\DeclareSymbolFontAlphabet{\mathbb}{AMSb}
\DeclareSymbolFontAlphabet{\mathbbl}{bbold}
\newcommand{\Prism}{{\mathlarger{\mathbbl{\Delta}}}}
\setlist[enumerate]{itemsep=2pt,parsep=2pt,before={\parskip=2pt}}
\newcommand{\cosimp}[3]{\xymatrix@1{#1 \ar@<.4ex>[r] \ar@<-.4ex>[r] & {\ }#2 \ar@<0.8ex>[r] \ar[r] \ar@<-.8ex>[r] & {\ } #3 \ar@<1.2ex>[r] \ar@<.4ex>[r] \ar@<-.4ex>[r] \ar@<-1.2ex>[r] & \cdots }}
\newcommand{\colim}{\mathop{\mathrm{colim}}}
\newcommand{\adjunction}[4]{\xymatrix@1{#1{\ } \ar@<0.3ex>[r]^{ {\scriptstyle #2}} & {\ } #3 \ar@<0.3ex>[l]^{ {\scriptstyle #4}}}}
\begin{document}

\newtheorem{theorem}{Theorem}[section]
\newtheorem*{theorem*}{Theorem}
\newtheorem*{definition*}{Definition}
\newtheorem{proposition}[theorem]{Proposition}
\newtheorem{lemma}[theorem]{Lemma}
\newtheorem{corollary}[theorem]{Corollary}

\theoremstyle{definition}
\newtheorem{definition}[theorem]{Definition}
\newtheorem{question}[theorem]{Question}
\newtheorem{remark}[theorem]{Remark}
\newtheorem{warning}[theorem]{Warning}
\newtheorem{example}[theorem]{Example}
\newtheorem{notation}[theorem]{Notation}
\newtheorem{convention}[theorem]{Convention}
\newtheorem{construction}[theorem]{Construction}
\newtheorem{claim}[theorem]{Claim}
\newtheorem{assumption}[theorem]{Assumption}

\crefname{assumption}{assumption}{assumptions}
\crefname{construction}{construction}{constructions}

\newcommand{\qc}{q-\mathrm{crys}}

\newcommand{\Shv}{\mathrm{Shv}}
\newcommand{\et}{{\acute{e}t}}
\newcommand{\crys}{\mathrm{crys}}
\renewcommand{\inf}{\mathrm{inf}}
\newcommand{\Hom}{\mathrm{Hom}}
\newcommand{\Sch}{\mathrm{Sch}}
\newcommand{\Spf}{\mathrm{Spf}}
\newcommand{\Spa}{\mathrm{Spa}}
\newcommand{\Spec}{\mathrm{Spec}}
\newcommand{\perf}{\mathrm{perf}}
\newcommand{\qsyn}{\mathrm{qsyn}}
\newcommand{\perfd}{\mathrm{perfd}}
\newcommand{\arc}{{\rm arc}}
\newcommand{\conj}{\mathrm{conj}}
\newcommand{\rad}{\mathrm{rad}}

\setcounter{tocdepth}{1}

\title{Prisms and prismatic cohomology}
\author{Bhargav Bhatt}
\author{Peter Scholze}
\dedicatory{In the memory of Jean-Marc Fontaine}

\begin{abstract}
We introduce the notion of a prism, which may be regarded as a ``deperfection'' of the notion of a perfectoid ring. Using prisms, we attach a ringed site --- the prismatic site --- to a $p$-adic formal scheme. The resulting cohomology theory specializes to (and often refines) most known integral $p$-adic cohomology theories. 

As applications, we prove an improved version of the almost purity theorem allowing ramification along arbitrary closed subsets (without using adic spaces), give a co-ordinate free description of $q$-de Rham cohomology as conjectured by the second author in \cite{ScholzeqdeRham}, and settle a vanishing conjecture for the $p$-adic Tate twists $\mathbf{Z}_p(n)$ introduced in \cite{BMS2}.
\end{abstract}

\maketitle

\tableofcontents

\newpage 

\section{Introduction}

Fix a prime $p$. In this article, we give a new and unified construction of various $p$-adic cohomology theories, including \'etale, de~Rham and crystalline cohomology, as well as the more recent constructions in \cite{BMS1}, \cite{BMS2} and the so far conjectural $q$-de~Rham cohomology from \cite{ScholzeqdeRham}. The key innovation is the following definition, which is a ``deperfection" of perfectoid rings.

\begin{definition} 
\label{def:Prism}
A \emph{prism} is a pair $(A,I)$ where $A$ is a $\delta$-ring (see Remark~\ref{DeltaIntro}) and $I\subset A$ is an ideal defining a Cartier divisor in $\Spec(A)$, satisfying the following two conditions.
\begin{enumerate}
\item The ring $A$ is $(p,I)$-adically complete.\footnote{We actually allow $A$ to be merely derived $(p,I)$-adically complete below, although we will usually restrict to situations where the subtle difference between these notions does not come up.}
\item The ideal $I+\phi_A(I)A$ contains $p$; here $\phi_A$ refers to the lift of Frobenius on $A$ induced by its $\delta$-structure (Remark~\ref{DeltaIntro}).
\end{enumerate}
A map $(A,I) \to (B,J)$ of prisms is given by a map of $A \to B$ of $\delta$-rings carrying $I$ into $J$. 
\end{definition}

Before giving examples, we briefly comment on the notion of $\delta$-rings used above. 

\begin{remark}
\label{DeltaIntro}
The notion of $\delta$-rings was introduced by Joyal \cite{JoyalDelta} and studied extensively by Buium \cite{BuiumArithmeticAnalog} under the name of ``$p$-derivations''; see also \cite{BorgerCourse}. This notion provides a convenient language for discussing rings with a lift of the Frobenius, and is reviewed in depth in \S \ref{sec:DeltaRings}. Roughly, a $\delta$-ring structure on a commutative ring $A$ is a map $\delta_A:A \to A$ satisfying certain identities ensuring that the associated map $\phi_A:A \to A$ given by $\phi_A(x) = x^p + p\delta_A(x)$ is a ring homomorphism (which then necessarily lifts the Frobenius on $A/p$). In particular, if  $A$ is a $p$-torsionfree commutative ring, then a $\delta$-ring structure on $A$ is equivalent to the datum of a ring map $\phi_A: A\to A$ lifting the Frobenius on $A/p$. The name ``$p$-derivation'' is explained by observing that $\delta$ lowers the $p$-adic order of vanishing by $1$ in a suitable sense (see Remark~\ref{InitialDeltaRing} for an example).
\end{remark}

Let us give some examples of prisms. In the following examples (and, in fact, all known examples), the ideal $I=(d)$ is actually principal. Under \cref{def:Prism} (1), condition (2) is then equivalent to the condition that $\delta(d)\in A$ is a unit; we call such an element $d\in A$ a {\em distinguished element} of $A$. 

\begin{example} 
\label{PrismExIntro}

The choice of names below is largely dictated by the comparison theorems for the associated prismatic cohomology theory.
\begin{enumerate}
\item (Crystalline Prisms) Let $A$ be any $p$-torsionfree and $p$-complete ring with a Frobenius lift $\phi: A\to A$; this induces a unique $\delta$-ring structure on $A$. Then $(A,(p))$ is a prism.

\item (Perfect Prisms) A prism $(A,I)$ is called perfect if $A$ is perfect, i.e.~$\phi: A\to A$ is an isomorphism. The category of perfect prisms is equivalent to the category of perfectoid rings (as defined in say \cite[Definition 4.18]{BMS2}) via the functors $(A,I)\mapsto A/I$ and $R\mapsto (A_\inf(R),\ker(\theta))$ (Theorem~\ref{PerfdPrism}). Any prism admits a ``perfection'', so a general prism can be regarded as a ``not necessarily perfect'' analog of a perfectoid ring.

\item (Breuil-Kisin-type Prisms) Let $K/\mathbf Q_p$ be a complete discretely valued extension with perfect residue field $k$ and uniformizer $\pi\in K$. Let $A=\mathfrak S=W(k)[[u]]$, regarded as a $\delta$-ring via the Frobenius lift that extends the Frobenius on $W(k)$ and sends $u$ to $u^p$. Let $I\subset A$ be the kernel of the surjective map $A=W(k)[[u]]\to \mathcal O_K$ sending $u$ to $\pi$. Then $(A,I)$ is a prism.

\item ($q$-crystalline Prism) Let $A=\mathbf Z_p[[q-1]]$, the $(p,[p]_q)$-adic completion of $\mathbf Z[q]$, regarded as a $\delta$-ring via the Frobenius lift that sends $q$ to $q^p$. Let $I=([p]_q)$, where $[p]_q = \frac{q^p-1}{q-1} = 1+q+...+q^{p-1}$ is the $q$-analog of $p$. Then $(A,I)$ is a prism.
\end{enumerate}
In particular, combining the first example with any of the other examples shows that a given $\delta$-ring $A$ may support many different prism structures. At the opposite extreme, one can show that the local ring $W(\mathbf{F}_p[x]/(x^2))$ is naturally a $p$-adically complete $\delta$-ring (see Remark~\ref{ThetaWitt} for the $\delta$-structure on the ring of Witt vectors) that does not underlie any prism $(A,I)$: one can show that any nonzerodivisor $d \in W(\mathbf{F}_p[x]/(x^2))$ must be a unit\footnote{The maximal ideal  of $W(\mathbf{F}_p[x]/(x^2))$ is generated by $[x]$ and $VW(\mathbf{F}_p[x]/(x^2))$ and is annihilated by multiplication by the nonzero element $[x]$: we have $[x] \cdot [x] = [x^2] = 0$ and $[x] V(-) = V(F([x]) \cdot -) = 0$ as $F([x]) = [x^p] = 0$.}.
\end{example}

Our constructions with prisms will often require us to contemplate very large algebras and modules.  For technical reasons pertaining to the behaviour of completions in highly nonnoetherian situations (and, relatedly, to avoid introducing derived analogs of prisms), we will restrict attention to the following class of prisms, which includes all the ones in Example~\ref{PrismExIntro}.

\begin{definition} 
A prism $(A,I)$ is \emph{bounded} if $A/I$ has bounded $p^\infty$-torsion, i.e.~there is some integer $n$ such that $A/I[p^\infty] = A/I[p^n]$.
\end{definition}

Given a bounded prism $(A,I)$ and a smooth $p$-adic formal scheme $X$ over $A/I$, our next goal is to describe a ringed site $((X/A)_\Prism, \mathcal{O}_\Prism)$ that we call the prismatic site of $X$.  Roughly, this category is defined by ``probing'' $X$ using prisms over $(A,I)$. Before giving the formal definition, it is very useful to note the following rigidity property of prisms over $(A,I)$.

\begin{proposition}[Proposition~\ref{PrismMapTaut}]
\label{RigidityIntro} 
If $(A,I)\to (B,J)$ is a map of prisms, then $J=IB$.
\end{proposition}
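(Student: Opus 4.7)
The plan is to reduce to the case of principal prisms and then carry out a short $\delta$-ring computation, closing with a lift-from-quotient argument using completeness.

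\emph{Reduction to principal generators.} The conclusion $IB = J$ is Zariski-local on $\Spec(B)$, and by the Cartier condition, $I$ (resp.\ $J$) is locally principal in $A$ (resp.\ $B$). Combined with axiom (2) of \cref{def:Prism}, the discussion preceding \cref{PrismExIntro} shows that a local generator can always be taken to be a distinguished element. So, after a suitable Zariski localization of $B$ (and recompletion, which preserves the prism axioms), one may assume $I = (d)$ and $J = (e)$ with $d \in A$ and $e \in B$ both distinguished. Since $A \to B$ sends $I$ into $J$, write $d = e u$ for some $u \in B$; then the desired equality $IB = J$ is equivalent to the single claim $u \in B^\times$.

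\emph{The $\delta$-theoretic computation.} The Leibniz rule $\delta(xy) = x^p \delta(y) + y^p \delta(x) + p\,\delta(x)\delta(y)$ applied to $d = eu$ gives
\[
\delta(d) \;=\; e^p\,\delta(u) + u^p\,\delta(e) + p\,\delta(u)\delta(e).
\]
Reducing modulo $e$ and using $u^p + p\,\delta(u) = \phi(u)$, we obtain $\delta(d) \equiv \delta(e)\cdot \phi(u) \pmod{e}$. By distinguishedness, both $\delta(d)$ and $\delta(e)$ are units in $B$, so $\phi(u)$ is a unit in $B/(e)$. Reducing further modulo $p$, we have $\phi(u) \equiv u^p \pmod{(p,e)}$, so $u^p$ and hence $u$ is a unit in $B/(p, J)$.

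\emph{Conclusion by completeness.} Because $(B,J)$ is a prism, $B$ is (derived) $(p,J)$-adically complete. Therefore any element whose reduction modulo $(p, J)$ is a unit is itself a unit in $B$: lifting the inverse yields $uv = 1 + z$ with $z \in (p, J)$ topologically nilpotent, and one inverts $1 + z$ by the geometric series. Hence $u \in B^\times$, so $IB = dB = euB = eB = J$, as desired.

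The main obstacle is really just the bookkeeping in the first step: one has to verify that condition (2) can be promoted to distinguishedness of a local generator, and that the prism structure survives the localization/completion used to make $I$ and $J$ principal. Once the principal setup is in place, the heart of the proof is a single application of the Leibniz rule followed by the standard unit-lifting argument in a complete ring.
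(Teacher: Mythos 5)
Your proof is correct in substance and follows essentially the same route as the paper's argument (\cref{PrismMapTaut}): both reduce to the principal case with distinguished generators and then run the $\delta$-Leibniz computation on the factorization $d = eu$, which the paper packages once and for all as \cref{distinguishedDivide}. The bookkeeping you flag at the end is genuinely where care is needed: Zariski-localizing $B$ alone leaves $I \subset A$ untouched, so the claim ``assume $I=(d)$ with $d\in A$'' does not follow from a $B$-localization, and even if one takes $d$ to be a local generator of $IB\subset B$, its distinguishedness requires an argument (you cannot yet invoke the criterion preceding \cref{PrismExIntro}, since $(B,IB)$ is not a priori a prism). The paper handles this by applying \cref{prismcrit} to both $(A,I)$ and $(B,J)$ to produce ind-Zariski covers $A \to A'$ and $B \to B'$ with principal distinguished ideals and $p, d, e \in \rad$, replacing $B'$ by a localization of $A'\otimes_A B'$ so the map extends, and then descending faithfully flatly; the image of a distinguished generator of $IA'$ in $B'$ is automatically distinguished because $\delta$-maps preserve distinguished elements, and this is the cleanest way to repair your reduction. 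With that fix in place, your chain of deductions --- $\delta(d)\equiv \delta(e)\phi(u) \pmod e$, hence $\phi(u)$ a unit mod $e$, hence $u^p$ and thus $u$ a unit mod $(p,e)$, hence $u\in B^\times$ because derived $(p,J)$-completeness forces $(p,J)\subset\rad(B)$ --- is correct and is the inline version of the paper's appeal to \cref{distinguishedDivide}.
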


In other words, when working over a fixed base prism, the ideal $I$ is not varying anymore. We can now give the promised definition. 

\begin{definition}
\label{PrismaticSiteIntro} Fix a bounded prism $(A,I)$ and a smooth $p$-adic formal scheme $X$ over $A/I$.
\begin{enumerate}
\item A map $(A,I)\to (B,J)$ of prisms is called {\em (faithfully) flat} if $A\to B$ is $(p,I)$-completely (faithfully) flat (see \S \ref{NotationGlobal} for a definition of the latter).

\item The {\em prismatic site} $(X/A)_\Prism$ is the opposite of the category of prisms $(B,J)$ with a map $(A,I)\to (B,J)$ and a map $\Spf(B/J)\to X$ over $\Spf(A/I)$, endowed with the faithfully flat covers as defined in (1).

\item The {\em structure sheaf} $\mathcal O_\Prism$ on $(X/A)_\Prism$ is the sheaf taking a pair $(B,J)$ (with maps $(A,I)\to (B,J)$ and $\Spf(B/J)\to X$) to $B$. This is a sheaf of $I$-torsionfree $\delta$-$A$-algebras. 

\item There is also another sheaf $\overline{\mathcal{O}}_\Prism$ of rings on $(X/A)_\Prism$ taking a pair $(B,J)$ to $B/J$. This is naturally a sheaf of $\mathcal{O}(X)$-algebras, and we have $\mathcal{O}_\Prism \otimes_A A/I \cong \overline{\mathcal{O}}_\Prism$ by \cref{RigidityIntro}.
\end{enumerate}
\end{definition}

\begin{remark}
There are many variants of Definition~\ref{PrismaticSiteIntro}. For example, one might use the \'etale or quasisyntomic topologies instead of the flat topology. More interestingly, one can define an ``absolute'' prismatic site $(X)_\Prism$ for any $p$-adic formal scheme $X$ by simply discarding the base $(A,I)$ in \cref{PrismaticSiteIntro}, i.e., as the category of prisms $(B,J)$ together with a map $\mathrm{Spf}(B/J) \to X$. While important in arithmetic considerations, this notion does not play a significant role in this paper, and is only used in our study of algebras over a perfectoid ring in an auxiliary way to make certain functorialities transparent. Another variant is the perfect prismatic site $(X/A)_\Prism^{\perf}$, which is obtained from $(X/A)_\Prism$ by restricting attention to perfect prisms $(B,J)$; this variant plays an important role in our results on perfectoid rings, such as Theorem~\ref{AlmostPurityIntro}. 
\end{remark}

A major goal of this paper is to explain why prismatic cohomology recovers (and often refines) most known integral $p$-adic cohomology theories:

\begin{theorem}\label{thm:A} Let $(A,I)$ be a bounded prism, and let $X$ be a smooth $p$-adic formal scheme over $A/I$. Let
\[
R\Gamma_\Prism(X/A) := R\Gamma((X/A)_\Prism,\mathcal O_\Prism)\ ,
\]
which is a commutative algebra in the derived category $D(A)$ of $A$-modules, and comes equipped with a $\phi_A$-linear endomorphism $\phi$.
\begin{enumerate}
\item {\em Crystalline Comparison (Theorem~\ref{CrysComp}):} If $I=(p)$, then there is a canonical $\phi$-equivariant isomorphism
\[
R\Gamma_\crys(X/A)\cong R\Gamma_\Prism(X/A) \widehat{\otimes}_{A,\phi_A}^L A\ .
\]
of commutative algebras in $D(A)$.

\item {\em Hodge-Tate Comparison (Theorem~\ref{HTCompPrismatic}):} If $X$ is affine, say $X=\Spf R$,  there is a canonical $R$-module isomorphism
\[
\Omega^i_{R/(A/I)}\{-i\}\cong H^i(R\Gamma_\Prism(X/A)\otimes^L_A A/I)\ .
\]
Here we write $M\{i\} = M\otimes_{A/I} (I/I^2)^{\otimes i}$ for any $A/I$-module $M$.

\item {\em de~Rham Comparison (Theorem~\ref{dRComp1}, Corollary~\ref{generaldeRham}):} There is a canonical isomorphism
\[
R\Gamma_{\mathrm{dR}}(X/(A/I))\cong R\Gamma_\Prism(X/A)\widehat{\otimes}^L_{A,\phi_A} A/I\ .
\]
of commutative algberas in $D(A)$. Moreover, it can be upgraded naturally to an isomorphism of commutative differential graded algebras.

\item {\em \'Etale Comparison (Theorem~\ref{EtaleCompThm}):} Assume $A$ is perfect. Let $X_\eta$ be the generic fibre of $X$ over $\mathbf Q_p$, as a (pre-)adic space. For any $n \geq 0$, there is a canonical isomorphism
\[
R\Gamma_\et(X_\eta,\mathbf Z/p^n\mathbf Z)\cong \left(R\Gamma_\Prism(X/A)/p^n[\tfrac 1I]\right)^{\phi=1}\ .
\]
of commutative algebras in $D(\mathbf{Z}/p^n)$.

\item {\em Base Change (Corollary~\ref{BaseChangePrismCoh}):} Let $(A,I)\to (B,J)$ be a map of bounded prisms, and let $Y=X\times_{\Spf(A/I)} \Spf(B/J)$. Then the natural map induces an isomorphism
\[
R\Gamma_\Prism(X/A)\widehat{\otimes}^L_A B\cong R\Gamma_\Prism(Y/B)\ ,
\]
where the completion on the left is the derived $(p,J)$-adic completion.

\item {\em Image of $\phi$ (Corollary~\ref{ImageofPhi}):} The linearization
\[
\phi_A^\ast R\Gamma_\Prism(X/A)\to R\Gamma_\Prism(X/A)
\]
of $\phi$ becomes an isomorphism after inverting $I$. More precisely, if $I=(d)$ is principal, there is a map $V_i: H^i_\Prism(X/A)\to H^i(\phi_A^\ast R\Gamma_\Prism(X/A))$ such that $V_i\phi = \phi V_i = d^i$.
\end{enumerate}

In particular, it follows from the Hodge-Tate comparison that $R\Gamma_\Prism(X/A)$ is a perfect complex of $A$-modules if $X$ is proper.
\end{theorem}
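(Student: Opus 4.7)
The plan is to establish the Hodge-Tate comparison (2) first via a local \v{C}ech--Alexander computation, and then deduce the remaining parts from it. For the local setup, work Zariski-locally on $X = \Spf R$ assuming $R$ is the $(p,I)$-adic completion of a smooth $A/I$-algebra admitting a framing. Lift a surjection from a polynomial ring to a surjection $P \twoheadrightarrow R$ modulo $I$, where $P$ is a $(p,I)$-completed free $\delta$-$A$-algebra. Forming the \v{C}ech nerve $P^\bullet$ and applying the \emph{prismatic envelope} functor at each cosimplicial level yields a cosimplicial bounded prism $(B^\bullet, IB^\bullet)$ whose totalization computes $R\Gamma_\Prism(X/A)$, since such envelopes form a cofinal system of covers in the prismatic site.

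For (2), reduce modulo $I$. By the rigidity statement (\cref{RigidityIntro}), $B^\bullet/IB^\bullet$ is a cosimplicial $R$-algebra whose normalized cohomology can be computed by an explicit Koszul calculation: a generator $d$ of $I$ appears in the differentials with positive multiplicity, producing $\Omega^i_{R/(A/I)}\{-i\}$ in cohomological degree $i$, with the Breuil--Kisin twist arising naturally from the factor $(I/I^2)^{\otimes i}$. This is the technical core of the theorem.

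Parts (5) and (6) then follow formally. Base change holds because the prismatic envelope construction commutes with $(p,I)$-completely flat maps of prisms, so both sides of the base change morphism have the same Hodge-Tate cohomology sheaves and agree after derived completion. For the image of Frobenius, the Hodge-Tate identification combined with the formula $\phi(d) = d^p + p\delta(d)$ shows that the linearization $\phi_A^* H^i_\Prism(X/A) \to H^i_\Prism(X/A)$ is canonically divisible by $d^i$, producing $V_i$ with $V_i\phi = \phi V_i = d^i$. For (3), apply (2) to the Frobenius base change: in the twisted mod-$I$ reduction $R\Gamma_\Prism(X/A)\widehat{\otimes}_{A,\phi_A}^L A/I$, the Breuil--Kisin twists trivialize because the structure map factors through $\phi_A$, and a local Frobenius-lift on $R$ together with a Poincar\'e-lemma-style argument identifies the cosimplicial totalization with the de~Rham complex of $R$.

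For (1), when $I=(p)$, a prism $(B,(p))$ is automatically a $p$-adic pd-thickening of $B/p$: the $\delta$-structure produces $x^p = \phi(x) - p\delta(x) \in pB$ for $x \in (p)$, giving canonical divided powers on $(p)$. This provides a cocontinuous functor from the crystalline site of $R/A$ into $(X/A)_\Prism$, and a cohomological cofinality argument identifies the two cohomology theories, modulo the Frobenius twist $\phi_A^*$ reflecting that the prismatic structure sheaf records $B$ while the crystalline one records $B/p^n$. For (4), assuming $(A,I)$ perfect, the identification of perfect prisms with perfectoid rings combined with a pro-\'etale / arc-descent argument reduces the claim to an Artin--Schreier--Witt short exact sequence $0 \to \mathbf Z/p^n \to \mathcal{O}_\Prism/p^n[1/I] \xrightarrow{\phi - 1} \mathcal{O}_\Prism/p^n[1/I] \to 0$ on the perfect prismatic site. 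The main obstacle is proving (2) with the correct Breuil--Kisin twist: constructing the comparison map, reducing to the \v{C}ech--Alexander complex, and running the Koszul computation without losing the twist. The other parts, once (2) is established, are either formal (3, 5, 6) or reduce to standard site-theoretic/perfectoid machinery (1, 4).
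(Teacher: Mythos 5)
Your logical architecture is inverted relative to the paper's, and the inversion hides a real gap. You propose to prove the Hodge--Tate comparison (2) first by a ``direct Koszul calculation'' of the \v{C}ech--Alexander complex modulo $I$, and then treat the crystalline comparison (1) as a later cofinality argument. In the paper the dependency runs the other way: the crystalline comparison (\cref{CrysComp}) is established first, in characteristic $p$, via the identification of prismatic envelopes with divided power envelopes (\cref{PDenvRegSeqLambda}); the Hodge--Tate comparison in characteristic $p$ is then deduced from it using the Cartier isomorphism (\cref{HodgeTateCharp}); and the general Hodge--Tate comparison reduces to the characteristic $p$ case by the passage from the universal oriented prism to a crystalline prism (\cref{UnivOrientedCrystallize}). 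There is no direct Koszul computation of the cosimplicial object $B^\bullet/IB^\bullet$: the \v{C}ech--Alexander complex is not a Koszul complex, and the Breuil--Kisin twist arises from the Bockstein $\beta_I$ built out of $I^i/I^{i+2}$, not from a generator $d$ ``appearing in the differentials with multiplicity.'' Your argument for (2) does not survive contact with the actual complex, and since you explicitly flag (2) as the technical core, this is the load-bearing gap. Moreover, the crystalline comparison itself is not merely a cofinality argument plus the observation that $(p)$ carries divided powers in a $\delta$-ring: the comparison map $\phi_A^*C^\bullet \to D^\bullet$ between prismatic and crystalline \v{C}ech--Alexander complexes must be shown to be a quasi-isomorphism, and this needs the nontrivial combinatorial lemma \cref{CosimpHT} about relative Frobenius twists of \v{C}ech nerves of polynomial algebras.

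A second gap concerns part (6). You assert that the Hodge--Tate identification together with the identity $\phi(d) = d^p + p\delta(d)$ ``shows'' the linearization is canonically divisible by $d^i$, producing $V_i$. This does not follow. The factorization of Frobenius through $L\eta_I \Prism_{X/A}$ requires the Nygaard filtration, whose existence and graded-piece identification $\mathrm{gr}^i_N \cong \tau^{\leq i}\overline{\Prism}\{i\}$ are themselves major theorems (\cref{thmCagain}), established in the paper only after a detailed analysis of quasiregular semiperfectoid rings (\cref{ThmNygaard}, \cref{NygaardKeyCase}) and quasisyntomic descent. The map $V_i$ is then produced by the d\'ecalage machinery of \cite{BMS1} applied to the isomorphism $\widetilde\phi\colon \phi_A^*R\Gamma_\Prism \xrightarrow{\sim} L\eta_I R\Gamma_\Prism$ (\cref{ImageofPhi}). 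Neither of these ingredients is visible in your outline. Similarly, the general de~Rham comparison (3) without a torsion-freeness hypothesis on $W(A/I)$ is obtained as a corollary of the Nygaard theory (\cref{generaldeRham}); a Poincar\'e-lemma argument only handles the version with the technical hypothesis. The \'etale comparison and base change sketches are closer in spirit to the paper, but as written the proposal's claim that (3), (5), (6) ``follow formally'' from (2) understates what is actually needed.
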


Let us discuss the consequences of \cref{thm:A} over the prisms discussed in \cref{PrismExIntro}.

\begin{example} 
\label{ExPrismaticClassical}

\begin{enumerate}
\item (Crystalline) Assume that $I=(p)$. In this case, prismatic cohomology gives a canonical Frobenius descent of crystalline cohomology by \cref{thm:A} (1). Note that this is no extra information when $A$ is perfect, while it is interesting extra information in the common case that $A$ is a $p$-completely smooth lift\footnote{In this special case where $A/p$ is a smooth $k$-algebra, the extra Frobenius descent provided by Theorem~\ref{thm:A} (1), at the level of cohomology groups, can also be deduced from Ogus-Vologodsky's description \cite[Theorem 2.8 (3)]{OgusVologodsky} of their local Cartier transform.} of a smooth $k$-algebra equipped with a Frobenius lift, for some perfect field $k$. This yields restrictions on the possible structure of the torsion in crystalline cohomology (e.g., its length, when finite, has to be a multiple of $p^{\dim(A/pA)}$).

\item (BMS1) Let $C/\mathbf Q_p$ be an algebraically closed complete extension with ring of integers $\mathcal O_C$, and let $A := A_\inf = A_\inf(\mathcal O_C)$ be Fontaine's ring with $I=\ker(\theta: A_\inf\to \mathcal O_C)$. The pair $(A,I)$ is a perfect prism corresponding to the perfectoid ring $\mathcal{O}_C$. In \cite{BMS1}, we defined a complex of $A_\inf$-modules $R\Gamma_{A_\inf}(X)$ equipped with a ``Frobenius'' operator $\phi$. We shall prove in \S \ref{sec:AOmegaComp} that there exists a canonical $\phi$-equivariant isomorphism
\[
R\Gamma_{A_\inf}(X)\cong \phi_A^\ast R\Gamma_\Prism(X/A_\inf)\ .
\]
Note that in this case $\phi_{A}$ is an isomorphism, so the pullback appearing above merely twists the $A$-module structure. The comparison results from \cite{BMS1} are then immediate consequences of the comparison results above (except for the results on de~Rham-Witt theory in \cite{BMS1} that we have not taken up here); note that the $A_{\crys}$-comparison theorem from \cite{BMS1} follows from \cref{thm:A} (1) thanks to \cref{thm:A} (5) applied to the map $(A_{\inf},\ker(\theta)) \to (A_{\crys},(p))$ of bounded prisms. 

\begin{remark}
Our search for the prismatic site was substantially motivated by a desire to obtain a site-theoretic construction of the $A_{\inf}$-cohomology. The ``Frobenius'' operator on this cohomology had slightly mysterious origins: it came, rather indirectly, via the tilting equivalence for perfectoids. Thanks to the relation to prismatic cohomology,  the origins of this operator are now clear: it comes from the Frobenius lift on $\mathcal{O}_\Prism$. 
\end{remark}

\item (Breuil-Kisin, BMS2) Let $K/\mathbf Q_p$ be a complete discretely valued field with perfect residue field $k$ and uniformizer $\pi$, and let $A=\mathfrak S=W(k)[[u]]$ with $I\subset A$ the kernel of the map $A\to \mathcal O_K$ sending $u$ to $\pi$. Then in \cite{BMS2} we defined a complex of $A$-modules $R\Gamma_{\mathfrak S}(X)$ equipped with a Frobenius, using topological Hochschild homology. In \S \ref{sec:RelativeNygaard}, we construct a canonical $\phi$-equivariant isomorphism
\[
R\Gamma_{\mathfrak S}(X)\cong R\Gamma_\Prism(X/\mathfrak S)\ .
\]
The above comparison results recover the results of \cite{BMS2} on Breuil-Kisin cohomology.

\item ($q$-crystalline) Let $A=\mathbf Z_p[[q-1]]$ with $I=([p]_q)$ as above, so that $A/I = \mathbf Z_p[\zeta_p]$. If $R$ is a $p$-completely smooth $\mathbf Z_p$-algebra, $R^{(1)} := R\otimes_{\mathbf Z_p} \mathbf Z_p[\zeta_p]$ and $X=\Spf R^{(1)}$, then for any choice of a $p$-completely \'etale map
\[
\square: \mathbf Z_p\langle T_1,\ldots,T_d\rangle\to R\ ,
\]
we construct in \S \ref{sec:qcrys} a canonical isomorphism
\[
q\Omega_R^\square\cong R\Gamma_\Prism(X/A)
\]
with the $q$-de~Rham complexes from \cite{ScholzeqdeRham}, proving \cite[Conjecture 1.1]{ScholzeqdeRham} on the independence (up to canonical quasi-isomorphism) of $q\Omega_R^\square$ from the choice of $\square$. 
\end{enumerate}
\end{example}

\begin{remark}
In Theorem~\ref{thm:A} (2), we have formulated the Hodge-Tate comparison solely in the affine case. There are several reasons for this choice. First, it is simpler to formulate the result in the affine case as we can avoid introducing excessive formalism necessary to formulate the appropriate analog for non-affine formal schemes, e.g., the non-affine version of the Hodge-Tate comparison  in Theorem~\ref{HTCompPrismatic} needs the introduction of the functor $R\nu_*:\mathrm{Shv}((X/A)_\Prism) \to \mathrm{Shv}(X_{\et})$ from Construction~\ref{PrismtoEtale} for general $X$. Secondly, there is no loss of generality:  our constructions are functorial in the $\infty$-categorical sense, so the affine statement in Theorem~\ref{thm:A} (2) immediately implies the generalization to non-affine formal schemes in Theorem~\ref{HTCompPrismatic} (once the latter has been formulated). Relatedly, the statement for affines immediately yields analogs of interest beyond formal schemes, e.g., for formal stacks. Thus, in Theorem~\ref{thm:A} (2) --- and in fact for several other analogous results elsewhere in the paper --- we have restricted ourselves to the affine case when it leads to cleaner statements without sacrificing generality.
\end{remark}

In \cite{FontaineMessing}, Fontaine-Messing gave a description of crystalline cohomology in terms of syntomic cohomology of a sheaf of divided power envelopes. The corresponding idea in mixed characteristic was taken up in \cite{BMS2}, where it was rephrased in terms of quasisyntomic descent from a class of semiperfectoid rings. To adapt these results to the prismatic context, we prove the following result about the prismatic site of semiperfectoid rings; we note that part (2) below says that there is a notion of ``perfection" even in mixed characteristic (see also Theorem~\ref{AlmostPurityIntro} (2) for a more general statement).

\begin{theorem}\label{thm:B} Let $S$ be a semiperfectoid ring, i.e., $S$ is a (derived) $p$-adically complete quotient of a perfectoid ring.
\begin{enumerate}
\item {\em Existence of universal prisms (Proposition~\ref{InitialPrism}):} The category $(S)_\Prism$ of prisms $(A,I)$ with a map $S\to A/I$ admits an initial object $(\Prism_S^{\mathrm{init}},I)$, and $I=(d)$ is principal.
\item {\em The perfectoidization of $S$ (Corollary~\ref{PerfectionMixedChar}, Theorem~\ref{PerfectoidificationSurj}):} Let $(\Prism_S^{\mathrm{init}})_\perf$ be the $(p,I)$-completed perfection of $\Prism_S^{\mathrm{init}}$, and $S_\perfd = (\Prism_S^{\mathrm{init}})_\perf/I$. Then $S_\perfd$ is a perfectoid ring and the map $S \to S_\perfd$ is the universal map to a perfectoid ring from $S$. Moreover, the map $S \to S_\perfd$ is surjective. 
\end{enumerate}
\end{theorem}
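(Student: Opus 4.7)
My strategy for (1) is to build $\Prism_S^{\mathrm{init}}$ explicitly from a semiperfectoid presentation of $S$ and then deduce universality using the rigidity of prisms (\cref{RigidityIntro}). Choose a surjection $R \twoheadrightarrow S$ from a perfectoid $R$ with kernel $J$, and let $(A_0, I_0)$ be the perfect prism corresponding to $R$ under the equivalence \cref{PerfdPrism}; fix a distinguished generator $d$ of $I_0$. By \cref{RigidityIntro}, the defining ideal of every prism over $(A_0, I_0)$ is generated by (the image of) $d$, so the problem reduces to constructing an initial $(p,d)$-adically complete $\delta$-$A_0$-algebra $A$ with $d$ a nonzerodivisor in which the lifts of $J$ become divisible by $d$.

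I would realize $A$ as a prismatic envelope. Pick lifts $\tilde x_i \in A_0$ of a set of generators of $J$, and set
\[
A \;:=\; A_0\bigl\{\tfrac{\tilde x_i}{d}\bigr\}_\delta^{\wedge},
\]
the universal derived $(p,d)$-complete $\delta$-$A_0$-algebra in which each $\tilde x_i$ becomes divisible by $d$ (concretely: adjoin $\delta$-variables $y_i$, impose $\tilde x_i = d y_i$, and $(p,d)$-complete). Granting the technical claim that this envelope is a prism, one checks $A/dA = A_0/(d, \tilde x_i) = R/J = S$, so $(A,(d))$ is a natural object of $(S)_\Prism$. For universality, given $(B,J_B) \in (S)_\Prism$, the composite $R \twoheadrightarrow S \to B/J_B$ together with the universal property of $(A_0,I_0)$ yields a unique prism map $(A_0,I_0) \to (B,J_B)$, with $J_B = dB$ by \cref{RigidityIntro}; since the images of the $\tilde x_i$ in $B/J_B$ already vanish in $S$, they lie in $dB$, and the nonzerodivisor status of $d$ in $B$ produces unique $y_i \in B$ with $\tilde x_i = d y_i$, extending $A_0 \to B$ to a unique $\delta$-map $A \to B$. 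This proves (1) and moreover identifies $\Prism_S^{\mathrm{init}}/I = S$ on the nose.

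For (2), the $(p,I)$-completed perfection $(\Prism_S^{\mathrm{init}})_\perf$ is by construction a perfect prism, so $S_\perfd = (\Prism_S^{\mathrm{init}})_\perf/I$ is perfectoid by \cref{PerfdPrism}. Universality of $S \to S_\perfd$ among maps from $S$ into perfectoid rings is immediate from (1): any such $S \to R'$ puts the perfect prism $(A',I')$ attached to $R'$ into $(S)_\Prism$, and the unique resulting map $\Prism_S^{\mathrm{init}} \to A'$ factors through the perfection as $A'$ is already perfect. For surjectivity of $S \to S_\perfd$: both sides are derived $p$-complete, so by derived Nakayama it suffices to prove $S/p \to S_\perfd/p$ is surjective. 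Using $\Prism_S^{\mathrm{init}}/I = S$, one computes
\[
S_\perfd/p \;=\; (\Prism_S^{\mathrm{init}})_\perf/(p,I) \;=\; \bigl(\Prism_S^{\mathrm{init}}/(p,I)\bigr)_\perf \;=\; (S/p)_\perf,
\]
the characteristic-$p$ perfection (colimit along Frobenius) of $S/p$. Since $R/p$ is semiperfect (Frobenius surjective), so is its quotient $S/p$; hence $S/p \to (S/p)_\perf$, whose target is the quotient of $S/p$ by the Frobenius-nilpotents $\bigcup_n \ker(F^n)$, is surjective.

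The main obstacle is the technical verification that the prismatic envelope $A$ in (1) really is a prism, i.e.\ that the derived $(p,d)$-completion of the naive $\delta$-algebra $A_0\{y_i\}_\delta/(\tilde x_i - d y_i)$ remains $d$-torsionfree and that $A/dA$ computes the expected classical quotient $S$ rather than some derived variant. This is a delicate $\delta$-theoretic flatness problem, and it is the technical heart of the theorem.
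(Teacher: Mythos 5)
Your proposal goes wrong in a way that actually matters for the hard part of the theorem, so let me be concrete about where.

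\textbf{The claim $\Prism_S^{\mathrm{init}}/I = S$ is false.} In part (1) you write ``one checks $A/dA = A_0/(d,\tilde{x}_i) = R/J = S$'' and conclude that $\Prism_S^{\mathrm{init}}/I \cong S$ on the nose. This computation does not survive the two modifications you gloss over: first killing all $d$-torsion (iteratively, since completion can create new $d$-torsion --- the paper resolves this by a transfinite induction) and then $(p,d)$-completing. The paper's Proposition~\ref{InitialPrism} only produces a \emph{map} $S\to \Prism_S^{\mathrm{init}}/d$, not an isomorphism, and indeed it cannot be an isomorphism in general: by Theorem~\ref{thm:Bq}(1), for $S$ quasiregular semiperfectoid the map $S\to \overline{\Prism}_S$ is a $p$-completely \emph{faithfully flat cover} with nontrivial conjugate graded pieces $\wedge^i L_{S/R}[-i]$ for $i\geq 1$, so it is far from surjective when $S$ is not perfectoid (you can see this explicitly in \S\ref{ss:NygaardSpecial}: for $S = \mathbf{Z}_p[\zeta_{p^\infty},X^{1/p^\infty}]^\wedge/X$, the divided powers $Y^{pn}/[pn]_q!$ live in $\overline{\Prism}_S$ but not in the image of $S$). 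Even for $S = R$ perfectoid with nontrivial residue field this is wrong at the level of your mod-$p$ calculation: $S_\perfd/p = R/p$ while $(R/p)_\perf$ kills the whole maximal ideal of $R/p$ (since every $x$ with $v(x)>0$ satisfies $x^{p^n}\in pR$ for $n\gg 0$).

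\textbf{The surjectivity argument therefore collapses.} Your chain $S_\perfd/p = (\Prism_S^{\mathrm{init}})_\perf/(p,I) = (\Prism_S^{\mathrm{init}}/(p,I))_\perf = (S/p)_\perf$ relies on the false isomorphism $\Prism_S^{\mathrm{init}}/I = S$ (and also commutes perfection past a completed quotient without justification). The counterexample above shows the conclusion $S_\perfd/p \cong (S/p)_\perf$ is already wrong for $S$ perfectoid. Surjectivity of $S\to S_\perfd$ is genuinely the hard part of Theorem~\ref{thm:B} --- in the paper it is Theorem~\ref{PerfectoidificationSurj}, proved only after developing derived prismatic cohomology (\S\ref{ss:DerivedPrismatic}), Proposition~\ref{PrismaticRefineQSyn}, and Andr\'e's flatness lemma (Theorem~\ref{AndreFlatness}). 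The actual argument reduces $J$ to a principal ideal $(f)$, invokes Andr\'e's lemma to pass to a faithfully flat perfectoid extension where $f$ acquires compatible $p$-power roots, and then uses reducedness of perfectoid rings to identify $S_\perfd$ as the $p$-completion of $R/(f^{1/p^\infty})$, which is visibly a quotient of $R$. None of that is recoverable from a formal manipulation of the initial prism.

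\textbf{What is fine.} Your construction of the initial object as a prismatic envelope over $A_\inf(R)$, the use of \cref{RigidityIntro} for universality, and the deduction of universality in (2) from (1) all match the paper's argument. The observation that the envelope may fail to be $d$-torsionfree is correctly flagged, though you underestimate it: the resolution is not merely a ``flatness problem'' but the transfinite killing of $d$-torsion, and crucially it changes the quotient $\Prism_S^{\mathrm{init}}/I$ away from $S$.
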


The surjectivity of $S \to S_\perfd$ in Theorem~\ref{thm:B} (2) implies that the notions of ``Zariski closed" and ``strongly Zariski closed" subsets of affinoid perfectoid spaces agree, contrary to a claim made by the second author in \cite[Section II.2]{ScholzeTorsion}. The proof of this result uses an important flatness lemma of Andr\'e \cite{AndreDirectFactor} for perfectoid rings (for which we offer a direct prismatic proof in \cref{AndreFlatness}).

\begin{remark}[Almost mathematics with respect to any closed set]
\label{ZariskiClosedStrongly}
\cref{thm:B} (2) implies that there is a good notion of ``almost mathematics'' with respect to any ideal of a perfectoid ring, and not merely $\sqrt{pR}$ as in the classical setup. More precisely,  if $J \subset R$ is any $p$-complete ideal in a perfectoid ring, \cref{thm:B} (2) implies that $R \to (R/J)_\perfd$ is a surjective map of perfectoid rings. General properties of perfectoid rings then show that the kernel $J_\perfd := \ker(R \to (R/J)_\perfd)$ satisfies $J_\perfd \widehat{\otimes}_R^L J_\perfd \simeq J_\perfd$, which leads to a notion of ``$J$-almost mathematics'' for $p$-complete $R$-modules and $p$-complete objects of the derived category $D(R)$ (see \S \ref{ss:AlmostMathIdeal}).
\end{remark}

The prism $\Prism_A^{\mathrm{init}}$ from \cref{thm:B} is poorly behaved in general, and we do not know any explicit description. However, if $S$ has a well-behaved cotangent complex, then one can do better, as this prism can be described as the derived prismatic cohomology of $S$ itself.

\begin{theorem}
\label{thm:Bq}
Let $S$ be a semiperfectoid ring. Assume now that $S$ is quasiregular, i.e.~$S$ has bounded $p^\infty$-torsion and the cotangent complex $L_{S/\mathbf Z_p}[-1]$ is $p$-completely flat. Write\footnote{We have switched from $\Prism_S^{\mathrm{init}}$ to the more evocative $\Prism_S$ in Theorem~\ref{thm:Bq} as this object is well-behaved for $S$ quasiregular semiperfectoid (by Theorem~\ref{thm:Bq}!).} $\Prism_S = \Prism_S^{\mathrm{init}}$ for the object from \cref{thm:B}.

\begin{enumerate}[resume]

\item {\em The conjugate filtration (Proposition~\ref{QRSPPrism}, Construction~\ref{DerivedPrismatic}):} The $S$-algebra $\Prism_S/I$ is $p$-completely flat. Moreover, for any choice of a perfectoid $R$ mapping to $S$, the $S$-algebra $\Prism_S/I$ admits an increasing (``conjugate") filtration $\mathrm{Fil}^{\conj}_i \Prism_S/I$ by $S$-modules together with isomorphisms
\[ \mathrm{gr}^{\conj}_i \Prism_S/I\cong (\wedge^i L_{S/R}[-i])^\wedge \{-i\},\]
where the Breuil-Kisin twist appearing on the right is defined using the perfect prism attached to $R$ as in  Theorem~\ref{thm:A} (2).

\item {\em The Nygaard filtration (\S \ref{ss:NygaardSPerfdStructure}):} The ring $\Prism_S$ admits a natural decreasing (``Nygaard") filtration
\[
\mathrm{Fil}^j_N \Prism_S = \{x\in \Prism_S\mid \phi(x)\in I^j \Prism_S\}\ .
\]
For a generator $d\in I$ coming from a generator of $\ker\theta: A_\inf(R)\to R$ for $R$ as in (1), the composite map
\[
\mathrm{Fil}^j_N \Prism_S\xrightarrow{\phi/d^j} \Prism_S\to \Prism_S/I
\]
has image $\mathrm{Fil}^{\conj}_j \Prism_S/I$, inducing an isomorphism
\[
\mathrm{gr}^j_N \Prism_S\cong \mathrm{Fil}^{\mathrm{conj}}_j \Prism_S/I\ .
\]

\item {\em Comparison with the topological theory (\S \ref{sec:BMS2Comp}):} The ring $\widehat{\Prism}_S=\pi_0 \mathrm{TP}(S;\mathbf Z_p)$ defined in \cite{BMS2} is $\phi$-equivariantly isomorphic to the completion of $\Prism_S$ with respect to its Nygaard filtration, and in particular admits a functorial $\delta$-ring structure.
\end{enumerate}
\end{theorem}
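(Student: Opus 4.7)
The plan is to reduce all three parts to the structure of derived prismatic cohomology of quasiregular semiperfectoid rings. The key technical input I would want in hand is that, for such $S$ with a chosen surjection from a perfectoid $R$, the derived (left Kan extended) prismatic cohomology $\mathbb{L}\Prism_{S/A}$ relative to the perfect prism $(A,I)$ attached to $R$ is concentrated in degree $0$, and that its $H^0$ recovers the initial object $\Prism_S^{\mathrm{init}}$ of \cref{thm:B}. This concentration in degree zero is the main obstacle: it follows by combining the Hodge--Tate comparison (\cref{thm:A}(2)) with the quasiregularity hypothesis on $L_{S/\mathbf Z_p}$, which ensures that all the relevant wedge powers of $L_{S/R}$ sit in amplitude $0$. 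Once this is established, the entire ringed structure of $\Prism_S$ is controlled by quasisyntomic descent from polynomial situations.

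\emph{Part (3).} Apply the Hodge--Tate comparison to the derived prismatic cohomology of $S$ relative to $(A,I)$, using base change (\cref{thm:A}(5)). The Postnikov filtration on $\mathbb{L}\Prism_{S/A}\otimes_A^L A/I$ provides a natural increasing ``conjugate'' filtration with $i$-th graded piece $(\wedge^i L_{S/R}[-i])^{\wedge}\{-i\}$. Quasiregularity makes $L_{S/R}[-1]$ be $p$-completely flat, so each of these graded pieces lives in degree $0$; consequently $\Prism_S/I$ is discrete, $p$-completely flat over $S$, and carries the stated filtration.

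\emph{Part (4).} The Nygaard filtration is defined by the formula $\mathrm{Fil}^j_N\Prism_S=\{x:\phi(x)\in I^j\Prism_S\}$, and one must identify the image of $\phi/d^j$ modulo $I$ with $\mathrm{Fil}^{\conj}_j\Prism_S/I$. Using \cref{thm:A}(6), multiplication by $d^j$ on $\Prism_S$ can be rewritten in terms of $\phi$ composed with a canonical section on the $I$-inverted level, which makes it clear that $\phi/d^j$ lands in the $j$-th step of the conjugate filtration. For the isomorphism on graded pieces, I would argue inductively on $j$: reduce by quasisyntomic descent to the universal quasiregular semiperfectoid quotients of perfectoid rings by single elements (i.e.~$R/g$ for $g$ a well-chosen nonzerodivisor with compatible $p$-power roots), where both filtrations are computable by hand from a single Breuil--Kisin twist and the identification $\phi(d)=u\cdot d^p$ with $u$ a unit. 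Together with the identification $\mathrm{gr}^{\conj}_j\Prism_S/I\cong(\wedge^j L_{S/R}[-j])^{\wedge}\{-j\}$ from (3), this pins down $\mathrm{gr}^j_N\Prism_S$ on the nose.

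\emph{Part (5).} The comparison with $\pi_0\mathrm{TP}(S;\mathbf Z_p)$ is carried out by constructing a $\phi$-equivariant map of filtered rings and checking isomorphism on Nygaard graded pieces. The BMS2 ring $\widehat{\Prism}_S=\pi_0\mathrm{TP}(S;\mathbf Z_p)$ admits a Nygaard filtration whose graded pieces are identified with $\pi_{2j}\mathrm{THH}(S;\mathbf Z_p)$, and the BMS2 conjugate filtration on $\pi_*\mathrm{THH}(S;\mathbf Z_p)$ has graded pieces $(\wedge^i L_{S/R}[-i])^{\wedge}\{-i\}$, matching part (3). The plan is: first use the initial object property of $\Prism_S$ (\cref{thm:B}(1)) together with the natural $\delta$-ring structure on $\pi_0\mathrm{TP}$ (from the circle action / Frobenius on $\mathrm{TC}^-$) to produce a prism map $\Prism_S\to\pi_0\mathrm{TP}(S;\mathbf Z_p)$; then check this map carries $\mathrm{Fil}^j_N\Prism_S$ into the topological Nygaard filtration (both are characterized by the same $\phi\in I^j$ condition after identifying the generators of $I$); then pass to the Nygaard completion of $\Prism_S$ and verify the induced map is an iso on $\mathrm{gr}^j_N$ using (4) and the BMS2 computation. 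The main delicacy here is ensuring the two conjugate filtrations (prismatic on $\Prism_S/I$, topological on $\pi_0\mathrm{THH}$) are literally equal as filtrations on the same ring, which I expect to follow from the fact that both arise as the Postnikov filtration on a Hodge--Tate-like complex whose graded pieces are determined by $L_{S/R}$.
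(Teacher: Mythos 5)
Your treatment of part (3) matches the paper (Proposition~\ref{QRSPPrism}): discreteness of $\overline{\Prism}_{S/A}$ follows from the Hodge--Tate comparison plus quasiregularity, and the conjugate filtration is the one induced on $H^0$. No issues there.

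For part (4), the overall skeleton --- explicit computation in a basic special case, K\"unneth, quasisyntomic descent and left Kan extension --- is the same as in \S\ref{sec:Nygaard}, but the key identity you invoke is false. You write that in the basic case one uses ``the identification $\phi(d)=u\cdot d^p$ with $u$ a unit,'' but $d$ is a \emph{distinguished} element of $I$, so $\phi(d)=d^p+p\,\delta(d)$ with $\delta(d)$ a unit; thus $\phi(d)\equiv p\,\delta(d)\pmod{d^p}$, which is not in $(d^p)$ unless $d$ is a unit. (Concretely, for $d=[p]_q$ one has $\phi([p]_q)=[p]_{q^p}$, not a unit times $[p]_q^p$.) You may be conflating the distinguished generator $d$ of $I$ with a rank-one Teichm\"uller lift $[g^\flat]$, for which $\phi([g^\flat])=[g^\flat]^p$ does hold but which does not generate $I$. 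The actual engine of the paper's computation in the basic case $S=(\mathbf Z_p[\zeta_{p^\infty},X^{1/p^\infty}]/X)^\wedge$ is the $q$-divided power structure: $\Prism_S$ is the $(p,q-1)$-completion of $\bigoplus_{i\in\mathbf N[1/p]}A\cdot Y^i/[\lfloor i\rfloor]_q!$ (Lemma~\ref{qPDEx}), and the identity $\phi(Y^i/[\lfloor i\rfloor]_q!) = u\cdot [p]_q^{\lfloor i\rfloor}\cdot Y^{ip}/[\lfloor ip\rfloor]_q!$ from \cref{qMultId} is what pins down both the Nygaard and conjugate filtrations degree by degree (Lemma~\ref{NygaardKeyCase}). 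Without a replacement for this, the descent scaffolding in your argument has nothing to descend.

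For part (5) there is a genuine circularity. You propose to ``use the initial object property of $\Prism_S$ together with the natural $\delta$-ring structure on $\pi_0\mathrm{TP}(S;\mathbf Z_p)$'' to produce the comparison map. But the theorem itself asserts, as one of its conclusions, that $\pi_0\mathrm{TP}(S;\mathbf Z_p)$ ``admits a functorial $\delta$-ring structure,'' and the paper stresses immediately after the statement that the cyclotomic Frobenius lifting the mod-$p$ Frobenius is \emph{not} clear from the definitions. Invoking the initial-object property of $\Prism_S$ in the category of prisms requires knowing the target is a prism, hence a $\delta$-ring, hence requires precisely what you are trying to prove. The paper's route (Theorem~\ref{BMS2CompNC}) avoids this: it first checks that the $\delta$-structure on $\Prism_S$ extends to the Nygaard completion, then reduces by Andr\'e's lemma and K\"unneth to the one-variable case, and there uses Lemma~\ref{GetqPD} to construct a $\phi$-equivariant ring map $\Prism_S\to\widehat{\Prism}^{\mathrm{nc}}_{S/A}$ by hand (with no $\delta$-structure on the target assumed); the isomorphism, and hence the $\delta$-structure on $\widehat{\Prism}_S$, is then verified after base change to the crystalline case. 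You would need to rearrange your argument so that the $\delta$-structure on the topological side is a consequence rather than an input.
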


\begin{remark}
Let $S$ be a quasiregular semiperfectoid ring (see \cref{thm:B}). The formalism of topological Hochschild homology endows the commutative ring $\pi_0 \mathrm{TP}(S;\mathbf{Z}_p)$ with an endomorphism $\phi_S$ often called the ``Frobenius'' (see \cite{NikolausScholze}). Despite the name, it is not clear from the definitions that $\phi_S$ lifts the Frobenius on $\pi_0 \mathrm{TP}(S;\mathbf{Z}_p)/p$. Thanks to \cref{thm:Bq} (3), an even better statement is now available: $\phi_S$ is the Frobenius lift attached to a $\delta$-structure. (Note that this assertion need not be true for more general $S$.)
\end{remark}

We give three applications of the above results on semiperfectoid rings. Our first application concerns the Nygaard filtration on prismatic cohomology in the smooth case; the relevance of semiperfectoid rings to this question is that the quasiregular semiperfectoid rings form a basis for the quasisyntomic site. Note that part (3) below implies the result on the image of $\phi$ in \cref{thm:A}.

\begin{theorem}[Theorem~\ref{thmCagain}]
\label{thm:C}
Let $(A,I)$ be a bounded prism and let $X=\Spf(R)$ be an affine smooth $p$-adic formal scheme over $A/I$. 

\begin{enumerate}
\item {\em (Existence of the Nygaard filtration)} On the quasisyntomic site $X_\qsyn$, one can define a sheaf of $(p,I)$-completely flat $\delta$-$A$-algebras $\Prism_{-/A}$ equipped with a Nygaard filtration on its Frobenius twist
\[
\Prism_{-/A}^{(1)} := \Prism_{-/A}\widehat{\otimes}^L_{A,\phi} A
\]
given by
\[
\mathrm{Fil}^i_N \Prism_{-/A}^{(1)} = \{x\in \Prism_{-/A}^{(1)}\mid \phi(x)\in I^i \Prism_{-/A}\}\ .
\]

\item {\em (Quasisyntomic descent)} There is a canonical isomorphism
\[
R\Gamma_\Prism(X/A)\cong R\Gamma(X_\qsyn,\Prism_{-/A})
\]
and we endow prismatic cohomology with the Nygaard filtration
\[
\mathrm{Fil}^i_N R\Gamma_\Prism(X/A)^{(1)} = R\Gamma(X_\qsyn,\mathrm{Fil}^i_N \Prism_{-/A}^{(1)})\ .
\]
\item {\em (Graded pieces of the Nygaard filtration)} There are natural isomorphisms
\[
\mathrm{gr}^i_N R\Gamma_\Prism(X/A)^{(1)} \cong \tau^{\leq i} \overline{\Prism}_{R/A}\{i\}
\]
for all $i\geq 0$. 
\item {\em (Frobenius is an isogeny)} The Frobenius $\phi$ on $R\Gamma_\Prism(X/A)$ factors as
\[
\phi_A^\ast R\Gamma_\Prism(X/A)=R\Gamma_\Prism(X/A)^{(1)}\xrightarrow{\tilde{\phi}} L\eta_I R\Gamma_\Prism(X/A)\to R\Gamma_\Prism(X/A)\ ,
\]
using the d\'ecalage functor $L\eta_I$ as e.g.~in \cite{BMS1}. The map
\[
\tilde{\phi}: \phi_A^\ast R\Gamma_\Prism(X/A)\to L\eta_I R\Gamma_\Prism(X/A)
\]
is an isomorphism.
\end{enumerate}
\end{theorem}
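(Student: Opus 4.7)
The overarching strategy is to use quasisyntomic descent, reducing everything to the quasiregular semiperfectoid case governed by \cref{thm:Bq}, exploiting the fact that quasiregular semiperfectoid $A/I$-algebras form a basis for $X_\qsyn$. For any such $S$, \cref{thm:Bq}(1) gives a $(p,I)$-completely flat $\delta$-$A$-algebra $\Prism_S$, with a Nygaard filtration via the formula in the statement, and this assignment is functorial in $S$ by the universal property of \cref{thm:B}(1). Setting $\Prism_{-/A}(S) = \Prism_S$ on the basis and sheafifying yields the sheaf of part (1). For part (2), we use that $\Prism_S$ is the initial object of $(\Spf S/A)_\Prism$ by \cref{thm:B}(1), so $R\Gamma_\Prism(\Spf S/A) \simeq \Prism_S$ is concentrated in degree $0$; choosing a quasisyntomic hypercover $S^\bullet \to R$ by quasiregular semiperfectoid rings and invoking flat descent for prismatic cohomology yields
\[
R\Gamma_\Prism(X/A) \simeq \mathrm{Tot}(\Prism_{S^\bullet}) \simeq R\Gamma(X_\qsyn, \Prism_{-/A}),
\]
which also transports the Nygaard filtration on the Frobenius twist.

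For part (3), by quasisyntomic descent it suffices to identify $\mathrm{gr}^i_N \Prism_{-/A}^{(1)}$ as a sheaf on $X_\qsyn$. On a quasiregular semiperfectoid $S$ with chosen perfectoid cover $R \to S$, \cref{thm:Bq}(2) identifies this graded piece with $\mathrm{Fil}^{\conj}_i (\Prism_S/I)$, whose conjugate graded pieces are $(\wedge^k L_{S/R}[-k])^\wedge \{-k\}$ for $0 \le k \le i$. Since $L_{S/R}[-1]$ is $p$-completely flat on quasiregular semiperfectoids, these pieces are discrete $(p,I)$-completely flat modules, so $\mathrm{Fil}^{\conj}_i \Prism_S/I$ is a discrete submodule of $\Prism_S/I$ on the basis. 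Applying $R\Gamma(X_\qsyn,-)$ and using the Hodge-Tate comparison (\cref{thm:A}(2)) to identify $\mathcal{H}^k(\overline{\Prism}_{R/A})$ with $\Omega^k_{R/(A/I)}\{-k\}$, one verifies that the conjugate filtration, after globalizing, agrees with the canonical $t$-structure truncation on $\overline{\Prism}_{R/A}$. Tracking the Breuil-Kisin twist $\{i\}$ introduced by the Frobenius twist then yields $\mathrm{gr}^i_N R\Gamma_\Prism(X/A)^{(1)} \simeq \tau^{\le i} \overline{\Prism}_{R/A}\{i\}$.

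For part (4), the factorization $\tilde\phi$ exists because the Nygaard filtration is defined precisely so that $\phi$ carries $\mathrm{Fil}^j_N$ into $I^j R\Gamma_\Prism(X/A)$, and this is the hypothesis needed to lift $\phi$ to a map $\tilde\phi: \phi_A^\ast R\Gamma_\Prism(X/A) \to L\eta_I R\Gamma_\Prism(X/A)$ via the universal property of the d\'ecalage functor. To show $\tilde\phi$ is an isomorphism, one compares associated gradeds: the Nygaard gradeds of the source are $\tau^{\le i} \overline{\Prism}_{R/A}\{i\}$ by (3), while the natural filtration on $L\eta_I R\Gamma_\Prism(X/A)$ has graded pieces with the same description via the standard computation of $L\eta_I$ in terms of cohomology sheaves twisted by powers of $I$. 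The main obstacle is exactly the identification in (3) between the conjugate filtration (a local, valuewise construction on the quasisyntomic site) and the canonical truncation filtration on $\overline{\Prism}_{R/A}$ (a global, derived-categorical operation); this requires delicate control over $(p,I)$-complete flatness under derived tensor products, derived exterior powers, and truncations, together with careful tracking of Breuil-Kisin twists throughout the argument.
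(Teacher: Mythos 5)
Your proposal has a genuine gap rooted in the difference between the \emph{absolute} and \emph{relative} Nygaard theories. Theorem~\ref{thm:Bq} concerns the absolute prismatic ring $\Prism_S = \Prism_S^{\mathrm{init}}$ of a quasiregular semiperfectoid ring $S$ (a quotient of a perfectoid ring) and the absolute Nygaard filtration $\mathrm{Fil}^j_N \Prism_S = \{x : \phi(x) \in I^j \Prism_S\}$. But Theorem~\ref{thm:C} is stated over an \emph{arbitrary} bounded prism $(A,I)$, and the Nygaard filtration there lives on the Frobenius twist $\Prism_{-/A}^{(1)} = \Prism_{-/A}\widehat\otimes^L_{A,\phi} A$, cut out by the \emph{relative} Frobenius $\phi_{S/A}:\Prism_{S/A}^{(1)}\to \Prism_{S/A}$. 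When $(A,I)$ is perfect, $\phi_A$ is an isomorphism, so the Frobenius twist is harmless and $\Prism_{S/A}=\Prism_S$; in that special case your reduction to Theorem~\ref{thm:Bq} would be legitimate. But for a non-perfect $(A,I)$, the base $A/I$ need not be perfectoid, quasiregular semiperfectoid $A/I$-algebras (in the sense of Theorem~\ref{thm:B}) need not form a basis for $X_{\qsyn}$, and even when a given $S$ over $A/I$ happens to be quasiregular semiperfectoid, the absolute $\Prism_S$ it carries is not the same as $\Prism_{S/A}$ unless $A$ is the initial prism over $S$. So the central reduction of your argument does not apply in the stated generality.

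What the paper does instead (\S\ref{sec:RelativeNygaard}) is to introduce \emph{large quasisyntomic} $A/I$-algebras --- those admitting a surjection from $A/I\langle X_i^{1/p^\infty}\rangle$ --- which do form a basis for $X_{\qsyn}$ over an arbitrary bounded prism, and to prove the relative analogue of Theorem~\ref{thm:Bq} for these (Theorem~\ref{RelativeNygaard}): $\Prism_{S/A}$ is discrete, $(p,I)$-completely flat, initial in $(S/A)_\Prism$, and the graded pieces of the Nygaard filtration on $\Prism_{S/A}^{(1)}$ are identified with the conjugate filtration of $\overline{\Prism}_{S/A}$. Crucially, the proof of Theorem~\ref{RelativeNygaard}(2) itself reduces to the perfect-prism case of \S\ref{sec:Nygaard} by a chain of base changes: localize to make $I$ orientable, base change to the universal oriented prism, then to its $(p,I)$-completed perfection $A_\infty$, and descend using $(p,I)$-complete faithful flatness. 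This reduction chain --- not a direct appeal to Theorem~\ref{thm:Bq} --- is the missing ingredient in your proposal. Finally, for part (4), your appeal to "the universal property of the d\'ecalage functor" is too vague: the actual condition needed to lift $\phi$ through $L\eta_I$ is that the filtered complex $\mathrm{Fil}^\star_N R\Gamma_\Prism(X/A)^{(1)}$ be connective in the Beilinson $t$-structure (i.e., $\mathrm{gr}^i$ lies in $D^{\le i}$), which is exactly what the graded-piece identification of part (3) supplies, via \cite[Proposition 5.8]{BMS2}; the isomorphy of $\tilde\phi$ is then checked modulo $I$ and reduced (again via the same base-change chain, plus Construction~\ref{UnivOrientedCrystallize}) to the Cartier isomorphism, rather than by a direct comparison of associated gradeds.
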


Our second application is to the study of perfectoid rings. Generalizing Theorem~\ref{thm:B} (2), we prove that there is a ``perfectoidization'' for {\em any} finite algebra over a perfectoid ring, and this operation behaves like the perfection in characteristic $p$ in many ways. 

\begin{theorem}
\label{AlmostPurityIntro}
Let $R$ be a perfectoid ring and let $S$ be an integral $R$-algebra.
\begin{enumerate}
\item {\em Perfectoidizing integral algebras (Theorem~\ref{PerfdDiscrete}):} There is a perfectoid ring $S_\perfd$ with a map $S\to S_\perfd$ such that any map from $S$ to a perfectoid ring factors uniquely over $S_\perfd$.

\item {\em Almost purity (Theorem~\ref{GeneralAlmostPurity}):} Assume $R \to S$ is finite \'etale away from $V(J) \subset \mathrm{Spec}(R)$, where $J \subset R$ is a finitely generated ideal. Then $R \to S_\perfd$ is $J$-almost finite \'etale (see Remark~\ref{ZariskiClosedStrongly} for the terminology, and \S \ref{sec:APT} for a precise formulation).
\end{enumerate}
\end{theorem}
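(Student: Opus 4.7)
The plan for part (1) is to reduce to the semiperfectoid case covered by Theorem~\ref{thm:B}(2) via a colimit-and-quotient strategy. First observe that perfectoidization, if it exists in general, must commute with filtered colimits in the source (apply the universal property to any test perfectoid). Writing the integral $R$-algebra $S = \colim_\alpha S_\alpha$ as a filtered colimit of finite $R$-subalgebras, it suffices to construct $(S_\alpha)_\perfd$ for each finite $R$-algebra. Fix such an $S'$; choose $R$-module generators $s_1, \ldots, s_n \in S'$ and consider the induced surjection $R\langle Y_1,\ldots,Y_n\rangle \twoheadrightarrow S'$ sending $Y_i \mapsto s_i$. Form the derived pushout
\[ \widetilde{S'} := S' \widehat{\otimes}_{R\langle Y_1,\ldots,Y_n\rangle}^L R\langle Y_1^{1/p^\infty}, \ldots, Y_n^{1/p^\infty}\rangle, \]
a (derived) semiperfectoid $R$-algebra receiving $S'$. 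Apply Theorem~\ref{thm:B}(2) to produce $\widetilde{S'}_\perfd$ and set $(S')_\perfd := \widetilde{S'}_\perfd$. Any map from $S'$ to a perfectoid $P$ extends through $\widetilde{S'}$ by lifting $R\langle Y_i\rangle \to P$ to $R\langle Y_i^{1/p^\infty}\rangle \to P$ using the perfectoid property of $P$, and the resulting factorization canonicalizes after further composing with $\widetilde{S'} \to \widetilde{S'}_\perfd$ by the universal property in the semiperfectoid case.

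For part (2), I would combine the \'etale comparison theorem (Theorem~\ref{thm:A}(4)) with the $J$-almost formalism of Remark~\ref{ZariskiClosedStrongly} and Andr\'e's flatness lemma. The hypothesis that $R \to S$ is finite \'etale away from $V(J)$ gives finite \'etaleness of $R[\tfrac{1}{J}] \to S[\tfrac{1}{J}]$, and the universal property from (1) transports this to finite \'etaleness of $R[\tfrac{1}{J}] \to S_\perfd[\tfrac{1}{J}]$. The task is to integrate this generic statement into a $J$-almost finite \'etale statement. Using the key identity $J_\perfd \widehat{\otimes}^L_R J_\perfd \simeq J_\perfd$ highlighted in Remark~\ref{ZariskiClosedStrongly}, I would first establish $J$-almost faithful flatness of $R \to S_\perfd$ via prismatic descent combined with Andr\'e's lemma applied after a perfectoid cover, then combine with \'etaleness after inverting $J$ to conclude the full $J$-almost finite \'etaleness of $R \to S_\perfd$, in the spirit of Faltings' almost purity theorem.

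The main obstacle I anticipate is in part (1): proving that the derived perfectoidization $\widetilde{S'}_\perfd$ is concentrated in degree zero rather than merely an object of $D(R)$, and producing a genuine ring $(S')_\perfd$. For $\widetilde{S'}$ semiperfectoid in the classical (non-derived) sense, discreteness follows from Theorem~\ref{thm:B}(2), but the derived pushout construction above may well land outside this range since the map $R\langle Y_i\rangle \to S'$ need not be a surjection of $p$-completely flat $R$-algebras. Establishing discreteness likely forces one to use the integrality of $s_i$ over $R$ essentially, e.g., via the conjugate filtration of Theorem~\ref{thm:Bq}(1) to inductively control $L_{\widetilde{S'}/R}$ using the monic polynomials satisfied by the $s_i$, so that the higher Tor's in the derived pushout vanish $p$-completely.
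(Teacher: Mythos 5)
Your reduction of part (1) to the semiperfectoid case has a genuine gap in the universal property argument. Set $\widetilde{S'} := S' \widehat{\otimes}_{R\langle Y_1,\dots,Y_n\rangle} R\langle Y_1^{1/p^\infty},\dots,Y_n^{1/p^\infty}\rangle$ as you propose; note first that since $R\langle Y_i\rangle \to R\langle Y_i^{1/p^\infty}\rangle$ is $p$-completely flat, $\widetilde{S'}$ is a genuine (discrete) semiperfectoid ring, so Theorem~\ref{thm:B}~(2) applies directly and the ``discreteness'' worry you raise at the end is a red herring. The real problem is that $(\widetilde{S'})_{\perfd}$ does \emph{not} have the universal property for $S'$. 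Your argument for factorization claims that any map $S' \to P$ to a perfectoid ring extends to $\widetilde{S'} \to P$ ``by lifting $R\langle Y_i\rangle \to P$ to $R\langle Y_i^{1/p^\infty}\rangle \to P$ using the perfectoid property of $P$.'' But in a general perfectoid ring, the Frobenius is surjective on $P/p$ and not on $P$ itself, so an element of $P$ need not admit a compatible system of $p$-power roots; the lift need not exist. Even when a lift exists (e.g.\ if $P$ is absolutely integrally closed) it is not unique, and the induced map $(\widetilde{S'})_{\perfd} \to P$ depends on the chosen roots. Concretely, by \cref{PerfdSymmMon} one has
\[
(\widetilde{S'})_{\perfd} \;\simeq\; (S')_{\perfd} \widehat{\otimes}^{L}_{(R\langle Y_i\rangle)_{\perfd}} R\langle Y_i^{1/p^\infty}\rangle,
\]
and the base change $(R\langle Y_i\rangle)_{\perfd} \to R\langle Y_i^{1/p^\infty}\rangle$ is not an isomorphism in mixed characteristic (unlike characteristic $p$, where $(S)_{\perfd}$ is the direct-limit perfection), so the construction both depends on the choice of generators and overshoots $(S')_{\perfd}$.

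There is also a structural mismatch with the paper's argument that is worth flagging: the paper does not prove (1) first and then (2). Instead, it defines $S_\perfd$ intrinsically via derived prismatic cohomology and the perfect prismatic site (\cref{def:perfectoidization}, \cref{PerfdPerfectSite}), proves the $J$-almost version of (2) first (the first half of the proof of \cref{GeneralAlmostPurity}: reduce via $\arc$-excision \cref{PerfdBlowup} and Andr\'e's lemma to the Galois case, then to the split case), and only then deduces the discreteness statement of (1) in \cref{PerfdDiscrete} --- via a stratification of $\Spec(R)$ (\cref{StratifyFinite}), induction using the $J$-almost connectivity criterion \cref{ConnectivityCrit}, and the already-established $J$-almost finite \'etaleness. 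Once discreteness is known, the universal property is automatic by \cref{PerfdDiscreteUniversal}. Your sketch for part (2) is too vague to carry this weight: ``prismatic descent combined with Andr\'e's lemma'' is the right flavor, but the actual work is in the $\arc$-excision argument (\cref{PerfdBlowup}) and the reduction to the Galois/split case, neither of which appear in your outline, and which are precisely the tools that make the integral case tractable.
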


Theorem~\ref{AlmostPurityIntro} (2) reduces to the usual almost purity theorem for perfectoid algebras when $J = (p)$, and improves on the perfectoid Abhyankar lemma \cite{AndreAbhyankar} when $J = (g)$ for some $g \in R$; moreover, our proof avoids adic spaces (and thus perfectoid spaces), so we get a new proof of both results. As a corollary, we deduce that for a perfectoid ring $R$, the $\mathbf{F}_p$-\'etale cohomological dimension of $\mathrm{Spec}(R[1/p])$ is $\leq 1$ (Theorem~\ref{CohDim}).

The third application of our results on semiperfectoid rings is a resolution of the vanishing conjecture on the complexes $\mathbf Z_p(n)$ made in \cite[Conjecture 7.18]{BMS2}. By the main results of \cite{BMS2} and \cite{ClausenMathewMorrow}, these are related to $p$-adic algebraic $K$-theory.

\begin{theorem}[Theorem~\ref{OddVanishing}]
\label{thm:bhattvanishing} The quasisyntomic sheaves of complexes $\mathbf Z_p(n)$ of \cite{BMS2} are concentrated in degree $0$ and $p$-torsionfree. In particular, locally in the quasisyntomic topology of any quasisyntomic $\mathbf Z_p$-algebra, the mod $p$ algebraic $K$-theory $K(-)/p$ functor is concentrated in even degrees.
\end{theorem}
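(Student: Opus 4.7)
The plan is to reduce via quasisyntomic descent to the universal case of quasiregular semiperfectoid (QRSP) rings $S$, use Theorem~\ref{thm:Bq} to make $\mathbf{Z}_p(n)(S)$ into a concrete object concentrated in cohomological degree $0$, and then prove surjectivity (with $p$-torsion free kernel) of a Frobenius-difference map by successive approximation. Since $\mathbf{Z}_p(n)$ is constructed in \cite{BMS2} as a sheaf for the quasisyntomic topology, and since QRSP rings form a basis of that site, it suffices to prove the theorem for each QRSP ring $S$. For such an $S$, there is a defining fiber sequence
\begin{equation*}
\mathbf{Z}_p(n)(S) \longrightarrow \mathrm{Fil}^n_N \widehat{\Prism}_S\{n\} \xrightarrow{\ \mathrm{can} - \phi_n\ } \widehat{\Prism}_S\{n\},
\end{equation*}
where $\phi_n = \phi/d^n$ is the divided Frobenius, well-defined on $\mathrm{Fil}^n_N$ by the definition of the Nygaard filtration.

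By Theorem~\ref{thm:Bq}(1), the ring $\overline{\Prism}_S$ is $p$-completely flat over $S$ and hence lies in degree $0$; by Theorem~\ref{thm:Bq}(2), each Nygaard graded piece $\mathrm{gr}^i_N \widehat{\Prism}_S$ is identified with $\mathrm{Fil}^{\conj}_i \overline{\Prism}_S$ up to Breuil-Kisin twist, and so is also in degree $0$. Therefore both $\widehat{\Prism}_S$ and $\mathrm{Fil}^n_N \widehat{\Prism}_S$ are concentrated in cohomological degree $0$, and Theorem~\ref{thm:bhattvanishing} reduces to the purely module-theoretic statement that $\mathrm{can} - \phi_n$ is surjective with $p$-torsion free kernel. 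Using derived $p$-completeness of all terms, both parts follow once surjectivity is known modulo $p$.

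Proving mod-$p$ surjectivity of $\mathrm{can} - \phi_n$ is the main obstacle. The strategy is to solve the equation $x - \phi_n(x) = y$ for arbitrary $y \in \widehat{\Prism}_S/p$ by comparing both sides stratum by stratum via the Nygaard filtration. On the lower Nygaard strata $\mathrm{gr}^j_N \widehat{\Prism}_S/p$ for $j < n$ of the target, the inclusion $\mathrm{can}$ contributes nothing (its image lies in $\mathrm{Fil}^n_N$), so one needs $\phi_n$ to surject from $\mathrm{Fil}^n_N/p$ onto these lower graded pieces; this is handled using the Nygaard-to-conjugate identification of Theorem~\ref{thm:Bq}(2) together with the observation that modulo $p$ one has $d \in \mathrm{Fil}^1_N$ (because $\phi(d) \equiv d^p \pmod p$ lies in $I$). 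On the upper strata $j \geq n$, after the lower strata have been corrected, one iterates the rearranged equation $x = y + \phi_n(x)$; convergence in the Nygaard-adic topology is guaranteed by the completeness statement of Theorem~\ref{thm:Bq}(3). Piecing these together produces the required preimage $x \in \mathrm{Fil}^n_N \widehat{\Prism}_S/p$.

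The final assertion about mod-$p$ algebraic K-theory is a formal consequence of the motivic filtration of \cite{BMS2}: on QRSP rings, $K(-;\mathbf{Z}_p)$ carries a complete exhaustive filtration whose associated graded pieces are $\mathbf{Z}_p(i)[2i]$. Once each $\mathbf{Z}_p(i)$ is known to be concentrated in cohomological degree $0$, each shifted piece $\mathbf{Z}_p(i)[2i]$ is concentrated in homotopical degree $2i$, so $K(-)/p$ is concentrated in even homotopical degrees on QRSP rings, hence locally in the quasisyntomic topology of any quasisyntomic $\mathbf{Z}_p$-algebra.
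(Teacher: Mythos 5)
Your reduction to QRSP rings misinterprets what the theorem asserts and tries to prove something strictly stronger and false. The theorem is a statement about the \emph{sheaf} $\mathbf Z_p(n)$: for each quasiregular semiperfectoid $S$ and each class $\alpha\in H^1(\mathbf Z_p(n)(S))$ there must be a quasisyntomic cover $S\to S'$ killing $\alpha$. It is emphatically \emph{not} true that $\mathbf Z_p(n)(S)$ itself is concentrated in degree $0$ for every QRSP $S$, so the goal you set --- surjectivity of $\mathrm{can}-\phi_n$ mod $p$ on the nose --- is unachievable. Already for a perfectoid ring $R$ one has $\mathbf Z_p(n)(R)\simeq R\Gamma(\mathrm{Spec}(R[1/p]),\mathbf Z_p(n))$ (Theorem~\ref{TateTwistPerfd}), which visibly has nonzero $H^1$ in general; for instance for $n=0$ it computes $R\Gamma_{\et}(\mathrm{Spec}(R),\mathbf Z_p)$. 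The correct strategy must be local, and that is exactly where the real work lies.

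Your ``successive approximation'' sketch also has an internal gap precisely at the point where locality becomes necessary. The iteration $x\mapsto y+\phi_n(x)$ does converge on the \emph{high} strata of the Nygaard grading (this is how the actual argument handles degrees $> n$ in Lemma~\ref{PerfdSPerfdSurjTate}), but on the \emph{low} strata there is simply no reason for $\phi_n$ to be surjective: the claim ``$\phi_n$ surjects from $\mathrm{Fil}^n_N/p$ onto the lower graded pieces'' is where you would need to invoke something genuinely nontrivial. In the paper this step is supplied by (i) the identification of $\mathbf Z_p(n)$ on perfectoid rings with honest \'etale cohomology of the generic fibre (Theorem~\ref{TateTwistPerfd}), which reduces killing $H^1$ to Artin--Schreier (for $n=0$) and Kummer theory plus unique $p$-divisibility of $\mathrm{Pic}$ (for $n\ge 1$); (ii) Andr\'e's flatness lemma (Theorem~\ref{AndreFlatness}) to pass to a cover where those classes die; and (iii) an explicit computation (Lemma~\ref{PerfdSPerfdSurjTate}) showing that for the model QRSP ring $S=R\langle X_i^{1/p^\infty}\rangle/(X_i)$ the map $H^1(\mathbf Z_p(n)(R'))\to H^1(\mathbf Z_p(n)(S))$ from the perfectoid $R'=R\langle X_i^{1/p^\infty}\rangle$ is surjective, which pushes the problem to the perfectoid base. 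None of these inputs appears in your outline, and they are not replaceable by general completeness or degree-$0$ considerations.

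The formal parts of your proposal are fine and match the paper: reducing $p$-torsionfreeness to discreteness, using Theorem~\ref{thm:Bq} to place $\widehat{\Prism}_S$ and $\mathrm{Fil}^n_N\widehat{\Prism}_S$ in degree $0$ so that $\mathbf Z_p(n)(S)$ is concentrated in $[0,1]$, and deducing the $K$-theory consequence from the BMS2 motivic filtration. But the heart of the theorem --- the local vanishing of $H^1$ --- is not addressed by your sketch and cannot be addressed without the \'etale-comparison and flat-cover inputs described above.
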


\cref{thm:bhattvanishing} (or, rather, its proof) yields vanishing results in concrete situations. For instance, if $C/\mathbf{Q}_p$ is a complete and algebraically closed field, then $\pi_* K(\mathcal{O}_C/p^n;\mathbf{Z}_p)$ vanishes in odd degrees for any $n \geq 0$ (\cref{OddVanishingExamples}).

\subsection{Leitfaden of the paper}
We begin with the theory of $\delta$-rings in \S \ref{sec:DeltaRings}. Having established enough language, we introduce prisms in \S \ref{sec:Prisms} and the prismatic site in \S \ref{sec:PrismaticSite}. The crystalline and Hodge-Tate comparison results in characteristic $p$ are proven next in \S \ref{ss:HTProof}. From this, we deduce the Hodge-Tate comparison in general in \S \ref{sec:HTCompGeneral}, which implies the compatibility with base change, and the de~Rham comparison (under a small technical hypothesis). 

We then proceed in \S \ref{sec:SemiPerfd} to study semiperfectoid rings $S$ via derived prismatic cohomology (i.e., using simplicial resolution by smooth algebras) to prove Theorem~\ref{thm:B}. These results are then applied in \S \ref{generalperfoidization} --- \S \ref{sec:EtaleCD}  to studying the ``generic fibre''. In particular, we prove the \'etale comparison theorem for prismatic cohomology in \S \ref{sec:EtaleComp}, and \cref{AlmostPurityIntro} in \S \ref{sec:APT}. 

In \S \ref{sec:Nygaard}, we analyze the Nygaard filtration explicitly for quasiregular semiperfectoid rings. This analysis is used to prove the comparison with \cite{BMS2} in \S \ref{sec:BMS2Comp} and \cref{thm:bhattvanishing} in \S \ref{sec:OddVanishing}. Descending back to the smooth case, we deduce Theorem~\ref{thm:C} in \S \ref{sec:RelativeNygaard}, which also implies the results on the image of $\phi$ and the general version of the de~Rham comparison in \cref{thm:A}.

Finally, in \S \ref{sec:qcrys}, we introduce $q$-crystalline cohomology; this theory is computed by $q$-de Rham complexes in the presence of co-ordinates, which resolves some conjectures from \cite{ScholzeqdeRham}. This theory is also identified with prismatic cohomology in certain situations, which leads to concrete representatives computing prismatic cohomology. These concrete representatives are used in \S \ref{sec:AOmegaComp} to relate prismatic cohomology to the $A\Omega$-complexes from \cite{BMS1}.  We end in \S \ref{sec:UniqueComp} by proving a strong uniqueness result for comparison isomorphisms (so any diagram chase involving such comparison isomorphisms necessarily commutes).

\subsection{Notation} 
\label{NotationGlobal}
Given an abelian group $M$ equipped with a set of commuting endomorphisms $\{f_i: M\to M\}_{i \in I}$, we may regard $M$ as a module over the polynomial algebra $\mathbf Z[f_i|i\in I]$ by letting the variable $f_i$ act as the endomorphism $f_i$. If we are further given a total ordering of $I$, we define the (homological) Koszul complex
\begin{equation}
\mathrm{Kos}(M;(f_i)_{i\in I}) = \ldots \to \bigoplus_{i<j} M\to \bigoplus_{i\in I} M\xrightarrow{(f_i)_{i\in I}} M\to 0
\end{equation}
with usual differentials (see, e.g., \cite[Tag 0621]{Stacks}) which represents $M\otimes_{\mathbf Z[f_i|i\in I]}^L \mathbf Z$ in the derived category, and dually the cohomological Koszul complex
\begin{equation}
\mathrm{Kos}_c(M;(f_i)_{i\in I}) = 0\to M\xrightarrow{(f_i)_{i\in I}} \prod_{i\in I} M\to \prod_{i<j} M\to\ldots
\end{equation}
representing $R\Hom_{\mathbf Z[f_i|i\in I]} (\mathbf Z,M)$ in the derived category. Note that if $I$ is finite, then these two complexes are isomorphic up to shift.

We will often take various completions. These are taken in the derived sense unless otherwise specified. We refer to \cite[Tag 091N]{Stacks} for the following assertions about derived completion; other relevant references are \cite[\S 3.4, 3.5]{BSProetale} as well as \cite[\S 7, \S 8, and Appendix D]{LurieSAG} (especially for the $\infty$-categorical aspects). Recall that if $A$ is a ring with a finitely generated ideal $I\subset A$, then a complex $M$ of $A$-modules is derived $I$-adically complete (often abbreviated to derived $I$-complete) if the natural map
\[
M\to \widehat{M} := R\lim_n \mathrm{Kos}(M;f_1^n,\ldots,f_r^n)
\]
is an isomorphism, where $f_1,\ldots,f_r\in I$ are generators of $I$; the object $\widehat{M} \in D(A)$ and the map $M \to \widehat{M}$ (and thus the condition of derived $I$-completeness) are independent of the choice of generators of $I$.\footnote{Beware that in general the completion of $M$ is not given by $R\lim_n M\otimes^L_A A/I^n$; however, this happens if $I$ is generated by a regular sequence, or if $A$ is noetherian (by Artin-Rees).} We shall write $D_{I-comp}(A)$ for the full subcategory of $D(A)$ spanned by $I$-complete complexes; if the ideal $I$ is clear from context, we shall simply write $D_{comp}(A)$ instead. 

A complex $M$ of $A$-modules is derived $I$-complete if and only if each $H^i(M)$ is derived $I$-complete. In particular, the category of derived $I$-complete $A$-modules is an abelian category stable under passage to kernels, cokernels, images and extensions in the category of all $A$-modules. Any classically $I$-adically complete $A$-module $M$ is derived $I$-complete, and conversely if $M$ is derived $I$-complete and $I$-adically separated, then $M$ is classically $I$-adically complete. In general, for an $A$-module $M$ it can happen that its derived $I$-completion $\widehat{M}$ is not concentrated in degree zero. However, if $I$ is principal and $M$ has bounded $I^\infty$-torsion, i.e.~there is some integer $n$ such that $M[I^\infty]=M[I^n]$, then $\widehat{M}$ is concentrated in degree zero and agrees with the classical $I$-adic completion $\lim_n M/I^n M$ of $M$. More generally, for $I=(f_1,...,f_r)$ and an $A$-module $M$, the derived $I$-completion of $M$ and the classical $I$-adic completion of $M$ agree if the projective system $\{H_i(\mathrm{Kos}(M;f_1^n,\ldots,f_r^n)\}_{n \geq 1}$ is pro-zero for $i > 0$. In such cases, we shall simply write ``$I$-adic completion'' for either the classical or the derived completion of $M$. 

A complex $M$ of $A$-modules is $I$-completely flat if for every $I$-torsion $A$-module $N$, the derived tensor product $M\otimes^L_A N$ is concentrated in degree $0$. This implies in particular that $M\otimes^L_A A/I$ is concentrated in degree $0$ and is a flat $A/I$-module. Moreover, if $M$ is $I$-completely flat, then it is $J$-completely flat for any ideal $J$ that contains $I^n$ for some $n$. Note that if $M$ is a flat $A$-module, then the derived $I$-completion $\widehat{M}$ is still $I$-completely flat.  More generally, we say that $M$ has finite $I$-complete Tor amplitude if $M \otimes_A^L A/I$ has finite Tor amplitude in $D(A/I)$; this is equivalent to requiring that there exist a constant $c \geq 0$ such that $M \otimes_A^L -$ carries $I$-power torsion $A$-modules to $D^{[-c,c]}$ (with $c=0$ corresponding to $I$-complete flatness).

A complex $M$ of $A$-modules is $I$-completely faithfully flat if it is $I$-completely flat and $M\otimes^L_A A/I$ (which is automatically a flat $A/I$-module concentrated in degree zero by $I$-complete flatness) is a faithfully flat $A/I$-module. 

A derived $I$-complete $A$-algebra $R$ is called $I$-completely \'etale (resp.~$I$-completely smooth, $I$-completely ind-smooth) if $R\otimes^L_A A/I$ is \'etale (resp.~smooth, ind-smooth) over $A/I$, i.e., that $R\otimes^L_A A/I$ is concentrated in degree $0$, where it is given by an \'etale (resp.~smooth, ind-smooth) $A/I$-algebra.  We note that by Elkik's algebraization results\footnote{As Elkik's exposition assumes noetherianness, we sketch a direct explanation of the relevant algebraization result via derived deformation theory. Given a commutative ring $A$ with finitely generated ideal $I \subset A$, we shall show that any $I$-completely smooth and $I$-complete simplicial commutative $A$-algebra $R$ is the derived $I$-completion of a smooth $A$-algebra. Using \cite[Tag 07M8]{Stacks}, choose a smooth $A$-algebra $R'$ lifting the smooth $A/I$-algebra $R \otimes_A^L A/I$. We shall show that the derived $I$-completion $S$ of $R'$ is isomorphic to $R$. Choose $f_1,...,f_r \in I$ generators, and write $A_n = \mathrm{Kos}(A;f_1^n,....,f_r^n)$, so $\widehat{A} \simeq \lim_n A_n$ is the derived $I$-completion of $A$ and $A/I = \pi_0(A_1)$. Each $A_n$ is a simplicial commutative ring, and (by reduction to $A=\mathbf{Z}[f_1,...,f_r]$) each map $A_{n+1} \to A_n$ is also a finite composition of square-zero extension where the ``ideals'' of the extension lie in $D^{b,\leq 0}$ at each step. Moreover, via the canonical filtration,  the map $A_1 \to \pi_0(A_1) = A/I$ is  a finite composition of square-zero extensions where the ``ideals'' of the extension lie in $D^{b,< 0}$ at each step. The $A$-algebras $R$ and $S$ are isomorphic and smooth after derived base change to $A/I$ by construction. By derived deformation theory, any such isomorphism can then be lifted successively to yield a compatible system $\{S \otimes_A^L A_n \simeq R \otimes_A^L A_n\}_{n \geq 1}$ of isomorphisms of smooth algebras over $\{A_n\}_{n \geq 1}$. Indeed, the obstruction to lifting across each square-zero extension encountered lies in a group of the form $\mathrm{Ext}^{1}_B(P,M)$, where $B$ is a simplicial commutative ring, $P$ is a finite projective $B$-module (i.e.,  $P \in D(B)$ with $P \otimes_B^L \pi_0(B)$ being finite projective over $\pi_0(B)$) and $M \in D^{b,\leq 0}(B)$; filtering $M$ by the canonical filtration and using the projectivity of $P$ shows that these groups are $0$, so the obstruction vanishes. Taking limits over $n$ then gives an isomorphism $S \simeq R$, as wanted. Note that the derivedness in this proof can be suppressed if $A$ is classically $I$-complete by simply working with the tower $\{A/I^n\}_{n \geq 1}$.}, $R$ is $I$-completely \'etale (resp.~smooth) if and only if it is the derived $I$-completion of an \'etale (resp.~smooth) $A$-algebra, which is the condition that we have used in \cite{BMS1}, \cite{BMS2}.

We will sometimes use simplicial commutative rings, e.g., to resolve arbitrary algebras by ind-smooth algebras as in the definition of the cotangent complex. (All our rings, including the simplicial ones, are assumed to be commutative.) Following topological terminology, we say that a simplicial ring $A$ is \emph{discrete} if it is concentrated in degree $0$, i.e.~$\pi_i A=0$ for $i>0$; in other words, a discrete simplicial ring is equivalent to the usual ring $\pi_0 A$. Occasionally, we use the same terminology more generally for any object of the derived category: A complex $M$ is {\em discrete} if $H^i(M)=0$ for $i\neq 0$, in which case $M$ is isomorphic to the module $H^0(M)$; likewise, a complex $M$ is {\em connective} (resp. {\em coconnective}) if $H^i(M) = 0$ for $i > 0$ (resp. $i < 0$). Given a simplicial commutative ring $A$ and $M \in D(A)$, recall that one calls $M$ {\em (faithfully) flat} if $M \otimes_A^L \pi_0(A) \in D(\pi_0(A))$ is (faithfully) flat in the usual sense; similarly, one can define the notion of {\em $I$-complete (faithful) flatness} given a finitely generated ideal $I \subset \pi_0(A)$.

Finally, we occasionally use $\infty$-categorical techniques (especially when constructing and using derived prismatic cohomology, and when talking about descent in the derived category). We shall follow the following standard convention in these situations: given a commutative (or $E_\infty$-) ring $A$, we write $\mathcal{D}(A)$ for its derived $\infty$-category. Similarly, given a finitely generated ideal $I \subset \pi_0(A)$, we write $\mathcal{D}_{I-comp}(A)$ (or simply $\mathcal{D}_{comp}(A)$ if $I$ is clear) for the full $\infty$-subcategory of $\mathcal{D}(A)$ spanned by $I$-complete objects. The following variant of faithfully flat descent will be used often: for a commutative ring $A$ with a finitely generated ideal $I \subset A$, the functor $\mathcal{D}_{I-comp}(-)$ on simplicial commutative $A$-algebras is a sheaf for the topology defined by $I$-completely faithfully flat maps\footnote{To see this, fix generators $f_1,...,f_r \in I$. Given a map $B \to C$ of simplicial commutative $A$-algebras with derived Cech nerve $C^*$, one has $\mathcal{D}_{I-comp}(B) \simeq \lim_n \mathcal{D}(\mathrm{Kos}(B;f_1^n,...,f_r^n))$ and similarly for $C^*$; this is proven for the bounded above variant in \cite[Theorem 8.3.4.4]{LurieSAG} and asserted for the full derived $\infty$-categories in \cite[Remark 8.3.4.5]{LurieSAG}; the latter can be proven by the same method as \cite[Lemma 3.5.5]{BSProetale}. Thus, the map $\mathcal{D}_{I-comp}(B) \to \lim \mathcal{D}_{I-comp}(C^*)$ is the inverse limit over $n$ of the maps $\mathcal{D}(\mathrm{Kos}(B;f_1^n,...,f_r^n)) \to \lim \mathcal{D}(\mathrm{Kos}(C^*;f_1^n,...,f_r^n))$; the latter maps are all equivalences by faithfully flat descent for simplicial commutative rings (\cite[Corollary D.6.3.3]{LurieSAG}), so the inverse limit is also an equivalence.}.

\subsection*{Acknowledgements}
We are grateful to Johannes Ansch\"utz, Ben Antieau, Sasha Beilinson, K\k{e}stutis \v{C}esnavi\v{c}ius, Johan de Jong, Vladimir Drinfeld,  H\'el\`ene Esnault, Ofer Gabber, Lars Hesselholt, Mel Hochster, Luc Illusie, Arthur-C\'esar Le Bras, Judith Ludwig, Jacob Lurie, Akhil Mathew, Matthew Morrow, Emanuel Reinecke, Martin Speirs and Nils Wa\ss muth for useful conversations. We thanks K\k{e}stutis \v{C}esnavi\v{c}ius, Lars Hesselholt, and especially Jacob Lurie, for their comments on a draft version of this paper. We are also grateful to Yves Andr\'e, Ko Aoki, Toby Gee, Kiran Kedlaya, Mark Kisin, Dmitry Kubrak, Bernard Le Stum, Zhouhang Mao, Akhil Mathew, Matthew Morrow, Arthur Ogus and the anonymous referees for comments on the initial versions of this paper; thanks especially for Teruhisa Koshikawa for pointing out an error (and a fix) in construction of \v{C}ech-Alexander complexes for prismatic cohomology (now incorporated in Construction~\ref{PrismaticFunctorialCC}).

Work on this project was carried out at a number of institutions, including the University of Bonn, MPIM, University of Michigan, Columbia University and MSRI (supported by the NSF grant \#1440140);  we thank these institutions for their hospitality. In the duration of this project, the first author was partially supported by the NSF grants  \#1501461 and \#1801689, a Packard fellowship and the Simons foundation grant \#622511, while the second author was supported by a DFG Leibniz Grant and the Hausdorff Center for Mathematics (GZ 2047/1, Projekt-ID 390685813).

\newpage

\section{$\delta$-rings}
\label{sec:DeltaRings}

Fix a prime $p$. In this section, we discuss the theory of $\delta$-rings. In \S \ref{ss:DeltaDef} and \S \ref{ss:ExtendDelta}, which are essentially review, we discuss the definitions and basic properties of $\delta$-rings \cite{JoyalDelta}; a good reference for this material is \cite{BorgerCourse}. In \S \ref{ss:DistElt}, we introduce distinguished elements, which are essential to defining prisms. Basic properties of perfect $\delta$-rings are then the subject of \S \ref{ss:PerfDelta}. In \S \ref{ss:DeltaPD}, we establish a relationship between divided power envelopes and $\delta$-structures; this relationship plays an important role in many subsequent computations in this paper, thanks in large part to the nice commutative algebra properties of divided power envelopes of regular sequences, which are reviewed in \S \ref{ss:PDEnvelopeFlat}.

All our rings are $\mathbf{Z}_{(p)}$-algebras. We write $\rad(A)$ for the Jacobson radical of a ring $A$. In the following, note that the expression
\[
\frac {x^p+y^p-(x+y)^p}p\in \mathbf Z[x,y]
\]
can be evaluated in any ring.

\subsection{Definition and basic properties}
\label{ss:DeltaDef}

\begin{definition}\label{def:deltaring}
A {\em $\delta$-ring} is a pair $(R,\delta)$ where $R$ is a commutative ring and $\delta:R \to R$ is a map of sets with $\delta(0) = \delta(1) = 0$, satisfying the following two identities
\[ \delta(xy) = x^p \delta(y) + y^p \delta(x) + p \delta(x) \delta(y) \quad \text{and} \quad \delta(x+y) = \delta(x) + \delta(y) + \frac {x^p+y^p-(x+y)^p}p.\]
There is an evident category of $\delta$-rings. (In the literature, a $\delta$-structure is often called a {\em $p$-derivation}.)
\end{definition}

\begin{remark}[$\delta$-structures give Frobenius lifts]
\label{FrobLiftTheta}
Given a $\delta$-ring $(R,\delta)$, we write $\phi:R \to R$ for the map defined by $\phi(x) = x^p + p \delta(x)$; the identities on $\delta$ ensure that this is a ring homomorphism that lifts Frobenius on $R/p$. In fact, the identities on $\delta$ are reverse engineered from this requirement: if $R$ is a $p$-torsionfree ring, then any lift $\phi:R \to R$ of the Frobenius on $R/p$ comes from a unique $\delta$-structure on $R$, given by the formula $\delta(x) = \frac{\phi(x) - x^p}{p}$.  Note that as the condition of being a lift of Frobenius is vacuous if $p$ is invertible, a $\delta$-ring over $\mathbf{Q}$ is the same thing as a $\mathbf{Q}$-algebra with an endomorphism. In general, given a $\delta$-ring $R$, we shall write $\phi:R \to R$ for its Frobenius endomorphism; if there is potential for confusion, we shall denote this map by $\phi_R$ instead.
\end{remark}

\begin{remark}[$\delta$-structures and $\lambda$-structures] By a theorem of Wilkerson \cite[Proposition 1.2]{Wilkerson}, giving a $\lambda$-ring structure on a flat $\mathbf Z$-algebra $R$ is equivalent to specifying commuting Frobenius lifts $\phi_p$ on $R$ for all primes $p$. Borger has extended this in \cite{BorgerWitt1} in multiple ways (including allowing all $\mathbf{Z}$-algebras). In particular, it follows from his work that a $\delta$-structure on a ring $R$ is the same as a $p$-typical $\lambda$-structure. This motivates the following terminology: an element $x$ in a $\delta$-ring $B$ has {\em rank $1$} if $\delta(x) = 0$; such elements satisfy $\phi(x) = x^p$ (and the converse holds true if $B$ is $p$-torsionfree).
\end{remark}

\begin{remark}[$\delta$-rings via $W_2(-)$, following Rezk \cite{RezkLambda}]
\label{ThetaW2}
For any ring $R$, the ring $W_2(R)$ of $p$-typical length $2$ Witt vectors is defined as follows: we have $W_2(R) = R \times R$ as sets, and addition and multiplication are defined via
\[ (x,y) + (x',y') := (x+x',  y+y' + \frac{x^p + (x')^p - (x+x')^p}{p}) \quad \text{and} \quad (x,y) \cdot (x',y') = (xx', x^py' + x'^py + pyy').\]
Ignoring the second component gives a ring homomorphism $\epsilon:W_2(R) \to R$. It is immediate from the definitions that specifying a $\delta$-structure on $R$ is the same as specifying a ring map $w:R \to W_2(R)$ such that $\epsilon \circ w = \mathrm{id}$: the correspondence attaches the map $w(x) = (x,\delta(x))$ to a $\delta$-structure $\delta:R \to R$ on $R$.
\end{remark}

\begin{remark}[Derived Frobenius lifts give $\delta$-structures]
\label{DerivedFrobLift}
Let $R$ be any $\mathbf{Z}_{(p)}$-algebra. Then specifying a $\delta$-structure on $R$ is the same as specifying a map $\phi:R \to R$ and a path in the space $\mathrm{End}_{SCR_{\mathbf{F}_p}}(R \otimes_{\mathbf{Z}}^L \mathbf{F}_p)$, between the points defined by $\phi$ and the Frobenius; here $SCR_{\mathbf F_p}$ denotes the $\infty$-category of simplicial commutative $\mathbf F_p$-algebras. In other words, giving a $\delta$-structure on $R$ is the same as specifying a lift of Frobenius in the derived sense. To prove this, first note that for a $p$-torsionfree ring $R$, the square
\[\xymatrix{
W_2(R)\ar[r]^F \ar[d] & R\ar[d]\\
R\ar[r]^{\tilde{\phi}} & R/p
}\]
is a pullback square of rings, where the right map i the canonical projection, the lower map is the composite of the projection $R\to R/p$ with the Frobenius of $R/p$, and the top map is the Witt vector Frobenius (defined on Witt co-ordinates by $F(x,y) = x^p + py$). Passing to simplicial resolutions, this implies that for any simplicial ring, there is a functorial pullback square
\[\xymatrix{
W_2(R)\ar[r]^F \ar[d] & R\ar[d]\\
R\ar[r]^{\tilde{\phi}} & R\otimes^L_{\mathbf Z} \mathbf F_p
}\]
of simplicial rings. Using this pullback square, \cref{ThetaW2} translates into the desired description of $\delta$-rings.
\end{remark}

\begin{example}[The initial $\delta$-ring]
\label{InitialDeltaRing}
The identity map on the $p$-torsionfree ring $\mathbf{Z}_{(p)}$ is its unique endomorphism and lifts the Frobenius modulo $p$, so $\mathbf{Z}_{(p)}$ carries a unique $\delta$-structure given by $\delta(x) = \frac{x-x^p}{p}$. In fact, this is the initial object in the category of $\delta$-rings (as we work with $\mathbf{Z}_{(p)}$-algebras). One checks that $\delta$ on $\mathbf{Z}_{(p)}$ lowers the $p$-adic valuation by $1$ for non-units. In particular, $\delta^n(p^n)$ is a unit for all $n$. As $\mathbf{Z}_{(p)}$ is the initial $\delta$-ring, it follows that there is no nonzero $\delta$-ring where $p^n = 0$ for some $n \geq 0$.
\end{example}

\begin{remark}[Limits and colimits of $\delta$-rings, and Witt vectors]
\label{ThetaWitt}
One can show that the category of $\delta$-rings admits all limits and colimits, and these are computed at the level of underlying rings: this is easy for limits, and for colimits it follows via the characterization in \cref{ThetaW2} as there is a natural map $\colim_i W_2(-) \to W_2(\colim_i -)$ of functors. It follows by general nonsense that the forgetful functor from $\delta$-rings to rings has both left and right adjoints, since it commutes with both colimits and limits. The left adjoint provides one with a notion of ``free objects'' and is studied in \cref{freelambdaring} below. The right adjoint is given by the (always $p$-typical) Witt vector functor $W(-)$ by a result of Joyal \cite{JoyalDelta}. Thus, for each $\delta$-ring $R$, we have a natural map $R \xrightarrow{w_R} W(R)$ of $\delta$-rings by adjunction; the first two components of this map (in standard Witt co-ordinates) are given by $x \mapsto (x,\delta(x))$, as in \cref{ThetaW2}.
\end{remark}

\begin{notation}
We shall use the symbols $\{ \}$ and $()_\delta$ to denote the adjoining and killing of elements in the theory of $\delta$-rings. Thus, $\mathbf{Z}_{(p)}\{x\}$ is the free $\delta$-ring on one generator $x$, $\mathbf{Z}_{(p)}\{x,y\}/(f)_\delta$ is defined by a pushout square 
\[ \xymatrix{ \mathbf{Z}_{(p)}\{t\} \ar[r]^-{t \mapsto f} \ar[d]^-{t \mapsto 0} & \mathbf{Z}_{(p)}\{x,y\} \ar[d] \\
		   \mathbf{Z}_{(p)} \ar[r] & \mathbf{Z}_{(p)}\{x,y\}/(f)_\delta,}\]
etcetera. 
\end{notation}

Regarding the quotients that one takes here, we note the following lemma.

\begin{lemma}[Quotients]
Let $A$ be a $\delta$-ring. Let $I \subset A$ be an ideal. Then $I$ is stable under $\delta$ if and only if there exists a (necessarily unique) $\delta$-structure on $A/I$ compatible with the one on $A$. 
\end{lemma}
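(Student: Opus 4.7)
The plan is to prove both directions by unwinding what ``compatible'' means: a $\delta$-structure on $A/I$ compatible with that on $A$ is precisely one making the projection $\pi: A \to A/I$ a morphism of $\delta$-rings, i.e., satisfying $\delta_{A/I} \circ \pi = \pi \circ \delta_A$. Uniqueness then follows at once from surjectivity of $\pi$.

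For the forward direction, I would argue: if such a $\delta_{A/I}$ exists, then for any $x \in I$ we have $\pi(\delta_A(x)) = \delta_{A/I}(\pi(x)) = \delta_{A/I}(0) = 0$, so $\delta_A(x) \in I$, proving $\delta$-stability of $I$.

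For the backward direction, assuming $\delta_A(I) \subset I$, the only sensible definition is $\delta_{A/I}(x + I) := \delta_A(x) + I$; the work is to show this is well-defined, after which the $\delta$-identities on $A/I$ follow immediately by applying $\pi$ to the corresponding identities in $A$. For well-definedness, I take $x, x' \in A$ with $y := x - x' \in I$ and use the additivity axiom of $\delta_A$ to write
\[
\delta_A(x) - \delta_A(x') \;=\; \delta_A(y) \;+\; \frac{y^p + (x')^p - (y + x')^p}{p}.
\]
The first summand lies in $I$ by hypothesis. For the second, expanding the binomial gives
\[
\frac{y^p + (x')^p - (y + x')^p}{p} \;=\; -\sum_{k=1}^{p-1} \frac{1}{p}\binom{p}{k} y^k (x')^{p-k},
\]
where each $\tfrac{1}{p}\binom{p}{k}$ is an integer and every term contains $y^k$ with $k \geq 1$; thus this sum lies in $I$ as well. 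Hence $\delta_A(x) \equiv \delta_A(x') \pmod I$, so $\delta_{A/I}$ is well-defined.

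The main (and essentially the only) obstacle is this well-definedness check, which reduces to the elementary observation that the ``universal correction polynomial'' $\frac{X^p + Y^p - (X+Y)^p}{p} \in \mathbf{Z}[X, Y]$ lies in the ideal $(X, Y)$ of $\mathbf{Z}[X,Y]$ (and indeed in $(XY)$), so substituting $X \in I$ keeps the result in $I$. Everything else is a formal consequence.
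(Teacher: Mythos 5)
Your proof is correct and follows the same route as the paper: the direction from the existence of a compatible $\delta$-structure to $\delta$-stability of $I$ is immediate, and the converse reduces to checking that $\delta_A(x) \equiv \delta_A(x')\pmod I$ when $x-x'\in I$, which you establish via the additivity axiom exactly as the paper does (the paper simply omits the explicit expansion of the correction polynomial that you supply). One small cosmetic point: your ``forward''/``backward'' labels are swapped relative to the usual convention for an ``if and only if,'' but this does not affect the argument.
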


\begin{proof}
The ``if'' direction is clear. For the ``only if'' direction, we must show that if $a \in A$ and $f \in I$, then $\delta(a) \equiv \delta(a+f) \mod I$; but this follows immediately from the additivity formula.
\end{proof}

\begin{example}[Quotients in $\delta$-rings]
Let $A$ be a $\delta$-ring, and let $I \subset A$ be an ideal. Then the universal $\delta$-$A$-algebra $B$ with $IB = 0$ is given by $B = A/J$, where $J$ is the $\delta$-stabilization of $I$, i.e., the ideal generated by $\cup_n \delta^n(I)$. 
\end{example}

\begin{lemma}[Free $\delta$-rings]
\label{freelambdaring}
The ring $\mathbf{Z}_{(p)}\{x\}$ is a polynomial ring on the set $\{x,\delta(x),\delta^2(x),...\}$ and its Frobenius endomorphism is faithfully flat. The ring $\mathbf{Q}\{x\} = \mathbf{Z}_{(p)}\{x\}[\frac{1}{p}]$ is also a polynomial ring on the set $\{x,\phi(x),\phi^2(x),...\}$. 
\end{lemma}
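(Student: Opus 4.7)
The plan is to realize $\mathbf{Z}_{(p)}\{x\}$ as an explicit polynomial ring and then analyze its Frobenius via a graded argument. First, set $R := \mathbf{Z}_{(p)}[x_0, x_1, x_2, \ldots]$. Because $R$ is $p$-torsionfree, \cref{FrobLiftTheta} identifies $\delta$-structures on $R$ with Frobenius lifts; declare $\phi(x_i) := x_i^p + p\, x_{i+1}$, which satisfies $\phi(x_i) \equiv x_i^p \pmod p$ on generators and therefore is a Frobenius lift. The associated $\delta$-structure gives $\delta(x_i) = (\phi(x_i) - x_i^p)/p = x_{i+1}$, so $x_i = \delta^i(x_0)$ and the $\{\delta^i(x_0)\}_{i \geq 0}$ are algebraically independent over $\mathbf{Z}_{(p)}$ by construction. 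For the universal property: given any $\delta$-ring $(S,\delta_S)$ and $s \in S$, the $\mathbf{Z}_{(p)}$-algebra map $f: R \to S$ with $f(x_i) = \delta_S^i(s)$ commutes with $\delta$ on all of $R$ because the subset $\{a \in R : f(\delta(a)) = \delta_S(f(a))\}$ is a subring (the sum and product rules for $\delta$ involve only the ring structure and the universal integer polynomial $\frac{a^p + b^p - (a+b)^p}{p}$) containing the generators.

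For $\mathbf{Q}\{x\} = R[\tfrac{1}{p}]$, set $u_i := \phi^i(x_0)$. An induction on $i$ using $\phi(x_i) = x_i^p + p\,x_{i+1}$ shows $u_i = p^i x_i + P_i$ for some $P_i \in \mathbf{Z}_{(p)}[x_0, \ldots, x_{i-1}]$. Inverting $p$ and solving this triangular system recursively expresses each $x_i$ as a polynomial in $u_0, \ldots, u_i$, so the $\mathbf{Q}$-algebra map $\mathbf{Q}[t_0, t_1, \ldots] \to \mathbf{Q}\{x\}$ sending $t_i \mapsto u_i$ admits an explicit two-sided inverse and is thus an isomorphism. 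In particular, $\phi$ is injective on $R$ (on $R[\tfrac{1}{p}] = \mathbf{Q}[u_0, u_1, \ldots]$ it acts as the shift $u_i \mapsto u_{i+1}$), so $\phi(R) \subset R$ is itself a polynomial ring on $\{\phi(x_i)\}_{i \geq 0}$.

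For faithful flatness of $\phi: R \to R$, grade $R$ by $\deg(x_i) := p^i$. Then $\phi(x_i)$ is homogeneous of degree $p^{i+1}$, so $\phi(R)$ is a graded subring. For each $d \geq 0$, write $d = \sum_i c_i(d)\, p^i$ in base $p$ and set $b_d := \prod_i x_i^{c_i(d)}$, which has degree $d$. Consider the graded $\phi(R)$-linear map
\[ \alpha: \bigoplus_{d \geq 0} \phi(R) \cdot [b_d] \longrightarrow R, \qquad [b_d] \longmapsto b_d. \]
A Hilbert series computation shows both sides equal $\prod_{i \geq 0} (1 - t^{p^i})^{-1}$: the source is $\bigl(\sum_{d \geq 0} t^d\bigr) \cdot \prod_{i \geq 0} (1 - t^{p^{i+1}})^{-1} = (1-t)^{-1} \cdot \prod_{j \geq 1} (1 - t^{p^j})^{-1}$, which matches the target after splitting off the $j=0$ factor. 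Reducing modulo $p$, $\alpha$ becomes the standard decomposition $\bigoplus_c \mathbf{F}_p[x_i^p] \cdot x^c \xrightarrow{\cong} \mathbf{F}_p[x_i]$ (where $c$ ranges over finite tuples with $0 \leq c_i < p$), using $\phi(x_i) \equiv x_i^p \pmod p$. Thus in each graded degree $\alpha$ is a map of finitely generated free $\mathbf{Z}_{(p)}$-modules of the same rank whose mod-$p$ reduction is an isomorphism, and the determinantal criterion forces $\alpha$ itself to be an isomorphism in that degree. Hence $R$ is a free $\phi(R)$-module (with basis $\{b_d\}_{d \geq 0}$ containing $1 = b_0$), so $\phi$ is faithfully flat. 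The main technical obstacle is precisely this last step: $R$ is not $p$-adically complete, so flatness of $\phi$ cannot be obtained directly from its mod-$p$ behaviour by completion-based arguments; the grading by powers of $p$ lets us reduce degree by degree to finitely generated free $\mathbf{Z}_{(p)}$-modules, where the determinantal criterion applies.
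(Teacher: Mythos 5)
Your proof is correct. The universal‐property argument that $R = \mathbf{Z}_{(p)}[x_0,x_1,\dots]$ with $\delta(x_i)=x_{i+1}$ is the free $\delta$-ring, as well as the description of $\mathbf{Q}\{x\}$ (via the triangular change of variables $u_i = p^i x_i + P_i$), matches the paper's proof in substance, though you spell out the $\mathbf{Q}\{x\}$ part more explicitly than the paper, which just remarks that inverting $p$ gives it.

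For faithful flatness of $\phi$, however, you take a genuinely different route. The paper writes $\phi$ as the filtered colimit of the maps $\phi_i:\mathbf{Z}_{(p)}[x,\dots,\delta^i(x)]\to\mathbf{Z}_{(p)}[x,\dots,\delta^{i+1}(x)]$ and invokes the fibrewise criterion for flatness on each $\phi_i$: it suffices that $\phi_i[1/p]$ (a shift of free generators over $\mathbf{Q}$) and $\phi_i/p$ (Frobenius followed by an inclusion of polynomial $\mathbf{F}_p$-algebras) are each faithfully flat. You instead grade $R$ by $\deg(x_i)=p^i$, exhibit the explicit candidate basis $\{b_d\}$ indexed by base-$p$ expansions, compare Hilbert series to confirm equal graded ranks, and then descend from the isomorphy of $\alpha\bmod p$ (the standard decomposition of $\mathbf{F}_p[x_i]$ over $\mathbf{F}_p[x_i^p]$) to isomorphy of $\alpha$ degree by degree via the determinant/Nakayama trick. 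Your approach is more computational but buys a strictly stronger conclusion — an explicit free basis of $R$ over $\phi(R)$ — whereas the paper's reduction to the fibrewise criterion is shorter once that criterion is taken as known. Both handle the fact that $R$ is not $p$-complete: you via the finite-dimensional graded pieces, the paper via the generality of the fibrewise criterion.
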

\begin{proof}
Consider the polynomial ring $A = \mathbf{Z}_{(p)}[x_0,x_1,x_2,...]$ on countably many generators. The assignment $x_i \mapsto x_i^p + p x_{i+1}$ gives an endomorphism $\phi$ of $A$ that lifts the Frobenius on $A/p$. As $A$ is $p$-torsionfree, there is a unique $\delta$-structure on $A$ described by $\delta(x_i) = x_{i+1}$. Given any $\delta$-ring $R$ with an element $f \in R$, thanks to the universal property of the polynomial ring, there is a unique ring homomorphism $\eta_f:A \to R$ defined by $\eta_f(x_i) = \delta^i(f)$. By construction, we have $\eta_f(\delta(x_i)) = \delta(\eta_f(x_i))$, so $\eta_f$ is a map of $\delta$-rings, as the relations imposed on $\delta$ in \cref{def:deltaring} determine the behaviour on the $\mathbf Z_{(p)}$-algebra generated by the $x_i$ from the behaviour on the $x_i$. It is then also clear that $\eta_f$ is uniquely determined as a map of $\delta$-rings by the requirement $\eta_f(x_0) = f$. It follows that setting $\mathbf{Z}_{(p)}\{x\} = A$ with $x = x_0$ gives the free $\delta$-ring on a generator $x$, so this ring is indeed a polynomial ring on $\{x,\delta(x),\delta^2(x),...\}$ as asserted. Inverting $p$ easily gives the desired assertion for $\mathbf{Q}\{x\}$ as well.

We explain the faithful flatness assertion. The map $\phi: A\to A$ can be written as the filtered colimit of the maps
\[
\phi_i: \mathbf Z_{(p)}[x,\delta(x),\ldots,\delta^i(x)]\to \mathbf Z_{(p)}[x,\delta(x),\ldots,\delta^{i+1}(x)]\ ,
\]
so it is enough to prove that each $\phi_i$ is faithfully flat. By the fibrewise criterion for flatness, it suffices to see that $\phi_i[1/p]$ and $\phi_i/p$ are faithfully flat. But $\phi_i[1/p]$ agrees with the map
\[
\mathbf{Q}[x,\phi(x),\ldots,\phi^i(x)]\to \mathbf{Q}[x,\phi(x),\ldots,\phi^{i+1}(x)]
\]
shifting generators, which is evidently faithfully flat. On the other hand, $\phi_i/p$ agrees with the composite of the Frobenius on $\mathbf F_p[x,\delta(x),\ldots,\delta^i(x)]$ with the inclusion
\[
\mathbf F_p[x,\delta(x),\ldots,\delta^i(x)]\hookrightarrow \mathbf F_p[x,\delta(x),\ldots,\delta^{i+1}(x)]\ ,
\]
both of which are faithfully flat.
\end{proof}

\begin{corollary}[Frobenius is fpqc locally surjective]\label{Frobeniusfpqcsurj}
Fix a $\delta$-ring $A$ and an element $x \in A$. Then there exists a faithfully flat map $A \to B$ of $\delta$-rings such that the image of $x$ in $B$ has the form $\phi(y)$ for some $y \in B$. 
\end{corollary}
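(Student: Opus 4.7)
The plan is to construct $B$ by adjoining a Frobenius preimage of $x$ in a universal fashion, leveraging the faithful flatness of Frobenius on the free $\delta$-ring established in \cref{freelambdaring}. Reducing to the universal case, let $R := \mathbf{Z}_{(p)}\{T\}$ be the free $\delta$-ring on one generator, and let $\eta: R \to A$ be the unique $\delta$-ring map sending $T \mapsto x$.

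I would then set
\[ B := A \otimes_R R, \]
computed as the pushout in $\delta$-rings, where the first copy of $R$ acts on $A$ via $\eta$ and on the second copy of $R$ via the Frobenius endomorphism $\phi_R: R \to R$. By \cref{ThetaWitt}, this pushout is computed on underlying rings, so $B$ acquires a canonical $\delta$-structure and the structure map $A \to B$ is a $\delta$-ring homomorphism. Faithful flatness of $A \to B$ then follows by base change from the faithful flatness of $\phi_R: R \to R$ asserted in \cref{freelambdaring}.

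It remains to produce the desired $y \in B$. The Frobenius $\phi_B$ on the pushout restricts to $\phi_A$ on $A$ and to $\phi_R$ on the second copy of $R$; setting $y := 1 \otimes T$, this gives $\phi_B(y) = 1 \otimes \phi_R(T)$. On the other hand, the image of $x$ in $B$ satisfies
\[ x \otimes 1 \;=\; \eta(T) \otimes 1 \;=\; 1 \otimes \phi_R(T), \]
where the last step is the defining tensor-product relation (the first $R$ acts on the second via $\phi_R$). Thus the image of $x$ equals $\phi_B(y)$, as required. The only genuine input is the faithful flatness of Frobenius on $\mathbf{Z}_{(p)}\{T\}$ from \cref{freelambdaring}; everything else is a formal manipulation of universal properties of free $\delta$-rings together with the compatibility (\cref{ThetaWitt}) of colimits of $\delta$-rings with colimits of underlying rings, so I anticipate no serious obstacle.
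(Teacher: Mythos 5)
Your construction of $B$ as the pushout $A \otimes_R R$ along $\eta$ and $\phi_R$ is precisely the paper's construction (there written as the pushout of $\mathbf{Z}_{(p)}\{s\} \gets \mathbf{Z}_{(p)}\{t\} \to A$ with $t \mapsto \phi(s)$ and $t \mapsto x$), and the verification of faithful flatness and of the identity $x \otimes 1 = \phi_B(1 \otimes T)$ matches the paper's argument. This is correct and essentially the same proof.
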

\begin{proof}
Set $B$ to be the pushout of the diagram $\mathbf{Z}_{(p)}\{s\} \gets \mathbf{Z}_{(p)}\{t\} \to A$ of $\delta$-rings, where the first map sends $t$ to $\phi(s)$ and the second map sends $t$ to $x$. The resulting map $A \to B$ is faithfully flat by Lemma~\ref{freelambdaring} (recalling that pushouts of $\delta$-rings can be computed on the level of underlying rings by \cref{ThetaWitt}), and the image of $x$ equals that of $\phi(s)$ in $B$.
\end{proof}

\begin{remark}[Joyal's $\delta_n$-operations]
\label{JoyalOps}
As $\delta$-rings $R$ admit a natural map $w: R\to W(R)$ to their ring of $p$-typical Witt vectors, there are natural functorial operations $\delta_n:R \to R$ for $n \geq 0$ on any $\delta$-ring $R$ such that
\[
w(x)=(\delta_0(x),\delta_1(x),\delta_2(x),\ldots)\in W(R)
\]
in Witt vector coordinates; in particular $\delta_0(x)=x$ and $\delta_1(x)=\delta(x)$. These operations can be characterised by the following universal identity: for each element $x$ in each $\delta$-ring $R$, we have 
\[ \phi^n(x) = \delta_0(x)^{p^n} + p \delta_1(x)^{p^{n-1}} + ... + p^n \delta_n(x).\]
In general, one can express $\delta_n(-)$ as a monic polynomial of degree $n$ in $\delta$. 
\end{remark}

\begin{remark}[The free $\delta$-ring via Joyal's operations]
\label{JoyalOpsGenerate}
As the operation $\delta_n$ from Remark~\ref{JoyalOps} is a monic degree $n$ polynomial in $\delta$, one can reformulate the first part of Lemma~\ref{freelambdaring} as the following assertion: the ring $\mathbf{Z}_{(p)}\{x\}$ is a polynomial ring on the set $\{\delta_n(x)\}_{n \geq 0}$. 
\end{remark}

\subsection{Extending $\delta$-structures}
\label{ss:ExtendDelta}

\begin{lemma}[Localizations]
\label{ExtendLocalize}
Let $A$ be a $\delta$-ring. Let $S \subset A$ be a multiplicative subset such that $\phi(S) \subset S$. Then the localization $S^{-1} A$ admits a unique $\delta$-structure compatible with the map $A \to S^{-1}(A)$. Moreover, the map $A \to S^{-1} A$ is initial amongst all $\delta$-$A$-algebras $B$ such that each element of $S$ is invertible in $B$. 
\end{lemma}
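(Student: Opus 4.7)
The approach is to leverage the characterization of $\delta$-structures via length-$2$ Witt vectors recorded in \cref{ThetaW2}: a $\delta$-structure on a commutative ring $R$ is the same as a ring map $w \colon R \to W_2(R)$ splitting the projection $\epsilon \colon W_2(R) \to R$, $(x,y) \mapsto x$. I would produce the desired $\delta$-structure on $S^{-1}A$ by extending $w_A$ through this translation; once the key unit calculation below is in place, everything else is formal.

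The crux is to extend the composite ring map $A \xrightarrow{w_A} W_2(A) \to W_2(S^{-1}A)$ across the localization $A \to S^{-1}A$. By the universal property of localization, this reduces to checking that every $s \in S$ maps to a unit in $W_2(S^{-1}A)$. Using the product formula $(a,b)(a',b') = (aa', a^p b' + (a')^p b + pbb')$, a direct computation shows that $(a,b) \in W_2(R)$ is a unit as soon as both $a$ and $a^p + pb$ are units in $R$: one takes $a' = a^{-1}$ and then $b' = -(a')^p b / (a^p + pb)$ as the inverse. Applied to the image $(s/1, \delta(s)/1)$ of $s \in S$, the first coordinate is a unit since $s \in S$, and $s^p/1 + p\delta(s)/1 = \phi(s)/1$ is a unit by the hypothesis $\phi(S) \subset S$. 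This yields a unique factorization $w' \colon S^{-1}A \to W_2(S^{-1}A)$ of the composite above.

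To finish, note that $\epsilon \circ w'$ and $\mathrm{id}_{S^{-1}A}$ are ring endomorphisms of $S^{-1}A$ that agree on $A$ (using $\epsilon \circ w_A = \mathrm{id}_A$), so they coincide by the universal property of localization. Thus $w'$ defines a $\delta$-structure on $S^{-1}A$ extending that on $A$, and uniqueness follows by running the same argument backwards: any compatible section of $\epsilon$ extending $w_A$ must equal $w'$. The universal mapping property falls to the same trick: given any $\delta$-$A$-algebra $B$ in which $S$ is invertible, the induced ring map $g \colon S^{-1}A \to B$ is automatically a map of $\delta$-rings because $w_B \circ g$ and $W_2(g) \circ w'$ are two ring maps $S^{-1}A \to W_2(B)$ that agree on $A$ (as $A \to B$ respects $\delta$), hence are equal. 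The only genuinely nontrivial step is the unit criterion for $W_2$; the hypothesis $\phi(S) \subset S$ is precisely what makes it go through by supplying $\phi(s)$ as the second ghost component of $w_A(s)$.
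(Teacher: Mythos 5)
Your proof is correct, but it takes a genuinely different route from the paper's. The paper first handles the case where $A$ is $p$-torsionfree by observing directly that $\phi_A$ carries $S$ into itself, hence descends to an endomorphism of $S^{-1}A$ that is readily seen to be a Frobenius lift; it then reduces the general case to this one by presenting $A$ as a quotient of a free $\delta$-ring $F$ (which is $p$-torsionfree by \cref{freelambdaring}), pulling $S$ back to a $\phi$-stable multiplicative set $T \subset F$, and using $S^{-1}A = T^{-1}F \otimes_F A$ together with the fact that colimits of $\delta$-rings are computed on underlying rings. You instead argue uniformly via the $W_2$-characterization (\cref{ThetaW2}), proving a unit criterion for $W_2(R)$ — that $(a,b)$ is invertible once $a$ and $a^p+pb$ are — and applying it to $w_A(s) = (s,\delta(s))$, where $s^p + p\delta(s) = \phi(s)$ is a unit in $S^{-1}A$ precisely because $\phi(S) \subset S$. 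The factorization $w' \colon S^{-1}A \to W_2(S^{-1}A)$ and the section-of-$\epsilon$ property then follow formally from the universal property of localization. Your version avoids the detour through the torsionfree case and the free-resolution trick, and it makes the role of the hypothesis $\phi(S) \subset S$ unusually transparent: it is exactly what makes the second ghost component of $w_A(s)$ invertible. The paper's reduction-to-the-free-case pattern, on the other hand, is reused elsewhere (e.g., \cref{Rank1pdivisible}), so there is some economy in establishing it early.
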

\begin{proof}
We first explain the argument when $A$ is $p$-torsionfree. In this case, the localization $S^{-1} A$ is also $p$-torsionfree. The map $\phi_A:A \to A$ carries $S$ to itself, and hence induces a map $\phi_{S^{-1}A}:S^{-1} A \to S^{-1} A$. As $A \to S^{-1} A$ is a localization, it is easy to see that $\phi_{S^{-1}A}$ is a lift of Frobenius as $\phi_A$ is so; this gives the first part of the lemma. The second part is clear.

In general, given a pair $(A,S)$ as in the lemma, choose a surjection $\alpha:F \twoheadrightarrow A$ with $F$ being a free $\delta$-ring on some set. Then $F$ is $p$-torsionfree (by \cref{freelambdaring}). The preimage $T := \alpha^{-1}(S) \subset F$ is a multiplicative closed subset of $F$ (as $\alpha$ is multiplicative) that is $\phi$-stable (as $\alpha$ commutes with $\phi$).  The localization $T^{-1} F$ carries a unique $\delta$-structure compatible with the one on $F$ by the preceding paragraph. The formula $S^{-1} A = T^{-1} F \otimes_F A$, and the fact that colimits of $\delta$-rings coincide with those of the underlying rings, then shows that $S^{-1} A$ also carries a unique $\delta$-structure compatible with the one on $A$. The last part is clear.
\end{proof}

\begin{remark}[Localizations in the $p$-local world]\label{ExtendLocalizePLocal}
In a $\delta$-ring $A$ with $p$ in the Jacobson radical $\rad(A)$, the formula $\phi(f) = f^p + p \delta(f)$ shows that if $f$ is a unit, so is $\phi(f)$. Thus, for any $\delta$-ring $A$ and any multiplicative subset $S \subset A$, the $p$-localization $(S^{-1} A)_{(p)}$ of the localization $S^{-1} A$ of $A$ coincides with the $p$-localization of $T^{-1} A$ where $T = \{S,\phi(S),\phi^2(S),...\}$. By \cref{ExtendLocalize}, it follows that $(S^{-1} A)_{(p)}$ carries a unique $\delta$-structure compatible with the one on $A$, and can be characterized as the initial object in the category of all $\delta$-$A$-algebras $B$ with $p\in \rad(B)$ where $S$ becomes invertible.
\end{remark}

In the paper, we will use the preceding remark mostly with $p$-localization replaced with $p$-completion. Regarding completions, we have the following general result.

\begin{lemma}[Completions]
\label{LambdaCompletions}
Let $A$ be a $\delta$-ring, and let $I \subset A$ be a finitely generated ideal containing $p$. Then the map $\delta: A\to A$ is $I$-adically continuous; more precisely, for each $n$ there is some $m$ such that for all $x\in A$, one has $\delta(x+I^m)\subset \delta(x)+I^n$.

Moreover, the classical $I$-adic completion of $A$ acquires a unique $\delta$-structures compatible with the one on $A$.
\end{lemma}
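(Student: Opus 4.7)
My plan is to first prove the continuity statement quantitatively, reducing it to a simple estimate on how much $\delta$ can lower the $I$-adic order of vanishing; once continuity is in hand, the existence and uniqueness of a $\delta$-structure on the classical completion $\widehat{A}$ follows almost formally by extending the structure map $w : A \to W_2(A)$ coordinatewise.

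The key quantitative input is the claim that $\delta(I^m) \subseteq I^{m-1}$ for all $m \geq 1$. I will first verify this for a pure product $y = f_1 f_2 \cdots f_m$ with $f_i \in I$, by induction on $m$ using the multiplicative identity. Writing $y = (f_1 \cdots f_{m-1}) \cdot f_m$ and applying the product rule gives
\[
\delta(y) = (f_1 \cdots f_{m-1})^p \delta(f_m) + f_m^p\, \delta(f_1 \cdots f_{m-1}) + p\,\delta(f_1 \cdots f_{m-1})\delta(f_m),
\]
and each summand lies in $I^{p(m-1)} + I^p \cdot I^{m-2} + I \cdot I^{m-2} \subseteq I^{m-1}$ (using $p \in I$ and the inductive hypothesis). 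To pass from pure products to an arbitrary element $y = \sum_{k=1}^N g_k \in I^m$ with each $g_k$ a product of $m$ elements of $I$, I will iteratively apply the additivity formula; the correction $C(x,y) = p^{-1}(x^p+y^p-(x+y)^p) = \sum_{i=1}^{p-1} \tfrac{1}{p}\binom{p}{i} x^i y^{p-i}$ is a polynomial each of whose monomials has positive degree in both $x$ and $y$, so inputs in $I^m$ produce correction terms in $I^{mp} \subseteq I^{m-1}$. Thus $\delta\bigl(\sum g_k\bigr) \equiv \sum \delta(g_k) \pmod{I^{mp}}$, and both pieces lie in $I^{m-1}$.

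Continuity is then immediate: for $x \in A$ and $y \in I^m$, the additivity formula gives
\[
\delta(x+y) - \delta(x) = \delta(y) + C(x,y),
\]
and $\delta(y) \in I^{m-1}$ by the claim while $C(x,y) \in I^m$ since every monomial contains a factor $y^{p-i}$ with $p-i \geq 1$. So $\delta(x + I^m) \subseteq \delta(x) + I^{m-1}$; given $n$, take $m = n+1$.

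For the second part, continuity ensures that $\delta$ extends uniquely to a set map $\widehat{\delta} : \widehat{A} \to \widehat{A}$ on the classical $I$-adic completion. To verify it is a $\delta$-structure, I will use the description of \cref{ThetaW2}: a $\delta$-structure on $R$ is a section $w : R \to W_2(R)$ of the projection. Since $W_2(-)$ is, as a set, just the self-product, and its ring operations are given by fixed universal polynomials, the inverse system $\{W_2(A/I^m)\}$ has limit naturally identified with $W_2(\widehat{A})$. The composition $A \xrightarrow{w} W_2(A) \to W_2(\widehat{A})$, which is continuous by the first part applied to each component, extends uniquely to a ring map $\widehat{w} : \widehat{A} \to W_2(\widehat{A})$, and this map is a section of the projection because that property holds before completion. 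Uniqueness of the extension gives uniqueness of the compatible $\delta$-structure. The main obstacle is really just the quantitative claim $\delta(I^m) \subseteq I^{m-1}$; the extension step is essentially formal once that is in place.
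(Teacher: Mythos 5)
Your argument is correct, and it establishes a sharper quantitative estimate than the paper's. The paper first proves the ideal-theoretic containment $\delta(J_1 J_2) \subseteq J_1 + J_2 + p\,\delta(J_1)\delta(J_2)A$ for arbitrary ideals, then specializes to $J_1=J_2=I^{2^n}$ and obtains $\delta(I^{2^{n+1}}) \subseteq I^{2^n}$ by dyadic doubling, with $p\in I$ entering only once, at the base case $\delta(I^2)\subseteq I$. You instead prove the tight bound $\delta(I^m) \subseteq I^{m-1}$ by inducting on the length of a pure product and then absorbing the cross-terms via additivity; this is sharp (already for $A=\mathbf{Z}_{(p)}$, $I=(p)$, one has $\delta(p^m)=p^{m-1}(1-p^{m(p-1)})$ of valuation exactly $m-1$), at the cost of invoking $p\in I$ in every inductive step, in the summand $p\,\delta(z)\delta(f_m)$. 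Either estimate suffices for the continuity statement. For the extension, you route through the $W_2$-section description of \cref{ThetaW2}, extending $w$ coordinatewise via $\lim_m W_2(A/I^m)\cong W_2(\widehat{A})$ together with the observation that $\delta(I^{m+1})\subseteq I^m$ makes $A\to W_2(A/I^m)$ factor through $A/I^{m+1}$; the paper simply extends $\delta$ itself by continuity and verifies the defining identities hold densely. Your packaging makes existence and uniqueness a bit more transparent at once. Both treatments implicitly use the standard fact that for finitely generated $I$ the classical completion $\widehat{A}$ is $I\widehat{A}$-adically complete with $\widehat{A}/I^m\widehat{A}\cong A/I^m$, so that the $I\widehat{A}$-adic and inverse-limit topologies on $\widehat{A}$ agree and $A$ is dense for either.
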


The case of derived completions will be handled by the next lemma.	

\begin{proof} Once we have proved that $\delta$ is $I$-adically continuous, it follows that $\delta$ extends to a continuous map on the classical $I$-adic completion $\widehat{A}$ of $A$, which will by continuity still be a $\delta$-structure. This extension is also unique as the $\delta$-structure on $\widehat{A}$ must be $I\widehat{A}$-adically continuous by the same result applied to $\widehat{A}$.

For continuity, we note that the additivity formula implies that
\[
\delta(x+I^m)-\delta(x)\subset \delta(I^m)+I^m\ ,
\]
so it suffices to see that for any $n$ there is some $m\geq n$ such that $\delta(I^m)\subset I^n$. Note that the product (and addition) formula for $\delta$ imply that for any two ideals $J_1,J_2$,
\[
\delta(J_1J_2)\subset J_1+J_2+p\delta(J_1)\delta(J_2)A\ .
\]
In particular, taking $J_1=J_2=I$, we see that $\delta(I^2)\subset I$ as $p\in I$ by assumption. Taking $J_1=J_2=I^{2^n}$ then shows inductively that $\delta(I^{2^{n+1}})\subset I^{2^n}$, as desired.
\end{proof}

\begin{lemma}[\'Etale maps]\label{ExtendEtale}
Let $A$ be a $\delta$-ring equipped with a finitely generated ideal $I \subset A$ containing $p$. Assume that $B$ is a derived $I$-complete and $I$-completely \'etale $A$-algebra. Then $B$ admits a unique $\delta$-structure compatible with the one on $A$.
\end{lemma}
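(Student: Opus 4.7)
The plan uses the characterization of $\delta$-structures via Witt vectors from Remark~\ref{ThetaW2}: a $\delta$-structure on $B$ compatible with the one on $A$ amounts to specifying a ring map $w_B : B \to W_2(B)$ splitting the projection $\epsilon : W_2(B) \to B$ and extending the composite $A \xrightarrow{w_A} W_2(A) \to W_2(B)$ induced by the $\delta$-structure on $A$. I would construct $w_B$ by first working modulo $I^n$ and then assembling by taking the inverse limit.

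Setting $A_n = A/I^n$ and $B_n = B \otimes_A^L A/I^n$, the $I$-complete étaleness of $A \to B$ ensures that each $B_n$ is discrete and étale over $A_n$. Because $p \in I$, we have $p^n = 0$ in $B_n$, and the key observation is that the kernel $V(B_n)$ of $\epsilon : W_2(B_n) \to B_n$ is a nilpotent ideal: the product formula $V(x) V(y) = p V(xy)$ in $W_2(B_n)$ (read off directly from Remark~\ref{ThetaW2}) gives $V(B_n)^{k+1} \subseteq p^k V(B_n)$, which vanishes once $k \geq n$. With this nilpotence in hand, the lifting property of the étale map $A_n \to B_n$ against the surjection $\epsilon : W_2(B_n) \twoheadrightarrow B_n$ with nilpotent kernel yields a unique ring map $w_{B_n} : B_n \to W_2(B_n)$ that simultaneously splits $\epsilon$ and extends $A_n \xrightarrow{w_{A_n}} W_2(A_n) \to W_2(B_n)$. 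Equivalently, this is a unique $\delta$-structure on $B_n$ extending the one on $A_n$.

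For the assembly step, the hypothesis that $B$ is derived $I$-complete together with $I$-complete flatness gives $B = \lim_n B_n$, and $W_2$ commutes with limits (since $W_2(R) = R \times R$ as a set, with the ring operations of Remark~\ref{ThetaW2} preserved under limits in $R$), so $W_2(B) = \lim_n W_2(B_n)$. The uniqueness of each $w_{B_n}$ forces compatibility of the system, so taking the inverse limit yields the desired $w_B : B \to W_2(B)$, producing the compatible $\delta$-structure on $B$; uniqueness at each finite level propagates to uniqueness of $w_B$. The main obstacle is the nilpotence of $V(B_n)$ inside $W_2(B_n)$, which is precisely what lets us invoke étale lifting and sidesteps the fundamental difficulty that the naive Frobenius-lifting problem on $B$ itself would involve the non-nilpotent thickening $B \to B/p$.
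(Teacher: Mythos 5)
Your approach is genuinely different from the paper's: you work $I$-adically with the finite quotients $B_n = B \otimes_A^L A/I^n$ and the nilpotent ideal $V(B_n) \subset W_2(B_n)$, whereas the paper invokes Elkik's algebraization theorem to write $B$ as the derived $I$-completion of an honest \'etale $A$-algebra $B'$, applies van der Kallen's theorem to conclude $W_2(A) \to W_2(B')$ is \'etale, and then lifts $B' \to W_2(B)$ using that $W_2(B) \to B$ is a pro-infinitesimal thickening of derived $I$-complete rings. Your nilpotence computation $V(x)V(y) = pV(xy)$, hence $V(B_n)^{k+1} \subseteq V(p^k B_n) = 0$ for $k \geq n$, is correct and is the right mechanism for making the \'etale lifting apply.

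However, there is a genuine gap in the step where you write $A_n \xrightarrow{w_{A_n}} W_2(A_n)$. This presumes that $A_n = A/I^n$ carries a $\delta$-structure compatible with the one on $A$, i.e.\ that $\delta(I^n) \subseteq I^n$. This is false in general: for instance $\delta(p^n)$ is a unit in any $\delta$-ring (Example~\ref{InitialDeltaRing}), so for $I = (p)$ the ideal $I^n = (p^n)$ is never $\delta$-stable unless it is the unit ideal. What Lemma~\ref{LambdaCompletions} actually gives is only the weaker continuity estimate $\delta(I^{2^{k+1}}) \subseteq I^{2^k}$, not $\delta(I^n) \subseteq I^n$. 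Consequently the map $A \xrightarrow{w_A} W_2(A) \to W_2(B_n)$, which is the only natural candidate for the top horizontal arrow in your lifting square, does not factor through $A_n$, and the square you want to fill in does not exist level-by-level. Your argument can be repaired, but it becomes a pro-level argument rather than a level-by-level one: using the continuity of $\delta$, the composite $A \to W_2(B_n)$ kills $I^{m(n)}$ for a suitable $m(n) \gg n$, so one gets a map $A_{m(n)} \to W_2(B_n)$, and since $A_{m(n)} \to B_{m(n)}$ is \'etale and $\epsilon: W_2(B_n) \to B_n$ has nilpotent kernel, \'etale lifting produces a unique $A_{m(n)}$-algebra map $B_{m(n)} \to W_2(B_n)$ over $B_{m(n)} \to B_n$. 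These assemble into a pro-system, and taking inverse limits (using $B \simeq \lim_n B_n$ and $W_2(B) = \lim_n W_2(B_n)$, and noting that $\{B_{m(n)}\}$ is cofinal in $\{B_n\}$) gives $w_B : B \to W_2(B)$; uniqueness propagates similarly. You should also be slightly careful that $B = \lim_n B \otimes_A^L A/I^n$ is not an automatic consequence of derived $I$-completeness (cf.\ the footnote in the Notation section); for an $I$-completely flat, derived $I$-complete $B$ one can argue via Koszul complexes or torsion-freeness, but this point deserves a word.
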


This lemma implies, in particular, that the $\delta$-structure on $A$ passes uniquely  to its derived $I$-completion for any $I \subset A$ containing a power of $p$.

\begin{proof} We use the characterization of $\delta$-rings in terms of $W_2$ as in Remark~\ref{ThetaW2}. By Elkik's algebraization theorem, we can write $B$ as the derived $I$-completion of some \'etale $A$-algebra $B^\prime$. Then $W_2(A)\to W_2(B^\prime)$ is \'etale by van der Kallen's theorem, cf.~\cite[Theorem 10.4]{BMS1}. We claim that $W_2(B)$ is the derived $I$-completion of $W_2(B^\prime)$, when regarded as $A$-algebras via $A\xrightarrow{w_A} W_2(A)\to W_2(B^\prime)$. For this, note that there is a short exact sequence
\[
0\to \phi_\ast B^\prime\to W_2(B^\prime)\to B^\prime\to 0
\]
of $A$-modules, and derived $I$-completion agrees with derived $\phi(I)$-completion as $p\in I$. In particular $W_2(B)$ is derived $I$-complete and $I$-completely \'etale over $W_2(A)$.

Considering the diagram
\[\xymatrix{
A\ar[r]\ar[d] & W_2(B)\ar[d]\\
B^\prime\ar[r]& B
}\]
and using that $A\to B^\prime$ is \'etale while $W_2(B)\to B$ is a pro-infinitesimal thickening, we see that there is a unique lift $B^\prime\to W_2(B)$ making the diagram commute, which then extends to a unique map $w_B: B\to W_2(B)$ as $W_2(B)$ is derived $I$-complete. This gives the desired unique $\delta$-structure on $B$ compatible with the one on $A$.
\end{proof}

\subsection{Distinguished elements}
\label{ss:DistElt}

The following notion plays a central role in this paper:

\begin{definition}
An element $d$ of a $\delta$-ring $A$ is {\em distinguished} if $\delta(d)$ is a unit.
\end{definition}

Any morphism of $\delta$-rings preserves distinguished elements. 

\begin{example} 
\label{distinguishedElts}
The following examples of distinguished elements are crucial for cohomological purposes.
\begin{enumerate}
\item {\em Crystalline cohomology.} Take $A = \mathbf{Z}_p$ with $d = p$. Indeed, $\delta(p) = 1 - p^{p-1} \in \mathbf{Z}_{p}^*$.

\item {\em $q$-de Rham cohomology.} Take $A = \mathbf{Z}_p\llbracket q-1\rrbracket$, $d = [p]_q := \frac{q^p-1}{q-1} \in A$, with $\delta$-structure determined by $\phi(q) = q^p$. The distinguishedness of $d$ can be seen directly, or by simply observing that $\delta([p]_q) \equiv \delta(p) \mod (q-1)$, so the claim follows from (1) and $(q-1)$-adic completeness.

\item {\em $A_{\inf}$-cohomology.} For a perfectoid field $C/\mathbf{Q}_p$, take $A = A_{\inf}(\mathcal{O}_C)$ with $d=\xi$ being any generator of the kernel of Fontaine's map $A_{\inf} \to \mathcal{O}_C$. The ring $A$ carries a unique $\delta$-structure given by the usual lift of Frobenius. The distinguishedness of $d$ can be seen as in (2) via specialization along the $\delta$-map $A_{\inf} \to W(k)$, where $k$ is the residue field of $C$.

\item {\em Breuil-Kisin cohomology.} Fix a discretely valued extension $K/\mathbf{Q}_p$ with uniformizer $\pi$. Let $W \subset \mathcal{O}_K$ be the maximal unramified subring. Take $A = W \llbracket u \rrbracket$ with $\delta$-structure determined by the canonical one on $W$ and satisfying $\phi(u) = u^p$. There is a $W$-equivariant surjection $A \to \mathcal{O}_K$ determined by $u \mapsto \pi$. Any generator $d \in A$ of the kernel of this map is distinguished; this can be seen as in (2) via specialization along the $\delta$-map $W\llbracket u \rrbracket \xrightarrow{u \mapsto 0} W$.
\end{enumerate}
\end{example}

\begin{example}
Consider the initial object in the category $\delta$-rings equipped with a distinguished element $d$. By \cref{ExtendLocalize} and \cref{freelambdaring}, such an object exists and can be described explicitly as the localization $S^{-1} \mathbf{Z}_{(p)}\{d\}$, where $S = \{\delta(d),\phi(\delta(d)),\phi^2(\delta(d)),\ldots\}$. In particular, the universal distinguished element $d$ is a nonzerodivisor modulo $p$. If no confusion arises, we shall denote this $\delta$-ring by $\mathbf{Z}_{(p)}\{d,\delta(d)^{-1}\}$.

\end{example}

\begin{remark}[Viewing a distinguished element as a deformation of $p$]
\label{rmk:DistEltDefp}
There is a relatively easy way to map any distinguished element to the distinguished element $p$ (up to units). Let $A$ be a derived $p$-complete $\delta$-ring equipped with a distinguished element $d$. Consider the composite 
\[ s:A \xrightarrow{\phi} A \xrightarrow{w} W(A) \xrightarrow{can} W(A/d),\] 
where $w$ comes from \cref{ThetaWitt}. As $s$ is a map of $\delta$-rings, the Witt vector $s(d) \in W(A/d)$ has the form $F(0,\delta(d),\ldots) \in W(A/d)$, where $F$ is the Witt vector Frobenius. Thus, we can write $s(d) = F(V(\delta(d),...)) = p \cdot (\delta(d),....)$. As $\delta(d)$ is a unit and $W(A/d)$ is complete along the kernel of $W(A/d) \to A/d$ (by the derived $p$-completeness of $A/d$ and the analogous assertion for $\mathbf{Z}/p^n$-algebras), the Witt vector $(\delta(d),...) \in W(A/d)$ is a unit as well, so $s(d) = pu$ for a unit $u \in W(A/d)$. Thus, $s$ gives the promised map. However, the passage from $A$ to $W(A/d)$ is fairly drastic, and one often loses control on the algebraic properties of this map. A refinement of $s$ which is better behaved homologically is presented in Construction~\ref{UnivOrientedCrystallize} (under some extra hypotheses on $A$).
\end{remark}

\begin{lemma}
\label{distinguishedUnit}
Let $A$ be a $\delta$-ring. Fix a distinguished element $d \in A$ and a unit $u \in A^*$. If $d,p \in \rad(A)$, then $ud$ is distinguished.
\end{lemma}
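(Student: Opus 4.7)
The plan is to compute $\delta(ud)$ directly using the Leibniz-type formula from \cref{def:deltaring} and then verify unit-ness modulo the Jacobson radical. Concretely, from the multiplicative identity we have
\[
\delta(ud) \;=\; u^p\,\delta(d) + d^p\,\delta(u) + p\,\delta(u)\,\delta(d).
\]
Since $u \in A^*$, the element $u^p$ is a unit, and since $d$ is distinguished, $\delta(d)$ is a unit; hence the first summand $u^p \delta(d)$ is a unit in $A$.

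Next I would argue that the remaining two summands both lie in $\mathrm{rad}(A)$. This uses the hypothesis that $d, p \in \mathrm{rad}(A)$: indeed $d^p \in \mathrm{rad}(A)$ (as the Jacobson radical is an ideal, and contains any power of its elements), so $d^p \,\delta(u) \in \mathrm{rad}(A)$; similarly $p \,\delta(u)\,\delta(d) \in \mathrm{rad}(A)$ because $p \in \mathrm{rad}(A)$.

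Therefore $\delta(ud)$ is the sum of a unit and an element of $\mathrm{rad}(A)$. By the defining property of the Jacobson radical (an element $r \in \mathrm{rad}(A)$ satisfies $1 + r \in A^*$, and more generally $v + r \in A^*$ whenever $v \in A^*$), this sum is again a unit. Hence $\delta(ud) \in A^*$, so $ud$ is distinguished. There is no real obstacle here; the only subtle point is remembering to use the radical hypothesis on both $d$ and $p$ to handle the two non-leading terms, and it is worth noting that this hypothesis is genuinely necessary (without it, $d^p \delta(u)$ need not be small).
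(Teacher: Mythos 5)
Your proof is correct and follows exactly the same route as the paper: expand $\delta(ud)$ via the product rule, observe the leading term $u^p\delta(d)$ is a unit and the other two terms lie in $\rad(A)$, and conclude. The extra explanatory remarks (why each term lands in the radical, and why unit plus radical is a unit) are just spelled-out details of the same argument.
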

\begin{proof}
We want to show that $\delta(ud)$ is a unit. Expanding, we have
\[ \delta(ud) = u^p \delta(d) + d^p \delta(u) + p \delta(u) \delta(d).\]
The first term on the right side is a unit and the other two terms lie in $\rad(A)$, so the whole expression is also a unit.
\end{proof}

\begin{lemma}
\label{distinguishedDivide}
Let $A$ be a $\delta$-ring with a distinguished element $d\in A$. Assume that we can write $d = fh$ for some $f,h \in A$ such that $f,p \in \rad(A)$. Then $f$ is distinguished and $h$ is a unit.
\end{lemma}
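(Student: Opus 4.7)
The plan is to expand $\delta(d) = \delta(fh)$ using the product rule and then use that anything in $\rad(A)$ is negligible modulo units.

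Concretely, the product formula from \cref{def:deltaring} gives
\[
\delta(d) \;=\; f^p\delta(h) + h^p\delta(f) + p\,\delta(f)\delta(h).
\]
Since $f\in\rad(A)$ we have $f^p\in\rad(A)$, and by hypothesis $p\in\rad(A)$, so the first and third summands on the right lie in $\rad(A)$. Therefore
\[
h^p\delta(f) \;\equiv\; \delta(d) \pmod{\rad(A)}.
\]
The element $\delta(d)$ is a unit because $d$ is distinguished; thus $h^p\delta(f)$ is a unit modulo $\rad(A)$, hence a unit in $A$ (as an element congruent to a unit modulo the Jacobson radical is itself a unit).

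Once we know $h^p\delta(f)\in A^\times$, both factors are forced to be units in the commutative ring $A$: writing $h^p\delta(f)\cdot v = 1$ exhibits $\delta(f)$ as a unit (so $f$ is distinguished) and exhibits $h^p$ as a unit, from which $h\cdot(h^{p-1}v\,\delta(f))=1$ shows $h\in A^\times$. Nothing beyond the product rule and elementary Jacobson-radical bookkeeping is needed, so there is no real obstacle; the only thing to keep straight is that the hypothesis $f,p\in\rad(A)$ (rather than only $p\in\rad(A)$) is exactly what is needed to ensure that $h^p\delta(f)$ is the dominant term of the expansion.
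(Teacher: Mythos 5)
Your proof is correct and follows the same route as the paper: apply $\delta$ to $d=fh$, observe that the $f^p\delta(h)$ and $p\,\delta(f)\delta(h)$ terms lie in $\rad(A)$, and conclude that $h^p\delta(f)$ must be a unit. The only addition is spelling out the final step (a product being a unit forces each factor to be a unit), which the paper leaves implicit.
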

\begin{proof}
Applying $\delta$ to $d=fh$ gives 
\[ \delta(d) = f^p \delta(h) + h^p \delta(f) + p \delta(f) \delta(h).\]
The left side is a unit, while the first and last terms of the right side lie in $\rad(A)$, so $h^p \delta(f)$ is a unit, which proves both claims.
\end{proof}

\begin{lemma}
\label{distinguishedIntersect}
Fix a $\delta$-ring $A$ and an element $d\in A$ such that $d,p \in \rad(A)$. Then $d$ is distinguished if and only if $p \in (d,\phi(d))$. In particular, the property ``$d$ is distinguished'' only depends on the ideal $(d)$. 
\end{lemma}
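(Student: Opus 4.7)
My plan is to dispatch the forward direction by rearrangement, then tackle the converse by applying $\delta$ to the hypothesised identity and reducing modulo $\rad(A)$. For the forward direction, if $\delta(d)$ is a unit, the defining relation $\phi(d) = d^p + p\delta(d)$ rearranges to $p = \delta(d)^{-1}(\phi(d) - d^p) \in (d,\phi(d))$.

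For the converse, I would write $p = \alpha d + \beta\phi(d)$ for some $\alpha,\beta \in A$ and apply $\delta$, using both the additive and multiplicative formulas from \cref{def:deltaring}. The guiding observation is that $\delta(p)$ is the image of $\delta(p) = 1 - p^{p-1}$ from the initial $\delta$-ring of \cref{InitialDeltaRing}, which is $\equiv 1\pmod{\rad(A)}$ since $p \in \rad(A)$. Moreover, since $d, p \in \rad(A)$ we also have $\phi(d) = d^p + p\delta(d) \in \rad(A)$; the product rule then yields $\delta(\alpha d) \equiv \alpha^p \delta(d)\pmod{\rad(A)}$ and $\delta(\beta\phi(d)) \equiv \beta^p \delta(\phi(d)) = \beta^p \phi(\delta(d)) \equiv \beta^p \delta(d)^p \pmod{\rad(A)}$, using $\phi\circ\delta = \delta\circ \phi$ and $\phi(x) \equiv x^p\pmod p$. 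Meanwhile, the additive correction $\tfrac{(\alpha d)^p + (\beta\phi(d))^p - p^p}{p}$ is a $\mathbf Z$-linear combination of monomials each divisible by $d$, so it lies in $\rad(A)$ as well. Summing the three contributions,
\[ 1 \equiv \alpha^p \delta(d) + \beta^p \delta(d)^p = \delta(d)\bigl(\alpha^p + \beta^p \delta(d)^{p-1}\bigr) \pmod{\rad(A)}, \]
so this product is a unit (any element congruent to $1$ modulo the Jacobson radical is), whence $\delta(d)$ is a unit.

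The ``in particular'' assertion follows for free: any other generator of $(d)$ has the form $ud$ for a unit $u$, and $\phi(ud) = \phi(u)\phi(d)$ with $\phi(u)$ a unit, so $(ud,\phi(ud)) = (d,\phi(d))$, and membership of $p$ in this ideal depends only on the ideal $(d)$. The main obstacle I anticipate is the bookkeeping in the reduction modulo $\rad(A)$ --- one must notice that $\phi(d)$ itself lives in $\rad(A)$, so that $\phi(d)^p \delta(\beta)$ and the analogous cross terms can be discarded, and that commuting $\delta$ with $\phi$ converts $\delta(\phi(d))$ into $\delta(d)^p$ modulo $\rad(A)$. Once this is in hand, the desired identity drops out of a direct expansion.
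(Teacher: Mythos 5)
Your argument is correct and takes a genuinely different route from the paper. For the converse, the paper first observes that it suffices to show $A/(p,d,\delta(d)) = 0$, then passes to the $(p,d,\delta(d))$-adic completion so that $\delta(d) \in \rad(A)$ too, rearranges $p = ad + b\phi(d)$ into $p(1-b\delta(d)) = cd$, and finishes by invoking Lemma~\ref{distinguishedUnit} (to see the left side is distinguished) followed by Lemma~\ref{distinguishedDivide} (to conclude $d$ is). Your proof avoids both the completion step and the two auxiliary divisibility lemmas: you apply $\delta$ directly to the hypothesised relation, keep careful track of which contributions vanish modulo $\rad(A)$ (using $\phi(d) \in \rad(A)$, $\delta(p) = 1 - p^{p-1}$, and $\phi\circ\delta = \delta\circ\phi$), and extract a unit factorization $\delta(d)\bigl(\alpha^p + \beta^p\delta(d)^{p-1}\bigr) \equiv 1 \pmod{\rad(A)}$. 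This is more elementary and self-contained, at the cost of some explicit bookkeeping that the paper's lemma-based approach packages away; it also more clearly exposes \emph{why} the answer is a unit, rather than deriving it by contradiction.

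One small correction in the ``in particular'' paragraph: it is not true in a general (possibly non-domain) ring that two generators of the same ideal differ by a unit, so the premise ``any other generator of $(d)$ has the form $ud$ for a unit $u$'' is unjustified. Fortunately that premise is not needed: if $(d) = (d')$, then $d' = cd$ and $d = c'd'$ for some $c, c' \in A$, and applying the ring homomorphism $\phi$ gives $(\phi(d)) = (\phi(d'))$ directly, whence $(d,\phi(d)) = (d',\phi(d'))$. The ``in particular'' then follows exactly as you intended.
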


\begin{proof}
Assume first that that $d$ is distinguished, so $\delta(d)$ is a unit. Then the formula $\phi(d) = d^p + p \delta(d)$ immediately shows that $p \in (d,\phi(d))$.

Conversely, assume we can write $p = ad + b\phi(d)$ for some $a,b \in A$. We want to show $\delta(d)$ is invertible. As $d,p \in \rad(A)$, it suffices to show that $\delta(d)$ is invertible modulo $(d,p)$; equivalently, it suffices to show that $A/(p,d,\delta(d)) = 0$. To prove this, we may replace $A$ with its $(p,d,\delta(d))$-adic completion (or just a suitable ind-Zariski localization) to assume that $p, d, \delta(d) \in \rad(A)$. Simplifying the equation $p = ad + b\phi(d)$ using the definition of $\phi$ then yields an equation of the form $p(1 - b\delta(d)) = cd$ for suitable $c \in A$. As $p$ is distinguished and $\delta(d) \in \rad(A)$, the left hand side is distinguished by Lemma~\ref{distinguishedUnit}. But then Lemma~\ref{distinguishedDivide} implies that $d$ is distinguished, as desired.
\end{proof}

\subsection{Perfect $\delta$-rings}
\label{ss:PerfDelta}

\begin{definition}
A $\delta$-ring $A$ is {\em perfect} if $\phi$ is an isomorphism.
\end{definition}

\begin{remark}[Perfection]
 The inclusion of perfect $\delta$-rings into all $\delta$-rings has left and right adjoints given by the perfection functors $A \mapsto A_\perf = \colim_\phi A$ and $A \mapsto A^\perf = \lim_\phi A$ respectively.
 \end{remark}

A pleasant feature of this theory is that Frobenius kills the $p$-torsion.

\begin{lemma}[$p$-torsion in $\delta$-rings]
\label{PerfLambdaTF}
Fix a $\delta$-ring $A$. Then $A$ is $p$-torsionfree if at least one of the following holds true:
\begin{enumerate}
\item The Frobenius $\phi$ is injective (e.g., $A$ is perfect).
\item $A$ is reduced.
\end{enumerate}
\end{lemma}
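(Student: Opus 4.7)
The natural starting point is to exploit the only two ingredients available: the equation $px=0$ and the formal identities satisfied by $\delta$. My plan is to apply $\delta$ to the relation $px = 0$, distill from this an identity of the form $\phi(x) = (\text{multiple of }p)\cdot x^p$, and then conclude in each case separately.

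Concretely, using the product formula $\delta(py) = p^p \delta(y) + y^p \delta(p) + p\delta(p)\delta(y)$ with $y = x$, and plugging in the value $\delta(p) = \frac{p - p^p}{p} = 1 - p^{p-1}$ coming from the initial $\delta$-ring $\mathbf{Z}_{(p)}$ (\cref{InitialDeltaRing}), the left side is $\delta(px) = 0$. The coefficients of $\delta(x)$ collapse nicely: $p^p + p(1-p^{p-1}) = p$. So the identity reduces to
\[
p\,\delta(x) + (1 - p^{p-1})\,x^p = 0,
\]
which rearranges to $\phi(x) = x^p + p\,\delta(x) = p^{p-1}\,x^p$. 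Since $p \geq 2$ we have $p \mid p^{p-1}$, and since $px = 0$ implies $p x^p = (px)\cdot x^{p-1} = 0$, it follows that $p^{p-1} x^p = 0$, and hence $\phi(x) = 0$.

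From here the two cases are immediate. In case (1), injectivity of $\phi$ forces $x = 0$. In case (2), the vanishing $\phi(x) = 0$ means $x^p = -p\,\delta(x)$, so multiplying by $x$ gives $x^{p+1} = -(px)\,\delta(x) = 0$; thus $x$ is nilpotent, and being reduced forces $x = 0$.

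I do not anticipate an obstacle: the whole argument is a short manipulation once one thinks to apply $\delta$ to $px = 0$ and to use the explicit value of $\delta(p)$. The only substantive observation is the cancellation $p^p + p(1 - p^{p-1}) = p$, which turns what looks like a relation of order $p^p$ in $\delta(x)$ into a first-order relation, making the identity $\phi(x) = p^{p-1}x^p$ visible.
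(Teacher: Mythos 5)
Your proof is correct and takes essentially the same route as the paper: apply $\delta$ to $px=0$, combine the resulting identity with $\phi(x)=x^p+p\delta(x)$ to conclude $\phi(x)=0$, then finish case (1) by injectivity of $\phi$ and case (2) by producing a nilpotent. The only cosmetic difference is that you substitute $\delta(p)=1-p^{p-1}$ at the outset and exploit the collapse $p^p+p(1-p^{p-1})=p$, whereas the paper keeps $\delta(p)$ symbolic, groups it with $\phi(x)$, and uses that $\delta(p)$ is a unit; your case (2) is also marginally shorter since $x^{p+1}=-(px)\delta(x)=0$ bypasses the paper's intermediate deduction that $p\delta(x)=0$.
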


\begin{proof}
For (1), we shall prove a stronger statement: given $x \in A$ with $px = 0$, we have $\phi(x) = 0$. To see this, apply $\delta$ to $px = 0$ to get 
\[ 0 = p^p \delta(x) + x^p \delta(p) + p \delta(x) \delta(p) = p^p \delta(x) + \phi(x) \delta(p).\]
As $\delta(p)$ is a unit, it is enough to show that $p^p \delta(x) = 0$. But we have
\[ p^p \delta(x) = p^{p-1}(\phi(x) - x^p) = \phi(p^{p-1} x) - p^{p-1} x^p,\]
and this vanishes as $px = 0$ and $p \geq 2$.

For (2), say $A$ is a reduced $\delta$-ring and $x \in A$ with $px = 0$. We have seen above that $\phi(x) = 0$ and that $p^p \delta(x) = 0$. The latter implies that $p\delta(x) = 0$ as $A$ is reduced. On the other hand, since $\phi(x) = 0$, we have $x^p = -p\delta(x)$, so $x^p = 0$, whence $x=0$ by reducedness of $A$. 
\end{proof}

\begin{remark}
A ring $R$ of characteristic $p$ is reduced if and only if its Frobenius endomorphism is injective. Thus, one might wonder if assumptions (1) and (2) in \cref{PerfLambdaTF} are equivalent. In fact, they are mutually non-comparable, i.e., neither implies the other. For example, the reduced ring $A = \mathbf{Z}_p[x]/(x^p-1)$ supports a unique $\delta$-structure $\phi(x) = x^p = 1$; the Frobenius lift $\phi$ on $A$ is not injective since $\phi(x-1) = 0$. Conversely, the non-reduced ring $B = \mathbf{Z}_p[x]/(x^2)$ supports a unique $\delta$-structure with $\phi(x) = px$; since $\phi(a+bx) = a+pbx$ for $a,b \in \mathbf{Z}_p$, the map $\phi$ is injective. 
\end{remark}

\begin{remark}
Lemma~\ref{PerfLambdaTF} (1) can also be conceptually understood using the fact that $\phi$ is a derived Frobenius lift, as in Remark~\ref{DerivedFrobLift}.  To see this, given a $\delta$-ring $(A,\delta)$, write $B$ for the simplicial commutative $\mathbf{F}_p$-algebra $A \otimes_{\mathbf{Z}}^L \mathbf{F}_p$ obtained by base change from $A$. Then the map $\phi_B:B \to B$ induced by base change from $\phi:A \to A$ coincides with the Frobenius endomorphism of $B$. Now it is known that the Frobenius is always zero on $\pi_i(B)$ for $i > 0$, cf.~e.g.~\cite[Proposition 11.6]{BhattScholzeWitt}. In particular, it follows that $\pi_1(\phi_B):\pi_1(B) \to \pi_1(B)$ is zero. But the standard resolution of $\mathbf{F}_p$ over $\mathbf{Z}$ identifies $\pi_1(B)$ with $A[p]$ in a $\phi$-compatible manner, so it follows that $\phi(A[p]) = 0$, as wanted. 
\end{remark}

\begin{corollary}[Perfect $\delta$-rings]
\label{PerfectLambda}
The following categories are equivalent:
\begin{enumerate}
\item The category $\mathcal{C}_1$ of perfect $p$-complete $\delta$-rings.
\item The category $\mathcal{C}_2$ of $p$-adically complete and $p$-torsionfree rings $A$ with $A/p$ being perfect.
\item The category $\mathcal{C}_3$ of perfect $\mathbf{F}_p$-algebras. 
\end{enumerate}
The functor relating (1) and (2) is the forgetful functor; in particular, any ring homomorphism between two perfect $p$-complete $\delta$-rings is automatically a $\delta$-map. The functors relating (2) and (3) are $A \mapsto A/p$ and $R \mapsto W(R)$; in particular, there is only one $\delta$-structure on $W(R)$ for $R$ perfect of characteristic $p$.
\end{corollary}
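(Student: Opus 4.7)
The plan is to split the claim into two equivalences: $\mathcal{C}_2 \simeq \mathcal{C}_3$ (classical strict $p$-rings) and $\mathcal{C}_1 \simeq \mathcal{C}_2$ (showing the $\delta$-structure is uniquely determined by the underlying ring, and always exists when the conditions in (2) hold). I begin by checking the forgetful functor $\mathcal{C}_1 \to \mathcal{C}_2$ is well-defined: if $A \in \mathcal{C}_1$ then $A$ is $p$-torsionfree by \cref{PerfLambdaTF}(1) (the Frobenius $\phi$ is an isomorphism, hence injective), and $A/p$ is perfect since the Frobenius on $A/p$ equals $\phi \bmod p$, which is an isomorphism.

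For $\mathcal{C}_2 \simeq \mathcal{C}_3$, I invoke the standard theory of strict $p$-rings: the functor $R \mapsto W(R)$ lands in $\mathcal{C}_2$, and for any $A \in \mathcal{C}_2$ I construct a Teichmüller section $\tau: A/p \to A$ by $\tau(\bar{x}) = \lim_n \tilde{x}_n^{p^n}$, where $\tilde{x}_n \in A$ is any lift of $\bar{x}^{1/p^n} \in A/p$. Convergence and independence of the chosen lifts follow from the elementary identity $y \equiv z \bmod p^k \Rightarrow y^p \equiv z^p \bmod p^{k+1}$. One checks $\tau$ is multiplicative and that every $a \in A$ admits a unique $p$-adic expansion $a = \sum_{n \geq 0} \tau(\bar{a}_n) p^n$, which furnishes the canonical ring isomorphism $A \cong W(A/p)$ inverse to reduction mod $p$.

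For $\mathcal{C}_1 \simeq \mathcal{C}_2$, essential surjectivity follows by transporting the Witt vector Frobenius $F: W(A/p) \to W(A/p)$ along $A \cong W(A/p)$: this yields a Frobenius lift on $A$ which is an isomorphism (since $A/p$ is perfect), and as $A$ is $p$-torsionfree it corresponds canonically to a $\delta$-structure by \cref{FrobLiftTheta}, producing an object of $\mathcal{C}_1$.

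The main technical point is fully faithfulness, namely showing that any two Frobenius lifts $\phi_1, \phi_2: A \to A$ on an object of $\mathcal{C}_2$ coincide, so that any ring homomorphism between objects of $\mathcal{C}_1$ automatically preserves $\phi$ and hence $\delta$. I would argue by induction on $n$: they agree mod $p$ since both reduce to Frobenius, and if $\phi_1 \equiv \phi_2 \bmod p^n$ then for any $a \in A$ I write $a = b^p + pc$ (using perfection of $A/p$ to get $\bar{a} = \bar{b}^p$, then lifting to $A$). Setting $\phi_1(b) = \phi_2(b) + p^n d$, the binomial expansion gives $\phi_1(b)^p \equiv \phi_2(b)^p \bmod p^{n+1}$ since each cross term $\binom{p}{k} \phi_2(b)^{p-k}(p^n d)^k$ has $p$-adic valuation at least $n+1$ for $1 \le k \le p$. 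Combining with $p\phi_1(c) \equiv p\phi_2(c) \bmod p^{n+1}$ gives $\phi_1(a) \equiv \phi_2(a) \bmod p^{n+1}$, and $p$-adic completeness yields $\phi_1 = \phi_2$. This closes the loop and simultaneously identifies the composite $\mathcal{C}_1 \simeq \mathcal{C}_2 \simeq \mathcal{C}_3$ with $A \mapsto A/p$ and $R \mapsto W(R)$.
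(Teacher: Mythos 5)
Your proof is correct, but it takes a more hands-on route than the paper. The paper cites the classical equivalence $\mathcal{C}_2 \simeq \mathcal{C}_3$ as ``well-known (e.g., by deformation theory)'' and then derives the equivalence $\mathcal{C}_1 \simeq \mathcal{C}_2$ by a compressed categorical argument: the forgetful functor $F_{12}$ is faithful (via \cref{PerfLambdaTF}), and since the equivalence $F_{32} = F_{12} \circ F_{31}$ factors through $F_{12}$, one concludes $F_{12}$ is an equivalence. You instead re-derive both equivalences from scratch: the Teichm\"uller-section construction and $p$-adic expansion for $\mathcal{C}_2 \simeq \mathcal{C}_3$, and an explicit $p^n$-induction establishing the uniqueness of Frobenius lifts for the $\mathcal{C}_1$ part. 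Your approach buys elementariness and self-containment (you don't lean on prior strict $p$-ring theory or deformation theory), at the cost of length; the binomial estimate $\binom{p}{k}\phi_2(b)^{p-k}(p^n d)^k \in p^{n+1}A$ for $1 \le k \le p$ is correctly checked and the induction closes.

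One small elision worth flagging: you deduce fullness of the forgetful functor (``any ring homomorphism between objects of $\mathcal{C}_1$ automatically preserves $\phi$'') from the uniqueness of the Frobenius lift on a \emph{single} object. That is really the special case $f = \mathrm{id}$. To get the general case, either run your induction verbatim on $f\phi_1$ versus $\phi_2 f$ (both reduce to $\bar{f} \circ \mathrm{Frob}$ mod $p$, and the same binomial argument propagates the congruence, since $a = b^p + pc$ gives $f\phi_1(a) = f(\phi_1(b))^p + pf(\phi_1(c))$ and likewise for $\phi_2 f$), or note that once you have $\mathcal{C}_2 \simeq \mathcal{C}_3$, the maps $f\phi_1$ and $\phi_2 f$ are two ring homomorphisms $A_1 \to A_2$ in $\mathcal{C}_2$ reducing to the same map $\bar{f} \circ \mathrm{Frob}$ modulo $p$, hence are equal by fully faithfulness of reduction mod $p$. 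Either patch is one sentence and does not change the substance of your argument.
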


\begin{proof}
\cref{PerfLambdaTF} ensures that forgetting the $\delta$-structure gives a functor $F_{12}:\mathcal{C}_1 \to \mathcal{C}_2$, which is then tautologically faithful. The functor $F_{23}:\mathcal{C}_2 \to \mathcal{C}_3$ is given by reduction modulo $p$, i.e.~by $A \mapsto A/p$. Finally, the functor $F_{31}:\mathcal{C}_3 \to \mathcal{C}_1$ is given by $R \mapsto W(R)$. It is well-known (e.g., by deformation theory) that the resulting functor $F_{32} := F_{12} \circ F_{31}$ is an inverse to $F_{23}$. As $F_{12}$ is faithful and the equivalence $F_{32}$ factors over $F_{12}$, it formally follows that $F_{12}$ is also an equivalence. 
\end{proof}

\begin{lemma}[Perfect elements have rank $1$]
\label{Rank1pdivisible}
Fix a $\delta$-ring $A$ and some $x \in A$. Then $\delta(x^{p^n}) \in p^n A$ for all $n$. In particular, if $A$ is $p$-adically separated and $y \in A$ admits a $p^n$-th root for all $n \geq 0$, then $\delta(y) = 0$, i.e., $y$ has rank $1$.
\end{lemma}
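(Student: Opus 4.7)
The plan is to reduce to the universal case where everything is $p$-torsionfree, and then extract the divisibility from the binomial expansion of $\phi(x)^{p^n}$. Concretely, given any $\delta$-ring $A$ and $x \in A$, there is a unique $\delta$-map $\eta \colon \mathbf{Z}_{(p)}\{t\} \to A$ with $\eta(t) = x$, and by \cref{freelambdaring} the source is a polynomial ring over $\mathbf{Z}_{(p)}$ (hence $p$-torsionfree). Since $\delta$-maps carry $p^n$-multiples to $p^n$-multiples, it suffices to prove the first assertion in $\mathbf{Z}_{(p)}\{t\}$ for the universal element $t$.

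So I may assume $A$ is $p$-torsionfree. Since $\phi$ is a ring homomorphism, $\phi(x^{p^n}) = \phi(x)^{p^n}$, and by the definition of $\phi$,
\[
p\,\delta(x^{p^n}) \;=\; \phi(x)^{p^n} - x^{p^{n+1}} \;=\; (x^p + p\,\delta(x))^{p^n} - x^{p^{n+1}}.
\]
Expanding the binomial and dropping the $k=0$ term (which cancels the $x^{p^{n+1}}$), I need to verify that
\[
\sum_{k=1}^{p^n} \binom{p^n}{k} x^{p(p^n-k)} \, p^k\, \delta(x)^k \;\in\; p^{n+1} A.
\]
Using the standard identity $v_p\bigl(\binom{p^n}{k}\bigr) = n - v_p(k)$ for $1 \le k \le p^n$ (e.g.\ via Kummer's theorem, since $p^n - 1$ has all digits equal to $p-1$ in base $p$), each term has $p$-adic valuation $\ge n - v_p(k) + k$. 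Because $k \ge v_p(k) + 1$ for $k \ge 1$, this is $\ge n+1$, as required. Dividing by $p$ (which is injective on $A$) then yields $\delta(x^{p^n}) \in p^n A$.

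The deduction about rank $1$ elements is formal: if $y \in A$ admits a $p^n$-th root $z_n \in A$ for every $n$, then $y = z_n^{p^n}$, so the first part applied to $z_n$ gives $\delta(y) \in p^n A$. Since this holds for every $n$ and $A$ is $p$-adically separated, $\delta(y) \in \bigcap_n p^n A = 0$.

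The only mildly delicate point is the valuation estimate $v_p\bigl(\binom{p^n}{k} p^k\bigr) \ge n+1$, and this is an elementary counting fact; the rest of the argument is essentially forced by the definition of $\phi$ once one has reduced to the $p$-torsionfree setting via the universal $\delta$-ring, so I do not expect any real obstacle.
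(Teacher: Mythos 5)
Your proof is correct and follows essentially the same strategy as the paper's: reduce to the universal case $\mathbf{Z}_{(p)}\{t\}$, which is $p$-torsionfree by \cref{freelambdaring}, and then bound the $p$-adic valuation of $\phi(x)^{p^n} - x^{p^{n+1}}$. The only difference is in the final arithmetic step: the paper iterates the elementary lemma ``$a \equiv b \pmod{p^k}$ implies $a^p \equiv b^p \pmod{p^{k+1}}$'' starting from $\phi(x) \equiv x^p \pmod{p}$, whereas you do a one-shot binomial expansion and invoke the valuation formula $v_p\bigl(\binom{p^n}{k}\bigr) = n - v_p(k)$ together with $k \geq v_p(k)+1$ --- slightly heavier machinery for the same conclusion, but equally valid.
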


The conclusion of last part is false if we do not impose some form of $p$-locality on $A$: any endomorphism of the ring $\mathbf{Q}[x^{\frac{1}{p^\infty}}]$ determines a $\delta$-structure, but very few of them make $x$ have rank $1$. An explicit example is given by taking the Frobenius to be the identity.

\begin{proof}
The last assertion is automatic from the first one. For the first one, by reduction to the universal case from \cref{freelambdaring}, we may assume $A$ is $p$-torsionfree. Thus, $\delta(x^{p^n}) \in p^n A$ if and only if $p \delta(x^{p^n}) \in p^{n+1} A$. But showing the latter is equivalent to checking that $\phi(x^{p^n}) \equiv x^{p^{n+1}} \mod p^{n+1} A$. For $n=0$, this is true by definition. In general, given elements $a,b$ of a ring $C$ such that $a \equiv b \mod p^k C$ for some $k \geq 0$, we have $a^p \equiv b^p \mod p^{k+1} C$ using the binomial theorem. Applying this inductively to $\phi(x) \equiv x^p \mod pA$ then gives the desired claim.
\end{proof}

\begin{lemma}[Distinguished elements in perfect $\delta$-rings]
\label{distinguishedPerfect}
Let $A$ be a perfect $p$-complete $\delta$-ring, and fix $d \in A$. Then $d$ is distinguished if and only if the coefficient of $p$ in the Teichm\"uller expansion of $d$ (defined via \cref{PerfectLambda}) is a unit. 
\end{lemma}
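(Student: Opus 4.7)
The plan is to reduce to a direct computation with Witt coordinates. By \cref{PerfectLambda}, we may identify $A = W(R)$ for the perfect $\mathbf{F}_p$-algebra $R := A/p$, equipped with its unique $\delta$-structure; the associated Frobenius lift $\phi$ is the Witt vector Frobenius, and in particular satisfies $\phi([a]) = [a^p] = [a]^p$ on Teichmüller lifts. Write the Teichmüller expansion as $d = \sum_{i \geq 0} [a_i] p^i$. Since $A$ is $p$-adically complete, an element of $A$ is a unit iff its reduction to $R$ is a unit; thus ``the coefficient $[a_1]$ of $p$ is a unit'' is equivalent to ``$a_1 \in R^\times$''.

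The crux is the congruence $\delta(d) \equiv [a_1]^p \pmod p$. To obtain this, write $d = [a_0] + pe$ with $e = \sum_{i \geq 1} [a_i] p^{i-1}$. Expanding via the binomial theorem, every term $\binom{p}{k}[a_0]^{p-k}(pe)^k$ with $k \geq 1$ is divisible by $p^2$ (using that $\binom{p}{k}$ is divisible by $p$ for $1 \leq k \leq p-1$, together with the factor $p^k$), so $d^p \equiv [a_0]^p \pmod{p^2}$. On the other hand, since $\phi$ is a ring map fixing $p$ and acting by $[a_i] \mapsto [a_i]^p$,
\[ \phi(d) = \sum_{i \geq 0} [a_i]^p\, p^i \equiv [a_0]^p + p[a_1]^p \pmod{p^2}. \]
Using that $A$ is $p$-torsionfree (\cref{PerfLambdaTF}), one may divide by $p$, so
\[ \delta(d) = \frac{\phi(d) - d^p}{p} \equiv [a_1]^p \pmod p. \]

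Finally, since $R$ is perfect of characteristic $p$, the image $[a_1]^p \bmod p = a_1^p$ is a unit in $R$ iff $a_1$ is a unit in $R$. Combining with the first paragraph, $\delta(d)$ is a unit iff $a_1$ is a unit iff $[a_1]$ is a unit, proving the equivalence.

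I do not foresee any serious obstacle: the argument is essentially a one-line calculation once the identification $A = W(R)$ is in place. The mildly delicate point is to avoid confusing the ghost components with the Teichmüller components of $d$; this is handled by working directly with the multiplicativity of $[-]$ and the ring-homomorphism property of $\phi$, rather than appealing to Witt vector addition formulas.
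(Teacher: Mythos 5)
Your proof is correct and follows essentially the same route as the paper: both compute $\delta(d) = \frac{1}{p}(\phi(d) - d^p)$ explicitly in Witt coordinates and observe $\delta(d) \equiv a_1^p \pmod p$, then conclude via $p$-completeness and perfectness of $R$. You simply spell out the binomial estimate $d^p \equiv [a_0]^p \pmod{p^2}$ that the paper leaves implicit.
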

\begin{proof}
Say $d = \sum_{i=0}^\infty [a_i] p^i$ is the Teichm\"uller expansion of some $d \in A =W(R)$ for a perfect $\mathbf{F}_p$-algebra $R$. One then has
\[ \delta(d) = \frac{1}{p} \cdot \big(\sum_{i=0}^\infty [a_i^p] p^i - (\sum_{i=0}^\infty [a_i] p^i)^p\big).\]
Reducing modulo $p$, this gives
\[ \delta(d) \equiv a_1^p \mod pA,\]
so, by $p$-completeness of $A$, $d$ is distinguished exactly when $a_1 \in R$ is a unit.
\end{proof}

\begin{lemma}
\label{PrimEltPRop}
 Let $A$ be a $p$-torsionfree and $p$-adically separated $\delta$-ring with $A/p$ reduced. (For example, $A$ could be perfect and $p$-complete.) Fix $d \in A$ that is distinguished. Then 
\begin{enumerate}
\item The element $d\in A$ is a nonzerodivisor.
\item The ring $R = A/d$ has bounded $p^\infty$-torsion; in fact, we have $R[p] = R[p^\infty]$.
\end{enumerate}
\end{lemma}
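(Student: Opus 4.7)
The plan is to prove both parts by exploiting the identity $\phi(a) = a^p + p\delta(a)$ to shuttle information between divisibility by $p$ and divisibility by $d$, combined with reducedness of $A/p$. The key technical tool throughout is the following elementary observation: in any reduced ring $S$, the relation $\bar{a}^n \bar{b} = 0$ implies $\bar{a}\bar{b} = 0$, since $(\bar{a}\bar{b})^n = \bar{a}^n \bar{b}^n = 0$ forces $\bar{a}\bar{b}$ to be nilpotent, hence zero.

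For (1), I would argue by contradiction. Suppose $dx = 0$ with $x \neq 0$. Using $p$-adic separatedness, there is a largest $n \geq 0$ with $x \in p^n A$; factoring out $p^n$ and using $p$-torsionfreeness to cancel, one may assume $x \notin pA$. Applying $\phi$ to $dx = 0$ gives $\phi(d)\phi(x) = 0$; expanding $\phi(d) = d^p + p\delta(d)$ and multiplying by $x$, the term $d^p x \phi(x) = d^{p-1}(dx)\phi(x)$ vanishes, leaving $p\delta(d)x\phi(x) = 0$. Since $\delta(d)$ is a unit and $A$ is $p$-torsionfree, this yields $x\phi(x) = 0$; expanding $\phi(x) = x^p + p\delta(x)$, this becomes $x^{p+1} \in pA$, so reducedness of $A/p$ forces $x \in pA$, the desired contradiction.

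For (2), I would first note that it suffices to show $R[p^2] \subseteq R[p]$: a routine induction (applying this inclusion to $p^{n-1}\bar{r}$) then yields $R[p^n] = R[p]$ for all $n$, hence $R[p^\infty] = R[p]$. So assume $p^2 x = dy$ and seek $y'$ with $px = dy'$. Applying $\phi$ to $p^2 x = dy$ and expanding $\phi(d) = d^p + p\delta(d)$ gives $p(p\phi(x) - \delta(d)\phi(y)) = d^p \phi(y)$, so $d^p\phi(y) \in pA$. The reducedness trick applied in $A/p$ downgrades this to $d\phi(y) \in pA$; writing $d\phi(y) = pw$ and substituting back, $p$-torsionfreeness yields $p\phi(x) - \delta(d)\phi(y) = d^{p-1}w \in dA$, hence $\phi(y) \in pR$ (using $p \geq 2$ and that $\delta(d)$ is a unit in $R$). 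Expanding $\phi(y) = y^p + p\delta(y)$ then shows $y^p \in pR$, i.e., $y^p = pa + dc$ in $A$ for some $a,c$. Multiplying by $d$ and using $dy = p^2 x$ gives $p^2 y^{p-1}x = dy^p = pda + d^2 c$, whence $d^2 c \in pA$; applying the reducedness trick once more gives $dc \in pA$, so $y^p = pa + dc \in pA$, and reducedness of $A/p$ forces $y \in pA$. Writing $y = py'$ and cancelling $p$ from $p^2 x = pdy'$ by $p$-torsionfreeness yields $px = dy'$, as wanted. The main obstacle is simply spotting this particular interleaving of $\phi$-applications with the reducedness trick that descends step-by-step from $d^p\phi(y) \in pA$ all the way to $y \in pA$; once the pattern is identified, each individual step is routine.
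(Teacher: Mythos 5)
Your proof is correct. Its strategy --- exploit the $\delta$-ring identities plus reducedness of $A/p$ to bootstrap from $d$-divisibility to $p$-divisibility --- matches the paper's, but the route through the computation is genuinely different. For part (1), the paper applies $\delta$ to $xd=0$ to obtain $x^p\delta(d)+\delta(x)\phi(d)=0$ and then multiplies by $\phi(x)$, so that $\phi(x)\phi(d)=\phi(xd)=0$ kills the second term and gives $x^{2p}\in pA$; you instead apply $\phi$ and multiply by $x$, using $d^px\phi(x)=d^{p-1}(dx)\phi(x)=0$ to isolate $x\phi(x)=0$ and hence $x^{p+1}\in pA$, which is if anything a touch slicker. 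For part (2) the gap is wider: the paper again applies $\delta$, this time to the containment $gd\in p^2A$, and multiplies by $\phi(g)$ so that $\phi(gd)\in pA$ eliminates the cross term in one stroke, landing on $g^p\phi(g)\delta(d)\in pA$, hence $g^{2p}\in pA$, hence $g\in pA$ directly. You apply $\phi$ instead and then iterate the reducedness trick three separate times, descending from $d^p\phi(y)\in pA$ to $d\phi(y)\in pA$, from $d^2c\in pA$ to $dc\in pA$, and finally from $y^p\in pA$ to $y\in pA$. Both routes are valid; the paper's multiply-by-$\phi(g)$ device is the shortcut that collapses your iterated descent into a single application of reducedness.
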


\begin{proof} For (1), assume $f d = 0$ for some $f \in A$. We must show $f = 0$.  If not, then since $A$ is $p$-torsionfree and $p$-adically separated, we may assume $p \nmid f$ (by dividing $f$ by a suitable power of $p$). Applying $\delta$ to $fd = 0$ gives
\[ f^p \delta(d) + \delta(f) \phi(d) = 0.\]
Multiplying by $\phi(f)$ and using $\phi(fd) = 0$ gives $f^p \phi(f) \delta(d) = 0$, and hence $f^p \phi(f) = 0$ as $\delta(d)$ is a unit. Reducing modulo $p$ yields $f^{2p} = 0 \mod pA$, whence $f = 0 \mod pA$ as $A/pA$ has an injective Frobenius. But we assumed $p \nmid f$, so we get a contradiction.

For (2), it is enough to show that $R[p] = R[p^2]$. Lifting to $A$, we must show the following: given $f,g \in A$ with $p^2 f = gd$, we must have $p \mid g$ and hence $pf \in d A$ since $A$ is $p$-torsionfree.  Applying $\delta$ to the containment $gd \in p^2 A$ gives $\delta(d) g^p + \delta(g) \phi(d)  \in pA$. Multiplying by $\phi(g)$, and using that $\phi(d g) \in p A$, gives $\delta(d) g^p \phi(g) \in p A$. As $\delta(d)$ is a unit, this gives $g^p \phi(g) \in pA$ and hence $g^{2p} \in pA$. Finally, as Frobenius is injective modulo $p$, we conclude that $g \in p A$, as wanted.
\end{proof}

\subsection{Relation to divided power algebras}
\label{ss:DeltaPD}

In a $p$-torsionfree $\mathbf Z_{(p)}$-algebra $A$, we let
\[
\gamma_n(x)=\frac{x^n}{n!}\in A[\tfrac 1p]
\]
be the usual divided powers.

\begin{lemma}
\label{FirstDividedPowerLambda}
Let $A$ be a $p$-torsionfree $\delta$-ring. Fix $z \in A$ with $\gamma_p(z) \in A$. Then $\gamma_n(z) \in A$ for all $n \geq 0$.
\end{lemma}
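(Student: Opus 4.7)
My plan is to reduce to showing $\gamma_{p^k}(z) \in A$ for all $k \geq 0$, and then to induct on $k$, using the Frobenius lift to propagate the $p$-divisibility already implicit in the hypothesis upward through divided powers of increasing order.

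\textbf{Reduction to prime powers.} Iterating the divided-power product formula $\gamma_m(z)\gamma_n(z) = \binom{m+n}{m} \gamma_{m+n}(z)$ yields $\prod_i \gamma_{n_i}(z) = \binom{n}{n_1,n_2,\ldots}\,\gamma_n(z)$ whenever $n = \sum_i n_i$. Writing $n = \sum_i a_i p^i$ in base $p$ and setting $n_i = a_i p^i$, Kummer's theorem says this multinomial has $p$-adic valuation equal to the number of carries in the base-$p$ sum, which is zero; the same Kummer argument writes $\gamma_{p^i}(z)^{a_i}$ as a $\mathbf{Z}_{(p)}$-unit times $\gamma_{a_i p^i}(z)$. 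Combining the two, if $\gamma_{p^i}(z) \in A$ for every $i$ then $\gamma_n(z) \in A$ for every $n$.

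\textbf{Extracting $\phi(z) \in pA$.} Since $(p-1)!$ is a unit in $\mathbf{Z}_{(p)}$, the hypothesis $\gamma_p(z) \in A$ is equivalent to $z^p \in pA$. Combined with the identity $\phi(z) = z^p + p\delta(z)$, this gives $\phi(z) \in pA$; write $\phi(z) = pc$ with $c \in A$. This is the one place the hypothesis is used.

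\textbf{Induction on $k$.} Set $a_k := \gamma_{p^k}(z)$; the cases $k = 0, 1$ are trivial or given. Assuming $a_k \in A$, the product formula gives $a_k^p = \bigl((p^{k+1})!/((p^k)!)^p\bigr)\,a_{k+1}$, and a routine valuation calculation shows $v_p\bigl((p^{k+1})!/((p^k)!)^p\bigr) = \tfrac{p^{k+1}-1}{p-1} - p\cdot\tfrac{p^k-1}{p-1} = 1$, so it suffices to prove $a_k^p \in pA$. Using $a_k^p = \phi(a_k) - p\delta(a_k)$ together with $\delta(a_k) \in A$, this reduces further to $\phi(a_k) \in pA$. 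Computing in $A[1/p]$ (legitimate by $p$-torsion-freeness), $\phi(a_k) = \phi(z)^{p^k}/(p^k)! = \bigl(p^{p^k}/(p^k)!\bigr)\,c^{p^k}$, and the coefficient has $p$-adic valuation $p^k - \tfrac{p^k-1}{p-1} = \tfrac{p^k(p-2)+1}{p-1} \geq 1$ for $k \geq 1$. Hence $\phi(a_k) \in pA$, closing the induction.

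\textbf{Main obstacle.} The substantive work is really just the two $p$-adic valuation calculations; the conceptual heart is the observation that the hypothesis $\gamma_p(z) \in A$ already forces $\phi(z) \in pA$, after which every application of $\phi$ to a divided power supplies a large enough power of $p$ in the numerator to absorb the denominator $(p^k)!$.
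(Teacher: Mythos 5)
Your proof is correct and takes a genuinely different route from the one in the paper. The paper first establishes $\gamma_{p^2}(z) \in A$ by a direct expansion of $\delta\bigl(\tfrac{z^p}{p}\bigr)$, then runs a recursive induction on ``admissible pairs'' $(A,z)$ via the identity $\gamma_{kp}(z) = u\,\gamma_k(\gamma_p(z))$ for a unit $u \in \mathbf{Z}_{(p)}$, reducing high divided powers of $z$ to lower divided powers of the element $\gamma_p(z)$; this requires a double induction and tracking that $(A,\gamma_p(z))$ is again admissible. Your argument instead isolates the single input $\phi(z) \in pA$ --- an immediate consequence of $z^p \in pA$ and $\phi(z) = z^p + p\delta(z)$ --- and then exploits multiplicativity of $\phi$: since $\phi(\gamma_{p^k}(z)) = \phi(z)^{p^k}/(p^k)!$ carries a coefficient of $p$-adic valuation $p^k - \tfrac{p^k-1}{p-1} \geq 1$, a single valuation count closes the induction $\gamma_{p^k}(z) \in A \Rightarrow \gamma_{p^{k+1}}(z) \in A$ (together with the standard Kummer reduction from general $n$ to prime powers). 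What you gain is a transparent structural explanation of where the $p$-divisibility comes from: the Frobenius lift upgrades the hypothesis to $\phi(z) \in pA$, after which the numerator $p^{p^k}$ in $\phi(\gamma_{p^k}(z))$ vastly overpowers the denominator $(p^k)!$, whose valuation is only $\tfrac{p^k-1}{p-1} < p^k$. The paper's recursion stays within divided-power arithmetic and never computes $\phi$ on a divided power; the two proofs are thus largely independent, though both ultimately rest on the same formula $\phi(x) = x^p + p\delta(x)$.
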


\begin{proof}
We first explain why $\gamma_{p^2}(z) \in A$. As $\mathrm{val}_p(p^2!) = p+1$, we must check that $\frac{z^{p^2}}{p^{p+1}} \in A$. As $A$ is a $\delta$-ring, we have $\delta(\frac{z^p}{p}) \in A$. Using the definition of $\phi$, we get
\[ \delta(\frac{z^p}{p}) = \frac{1}{p}\Big(\frac{\phi(z)^p}{p} - \frac{z^{p^2}}{p^p}\Big) = \frac{(z^p + p \delta(z))^p}{p^2} - \frac{z^{p^2}}{p^{p+1}} \in A.\]
Now $\frac{z^p + p \delta(z)}{p} \in A$ by assumption, which gives $\frac{(z^p + p\delta(z))^p}{p^2} = p^{p-2} \cdot \Big(\frac{z^p + p\delta(z)}{p}\Big)^p \in A$ as well (as $p\geq 2$), so the above formula for $\delta(\frac{z^p}{p})$ shows that $\frac{z^{p^2}}{p^{p+1}} \in A$.

We now prove the lemma. For the purposes of solely this lemma, call an element $z \in A$ {\em admissible} if $\gamma_p(z) \in A$. We shall prove by induction on $n$ that for any admissible $z \in A$, we have $\gamma_n(z) \in A$. The $n=1$ case is clear. Assume inductively that for all admissible $y \in A$, we have $\gamma_m(y) \in A$ for $m < n$. Fix admissible $z \in A$. We must check $\gamma_n(z) \in A$. The statement is clear by induction if $n$ is not divisible by $p$, so we may assume $n=kp$, whence $k < n$. But one checks (by induction on $k$) that the general identity $\gamma_{kp}(-) = u \cdot \gamma_k(\gamma_p(-))$ for a unit $u \in \mathbf{Z}_{(p)}$. Using this identity for $k=p$, the previous paragraph shows that $\gamma_p(z)$ is admissible. As $k < n$, induction shows that $\gamma_k(\gamma_p(z)) \in A$, so the claim follows.
\end{proof}

\begin{lemma}
\label{PDalgLambda}
The ring $C := \mathbf{Z}_{(p)}\{x,\frac{\phi(x)}{p}\}$ defined by the pushout square
\[ \xymatrix{ \mathbf{Z}_{(p)}\{y\} \ar[r]^-{y \mapsto pz} \ar[d]_-{y \mapsto \phi(x)} & \mathbf{Z}_{(p)}\{z\} \ar[d] \\
		  \mathbf{Z}_{(p)}\{x\} \ar[r] & \mathbf{Z}_{(p)}\{x,z\}/(\phi(x) - pz)_{\delta} =: C }\]
of $\delta$-rings identifies with the pd-envelope $D := D_{(x)}(\mathbf{Z}_{(p)}\{x\})$.
\end{lemma}

\begin{proof}
The left vertical map in the pushout square defining $C$ identifies abstractly with the map $\phi$ on $\mathbf{Z}_{(p)}\{x\}$, and is thus faithfully flat by \cref{freelambdaring}. The right vertical map is then also faithfully flat, so $C$ has no $p$-torsion. Moreover, as the top horizontal map is an isomorphism after inverting $p$, the same is true for the bottom horizontal map. We may thus view $C$ as the smallest $\delta$-subring of $\mathbf{Z}_{(p)}\{x\}[\frac{1}{p}]$ that contains $\mathbf{Z}_{(p)}\{x\}$ and $\frac{\phi(x)}{p}$. Equivalently, as $\frac{\phi(x)}{p} = \frac{x^p}{p} + \delta(x)$, we can also view $C$ as the smallest $\delta$-subring of $\mathbf{Z}_{(p)}\{x\}[\frac{1}{p}]$ containing $\mathbf{Z}_{(p)}\{x\}$ and $\frac{x^p}{p}$.

Now consider the pd-envelope $D$ of $\mathbf{Z}_{(p)}\{x\}$ along $(x)$. As $x$ is a free variable, it is standard that $D$ is $p$-torsionfree, and may thus be viewed as the smallest subring of $\mathbf{Z}_{(p)}\{x\}[\frac{1}{p}]$ that contains $\mathbf{Z}_{(p)}\{x\}$ and $\frac{x^n}{n!}$ for all $n \geq 1$. We shall check $D = C$ as subrings of $\mathbf{Z}_{(p)}\{x\}[\frac{1}{p}]$.

The containment $D \subset C$ is immediate from \cref{FirstDividedPowerLambda}. To show $C \subset D$, since $\frac{x^p}{p} \in D$, it is enough to show that $\phi$ preserves $D$ and that the resulting endomorphism of $D$ gives a $\delta$-structure (or, equivalently, that $\phi$ restricts to a Frobenius lift on $D$). Observe that
\[ \phi(\frac{x^n}{n!}) = \frac{(x^p + p \delta(x))^n}{n!} = \frac{\sum_{i=0}^n \binom{n}{i} x^{pi} p^{n-i} \delta(x)^{n-i}}{n!} = \frac{\sum_{i=0}^n \binom{n}{i} \cdot (pi)! \cdot p^{n-i} \cdot \frac{x^{pi}}{(pi)!} \cdot \delta(x)^{n-i}}{n!}.   \]
To prove $\phi$ preserves $D$, it suffices to prove that the coefficient 
\[ \frac{\binom{n}{i} \cdot (pi)! \cdot p^{n-i}}{n!}\] 
is $p$-integral. But we have 
\[ \frac{\binom{n}{i} \cdot (pi)! \cdot p^{n-i}}{n!} = \frac{(pi)!}{i!} \cdot \frac{p^{n-i}}{(n-i)!}.\]
Both terms lie in $\mathbf Z_{(p)}$, and the first factor is divisible by $p$ for $i>0$, while the second factor is divisible by $p$ if $i<n$ (as it is a divided power of $p$). This proves that $\phi$ preserves $D$, and that 
\[ \phi(\frac{x^n}{n!}) \equiv 0 \mod pD\] 
for $n > 0$. We also have
\[ \Big(\frac{x^n}{n!}\Big)^p = \gamma_p(\frac{x^n}{n!}) \cdot p! = 0 \mod pD\]
for $n > 0$. In particular, the endomorphism $\phi$ of $D$ reduces modulo $p$ to Frobenius on all generators, and hence all elements, of $D$, as wanted.
\end{proof}

\begin{remark}
\label{PDalgLambdaDerived}
The proof of \cref{PDalgLambda} shows also that the simplicial commutative ring obtained by freely adjoining $\frac{\phi(x)}{p}$ to $\mathbf{Z}_{(p)}\{x\}$ is discrete (i.e., that the pushout square in \cref{PDalgLambda} is also a pushout square in the $\infty$-category of simplicial commutative rings) and thus coincides with the ring $\mathbf{Z}_{(p)}\{x,\frac{\phi(x)}{p}\} \simeq D_{(x)}(\mathbf{Z}_{(p)})$ above.
\end{remark}

\begin{lemma}[Divided power envelopes for regular sequences]
\label{PDEnvBCbasic}
Let $B$ be a $p$-torsionfree ring. Fix $f_1,...,f_r \in B$ that give a Koszul-regular sequence on $B/p$. Consider simplicial commutative ring $D'$ defined by the pushout square
\[ \xymatrix{ \mathbf{Z}_{(p)}[x_1,...,x_r] \ar[r]^-{x_i \mapsto f_i} \ar[d] & B \ar[d] \\
D_{(x_1,...,x_r)}(\mathbf{Z}_{(p)}[x_1,...,x_r]) \ar[r] & D' } \]
in the $\infty$-category of simplicial commutative rings. Then $D'$ is discrete, $p$-torsionfree, and identified with the PD-envelope $D_{(f_1,...,f_r)}(B)$. In particular, the latter is also $p$-torsionfree, and its formation from $B$ commutes with base change along maps $B \to B'$ of $p$-torsionfree rings with the property that $f_1,...,f_r$ give a Koszul-regular sequence on $B'/p$.
\end{lemma}

\begin{proof}
We first prove that $D'$ is discrete and $p$-torsionfree. As the left vertical map is an isomorphism after inverting $p$, the same is true for the right one, so $B[1/p] = D'[1/p]$. To show that $D'$ is discrete and $p$-torsionfree, it then suffices to show that $D' \otimes_{\mathbf{Z}}^L \mathbf{F}_p$ is discrete. Now recall the standard calculation that the map
 $\mathbf{F}_p[x_1,..,x_r] \to D_{(x_1,...,x_r)}(\mathbf{F}_{(p)}[x_1,...,x_r])$ expresses the target as a free module over $\mathbf{F}_p[x_1,...,x_r]/(x_1^p,...,x_r^p) = \mathrm{Kos}(\mathbf{F}_p[x_1,...,x_r]; x_1^p,...,x_r^p)$. Applying $- \otimes_{\mathbf{Z}}^L \mathbf{F}_p$ to the above square, we learn that $D' \otimes_{\mathbf{Z}}^L \mathbf{F}_p$ is a free module over $\mathrm{Kos}(B/p;f_1^p,...,f_r^p)$. Our assumption on the $f_i$'s shows that $\mathrm{Kos}(B/p;f_1,...,f_r)$ is concentrated in degree $0$. By induction on $\sum_i n_i$, it then follows that $\mathrm{Kos}(B/p;f_1^{n_1},...,f_r^{n_r})$ is concentrated in degree $0$ for all $r$-tuples $(n_1,...,n_r)$ of positive integers. In particular,  $\mathrm{Kos}(B/p;f_1^p,...,f_r^p)$ is concentrated in degree $0$, and thus the free module $D' \otimes_{\mathbf{Z}}^L \mathbf{F}_p$  is also concentrated in degree $0$. 

In the $p$-torsionfree $B$-algebra $D'$, the images of the $f_i$'s admit divided powers (as the images of the divided powers of the $x_i$'s on the bottom left in the square above). The universal property then gives a map $a:D_{(f_1,...,f_r)}(B) \to D'$ of $B$-algebras. On the other hand, the description of $D'$ as a pushout and functoriality of divided power envelopes also gives a natural map $b:D' \to D_{(f_1,...,f_r)}(B)$ of $B$-algebras. The composition $a \circ b:D' \to D'$ must be the identity: it induces the identity on $D'[1/p] = B[1/p]$ by virtue of being a $B$-algebra map. On the other hand, the composition $b \circ a:D_{(f_1,...,f_r)}(B) \to D_{(f_1,...,f_r)}(B)$ is a $B$-algebra map that sends each divided power $\gamma_n(f_i) \in  D_{(f_1,...,f_r)}(B)$ to the image under the functoriality map $D_{(x_1,...,x_r)}(\mathbf{Z}_{(p)}[x_1,...,x_r]) \to D'  \xrightarrow{b}  D_{(f_1,...,f_r)}(B)$ of $\gamma_n(x_i) \in D_{(x_1,...,x_r)}(\mathbf{Z}_{(p)}[x_1,...,x_r])$. As this image is exactly $\gamma_n(f_i)$, it follows that $b \circ a$ is the identity on all $B$-algebra generators of $ D_{(f_1,...,f_r)}(B)$, and must thus be the identity. Thus, $a$ and $b$ give mutually inverse isomorphisms between $D'$ and $ D_{(f_1,...,f_r)}(B)$.

The last part is clear as the formation of $D'$ from $B$ commutes with all base changes.
\end{proof}

\begin{corollary}
\label{PDenvRegSeqLambda}
Let $A$ be a $p$-torsionfree $\delta$-ring. Fix $f_1,...,f_r \in A$ that define a regular sequence in $A/p$. Consider the simplicial commutative $\delta$-ring $A\{\frac{\phi(f_1)}{p},...,\frac{\phi(f_r)}{p}\}$ obtained by freely adjoining $\frac{\phi(f_i)}{p}$ to $A$ for $1 \leq i \leq r$, i.e., the simplicial commutative $\delta$-ring defined by the derived pushout square
\[ \xymatrix{ \mathbf{Z}_{(p)}\{x_1,....,x_r\} \ar[r]^-{x_i \mapsto py_i} \ar[d]^-{x_i \mapsto \phi(f_i)} & \mathbf{Z}_{(p)}\{y_1,...y_r\} \ar[d] \\
A \ar[r] & A\{\frac{\phi(f_1)}{p},...,\frac{\phi(f_r)}{p}\} }\]
in the $\infty$-category of simplicial commutative $\delta$-rings. Then $A\{\frac{\phi(f_1)}{p},...,\frac{\phi(f_r)}{p}\}$ is discrete, $p$-torsionfree, and identifies with the pd-envelope $D_I(A)$ of $I = (f_1,...,f_r) \subset A$. In particular, $D_I(A)$ is a $\delta$-ring, and is finitely presented (over $A$) as such.
\end{corollary}
\begin{proof} 
We can write the pushout square in the statement as a composition of two pushout squares in the $\infty$-category of simplicial commutative $\delta$-rings:
\[ \xymatrix{ \mathbf{Z}_{(p)}\{x_1,....,x_r\} \ar[r]^-{x_i \mapsto py_i} \ar[d]^-{\phi} &  \mathbf{Z}_{(p)}\{y_1,...y_r\} \ar[d]  \\
 \mathbf{Z}_{(p)}\{x_1,....,x_r\} \ar[r] \ar[d]^{x_i \mapsto f_i} & \mathbf{Z}_{(p)}\{x_1,...,x_r,\frac{\phi(x_1)}{p},...,\frac{\phi(x_r)}{p}\} \ar[d] \\
A \ar[r] & A\{\frac{\phi(f_1)}{p},...,\frac{\phi(f_r)}{p}\}, }\]
where the second and third vertex of the second column are defined by virtue of the pushout property. Using Lemma~\ref{PDalgLambda} and Remark~\ref{PDalgLambda}, the middle row can be identified with the map  $\mathbf{Z}_{(p)}\{x_1,....,x_r\}  \to D_{(x_1,...,x_r)}(\mathbf{Z}_{(p)}\{x_1,....,x_r\} )$ that formally adjoins divided powers of the $x_i$'s. Lemma~\ref{PDEnvBCbasic} (and the fact that the formation of PD-envelopes commutes with flat base change along $\mathbf{Z}_{(p)}[x_1,...,x_r] \to \mathbf{Z}_{(p)}\{x_1,...,x_r\}$) then imply the bottom right vertex of the above square is indeed discrete, $p$-torsionfree, and identified with $D_I(A)$, as wanted.
\end{proof}

\begin{remark}
\label{PDEnvFDescent}
\cref{PDenvRegSeqLambda} shows that the pd-envelope $D_I(A)$ is  the $\phi$-pullback of $A\{\frac{f_1}{p},...,\frac{f_r}{p}\}$. This last ring only depends on the ideal $I = (f_1,..,f_r)$, and not on the specific sequence chosen: it can be characterized as the universal $p$-torsionfree $\delta$-$A$-algebra $B$ where $p \mid I$. In particular, the pd-envelopes that arise in this fashion have a canonical $\phi$-descent.
\end{remark}

\begin{warning}
\cref{PDenvRegSeqLambda} shows that the subring $A\{\frac{\phi(f_1)}{p},...,\frac{\phi(f_r)}{p}\} \subset A[\frac{1}{p}]$, which also equals $A\{\frac{f_1^p}{p},...,\frac{f_r^p}{p}\}$,  is independent of the $\delta$-structure on $A$. The Frobenius appearing in the preceding sentence is crucial, and the claim does not hold without the $\phi$-twist. For example, consider the ring $A = \mathbf{Z}_{(p)}[x]$ with $f = x$. If we use the Frobenius lift $x \mapsto x^p$ to define a $\delta$-structure $\delta_1$, then $R_1 := A\{\frac{x}{p}\} \subset A[\frac{1}{p}]$ contains $\delta_1(\frac{x}{p}) = \frac{x^p}{p^{p+1}} \cdot (p^{p-1} - 1)$. On the other hand, if we use the Frobenius lift $x \mapsto x^p + p$ to define a $\delta$-structure $\delta_2$, then $R_2 := A\{\frac{x}{p}\} \subset A[\frac{1}{p}]$ contains $\delta_2(\frac{x}{p}) = \frac{x^p}{p^{p+1}} \cdot (p^{p-1} - 1) + \frac{1}{p}$. Any subring of $A[\frac{1}{p}]$ that contains both $\delta_1(\frac{x}{p})$ and $\delta_2(\frac{x}{p})$ must then also contain $\frac{1}{p}$. On the other hand, it is easy to see using \cref{PDenvRegSeqLambda} that neither $R_1$ nor $R_2$ contain $\frac{1}{p}$. In particular, we have $R_1 \not\subset R_2$ and $R_2 \not\subset R_1$.
\end{warning}

\subsection{Flatness for certain pd-envelopes and prismatic-envelopes}
\label{ss:PDEnvelopeFlat}

For future reference, we record some well-known facts about divided power envelopes as well as their consequences for prismatic envelopms. We formulate these in the generality of simplicial commutative rings as it will be convenient for applications. 


Fix a simplicial commutative ring $A$ and a finitely generated ideal $I = (f_1,...,f_n) \subset \pi_0(A)$. Let $B$ be a derived $I$-complete simplicial commutative $A$-algebra. Then it follows from the definitions that $B$ is $I$-completely flat over $A$ if and only if the map $\mathrm{Kos}(A; f_1,..,f_n) \to \mathrm{Kos}(B; f_1,...,f_n)$ is a flat map of simplicial commutative rings. Here the Koszul complexes are defined by
\[
\mathrm{Kos}(A;f_1,\ldots,f_n) = A\otimes^L_{\mathbf Z[f_1,\ldots,f_n]} \mathbf Z[f_1,\ldots,f_n]/(f_1,\ldots,f_n)\ .
\]
We shall need a variant.

\begin{definition}
\label{CompleteRegularity}
Fix a simplicial commutative ring $A$ and a finitely generated ideal $I = (f_1,...,f_n) \subset \pi_0(A)$. Let $B$ be a derived $I$-complete simplicial commutative $A$-algebra. A sequence $x_1,...,x_r \in \pi_0(B)$ is {\em $I$-completely regular relative to $A$} if the map $\mathrm{Kos}(A; f_1,..,f_n) \to \mathrm{Kos}(B; f_1,...,f_n, x_1,...,x_r)$ is a flat map of simplicial commutative rings.
\end{definition}

In the above setup, it is immediate that these notions are stable under $I$-complete base changes on $A$. Moreover, the condition of $I$-complete regularity on a sequence $x_1,...,x_r$ is insensitive to perturbing each $x_i$ by an element of $(f_1,...,f_r) \pi_0(B)$.  Using notion, we obtain a relative variant of a slight strengthening of Lemma~\ref{PDEnvBCbasic} with a similar proof:

\begin{lemma}
\label{PDEnvelopeFlat}
Let $A$ be a simplicial commutative ring. Let $B$ be a $p$-complete simplicial commutative $A$-algebra equipped with a sequence $x_1,...,x_r \in \pi_0(B)$ such that the sequence $x_1^p,...,x_r^p \in \pi_0(B)$ is $p$-completely regular relative to $A$. Let $D$ be the simplicial commutative ring obtained defined by the pushout square 
\[ \xymatrix{ \mathbf{Z}_p[y_1,...,y_r]^{\wedge}_p \ar[r] \ar[d]^{\mathrm{can}} & B \ar[d] \\
D_{(y_1,...,y_r)}(\mathbf{Z}_p[y_1,...,y_r])^{\wedge}_p \ar[r] & D }\]
in the $\infty$-category of derived $p$-complete simplicial commutative rings, where the top horizontal map is any map of simplicial commutative rings sending $y_i$ to $x_i$ on $\pi_0$; there is a unique such map up to homotopy.   Then $A \to D$ is $p$-completely flat. 
\end{lemma}

The condition on the $x_i$'s is implied if we assume that $x_1,...,x_r$ is itself $p$-completely regular relative to $A$ by filtering the Koszul complex on $x_1^p,...,x_r^p$ in terms of that on $x_1,...,x_r$ (as in the proof of Lemma~\ref{PDEnvBCbasic}).

\begin{proof}
To show that $A \to D$ is $p$-completely flat, it suffices to show that the base change $\pi_0(A/p) \to \pi_0(A/p) \otimes_A^L D$ is flat. As the formation of $D$ from $B$ commutes with base change, and because our hypotheses are stable under base change, we may base change along $A \to \pi_0(A/p)$ to reduce to the case where $A$ is a discrete $\mathbf{F}_p$-algebra. In this case, $B$ is a simplicial commutative $A$-algebra, and $x_1,...,x_r \in \pi_0(B)$ is a sequence such that $\mathrm{Kos}(B; x_1^p,...,x_r^p)$ is $A$-flat.  As we are working over $\mathbf{F}_p$, one learns from the square defining $D$ that $D$ is a free $\mathrm{Kos}(B; x_1^p,...,x_r^p)$-module: this reduces to the analogous statement for the mod $p$ reduction of the left vertical map, which is classical. Since we already assume that $\mathrm{Kos}(B; x_1^p,...,x_r^p)$ is $A$-flat, the claim follows.
\end{proof}

\begin{corollary}
\label{PDFlatp}
Let $A$ be a $p$-complete simplicial commutative $\delta$-ring. Let $B$ be a $p$-complete simplicial commutative $\delta$-$A$-algebra. Fix a sequence $x_1,...,x_r \in \pi_0(B)$ that is $p$-completely regular relative to $A$. Consider the $p$-complete simplicial commutative $\delta$-$A$-algebra $C := B\{\frac{x_i}{p}\}^\wedge$ obtained by freely adjoining $\frac{x_i}{p}$ to $B$, i.e., $C$ is described by a pushout diagram
\[ \xymatrix{ \mathbf{Z}_p\{x_1,...,x_r\}^{\wedge}_p \ar[rr]^-{x_i \mapsto x_i} \ar[d]^{x_i \mapsto pz_i} && B \ar[d] \\
  \mathbf{Z}_p\{z_1,...,z_r\}^{\wedge} \ar[rr] && C }\]
in the $\infty$-category of simplicial commutative $\delta$-rings. Then $C$ is $p$-completely flat over $A$.
\end{corollary}

\begin{proof}
Consider the commutative diagram
\[ \xymatrix{ A \ar[r] \ar@{=}[d] & A \{x_1,...,x_r\}^\wedge \ar[r] \ar[d]^-{\psi} & B \ar[r] \ar[d]^-{\psi_B} & B\{\frac{x_1}{p},...,\frac{x_r}{p}\}^\wedge =: C \ar[d]^-{\psi_C} \\
		  A \ar[r] & A\{y_1,...,y_r\}^\wedge \ar[r] & B' \ar[r] & B'\{\frac{\phi(y_1)}{p},...,\frac{\phi(y_r)}{p}\}^\wedge =: C' }\]
Here the second vertical map $\psi$ is the unique $\delta$-$A$-algebra map defined by $x_i \mapsto \phi(y_i)$, and $B'$ and $C'$ are simplicial commutative $\delta$-rings defined by requiring the middle and right squares above to be derived pushout squares in $p$-complete simplicial commutative rings. The map $\psi$ is $p$-completely faithfully flat, and hence the same holds for $\psi_B$ and $\psi_C$. It is thus enough to show that $C'$ is $p$-completely flat over $A$.  Now the images $\phi(y_1),...,\phi(y_r) \in \pi_0(B')$  of $x_1,...,x_r \in \pi_0(B)$ under $\psi_B$ form a $p$-completely regular sequence on $B'$ relative to $A$ by the assumption on $x_1,...,x_r \in \pi_0(B)$ and the $p$-complete flatness of $\psi_B$. When checking $p$-complete regularity, we may perturb the sequence by multiples of $p$, so we learn that the sequence $y_1^p,....,y_r^p \in \pi_0(B')$ is also $p$-completely regular relative to $A$. The $p$-complete flatness of $C'$ over $A$ now follows from \cref{PDEnvelopeFlat} and \cref{PDenvRegSeqLambda}.
\end{proof}

\newpage

\section{Prisms}
\label{sec:Prisms}

In this section, we define prisms (\cref{DefPrismCat}) and prove some basic properties; notably, we establish the equivalence between perfectoid rings and perfect prisms (\cref{PrismPerfection}), and prove a flatness result for certain ``prismatic envelopes'' (\cref{PrismaticEnvSmooth}) that plays an important role later.

Before defining prisms, let us record the following criterion for when an ideal in a $\delta$-ring can be generated by a distinguished element.

\begin{lemma}
\label{prismcrit}
Fix a pair $(A,I)$ where $A$ is a $\delta$-ring and $I \subset A$ is an ideal that is locally principal. Assume that $p$ and $I$ lie in $\rad(A)$. The following are equivalent:
\begin{enumerate}
\item We have $p \in I^p + \phi(I)A$.
\item We have $p \in I + \phi(I)A$.
\item There exists a faithfully flat map $A \to A'$ of $\delta$-rings that is an ind-(Zariski localization)\footnote{By definition, an $A$-algebra $B$ is called an ind-(Zariski localization) if it can be presented as a filtered colimit of maps of $A$-algebras of the form $B_j := \prod_{i=1}^{n_j} A_{f_i}$ for $f_1,...,f_{n_j} \in A$. Note that such an $A$-algebra $B$ is faithfully flat over $A$ exactly when each tuple $(f_1,...,f_{n_j})$ generates the unit ideal in some (or equivalently any) presentation of $B$ as such a filtered colimit.}  such that $IA'$ is generated by a distinguished element $d$ and $d,p \in \rad(A')$.
\end{enumerate}
\end{lemma}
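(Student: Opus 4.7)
The plan is to prove the cycle $(1) \Rightarrow (2) \Rightarrow (3) \Rightarrow (1)$. The implication $(1) \Rightarrow (2)$ is immediate from $I^p \subset I$, and $(3) \Rightarrow (1)$ will be a short computation using the identity $\phi(d) = d^p + p\delta(d)$. The real content is $(2) \Rightarrow (3)$: I need to produce a $\delta$-equivariant Zariski-local cover on which $I$ becomes principal with distinguished generator.

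For $(2) \Rightarrow (3)$, since $I$ is locally principal, I first choose a finite family $\{f_i\} \subset A$ with $(f_i)_{i} = A$ such that each $I A_{f_i} = (d_i)$ is principal, and set $A' := \prod_i A_{f_i}$, which is faithfully flat and an ind-(Zariski localization) of underlying rings. The first technical step is to promote $A \to A'$ to a map of $\delta$-rings. The hypothesis $p \in \rad(A)$ gives $p \in \rad(A_{f_i})$ (the radical only grows under localization), and then the identity $\phi(f_i) = f_i^p + p\delta(f_i) = f_i^p\bigl(1 + p\delta(f_i) f_i^{-p}\bigr)$ shows $\phi(f_i)$ is a unit in $A_{f_i}$; iterating, $\phi^k(f_i) \in A_{f_i}^*$ for all $k$. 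Thus the multiplicative set $T_i$ generated by $\{\phi^k(f_i^n)\}_{k,n \geq 0}$ lies inside $A_{f_i}^*$, forcing $A_{f_i} = T_i^{-1} A$, and \cref{ExtendLocalize} equips $A_{f_i}$ with a compatible $\delta$-structure. Since $I \subset \rad(A)$, the generator $d_i$ lies in $\rad(A_{f_i})$, and localizing the containment $p \in I + \phi(I) A$ yields $p \in (d_i, \phi(d_i))$ in $A_{f_i}$; \cref{distinguishedIntersect} (which uses exactly the radical hypothesis on $p$ and $d_i$) then forces $d_i$ to be distinguished.

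For $(3) \Rightarrow (1)$, faithful flatness of $A \to A'$ lets me test $p \in I^p + \phi(I) A$ after base change to $A'$, so I may assume $I = (d)$ with $d$ distinguished. Then $\delta(d) \in A^*$, and rearranging $\phi(d) = d^p + p\delta(d)$ gives
\[
p = \delta(d)^{-1}\bigl(\phi(d) - d^p\bigr) \in (d^p) + \phi(d) A = I^p + \phi(I) A.
\]

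The only mildly non-obvious step is promoting the Zariski cover to a $\delta$-cover; the rest is bookkeeping plus \cref{distinguishedIntersect}. The hypothesis $p \in \rad(A)$ is the essential ingredient, since it is what allows inverting an element to automatically invert its Frobenius, so that no further $\phi$-saturation is needed.
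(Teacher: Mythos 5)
Your overall architecture matches the paper's, but the key step in $(2) \Rightarrow (3)$ contains a genuine gap: the parenthetical justification ``the radical only grows under localization'' is false. The Jacobson radical can shrink under localization. For a concrete example fitting the present setting, take $A = \mathbf{Z}_p\llbracket x\rrbracket$ with $\rad(A) = (p,x)$; in $A_x = \mathbf{Z}_p\llbracket x\rrbracket[1/x]$, the ideal $(x-p)$ is maximal with residue field $\mathbf{Q}_p$ and does not contain $p$, so $p \notin \rad(A_x)$. Consequently, in your $A' = \prod_i A_{f_i}$ there is no reason that $p$ or the generator $d_i$ of $IA_{f_i}$ lies in $\rad(A_{f_i})$. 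This defeats your argument at two separate points: it is what was supposed to make $1 + p\delta(f_i)f_i^{-p}$ a unit (so that $\phi(f_i) \in A_{f_i}^*$ and \cref{ExtendLocalize} applies), and it is the radical hypothesis of \cref{distinguishedIntersect} used to conclude that $d_i$ is distinguished. It also means the conclusion of $(3)$ itself --- which explicitly demands $d, p \in \rad(A')$ --- need not hold for your $A'$.

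The paper repairs exactly this by not stopping at $A[1/g_i]$: it uses the further Zariski localization $\widetilde{A[1/g_i]}$ along $V(p,I)$, which by construction forces $p$ and $IA'$ into $\rad(A')$. Faithful flatness of $A \to A'$ is then deduced, rather than free: the hypothesis $p, I \subset \rad(A)$ means every maximal ideal of $A$ already contains $(p,I)$, so the extra inversions throw away only non-closed points and the resulting ind-(Zariski localization) still hits every closed point of $\mathrm{Spec}(A)$. Once this correction is in place, the remainder of your argument --- extending the $\delta$-structure (the paper invokes \cref{ExtendLocalizePLocal} directly rather than building the $\phi$-saturated multiplicative set by hand, but the content is the same), applying \cref{distinguishedIntersect}, and your clean $(3) \Rightarrow (1)$ computation via $p = \delta(d)^{-1}(\phi(d) - d^p)$ --- agrees with the paper's proof.
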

\begin{proof}
We trivially have $(1) \Rightarrow (2)$. 

For $(2) \Rightarrow (3)$:
choose a sequence $g_1,...,g_n \in A$ of elements generating the unit ideal of $A$ such that $IA[1/g_i]$ is principal. Write $A' = \prod_{i=1}^n \widetilde{A[1/g_i]}$ where $\widetilde{A[1/g_i]}$ denotes the Zariski localization of $A[1/g_i]$ along $V(p,I)$, so $p$ and $IA'$ lie in $\rad(A')$. By \cref{ExtendLocalizePLocal}, the ring $A'$ admits a unique $\delta$-$A$-algebra structure. As $p$ and $I$ lie in $\rad(A)$, it trivially follows that $A \to A'$ is faithfully flat. By construction, $IA' = (d)$ for some $d\in A'$, and this element is distinguished by \cref{distinguishedIntersect}.

For $(3) \Rightarrow (1)$: under the hypothesis of $(3)$, we must check that $p = 0$ in $A/(I^p + \phi(I)A)$. This can be checked after base change to $A'$, so it is enough to show that $p=0$ in $A'/(d^p, \phi(d))$ where $d\in IA'$ is a distinguished generator. But this follows immediately: $\delta(d)$ is a unit by the distinguishedness of $d$, and we have $\phi(d) = d^p + p\delta(d)$. 
\end{proof}

\begin{definition}[The category of prisms]
\label{DefPrismCat}
Fix a pair $(A,I)$ comprising a $\delta$-ring $A$ and an ideal $I \subset A$; the collection of all such pairs forms a category called the category of {\em $\delta$-pairs}.
\begin{enumerate}
\item The pair $(A,I)$ is a {\em prism} if $I \subset A$ defines a Cartier divisor on $\mathrm{Spec}(A)$ such that $A$ is derived $(p,I)$-complete, and $p \in I + \phi(I)A$. The category of prisms is the corresponding full subcategory of all $\delta$-pairs.
\item A prism $(A,I)$ is called
\begin{itemize}[label={-}]
\item {\em perfect} if $A$ is a perfect $\delta$-ring, i.e., $\phi:A \to A$ is an isomorphism. 
\item {\em bounded} if $A/I$ has bounded $p^\infty$-torsion.
\item {\em orientable} if the ideal $I$ is principal, and the choice of a generator of $I$ is called an {\em orientation}. 
\item {\em crystalline} if $I = (p)$; any such prism is bounded and orientable. 
\end{itemize}

\item A map $(A,I) \to (B,J)$ of prisms is {\em (faithfully) flat} if the map $A \to B$ is $(p,I)$-completely (faithfully) flat.
\end{enumerate}
\end{definition}

\begin{example}
For any $p$-torsionfree and $p$-complete $\delta$-ring $A$, the pair $(A,(p))$ is a crystalline prism, and conversely any crystalline prism is of this form.
\end{example}

\begin{example}
\label{UnivOrientedPrism}
Let $A_0 = \mathbf{Z}_{(p)}\{d, \delta(d)^{-1}\}$ be the displayed localization of the free $\delta$-ring on a variable $d$. Let $A$ be the $(p,d)$-completion of $A_0$, and let $I = (d) \subset A$. Then the pair $(A,I)$ is a bounded prism, and the element $d \in I$ is a distinguished generator (and thus defines an orientation). In fact, this pair is the universal orient\emph{ed} prism (but not the universal orient\emph{able} prism).
For future reference, we note the following: in this prism, the sequence $p,d$ is regular and the Frobenius map $\phi:A/p \to A/p$ is $d$-completely flat.
\end{example}
%

An important property of prisms is the rigidity of the ideal $I$.

\begin{lemma}[Rigidity of maps]
\label{PrismMapTaut}
If $(A,I) \to (B,J)$ is a map of prisms, then the natural map induces an isomorphism $I \otimes_A B \cong J$. In particular, $IB = J$. 

Conversely, if $A \to B$ is a map of $\delta$-rings with $B$ being derived $(p,I)$-complete, then $(B,IB)$ is a prism exactly when $B[I] = 0$.
\end{lemma}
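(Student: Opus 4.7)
The plan is to first prove surjectivity ($IB=J$) by reducing to the oriented case via \cref{prismcrit}, then upgrade to the isomorphism $I\otimes_A B \cong J$ using that both sides are invertible $B$-modules, and finally handle the converse by unpacking when the extended ideal $IB$ is a Cartier divisor.

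\textbf{Reduction to the oriented case.} The statement is local on $\Spec B$ in the flat topology, so we may base-change along a faithfully flat $\delta$-cover. Using \cref{prismcrit}, there is an ind-Zariski faithfully flat $\delta$-cover $A\to A'$ such that $IA'=(d)$ with $d$ distinguished and $d,p\in\rad(A')$. After passing to $A'$ (and derived $(p,I)$-completing $B\otimes_A A'$), we may assume $I=(d)$ is principal with distinguished generator. A further Zariski localization on $B$ (again via \cref{prismcrit}) similarly lets us assume $J=(d')$ is principal with $d'$ distinguished and $d',p\in\rad(B)$.

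\textbf{Main step.} Let $e\in J$ denote the image of $d$ under $A\to B$. Since maps of $\delta$-rings commute with $\delta$, the element $\delta(e)$ is the image of the unit $\delta(d)$, so $\delta(e)$ is a unit and $e$ is distinguished in $B$. Writing $e=d'h$ for some $h\in B$, the hypotheses of \cref{distinguishedDivide} (namely $e$ distinguished and $d',p\in\rad(B)$) apply and force $h$ to be a unit. Hence $(e)=(d')=J$, which gives $IB=J$. Now $I\otimes_A B$ is invertible as a $B$-module (since $I$ is invertible over $A$) and $J$ is invertible over $B$ (as a Cartier divisor), so the surjection $I\otimes_A B\twoheadrightarrow J$ just produced is automatically an isomorphism, since any surjective map of invertible modules is multiplication by a unit Zariski-locally.

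\textbf{Converse.} If $A\to B$ is a $\delta$-map with $B$ derived $(p,I)$-complete, then $(B,IB)$ is automatically derived $(p,IB)$-complete, and the containment $p\in IB+\phi(IB)B$ descends from $p\in I+\phi(I)A$. So $(B,IB)$ is a prism precisely when $IB$ defines a Cartier divisor on $\Spec B$, i.e., when $IB$ is invertible as a $B$-module. Zariski-locally on $A$ we may take $I=(d)$ with $d$ a nonzerodivisor; then $IB$ is a Cartier divisor exactly when the image of $d$ in $B$ is a nonzerodivisor, i.e., when $B[d]=0$. Globally, this becomes the condition $B[I]=0$.

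\emph{Main obstacle.} The decisive step is the appeal to \cref{distinguishedDivide}, which converts the naturality of $\delta$ into the strong statement that the image of a distinguished generator of $I$ already generates $J$; without this, one would only see the weaker inclusion $IB\subseteq J$. The only technical nuisance is keeping track of derived completions under the ind-Zariski localizations used, so that the reduction to oriented prisms preserves the prism structure on both sides.
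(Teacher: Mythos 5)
Your proof is correct and follows essentially the same route as the paper: reduce to the oriented case via \cref{prismcrit}, observe that the image of a distinguished generator is distinguished, and invoke \cref{distinguishedDivide} to conclude that the generator of $J$ divides it by a unit. The one small difference is cosmetic: the paper works directly with the ind-Zariski localizations $A\to A'$, $B\to B'$ produced by \cref{prismcrit} (which are honest faithfully flat ring maps, not just $(p,I)$-completely so) and never takes a derived $(p,I)$-completion; your gratuitous completion of $B\otimes_A A'$ is unnecessary and would require a little extra care to justify that the extended ideal remains a Cartier divisor and that descent still applies, whereas the paper's formulation sidesteps this entirely.
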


\begin{proof} We use \cref{prismcrit} to choose faithfully flat maps $A\to A'$ and $B\to B'$ such that $IA' = (d)$ and $JB' = (e)$ are principal and $d,p\in \rad(A')$, $p,e\in \rad(B')$. Note that necessarily $d$ and $e$ are nonzerodivisors, as $I$ and $J$ define Cartier divisors. We may also assume that the map $A\to B$ extends to a map $A'\to B'$ (replacing $B'$ by a localization of $A'\otimes_A B'$ if necessary). To see that $I\otimes_A B\to J$ is an isomorphism, it suffices to see that the similar claim holds for the map of $\delta$-pairs $(A',(d))\to (B',(e))$, by faithfully flat descent. But now $d=ef$ for some $f\in B'$ and $p,e\in \rad(B')$ by assumption, so \cref{distinguishedDivide} says that $f$ is a unit, and thus $(d)=(e)$ in $B'$.

For the second, note that $B[I] = 0$ if and only if $I\otimes_A B \simeq IB$ via the natural map. It then follows from the first part that if $(B,IB)$ is a prism, then $B[I] = 0$. Conversely, if $I \otimes_A B \simeq IB$, then $IB \subset B$ is an invertible $B$-module, so it defines a Cartier divisor on $\mathrm{Spec}(B)$. Also, the containment $p \in IB + \phi(I)B$ comes from the corresponding containment on $A$ and $B$ is derived $(p,I)$-complete by hypothesis, so $(B,IB)$ is indeed a prism.
\end{proof}

One can get close to proving that the ideal $I$ is necessarily principal:

\begin{lemma}
\label{PrismFaceFrob}
Let $(A,I)$ be a prism. Then the ideal $\phi(I)A \subset A$ is principal and any generator is a distinguished element. Moreover, the invertible $A$-modules $\phi^\ast(I)=I\otimes_{A,\phi} A$ and $I^p$ are trivial, i.e.~isomorphic to $A$.
\end{lemma}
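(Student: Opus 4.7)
My plan is to reduce to the case of an oriented prism via Lemma~\ref{prismcrit}, verify the three claims there by direct computation, and then descend along faithful flatness.

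First, I would apply Lemma~\ref{prismcrit} to obtain a faithfully flat ind-Zariski localization $A\to A'$ of $\delta$-rings such that $IA'=(d)$ is principal, generated by a distinguished element $d$ with $d,p\in\rad(A')$; after $(p,I)$-adic completion, $(A',IA')$ is again a prism in which $d$ is a nonzerodivisor (as it generates an invertible ideal). On $A'$, the ideal $\phi(I)A'=(\phi(d))$ is principal. Since $\phi$ commutes with $\delta$ and preserves units, $\delta(\phi(d))=\phi(\delta(d))\in(A')^\times$, so $\phi(d)$ is distinguished in $A'$. Next I would verify that $(A',(\phi(d)))$ is itself a prism: the sum condition $p\in(\phi(d),\phi^2(d))$ follows by applying $\phi$ to $p\in(d,\phi(d))$; derived $(p,\phi(d))$-completeness is equivalent to $(p,d)$-completeness since the two ideals have the same radical (using $\phi(d)\equiv d^p \pmod{p}$); and the Cartier condition reduces, via Lemma~\ref{PrismMapTaut} applied to the $\delta$-endomorphism $\phi:A'\to A'$, to the vanishing $A'[\phi(d)]=0$. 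Granting this, $\phi(d)$ is a nonzerodivisor in $A'$, and the three claims of the lemma become evident on $A'$: $\phi(I)A'$ is principal with distinguished generator $\phi(d)$, $\phi^\ast(IA')\cong A'$ via the trivialization $d\otimes 1\mapsto 1$, and $(IA')^p=(d^p)\cong A'$.

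To descend to $A$, I would first note that the $A$-linear map $\phi^\ast(I)\to A$, $x\otimes a\mapsto a\phi(x)$, has image $\phi(I)A$ and becomes injective after base change to $A'$ (where it is $1\mapsto\phi(d)$, injective as $\phi(d)$ is a nonzerodivisor); hence it is injective on $A$ by faithful flatness. Thus $\phi^\ast(I)\cong\phi(I)A$ as invertible $A$-modules, and the triviality of $\phi^\ast(I)$ is equivalent to principality of $\phi(I)A$ as an ideal. For principality I would apply fpqc descent for $\mathbf{G}_m$-torsors: the trivialization of $\phi^\ast(IA')$ by $d\otimes 1$ has descent cocycle in $(A'\otimes_A A')^\times$ equal to $\phi$ applied to the transition units of the Cartier divisor $IA'$, and this cocycle must be a coboundary once one exploits the compatibility of $\phi$ with the Cartier structure on $I$. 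For the distinguishedness of any generator $e$ of $\phi(I)A$: writing $e=u\phi(d)$ on $A'$ for a unit $u\in(A')^\times$, expanding $\delta(u\phi(d))$ via the product rule and using $\phi(d),p\in\rad(A')$ yields $\delta(e)\equiv u^p\phi(\delta(d))\pmod{\rad(A')}$, a unit in $A'$, hence a unit in $A$ by faithful flatness. The triviality of $I^p$ I would argue analogously using the generator $d^p$ on $A'$, noting that the transition units for $I^p$ are $p$-th powers of the transitions for $I$.

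The main obstacles will be (i) establishing $A'[\phi(d)]=0$, i.e., that $\phi(d)$ is a nonzerodivisor in $A'$, which requires delicate use of distinguishedness (where Lemma~\ref{distinguishedIntersect} and the axioms of a prism are used carefully together); and (ii) descending triviality of invertible $A$-modules from $A'$ to $A$, which is not automatic under fpqc descent but is handled by the ind-Zariski nature of our cover and the explicit compatibility of the natural generators $d\otimes 1$ and $d^p$ with the prism's Cartier structure.
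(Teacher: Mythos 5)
Your overall strategy — reduce to an oriented prism via Lemma~\ref{prismcrit}, verify the claims there, and descend — resembles how the paper begins, but both of the obstacles you flag at the end are genuine gaps, and the second one is where your argument as written does not go through.

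On obstacle (i): you assert $A'[\phi(d)]=0$, i.e.\ that $\phi(d)$ is a nonzerodivisor in $A'$, but offer no proof. This does not follow from $d$ being a nonzerodivisor, since $\phi$ need not be injective and $A'/d$ can have $p^\infty$-torsion (the prism is not assumed bounded). Note that the paper carefully avoids ever needing $\phi(d)$ to be a nonzerodivisor in $A$ or $A'$: for the triviality of $\phi^*(I)$, it picks $f\in\phi^*(I)$ mapping to a generator of $\phi(I)A$ and shows $A\to\phi^*(I)$, $1\mapsto f$, is an isomorphism by checking surjectivity on fibres at closed points of $\Spec(A)$; the nonzerodivisor fact is only invoked after further base change to $B=\widehat{A_\perf}$, where it holds by Lemma~\ref{PrimEltPRop} because $B$ is perfect and $p$-complete. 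Your route, which identifies $\phi^*(I)$ with the ideal $\phi(I)A$ via the map $x\otimes a\mapsto a\phi(x)$ and deduces injectivity by faithful flatness from the $A'$-side, really does need $\phi(d)$ to be a nonzerodivisor in $A'$, and you have not established this.

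On obstacle (ii): the claim that $\phi(I)A$ is principal is the heart of the lemma, and the ``fpqc descent for $\mathbf{G}_m$-torsors'' step does not do what you want. An invertible module that becomes free after a faithfully flat (even Zariski) cover need not be free; the nontriviality of $\mathrm{Pic}(A)$ is exactly this obstruction, and saying the descent cocycle ``must be a coboundary once one exploits the compatibility of $\phi$ with the Cartier structure on $I$'' is not an argument — there is no general reason the class $[\phi^*(I)]\in\mathrm{Pic}(A)$ should vanish. The paper's actual mechanism is quite different and is the essential idea you are missing: using the prism axiom in the form $p\in I^p+\phi(I)A$ (Lemma~\ref{prismcrit}(1)), one writes $p=a+b$ with $a\in I^p$, $b\in\phi(I)A$, and shows this explicit \emph{global} element $b$ generates $\phi(I)A$. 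Verifying that $b$ generates is then checked after a faithfully flat cover $A\to B$ with $IB=(d)$ principal: writing $a=xd^p$ and $b=y\phi(d)$ and manipulating $p=xd^p+y\phi(d)$ with Lemmas~\ref{distinguishedUnit} and~\ref{distinguishedDivide}, one shows $y$ must be a unit (else localizing at $V(d,p,y)$ yields a contradiction). Once $\phi(I)A$ is principal, distinguishedness of any generator follows from Lemma~\ref{distinguishedDivide}, and the triviality of $\phi^*(I)$ and $I^p$ follows by the closed-point argument (plus the observation that $\phi^*(I)\cong I^p$ after reduction mod $p$, hence over $A$ as $p\in\rad(A)$). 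Without the global generator $b$, your proposal has no way to conclude principality over $A$.
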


\begin{proof}
Once we know one generator of $\phi(I)A$ is distinguished, it follows from Lemma~\ref{distinguishedDivide} that all generators are distinguished. So for the first part it suffices to show that $\phi(I)A $ is generated by a distinguished element. By Lemma~\ref{prismcrit}, we can write $p = a + b$ where $a \in I^p$ and $b \in \phi(I)A$. We claim that $b$ generates $\phi(I)A$, i.e., the map $A \to \phi(I)A$ defined by $1 \mapsto b$ is surjective. Choose a faithfully flat map $A \to B$ as in Proposition~\ref{prismcrit} (3). It is enough to show that the map $B \to \phi(I)B$ induced by $1 \mapsto b$ is surjective and that $b$ is distinguished. We have $IB = (d)$ for a distinguished element $d\in B$. Since $a \in I^p$ and $b \in \phi(I)$, we can write $a = xd^p$ and $b = y\phi(d)$ for suitable $x,y \in B$. Our task is to show that $y$ is a unit. As $d,p \in \rad(B)$, it suffices to show that $y$ is a unit modulo $(d,p)$ or equivalently that $B/(d,p,y) = 0$. If not, by localizing along $\mathrm{Spec}(B/(d,p,y)) \subset \mathrm{Spec}(B)$, we may assume that $d,p,y \in \rad(B)$. The equation $p = a + b = xd^p + y\phi(d)$ simplifies to show
\[ p(1 - y\delta(d)) = d^p(x+y) = d \cdot (d^{p-1} (x+y))\]
Now $1 - y\delta(d)$ is a unit as $y \in \rad(B)$, so the left side is distinguished by Lemma~\ref{distinguishedUnit}.  Lemma~\ref{distinguishedDivide} then implies that $d^{p-1} (x+y)$ is a unit, whence $d$ is a unit, which contradicts $d \in \rad(B)$.

Note that $\phi^\ast(I)$ and $I^p$ become isomorphic over $A/p$ as in characteristic $p$ Frobenius pullback of a line bundle is its $p$-th power. As $p\in \rad(A)$, it follows that $\phi^\ast(I)$ and $I^p$ are isomorphic, so it suffices to prove that $\phi^\ast(I)$ is isomorphic to $A$. There is a natural surjection $\phi^\ast(I)\to \phi(I)A\subset A$, and we proved that $\phi(I)A$ is principal. Let $f\in \phi^\ast(I)$ be any element mapping to a generator of $\phi(I)A$. We claim that the induced map $A\to \phi^\ast(I), 1\mapsto f$ is an isomorphism. As it is a map of invertible modules, it suffices to prove that it induces a surjection on fibres at all closed points of $\Spec(A)$ (all of which lie in $\Spec(A/p)$ as $p\in \rad(A)$). In particular, letting $B$ be the $p$-adic completion of $A_\perf$, it suffices to see that the map $B\to \phi^\ast(I)\otimes_A B, 1\mapsto f$ is surjective. But now the natural map $\phi^\ast(I)\otimes_A B\to \phi(I)B$ is an isomorphism (as any distinguished element in a perfect $\delta$-algebra is a nonzerodivisor by \cref{PrimEltPRop}) and evidently $B\to \phi(I)B, 1\mapsto f$ is surjective as we chose $f$ to be a generator of $\phi(I)A$.
\end{proof}

\begin{lemma}[Properties of bounded prisms]
\label{BoundedPrismProp}
Let $(A,I)$ be a bounded prism. 
\begin{enumerate}
\item The ring $A$ is classically $(p,I)$-complete.
\item Fix a $(p,I)$-completely flat $A$-complex $M \in D(A)$. Then $M$ is discrete and classically $(p,I)$-complete. For any $n \geq 0$, we have $M[I^n] = 0$ and $M/I^nM$ has bounded $p^\infty$-torsion.
\item The category of (faithfully) flat maps $(A,I) \to (B,J)$ of prisms identifies with the category of $(p,I)$-completely (faithfully) flat $\delta$-$A$-algebras $B$ by the functor sending such an $A$-algebra $B$ to $(B,IB)$. 
\item (Bounded prisms are locally orientable) There exists a $(p,I)$-completely faithfully flat map $A \to B$ of $\delta$-rings such that $IB = (d)$ for a distinguished element $d\in B$ that is a nonzerodivisor. In fact, we may choose $A \to B$ to be the derived $(p,I)$-completion of an ind-Zariski localization of $A$. In particular, $(A,I) \to (B,(d))$ is a faithfully flat map of bounded prisms.
\end{enumerate}
\end{lemma}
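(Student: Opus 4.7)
The plan is to handle the four parts in the order (1), (2), (3), (4), letting (2) do most of the work that the later parts rely on.

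For part (1), I would exploit boundedness of $A/I$ together with the Cartier divisor hypothesis on $I$. Locally on $\mathrm{Spec}(A)$ we may write $I=(d)$ with $d$ a nonzerodivisor; the short exact sequences
\[ 0 \to A/d \xrightarrow{\,d^{n-1}\,} A/d^n \to A/d^{n-1} \to 0 \]
show inductively that each $A/I^n$ has bounded $p^\infty$-torsion. Combined with derived $(p,I)$-completeness, the general principle noted in the Notation section (derived $p$-completeness plus bounded $p^\infty$-torsion implies classical $p$-completeness) yields that every $A/I^n$ is classically $p$-complete. Since $I$ is locally generated by a nonzerodivisor, $A \simeq R\lim_n A/I^n$; the vanishing of $R^1\lim$ follows from surjectivity of the transition maps, so this coincides with $\lim_n A/I^n$ and $A$ is classically $(p,I)$-complete.

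For part (2), the $(p,I)$-complete flatness of $M$ forces $M\otimes_A^L A/I^n$ to be concentrated in degree $0$ for every $n$. Working Zariski-locally where $I=(d)$ for a nonzerodivisor $d$, the case $n=1$ computes as the Koszul complex $\mathrm{Kos}(M;d)$, whose discreteness gives $M[d]=0$; this assembles to $M[I]=0$ globally, and then $M[I^n]=0$ using the fact that $I^n/I^{n+1}$ is an invertible $A/I$-module. The module $M/IM$ is flat over $A/I$, hence has bounded $p^\infty$-torsion since $A/I$ does; inducting on $n$ exactly as in part (1), using that each $I^n/I^{n+1}\otimes_{A/I} M/IM$ is a flat $A/I$-module, shows that $M/I^nM$ has bounded $p^\infty$-torsion. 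Discreteness of $M$ follows by applying derived Nakayama to each positive-degree cohomology group of $M$ (which is derived $(p,I)$-complete and vanishes modulo $(p,I)$ by flatness); classical $(p,I)$-completeness then follows by the same limit argument as in part (1).

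For part (3), rigidity (\cref{PrismMapTaut}) shows that a flat map $(A,I)\to(B,J)$ of prisms has $J=IB$, so is determined by the $(p,I)$-completely flat $\delta$-$A$-algebra $B$. Conversely, given such a $B$, derived $(p,IB)$-completeness and the containment $p\in IB+\phi(IB)B$ are inherited from $(A,I)$, so the only point to check in order that $(B,IB)$ be a prism is that $IB$ defines a Cartier divisor, i.e., $B[I]=0$ and $I\otimes_A B\cong IB$; the first is part (2), and the second follows from $B[I]=0$ together with local principality of $I$. For part (4), apply \cref{prismcrit} to $(A,I)$ to obtain a faithfully flat ind-Zariski localization $A\to A_0$ of $\delta$-rings (using \cref{ExtendLocalizePLocal} for the $\delta$-structure) with $IA_0=(d_0)$ for a distinguished $d_0$ and $d_0,p\in\rad(A_0)$. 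Let $B$ be the derived $(p,I)$-completion of $A_0$; by \cref{LambdaCompletions} it inherits a $\delta$-structure, and $A\to B$ is $(p,I)$-completely faithfully flat. Then $(B,IB)=(B,dB)$ is a bounded prism by part (3), with $d$ distinguished, and part (2) applied to $B$ itself shows $d$ is a nonzerodivisor.

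The main obstacle I expect is the bookkeeping in part (2): extracting concrete discreteness, completeness, and bounded-torsion statements from $(p,I)$-complete flatness requires either passing to a Zariski cover where $I$ becomes principal (so that Koszul complexes and short exact sequences are available) or applying derived Nakayama with care to distinguish classical from derived completeness. Once (2) is in hand, parts (3) and (4) follow essentially formally from rigidity and \cref{prismcrit}.
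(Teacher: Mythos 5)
Your overall decomposition into (1)--(4) matches the paper's, and parts (1), (3), (4) follow the same lines (in (4), the $\delta$-structure on the derived completion is obtained by first using part (2) to conclude discreteness, after which the classical completion argument of \cref{LambdaCompletions} applies; the paper leaves this implicit).

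The substantive gap is at the very start of your part (2). You assert that ``the $(p,I)$-complete flatness of $M$ forces $M\otimes_A^L A/I^n$ to be concentrated in degree $0$ for every $n$.'' This does not follow directly from the definition: $(p,I)$-complete flatness controls $M\otimes_A^L N$ only for $(p,I)$-torsion $A$-modules $N$, and $A/I^n$ is $I$-torsion but generally \emph{not} $p$-torsion. What you get for free is that $M\otimes_A^L A/(p,I)^n$ is discrete, and that $M\otimes_A^L A/I^n$ is $p$-completely flat over $A/I^n$. Passing from $p$-complete flatness over $A/I^n$ to genuine discreteness is exactly where the bounded $p^\infty$-torsion hypothesis on $A/I$ (hence on $A/I^n$ by devissage) must enter; the paper invokes \cite[Lemma 4.7]{BMS2} for precisely this step. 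Without it, your subsequent deductions ($M[d]=0$ via the Koszul complex, hence $M[I^n]=0$, hence $M/I^nM$ is an honest quotient, etc.) are not grounded. Relatedly, your parenthetical claim that the positive-degree cohomology of $M$ ``vanishes modulo $(p,I)$ by flatness'' is not an immediate consequence either; it needs to be extracted from the short exact sequences relating $H^i(\mathrm{Kos}(M;d^n))$ to $H^i(M)/d^n$ and $H^{i+1}(M)[d^n]$ before derived Nakayama (Stacks tag 0G1Z) can be applied degree by degree. These are repairable, but as written the argument for (2) is circular in ordering (it treats $M$ as a module before discreteness is established) and omits the place where boundedness is essential.
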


Observe that, by $(p,I)$-completely faithfully flat descent, the functor that carries $(A,I)$ to the full subcategory of $\mathcal{D}(A)$ spanned by derived $(p,I)$-complete $(p,I)$-completely flat $A$-complexes is a sheaf on the category of all prisms. Part (2) shows that the restriction of this functor to the category of bounded prisms coincides with the assignment carrying $(A,I)$ to derived $(p,I)$-complete $(p,I)$-completely flat $A$-modules; in particular, the latter assignment is functorial under derived $(p,I)$-completed derived base changes, and forms a sheaf.

\begin{proof}
For (1), we must show that $A \simeq \lim_k A/(I^k, p^k)$. Given finitely many invertible $A$-modules $J_1,...,J_m$ equipped with maps $a_i:J_i \to A$, we write $\mathrm{Kos}(A;J_1,...,J_m)$ for the Koszul complex attached to the map $\oplus_{i=1}^m J_i \xrightarrow{\sum a_i} A$ (see \cite[Tag 0622]{Stacks}; note that if $J_i = A$ for all $i$, then this agrees with $\mathrm{Kos}(A; a_1(1),...,a_m(1))$. We now observe that one has the following sequence of isomorphisms
\begin{align*}
A &\simeq R\lim_n R\lim_m \mathrm{Kos}(A; I^n, p^m) \\
&\simeq R\lim_n R\lim_m \mathrm{Kos}(A/I^n; p^m) \\
&\simeq R\lim_n R\lim_m A/(I^n, p^m) \\
&\simeq \lim_k A/(I^k,p^k), 
\end{align*}
where the first equality comes from derived $(p,I)$-completeness of $A$, the second from the fact that $I^n \subset A$ is locally generated by a nonzerodivisor, the third by virtue of $A/I^n$ having bounded $p^\infty$-torsion (by devissage and the fact that $A/I$ has this property by assumption), and the last by a simple cofinality argument.

For (2), note that $M \otimes_A^L A/I^n$ is a $p$-completely flat $A/I^n$-complex for any $n \geq 0$. Since $A/I$ has bounded $p^\infty$-torsion, the same holds true for $A/I^n$ for any $n \geq 0$. But then \cite[Lemma 4.7]{BMS2} implies that $M \otimes_A^L A/I^n \simeq M/I^nM$ is a discrete $A/I^n$-module with bounded $p^\infty$-torsion; in particular, $M[I^n] = 0$ for all $n \geq 0$. The discreteness of $M$ now follows from the derived $I$-completeness of $M$ and the discreteness of $M \otimes_A^L A/I^n$. Finally, to prove that $M$ is classically $(p,I)$-complete, we may proceed as in (1).

For (3), note that if $(A,I) \to (B,J)$ is a (faithfully) flat map of prisms, then $B$ is a $(p,I)$-completely flat $A$-algebra by hypothesis, and $J =IB$ by Lemma~\ref{PrismMapTaut}. Conversely, say $B$ is a $(p,I)$-completely (faithfully) flat $A$-algebra. By \cref{PrismMapTaut}, it suffices to see that $B[I]=0$, which follows from part (2).

For (4), take $B$ to be the derived $(p,I)$-completion of the ring $A'$ from Lemma~\ref{prismcrit} (3), so $IB = (d)$ for a distinguished element $d\in B$. By (2), the ring $B$ is discrete, $B/IB$ has bounded $p^\infty$-torsion, and $B[I] = 0$, so $(B,IB)$ defines a prism by \cref{PrismMapTaut}.
\end{proof}

\begin{lemma}[Properties of perfect prisms]
\label{PerfectPrisms}
Let $(A,I)$ be a perfect prism.
\begin{enumerate}
\item The ideal $I$ is principal and any generator is a distinguished element.
\item The prism $(A,I)$ is bounded. In particular, the ring $A$ is classically $(p,I)$-complete.
\end{enumerate}
\end{lemma}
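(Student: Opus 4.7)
The plan is to prove (1) first, then deduce (2) from the already-developed theory of distinguished elements in perfect $\delta$-rings and of bounded prisms. The ``in particular'' clause in (2) will then drop out of \cref{BoundedPrismProp}(1).

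For principality of $I$ in (1), my input is \cref{PrismFaceFrob}, which says the ideal $\phi(I)A$ is principal and any generator is distinguished. Pick such a generator $e$. Because $A$ is perfect, $\phi \colon A \to A$ is a ring isomorphism, hence induces a bijection on ideals: $\phi(I)A = \phi(I)$, and $I = \phi^{-1}(\phi(I)) = (\phi^{-1}(e))$. Set $d := \phi^{-1}(e)$. Using the universal identity $\delta\phi = \phi\delta$ valid in any $\delta$-ring (checked on the $p$-torsionfree generic case from \cref{freelambdaring}), the distinguishedness of $e = \phi(d)$ says that $\phi(\delta(d)) = \delta(\phi(d)) \in A^\times$, and bijectivity of $\phi$ then gives $\delta(d) \in A^\times$, i.e., $d$ is a distinguished generator of $I$.

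Next I would apply \cref{PrimEltPRop} to the distinguished element $d$. The required hypotheses on $A$ hold: perfectness gives $p$-torsionfreeness via \cref{PerfLambdaTF}(1), and derived $(p,I)$-completeness from the prism axioms combined with $p$-torsionfreeness upgrades to classical $p$-completeness; moreover $A/p$ is perfect, hence reduced. \cref{PrimEltPRop} therefore yields that $d$ is a nonzerodivisor in $A$ and $(A/d)[p] = (A/d)[p^\infty]$. In particular $A/I$ has bounded $p^\infty$-torsion, so $(A,I)$ is bounded; \cref{BoundedPrismProp}(1) then delivers classical $(p,I)$-completeness of $A$.

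Finally, to close the ``any generator is distinguished'' part of (1): classical $(p,I)$-completeness forces $p,d \in \rad(A)$, and every generator of the principal ideal $I = (d)$ has the form $ud$ for some $u \in A^\times$. \cref{distinguishedUnit} then gives $ud$ distinguished. There is no serious obstacle; the one conceptual step is the descent of principality from $\phi(I)A$ to $I$ along the Frobenius isomorphism, after which everything follows mechanically from the already-developed theory.
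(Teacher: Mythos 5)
Your proof is correct and follows essentially the same route as the paper: Lemma~\ref{PrismFaceFrob} combined with the bijectivity of $\phi$ (and the universal commutation $\delta\phi = \phi\delta$) gives part (1), while \cref{PerfLambdaTF} plus \cref{PrimEltPRop} gives boundedness, and \cref{BoundedPrismProp}(1) finishes. The only cosmetic difference is that you prove (2) before closing the ``any generator'' clause of (1); one could instead observe directly that if $d'$ is any generator of $I$ then $\phi(d')$ generates $\phi(I)A$, hence is distinguished by \cref{PrismFaceFrob}, hence so is $d'$ since $\phi$ commutes with $\delta$ and is bijective.
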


\begin{proof}
(1) follows from Lemma~\ref{PrismFaceFrob} as $\phi:A \to A$ is an isomorphism by hypothesis. For (2), choose a generator $d\in I$. By Lemma~\ref{BoundedPrismProp}, it is enough to check that $A/d$ has bounded $p^\infty$-torsion. Now $A$ is $p$-torsionfree by Lemma~\ref{PerfLambdaTF}. As $A/p$ is perfect, it follows from Lemma~\ref{PrimEltPRop} that $A/d$ has bounded $p^\infty$-torsion.
\end{proof}

\begin{lemma}[Perfection of a prism]
\label{PrismPerfection}
Let $(A,I)$ be a prism. Let $A_\perf = \colim_\phi A$ be the perfection of $A$. Then $IA_\perf = (d)$ is generated by a distinguished element, $d$ and $p$ are nonzerodivisors in $A_\perf$, and $A_\perf/d[p^\infty]=A_\perf/d[p]$. In particular, the derived $(p,I)$-completion $A_\infty$ of $A_\perf$ agrees with its classical $(p,I)$-adic completion, and $(A,I)_\perf := (A_\infty,IA_\infty)$ is the universal perfect prism under $(A,I)$.
\end{lemma}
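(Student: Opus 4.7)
The plan is to prove the assertions in sequence. First, to identify the distinguished generator of $IA_\perf$: \cref{PrismFaceFrob} yields $\phi(I)A = (e)$ for a distinguished element $e \in A$. Since $\phi$ is a bijection on $A_\perf$, the equation $\phi_{A_\perf}(IA_\perf) = \phi(I)A_\perf = (e)$ gives $IA_\perf = (d)$ where $d := \phi_{A_\perf}^{-1}(e)$. The commutation $\delta \circ \phi = \phi \circ \delta$ yields $\delta(d) = \phi_{A_\perf}^{-1}(\delta(e))$, which is a unit since $\delta(e)$ is one and Frobenius preserves units; hence $d$ is distinguished.

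For the nonzerodivisor and torsion assertions: by \cref{PerfLambdaTF} the perfect $\delta$-ring $A_\perf$ is $p$-torsionfree, and moreover $A_\perf/p = \colim_\phi(A/p) = (A/p)_\perf$ is a perfect $\mathbf{F}_p$-algebra, hence reduced. For $d$ a nonzerodivisor and the bounded $p^\infty$-torsion of $A_\perf/d$, I would adapt the proof of \cref{PrimEltPRop}: from $fd = 0$, applying $\delta$ and multiplying by $\phi(f)$ yields $f^p\phi(f)\,\delta(d) = 0$, and since $\delta(d)$ is a unit, $f^p\phi(f) = 0$, which reduces modulo $p$ to $\bar f^{2p} = 0$ in the reduced ring $A_\perf/p$, hence $\bar f = 0$ and $f \in pA_\perf$. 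A parallel computation starting from $p^2 f = gd$ gives $\bar g = 0$, so $pf \in dA_\perf$, establishing $A_\perf/d[p^\infty] = A_\perf/d[p]$. The main obstacle is that the proof of \cref{PrimEltPRop} uses $p$-adic separatedness to close out the nonzerodivisor assertion, which is not a priori available for $A_\perf$; I plan to handle this via a careful analysis of the filtered-colimit structure $A_\perf = \colim_\phi A$ together with the derived $(p,I)$-completeness of the prism $A$, using the identity $\phi(p) = p$ to control Frobenius iterates of a representative of $f$ and conclude $f = 0$.

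Finally, bounded $(p,I)^\infty$-torsion of $A_\perf$ (from $p$-torsionfreeness and the torsion bound modulo $d$) implies that the derived $(p,I)$-completion $A_\infty$ is discrete and coincides with the classical $(p,I)$-adic completion. The pair $(A_\infty, IA_\infty)$ is then a perfect prism: the Frobenius on $A_\perf$ is continuous and bijective, so it extends to an automorphism of $A_\infty$; $IA_\infty = (d)$ remains a Cartier divisor because $d$ stays a nonzerodivisor through classical completion under bounded torsion; and $(p,I)$-completeness is automatic. For the universal property, any map $(A,I) \to (B,J)$ of prisms into a perfect prism factors uniquely through $A_\perf$ by the universal property of perfection among $\delta$-rings, and then uniquely through $A_\infty$ since $B$ is classically $(p,I)$-complete by \cref{PerfectPrisms} (with $J = IB$ by rigidity, \cref{PrismMapTaut}).
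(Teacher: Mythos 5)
Your proof follows essentially the same route as the paper's: identify the distinguished generator via \cref{PrismFaceFrob} and the bijectivity of $\phi$ on $A_\perf$, import the nonzerodivisor and bounded-torsion statements from \cref{PrimEltPRop}, and then show that the derived and classical $(p,I)$-completions of $A_\perf$ agree. Your treatment of the universal property is also the intended one.

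The one place where you go beyond the paper is in flagging that $A_\perf$ is not a priori $p$-adically separated, which is a standing hypothesis of \cref{PrimEltPRop}. This is a real subtlety, and the paper applies \cref{PrimEltPRop} to $A_\perf$ without comment. However, the fix you sketch (``careful analysis of the filtered-colimit structure $\ldots$ together with derived $(p,I)$-completeness of $A$'') is not fleshed out, and I do not see that it works as described: derived $(p,I)$-completeness of $A$ does not give $p$-adic separatedness of $A$ (nor is $A$ assumed $p$-torsionfree, so one cannot upgrade to classical completeness), and as you iterate $f = p f_1 = p^2 f_2 = \cdots$ the levels of the colimit at which the $f_n$ are represented can grow without bound, so there is no immediate reduction to a statement at a single finite level of $\colim_\phi A$. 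In short, you have identified an obstacle but not removed it.

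The good news is that the conclusion about $A_\infty$ does not require the nonzerodivisor claim on $A_\perf$ itself, and the paper's proof really proceeds via the $p$-adic completion. Concretely: $A_\perf$ is $p$-torsionfree (by \cref{PerfLambdaTF}), so its derived and classical $p$-completions agree; call this ring $B$. Then $B$ is $p$-torsionfree, $p$-adically complete (hence $p$-adically separated), and $B/p \cong A_\perf/p$ is perfect, so \cref{PrimEltPRop} applies legitimately to $B$, giving $d$ a nonzerodivisor in $B$ and $B/d$ having bounded $p^\infty$-torsion. From there one passes to the $d$-adic completion of $B$, which equals $A_\infty$, checking along the way (as the paper does) that each $B/d^n$ is classically $p$-complete so that $A_\infty$ is classically $(p,d)$-complete. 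Note also that your claim that ``bounded $(p,I)^\infty$-torsion of $A_\perf$'' directly yields the discreteness of the derived $(p,I)$-completion is imprecise; the stepwise completion (first $p$, then $d$) is the safe route. Finally, the bounded-torsion assertion $A_\perf/d[p^\infty] = A_\perf/d[p]$ \emph{can} be established directly on $A_\perf$, as you do, since the proof of \cref{PrimEltPRop}~(2) does not use $p$-adic separatedness; only the nonzerodivisor part (1) does. I would restructure your argument to make this split explicit and to carry out the reasoning on $B$ rather than $A_\perf$.
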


\begin{proof} As $A\to A_\perf$ factors over $\phi: A\to A$, it follows that from \cref{PrismFaceFrob} that $IA_\perf$ is generated by a distinguished element $d\in A_\perf$. As $A_\perf$ is perfect, $p$ is a nonzerodivisor, and by \cref{PrimEltPRop}, $d$ is a nonzerodivisor and $A/d[p^\infty] = A/d[p]$. Passing to $p$-adic completions first, the derived and classical completions agree by $p$-torsionfreeness, and $d$ is still a nonzerodivisor by \cref{PrimEltPRop}. Note that as $A_\perf$ is $p$-adically complete, any quotient $A_\perf/d^n$ is derived $p$-complete, and it also has bounded $p^\infty$-torsion, so that it is in fact classically $p$-complete. This implies that we may now also pass to the $d$-adic completion, where again the derived and classical completions agree, and it is classically $(p,d)$-adically complete. The universal property of $(A_\infty,IA_\infty)$ is clear.
\end{proof}

\begin{theorem}[Perfectoid rings $=$ perfect prisms]
\label{PerfdPrism}
The following two categories are equivalent:
\begin{itemize}
\item The category of perfectoid rings $R$ (in the sense of \cite[\S 3]{BMS1}).
\item The category of perfect prisms $(A,I)$.
\end{itemize}
The functors are $R \mapsto (A_{\inf}(R),\ker(\theta))$ and $(A,I) \mapsto A/I$ respectively. 
\end{theorem}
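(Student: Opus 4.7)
The plan is to verify that the two functors $F\colon R \mapsto (A_{\inf}(R),\ker\theta)$ and $G\colon (A,I) \mapsto A/I$ are well-defined and form mutually quasi-inverse equivalences. The composition $G \circ F$ is the identity by the very definition of $\theta$, so the substance lies entirely in showing (i) that $G$ lands in the category of perfectoid rings, and (ii) that $F \circ G$ is naturally isomorphic to the identity; both amount to analyzing $A/d$ for a perfect prism $(A,(d))$.

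For the easy direction (that $F$ is well-defined), fix a perfectoid $R$ and set $A = A_{\inf}(R) = W(R^\flat)$. Then $A$ is automatically a perfect, classically $p$-complete $\delta$-ring by \cref{PerfectLambda}. The ideal $\ker\theta$ is principal with some generator $\xi$ by the definition of perfectoid ring; the standard form of the Teichm\"uller expansion of $\xi$ (with unit coefficient of $p$) combined with \cref{distinguishedPerfect} shows that $\xi$ is distinguished. Then $p \in (\xi) + \phi(\xi)A$ by \cref{distinguishedIntersect}, and $(p,\xi)$-completeness is automatic from $p$-completeness since $\xi$ is principal. This gives a perfect prism.

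For the hard direction, fix a perfect prism $(A,I)$. By \cref{PerfectPrisms} we have $I=(d)$ with $d$ distinguished and $A$ classically $(p,d)$-complete, and by \cref{PerfLambdaTF} and \cref{PerfectLambda}, $A = W(S)$ for the perfect $\mathbf{F}_p$-algebra $S = A/p$. By \cref{distinguishedPerfect}, the Teichm\"uller expansion $d = \sum_{i\ge 0} [a_i]p^i$ has $a_1$ a unit. Setting $R := A/d$, I would check the perfectoid axioms of \cite[\S 3]{BMS1}: classical $p$-completeness of $R$ via boundedness of $p^\infty$-torsion (\cref{PerfectPrisms}); existence of $\pi \in R$ with $\pi^p$ equal to $p$ times a unit, obtained from $\pi = [a_0^{1/p}]\bmod d$ (using perfectness of $S$) and the congruence $[a_0] \equiv -[a_1]p(1+\text{higher})\pmod d$, where the parenthesized factor is a unit by $p$-adic completeness of $R$; surjectivity of Frobenius on $R/p = S/(a_0)$, from perfectness of $S$; and principality of $\ker(\theta_R\colon A_{\inf}(R)\to R)$. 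The last point reduces to constructing a canonical isomorphism $S \xrightarrow{\sim} R^\flat := \lim_\phi R/p$ via $s \mapsto (s^{1/p^n}\bmod a_0)_n$. Once this is in hand, $A_{\inf}(R) = W(R^\flat) \cong W(S) = A$, $\theta_R$ corresponds to $A \twoheadrightarrow A/d$, so $\ker\theta_R = (d) = I$, and $F \circ G(A,I) \cong (A,I)$ naturally.

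The main obstacle is the isomorphism $S \xrightarrow{\sim} R^\flat$. Injectivity follows from $a_0$-adic separatedness of $S$, which in turn descends from the classical $(p,d)$-completeness of $A$ upon reducing modulo $p$. Surjectivity is the heart of the matter: given a compatible sequence $(\bar s_n) \in R^\flat$ with $\bar s_{n+1}^p = \bar s_n$ in $S/(a_0)$, one lifts to $\tilde s_n \in S$ and produces the desired element of $S$ as the $a_0$-adic limit of $\tilde s_{n+k}^{p^k}$ as $k \to \infty$, using perfectness of $S$ to iterate $p$-th roots and $a_0$-adic completeness of $S$ for convergence. With the identification $R^\flat \cong S$ secured, both the well-definedness of $G$ and the isomorphism $F \circ G \cong \mathrm{id}$ follow immediately.
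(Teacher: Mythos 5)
Your proposal is correct and follows the paper's route for the easier parts (that $F$ is well-defined, that $G$ lands in perfectoid rings, and that $G\circ F\simeq\mathrm{id}$ by definition of $\theta$). Where you genuinely diverge from the paper is in showing $F\circ G\simeq\mathrm{id}$. You build the isomorphism $S=A/p\xrightarrow{\ \sim\ }R^\flat$ explicitly (injectivity from $a_0$-adic separatedness of $S$, surjectivity by the usual $a_0$-adic convergence trick for tilts), then identify $W(R^\flat)\cong W(S)=A$. The paper instead invokes the universal property of $A_{\inf}(A/I)\to A/I$ as the universal pro-infinitesimal thickening to produce a canonical ring map $A_{\inf}(A/I)\to A$ compatible with the projections to $A/I$, observes it is automatically a $\delta$-map by \cref{PerfectLambda}, and then uses \cref{distinguishedDivide} and \cref{PrimEltPRop} together with $d$-adic completeness to see it is an isomorphism. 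The paper's route is slicker: because the map is produced by a universal property, naturality in $(A,I)$ and compatibility with $\theta$ are automatic, whereas in your approach you still owe a check that under the composite identification $A_{\inf}(R)=W(R^\flat)\cong W(S)=A$ the map $\theta_R$ really becomes the projection $A\twoheadrightarrow A/d$ (this is true --- it can be checked on Teichm\"uller elements using $[s]^\sharp=[s]\bmod d$ --- but you do not say it). What your approach buys is that it makes the tilt correspondence fully explicit and keeps the argument entirely inside Witt-vector arithmetic, which is closer in spirit to the perfectoid literature.

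Two small points worth tightening. First, the claim that $a_0$-adic separatedness of $S$ "descends from the classical $(p,d)$-completeness of $A$ upon reducing modulo $p$" is right but needs a word: $A$ derived $(p,d)$-complete gives $S=A/p$ derived $a_0$-complete, and $S$ being perfect (hence reduced) forces $S[a_0^\infty]=S[a_0]$, so derived and classical $a_0$-completions agree; separatedness follows. Second, in the surjectivity step the ``desired element'' should be $s=\lim_k\tilde s_k^{p^k}$ (the $n=0$ limit), with $s^{1/p^n}=\lim_k\tilde s_{n+k}^{p^k}$ then verified for each $n$; as written, the free index $n$ in your displayed limit is slightly confusing.
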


\begin{proof}
For a perfectoid ring $R$ in the sense of \cite[\S 3]{BMS1}, the ring $A_{\inf}(R)$ is a perfect $\delta$-ring (clear) and $\ker(\theta)$ is generated by a distinguished element that is a nonzerodivisor by \cite[Remark 3.11]{BMS1}. Moreover, $A_{\inf}(R)$ is classically $(p,\ker(\theta))$-complete by construction, and hence also derived $(p,\ker(\theta))$-complete. This gives a functor $F$ in the forward direction.

Conversely, fix a perfect prism $(A,I)$. Lemma~\ref{PerfectPrisms} shows that $I = (d)$ for a distinguished element $d \in A$ that is a nonzerodivisor, and that $A$ is classically $(p,d)$-complete. To get a functor  $G$ in the reverse direction, we shall show that $R = A/d$ is perfectoid. The perfectness of $A$ ensures that the Frobenius on $R/p$ is surjective. As $A$ is perfect, we have $A \cong W(S)$ for a perfect $\mathbf{F}_p$-algebra $S$ by \cref{PerfectLambda}. Via this isomorphism, we have $d = [a_0] + p u$ for a unit $u \in A$ thanks to \cref{distinguishedPerfect}. If we let $\pi \in R$ be the image of $[a_0^{1/p}]$, then $\pi^p \mid p$ in $R$. It remains to show that $R$ is classically $p$-adically complete. Note that $R$ is derived $p$-complete as it is the cokernel of the map $A \xrightarrow{d} A$ of derived $p$-complete modules. As it has bounded torsion by Lemma~\ref{PrimEltPRop}, we conclude that $R$ is also classically $p$-adically complete. This proves that $R$ is perfectoid.

It is immediate from the construction that $GF \simeq \mathrm{id}$. To prove $FG \simeq \mathrm{id}$, we must check that if $(A,I)$ is a perfect prism, then $(A,I) \simeq (A_{\inf}(A/I), \ker(\theta))$. As $A_{\inf}(A/I) \xrightarrow{\theta} A/I$ is the universal pro-infinitesimal thickening of $A/I$, there is a unique map $A_{\inf}(A/I) \to A$ factoring the map down to $A/I$ on either side; in particular, we have $\ker(\theta)A \subset IA$. By \cref{PerfectLambda}, it is automatically a map of $\delta$-rings. As $\ker(\theta) \subset A_{\inf}(A/I)$ and $I \subset A$ are both generated by distinguished elements, it follows from Lemma~\ref{distinguishedDivide} that if $\ker(\theta) = d A_{\inf}(A/I)$, then $I = d A$. Moreover, $d$ is a nonzerodivisor on both rings by Lemma~\ref{PrimEltPRop} and both rings are $d$-adically complete. It remains to simply observe that $A_{\inf}(A/I)/d A_{\inf}(A/I) \to A/d$ is an isomorphism as both sides identify with $A/I$.
\end{proof}

\begin{remark}[Perfectoid covers of regular local rings via prisms]
\label{RegularLocalPrism}
Let $R$ be a complete noetherian regular local ring of dimension $d$ with residue field $k$ of characteristic $p > 0$. We explain how the formalism of prisms helps understand the construction of a faithfully flat map $R \to R_\infty$ with $R_\infty$ perfectoid that appeared in \cite[Example 3.4.6 (3)]{AndreAbhyankar} and \cite[Proposition 5.2]{BhattDSC}. We focus on the essential case of mixed characteristic, i.e., we assume $R$ is $p$-torsionfree.

Choose a Cohen ring $W$ for $k$ with a Frobenius lift $\phi_W$. Thanks to the Cohen structure theorem, we may write $R = A/(f)$, where $A := W\llbracket x_1,...,x_d\rrbracket$ and $f \in \mathfrak{m} - \mathfrak{m}^2$, where $\mathfrak{m} = (p,x_1,...,x_d) \subset A$ is the maximal ideal. We first explain how to endow $A$ with the structure of a $\delta$-ring that makes $f$ a distinguished element (and thus $(A,(f))$ becomes a prism). In the $(d+1)$-dimensional $k$-vector space $\mathfrak{m}/\mathfrak{m}^2$, we can write $f = a_0 p + \sum_{i=1}^d a_i x_i$, with $a_0 \in k$. There are two cases to consider, depending on whether $a_0 = 0$ or not. 

If $a_0 \neq 0$, then constant coefficient $f(0)$ has $p$-adic valuation $1$. Consider the Frobenius lift $\phi_A$ on $A$ extending $\phi_W$ and satisfying $\phi_A(x_i) = x_i^p$ for all $i$. We claim that the resulting $\delta$-structure on $A$ makes $f$ distinguished. Indeed, the ``evaluating at $0$'' map $A \xrightarrow{x_i \mapsto 0} W$ is a $\delta$-map with topologically nilpotent kernel that carries $f$ to a distinguished element, so $f$ must be distinguished. 

If $a_0 = 0$, then $f$ can be regarded a nonzero $W$-linear form in the $x_i$'s modulo $\mathfrak{m}^2$. We can then change variables to assume that $f = x_1-p$. In this case, the same $\delta$-structure used in the previous paragraph makes $f$ distinguished. 

Thus, we have written $R = A/(f)$, where $(A,(f))$ is a prism. As $A$ is a $p$-torsionfree $\delta$-ring whose mod $p$ reduction $A/p = k\llbracket x_1,...,x_d\rrbracket$ is regular, the Frobenius $\phi_A:A \to A$ is $p$-completely faithfully flat by inspection (or by the easy half of Kunz's theorem). Write $(A_\infty,(f))$ for the perfection of $(A,(f))$ (Proposition~\ref{PrismPerfection}), and $R_\infty = A_\infty/(f)$ for the corresponding perfectoid ring (Theorem~\ref{PerfdPrism}). As $(A,(f)) \to (A_\infty, (f))$ is a faithfully flat map of prisms with perfect target, the map $R \to R_\infty$ is then $p$-completely faithfully flat with $R_\infty$ perfectoid. As $R$ is noetherian, one checks that $R \to R_\infty$ is actually faithfully flat \cite[Proposition 5.1]{BhattDSC}, thus providing the requisite perfectoid faithfully flat cover of $R$.

\end{remark}

\begin{corollary}[The site of all prisms]
\label{BoundedPrismSite}
The category opposite to that of all bounded prisms $(A,I)$, endowed with the topology where covers are determined by faithfully flat maps of prisms, forms a site. The functor that carries $(A,I)$ to $A$ (resp. $A/I$) forms a sheaf for this topology with vanishing higher cohomology on any $(A,I)$.
\end{corollary}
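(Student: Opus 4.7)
There are three things to verify: (i) the faithfully flat covers define a Grothendieck topology on the opposite category, (ii) the presheaf $(A,I) \mapsto A$ is a sheaf with vanishing higher cohomology, and (iii) the same for $(A,I) \mapsto A/I$.

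For (i), the essential point is the existence of pushouts of bounded prisms along faithfully flat maps. Given a faithfully flat map $(A,I) \to (B,J)$ and any map $(A,I) \to (A',I')$ of bounded prisms, I would form the derived $(p,I')$-completion $B' := B \widehat{\otimes}_A A'$. Its $\delta$-structure extends canonically by \cref{LambdaCompletions}, and by \cref{BoundedPrismProp}(3) together with the stability of $(p,I)$-complete (faithful) flatness under completed base change, the induced map $A' \to B'$ is $(p,I')$-completely faithfully flat. The ideal $I'B' = IB'$ is invertible since $I$-invertibility is preserved under flat base change, and the quotient $B'/I'B'$ is $p$-completely flat over $A'/I'$, hence has bounded $p^\infty$-torsion (again using \cref{BoundedPrismProp}(2) and the fact that $A'/I'$ does). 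Combined with \cref{PrismMapTaut}, this shows $(B', I'B')$ is a bounded prism giving the desired pushout. Composition and identity axioms for covers follow immediately from the definition.

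For (ii), fix a faithfully flat cover $(A,I) \to (B,J)$. The key input is derived $(p,I)$-complete faithfully flat descent for $\mathcal{D}_{(p,I)\text{-comp}}(-)$, as recorded in the Notation subsection. Applied to the object $A \in \mathcal{D}_{(p,I)\text{-comp}}(A)$, it yields that the augmented Čech complex
\[ A \to B \to B \widehat{\otimes}_A B \to B \widehat{\otimes}_A B \widehat{\otimes}_A B \to \cdots \]
(with each tensor product derived and $(p,I)$-completed) is a quasi-isomorphism. The terms of this complex coincide with $\mathcal{O}_\Prism$ evaluated on the iterated self-pushouts of $(B,J)$ over $(A,I)$ constructed in (i), so this is exactly the Čech complex of the presheaf $(A,I) \mapsto A$ for the cover. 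Exactness of this augmented Čech complex gives both the sheaf property and the vanishing of higher Čech cohomology; passing from Čech to derived cohomology is standard since every cover of a prism is refined by a single-object cover in this setup.

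For (iii), by the rigidity lemma (\cref{PrismMapTaut}), at each stage of the Čech nerve the value of $(A,I) \mapsto A/I$ is just $B^{(n)}/I B^{(n)}$, where $B^{(n)}$ denotes the $n$-fold completed self-tensor product of $B$ over $A$. Since each $B^{(n)}$ is $(p,I)$-completely flat over $A$ (being a completed tensor product of such), the natural map $B^{(n)} \otimes_A^L A/I \to B^{(n)}/IB^{(n)}$ is an equivalence and the target is discrete. Applying $-\otimes_A^L A/I$ to the resolution from (ii) therefore yields a resolution of $A/I$ by the Čech complex for $\overline{\mathcal{O}}_\Prism$, proving the sheaf property and cohomology vanishing. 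The main obstacle is thus concentrated in step (i) — checking that the pushout remains a bounded prism — since once the site is set up, steps (ii) and (iii) are immediate from derived $(p,I)$-complete faithfully flat descent together with rigidity.
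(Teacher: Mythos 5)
Your proof takes essentially the same route as the paper's: step (i) constructs the pushout as the derived $(p,I)$-completion of the derived tensor product, and steps (ii)--(iii) invoke $(p,I)$-completely faithfully flat descent and the rigidity lemma. The paper streamlines (i) by directly citing \cref{BoundedPrismProp}(3), which already packages the statement that a $(p,I)$-completely faithfully flat $\delta$-$A$-algebra yields a flat map of bounded prisms; you reconstruct the individual pieces (invertibility of the ideal, boundedness of the quotient) by hand, which is fine but more work. One small citation mismatch: you appeal to \cref{LambdaCompletions} for the $\delta$-structure on the completed tensor product, but that lemma concerns the \emph{classical} $I$-adic completion of an ordinary ring, whereas you are deriving a $\delta$-structure on the \emph{derived} completion of a derived tensor product. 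The right mechanism is either the remark following \cref{ExtendEtale} (derived completions of $\delta$-rings inherit $\delta$-structures), or one should first invoke \cref{BoundedPrismProp}(2) to see that the derived completion is discrete and classically complete, after which \cref{LambdaCompletions} applies. For the descent step, you invoke the general sheaf property of $\mathcal{D}_{I\text{-comp}}(-)$ stated in the Notation section, whereas the paper reproves the needed case by hand via Koszul complexes $\mathrm{Kos}(A;p^n,I^n)$ and an inverse limit; these are mathematically the same argument, only differently packaged.
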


\begin{proof}
To check the site axioms, we must show the following: $(a)$ isomorphisms are covers, $(b)$ a composition of covers is a cover, and $(c)$ the pushout of a cover along an arbitrary map is a cover. The first two are automatic. For the last one, using the description from Lemma~\ref{PrismMapTaut}, we are reduced to showing the following: given a diagram $(C,IC) \xleftarrow{c} (A,IA) \xrightarrow{b} (B,IB)$ of maps of bounded prisms with $b$ faithfully flat, the map $b$ admits a pushout along $c$ which is also faithfully flat. For this, we simply take $D$ to be the derived $(p,I)$-completion of $B \otimes^L_A C$. By standard properties, $D$ is $(p,I)$-completely faithfully flat over $C$. By Lemma~\ref{BoundedPrismProp}, it follows that $(C,IC) \to (D,ID)$ is a faithfully flat map of bounded prisms; one checks that it serves as a pushout of $b$ along $c$.

Let us check the sheaf axiom now. Say $(A,I) \to (B,IB)$ is a faithfully flat map of prisms. Let $(B^\bullet,IB^\bullet)$ be the \v{C}ech nerve of this map in the category of bounded prisms, regarded as a cosimplicial bounded prism over $(A,I)$. By the previous paragraph, this means that $B^\bullet$ is the derived $(p,I)$-completion of the derived \v{C}ech nerve of $A \to B$, regarded as a cosimplicial object in $\mathcal{D}(A)$. In particular, $\mathrm{Kos}(A; p^n,I^n) \to \mathrm{Kos}(B^\bullet;p^n, I^n)$ is a limit diagram in $\mathcal{D}(A)$ for all $n$ by faithfully flat descent for maps of simplicial commutative rings. Taking an inverse limit over $n$ then shows that $A \to B^\bullet$ is also a limit diagram in $\mathcal{D}(A)$; this implies the sheaf axiom as well as the vanishing of higher \v{C}ech cohomology. The case of $(A,I)\mapsto A/I$ follows by tensoring the equivalence $A \simeq \lim B^\bullet$ in $\mathcal{D}(A)$ with the perfect complex $A/I$.
\end{proof}

\begin{proposition}[Existence of prismatic envelopes for regular ideals]
\label{qPDEnvRegular}
\label{PrismaticEnvSmooth}
Let $(A,I)$ be a bounded prism, and let $B$ be a $(p,I)$-completely flat $\delta$-$A$-algebra. Let $J \subset B$ be an ideal containing $IB$, so there is an induced map $(A,I) \to (B,J)$ of $\delta$-pairs. Assume that, Zariski locally on $\mathrm{Spf}(B)$, we can write $J = (I,x_1,...,x_r)$ for a sequence $x_1,...,x_r \in B$ that is $(p,I)$-completely regular relative to $A$ (in the sense of Definition~\ref{CompleteRegularity}). 

Then there is a universal map $(B,J) \to (C,IC)$ of $\delta$-pairs to a prism $(C,IC)$ over $(A,I)$. If there is no potential for confusion, we shall write $C := B\{\frac{J}{I}\}^\wedge$.  Moreover, this universal object has the following properties:
\begin{enumerate}
\item $(B\{\frac{J}{I}\}^\wedge,IB\{\frac{J}{I}\}^\wedge)$ is $(p,I)$-completely flat over $(A,I)$.

\item The construction $(B,J) \mapsto (B\{\frac{J}{I}\}^\wedge,IB\{\frac{J}{I}\}^\wedge)$ commutes with base change along arbitrary maps $(A,I) \to (A',IA')$ of bounded prisms.

\item The construction $(B,J) \mapsto (B\{\frac{J}{I}\}^\wedge,IB\{\frac{J}{I}\}^\wedge)$ is compatible with flat localization on $B$: if $B \to B'$ is a $(p,I)$-completely flat map of $\delta$-$A$-algebras, then $B\{\frac{J}{I}\}^{\wedge} \widehat{\otimes}_B^L B' \cong B'\{\frac{J'}{I}\}^{\wedge}$, where $J' = JB' \subset B'$. 
\end{enumerate}

\end{proposition}

For evident reasons, we shall refer to the map $(B,J) \to (B\{\frac{J}{I}\}^\wedge,IB\{\frac{J}{I}\}^\wedge)$ as the {\em prismatic envelope} of the $\delta$-pair $(B,J)$ over the bounded prism $(A,I)$.

\begin{proof}
We first observe that it suffices to prove the proposition when the prism $(A,I)$ is orientable, i.e., when $I$ is principal. Indeed, the base change compatibility in part (2) of the proposition in the orientable case coupled with $(p,I)$-complete faithfully flat descent for derived $(p,I)$-complete $(p,I)$-completely flat $A$-modules (see remark following statement of Lemma~\ref{BoundedPrismProp}) then solve the problem in general. Thus, assume $I=(d)$ is principal with distinguished generator $d \in I$. 

Next, we observe that it suffices to prove the proposition when $J$ globally has the form $J=(I,x_1,...x_r)$ with $x_1,...,x_r \in B$ being $(p,I)$-completely regular relative to $A$. This follows by a similar argument as in the previous paragraph using part (3) instead of part (2), and using the sheaf property for $\mathcal{D}_{(p,I)-comp}(-)$ instead of its restriction to the $(p,I)$-completely flat objects (as $B\{\frac{J}{I}\}^\wedge$ is typically not $(p,I)$-completely flat over $B$). Thus, we may and do assume that $J = (I,x_1,...,x_r)$ has this form. 

Consider the simplicial commutative $\delta$-$A$-algebra $C := B\{\frac{x_1}{d},...,\frac{x_r}{d}\}^\wedge$ obtained by freely adjoining $\frac{x_i}{d}$ to $B$ in the category of derived $(p,I)$-complete simplicial $\delta$-$A$-algebras (i.e., as an appropriate pushout, as in Corollary~\ref{PDFlatp}). We shall check that $C$ is $(p,I)$-completely flat over $A$. Granting this, the proposition follows. Indeed, the $(p,I)$-complete flatness of $C$ forces $C$ to be discrete and $d$-torsionfree by Lemma~\ref{BoundedPrismProp}, so $(C,(d))$ is a bounded prism over $(A,(d))$. Moreover, there is a natural map $(B,J) \to (C,(d))$ over $(A,(d))$ by construction, and its universality is clear: to specify a map from $C$ to any $d$-torsionfree $(p,d)$-complete $\delta$-ring $D$, we must specify a map $B \to D$ that carries all the $x_i$, and hence the ideal $J$, into $d D$.  The compatibility with base change is also clear from the definition of $C$ as the free derived $(p,I)$-complete $\delta$-$B$-algebra on $\{\frac{x_i}{d}\}_{i=1,...,r}$. Similarly, the last assertion of the proposition --- the flat localization property --- is clear either from the universal property, or from the definition of $C$ (and the fact that the sequence $x_1,...,x_r \in B'$ is $(p,I)$-completely regular relative to $A$ thanks to $(p,I)$-complete flatness of $B \to B'$).

We have reduced to checking that $C$ is $(p,d)$-completely flat over $A$. Consider the following commutative diagram of derived $(p,d)$-complete simplicial commutative $\delta$-rings:
\[ \xymatrix{ \mathbf{Z}_p \{z\}^\wedge \ar[d]_\psi^-{z \mapsto \phi(y)} \ar[r]^{z \mapsto d} & A \ar[d] \ar[r] & B \ar[r] \ar[d] & B\{\frac{x_1}{d},...,\frac{x_r}{d}\}^\wedge =: C \ar[d] \\
		   \mathbf{Z}_p \{y\}^\wedge \ar[r] & A' \ar[r] \ar[d] & B' \ar[r] \ar[d] & B'\{\frac{x_1}{\phi(y)},...,\frac{x_r}{\phi(y)}\}^\wedge =: C' \ar[d] \\
		   	& D := A'\{\frac{\phi(y)}{p}\}^\wedge \ar[r] & B'' \ar[r] & B''\{\frac{x_1}{p},...,\frac{x_r}{p}\}^\wedge =: C'' }\]
Here the first row is our input data, the top left vertical map labelled $\psi$ abstractly identifies with the Frobenius on the free $(p,z)$-complete $\delta$-ring $\mathbf{Z}_p\{z\}^{\wedge}$,  all squares are derived pushout squares of $(p,z)$-complete simplicial commutative rings over the top left vertex, and description of the bottom right vertex follows by observing that $\phi(y) = p \cdot u \in \pi_0(D)$ for a unit $u \in \pi_0(D)$ by Lemma~\ref{distinguishedDivide}.

The map $\psi$ is $(p,z)$-completely faithfully flat, and hence the same holds true for all the vertical maps relating the first and second rows, so it suffices to show that $A' \to C'$ is $(p,z)$-completely flat. The map $D \to C''$ is $(p,z)$-completely flat by Corollary~\ref{PDFlatp} (as our hypotheses are stable under base change). So it is enough to show that $(p,z)$-complete flatness, or equivalently $(p,y)$-complete flatness, over $A'$ can be detected after base change along $A' \to D$. But Corollary~\ref{PDenvRegSeqLambda} ensures that that $D$ identifies with the ring obtained by freely adjoining divided powers of $y$ to $A'$. Since $(p,y)$-complete flatness of an $A'$-algebra is defined as flatness after base change to $A' \to \mathrm{Kos}(A';p,y)$, it suffices to observe that there is an $A'$-algebra map $D \to \mathrm{Kos}(A; p,y)$ (which easily follows, for instance, by reduction to the universal case $A' = \mathbf{Z}_p\{y\}$).
\end{proof}

\begin{example}
\label{ex:PrismativEnvSmooth}
Let us describe the typical application of Proposition~\ref{qPDEnvRegular} for the purposes of this paper. Given a $p$-completely smooth $A/I$-algebra $R$, a surjection $B_0 \to R$ of derived $(p,I)$-complete $A$-algebras with $B_0$ being $(p,I)$-completely smooth over $A$, and a $(p,I)$-completely flat map $B_0 \to B$ with $B$ being $\delta$-$A$-algebra, let $J \subset B$ be the derived $(p,I)$-complete ideal generated by the image of $\ker(B_0 \to R)$; equivalently, $J$ is the kernel of $B \to R \widehat{\otimes}^L_{B_0} B$. Then the pair $(B,J)$ satisfies the hypothesis of Proposition~\ref{qPDEnvRegular}. Consequently, one can form the prismatic envelope $(B,J) \to (B\{\frac{J}{I}\}^\wedge,IB\{\frac{J}{I}\}^\wedge)$ satisfying (1), (2) and (3) in Proposition~\ref{qPDEnvRegular}.
\end{example}

\newpage

\section{The prismatic site and formulation of the Hodge-Tate comparison}
\label{sec:PrismaticSite}

Fix a bounded prism $(A,I)$. Unless otherwise specified, all formal schemes over $A$ are assumed to have the $(p,I)$-adic topology, and thus formal schemes over $A/I$ have the $p$-adic topology. Fix a smooth $p$-adic formal $A/I$-scheme $X$. In \S \ref{ss:PrismaticSite}, we introduce the prismatic site $(X/A)_\Prism$ and discuss its relation to the \'etale topology of $X$. A key comparison theorem for prismatic cohomology --- the Hodge-Tate comparison --- is then formulated in \S \ref{ss:HT}. We end in \S \ref{ss:ComputePrismatic} by discussing the \v{C}ech-Alexander approach to computing prismatic cohomology, drawing consequences from the flatness results for prismatic envelopes established earlier. 

\subsection{The prismatic site}
\label{ss:PrismaticSite}

\begin{definition}[The prismatic site and its structure sheaf]
\label{DefPrismaticSite}
Let $X$ be a $p$-adic formal scheme over $A/I$. Let $(X/A)_\Prism$ be the category of maps $(A,I) \to (B,IB)$ of bounded prisms together with a map $\mathrm{Spf}(B/IB) \to X$ over $A/I$; the notion of morphism is the obvious one. We shall often denote such an object by 
\[ (\mathrm{Spf}(B) \gets \mathrm{Spf}(B/IB) \to X) \in (X/A)_\Prism\]
if no confusion arises. A map $(\mathrm{Spf}(C) \gets \mathrm{Spf}(C/IC) \to X) \to (\mathrm{Spf}(B) \gets \mathrm{Spf}(B/IB) \to X)$ in $(X/A)_\Prism$ is a {\em flat cover} if $(B,IB) \to (C,IC)$ is a faithfully flat map of prisms, i.e., $C$ is $(p,IB)$-completely flat over $B$.  The category $(X/A)_\Prism$ with the topology defined by flat covers is called the {\em prismatic site} of $X/A$.  

The assignment $(\mathrm{Spf}(B) \gets \mathrm{Spf}(B/IB) \to X) \mapsto B$ (resp. $B/IB$) defines a presheaf $\mathcal{O}_\Prism$ (resp. $\overline{\mathcal{O}}_\Prism$) of commutative $A$-algebras (resp. $\mathcal{O}(X)$-algebras) on $(X/A)_\Prism$. It follows from Corollary~\ref{BoundedPrismSite} that $(X/A)_\Prism$ with topology as defined above is indeed a site, and that $\mathcal{O}_\Prism$ and $\overline{\mathcal{O}}_\Prism$ are sheaves. We refer to $\mathcal{O}_\Prism$ as the {\em structure sheaf} on $(X/A)_\Prism$. 
\end{definition}

Thus, given a formal scheme $X$ as above, the data of an object of the prismatic site can be summarized by the following diagram:

\[ \xymatrix{ \mathrm{Spf}(B/IB) \ar[d]^{f} \ar[r] & \mathrm{Spf}(B) \ar[dd]^g \\
			X \ar[d] & \\
			\mathrm{Spf}(A/I) \ar[r] & \mathrm{Spf}(A), }\]
where $f$ and $g$ are the maps that must be specified (together with the $\delta$-$A$-algebra $B$).

\begin{remark}
In this paper, we shall almost exclusively use the prismatic site $(X/A)_\Prism$ as defined above under the additional assumption that $X$ is a smooth $p$-adic formal scheme over $A/I$: the main theorems (such as the Hodge-Tate comparison) break down without this assumption. We shall later extend prismatic cohomology to all $p$-adic formal $A/I$-schemes via left Kan extension from the smooth case; though we do not do so here, it is possible to recover the same definition using a {\em derived} prismatic site, i.e.~by working with simplicial rings throughout.
\end{remark}

Unless otherwise specified, assume for the rest of this subsection that $X$ is  a smooth $p$-adic formal scheme over $A/I$.

\begin{remark}[The prismatic topos as a slice topos and its functoriality]
\label{PrismaticSiteSlice}
Let $(\ast/A)_\Prism$ be the category of all bounded prisms over $(A,I)$, made into a site as in Corollary~\ref{BoundedPrismSite}; this also coincides with the site $(\mathrm{Spf}(A/I)/A)_\Prism$ constructed above. The smooth $p$-adic formal scheme $X$ defines a presheaf  $h_X$ on $(\ast/A)_\Prism$ by the formula $h_X(B,IB) = \mathrm{Hom}_{A/I}(\mathrm{Spf}(B/IB), X)$. A variant of the argument used to prove Corollary~\ref{BoundedPrismSite} shows that $h_X$ is a sheaf on $(\ast/A)_\Prism$. Unwinding definitions, the prismatic site $(X/A)_\Prism$ introduced above coincides with the category of pairs $((B,IB) \in (\ast/A)_\Prism,\eta \in h_X(B,IB))$, with topology inherited from $(\ast/A)_\Prism$. In particular, we may view $\Shv((X/A)_\Prism)$ as the slice topos $\Shv((\ast/A)_\Prism)_{/h_X}$. 

The slice topos perspective makes the functoriality of the prismatic topos clear: if $f:X \to Y$ is a map of smooth $p$-adic formal schemes over $A/I$, there is an induced natural transformation $h_X \to h_Y$ of sheaves on $(\ast/A)_\Prism$, which then yields a morphism $\Shv((X/A)_\Prism) \to \Shv((Y/A)_\Prism)$ of slice topoi.
\end{remark}

\begin{construction}[Relating prismatic sheaves to \'etale sheaves]
\label{PrismtoEtale}
Write $f\Sch_{/X}$ for the category of $p$-adic formal schemes $U$ equipped with an adic map $U \to X$; we endow this category with the \'etale topology. There is a natural functor $\mu:(X/A)_\Prism \to f\Sch_{/X}$ given by sending $(B,IB)$ to $\mathrm{Spf}(B/IB) \to X$. This functor is cocontinuous by Lemma~\ref{ExtendEtale}: given a bounded prism $(B,IB)$ and a $p$-completely \'etale map (resp. covering) $B/IB \to \overline{C}$, there is a unique $(p,I)$-completely \'etale map (resp. covering) $B \to C$ of $\delta$-rings lifting the previous map modulo $IB$. Consequently,  this construction defines a morphism
\[ \mu_X:\mathrm{Shv}( (X/A)_\Prism) \to \mathrm{Shv}(f\Sch/X)\]
of topoi by \cite[Tag 00XO]{Stacks}; we shall simply write $\mu$ instead of $\mu_X$ if no confusion arises. There is also a natural map $\Shv(f\Sch_{/X}) \to \Shv(X_{\et})$ of topoi defined by restriction. Composing with $\mu_X$ gives a map
\[ \nu_X:\Shv( (X/A)_\Prism) \to \Shv(X_{\et});\]
again, we write $\nu$ instead of $\nu_X$ if no confusion arises. We shall be interested in the pushforward $\nu_*$ along this map, which is described as follows. For any \'etale map $U \to X$, the category $(U/A)_\Prism$ is naturally a slice of $(X/A)_\Prism$ in a manner that preserves the notion of coverings. Moreover, restriction along this functor carries sheaves to sheaves. Unwinding definitions then gives that 
\[ (\nu_\ast F)(U \to X) = H^0( (U/A)_\Prism, F|_{(U/A)_{\Prism}}).\]
In other words, the functor $\nu$ allows us to localize the prismatic cohomology on $X_{\et}$. 
\end{construction}

\begin{remark}[Change of topology]
In Definition~\ref{DefPrismaticSite}, we used the flat topology on the category of prisms to define coverings. Replacing the flat topology with the Zariski, Nisnevich or \'etale topologies also leads to useful site structures on the same underlying category. Moreover, one has natural comparison maps 
\[ \Shv( (X/A)_\Prism) \to \Shv((X/A)_{\Prism,et}) \to \Shv((X/A)_{\Prism,Nis}) \to \Shv((X/A)_{\Prism,Zar})\]
relating the corresponding topoi. By Corollary~\ref{BoundedPrismSite} as well as the perspective adopted in Remark~\ref{PrismaticSiteSlice}, derived pushforward along every map in the above composition carries $\mathcal{O}_\Prism$ (resp. $\overline{\mathcal{O}_\Prism}$) to itself. In particular, we can use any of these topologies to compute the cohomology of $X$ with coefficients in $\mathcal{O}_\Prism$ or $\mathcal{O}_{\overline{\Prism}}$. More generally, using $(p,I)$-completely faithfully flat descent for $D_{(p,I)-\text{comp}}(-)$, a similar remark applies to cohomology with coefficients in a crystal $M$ in $(p,I)$-complete complexes, i.e., an assignment 
\[ \eta := (\mathrm{Spf}(B) \gets \mathrm{Spf}(B/IB) \to X)  \in (X/A)_\Prism \mapsto M(\eta)  \in \mathcal{D}_{(p,I)-comp}(B)\]
that is compatible with base change. More precisely, one defines such crystals as objects in the full subcategory of $D(\mathrm{PShv}( (X/A)_\Prism, \mathcal{O}_\Prism))$ spanned by objects $M$ satisfying these properties. (Note that the notion of crystal only depends on the category $(X/A)_\Prism$ equipped with the presheaf $\mathcal{O}_\Prism$ of $A$-algebras; it does not depend on the topology chosen.)

Given the insensitivity of the notion of prismatic cohomology and crystals to the choice of topology (as explained in the  previous paragraph), it seems worth explaining why we choose the flat topology (instead of, say, the \'etale topology) in our definition of the prismatic site. Besides naturality, the primary reason is to leave open the possibility of making arguments via quasisyntomic descent from ``perfectoid'' situations; see Examples~\ref{PerfectoidCoverFinalObject} and \ref{PerfectoidCoverFinalObjectAbs} for explicit examples of flat covers that are useful in practice (e.g., such covers are used in \cite{BhattScholzeFCrys} to understand $F$-crystals on the absolute prismatic site of $\mathcal{O}_K$ for finite extension $K/\mathbf{Q}_p$).
\end{remark}

\begin{remark}[The perfect prismatic site] 
\label{PerfectPrismatic}
For any $p$-adic formal $A/I$-scheme $Y$, one can define a version of the prismatic site (Definition~\ref{DefPrismaticSite}) by restricting to the full subcategory of $(Y/A)_\Prism^\perf \subset (Y/A)_\Prism$ of those objects $(\Spf (B) \gets \Spf (B/IB) \to Y) \in (Y/A)_\Prism$ for which $B$ is perfect. Equivalently, this is the category of perfectoid rings $R$ over $A/I$ together with a map $\Spf (R) \to Y$, and so this admits a description that does not use the notion of prisms. The cohomology of the perfect prismatic site has a different flavour than that of the prismatic site, and we will analyze it in Section~\ref{generalperfoidization}.
\end{remark}

\begin{remark}[The absolute prismatic site]
\label{AbsPrismaticSite}
For any $p$-adic formal scheme $X$, we can also consider the absolute prismatic site $(X)_\Prism$; its objects are bounded prisms $(B,J)$ equipped with a map $\mathrm{Spf}(B/J) \to X$ of $p$-adic formal schemes. We do not seriously explore this notion in this paper. However, the following observation shall be useful later:

\begin{lemma}\label{PerfectPrismInitial}
Let $(A,I)$ be a perfect prism corresponding to a perfectoid ring $R = A/I$. Then for any prism $(B,J)$, any map $A/I\to B/J$ of commutative rings lifts uniquely to a map $(A,I)\to (B,J)$ of prisms.
\end{lemma}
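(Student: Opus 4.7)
The plan is to leverage the identification $A \cong W(R^\flat)$ with $R^\flat := A/p$ a perfect $\mathbf{F}_p$-algebra (via \cref{PerfdPrism} and \cref{PerfectLambda}) and construct the lift via Teichmuller-style limits in $B$. Recall from \cref{PrismMapTaut} that giving a prism map $(A,I) \to (B,J)$ amounts to giving a $\delta$-ring map $A \to B$ whose reduction modulo $J$ factors through $A/I = R$; and by \cref{BoundedPrismProp}, $B$ is classically $(p,J)$-complete.

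For existence, I first construct a multiplicative map $\tau: R^\flat \to B$ refining the composition $R^\flat \xrightarrow{(-)^\sharp} R \xrightarrow{\bar{f}} B/J$, where $(-)^\sharp$ is Fontaine's multiplicative section (concretely $r \mapsto \theta([r])$ for $\theta: A \to A/I = R$). Given $r \in R^\flat$, pick lifts $\tilde{y}_n \in B$ of $\bar{f}((r^{1/p^n})^\sharp) \in B/J$ for each $n \geq 0$, and set
\[ \tau(r) := \lim_{n \to \infty} \tilde{y}_n^{p^n} \in B. \]
Convergence and independence of lifts is the standard Fontaine estimate: $\tilde{y}_{n+1}^p - \tilde{y}_n \in JB$, and in the binomial expansion of $(\tilde{y}_n + j)^{p^n}$ every nontrivial term lies in a high power of $(p,J)$ thanks to $v_p(\binom{p^n}{k}) + k \geq n$ for $k \geq 1$, rendering the sequence $(p,J)$-adically Cauchy. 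Similar arguments show $\tau$ is multiplicative, lands in rank-$1$ elements of $B$ (so $\phi_B \circ \tau = \tau^p$), and satisfies $\tau(r) \bmod J = \bar{f}(r^\sharp)$. I then extend $\tau$ to a $\delta$-ring map $f: A = W(R^\flat) \to B$ via the Teichmuller expansion $a = \sum_n p^n [a_n^{1/p^n}]$ available for Witt vectors of perfect $\mathbf{F}_p$-algebras, setting $f(a) := \sum_n p^n \tau(a_n^{1/p^n})$, with convergence by $p$-completeness of $B$. The standard Witt vector addition and multiplication formulas combined with the rank-$1$ property of $\tau(R^\flat)$ ensure this is a ring map, and Frobenius equivariance follows from $\phi_A([r]) = [r^p]$ matching $\phi_B(\tau(r)) = \tau(r)^p = \tau(r^p)$. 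The identity $f([r]) \bmod J = \bar{f}(r^\sharp) = \bar{f}(\theta([r]))$ extends by $W$-additivity to $f \bmod J = \bar{f} \circ \theta$, so in particular $f(I) \subset J$.

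For uniqueness, any $\delta$-ring lift $f': A \to B$ restricts on Teichmullers to a multiplicative rank-$1$ map $r \mapsto f'([r])$ from $R^\flat$ to $B$ whose reduction modulo $J$ is $\bar{f} \circ (-)^\sharp$. But such a map is forced to coincide with $\tau$: the value on any $r$ must be a rank-$1$ lift of $\bar{f}(r^\sharp)$ compatible with the values on the $p^n$-th roots $r^{1/p^n} \in R^\flat$, and $(p,J)$-adic separation of $B$ then singles out the Teichmuller limit formula for $\tau(r)$. Agreement on Teichmullers extends to all of $A$ by $W$-additivity. The main obstacle will be the careful convergence and multiplicativity check for $\tau$, which is essentially Fontaine's original construction of $\theta: A_\inf(R) \to R$ lifted from $R$ to the non-perfectoid target $B$.
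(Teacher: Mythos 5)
Your Teichm\"uller-limit construction is a genuinely different route from the paper, which instead invokes deformation theory: since $L_{A/\mathbf{Z}_p}$ vanishes after derived $p$-completion (as $A = W(R^\flat)$ with $R^\flat$ perfect) and $B$, $B/J$ are derived $p$-complete, one gets $\mathrm{Hom}(A,B) \simeq \mathrm{Hom}(A,B/J)$ directly. But your argument has two gaps. First, you cite \cref{BoundedPrismProp} to conclude $B$ is \emph{classically} $(p,J)$-complete, but the lemma is stated for an arbitrary prism $(B,J)$, and prisms are only required to be \emph{derived} $(p,I)$-complete; \cref{BoundedPrismProp} applies only to bounded prisms. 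Without classical completeness (or even $p$-adic separatedness), the limit $\lim_n \tilde{y}_n^{p^n}$ is neither guaranteed to exist in $B$ nor to be well-defined independent of the choice of lifts (the ambiguity lives in $\bigcap_n (p,J)^n B$, which need not vanish), and the appeal to \cref{Rank1pdivisible} for rank-$1$-ness also requires $p$-adic separatedness.

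Second, and more importantly, you pass from ``$\phi$-equivariance'' and rank-$1$ Teichm\"ullers directly to a $\delta$-ring map, but this is precisely the subtlety the paper's proof flags explicitly: when $B$ has $p$-torsion, a $\phi$-equivariant ring map need not be a $\delta$-map. From $\phi_B \circ f = f \circ \phi_A$ one only gets $p\bigl(\delta_B(f(a)) - f(\delta_A(a))\bigr) = 0$, which does not force the error to vanish. Knowing $\delta$ vanishes on $f([r])$ does not propagate to general $a = \sum p^n[a_n]$ without a further continuity-plus-additivity argument that you do not make. The paper resolves this cleanly: since $\phi_A$ is bijective, the $\phi$-equivariant map $A \to B$ factors through $B^\perf := \lim_\phi B$, which is a \emph{perfect} $\delta$-ring and hence $p$-torsionfree by \cref{PerfLambdaTF}; the induced map $A \to B^\perf$ is then automatically a $\delta$-map, and $B^\perf \to B$ is a $\delta$-map, so the composite is too. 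This factorization trick is the missing ingredient in your proof, and incidentally would also repair the completeness issue (since $B^\perf$ is $p$-torsionfree, its derived $p$-completion is classical).
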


Using suggestive notation, this says that $A\in (R)_\Prism$ is initial (or rather final, as we pass to opposite categories in order to get a site), so that $(R)_\Prism = (R/A)_\Prism$.

\begin{proof}
Fix a prism $(B,J)$. We must show that any map $A/I \to B/J$ of commutative rings lifts uniquely to map $(A,I) \to (B,J)$ of prisms.  Now $L_{A/\mathbf{Z}_p}$ vanishes after derived $p$-completion as $A$ is ring of Witt vectors of a perfect ring of characteristic $p$, while both $B$ and $B/J$ are derived $p$-complete. It follows by deformation theory that $\mathrm{Hom}(A,B) \simeq \mathrm{Hom}(A, B/J) \simeq \mathrm{Hom}(A, B/(J,p))$, so the composite map $A \to A/I \to B/J$ lifts uniquely to a map $\alpha:A \to B$. This argument also proves that $\alpha$ is compatible with the Frobenius lifts on $A$ and $B$. Also, by construction, we have $\alpha(I) \subset J$. Thus, once we know $\alpha$ is a $\delta$-map (which is not automatic, since $B$ might have $p$-torsion), we will have found a unique map $(A,I) \to (B,J)$ of prisms lifting the given map $A/I \to B/J$, proving the lemma. To check $\alpha$ is a $\delta$-map, note that $\alpha$ factors uniquely over $B^\perf := \lim_{\phi} B \to B$ by the bijectivity of $\phi$ on $A$. As $B^\perf$ is a perfect $\delta$-ring, it is $p$-torsionfree. Thus, the $\phi$-equivariant map $A \to B^\perf$ is automatically a $\delta$-map. Moreover, the map $B^\perf \to B$ is a $\delta$-map by construction, so the composite $\alpha:A \to B$ is indeed a $\delta$-map, as wanted. 
\end{proof}

The ``dual'' to the above assertion is false: given a prism $(A,I)$ and a perfect prism $(B,J)$, there might be multiple maps $(A,I) \to (B,J)$ inducing the same map $A/I \to B/J$. 
\end{remark}

\subsection{The Hodge-Tate comparison theorem: formulation}
\label{ss:HT}

In this subsection, we fix a smooth $p$-adic formal scheme $X$ over $A/I$.

\begin{construction}[The Hodge-Tate comparison map]
\label{CartierPrismatic}
Using Construction~\ref{PrismtoEtale}, we can produce some complexes on $X_{\et}$ that will be fundamental to this paper. Write
\[ \Prism_{X/A} := R\nu_* \mathcal{O}_{\Prism} \in D(X_{\et}, A) \quad \text{and} \quad \overline{\Prism}_{X/A} := R\nu_* \overline{\mathcal{O}}_{\Prism} \in D(X_{\et}, \mathcal{O}_X).\]
These are commutative algebra objects of the corresponding derived categories, and we have the formula 
\[ \overline{\Prism}_{X/A} \simeq \Prism_{X/A} \otimes_A^L A/I.\]
In particular, setting $M\{i\} := M \otimes_{A/I} I^i/I^{i+1}$ for an $A/I$-module $M$, we have a Bockstein differential
\[ \beta_I: H^i(\overline{\Prism}_{X/A})\{i\} \to H^{i+1}(\overline{\Prism}_{X/A})\{i+1\}\]
from the triangle
\[
\overline{\Prism}_{X/A}\{i+1\}\to \Prism_{X/A}\otimes^L_A I^i/I^{i+2}\to \overline{\Prism}_{X/A}\{i\}\ .
\]
As $i$ varies, these maps assemble to endow the graded ring $H^*(\overline{\Prism}_{X/A})\{\ast\}$ with the structure of a differential graded $A/I$-algebra whose $0$-th term is an $\mathcal{O}_X$-algebra; write $\eta_X^0:\mathcal{O}_X \to H^0(\overline{\Prism}_{X/A})$ for the structure map. By the universal property of K\"{a}hler differentials, we have an $\mathcal{O}_X$-module map $\eta_X^1:\Omega^1_{X/(A/I)} \to H^1(\overline{\Prism}_{X/A})\{1\}$ given by sending a local section $fdg$ to $f\beta_I(g)$. To extend this to larger degrees, note that the differential graded algebra $H^*(\overline{\Prism}_{X/A})\{\ast\}$ is graded commutative by generalities on cohomology of commutative ring objects in a topos. Moreover, we have the following weak form of strict graded commutativity

\begin{lemma}
\label{WeakGradedCommutativity}
For any local section $f \in \mathcal{O}_X(U)$, the differential $\beta_I(f) \in H^1(\overline{\Prism}_{X/A})\{1\}(U)$ of the image of $f$ in $H^0(\overline{\Prism}_{X/A})(U)$ squares to zero. 
\end{lemma}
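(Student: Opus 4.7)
First I would reduce to a concrete cosimplicial model. The statement is local on $X_{\et}$, so we may assume $X=\Spf R$ is affine. By \cref{BoundedPrismProp}(4), after passing to a faithfully flat cover of $(A,I)$ in the prismatic site we may also assume $I=(d)$ is principal with $d$ a distinguished nonzerodivisor. Using \cref{PrismaticEnvSmooth} applied to a $(p,d)$-completely free $\delta$-$A$-algebra surjecting onto $R$, I would construct a faithfully flat cover $(B,(d))$ of $X$ in $(X/A)_\Prism$ and form its \v{C}ech nerve $B^\bullet$. Each $B^n$ is a bounded prism, hence $d$-torsionfree by \cref{BoundedPrismProp}(2), so the sequence
\[
0\to B^\bullet/d\xrightarrow{\cdot d}B^\bullet/d^2\to B^\bullet/d\to 0
\]
is short exact at each cosimplicial level. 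The normalized cochain complex $N^\ast(B^\bullet/d)$ computes $R\Gamma(\overline{\Prism}_{X/A})$ locally (and similarly $N^\ast(B^\bullet/d^2)$ computes $R\Gamma(\Prism_{X/A}/d^2)$), so the above sequence realizes $\beta_I=\beta_d$ as its connecting map on cohomology, with the twist $\{1\}$ coming from identifying $dB^\bullet/d^2B^\bullet\cong (B^\bullet/d)\{1\}$.

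Next, I would represent $\beta(f)$ explicitly. For $f\in \mathcal{O}_X(U)=R$, choose a lift $\tilde f\in B^0=B$. Then $\tilde f\otimes 1-1\otimes \tilde f\in B^1$ reduces to zero modulo $d$, so by $d$-torsion-freeness of $B^1$ the element $g:=(\tilde f\otimes 1-1\otimes \tilde f)/d\in B^1$ is well-defined, and its reduction modulo $d$ is a $1$-cocycle representing $\beta(f)$. The cup product $\beta(f)^2\in H^2(\overline{\Prism})(U)\{2\}$ is then represented by the Alexander--Whitney product $d_2(g)\cdot d_0(g)\in B^2/d$.

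Finally, I would conclude via graded commutativity whenever possible. Since $\overline{\Prism}_{X/A}$ is a sheaf of commutative algebra objects of $D(X_\et,A/I)$, its cohomology $H^\ast(\overline{\Prism})\{\ast\}$ is graded commutative, so $\beta(f)^2=-\beta(f)^2$, i.e., $2\beta(f)^2=0$. For $p$ odd this immediately gives $\beta(f)^2=0$. The hard part is the case $p=2$, where graded commutativity alone is insufficient and an explicit $1$-cochain $h\in B^1/d$ whose \v{C}ech differential equals $d_2(g)\cdot d_0(g)$ modulo $d$ must be exhibited; I expect this to come from the $\delta$-ring structure on $B$ via the identity $\phi(\tilde f)=\tilde f^2+2\delta(\tilde f)$ together with the prism relation $2\in (d,\phi(d))\cdot A$, which together let one rewrite $(\tilde f\otimes 1-1\otimes \tilde f)^2=\tilde f^2\otimes 1-2\tilde f\otimes \tilde f+1\otimes \tilde f^2$ in a form witnessing the required coboundary.
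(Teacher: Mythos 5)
Your argument through the cosimplicial \v{C}ech--Alexander model, the explicit cocycle $g=(\tilde f\otimes 1 - 1\otimes \tilde f)/d$ representing $\beta_I(f)$, the Alexander--Whitney cup product, and the graded commutativity observation $2\beta_I(f)^2=0$ are all correct, and indeed suffice to prove the lemma for $p$ odd. But the final paragraph is where the actual content lies, and it is precisely there that you have a genuine gap: you do not exhibit the required primitive $h\in B^1/d$ with $\check{\partial}(h)=d_2(g)\cdot d_0(g)$ modulo $d$ when $p=2$; you only say you \emph{expect} it to come from $\phi(\tilde f)=\tilde f^2+2\delta(\tilde f)$ and $2\in I+\phi(I)A$. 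That expectation is plausible in spirit, but the combinatorics of producing the cochain $h$ from these identities (and verifying the coboundary formula inside the cosimplicial object of prismatic envelopes) is nontrivial, and as written there is no proof. The paper itself flags $p=2$ as the only non-obvious case, so this is not a minor omission --- it is the whole difficulty.

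The paper handles this by a completely different route that avoids any explicit $2$-adic calculation. Note that the maps $\eta^0_X$ and $\eta^1_X$ in Construction~\ref{CartierPrismatic} are defined \emph{before} Lemma~\ref{WeakGradedCommutativity} is invoked, using only the algebra structure on $H^*(\overline\Prism_{X/A})\{*\}$ and the Bockstein $\beta_I$. Proposition~\ref{PropHTCAffineLine} then shows directly (by base change to the universal oriented prism and ultimately to the crystalline case in characteristic $p$) that for $R=A/I\langle X\rangle$ one has $H^i(\overline\Prism_{R/A})=0$ for $i>1$. Given this, the lemma follows by functoriality: any $f\in\mathcal O_X(U)$ is pulled back from the universal element $X\in A/I\langle X\rangle$ via the map sending $X\mapsto f$, and $\beta_I(X)^2$ lives in $H^2(\overline\Prism_{A/I\langle X\rangle/A})\{2\}=0$. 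So $\beta_I(f)^2=0$ for all $f$, at all primes including $p=2$, with no cocycle computation at all. I would encourage you to adopt this reduction to the universal case rather than trying to make the explicit $p=2$ cochain manipulation work: the vanishing of $H^2$ for the affine line is exactly the input that makes the universal obstruction disappear, and it is proved by methods (base change to crystalline prisms and the Cartier isomorphism) orthogonal to the sign-and-torsion bookkeeping you are attempting.
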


This lemma is only non-obvious when $p=2$. We do not prove it here; it follows from Proposition~\ref{PropHTCAffineLine}, which we shall prove later. Assuming Lemma~\ref{WeakGradedCommutativity}, we can use the universal property of the de Rham complex to extend the graded $\mathcal{O}_X$-module map 
\[ \eta_X^0 \oplus \eta_X^1: \mathcal{O}_X \oplus \Omega^1_{X/(A/I)} \to H^0(\overline{\Prism}_{X/A}) \oplus H^1(\overline{\Prism}_{X/A})\{1\}\]
(that is compatible with the differential) as defined above to a map 
\[ \eta^*_X:\Omega^*_{X/(A/I)} \to H^*(\overline{\Prism}_{X/A})\{\ast\}\]
of commutative differential graded algebras compatibly with the $\mathcal{O}_X$-module structure on the terms. When the prism $(A,I)$ is oriented via the choice of a distinguished elemnt $d \in I$, we have $A/I \simeq I^i/I^{i+1}$ via $d^i$, so we may drop the twists appearing above; we typically do this when $d = p$. For future reference, we remark that the construction of the map $\eta^*_X$ depends functorially on the commutative algebra object $\Prism_{X/A} \in D(X_\et, A)$ together with the $\mathcal O_X$-algebra structure on $\Prism_{X/A}\otimes^L_A A/I$.
\end{construction}

This map forms our main tool for controlling $\Prism_{X/A}$. 

\begin{theorem}[Hodge-Tate Comparison]
\label{HTComp}
The map 
\[ \eta^*_X:\Omega^*_{X/(A/I)} \to H^*(\overline{\Prism}_{X/A})\{\ast\}\]
constructed above is an isomorphism of differential graded $A/I$-algebras. In particular, $\overline{\Prism}_{X/A} \in D(X_{\et}, \mathcal{O}_X)$ is a perfect complex with $H^i(\overline{\Prism}_{X/A}) \cong \Omega^i_{X/(A/I)}\{-i\}$. 
\end{theorem}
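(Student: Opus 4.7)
The plan is to establish the isomorphism étale-locally on $X$ via a Čech--Alexander resolution and an explicit coordinate computation. First, since both sides of $\eta^*_X$ form $\mathcal O_X$-module sheaves on $X_\et$ and their formation commutes with étale localization, I may reduce to the case $X = \Spf R$ affine with $R$ admitting an étale framing $\Box : (A/I)\langle T_1,\ldots,T_d\rangle \to R$. By the étale lifting result for $\delta$-rings (\cref{ExtendEtale}), this framed $R$ lifts uniquely to a $(p,I)$-completely étale $\delta$-$A$-algebra $B$ equipped with $B/IB \xrightarrow{\sim} R$ and a compatible map from $A\{T_1,\ldots,T_d\}^\wedge$. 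The pair $(B, IB)$ then furnishes a canonical object of $(X/A)_\Prism$.

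Next I would build a cosimplicial prism whose totalization computes prismatic cohomology. Let $B^{(n)}$ be the prismatic envelope (\cref{PrismaticEnvSmooth}) of the completed tensor product $B^{\widehat{\otimes}_A(n+1)}$ along the kernel of the multiplication map to $R$; each $B^{(n)}$ is $(p,I)$-completely flat over $A$. Using the étale lifting property (to lift framings) together with the universal property of prismatic envelopes, every object $(C,IC) \in (X/A)_\Prism$ receives a map from $(B,IB)$, so $(B,IB)$ is weakly final in $(X/A)_\Prism$. Faithfully flat descent on the prismatic site (\cref{BoundedPrismSite}) then gives
\[
R\Gamma((X/A)_\Prism, \mathcal O_\Prism) \simeq \mathrm{Tot}(B^{(\bullet)}), \qquad R\Gamma(X_\et, \overline{\Prism}_{X/A}) \simeq \mathrm{Tot}(B^{(\bullet)}/I).
\]

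The core task is to identify $\mathrm{Tot}(B^{(\bullet)}/I)$ with the de Rham complex $\Omega^*_{R/(A/I)}$ compatibly with $\eta^*_X$. Working locally, choose an orientation $d \in I$. The kernel of $B \widehat{\otimes}_A B \to R$ is generated, modulo the diagonal copy of $B$, by the sequence $\tau_i := T_i \otimes 1 - 1 \otimes T_i$, which is $(p,I)$-completely regular relative to $A$ because the framing is étale. By \cref{qPDEnvRegular}, $B^{(1)} = (B\widehat{\otimes}_A B)\{\tau_i/d\}^\wedge$, and similar descriptions hold for each $B^{(n)}$; a direct analysis of the cosimplicial object $B^{(\bullet)}/I$ then identifies it with the standard resolution whose totalization is $\Omega^*_{R/(A/I)}$, sending $\tau_i/d \bmod I$ to $dT_i$. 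In particular, the Bockstein of $T_i$ (cut out by the class of $\tau_i \in IB^{(1)}$) matches the de Rham differential under this identification.

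The main obstacle I anticipate is matching the abstractly defined map $\eta^*_X$ of \cref{CartierPrismatic} with the explicit de Rham identification coming out of the Čech computation, along with the prerequisite \cref{WeakGradedCommutativity} (really only nontrivial at $p=2$). Both questions reduce, by naturality of $\eta^*_X$ in the commutative $\mathcal O_X$-algebra $\overline{\Prism}_{X/A}$, to the universal case $R = (A/I)\langle T\rangle$ of the affine line: Bocksteins of a single section depend only on the $\delta$-subalgebra generated by that section, so it suffices to analyze the free $\delta$-algebra $A\{T\}^\wedge$ and its prismatic envelope, where an explicit calculation (in the spirit of \cref{PDenvRegSeqLambda}) should pin down both $\eta^*$ and anti-commutativity simultaneously.
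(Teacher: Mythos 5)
There is a genuine gap in Step 3 of your proposal, where you assert that ``a direct analysis of the cosimplicial object $B^{(\bullet)}/I$ identifies it with the standard resolution whose totalization is $\Omega^*_{R/(A/I)}$.'' This is not a computation — it is essentially the theorem itself. The cosimplicial ring $B^{(n)}/I$ is the reduction modulo $I$ of a prismatic envelope, and there is no evident way to recognize its totalization as the de Rham complex by inspection. Concretely, the ring $B^{(1)}/I = (B\widehat{\otimes}_A B)\{\tau_i/d\}^{\wedge}/I$ is a large $p$-completely flat $R$-algebra whose structure you would need to understand explicitly, but its relation to $\Omega^*_{R/(A/I)}$ is precisely what the Hodge--Tate comparison claims. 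In fact, even after one orients the prism and invokes \cref{qPDEnvRegular}, what one can recognize directly is not $B^{(\bullet)}$ but its \emph{Frobenius twist} $\phi_A^* B^{(\bullet)}$: by \cref{PDenvRegSeqLambda}, the twist is a cosimplicial divided power envelope, hence a crystalline \v{C}ech--Alexander complex computing de Rham cohomology. That this identification survives \emph{removing} the Frobenius twist is a real theorem, proved in the paper by \cref{CosimpHT} (a cosimplicial Frobenius descent lemma in characteristic $p$); and relating the result back to Hodge--Tate, rather than crystalline, cohomology requires the Cartier isomorphism. Your proposal is missing all three ingredients: the Frobenius-twist comparison with PD-envelopes, the descent argument undoing the twist, and Cartier.

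The paper's actual architecture is: (i) reduce to characteristic $p$ via \cref{UnivOrientedCrystallize}, a faithfully flat (conservative, base-change-compatible) map from the universal oriented prism to a crystalline one obtained by a $p$-completed PD construction; (ii) in characteristic $p$, prove the crystalline comparison $\Prism_{R^{(1)}/A}\simeq R\Gamma_{\crys}(R/A)$ (\cref{CrysComp}), which uses \cref{PDenvRegSeqLambda} and \cref{CosimpHT}; (iii) apply the Cartier isomorphism to extract the Hodge--Tate comparison mod $p$ (\cref{HodgeTateCharp}); (iv) propagate back via \cref{WeakBaseChange}, \cref{TorAmpBC}, and the pushout trick in \cref{PropHTCAffineLine} (which handles the fact that the map from the universal oriented prism need not have finite Tor amplitude). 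Your final step, reducing the identification of $\eta^*_X$ to the affine line, is in the right spirit and parallels \cref{PropHTCAffineLine}; but the affine-line case is itself settled by the characteristic-$p$ mechanism above, not by inspection of the free $\delta$-ring, so that step does not rescue the missing middle.

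Separately, your framing of the cosimplicial object is slightly off: for the \v{C}ech--Alexander complex you should take $B_0$ to be a ($(p,I)$-completed) polynomial $A$-algebra and let $B$ be the free $\delta$-$A$-algebra on $B_0$, as in \cref{PrismaticFunctorialCC}; using the \'etale lift $(B, IB)$ of a framing of $R$ does give a weakly final object, but the prismatic envelopes $B^{(n)}$ then require the mild extra care in \cref{PrismaticEnvSmooth} to know that the kernels are locally generated by relatively regular sequences. This is a minor issue and the argument can be repaired, but it should be flagged.
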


Theorem~\ref{HTComp} will be proven in \S \ref{ss:HTProof} in characteristic $p$, and then in \S \ref{sec:HTCompGeneral} in general. In the rest of this subsection, we record some consequences.

\begin{corollary}\label{BaseChangePrismCoh}
The formation of $\Prism_{X/A} \in D(X_{\et}, A)$ commutes with base change along a map $(A,I) \to (B,IB)$ of bounded prisms, i.e., if $g:X_B := X \times_{\mathrm{Spf}(A/I)} \mathrm{Spf}(B/IB) \to X$ denotes the projection, then $(g^* \Prism_{X/A})^\wedge \cong \Prism_{X_B/B}$, where $(-)^\wedge$ denotes derived $(p,I)$-completion.
\end{corollary}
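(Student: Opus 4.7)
The plan is to deduce the base change property from the Hodge--Tate comparison (Theorem \ref{HTComp}). The first step is to construct a natural comparison map. The assignment sending $(C,IC,\Spf(C/IC) \to X_B) \in (X_B/B)_\Prism$ to the object $(C,IC,\Spf(C/IC) \to X_B \to X) \in (X/A)_\Prism$, with structure map obtained by composing $(A,I)\to (B,IB)\to (C,IC)$, defines a continuous morphism of sites compatible with the structure sheaves. Passing to sheaves and applying $R\nu_\ast$ yields a canonical $B$-linear map of commutative algebras
\[ \alpha : \bigl(g^\ast \Prism_{X/A}\bigr)\widehat{\otimes}^L_A B \longrightarrow \Prism_{X_B/B}\]
in the derived category of $X_{B,\et}$, where the left-hand side is derived $(p,I)$-completed.

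Both sides of $\alpha$ are derived $(p,I)$-complete, so by derived Nakayama it suffices to show that $\alpha$ becomes an isomorphism after $-\otimes^L_B B/IB$. Theorem \ref{HTComp}, applied separately to $X/A$ and to $X_B/B$, identifies the cohomology sheaves of $\overline{\Prism}_{X/A}$ and $\overline{\Prism}_{X_B/B}$ with $\Omega^i_{X/(A/I)}\{-i\}$ and $\Omega^i_{X_B/(B/IB)}\{-i\}$, respectively. The functoriality statement at the end of Construction \ref{CartierPrismatic} ensures that $\alpha \otimes^L_B B/IB$ is compatible with these identifications. On cohomology sheaves it thus reduces to the canonical base change map
\[ \Omega^i_{X/(A/I)}\otimes_{A/I} B/IB \,\otimes_{B/IB}\bigl((I/I^2)\otimes_{A/I} B/IB\bigr)^{\otimes -i} \longrightarrow \Omega^i_{X_B/(B/IB)}\otimes_{B/IB} (IB/I^2B)^{\otimes -i}. \]
This is an isomorphism because $\Omega^i$ commutes with base change along $A/I \to B/IB$ for smooth $X$, and because the rigidity of prism maps (Lemma \ref{PrismMapTaut}) supplies the natural isomorphism $I\otimes_A B \cong IB$, which reduces modulo $I$ to $(I/I^2)\otimes_{A/I} B/IB \cong IB/I^2B$.

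The main point requiring care is verifying that the constructed map $\alpha$ is genuinely compatible with the Hodge--Tate comparison maps on both sides. This compatibility follows from the functoriality of $\eta^\ast_X$ noted in Construction \ref{CartierPrismatic}: the Hodge--Tate map depends functorially on $\Prism_{X/A}$ together with the $\mathcal{O}_X$-algebra structure on $\overline{\Prism}_{X/A}$, and $\alpha$ modulo $I$ is precisely a morphism of such data. Everything else is then a routine compatibility check reducing the prismatic base change problem to the evident base change property of K\"ahler differentials for smooth morphisms.
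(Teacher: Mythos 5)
Your proof is correct and takes essentially the same approach as the paper: reduce modulo $I$ by derived Nakayama and then invoke the Hodge--Tate comparison together with base change for K\"ahler differentials. The paper's proof is terser (it leaves the construction of the comparison map and its compatibility with the Hodge--Tate structure implicit), while you spell these out; the one point you glide over is that identifying the cohomology sheaves of $\overline{\Prism}_{X/A}\widehat{\otimes}^L_{A/I} B/IB$ with the na\"ive base change of $\Omega^i_{X/(A/I)}\{-i\}$ uses that these sheaves are $p$-completely flat over $A/I$ (since $X$ is smooth over $A/I$), so the relevant Tor spectral sequence degenerates — but this is exactly what makes the argument work and is a routine verification.
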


\begin{proof} By derived Nakayama \cite[Tag 0G1U]{Stacks}, it suffices to prove the base change modulo $I$, i.e.~that
\[
(g^* \overline{\Prism}_{X/A})^\wedge\cong \overline{\Prism}_{X_B/B}\ ,
\]
where the completion is the derived $p$-completion. This follows from the Hodge-Tate comparison as differentials $\Omega^\ast_{X/A}$ commute with base change.
\end{proof}

\begin{remark}[A Higgs specialization]
\label{Higgs}
Assume $A$ is $p$-torsionfree. If $X$ lifts to a $(p,I)$-completely flat $(p,I)$-adic formal scheme $\mathfrak{X}/A$ equipped with a compatible lift of Frobenius, then the affines in $X_{\et} \simeq \mathfrak{X}_{\et}$ naturally yield objects of $(X/A)_\Prism$: for each such affine $\mathfrak{U} \to \mathfrak{X}$, the diagram $(\mathfrak{U} \gets \mathfrak{U} \times_A A/I \to X)$ is an object of $(X/A)_\Prism$. Passing to global sections, this gives a $\phi$-equivariant map $\Prism_{X/A} \to \mathcal{O}_{\mathfrak{X}}$ of $E_\infty$-$A$-algebras on $X_{\et}$. Reducing modulo $I$, this determines an $E_\infty$-$A/I$-algebra map $\overline{\Prism}_{X/A} \to \mathcal{O}_X$ that splits the structure map $\mathcal{O}_X \to \overline{\Prism}_{X/A}$; note that this map, which we call a Higgs specialziation\footnote{The reason for this choice of terminology is that one can show the following: the $\infty$-category of crystals $M$ of derived $p$-complete $\overline{\mathcal{O}}_\Prism$-complexes on $(X/A)_\Prism$ is equivalent (via sheafified global sections) to $D_{p-\text{comp}}(X, \overline{\Prism}_{X/A})$, which in turn is equivalent, via base change along a chosen Higgs specialization, to the $\infty$-category of quasi-coherent sheaves on the formal completion $\widehat{T^* X}$ of the relative cotangent bundle of $X/(A/I)$ at its $0$-section. Thus, via base change along Higgs specializations, one can attach topologically nilpotent Higgs fields on $X$ (relative to $A/I$) to crystals of $\overline{\mathcal{O}}_\Prism$-modules on $(X/A)_\Prism$. Giving more precise statements and proofs is beyond the scope of this article as we have not systematically treated prismatic crystals; related results have been established elsewhere, e.g., \cite{TianFCrys}, \cite{MorrowTsuji}, Ogus (to appear) and \cite{BhattLurieChern}.}, depends on the choice of $\mathfrak{X}$  (including its $\delta$-structure).

\end{remark}

\begin{remark}[Obstruction to lifting modulo $I^2$]
One can show that the map $\mathcal{O}_X \to \tau^{\leq 1} \overline{\Prism}_{X/A}$ splits as an $\mathcal{O}_X$-module map if and only if $X$ lifts to $A/I^2$. To see this, we use the cotangent complex; in this remark, all cotangent complexes are interpreted in the $p$-complete sense. By general properties of the cotangent complex of the quotient of a ring by a nonzerodivisor, lifting $X$ to $A/I^2$ is equivalent to splitting the map $L_{X/A} \to L_{X/(A/I)}$. The desired claim now follows from:

\begin{proposition}
\label{AbsCCPrismatic}
There is a natural isomorphism $L_{X/A} \simeq \big(\tau^{\leq 1} \overline{\Prism}_{X/A}\big)\{1\}[1]$.
\end{proposition}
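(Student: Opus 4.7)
The plan is to identify both $L_{X/A}$ and $(\tau^{\leq 1}\overline{\Prism}_{X/A})\{1\}[1]$ as extensions in $D(X_{\et}, \mathcal{O}_X)$ of $\Omega^1_{X/(A/I)}$ by $\mathcal{O}_X\{1\}[1]$ via naturally arising fiber sequences, construct a natural comparison map, and then verify it is an isomorphism by checking it on the outer terms of these fiber sequences.

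First, I set up the two fiber sequences. For the left-hand side, the transitivity triangle for $A \to A/I \to \mathcal{O}_X$ gives
\[ \mathcal{O}_X \otimes^L_{A/I} L_{(A/I)/A} \to L_{X/A} \to L_{X/(A/I)}. \]
Since $I \subset A$ is locally generated by a nonzerodivisor (as it defines a Cartier divisor), we have $L_{(A/I)/A} \simeq (I/I^2)[1]$, and smoothness of $X$ over $A/I$ gives $L_{X/(A/I)} \simeq \Omega^1_{X/(A/I)}$. This yields the fiber sequence
\[ \mathcal{O}_X\{1\}[1] \to L_{X/A} \to \Omega^1_{X/(A/I)}, \]
whose connecting map $\Omega^1_{X/(A/I)} \to \mathcal{O}_X\{1\}[2]$ is the Kodaira--Spencer obstruction to lifting $X$ from $A/I$ to $A/I^2$. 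For the right-hand side, the Postnikov fiber sequence $\tau^{\leq 0}\overline{\Prism}_{X/A} \to \tau^{\leq 1}\overline{\Prism}_{X/A} \to H^1(\overline{\Prism}_{X/A})[-1]$, combined with the Hodge--Tate identifications $H^0 \simeq \mathcal{O}_X$ and $H^1 \simeq \Omega^1_{X/(A/I)}\{-1\}$ of \cref{HTComp}, yields after twisting by $\{1\}[1]$ the fiber sequence
\[ \mathcal{O}_X\{1\}[1] \to (\tau^{\leq 1}\overline{\Prism}_{X/A})\{1\}[1] \to \Omega^1_{X/(A/I)}. \]

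Next, I construct the natural comparison map $\alpha: L_{X/A} \to (\tau^{\leq 1}\overline{\Prism}_{X/A})\{1\}[1]$ in $D(X_{\et}, \mathcal{O}_X)$. The structure map $\mathcal{O}_X \to \overline{\Prism}_{X/A}$ is a map of $E_\infty$-$A$-algebras; truncating turns $\tau^{\leq 1}\overline{\Prism}_{X/A}$ into a two-stage coconnective $E_\infty$-$A$-algebra whose $\tau^{\leq 0}$ is $\mathcal{O}_X$. By Lurie's classification of derived square-zero extensions of $E_\infty$-algebras (HA, Ch.~7), such a two-stage extension is classified by a $k$-invariant: a map from $L_{\mathcal{O}_X/A}$ to the appropriate shift of the $H^1$-piece $\Omega^1_{X/(A/I)}\{-1\}$. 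Taking the adjoint and accounting for the $\{1\}[1]$-twist produces the map $\alpha$. By its construction as the classifying data of a fiber sequence, $\alpha$ automatically extends to a morphism between the two fiber sequences displayed above.

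Finally, I check $\alpha$ is an isomorphism by verifying it induces equivalences on the outer terms. On the $\Omega^1_{X/(A/I)}$-end, the induced map agrees, by construction of $\alpha$ via the $k$-invariant, with (the inverse of) the degree-$1$ Hodge--Tate comparison isomorphism $\Omega^1_{X/(A/I)}\{-1\} \simeq H^1(\overline{\Prism}_{X/A})$ of \cref{HTComp}, hence is an isomorphism. On the $\mathcal{O}_X\{1\}[1]$-end, the induced map is the canonical identity coming from the structure map and the $\{1\}[1]$-twist. The five-lemma in the triangulated category $D(X_{\et}, \mathcal{O}_X)$ then yields that $\alpha$ is an isomorphism, as required.

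The main obstacle is the precise classification invoked in the second step: one must set up the theory of derived square-zero extensions in the $\infty$-category of $E_\infty$-$A$-algebras (rather than $E_\infty$-$\mathbf{Z}_p$-algebras) so that the classifying $k$-invariant naturally involves the \emph{relative} cotangent complex $L_{X/A}$, and so that the $I$-adic twist $\{1\}$ appears correctly from the identification $L_{(A/I)/A} \simeq (I/I^2)[1]$. An alternative, more hands-on route is to argue locally: on an affine open $\mathrm{Spf}(R) \subset X$, choose a $p$-completely smooth lift $\widetilde R$ of $R$ over $A$ equipped with a Frobenius lift, so that $\widetilde R$ defines an object of $(X/A)_\Prism$ and yields an explicit (\v Cech--Alexander type) representative for $\overline{\Prism}_{X/A}$. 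In such local coordinates, both sides admit concrete two-term descriptions (via $I\widetilde R/I^2\widetilde R$ together with $\Omega^1_{\widetilde R/A} \otimes_{\widetilde R} R$), from which one can exhibit the comparison map explicitly and verify the agreement of extension classes by direct inspection, concluding by $(p,I)$-completely faithfully flat descent.
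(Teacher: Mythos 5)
Your decomposition of both sides into fiber sequences with outer terms $\mathcal{O}_X\{1\}[1]$ and $\Omega^1_{X/(A/I)}$ is correct and matches the one the paper's verification uses, and the five-lemma argument at the end is sound once a comparison map is in hand. The genuine gap is in the construction of $\alpha$. You invoke Lurie's classification of derived square-zero extensions (HA, Ch.~7) to classify the coconnective two-stage ring $\tau^{\leq 1}\overline{\Prism}_{X/A}$ under $\mathcal{O}_X$ by a map out of $L_{\mathcal{O}_X/A}$. But the theory in HA Ch.~7 classifies square-zero extensions $\tilde{B} \to B$ — maps \emph{into} a fixed $B$ with connective fiber — via derivations $L_{B/A} \to M[1]$; it does not furnish a cotangent-complex classification of coconnective ``co-extensions'' $B \to C$ with $\tau^{\leq 0}C = B$ and prescribed $H^1$. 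You flag this as ``the main obstacle,'' but then treat it as a technical wrinkle rather than a missing argument, and the promised ``adjoint'' step converting such a $k$-invariant into a map $L_{X/A} \to (\tau^{\leq 1}\overline{\Prism}_{X/A})\{1\}[1]$ with the correct target, twist and shift is never explained.

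The paper's construction is much more direct and sidesteps this entirely: for each object $(B \to B/IB \gets R)$ of $(R/A)_\Prism$, functoriality of the cotangent complex for the square $A \to B$, $R \to B/IB$ gives a canonical map $L_{R/A} \to L_{(B/IB)/B} \simeq I/I^2 \otimes_{A/I}^L B/IB[1] \simeq B/IB\{1\}[1]$; taking derived global sections over $(R/A)_\Prism$ yields $L_{R/A} \to \overline{\Prism}_{R/A}\{1\}[1]$, which factors through $(\tau^{\leq 1}\overline{\Prism}_{R/A})\{1\}[1]$ because $L_{R/A} \in D^{\leq 0}$. This is a cleaner version of the ``hands-on route'' you sketch at the end, except that it requires no choice of lift $\widetilde R$ — it works for every object of the site simultaneously. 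Your assertion that the induced maps on the outer terms agree with the Hodge--Tate comparison ``by construction'' also needs unwinding; the paper handles this by base-change reduction to $X = \mathbf{A}^1$ over $A = \mathbf{Z}_p$, where it becomes an explicit check. I would encourage you to actually carry out your alternative local construction rather than lean on the $k$-invariant formalism, which as stated does not apply.
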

\begin{proof}[Proof sketch]
When $X := \mathrm{Spf}(R)$ is affine, we shall construct a natural map $\eta_R:L_{R/A} \to \overline{\Prism}_{R/A}\{1\}[1]$ in the $\infty$-category $\mathcal{D}(R)$ (where we follow Notation~\ref{NotAffinePrism} below in switching from schemes to rings). As both objects appearing in the statement of the proposition lie in $\mathcal{D}_{perf}(X)$ (by Theorem~\ref{HTComp} for the right side), globalizing (i.e., taking limits over all affine opens) yields a comparison map $L_{X/A} \to \overline{\Prism}_{X/A}\{1\}[1]$ in $\mathcal{D}_{perf}(X)$ and thus (as $L_{X/A} \in D^{\leq 0}$) a map $L_{X/A} \to \tau^{\leq 0} \left(\overline{\Prism}_{X/A}\{1\}[1]\right) \simeq \big(\tau^{\leq 1} \overline{\Prism}_{X/A}\big)\{1\}[1]$ in $\mathcal{D}_{perf}(X)$, as wanted in the proposition. To construct $\eta_R$, observe that given any object $(R \to B/IB \gets B) \in (R/A)_\Prism$, we have a natural map
\[ L_{R/A} \to L_{(B/IB)/B} \simeq I/I^2 \otimes_{A/I}^L B/IB[1] \simeq B/IB\{1\}[1].\]
Taking derived global sections over $(R/A)_\Prism$ then yields the desired comparison map. 

It remains to show that the map $\mu:L_{X/A} \to \big(\tau^{\leq 1} \overline{\Prism}_{X/A}\big)\{1\}[1]$ constructed above is an isomorphism. Note that the cohomology sheaves of both sides are abstractly isomorphic: we get $\mathcal{O}_X\{1\}$ in degree $-1$ and $\Omega^1_{X/A}$ in degree $0$ (by the transitivity triangle for the left side, and by Theorem~\ref{HTComp} for the right side). Moreover, the formation of both sides and the map above commutes with $p$-completed base change (using Corollary~\ref{BaseChangePrismCoh} for the right side). Using this property, we reduce to checking the statement when $I = (p)$. Using Theorem~\ref{HTComp}, it is enough to show that the maps $H^i(\mu)$ for $i=0,1$ agree with the maps coming from Construction~\ref{CartierPrismatic} used to produce the maps in Theorem~\ref{HTComp}. For this compatibility, one reduces to the case where $X = \mathbf{A}^1$. By the base change compatibility of the construction, we may then even assume $A = \mathbf{Z}_p$. In this case, we leave it to the reader to check the desired compatibility. Alternately, a complete proof can be found in \cite[Proposition 3.2.1]{AnschutzLeBrasPrismaticDieudonne}.
\end{proof}

The  $I=(p)$ case of the preceding discussion is closely related to \cite[Theorem 3.5]{DeligneIllusie} via the de Rham comparison isomorphism (Theorem~\ref{generaldeRham}); see also \cite[\S 8.2]{BMS1} for a variant in the $A_{\inf}$-cohomology theory, Remark~\ref{AbsCCNygaard} for a variant in the singular case, and \cite{IllusiePartialDeg,IllusieSaitoConfTalk} for a more general statement with applications in the crystalline case.
\end{remark}

\subsection{Generalities on computing prismatic cohomology}
\label{ss:ComputePrismatic}

\begin{notation}
\label{NotAffinePrism}
Let $(A,I)$ be a bounded prism, and let $X$ be a smooth $p$-adic formal $A/I$-scheme. When $X := \mathrm{Spf}(R)$ is affine, we also write $(R/A)_\Prism$ for $(\mathrm{Spf}(R)/A)_\Prism$; a typical object here is seen as a diagram $(B \to B/IB \gets R)$ where $(B,IB)$ is a prism over $(A,I)$. When we view our objects via rings rather than schemes, we use the algebraic language to describe categorical properties of objects in $(R/A)_\Prism$; for instance, if $A=R/I$, then we refer to $(A \to A/I \gets A/I=R) \in (R/A)_\Prism$ as an {\em initial} object rather than a final object. In this affine situation, set $\Prism_{R/A} := R\Gamma(\mathrm{Spec}(R), \Prism_{\mathrm{Spec}(R)/A})$ and $\overline{\Prism}_{R/A} := \Prism_{R/A} \otimes_A^L A/I$. 

\end{notation}

\begin{construction}[Weakly initial objects]
\label{cons:weakinitialsite}
Let $R$ be a $p$-completely smooth $A/I$-algebra. Let $B_0 \to R$ be a surjection of $A$-algebras with $B_0$ being the derived $(p,I)$-completion of a polynomial $A$-algebra, and let $B = \mathrm{Free}_{\delta,A}(B_0)^{\wedge}$ be the derived $(p,I)$-completion of the free $\delta$-$A$-algebra $\mathrm{Free}_{\delta,A}(B_0)$ on $B_0$. As $A$ is bounded, one can also use the classical completion to define $B$ by \cref{BoundedPrismProp}.  Let $J_0 = \ker(B_0 \to R)$; as $B_0 \to B$ expresses the latter as the derived $(p,I)$-completion of a polynomial $B_0$-algebra, the same holds true for $R \to R' := (B/J_0B)^{\wedge}$, where the completion is now $p$-adic. In particular, $R'$ is $p$-completely ind-smooth over $A/I$, and $\ker(B \to R')$ is the derived $(p,I)$-complete ideal of $B$ generated by $J_0$. Applying \cref{PrismaticEnvSmooth} (see also Example~\ref{ex:PrismativEnvSmooth}) to the pair $(B,\ker(B \to R'))$ yields an object $(C \to C/IC \gets R) \in (R/A)_\Prism$, and the construction fits into the following commutative diagram
\[ \xymatrix{ A \ar[r] \ar[d] & B_0 \ar[r] \ar[d] & B = \mathrm{Free}_{\delta,A}(B_0)^{\wedge} \ar[d] \ar[r] & C \ar[d] \\
A/I \ar[r] & R \ar[r] & R' \ar[r] & C/IC, }\]
with the square in the middle being a pushut square of derived $(p,I)$-complete $A$-algebras. We claim that the resulting object  $(C \to C/IC \gets R) \in (R/A)_\Prism$ is weakly initial, i.e., it maps to any other object of $(R/A)_\Prism$. To see this, fix some $(D \to D/ID \gets R) \in (R/A)_\Prism$. As $B_0$ is the derived $(p,I)$-completion of a polynomial $A$-algebra and $D$ is derived $(p,I)$-complete, one can choose an $A$-algebra map $B_0 \to D$ lifting the $A/I$-algebra map $R \to D/ID$. As $D$ is a $\delta$-$A$-algebra, this choice extends uniquely to a $\delta$-$A$-algebra map $B \to D$. By construction, this extension carries $J_0B$ into $ID$. By the universal property of $C$, there is a unique $\delta$-$A$-algebra factorization $B \to C \to D$. To finish, we must check that the resulting map $R \to C/IC \to D/ID$ agrees with the given map $R \to D/ID$, but this is immediate as the composite $B_0 \to B \to C \to D$ was the map chosen as the start of the construction and thus compatible with the given map $R \to D/ID$ under the natural quotient maps  $B_0 \to R$ and $D \to D/ID$.

For future use, we remark that the construction above carrying the surjection $(B_0 \to R)$ to the weakly final object $(C \to C/IC \gets R) \in (R/A)_\Prism$ is functorial in $B_0$ in the obvious sense. Moreover, using the universal property of the construction of $B$ from $B_0$ and $C$ from $B$, one checks that the construction carries tensor products in the $B_0$ argument to coproducts in $(R/A)_\Prism$. 
\end{construction}

\begin{construction}[\v{C}ech-Alexander complexes for prismatic cohomology]
\label{PrismaticFunctorialCC}
Let $R$ be a $p$-completely smooth $A/I$-algebra. Let $B_0 \to R$ be a surjection of $A$-algebras with $B_0$ being the derived $(p,I)$-completion of a polynomial $A$-algebra. Write $B_0^\bullet$ for the derived $(p,I)$-completed Cech nerve of $A \to B_0$. Applying Construction~\ref{cons:weakinitialsite} to the cosimplicial object of surjections $(B_0^\bullet \to R$) gives a cosimplicial object $(C^\bullet \to C^\bullet/IC^\bullet \gets R) \in (R/A)_\Prism$. By the last sentence of Construction~\ref{cons:weakinitialsite}, this object is the Cech nerve of $(C^0 \to C^0/IC^0 \gets R) \in (R/A)_\Prism$. As the latter object is weakly initial in $(R/A)_\Prism$, it follows from the vanishing assertion\footnote{Given a topos $\mathcal{X}$, a sheaf $A$ of abelian groups, and a weakly final object $X_0 \in \mathcal{X}$ with Cech nerve $X_0^\bullet$, the natural map gives an isomorphism $R\Gamma(\mathcal{X},A) \simeq R\lim_\bullet R\Gamma(X_0^\bullet,A)$; thus, if $A$ has vanishing higher cohomology on each $X_0^i$, then $R\Gamma(\mathcal{X},A)$ is computed by the cosimplicial object $F(X_0^\bullet)$. To prove this, observe that the map $X_0 \to \ast$ in $\mathcal{X}$ is surjective by the weak finality assumption on $X_0$. The claim now follows from \v{C}ech cohomological descent applied to the surjective map $X_0 \to \ast$; the special case of a presheaf topos is the subject of \cite[Tag 07JM]{Stacks}, and the proof of \v{C}ech descent given there adapts here.} in Corollary~\ref{BoundedPrismSite} that $C^\bullet$ (resp. $C^\bullet/IC^\bullet$) computes $\Prism_{R/A}$ (resp. $\overline{\Prism}_{R/A}$). 

In the preceding construction, we may choose the input surjection $B^0 \to R$ functorially in $R$; for instance, one may simply set $B_0$ to be the $(p,I)$-completion of the polynomial $A$-algebra on the set $R$. Via such a choice, we obtain a strictly functorial chain complex $C^\bullet((R/A)_{\Prism}, \mathcal{O}_{\Prism})$ calculating $\Prism_{R/A}$. This complex, however, does not commute with base change on the prism $(A,I)$ as the choice of $B_0$ is not base change compatible.
\end{construction}

\begin{remark}[A base change compatible \v{C}ech-Alexander complex for the affine line]
\label{CechAlexanderForLine}
Let us explain how to make Construction~\ref{PrismaticFunctorialCC} explicit for $R = A/I \langle X \rangle$ being the $p$-adic completion of a polynomial ring. In this case, for the ring $B_0$ from Construction~\ref{PrismaticFunctorialCC}, we may simply take  the derived $(p,I)$-completion of $A[X]$, with the map $B_0 \to R$ being the obvious map. The resulting cosimplicial $\delta$-$A$-algebra $C^\bullet$ commutes with base change on the prism $(A,I)$.
\end{remark}

Using the complexes from Construction~\ref{PrismaticFunctorialCC}, we prove two stability properties for prismatic cohomology that will be useful for the proof of the Hodge-Tate comparison. We note that both are easy consequences of the Hodge-Tate comparison; in particular, the first is a special case of Corollary~\ref{BaseChangePrismCoh}. 

\begin{lemma}[Base change]
\label{WeakBaseChange}
Let $R$ be a $p$-completely smooth $A/I$-algebra. Let $(A,I) \to (A',I')$ be a map of bounded prisms such that $A \to A'$ has finite $(p,I)$-complete Tor amplitude. If we write $R' := R \widehat{\otimes}_A A'$ for the base change, then the natural map induces an isomorphism $\Prism_{R/A} \widehat{\otimes}_A^L A' \cong \Prism_{R'/A'}$, and similarly for $\overline{\Prism}_{R/A}$.
\end{lemma}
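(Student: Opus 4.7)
The plan is to use the \v{C}ech-Alexander complex of Construction~\ref{PrismaticFunctorialCC} to compute both sides. Choose a surjection $P \twoheadrightarrow R$ where $P$ is the derived $(p,I)$-completion of a polynomial $A$-algebra on a set of generators of $R$. Applying Construction~\ref{PrismaticFunctorialCC} yields a cosimplicial $\delta$-$A$-algebra $C^\bullet$ with $\Prism_{R/A} \cong \mathrm{Tot}(C^\bullet)$ via the associated normalized cochain complex. Since $P$ is $(p,I)$-completely flat over $A$, the surjection $P \widehat{\otimes}_A^L A' \twoheadrightarrow R'$ is a base-change-compatible choice for the analogous construction for $\Prism_{R'/A'}$, producing $C'^\bullet$ with $\Prism_{R'/A'} \cong \mathrm{Tot}(C'^\bullet)$. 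It then suffices to establish (i) a termwise equivalence $C^n \widehat{\otimes}_A^L A' \cong C'^n$ with each $C^n$ being $(p,I)$-completely flat over $A$, and (ii) that $-\widehat{\otimes}_A^L A'$ commutes with the totalization in this situation.

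For (i), each $C^n$ is built in three steps from the derived $(p,I)$-completed $(n+1)$-fold tensor power $P^{\widehat{\otimes}_A (n+1)}$: form this tensor power, freely adjoin a $\delta$-structure and complete to get $B^n = \mathrm{Free}_{\delta,A}(P^{\widehat{\otimes}_A (n+1)})^\wedge$, and apply the prismatic envelope construction of Corollary~\ref{PrismaticEnvSmooth} to the induced surjection $B^n \to R'_n$ from Construction~\ref{cons:weakinitialsite}. Each step is manifestly compatible with derived $(p,I)$-complete base change along $(A,I) \to (A',I')$: tensor powers because $P$ is completely flat over $A$; the free $\delta$-algebra because $\mathrm{Free}_{\delta,A}$ of a polynomial algebra is itself a polynomial algebra by Lemma~\ref{freelambdaring}, which commutes with base change after completion; and the prismatic envelope by Proposition~\ref{qPDEnvRegular} (Zariski-locally on $\mathrm{Spf}(B^n)$, the relevant ideal is generated by a $(p,I)$-completely regular sequence relative to $A$, reducing the base change compatibility to the explicit case, and recalling from Lemma~\ref{PrismMapTaut} that $IA' = I'$). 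The flatness statement also follows from the same three ingredients, using the flatness assertion in Corollary~\ref{PrismaticEnvSmooth}, so that in particular $C^n \widehat{\otimes}_A^L A'$ is discrete.

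For (ii), the finite $(p,I)$-complete Tor amplitude hypothesis means that $-\widehat{\otimes}_A^L A'$ shifts cohomological bounds by at most a fixed uniform constant $d$. Since each $C^n$ is discrete, the normalized cochain complex of $C^\bullet$ lives in cohomological degrees $\geq 0$, and the base change commutes with the totalization by resolving $A'$ by a length-$d$ complex of $(p,I)$-completely flat $A$-modules and forming the total complex of the resulting double complex (where only the ``finite direction'' of length $d$ appears, so all products reduce to finite sums at each degree). Combining (i) and (ii) yields $\Prism_{R/A} \widehat{\otimes}_A^L A' \cong \Prism_{R'/A'}$; the statement for $\overline{\Prism}$ follows by further tensoring with $A/I$ over $A$. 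The main technical obstacle is (ii): without the Tor amplitude hypothesis, derived tensor product need not commute with the infinite totalization of a cosimplicial complex, so the assumption on $A \to A'$ is used essentially at precisely this point.
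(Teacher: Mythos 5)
Your proof is correct and follows essentially the same route as the paper: compute both sides via the \v{C}ech--Alexander complexes of Construction~\ref{PrismaticFunctorialCC}, observe that termwise base change along $A \to A'$ works (the paper dispatches your step (i) as ``by inspection and Corollary~\ref{PrismaticEnvSmooth}''), and then commute the completed base change with the totalization using the finite Tor amplitude hypothesis. The one place you diverge is in justifying step (ii): the paper invokes Lemma~\ref{TorAmpBC}, whose proof approximates the totalization by finite truncations and uses the uniform coconnectivity bound furnished by the Tor amplitude hypothesis directly, without ever producing an explicit resolution. Your version instead replaces $A'$ by a bounded complex of $(p,I)$-completely flat $A$-modules and observes that the resulting double complex is degree-wise finite. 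This works, but it silently uses a nontrivial fact: that finite $(p,I)$-complete Tor amplitude implies $A'$ admits such a finite completely-flat resolution. This is true (take a resolution by $(p,I)$-completed free modules and observe, by the same d\'ecalage/syzygy reasoning one uses in the classical Tor-dimension setting, that the $c$-th kernel is again $(p,I)$-completely flat), but it deserves a sentence rather than being assumed; the truncation argument of Lemma~\ref{TorAmpBC} is precisely engineered to avoid this point.
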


Once the Hodge-Tate comparison is proven, the $(p,I)$-complete Tor amplitude assumption above can be dropped; of course, as Lemma~\ref{WeakBaseChange} is used to prove the Hodge-Tate comparison, we cannot drop the assumption at this point of the exposition.

\begin{proof}
By inspection and the base change property of \cref{PrismaticEnvSmooth} (2), the formation of the complex in Construction~\ref{PrismaticFunctorialCC} commutes with $(p,I)$-completed base change along $A \to A'$ in a weak sense, i.e., the cosimplicial ring  $C^\bullet( (R/A)_\Prism, \mathcal{O}_{\Prism}) \widehat{\otimes}_A A'$ obtained as the {\em termwise} derived $(p,I)$-completed base change of
the cosimplicial ring $C^\bullet( (R/A)_\Prism, \mathcal{O}_{\Prism})$ computes $\Prism_{R'/A'}$. It now remains to observe that since $A \to A'$ has finite $(p,I)$-complete Tor amplitude, the functor of $(p,I)$-completed base change along $A \to A'$ commutes with totalizations of cosimplicial $(p,I)$-complete $A$-modules by Lemma~\ref{TorAmpBC}. The result for $\overline{\Prism}$ follows by reduction modulo $I$.
\end{proof}

Second, it localizes for the \'etale topology.

\begin{lemma}[\'Etale localization]
\label{EtaleLocalizePrismatic}
Let $R \to S$ be a $p$-completely \'etale map of $p$-completely smooth $A/I$-algebras. Then the natural map $\overline{\Prism}_{R/A} \widehat{\otimes}_R^L S \to \overline{\Prism}_{S/A}$ is an isomorphism.
\end{lemma}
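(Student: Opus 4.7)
My plan is to construct an \'etale base change functor $\Phi: (R/A)_\Prism \to (S/A)_\Prism$ and use it to reduce the \v{C}ech--Alexander computation of $\overline{\Prism}_{S/A}$ to that of $\overline{\Prism}_{R/A}$. Given any $(B, IB; R \to B/IB) \in (R/A)_\Prism$, the base change $B/IB \to B/IB \widehat{\otimes}_R S$ is $p$-completely \'etale by hypothesis on $R \to S$, so Lemma~\ref{ExtendEtale} provides a unique $(p,I)$-completely \'etale lift $B \to B'$ in derived $(p,I)$-complete $\delta$-$A$-algebras. The resulting pair $(B', IB')$ is then a bounded prism by Lemma~\ref{BoundedPrismProp}, and naturally sits in $(S/A)_\Prism$ via the induced $S \to B'/IB' = B/IB \widehat{\otimes}_R S$; this defines $\Phi$.

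Next I would verify that $\Phi$ is left adjoint to the forgetful functor $U: (S/A)_\Prism \to (R/A)_\Prism$ that composes the $S$-algebra structure with $R \to S$. Indeed, for $(D, ID) \in (S/A)_\Prism$, any $\delta$-$A$-algebra map $B \to D$ compatible with the $R$-algebra structure on $D/ID$ induces a map $B/IB \to D/ID$, which factors through $B'/IB' = B/IB \widehat{\otimes}_R S$ (using the $S$-algebra structure on $D/ID$) and then lifts uniquely to a $\delta$-map $B' \to D$ by Lemma~\ref{ExtendEtale}. As a consequence, $\Phi$ preserves coproducts and carries weakly initial objects of $(R/A)_\Prism$ to weakly initial objects of $(S/A)_\Prism$.

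Finally I will choose a weakly initial $(C, IC) \in (R/A)_\Prism$ via Construction~\ref{cons:weakinitialsite}; its \v{C}ech nerve $C^\bullet$ in $(R/A)_\Prism$ computes $\Prism_{R/A}$, and $C^\bullet/IC^\bullet$ computes $\overline{\Prism}_{R/A}$. By the preceding paragraph, $\Phi(C^\bullet)$ is the \v{C}ech nerve of the weakly initial $\Phi(C, IC) \in (S/A)_\Prism$, so $\Phi(C^\bullet)/I\Phi(C^\bullet)$ computes $\overline{\Prism}_{S/A}$; by construction of $\Phi$, one has $\Phi(C^n)/I\Phi(C^n) = (C^n/IC^n) \widehat{\otimes}_R S$ termwise. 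Since $R \to S$ is $p$-completely flat, the functor $-\widehat{\otimes}_R^L S$ is exact on $p$-complete $R$-modules and therefore commutes with cosimplicial totalizations in the $p$-complete derived category, yielding
\[ \overline{\Prism}_{S/A} \simeq \mathrm{Tot}\bigl( \Phi(C^\bullet)/I\Phi(C^\bullet)\bigr) \simeq \mathrm{Tot}\bigl( C^\bullet/IC^\bullet\bigr) \widehat{\otimes}_R^L S \simeq \overline{\Prism}_{R/A} \widehat{\otimes}_R^L S, \]
and functoriality of $\Phi$ identifies this with the natural map. The main obstacle will be the adjunction claim in the second paragraph --- i.e., naturality and uniqueness of the \'etale lift under Lemma~\ref{ExtendEtale} --- together with the (routine but necessary) verification that termwise \'etale base change commutes with cosimplicial totalization; once these are in hand, preservation of coproducts and hence of \v{C}ech nerves is automatic, and the result follows.
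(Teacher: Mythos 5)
Your proof is essentially the paper's proof: you construct the same \'etale base-change functor ($\Phi$, the paper's $F$), verify the same adjunction with the restriction functor, use it to push a weakly initial \v{C}ech nerve across, and then reduce to commuting $(p\text{-completed})$ base change past the totalization. The adjunction verification is done correctly and matches the paper.

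The one step that is too quick is the final assertion: ``the functor $-\widehat{\otimes}_R^L S$ is exact on $p$-complete $R$-modules and therefore commutes with cosimplicial totalizations in the $p$-complete derived category.'' Exactness alone does not imply commuting with arbitrary totalizations --- totalization is a countable limit, and $p$-completed base change is built from colimit-preserving operations, so there is no general exchange principle. What actually makes this work are two facts that you should invoke explicitly: (i) each term $C^n/IC^n$ is a discrete $p$-complete ring (this is \cref{BoundedPrismProp}~(2)), so the cosimplicial diagram lives in $D^{\geq 0}$ and its totalization is coconnective with $\tau^{\leq n}$ computed by a finite partial totalization; and (ii) a $p$-completely flat (indeed, any finite $p$-complete Tor amplitude) map preserves the coconnectivity filtration uniformly, so after base change the error terms still vanish as $n \to \infty$. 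This is precisely the content of \cref{TorAmpBC}, which is what the paper cites at this point; without it, ``exact therefore commutes with $\mathrm{Tot}$'' is not a valid inference. With that citation (or its reproof), your argument is complete and coincides with the paper's.
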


\begin{proof}
Let us begin with a general observation. There is an obvious functor $(S/A)_\Prism \to (R/A)_\Prism$ given by restriction of scalars along $R \to S$. This functor has a left\footnote{Recall that we are viewing objects in $(R/A)_\Prism$ as diagrams of rings rather than affine schemes.} adjoint $F:(R/A)_\Prism \to (S/A)_\Prism$ described as follows: given a prism $(B \to B/IB \gets R) \in (R/A)_\Prism$,  the induced $p$-completely \'etale map $B/IB \to B/IB \widehat{\otimes}^L_R S$ deforms uniquely to a $(p,I)$-completely \'etale $\delta$-map $B \to B_S$ by Lemma~\ref{ExtendEtale}, giving an object $F(B \to B/IB \gets R) := (B_S \to B_S/IB_S \simeq B/IB \widehat{\otimes}^L_R S \gets S)$ of $(S/A)_\Prism$. As $F$ is a left adjoint, it preserves coproducts and carries weakly initial objects to weakly initial objects.

Now consider the cosimplicial object $(C^\bullet \to C^\bullet/IC^\bullet \gets R)$ in $(R/A)_\Prism$ given by Construction~\ref{PrismaticFunctorialCC}, so $C^\bullet/IC^\bullet$ computes $\overline{\Prism}_{R/A}$. By construction, this cosimplicial object is the \v{C}ech nerve of its zeroth term $(C^0 \to C^0/IC^0 \gets R)$, and this zeroth term is a weakly initial object. Consider the corresponding cosimplicial object $(D^\bullet \to D^\bullet/ID^\bullet \gets S) := F(C^\bullet \to C^\bullet/IC^\bullet \gets R)$ in $(S/A)_\Prism$. As $F$ preserves both coproducts and weakly initial objects, it follows that $\overline{\Prism}_{S/A}$ is computed by $D^\bullet/ID^\bullet$. Our task is thus to show that the natural map $C^\bullet/IC^\bullet \widehat{\otimes}_R^L S \to D^\bullet/ID^\bullet$ is an isomorphism. This is true at the level of terms by construction of the functor $F$, so the claim follows from Lemma~\ref{TorAmpBC}.
\end{proof}

The following purely algebraic lemma was used above. 

\begin{lemma}
\label{TorAmpBC}
Let $C$ be a commutative ring equipped with a finitely generated ideal $J$. Let $C \to C'$ be a map of commutative rings that has finite $J$-complete Tor amplitude. Then the $J$-completed base change functor $- \widehat{\otimes}^L_C C'$ commutes with totalizations in $D^{\geq 0}$. 
\end{lemma}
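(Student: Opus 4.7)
The plan is to use derived Nakayama to reduce to a statement about tensoring with an object of finite Tor amplitude. For a cosimplicial object $M^\bullet \in D^{\geq 0}(C)$, write
\[
\alpha_{M^\bullet}: \mathrm{Tot}(M^\bullet) \widehat{\otimes}^L_C C' \longrightarrow \mathrm{Tot}\bigl(M^\bullet \widehat{\otimes}^L_C C'\bigr)
\]
for the canonical comparison map. Both sides are derived $J$-complete (the target because $D_{J-comp}(C)$ is closed under limits in $D(C)$). By derived Nakayama, I would check that $\alpha_{M^\bullet}$ becomes an isomorphism after $\otimes^L_C \mathrm{Kos}(C; J^n)$ for every $n\geq 1$, where $\mathrm{Kos}(C; J^n)$ denotes the Koszul complex on $n$-th powers of some fixed generators of $J$.

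The next step is to simplify both sides after tensoring with $\mathrm{Kos}(C;J^n)$. On the source, the $J$-completion drops out because $\mathrm{Kos}(C;J^n)$ is $J$-power torsion (hence automatically $J$-complete), giving $\mathrm{Tot}(M^\bullet) \otimes^L_C X_n$ with $X_n := C' \otimes^L_C \mathrm{Kos}(C;J^n)$. On the target, tensoring with the perfect complex $\mathrm{Kos}(C;J^n)$ commutes with $\mathrm{Tot}$ (being exact and commuting with all limits), yielding $\mathrm{Tot}(M^\bullet \otimes^L_C X_n)$. Thus the problem reduces to showing that $-\otimes^L_C X_n$ commutes with $\mathrm{Tot}$ on cosimplicial objects in $D^{\geq 0}(C)$.

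For this reduction, I next would show that $X_n$ has finite Tor amplitude over $C$. For any $C$-module $N$, $X_n \otimes^L_C N = C' \otimes^L_C \mathrm{Kos}(N;J^n)$; the complex $\mathrm{Kos}(N;J^n)$ sits in cohomological degrees $[-r,0]$ (with $r$ the number of generators of $J$) and has $J$-power torsion cohomology. Using the hypothesis that $C' \otimes^L_C -$ carries $J$-power torsion modules into $D^{[-c,c]}$ for some fixed $c$, a Postnikov tower argument places $X_n \otimes^L_C N$ in $D^{[-c-r,c]}$, confirming that $X_n$ has finite Tor amplitude over $C$.

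The final step invokes the general principle that any exact functor $F$ of finite cohomological amplitude commutes with totalizations of cosimplicial objects in $D^{\geq 0}$: for such a cosimplicial $N^\bullet$, the fiber of $\mathrm{Tot}(N^\bullet) \to \mathrm{Tot}^{\leq k}(N^\bullet)$ lies in $D^{\geq k+1}$, so after applying $F$ of amplitude bounded below by $-a$ it lies in $D^{\geq k+1-a}$, and the analogous bound holds with $F(N^\bullet)$ in place of $N^\bullet$. Since finite limits commute with $F$, one has $F(\mathrm{Tot}^{\leq k}(N^\bullet)) = \mathrm{Tot}^{\leq k}(F(N^\bullet))$, and letting $k \to \infty$ gives the identification $F(\mathrm{Tot}(N^\bullet)) = \mathrm{Tot}(F(N^\bullet))$. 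The main obstacle throughout is that uncompleted tensor product with $C'$ itself need not commute with $\mathrm{Tot}$; the role of the $J$-completion together with the finite $J$-complete Tor amplitude hypothesis is precisely to reduce matters to the bounded-amplitude setting where the commutativity does hold.
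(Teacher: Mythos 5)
Your proof is correct and rests on the same two pillars as the paper's argument: derived Nakayama via a Koszul complex on generators of $J$, and the fact that an exact functor of bounded cohomological amplitude commutes with totalizations of cosimplicial objects in $D^{\geq 0}$. The difference is organizational. You apply derived Nakayama once, at the outset, to the comparison map $\alpha_{M^\bullet}$; this trades the completed functor $-\widehat{\otimes}^L_C C'$ for the uncompleted, bounded-amplitude functor $-\otimes^L_C X_n$, after which you invoke the commutation-with-$\mathrm{Tot}$ principle for the latter. The paper keeps the completed functor in play throughout: it first reduces (using that finite truncations $\mathrm{Tot}^{\leq k}$ commute with $-\widehat{\otimes}^L_C C'$ trivially) to showing that $-\widehat{\otimes}^L_C C'$ carries $D^{\geq n}$ into $D^{\geq n-c}$ for a uniform $c$, and only then applies derived Nakayama inside that amplitude estimate to pass to the Koszul-reduced functor. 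Both routes use the hypothesis in exactly the same way and give essentially the same constant ($r$ from the Koszul complex plus the Tor amplitude bound). One cosmetic remark: tensoring with $\mathrm{Kos}(C;J^n)$ for all $n\geq 1$ is more than needed; derived Nakayama requires only a single Koszul complex on a fixed generating set of $J$.
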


\begin{proof}
Let $M^\bullet$ be a cosimplicial object in $D^{\geq 0}$ with totalization $M$. We must show that
\[ \mathrm{Tot}(M^\bullet) \widehat{\otimes}^L_C C' \simeq \mathrm{Tot}(M^\bullet \widehat{\otimes}_C^L C')\] 
via the natural map. If instead of $M^\bullet$ one uses the $n$-skeleton $M^{\leq n}$ of $M$, then the corresponding assertion is clear. We are thus reduced to showing the following: there exists some $c \geq 0$ such that if $K \in D^{\geq n}(C)$, then $K \widehat{\otimes}_C^L C' \in D^{\geq n-c}(C')$ for all $n \geq 0$. Choose generators $x_1,...,x_r \in J$. It is enough to find some $c \geq 0$ such that the functor 
\[ K \mapsto (K \otimes_C^L C') \otimes_{C'}^L \mathrm{Kos}(C';x_1,...,x_r)\] 
takes $D^{\geq n}$ to $D^{\geq n-c}$: indeed, this would imply that for the same constant $c$ and any integer $m \geq 1$, the functor
\[ K \mapsto (K \otimes_C^L C') \otimes_{C'}^L \mathrm{Kos}(C';x_1^m,...,x_r^m)\] 
also carries $D^{\geq n}$ to $D^{\geq n-c}$, which then implies the desired claim by taking an inverse limit of $m$. Now we can write 
\[ (K \otimes_C^L C') \otimes_{C'}^L \mathrm{Kos}(C';x_1,...,x_r) \simeq (K \otimes_C^L \mathrm{Kos}(C;x_1,...,x_r)) \otimes_{\mathrm{Kos}(C;x_1,...,x_r)}^L \mathrm{Kos}(C';x_1,...,x_r).\]
Now the operation $K \mapsto K \otimes_C^L \mathrm{Kos}(C;x_1,...,x_r)$ takes $D^{\geq n}$ to $D^{\geq n-r}$, while the operation $N \mapsto N \otimes_{\mathrm{Kos}(C;x_1,...,x_r)}^L \mathrm{Kos}(C';x_1,...,x_r)$ takes $D^{\geq m}$ to $D^{\geq m-a}$ for some fixed $a$ by assumption on the Tor amplitude. It follows that taking $c = a+r$ solves the problem.
\end{proof}

\newpage

\section{The characteristic $p$ case: the Hodge-Tate and crystalline comparisons}
\label{ss:HTProof}

In this section, we prove the crystalline comparison for prismatic cohomology; the key tool is the relationship between divided power envelopes and $\delta$-structures (\cref{PDenvRegSeqLambda}). Using the Cartier isomorphism, we deduce the Hodge-Tate comparison over crystalline prisms (\cref{HodgeTateCharp}).

\begin{notation}
Let $(A,(p))$ be a crystalline prism and let $I\subset A$ be a pd-ideal with $p\in I$. In particular the Frobenius $A/p\to A/p$ factors over the quotient $A/p \to A/I$, inducing a map $\psi: A/I\to A/p$. Let $R$ be a smooth $A/I$-algebra and let $R^{(1)} = R\otimes_{A/I,\psi} A/p$.
\end{notation}

\begin{theorem}[The crystalline comparison in characteristic $p$]\label{CrysComp} Write $R\Gamma_\crys(R/A)$ for the crystalline cohomology of $\mathrm{Spf}(R)$ relative to the pd-thickening $\mathrm{Spec}(A/I) \subset \mathrm{Spf}(A)$. Then there is a canonical isomorphism $\Prism_{R^{(1)}/A} \simeq R\Gamma_\crys(R/A)$ of $E_\infty$-$A$-algebras compatibly with the Frobenius.
\end{theorem}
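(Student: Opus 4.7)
My strategy is to identify both sides with explicit Čech--Alexander cosimplicial models and compare them using \cref{PDenvRegSeqLambda}, which realizes pd-envelopes as Frobenius-twisted prismatic envelopes.

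I would first reduce to the affine case. Both $R \mapsto \Prism_{R^{(1)}/A}$ (via \cref{EtaleLocalizePrismatic} together with the fact that étale base change in $R$ induces étale base change in $R^{(1)}$) and $R \mapsto R\Gamma_\crys(R/A)$ satisfy étale descent, so it suffices to produce a comparison isomorphism affinely and functorially. After further étale localization, one may choose a smooth $A$-algebra lift $\tilde R$ of $R$ together with a Frobenius lift $\phi_{\tilde R}$, making $\tilde R$ into a $p$-torsionfree $\delta$-$A$-algebra for which $(\tilde R,(p))$ is a bounded prism.

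Next I would produce matching cosimplicial resolutions. Choose a $p$-completed polynomial $A$-algebra $Q_0 = A[x_\alpha]^{\wedge}$ with a surjection $Q_0 \twoheadrightarrow R$, and equip it with the $\delta$-structure $\phi(x_\alpha) = x_\alpha^p$; let $K_0 = \ker(Q_0 \to R)$, a regular-mod-$p$ ideal by smoothness of $R$ over $A/I$. The cosimplicial $p$-completed pd-envelope $D^\bullet = D_{Q^\bullet}(K^\bullet)^{\wedge}$ computed from the Čech nerve of $A \to Q_0$ computes $R\Gamma_\crys(R/A)$ by the standard Berthelot description. On the other side, the same $Q_0$ (tensored along $\psi: A/I \to A/p$ on the $A$-structure) surjects onto $R^{(1)}$, and via \cref{PrismaticFunctorialCC} yields a cosimplicial prismatic envelope $C^\bullet$ computing $\Prism_{R^{(1)}/A}$, where passing from the surjection $Q^n \twoheadrightarrow R$ to the surjection $Q^n \twoheadrightarrow R^{(1)}$ replaces the kernel $K_n$ (modulo $p$) by its Frobenius image $\phi(K_n)$. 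The heart of the comparison is then \cref{PDenvRegSeqLambda}, which gives an isomorphism $Q^n\{\phi(K_n)/p\}^{\wedge} \cong D_{Q^n}(K_n)^{\wedge}$; this identifies $C^n$ with $D^n$ as $\delta$-$A$-algebras, and naturality in $Q^\bullet$ assembles these term-wise isomorphisms into the desired cosimplicial comparison.

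The Frobenius compatibility of the resulting isomorphism $\Prism_{R^{(1)}/A} \simeq R\Gamma_\crys(R/A)$ is built in, as \cref{PDenvRegSeqLambda} is $\delta$-equivariant. The main obstacle will be the careful bookkeeping needed to match the structures cosimplicially: precisely relating the Frobenius twist appearing in \cref{PDenvRegSeqLambda} to the Frobenius twist $R \rightsquigarrow R^{(1)}$ defined via $\psi$, and handling the auxiliary $\delta^k$-generators introduced when passing to free $\delta$-$A$-algebras in \cref{PrismaticFunctorialCC}, so that these extra generators form a regular sequence contributing trivially to both envelopes and do not affect the $p$-completed comparison.
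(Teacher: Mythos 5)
Your overall strategy — matching \v{C}ech--Alexander models for both sides and invoking \cref{PDenvRegSeqLambda} to relate pd-envelopes to prismatic envelopes — is indeed the paper's strategy, but there is a genuine gap at the heart of the argument. The claim that \cref{PDenvRegSeqLambda} ``identifies $C^n$ with $D^n$ as $\delta$-$A$-algebras'' is false. What \cref{PDenvRegSeqLambda} gives is an isomorphism $D^n \cong Q^n\{\frac{\phi(K_n)}{p}\}^\wedge$ between the pd-envelope and the prismatic envelope of the \emph{Frobenius image} $\phi(K_n)$. But the term computing $\Prism_{R^{(1)}/A}$ is $\phi_A^* C^n$, the prismatic envelope of $K_n$ base-changed along $\phi_A$ --- equivalently, the prismatic envelope of the coefficient-twisted ideal $K_n^{(1)}$ inside the Frobenius twist $Q^{n,(1)} = Q^n \otimes_{A,\phi_A} A$. (Relatedly, the kernel of the natural surjection $Q^{n,(1)} \to R^{(1)}$ is $K_n^{(1)}$, not $\phi(K_n)$; these only become comparable after applying the relative Frobenius $Q^{n,(1)} \to Q^n$.) The rings $\phi_A^* C^n$ and $D^n$ therefore differ by a pushout along the relative Frobenius of $Q^\bullet$ over $A$, which is an enormous faithfully flat extension, not an isomorphism. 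The real content of the proof --- entirely absent from your plan --- is that the relative-Frobenius map $\phi_A^* C^\bullet \to D^\bullet$ nevertheless becomes a quasi-isomorphism \emph{after totalization}. In the paper this is \cref{CosimpHT}, a careful combinatorial statement about Frobenius twists of \v{C}ech nerves of polynomial algebras in characteristic~$p$, combined with derived Nakayama.

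Two secondary points. First, the polynomial $\delta$-ring $Q_0 = A[x_\alpha]^\wedge$ with $\delta(x_\alpha)=0$ does not satisfy the requirement of \cref{PrismaticFunctorialCC}: to produce a weakly initial object of $(R/A)_\Prism$ one needs the \emph{free} $\delta$-$A$-algebra on $B_0$. With your $Q_0$, mapping it into an arbitrary object $(D \to D/pD \gets R)$ of the prismatic site would require rank-$1$ lifts in $D$ (elements with $\delta = 0$) of the images of the $x_\alpha$, and such lifts simply do not exist in general. So the passage to $\mathrm{Free}_{\delta,A}(B_0)$ is not a bookkeeping device to be handled afterwards; without it your cosimplicial ring does not compute prismatic cohomology at all. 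Second, the smooth lift $\tilde R$ you introduce by \'etale localization is never used in the rest of your argument; in the paper's proof such a lift appears only in reducing the general-$I$ case to $I=(p)$, and is orthogonal to the \v{C}ech--Alexander comparison.
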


\begin{remark} We will often apply the theorem when $R=\tilde{R}\otimes_{A/p} A/I$ for some smooth $A/p$-algebra $\tilde{R}$. In that case $R^{(1)}$ is the pullback of $\tilde{R}$ under $\phi$, and the base change compatibility of prismatic cohomology (Corollary~\ref{BaseChangePrismCoh}, reproduced as part of Corollary~\ref{HodgeTateCharp} below) translates the theorem into
\[
\phi_A^\ast \Prism_{\tilde{R}/A}\cong R\Gamma_\crys(\tilde{R}/A)\ .
\]
\end{remark}

\begin{proof}
We first give the proof when $I=(p)$, and then reduce to this case. \\

{\em The $I=(p)$ case.} For $I=(p)$, let us first recall a Cech-Alexander style construction of crystalline cohomology that closely mirrors by Construction~\ref{PrismaticFunctorialCC}. Consider an $A$-algebra surjection $B_0 \to R$ where $B_0$ is the $p$-completion of a polynomial $A$-algebra. Let $J_0 = \ker(B_0 \to R)$, and let $B$ be the free $p$-complete $\delta$-$A$-algebra on $B_0$. Let $R' = B/J_0B$, so $R \to R'$ is the ind-smooth map of $A/p$-algebras obtained via base change from the $p$-completely ind-smooth map $B_0 \to B$ of $p$-complete $A$-algebras. Write $D_0$ (resp. $D$) for the $p$-completed PD-envelope of the ideal $J_0 \subset B_0$ (resp. $J_0B \subset B$). Thus, we have a diagram
\[ \xymatrix{ 
A \ar[r] \ar[dd] & B_0 \ar[r] \ar[d] & B \ar[d] \\
& D_0 \ar[r] \ar[d] & D \ar[d] \\
A/p \ar[r] & R \ar[r] & R'. }\]
Both the small squares on the right are pushout squares whose horizontal maps are $p$-completely flat, the vertical maps on the small square on the bottom right are PD-thickenings, and both $D_0$ and $D$ are $p$-torsionfree (Lemma~\ref{PDEnvBCbasic}). Moreover, for each $n \geq 1$, the rings $D_0/p^n$ and $D/p^n$ identify with the PD-envelopes of $B_0/p^n \to R$ and $B/p^n \to R'$ (compatibly with the standard divided power structure on $(p)$). In particular, for each $n \geq 1$, the object $(D/p^n \to R' \gets R)$ gives an object of the big crystalline site $(R/A)_{CRYS}$ as in \cite[\S III.4]{BerthelotCrisCoh}, where the base $A$ is equipped with the PD-ideal $(p)$ with its unique PD-structure. As $n$ varies, we obtain a pro-object   $\{(D/p^n \to R' \gets R)\}_{n \geq 1}$ of $(R/A)_{CRYS}$. It is easy to see that this object is weakly initial (i.e., that each object of $(R/A)_{CRYS}$ receives a map from some term of the pro-object) and that the construction carrying the initial surjection $(B_0 \to R)$ to the pro-object $\{(D/p^n \to R' \gets R)\}_{n \geq 1}$ commutes with coproducts. Consequently, the \v{C}ech nerve of this object has the form $\{(D^\bullet/p^n \to (R')^\bullet \gets R)\}_{n \geq 1}$, where $D^\bullet$ is obtained by applying the initial construction $B_0 \mapsto D$ to the \v{C}ech nerve of $A \to B_0$. Thus, $R\Gamma_\crys(R/A)$ is computed by the cosimplicial ring $D^\bullet$; here we implicitly use \cite[Proposition III.4.1.4]{BerthelotCrisCoh} to identify crystalline cohomology in the big and small crystalline sites. Moreover, as the collection of surjections $(B_0 \to R)$ is sifted, this recipe is independent of the choice of $(B_0 \to R)$ up to canonical isomorphism.


Next, running Construction~\ref{PrismaticFunctorialCC} with the same input data $(B_0 \to R)$ as in the previous paragraph, we obtain a cosimplicial ring $C^\bullet$ computing $\Prism_{R/A}$. By base change stability of Cech-Alexander complexes, the cosimplicial ring $\phi_A^* C^\bullet$ (obtained by termwise base change along the Frobenius on $A$) computes $\Prism_{R^{(1)}/A}$. (Note that we are not claiming yet that $\phi_A^*$ commutes with totalizing $C^\bullet$.) To prove the isomorphism in the theorem, we shall exhibit a canonical map $\phi_A^* C^\bullet \to D^\bullet$ of cosimplicial $A$-algebras that gives a quasi-isomorphism on underlying complexes. We shall do so by explicitly describing both sides as suitable prismatic envelopes. Write $B_0^\bullet$ and $B^\bullet$ for the Cech nerves of $A \to B_0$ and $A \to B$ computed in $p$-complete $A$-algebras. If $J_0^\bullet \subset B_0^\bullet$ denotes the kernel of $B_0^\bullet \to R$, then $C^\bullet$ identifies with the $p$-completed prismatic envelope $B^\bullet\{\frac{J_0^\bullet B^\bullet}{p}\}^{\wedge}$, while $D^\bullet$ identifies with the PD-envelope $D_{J_0^\bullet B^\bullet}(B^\bullet)$ of $J_0^\bullet$ inside $B^\bullet$. Via \cref{PDenvRegSeqLambda}, we can then also describe $D^\bullet$ as the prismatic envelope $B^\bullet\{\frac{\phi(J_0^\bullet)B^\bullet}{p}\}^{\wedge}$. In particular, the relative Frobenius for $A \to B^\bullet$ induces a canonical map
\[ \eta:\phi_A^* C^\bullet := \phi_A^* B^\bullet\{\frac{J_0^\bullet B^\bullet}{p}\}^{\wedge} \to B^\bullet\{\frac{\phi(J_0^\bullet)B^\bullet}{p}\}^{\wedge} =: D^\bullet,\]
of cosimplicial $A$-algebras that yields the desired comparison map of $E_\infty$-$A$-algebras on passage to the derived category. This description also shows that $D^\bullet$ identifies with the pullback of the cosimplicial $B^{\bullet,(1)}$-module $\phi_A^* C^\bullet$ under the relative Frobenius $B^{\bullet,(1)} \to B^\bullet$ as the latter map is flat. Moreover, both source and target of $\eta$ are $p$-torsionfree: this is standard for PD envelopes and then follows for prismatic envelopes (see, e.g., Lemma~\ref{PDEnvelopeFlat} and Corollary~\ref{PDFlatp} applied with $A=\mathbf{Z}_p$). We may then apply Lemma~\ref{CosimpHT} below (with $N = \phi_A^* C^\bullet/p$ and $B^\bullet$ replaced by its mod $p$ reduction) to conclude that the derived mod $p$ reduction of $\eta$ gives a quasi-isomorphism on underlying complexes; derived Nakayama (\cite[Tag 0G1U]{Stacks}) then implies that $\eta$ itself is a quasi-isomorphism, proving the isomorphism in the theorem in the $I=(p)$ case.

To finish the $I=(p)$ case, it remains to prove that the comparison constructed above is compatible with the Frobenius. Note that the source and target of the map $\eta$ constructed above are naturally cosimplicial $\delta$-$A$-algebras and $\eta$ respects this structure. To finish, it will suffice to show that the Frobenius lift on $D^\bullet$ coming from its $\delta$-structure (ultimately induced by the $\delta$-structure on $B^\bullet$) coincides with the Frobenius on crystalline cohomology $R\Gamma_\crys(R/A)$ under the natural identification $R\Gamma_\crys(R/A) \simeq \lim D^\bullet$. This is a statement entirely on the crystalline side, but we do not know a reference, so we sketch a proof. Recall the following functoriality statement, immediate from the definition in crystalline cohomology:  given any map $\alpha:A \to A'$ of $p$-torsionfree rings, a smooth $A/p$-scheme $Y$ and a smooth $A'/p$-scheme $X$, an $\alpha$-equivariant map $\pi:X \to Y$, simplicial objects $U_\bullet \in (X/A')_{CRYS}$ and $V_\bullet \in (Y/A)_{CRYS}$, and a map $f_\bullet:U_\bullet \to V_\bullet$ of PD-schemes lying over $\pi$ (and thus over $\alpha$), one has an induced commutative diagram
\[ \xymatrix{ R\Gamma_{\crys}(Y/A) \ar[r] \ar[d]^{\pi^*} & R\Gamma_{\crys}(X/A') \ar[d] \\
\lim \mathcal{O}_{CRYS}(V_\bullet) \ar[r]^{f_\bullet^*} & \lim \mathcal{O}_{CRYS}(U_\bullet)}\]
of $E_\infty$-$A$-algebras (where the objects on the right are regarded as $A$-algebras via $\alpha$). Apply this observation with $A=A'$, $X=Y=\mathrm{Spec}(R)$, $U_\bullet=V_\bullet$ given by the PD-thickening $(D^\bullet/p^n \to (R')^\bullet \gets R)$ used above, and $\alpha$, $\pi$ and $f_\bullet$ being the Frobenius maps to conclude that the comparison map $R\Gamma_\crys(R/A) \to \lim_\bullet D^\bullet/p^n$ is $\phi$-equivariant; a similar argument with inductive systems of simplicial objects applied to $\{(D^\bullet/p^n \to (R')^\bullet \gets R)\}_{n \geq 1}$ then allows us to take the inverse limit to conclude that the comparison isomorphism $R\Gamma_\crys(R/A) \simeq \lim_\bullet D^\bullet$ is $\phi$-equivariant. \\


{\em The general case.} We now no longer assume $I=(p)$, so we merely have a containment $p \in I$. As $I$ admits divided powers, the map $A/p \to A/I$ is a surjection with a locally nilpotent kernel (each element is annihilated by the Frobenius). By noetherian approximation, we can choose a lift of the smooth $A/I$-algebra $R$ to a smooth $A/p$-algebra $\widetilde{R}$. After making such a choice, the $I=(p)$ case proven above gives the desired isomorphism $\Prism_{R^{(1)}/A} \simeq R\Gamma_{\crys}(R/A)$: the base change of $\widetilde{R}$ along the Frobenius $A/p \to A/p$ coincides with $R^{(1)}$ as this base change factors over the surjection $A/p \to A/I$. To conclude the argument, it therefore suffices to exhibit a canonically defined comparison map $\Prism_{R^{(1)}/A} \to R\Gamma_\crys(R/A)$ that agrees with the map coming from the choice of $\widetilde{R}$. We shall explain the canonical construction of such a map, and leave the compatibility of the two constructions to the reader as an exercise in  relating distinct Cech-Alexander complexes computing crystalline cohomology.

To construct the comparison, choose a surjection $B_0 \to R$ where $B_0$ is a smooth $A$-algebra equipped with a $\delta$-structure; for instance, $B_0$ could be a polynomial $A$-algebra. Forming the PD envelope of the surjection $B_0 \to R$ as well as its Cech nerve yields a $p$-completely flat $p$-complete cosimplicial $\delta$-$A$-algebra $D^\bullet$ equipped with a surjection $D^\bullet \to R$ whose kernel $K^\bullet$ has divided powers compatible with those on $I$ and such that the resulting map $R\Gamma_\crys(R/A) \to \lim D^\bullet$ is an isomorphism. Now observe that the Frobenius on $D^\bullet$ carries $K^\bullet$ into $pD^\bullet$ by the existence of divided powers on $K^\bullet$, thus giving a map 
\[ R \simeq D^\bullet/KD^\bullet \xrightarrow{\phi_{D^\bullet}} D^\bullet/pD^\bullet.\]
of cosimplicial rings which is linear over the corresponding map $A/I \to A/p$ induced by the Frobenius on $A$. Linearizing over $\psi$, we obtain an object $(D^\bullet \to D^\bullet/pD^\bullet \gets R^{(1)}) \in (R^{(1)}/A)_\Prism$, which then gives the desired comparison map $\Prism_{R^{(1)}/A} \to \lim D^\bullet \simeq R\Gamma_\crys(R/A)$. This comparison map, a priori, depends on the choice of the initial surjection $B_0 \to R$. But the collection of all such choices is sifted, so the constructed comparison map $\Prism_{R^{(1)}/A} \to R\Gamma_\crys(R/A)$ is independent of the choice of $B_0 \to R$ up to contractible ambiguity (e.g., one may take a colimit over all choices to obtain a canonical map). 
\end{proof}

The following lemma on cosimplicial modules was used in the above proof.

\begin{lemma}
\label{CosimpHT}
Let $k$ be a commutative ring of characteristic $p$. Let $B$ be a polynomial $k$-algebra. Let $B^\bullet$ be the Cech nerve of $k \to B$, and let $B^{\bullet,(1)}$ be its Frobenius twist, so we have the relative Frobenius $B^{\bullet,(1)} \to B^\bullet$ over $k$. For any cosimplicial $B^{\bullet,(1)}$-module $N$, the natural map
\[ N \to N \otimes_{B^{\bullet,(1)}} B^\bullet\]
gives a quasi-isomorphism on associated unnormalized chain complexes of $k$-modules.
\end{lemma}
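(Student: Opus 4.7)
The plan is to exploit the fact that $B$, viewed as a $B^{(1)}$-module via the relative Frobenius, is free with a canonical augmented basis. Let $W \subset B$ be the free $k$-submodule spanned by the monomials $x_1^{a_1} \cdots x_d^{a_d}$ with $0 \leq a_i < p$; then multiplication in $B$ induces an isomorphism of $B^{(1)}$-modules $B^{(1)} \otimes_k W \xrightarrow{\sim} B$, and $W$ carries a canonical split augmentation $\epsilon: W \twoheadrightarrow k$ (projection to the constant monomial) splitting the unit $k \hookrightarrow W$.

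Tensoring this identification $n+1$ times yields $B^n \cong B^{n,(1)} \otimes_k V^n$ as $B^{n,(1)}$-modules, where $V^n := W^{\otimes(n+1)}$. The key point is that this identification is compatible with the cofaces of the Cech nerve: the coface $d^i$ inserts $1 \in B$ at position $i$, which under the tensor decomposition corresponds to inserting $1 \in B^{(1)}$ and $1 \in W$ in their respective slots. (Codegeneracies need not be compatible, but these are irrelevant for the unnormalized chain complex, which only uses cofaces.) Consequently, for any cosimplicial $B^{\bullet,(1)}$-module $N^\bullet$, one obtains a natural isomorphism of semi-cosimplicial $k$-modules
\[ N^\bullet \otimes_{B^{\bullet,(1)}} B^\bullet \;\cong\; N^\bullet \otimes_k V^\bullet, \]
under which the map in the lemma corresponds to the inclusion $N^\bullet \otimes_k k \hookrightarrow N^\bullet \otimes_k V^\bullet$ induced by the unit $k \hookrightarrow V^\bullet$.

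The tensor augmentation $\epsilon^{\otimes(\bullet+1)}: V^\bullet \to k$ is a semi-cosimplicial map (as $\epsilon(1)=1$) that splits the unit, yielding a decomposition $V^\bullet \cong k \oplus \overline{V}^\bullet$. It therefore suffices to show that the unnormalized chain complex of $N^\bullet \otimes_k \overline{V}^\bullet$ is acyclic. In fact, the unnormalized chain complex of $\overline{V}^\bullet$ alone is null-homotopic: the extra degeneracy $h^n: V^n \to V^{n-1}$ defined by $w_0 \otimes \cdots \otimes w_n \mapsto \epsilon(w_0) \cdot w_1 \otimes \cdots \otimes w_n$ satisfies $h^{n+1} d^0 = \mathrm{id}$ and $h^{n+1} d^i = d^{i-1} h^n$ for $i \geq 1$, so $dh + hd = \mathrm{id}$; since $\epsilon^{\otimes n} \circ h^n = \epsilon^{\otimes(n+1)}$, the map $h$ restricts to $\overline{V}^\bullet$. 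Applying the Eilenberg--Zilber theorem then yields a quasi-isomorphism between the unnormalized chain complex of $N^\bullet \otimes_k \overline{V}^\bullet$ and the tensor product of the unnormalized chain complexes of $N^\bullet$ and $\overline{V}^\bullet$, and the latter is null-homotopic because one tensor factor is.

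The main technical subtlety is verifying the coface-compatibility of the decomposition $B^n \cong B^{n,(1)} \otimes_k V^n$ and keeping careful track of the distinction between cosimplicial and semi-cosimplicial structures; the rest is a formal application of Eilenberg--Zilber and an explicit contracting homotopy.
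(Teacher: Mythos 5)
Your decomposition $B^n \cong B^{n,(1)} \otimes_k V^n$ with $V^n = W^{\otimes (n+1)}$ and the contraction $h$ via the split augmentation $\epsilon$ are essentially identical to the paper's setup (the paper works with $W = k[S]$ for $S = \{0,\dots,p-1\}^d$, which is the same submodule). The reduction to showing $C(N^\bullet \otimes_k \overline{V}^\bullet)$ is acyclic is also the same. However, the final step — invoking the Eilenberg--Zilber theorem — has a genuine gap.

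The Eilenberg--Zilber theorem asserting $C(X^\bullet \otimes_k Y^\bullet) \simeq \mathrm{Tot}\bigl(C(X^\bullet) \otimes_k C(Y^\bullet)\bigr)$ holds for \emph{cosimplicial} $k$-modules; its proof requires codegeneracies on both factors (for the shuffle map and the homotopy showing $EZ \circ AW \simeq \mathrm{id}$). It fails in general for semi-cosimplicial modules — for instance, a semi-cosimplicial module $X^\bullet$ concentrated in a single positive degree gives $C(X \otimes X)$ in that same degree, while $C(X) \otimes C(X)$ sits in twice the degree. As you yourself note, the decomposition $B^\bullet \cong B^{\bullet,(1)} \otimes_k V^\bullet$ is only compatible with cofaces, so $\overline{V}^\bullet$ and therefore $N^\bullet \otimes_k \overline{V}^\bullet$ are only handed to you as semi-cosimplicial objects; the appeal to Eilenberg--Zilber is then not justified. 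The existence of your chain-level contraction $h$ on $C(\overline{V}^\bullet)$ is not by itself enough to transfer the contraction across the diagonal tensor product with $N^\bullet$: a direct attempt with $H = s^0_N \otimes h$ produces $dH + Hd = \mathrm{id} + (d^0_N s^0_N - \mathrm{id}) \otimes d^0_V h$, with a nonzero error term.

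The fix is exactly the observation the paper makes and that your argument omits: the $k$-module $W$, while not a subalgebra of $B$, \emph{does} carry a canonical $k$-algebra structure, namely the group algebra $k[(\mathbf{Z}/p)^d]$ via the pointed bijection $\{0,\dots,p-1\}^d \simeq (\mathbf{Z}/p)^d$. This algebra structure has the same unit $1 \in W$ and the same augmentation $\epsilon$, so $V^\bullet$ is identified — \emph{as a semi-cosimplicial $k$-module, with the same cofaces} — with the full Cech nerve of $k \to k[(\mathbf{Z}/p)^d]$. With the codegeneracies thus supplied, $k \to V^\bullet$ is a homotopy equivalence of genuine \emph{cosimplicial} $k$-modules (the Cech nerve of a split map), and tensoring a cosimplicial homotopy equivalence with the cosimplicial module $N^\bullet$ again gives a cosimplicial homotopy equivalence; this is what descends to a quasi-isomorphism on unnormalized complexes. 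Your extra degeneracy $h$ is precisely the chain-level shadow of this cosimplicial contraction, but the codegeneracies — not just $h$ — are what make the tensoring step work. A minor additional point: $B$ is allowed to be a polynomial algebra on an arbitrary (possibly infinite) generating set, so one should also reduce to finitely many variables by a filtered colimit argument before writing $W$ as you do.
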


The argument given below is somewhat indirect, and relies crucially on the fact that degeneracy maps may be ignored when passing from cosimplicial abelian groups to the corresponding object in the derived category. It is desirable to have a better proof of this comparison. A similar statement can also be found in \cite[Lemma 3.28]{NiziolKLog} (which we learnt about from \cite[Appendix B]{KoshikawaLog1}).

\begin{proof}
By a filtered colimit argument, we reduce to the case where $B$ is a polynomial algebra in finitely many variables. We first explain why $B^\bullet$ admits a ``basis'' over $B^{\bullet,(1)}$ as a semi-cosimplicial module (i.e., ignoring degeneracy maps), and then we use the basis to prove the lemma. For ease of notation, write $B = k[M]$ for the monoid $M = \mathbf{N}^r$ with suitable $r \geq 0$.

For any $[n] \in \Delta$, we have the monoid $M^{\oplus [n]}$. As $[n]$ varies, the assignment $[n] \mapsto M^{\oplus [n]}$ assembles naturally into a cosimplicial object $EM$: the face maps are given by inclusions of finite products of $M$ into larger finite products (and thus only use the knowedge of $M$ as a set pointed by $0 \in M$), while the degeneracies involve the monoid operation. More precisely, $EM$ is the Cech nerve of the map $0 \to M$ of commutative monoids. The construction $M \mapsto EM$ is functorial in $M$, and the map $B^{\bullet,(1)} \to B^\bullet$ in the question identifies with the map $k[E(pM)] \to k[EM]$ on monoid algebras induced by the inclusion $pM \subset M$ by functroriality of $E(-)$.

Let $S \subset M$ be the standard set of coset representatives for $M^{grp}/pM^{grp}$, i.e., writing $M = \mathbf{N}^r$, we take $S = \{0,...,p-1\}^r \subset \mathbf{N}^r$. We regard $S$ as a set pointed by $0 \in S$, so the composition $S \to M \to M^{grp}/pM^{grp}$ is a pointed bijection. We then have
\begin{equation}
\label{monoidcoset}
\bigsqcup_{s \in M} (pM) + s \simeq M.
\end{equation} 
Taking products, we obtain subsets $S^{\oplus [n]} \subset M^{\oplus [n]}$ for each $[n] \in \Delta$. Note that a face map $M^{\oplus [n]} \to M^{\oplus [m]}$ in $EM$ carries $S^{[n]}$ into $S^{[m]}$: up to a permutation, a face map $M^{\oplus [n]} \to M^{\oplus [m]}$ is given by the product of the identity map on the first $\#[n]$ factors of the target and the $0$ map on the remaining factors, so the claim follows  as $0 \in S$. Thus,  we obtain a semi-cosimplicial set $ES$ together with an inclusion $ES \subset EM$ of semi-cosimplicial sets. The decomposition \eqref{monoidcoset} yields a similar decomposition of the semi-cosimplicial set $EM$, i.e., for each $[n] \in \Delta$, the subset $S^{\oplus [n]} \subset M^{\oplus [n]}$ gives a set of coset representatives for $(M^{grp}/pM^{grp})^{\oplus [n]}$,  the natural maps give a decomposition
\begin{equation}
\label{monoidcosetcosimp}
\bigsqcup_{\underline{s} \in S^{\oplus [n]}} (pM^{\oplus [n]}) + \underline{s} \simeq M^{\oplus [n]},
\end{equation}
and this decomposition is compatible with the face maps of the semi-cosimplicial sets $EM$ and $ES$ respectively (as the face maps for $ES$ were defined by restriction from $EM$). 

For future use, we remark that if $\ast$ denotes the constant semi-cosimplicial set with value a point (i.e., the final object in semi-cosimplicial sets), then there is a natural map $\eta_0:\ast \to ES$ of semi-cosimplicial sets determined by the point $0 \in S$; let 
\[ \eta = k[\eta_0]:k \to k[ES]\]
be the induced map on semi-cosimplicial $k$-modules.

Passing to free commutative $k$-algebras, we learn from \eqref{monoidcosetcosimp} that the semi-cosimplicial $k$-algebra $B^\bullet = k[EM]$ is free when regarded as a module over the semi-cosimplicial $k$-algebra $B^{\bullet,(1)} = k[E(pM)]$ with basis $ES$, i.e., we have a natural isomorphism
\begin{equation}
\label{basiscosimp}
 k[E(pM)] \otimes_k k[ES] \simeq k[EM]
 \end{equation}
of semi-cosimplicial $k[E(pM)]$-modules. Moreover, under this isomorphism, the structure map 
\[ k[E(pM)] \to k[EM]\] 
of semi-cosimplicial $k$-modules to the right side of \eqref{basiscosimp} identifies with the map 
\[ \mathrm{id}_{k[E(pM)]} \otimes \eta:k[E(pM)] = k[E(pM)] \otimes_k k \to k[E(pM)] \otimes_k k[ES] \]
to the left side of \eqref{basiscosimp}.

Now we prove the lemma, so let $N$ be any cosimplicial $k[E(pM)]$-module. Our task is to show that the natural map
\[ \alpha:N \to N \otimes_{k[E(pM)]} k[EM]\]
of cosimplicial $k$-modules gives a quasi-isomorphism on associated unnormalized chain complexes of $k$-modules. As the formation of the unnormalized chain complex only depends on underlying semi-cosimplicial objects, we can use the discussion in the previous paragraph to reduce to checking that the natural map of semi-cosimplicial $k$-modules
\[ \beta = \mathrm{id}_N \otimes \eta: N = N \otimes_k k \to N \otimes_k k[ES]\]
gives a quasi-isomorphism on associated unnormalized chain complexes of $k$-modules.  We next observe that, using the pointed bijection $S \simeq M^{grp}/pM^{grp}$, the semi-cosimplicial $k$-module $k[ES]$ can also be identified with the semi-cosimplicial $k$-module underlying the Cech nerve $C^\bullet$ of the $k$-algebra map $k \to C^0 := k[M^{grp}/pM^{grp}]$. Moreover, under this identification, the map $\eta:k \to k[ES]$ identifies with the structure map $\gamma:k \to C^\bullet$. In our context, this means that the map
\[ \delta = \mathrm{id}_N \otimes \gamma:N = N \otimes_k k \to N \otimes_k C^\bullet \]
of cosimplicial $k$-modules recovers the map $\beta$ on underlying semi-cosimplicial $k$-modules, so it suffices to prove $\delta$ is a quasi-isomorphism on underlying unnormalized complexes. But $\gamma$ is a homotopy equivalence of cosimplicial $k$-modules as it is the Cech nerve of the $k$-algebra map $k \to C^0$ that admits a section (determined by the map $M \to 0$ via functoriality).  As homotopy equivalences of cosimplicial $k$-modules are stable under tensor products with arbitrary cosimplicial $k$-modules, the claim follows.
\end{proof}

\begin{corollary}[The Hodge-Tate comparison in characteristic $p$]
\label{HodgeTateCharp}
For any smooth $A/p$-algebra $S$, the comparison map from Theorem~\ref{HTComp} yields an isomorphism
\[
H^i(\overline{\Prism}_{S/A})\cong \Omega^i_{S/(A/p)}\{-i\}.
\]
In particular, the assigment $R \mapsto \Prism_{R/A}$ commutes with arbitrary $p$-complete base change on the prism $(A,(p))$.
\end{corollary}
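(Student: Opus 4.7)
The plan is to combine the crystalline comparison just proved (Theorem~\ref{CrysComp}) with the classical Cartier isomorphism for de Rham cohomology in characteristic $p$, and then propagate the result using the \'etale localization property of prismatic cohomology (Lemma~\ref{EtaleLocalizePrismatic}).

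First, for any smooth $A/p$-algebra $S$, applying the crystalline comparison with $R = S$ and $I = (p)$ gives $\Prism_{S^{(1)}/A} \simeq R\Gamma_{\crys}(S/A)$ as commutative $A$-algebras. Reducing modulo $p$ and using the standard identification $R\Gamma_{\crys}(S/A) \otimes_A^L A/p \simeq R\Gamma_{\mathrm{dR}}(S/(A/p))$ (valid globally for smooth algebras in characteristic $p$, via \v Cech-Alexander complexes or local lifts combined with Zariski descent), we obtain $\overline{\Prism}_{S^{(1)}/A} \simeq R\Gamma_{\mathrm{dR}}(S/(A/p))$. The classical Cartier isomorphism then gives $H^i(R\Gamma_{\mathrm{dR}}(S/(A/p))) \cong \Omega^i_{S^{(1)}/(A/p)}$, and so $H^i(\overline{\Prism}_{S^{(1)}/A}) \cong \Omega^i_{S^{(1)}/(A/p)}$. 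This establishes the Hodge-Tate isomorphism abstractly for every Frobenius twist $S^{(1)}$.

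To extend this to arbitrary smooth $A/p$-algebras, I would argue \'etale-locally. Both sides of the Hodge-Tate map $\eta_X^\ast$ satisfy \'etale descent: the differentials by construction, and the prismatic cohomology sheaves by Lemma~\ref{EtaleLocalizePrismatic}. Every smooth $A/p$-algebra is \'etale-locally a polynomial algebra $P = A/p[X_1, \ldots, X_n]$, and such a $P$ is (non-canonically) isomorphic to $T^{(1)}$ for $T = A/p[X_1, \ldots, X_n]$ itself. By naturality of $\eta^\ast$, the previous paragraph then implies $\eta_P^\ast$ is an isomorphism, and descent gives the result for all smooth $A/p$-algebras. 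The ``in particular'' base change assertion then follows from derived Nakayama together with the Hodge-Tate description, since $\Omega^i_{S/(A/p)}$ manifestly commutes with base change of the prism.

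The main obstacle will be verifying that the Hodge-Tate \emph{map} $\eta_X^\ast$ defined via the Bockstein (in Construction~\ref{CartierPrismatic}) genuinely matches the classical Cartier isomorphism on cohomology of the de Rham complex, rather than agreeing only up to some automorphism of $\Omega^\ast_{S^{(1)}/(A/p)}$. This compatibility must be checked by an explicit computation, most cleanly carried out in the universal case of the affine line $\mathbf{A}^1_{A/p}$ (as foreshadowed by Proposition~\ref{PropHTCAffineLine}): one computes the Bockstein of a local co-ordinate directly in a \v{C}ech-Alexander model built from the prismatic envelope of $A[X] \twoheadrightarrow A/p[X]$, matches it with the image of $dX$ under Cartier, and uses multiplicativity plus naturality to extend to higher degrees, higher-dimensional affine spaces, and finally to all smooth algebras by \'etale descent. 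This universal-case computation, together with the strict graded commutativity needed in Lemma~\ref{WeakGradedCommutativity}, is the technical heart of the argument.
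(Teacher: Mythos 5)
Your approach is essentially the paper's: reduce to a polynomial algebra by \'etale localization (Lemma~\ref{EtaleLocalizePrismatic}), then invoke the crystalline comparison (Theorem~\ref{CrysComp}) together with the classical Cartier isomorphism. The one structural difference is that the paper makes a further reduction to the initial crystalline prism $A = \mathbf{Z}_p$ via Lemma~\ref{WeakBaseChange} (this is available because $\mathbf{Z}_p \to A$ always has finite $p$-complete Tor amplitude, $\mathbf{Z}_p$ being a PID), which has the pleasant side effect that the Frobenius twist becomes the identity since the residue field is $\mathbf{F}_p$; you stay over a general base and instead observe that a polynomial $A/p$-algebra is canonically identified with its own Frobenius twist $T^{(1)} = T \otimes_{A/p,\phi} A/p$, which is equally valid. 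Both routes buy the same thing, though the paper's is slightly leaner because the twist bookkeeping vanishes.

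Your flagging of the Bockstein-versus-Cartier compatibility as the technical heart is appropriate — the paper's terse ``the result follows from Theorem~\ref{CrysComp} and the Cartier isomorphism'' is silently using that the inverse Cartier map $C^{-1}\colon \Omega^\ast_{R^{(1)}/\mathbf{F}_p} \to H^\ast(\Omega^\ast_{R/\mathbf{F}_p})$ and the Hodge--Tate map $\eta^\ast_{R^{(1)}}$ are both, by construction, maps of commutative differential graded algebras into the Bockstein cdga $(H^\ast,\beta_p)$ that agree with the relative Frobenius in degree zero, and any two such maps coincide (the degree-one component, and hence all higher ones, is forced by the requirement of commuting with differentials). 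So the compatibility reduces to checking that the Theorem~\ref{CrysComp} identification sends $\eta^0_{R^{(1)}}$ to relative Frobenius, which is visible from the \v Cech--Alexander comparison map in that proof. Your alternative plan of settling the matter by an explicit Bockstein computation in a \v Cech--Alexander model for $\mathbf{A}^1$ is a legitimate and more hands-on substitute, and neither route introduces circularity with Lemma~\ref{WeakGradedCommutativity}: the abstract isomorphism alone already gives $H^2(\overline{\Prism}_{\mathbf{A}^1/A}) = 0$, from which strict graded commutativity in degree one follows, allowing $\eta^\ast$ to be constructed in all degrees before one promotes the abstract isomorphism to a map isomorphism.
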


\begin{proof} By Lemma~\ref{EtaleLocalizePrismatic}, we can assume that $S=A/p[X_1,\ldots,X_n]$ is a polynomial algebra, and then by \cref{WeakBaseChange} we can reduce to the case $A=\mathbf Z_p$. Then $S=R^{(1)}$ for the $\mathbf F_p$-algebra $R=\mathbf F_p[X_1,\ldots,X_n]$, and the result follows from Theorem~\ref{CrysComp} and the Cartier isomorphism.
\end{proof}

\newpage

\section{The mixed characteristic case of the Hodge-Tate comparison}
\label{sec:HTCompGeneral}

To prove the Hodge-Tate comparison in general, we shall need the following construction to pass from general bounded prisms $(A,I)$ to crystalline ones.

\begin{construction}[Mapping the universal oriented prism to a crystalline one]
\label{UnivOrientedCrystallize}
Let $(A,(d))$ be an oriented prism with the following properties: the Frobenius on $A/p$ is flat, and the ring $A/(d)$ is $p$-torsionfree. For example, the universal oriented prism from Example~\ref{UnivOrientedPrism} has these properties. Let $B := A\{\frac{\phi(d)}{p}\}^\wedge$ be the $p$-complete simplicial commutative $\delta$-$A$-algebra obtained by freely adjoining $\phi(d)/p$ to $A$; as $\phi(d) = p u$ for a unit $u \in \pi_0(B)$ by Lemma~\ref{distinguishedDivide}, the $A$-algebra $B$ is $(p,\phi(d))$-complete, and hence (as $d^p \in (p,\phi(d))$) also $(p,d)$-complete Moreover, by Corollary~\ref{PDenvRegSeqLambda}, the $A$-algebra $B$ identifies with the $p$-completed pd-envelope of $(d) \subset A$ via the natural map $A \to B$, and is thus discrete and $p$-torsionfree since $d$ is a nonzerodivisor modulo $p$.  Now consider the map $\alpha:A \to B$ determined as the composition of the natural map $A \to B$ with $\phi:A \to A$. We can regard $\alpha$ as a map of prisms $(A,(d)) \to (B,(p))$ since $\alpha(d) \in pB$. Modulo $p$, the map $\alpha$ factors as
\begin{equation}
\label{CrystallizeTor}
 A/p \xrightarrow{can} A/(p,d) \xrightarrow{\phi} A/(p,d^p) \xrightarrow{can} B/p \simeq D_{(d)}(A/p)
 \end{equation}
where the first map has finite Tor amplitude, the second map is faithfully flat by construction, and the last map is faithfully flat (even free) by the structure of divided power envelopes. Using this description, one obtains the following  properties of the base change functor
\[ \widehat{\alpha^*}:{D}_{comp}(A) \to {D}_{comp}(B)\] 
on derived $(p,d)$-complete objects of the derived category:
\begin{enumerate}
\item The functor $\widehat{\alpha^*}$ is conservative. Indeed, by derived Nakayama (\cite[Tag 0G1U]{Stacks}), it suffices to know that derived $d$-completed base change along the map $A/p \to D_{(d)}(A/p)$ is conservative on derived $d$-complete objects; this follows as the same statement is true individually for each map appearing in the composition \eqref{CrystallizeTor} (by derived Nakayama for the first, and faithful flatness for the rest).

\item The functor $\widehat{\alpha^*}$ has finite $(p,d)$-complete Tor amplitude. Again, this reduces to the statement that derived $d$-completed base change along the composition in \eqref{CrystallizeTor} has finite $d$-complete Tor amplitude, which holds true individually for each map.

\item For any $p$-completely smooth $A/I$-algebra $R$ with $p$-completed base change $R_B$ to $B$, we have a base change isomorphism $\widehat{\alpha^*} \Prism_{R/A} \simeq \Prism_{R_B/B}$ by Lemma~\ref{WeakBaseChange}.

\end{enumerate}
We shall use these properties to deduce general properties of prismatic cohomology from those of crystalline cohomology.
\end{construction}

Before proceeding to the proof of the Hodge-Tate comparison, we need to prove Lemma~\ref{WeakGradedCommutativity}, ensuring that we have well-defined Hodge-Tate comparison maps. For this, we actually prove the Hodge-Tate comparison for the affine line; the proof below is structured in a way that generalizes to higher dimensions.

\begin{proposition}[The Hodge-Tate comparison for the affine line]
\label{PropHTCAffineLine}
If $R$ is the $p$-adic completion $A/I\langle X \rangle$ of the polynomial ring, then the maps $\eta_R^0:R \to H^0(\overline{\Prism}_{R/A})$ and $\eta_R^1\{-1\}:\Omega^1_{R/(A/I)}\{-1\} \to H^1(\overline{\Prism}_{R/A})$ coming from Construction~\ref{CartierPrismatic} (by twisting) are isomorphisms, and $H^i(\overline{\Prism}_{R/A}) = 0$ for $i > 1$.  In particular, Lemma~\ref{WeakGradedCommutativity} holds true in general.
\end{proposition}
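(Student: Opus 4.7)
My strategy is to reduce the proposition to the already-established characteristic $p$ Hodge-Tate comparison (\cref{HodgeTateCharp}) by transporting the question, via \cref{UnivOrientedCrystallize}, from the universal oriented prism to a crystalline one where the answer is known.

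First I would use \cref{BoundedPrismProp}(4) to pass to a faithfully flat cover, reducing to the case where $(A,I)=(A,(d))$ is oriented; by \cref{BoundedPrismSite} together with termwise flatness of the Cech-Alexander complex from \cref{CechAlexanderForLine}, the formation of $\overline{\Prism}_{R/A}$ satisfies faithfully flat descent along such a cover, and the Hodge-Tate map $\eta^*_R$ is natural with respect to it, so this reduction is harmless. Next, there is a canonical map $(A_0,(d_0)) \to (A,(d))$ from the universal oriented prism of \cref{UnivOrientedPrism}, and by the base-change compatibility asserted in \cref{CechAlexanderForLine}, the Cech-Alexander complex $C^\bullet_A$ computing $\Prism_{R/A}$ is the termwise $(p,d)$-completed base change of the one for $R_0 := A_0/d_0\langle X\rangle$ over $(A_0,(d_0))$. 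I would use this to reduce the proposition to the case of $(A_0,(d_0))$: once the cohomology of $\overline{\Prism}_{R_0/A_0}$ is shown to be concentrated in degrees $[0,1]$ with each $H^i$ a $p$-completely flat $A_0/d_0$-module of the predicted form, the termwise flatness of $C^\bullet_{A_0}/d_0$ and the bounded amplitude let one commute totalization with base change along $A_0 \to A$, transferring the conclusion to $(A,(d))$.

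To handle $(A_0,(d_0))$ itself, I would invoke \cref{UnivOrientedCrystallize}: since $A_0/p$ has flat Frobenius and $A_0/(d_0)$ is $p$-torsionfree, the construction produces a map of prisms $\alpha:(A_0,(d_0)) \to (B,(p))$ to the crystalline prism $B = A_0\{\tfrac{\phi(d_0)}{p}\}^\wedge$ such that the $(p,d_0)$-complete base change functor $\widehat{\alpha^*}$ is conservative and has finite $(p,d_0)$-complete Tor amplitude. Applying \cref{WeakBaseChange}, I get $\widehat{\alpha^*}\overline{\Prism}_{R_0/A_0} \simeq \overline{\Prism}_{R_B/B}$ with $R_B := B/p\langle X\rangle$. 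The Hodge-Tate comparison map $\eta^*$ is constructed functorially from the Bockstein differential (as noted at the end of \cref{CartierPrismatic}), so it pulls back correctly along $\alpha$. Since $(B,(p))$ is crystalline and $R_B$ is a smooth $B/p$-algebra of relative dimension $1$, \cref{HodgeTateCharp} gives that $\eta^*_{R_B}$ is an isomorphism and $H^i(\overline{\Prism}_{R_B/B})=0$ for $i\geq 2$; by conservativity, the same holds for $R_0/A_0$.

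Finally, \cref{WeakGradedCommutativity} drops out as a consequence: given a local section $f$ of $\mathcal{O}_X$ over an affine open $U=\mathrm{Spf}(S) \subset X$, the $A/I$-algebra map $A/I\langle X\rangle \to S$ sending $X \mapsto f$ induces a map on prismatic cohomology carrying $\beta_I(X)$ to $\beta_I(f)$; since $H^2(\overline{\Prism}_{(A/I\langle X\rangle)/A}) = 0$ by the proposition, $\beta_I(X)^2 = 0$, and so $\beta_I(f)^2=0$ by functoriality. The main obstacle I expect to be nontrivial is the second paragraph's Tot/base-change step — commuting the totalization of $C^\bullet_{A_0}/d_0$ with the base change $-\widehat{\otimes}_{A_0/d_0} A/d$ — where one must exploit both the $p$-complete flatness of the terms (via \cref{PrismaticEnvSmooth}) and the $[0,1]$-amplitude of the cohomology at $(A_0,(d_0))$, rather than a generic Tor amplitude bound on $A_0 \to A$ which need not exist.
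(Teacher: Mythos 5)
Your plan breaks down at the step transferring the conclusion from the universal oriented prism $(A_0,(d_0))$ to a general bounded oriented prism $(A,(d))$. The paper explicitly flags that the naive base change along $A_0\to A$ cannot be used: ``If the map $A_0\to A$ had finite $(p,I)$-complete Tor-amplitude (or we knew base change for prismatic cohomology in general already), we would get the claim via \cref{WeakBaseChange}. We can however argue as follows.'' Your attempted fix --- that ``termwise flatness of $C^\bullet_{A_0}/d_0$ and the bounded amplitude let one commute totalization with base change along $A_0\to A$'' --- is not a valid implication. The totalization of the base-changed Cech--Alexander complex is the \emph{termwise} tensor of the normalized cochain complex $N^\bullet$ with $A/d$, and since $N^\bullet$ is bounded below cohomologically (not above), termwise flatness does not make it K-flat: the termwise tensor need not compute the derived tensor, even when $\mathrm{Tot}(N^\bullet)$ has flat cohomology concentrated in $[0,1]$. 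Concretely, over $\mathbf{Z}_{(p)}$ the complex $\mathbf{Z}_{(p)}\xrightarrow{0}\mathbf{Z}_{(p)}\xrightarrow{p}\mathbf{Z}_{(p)}\xrightarrow{p}\cdots$ in degrees $\geq 0$ has flat terms and cohomology concentrated in degree $0$ where it is flat, but its termwise tensor with $\mathbf{Z}/p$ acquires cohomology in every degree. What is missing is flatness of the \emph{cycle} modules of $N^\bullet$ in degrees $\geq 2$, and this is precisely information that would follow from the Hodge--Tate comparison being proved --- so the argument is circular at that point.

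The paper instead proceeds oriented-first but then avoids $A_0\to A$ entirely. Writing $a\colon A_0\to D_0$ for the crystallizing map of \cref{UnivOrientedCrystallize} (which does have finite $(p,d_0)$-complete Tor amplitude and conservative base change), it forms the derived pushout $E := A\widehat{\otimes}^L_{A_0} D_0$ with the resulting maps $b\colon A\to E$ and $c\colon \mathbf{Z}_p\to D_0\to E$. Both $b$ and $c$ are controlled (finite $(p,d)$- resp.\ $p$-complete Tor amplitude), $\widehat{b^*}$ is conservative because $\widehat{a^*}$ is, and one checks using the base-change-compatible complexes of \cref{CechAlexanderForLine} together with \cref{TorAmpBC} that $\widehat{b^*}\,\overline{\Prism}_{R/A}\simeq\widehat{c^*}\,\overline{\Prism}_{\mathbf{F}_p[X]/\mathbf{Z}_p}$, which reduces the problem to \cref{HodgeTateCharp}. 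Your handling of the perfect-prism case via \cref{UnivOrientedCrystallize}, the initial reduction to the oriented case, and the deduction of \cref{WeakGradedCommutativity} by functoriality from $H^2(\overline{\Prism}_{(A/I\langle X\rangle)/A})=0$ are all fine; it is only the $A_0\to A$ transfer step that needs the paper's detour through $E$.
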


\begin{proof}
The last part follows by functoriality. For the rest, choose a map $\eta: R\oplus \Omega^1_{R/(A/d)}\{-1\}[-1] \to \overline{\Prism}_{R/A}$ in $D(R)$ inducing the maps $\eta_R^i$ on cohomology for $i=0,1$; this is possible because $R$ and $\Omega^1_{R/(A/d)}$ are finite  free over $R$. We will show that $\eta$ is an isomorphism.

First, if $I =(p)$, we are done thanks to the Hodge-Tate comparison in characteristic $p$ (Corollary~\ref{HodgeTateCharp}).  Next, if $(A,I)$ is the universal oriented prism $(A_0,(d))$ from Example~\ref{UnivOrientedPrism}, then the claim follows by base change to the $I = (p)$ case using the map $\alpha$ and properties (1) and (3) from Construction~\ref{UnivOrientedCrystallize}.

Next, consider a general bounded oriented prism $(A,(d))$. The choice of the generator $d \in A$ determines a map $A_0 \to A$, where $(A_0,(d))$ is the universal  oriented prism considered in the previous paragraph. If the map $A_0\to A$ had finite $(p,I)$-complete Tor-amplitude (or we knew base change for prismatic cohomology in general already), we would get the claim via \cref{WeakBaseChange}. We can however argue as follows. Letting $a:A_0 \to D_0$ denote the map $\alpha$ from Construction~\ref{UnivOrientedCrystallize}, we obtain a diagram
\[ \xymatrix{ & A_0 \ar[r] \ar[d]^-{a} & A \ar[d]^-{b} \\
		   \mathbf{Z}_p  \ar[r] & D_0 \ar[r] &  E := A \widehat{\otimes}_{A_0}^L D_0 }\]
of $(p,d)$-complete simplicial commutative rings, where the square is a pushout square of $(p,d)$-complete simplicial commutative rings. Note that the bottom left arrow lifts to a map of prisms $(\mathbf{Z}_p,(p)) \to (D_0,(a(d)))$ as $(p) = (a(d))$ by construction.  Write $c$ for the composite map $\mathbf{Z}_p \to E$. The base change functor $\widehat{b^*}$ is conservative on $(p,d)$-complete objects in the derived category (since the same holds true for $\widehat{a^*}$, see property (1) in Construction~\ref{UnivOrientedCrystallize}), so it suffices to show that $\widehat{b^*}(\eta)$ is an isomorphism. We claim that $\widehat{b^*} \overline{\Prism}_{R/A} \simeq \widehat{c^*} \overline{\Prism}_{\mathbf{F}_p[X]/\mathbf{Z}_p}$: this is clear at the level of explicit complexes if one uses the complexes from Remark~\ref{CechAlexanderForLine}, and the rest follows from Lemma~\ref{TorAmpBC} together with the observation that $b$ (resp. $c$) has finite $(p,d)$-complete (resp. $p$-complete) Tor amplitude. We leave it to the reader to check that the resulting identification $\widehat{b^*} \overline{\Prism}_{R/A} \simeq \widehat{c^*} \overline{\Prism}_{\mathbf{F}_p[X]/\mathbf{Z}_p}$ carries the map $\widehat{b^*} \eta$ to a map realizing the Hodge-Tate comparison map on cohomology for the target. In particular, our claim now reduces to the Hodge-Tate comparison for $\overline{\Prism}_{\mathbf{F}_p[X]/\mathbf{Z}_p}$, which was already explained earlier (Corollary~\ref{HodgeTateCharp}).

Finally, for a general bounded prism $(A,I)$, we reduce to the case treated in the previous paragraph by making a faithfully flat base change to a bounded oriented prism (Lemma~\ref{BoundedPrismProp} (4)).
\end{proof}

Thus, we now have a well-defined Hodge-Tate comparison map from Construction~\ref{CartierPrismatic}.

\begin{theorem}[The Hodge-Tate comparison]
\label{HTCompPrismatic}
The Hodge-Tate comparison maps
\[ \Omega^i_{X/(A/I)} \to H^i(\overline{\Prism}_{X/A})\{i\}\]
of sheaves on $X_{\et}$ from Construction~\ref{CartierPrismatic} are isomorphisms.
\end{theorem}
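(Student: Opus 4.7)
The plan is to reduce the statement to the characteristic $p$ case handled by \cref{HodgeTateCharp}, following the template of \cref{PropHTCAffineLine} but now across all cohomological degrees and all smooth $X$. First, I localize on $X_\et$ to reduce to the affine case $X = \Spf(R)$ with $R$ a $p$-completely smooth $A/I$-algebra. Using \cref{EtaleLocalizePrismatic} together with the base-change compatibility of K\"ahler differentials, and the functoriality of $\eta^*$ built into \cref{CartierPrismatic}, I may further reduce to the case $R = A/I\langle X_1,\ldots,X_n\rangle$, the $p$-adic completion of a polynomial algebra. In this case both $R$ and the $\Omega^i_{R/(A/I)}$ are topologically free, so the Hodge-Tate maps on cohomology can be lifted to a single morphism
\[ \eta: \bigoplus_{i=0}^n \Omega^i_{R/(A/I)}\{-i\}[-i] \to \overline{\Prism}_{R/A} \]
in $\mathcal{D}(R)$, and the task becomes to show $\eta$ is an isomorphism.

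By \cref{BoundedPrismProp}(4) I may faithfully-flat-locally assume $I=(d)$ is principal with $d$ distinguished; the choice of $d$ determines a map from the universal oriented prism $(A_0,(d))$ of \cref{UnivOrientedPrism}. Mimicking the structure of \cref{PropHTCAffineLine}, let $a:(A_0,(d)) \to (D_0,(p))$ be the map of prisms produced by \cref{UnivOrientedCrystallize}, and form the pushout $E := A \widehat{\otimes}_{A_0}^L D_0$ in $(p,d)$-complete simplicial commutative rings, together with the induced map $b:A \to E$. Since $a$ has finite $(p,d)$-complete Tor amplitude (modulo $p$, it factors as a finite-Tor-amplitude map followed by two faithfully flat maps, as noted in \cref{UnivOrientedCrystallize}), so does $b$; moreover $\widehat{a^*}$, and hence $\widehat{b^*}$, is conservative on $(p,d)$-complete complexes. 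Using the multi-variable version of the explicit \v{C}ech--Alexander complex from \cref{CechAlexanderForLine} together with \cref{TorAmpBC} and \cref{WeakBaseChange}, $\widehat{b^*}\overline{\Prism}_{R/A}$ is identified with the Hodge-Tate cohomology of the polynomial algebra $R \widehat{\otimes}_{A/I}^L E/(d)$ over the prism $(E,(d))$; since $a(d)=\phi(d)=pu$ for a unit $u \in D_0$, we have $(d)=(p)$ in $E$, so $(E,(d))$ is crystalline, and the desired statement for it is \cref{HodgeTateCharp}.

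The main obstacle is to verify that, through this chain of reductions, $\eta$ is transported to the map realizing the Cartier isomorphism in the characteristic $p$ setting. This is a functoriality check: as noted at the end of \cref{CartierPrismatic}, the construction of $\eta^*$ depends only on $\Prism_{X/A}$ as an $E_\infty$-$A$-algebra equipped with its $\mathcal{O}_X$-algebra structure modulo $I$, and every base-change operation used above preserves all of this structure since each is multiplicative. At the terminal point one invokes the fact --- implicit in the proof of \cref{HodgeTateCharp} via \cref{CrysComp} --- that the isomorphism it produces is indeed induced by the Bockstein differentials for the short exact sequences $0 \to I/I^2 \to A/I^2 \to A/I \to 0$ that define $\eta^*$ in \cref{CartierPrismatic}, concluding the argument.
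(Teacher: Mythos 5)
Your proposal is correct and follows essentially the same strategy as the paper: localize to the affine case, reduce via \'etale localization (Lemma~\ref{EtaleLocalizePrismatic}) to a $p$-completed polynomial ring, lift the comparison maps to a single morphism $\eta$ in the derived category, and then run the multi-variable analogue of the argument in Proposition~\ref{PropHTCAffineLine}, which is exactly what the paper's proof of Theorem~\ref{HTCompPrismatic} does (its entire content is ``proceed exactly as explained in Lemma~\ref{PropHTCAffineLine} for $n=1$''). One small imprecision: you speak of the ``prism $(E,(d))$'' and ``Hodge-Tate cohomology over $(E,(d))$,'' but $E = A\widehat{\otimes}^L_{A_0}D_0$ is a simplicial commutative ring, so this is not literally a prism, nor has the prismatic site been defined in that generality; the clean way to say it (as in Proposition~\ref{PropHTCAffineLine}) is that one identifies $\widehat{b^*}\overline{\Prism}_{R/A}$ with $\widehat{c^*}\overline{\Prism}_{\mathbf{F}_p[X_1,\ldots,X_n]/\mathbf{Z}_p}$, i.e.\ the Hodge--Tate complex over the genuine crystalline prism $(\mathbf{Z}_p,(p))$ base-changed along $c:\mathbf{Z}_p\to E$, which is what your base-change identifications actually produce.
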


\begin{proof}
Since the comparison map is globally defined, it is enough to prove the statement for affines. Thus, we shall check that for any $p$-completely smooth $A/I$-algebra $R$, the analogous map
\[ \Omega^i_{R/(A/I)} \to H^i(\overline{\Prism}_{R/A})\{i\}\]
is an isomorphism of $R$-modules. As both differential forms and prismatic cohomology localize in the \'etale topology (Lemma~\ref{EtaleLocalizePrismatic}), it is enough to prove the statement when $R = A/I \langle X_1,...,X_n \rangle$ is the $p$-adic completion of a polynomial ring. In this case, we choose a map 
\[ \eta:\bigoplus_{i=0}^n \Omega^i_{R/(A/I)}\{-i\}[-i] \to \overline{\Prism}_{R/A}\]
 in the derived category $D(R)$ lifting the Hodge-Tate comparison maps (up to twists) on cohomology, and proceed exactly as explained in Lemma~\ref{PropHTCAffineLine} for $n=1$.
\end{proof}

Using base change for prismatic cohomology (proven in Corollary~\ref{BaseChangePrismCoh} using Theorem~\ref{HTCompPrismatic}), we can now also prove the de~Rham comparison, at least under the technical assumption that $W(A/I)$ is $p$-torsion free. This is satisfied, for example,  if $A/I$ is $p$-torsion free or if $I=(p)$ and $A/p$ is reduced. This technical assumption will be removed later (\cref{generaldeRham}).

\begin{theorem}[The de Rham comparison]
\label{dRComp1}
Let $X$ be a smooth formal $A/I$-scheme and assume that $W(A/I)$ is $p$-torsion free. Then there is a natural isomorphism $\Prism_{X/A}\widehat{\otimes}_{A,\phi}^L A/I\simeq \Omega^*_{X/(A/I)}$ of commutative algebras in $D(X_\et, A/I)$, where the completion is $p$-adic.
\end{theorem}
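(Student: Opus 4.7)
The strategy is to reduce to the crystalline case $I=(p)$ via base change along the canonical map from $(A,I)$ to the Witt vectors of $A/I$, which under the hypothesis on $W(A/I)$ is a bounded crystalline prism.

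\textbf{Crystalline case.} First I would handle $I=(p)$. Combining the crystalline comparison (\cref{CrysComp}), which gives a natural Frobenius-equivariant isomorphism $\phi_A^* R\Gamma_\Prism(X/A) \simeq R\Gamma_\crys(X/A)$, with the standard identification $R\Gamma_\crys(X/A)\widehat{\otimes}_A^L A/p \simeq \Omega^*_{X/(A/p)}$ of crystalline cohomology modulo $p$ with de Rham cohomology, we obtain
\[
\Prism_{X/A}\widehat{\otimes}_{A,\phi_A}^L A/p \;=\; \phi_A^*\Prism_{X/A}\widehat{\otimes}_A^L A/p \;\simeq\; \Omega^*_{X/(A/p)},
\]
automatically as commutative algebras (as both sides are de Rham cohomology).

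\textbf{Reduction via Witt vectors.} For general bounded $(A,I)$, the canonical $\delta$-map $s : A \xrightarrow{\phi_A} A \xrightarrow{w} W(A) \to W(A/I) =: A'$ from \cref{rmk:DistEltDefp} satisfies $s(I) \subset pA'$, hence defines a map of prisms $s : (A,I) \to (A',(p))$. The hypothesis that $W(A/I)$ is $p$-torsion-free ensures $(A',(p))$ is a bounded crystalline prism. Setting $X' := X\times_{A/I,\bar s} A'/pA'$, base change for prismatic cohomology (\cref{BaseChangePrismCoh}, which relies on the already established Hodge-Tate comparison) gives $\Prism_{X/A}\widehat{\otimes}_A^L A' \simeq \Prism_{X'/A'}$, and applying the crystalline case to $(A',(p))$ produces
\[
\Prism_{X'/A'}\widehat{\otimes}_{A',\phi_{A'}}^L A'/pA' \;\simeq\; \Omega^*_{X'/(A'/pA')}.
\]

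\textbf{Descent to $(A,I)$.} The final and most delicate step is to descend this isomorphism to one over $(A,I)$. The crucial ingredient is that $s$ is a $\delta$-map, so $s\circ \phi_A = \phi_{A'}\circ s$; composed with the first-Witt-coordinate quotient $A' \twoheadrightarrow A/I$ (which is a ring map), this identifies the composite $A \xrightarrow{s} A' \xrightarrow{\phi_{A'}} A' \to A/I$ with the Frobenius-twisted projection $A \xrightarrow{\phi_A} A \to A/I$. Tracking the various $p$-completed tensor products and using the hypothesis on $W(A/I)$ to ensure the requisite projections and base changes are compatible (in particular that $A'/pA'$-modules descend correctly through the relation between the kernels $pA'$ and $VA'$), one identifies the $A'$-side isomorphism with the desired
\[
\Prism_{X/A}\widehat{\otimes}_{A,\phi_A}^L A/I \;\simeq\; \Omega^*_{X/(A/I)},
\]
and the commutative-algebra structure is preserved throughout. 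The main technical obstacle is this descent step, namely carefully matching the Frobenius twists and base changes on the two sides of $s$; the assumption that $W(A/I)$ is $p$-torsion-free is precisely what keeps all the relevant modules discrete and the relations between Witt, Frobenius and Verschiebung well-controlled.
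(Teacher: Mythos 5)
Your overall strategy coincides with the paper's: both use the $\delta$-map $\psi = s : A \xrightarrow{\phi_A} A \to W(A/I)$, check $s(I) \subset pW(A/I)$ so that $s$ is a map of prisms, and reduce to crystalline considerations using base change for prismatic cohomology. The crystalline case ($I = (p)$) is handled the same way via \cref{CrysComp} and the classical identification of crystalline cohomology mod $p$ with de Rham cohomology.

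However, the final ``descent'' step has a genuine gap, and it arises because you reduce modulo $p$ too early. After applying the crystalline case over $(A', (p)) = (W(A/I), (p))$, what you have is an isomorphism of complexes over $A'/pA' = W(A/I)/p$. Unwinding the twists (your computation $s\circ\phi_A = \phi_{A'}\circ s$ is correct), this isomorphism reads
\[
\Bigl(\Prism_{X/A}\widehat{\otimes}^L_{A,\phi_A} A/I\Bigr)\widehat{\otimes}^L_{A/I,\bar s} W(A/I)/p \;\simeq\; \Omega^*_{X/(A/I)}\widehat{\otimes}^L_{A/I,\bar s} W(A/I)/p,
\]
where $\bar s: A/I \to W(A/I)/p$ is the map induced by $s$ (equivalently, induced by the Witt Frobenius $F$ since $F(VW(A/I)) = pW(A/I)$). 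To produce the desired isomorphism over $A/I$, you would need to descend along $\bar s$. But $\bar s$ is a genuine ring map from $A/I$ into a typically much larger ring $W(A/I)/p$ --- it is not a quotient, and it is not faithfully flat in general (try $A/I = \mathcal{O}_C$). There is also no map $W(A/I)/p \to A/I$ compatible with the structure maps: $A/I = W(A/I)/VW(A/I)$ and $W(A/I)/p$ are incomparable quotients of $W(A/I)$ (since $VW(A/I) \ne pW(A/I) = FVW(A/I)$). Finally, even if $\bar s$ were faithfully flat, you have only shown the two sides become isomorphic after applying $- \widehat{\otimes}^L_{A/I,\bar s} W(A/I)/p$; you have not constructed a comparison map defined over $A/I$ that becomes the given isomorphism after base change, which is what descent requires. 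The parenthetical ``using ... that $A'/pA'$-modules descend correctly through the relation between the kernels $pA'$ and $VA'$'' is asserting exactly the missing step, and that relation (namely $pA' = FVA'$) does not yield descent.

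The fix is to work one level up, as the paper does: do \emph{not} reduce the crystalline comparison over $W(A/I)$ modulo $p$. Instead, combine \cref{CrysComp} over $W(A/I)$ with base change for prismatic cohomology to produce an isomorphism of commutative algebras \emph{over} $W(A/I)$:
\[
\Prism_{R/A}\widehat{\otimes}^L_{A,\psi} W(A/I) \;\simeq\; R\Gamma_{\crys}\bigl((R/p)/W(A/I)\bigr).
\]
Then apply $- \widehat{\otimes}^L_{W(A/I)} A/I$ along the first-Witt-coordinate projection $W(A/I) \twoheadrightarrow A/I$, which is a further base change (not a descent). Since $\mathrm{pr}\circ\psi = \mathrm{pr}\circ(\mathrm{can}\circ\phi_A) = (A\to A/I)\circ\phi_A$, the left side becomes $\Prism_{R/A}\widehat{\otimes}^L_{A,\phi_A}A/I$; and by the standard fact that crystalline cohomology of $R/p$ over the pd-thickening $W(A/I)\to A/(p,I)$ base changes along $W(A/I)\to A/I$ to the $p$-complete de Rham complex $\Omega^*_{R/(A/I)}$, the right side becomes the desired target. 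The hypothesis that $W(A/I)$ is $p$-torsionfree is needed precisely to guarantee that $\psi$ refines to a map of prisms $(A,I) \to (W(A/I),(p))$, nothing more.
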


\begin{proof}
As before, it is enough to solve the problem in the affine case. Thus, for a $p$-completely smooth $A/I$-algebra $R$, we must construct a functorial isomorphism $\Prism_{R/A}\widehat{\otimes}_{A,\phi}^L A/I\simeq \Omega^*_{R/(A/I)}$ of
commutative algebras in $D(A/I)$. We shall prove a stronger statement that involves relating both sides to crystalline cohomology. 

To formulate this stronger statement, note that the kernel $J \subset W(A/I)$ of the canonical composite surjection $W(A/I) \to A/I \to A/(I,p)$ is a pd-ideal. By generalities on crystalline cohomology, the commutative algebra object $R\Gamma_{\crys}((R/p)/W(A/I)) \in D(W(A/I))$ lifts the de Rham complex $\Omega^*_{R/(A/I)} \in D(A/I)$. On the other hand, the natural map $A \to A/I$ refines uniquely to a $\delta$-map $A \to W(A/I)$ by Remark~\ref{ThetaWitt}. It is therefore enough to construct a functorial isomorphism
\[ \Prism_{R/A} \widehat{\otimes}^L_{A,\psi} W(A/I) \simeq R\Gamma_{\crys}((R/p)/W(A/I))\]
of commutative algebras in $D(W(A/I))$, where $\psi:A \to W(A/I)$ is obtained by composing the canonical map $A \to W(A/I)$ with the Frobenius. We shall deduce this from the crystalline comparison.

Note that the composite $\psi:A \to W(A/I)$ carries $I$ into $(p)$: the canonical map $A \to W(A/I)$ carries $I$ into $VW(A/I)$, so $\psi$ carries $I$ into $FVW(R) = (p)$. By our $p$-torsionfreeness assumption on $W(A/I)$, the map $\psi$ thus refines to a map $(A,I) \to (W(A/I),(p))$ of prisms. Base change for prismatic cohomology then gives an isomorphism
\[
\Prism_{R/A}\widehat{\otimes}_{A,\psi}^L W(A/I)\cong \Prism_{R^\prime/W(A/I)}\ .
\]
for the base change $R^\prime = R \widehat{\otimes}_{A/I,\psi} W(A/I)/p$. It thus suffices to identify the right side above with $R\Gamma_{\crys}((R/p)/W(A/I))$. But this follows by applying Theorem~\ref{CrysComp} to the pd-ideal $J \subset W(A/I)$, and noting that $R'$ is the base change of $R/p$ along $W(A/I)/J \simeq A/(p,I) \to W(A/I)/p$ induced by the Frobenius on $W(A/I)$.
\end{proof}

\newpage

\section{Semiperfectoid rings and perfection in mixed characteristic}
\label{sec:SemiPerfd}

In this section, we study semiperfectoid rings. In \S \ref{InitialPrismSemiperfd}, we construct an initial object of the absolute prismatic site $(S)_\Prism$ of a semiperfectoid ring $S$ essentially by hand, use it to construct a ``perfectoidization'' $S_\perfd$ of $S$, and formulate the main theorem of this section: the map $S \to S_\perfd$ is surjective (\cref{PerfectoidificationSurj}). To prove this theorem, we need a better way to access the aforementioned initial object; this is provided by the theory of derived prismatic cohomology, which is the subject of \S \ref{ss:DerivedPrismatic}. In \S \ref{ss:AndreLemma}, these ideas are put to use: we give a new proof of a key flatness result of Andr\'e from \cite{AndreDirectFactor}, and use it to prove the promised surjectivity of $S \to S_\perfd$.

\subsection{Universal prisms for semiperfectoid rings}
\label{InitialPrismSemiperfd}

\begin{notation} Fix a semiperfectoid ring $S$, i.e.~$S$ is a derived $p$-complete ring that can be written as a quotient of a perfectoid ring $R$. We recall that $S$ is called {\em quasiregular} if $S$ has bounded $p^\infty$-torsion, and the cotangent complex $L_{S/\mathbf Z_p}[-1]$ is $p$-completely flat over $S$ (see  \cite[\S 4.4]{BMS2}). In that case $S$ is classically $p$-adically complete.
\end{notation}

\begin{proposition}\label{InitialPrism} The category of prisms $(A,I)$ equipped with a map $S\to A/I$ admits an initial object $(\Prism_S^{\mathrm{init}},I)$ and $I=(d)$ is principal.
\end{proposition}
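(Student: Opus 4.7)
The plan is to reduce the construction to a prismatic envelope over the perfect prism attached to a perfectoid cover $R \twoheadrightarrow S$. By hypothesis, we can choose such an $R$, and by \cref{PerfdPrism} and \cref{PerfectPrisms}, the corresponding perfect prism $(A_0, (d_0))$ has $d_0$ distinguished and $A_0/d_0 \cong R$. \cref{PerfectPrismInitial} then says that for any prism $(B, J)$ with a structure map $S \to B/J$, the composition $R \to S \to B/J$ lifts uniquely to a map of prisms $(A_0, (d_0)) \to (B, J)$; combined with \cref{PrismMapTaut}, this forces $J = d_0 B$. Hence the problem reduces to constructing the initial prism $(B, (d_0))$ over $(A_0, (d_0))$ such that the ideal $K := \ker(R \to S) \subset R = A_0/d_0$ vanishes in $B/d_0 B$.

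Next, I would construct this initial prism explicitly as a prismatic envelope. Pick generators $f_i$ for $K$ and lifts $\tilde{f}_i \in A_0$. The condition on $B$ above is equivalent to requiring $\tilde{f}_i \in d_0 B$ for all $i$. Let $\tilde{C}$ be the universal derived $(p, d_0)$-complete $\delta$-$A_0$-algebra in which each $\tilde{f}_i$ becomes divisible by $d_0$; explicitly, $\tilde{C}$ is the pushout, in derived $(p,d_0)$-complete $\delta$-rings, of the diagram $A_0 \leftarrow \mathbf{Z}_p\{X_i\}^{\wedge} \to A_0\{Y_i\}^{\wedge}$, where the left map sends $X_i \mapsto \tilde{f}_i$ and the right map sends $X_i \mapsto d_0 Y_i$. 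The pair $(\tilde{C}, (d_0))$ satisfies everything needed to be a prism except possibly $\tilde{C}[d_0] = 0$, so I would define $\Prism_S^{\mathrm{init}}$ as the quotient of $\tilde{C}$ by its $d_0^\infty$-torsion, re-completed.

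The main obstacle will be to show that $\tilde{C}[d_0^\infty]$ is a $\delta$-ideal, so that $\Prism_S^{\mathrm{init}}$ inherits a $\delta$-structure from $\tilde{C}$. Starting from $d_0^n x = 0$, the product rule gives
\[
\phi(d_0)^n\, \delta(x) = -x^p\, \delta(d_0^n),
\]
and since $\delta(d_0) \in A_0^{\times}$ (distinguishedness) together with the containment $p \in (d_0, \phi(d_0))$ from \cref{distinguishedIntersect}, one can bootstrap this identity in the $(p,d_0)$-adic setting to conclude that $\delta(x)$ is itself killed by a power of $d_0$. Granting this, $\Prism_S^{\mathrm{init}}$ is a $d_0$-torsion-free derived $(p,d_0)$-complete $\delta$-$A_0$-algebra with $\tilde{f}_i \in d_0\,\Prism_S^{\mathrm{init}}$, universal for these properties. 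Setting $I = d_0\,\Prism_S^{\mathrm{init}}$, \cref{PrismMapTaut} confirms that $(\Prism_S^{\mathrm{init}}, I)$ is a bona fide prism, and its universality among prisms equipped with a map from $S$ follows by combining the universality over $(A_0, (d_0))$ with the reduction from the first paragraph. The principality $I = (d)$ is automatic from the construction.
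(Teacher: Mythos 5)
Your high-level approach is the same as the paper's: reduce to a prismatic-envelope-type construction over the perfect prism attached to a perfectoid cover $R \twoheadrightarrow S$, freely adjoin the needed elements $\tilde f_i/d_0$ in the $\delta$-world, then kill the $d_0$-power torsion. There are two gaps in the execution, one of which is serious.

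First, the ``bootstrap'' you propose to show that $\tilde{C}[d_0^\infty]$ is already $\delta$-stable is not substantiated and I do not believe it works. From $d_0 x = 0$, your identity reads $\phi(d_0)\,\delta(x) = -x^p\,\delta(d_0)$; multiplying by $d_0^p$ (and using $d_0^p x^p = 0$) yields only $d_0^p\,\phi(d_0)\,\delta(x) = 0$. Since $\phi(d_0) = d_0^p + p\,\delta(d_0)$, the element $\phi(d_0)$ is not a unit multiple of a power of $d_0$ in a $(p,d_0)$-complete ring, and there is no a priori reason for $\phi(d_0)$ to be a nonzerodivisor on $\tilde{C}$ modulo $d_0$-power-torsion, so you cannot extract $d_0^m\,\delta(x) = 0$ from this relation. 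The paper does not attempt to prove $\delta$-stability of the torsion; it instead quotients by $B[d^\infty]_\delta$, the smallest $\delta$-stable ideal \emph{containing} the torsion, which is $\delta$-stable by fiat. You should do the same.

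Second, and more seriously, a single pass of ``quotient by torsion, then re-complete'' is not enough. After quotienting by (the $\delta$-stabilization of) the $d_0^\infty$-torsion and passing again to the derived $(p,d_0)$-completion, the completed ring can acquire new $d_0$-torsion. The paper flags this explicitly and resolves it by transfinitely iterating the two-step procedure, using that derived completion commutes with $\omega_1$-filtered colimits to see that the process eventually stabilizes. Without this iteration you have not established that your candidate $\Prism_S^{\mathrm{init}}$ is $d_0$-torsionfree, so the appeal to \cref{PrismMapTaut} at the end does not apply and you have not actually produced a prism. Both gaps need to be filled; the reduction at the start (via \cref{PerfectPrismInitial} and \cref{PrismMapTaut}) and the choice of generators are fine and match the paper.
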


In terms of Remark~\ref{AbsPrismaticSite}, the category in question is the absolute prismatic site $(\mathrm{Spf}(S))_\Prism$.

\begin{proof} Choose a surjective map $R\to S$ from a perfectoid ring. By \cref{PerfectPrismInitial}, the category of prisms $(A,I)$ with a map $R\to A/I$ admits the initial object $(A_\inf(R),\ker\theta)$, and we recall that $\ker\theta=(d)$ is always principal. Let $J\subset A_\inf(R)$ be the kernel of the surjection $A_\inf(R)\xrightarrow{\theta} R\to S$. Given any prism $(C,K)$ with a map $S\to C/K$, we get a unique map $(A_\inf(R),(d))\to (C,K)$ making $A_\inf(R)/d=R\to S\to C/K$ commute, and $K=dC$ by \cref{PrismMapTaut}. As $d$ is a nonzerodivisor in $C$, there are unique elements $\frac{f}{d}\in C$ for each $f\in J$. Let
\[
B=A_\inf(R)\{x_f|f\in J\}/(dx_f-f|f\in J)_\delta\ .
\]
The preceding arguments show that there is a unique map of $\delta$-rings $B\to C$. As $C$ is $d$-torsionfree, this extends to a unique map $B/B[d^\infty]_\delta \to C$ (where $B[d^\infty]_\delta$ is the smallest $\delta$-stable ideal of $B$ containing $B[d^\infty]$). As $C$ is also a derived $(p,d)$-complete classical ring, this map uniquely extends further to $H^0$ of the derived $(p,d)$-completion of $B/B[d^\infty]_\delta$. The latter $\delta$-ring may however contain new $d$-torsion elements; but after a transfinite induction (using that completion commutes with $\omega_1$-filtered colimits), we arrive at a derived $(p,d)$-complete $\delta$-$A_\inf(R)$-algebra $\Prism_S^{\mathrm{init}}$ such that $d$ is a nonzerodivisor. We still have a unique map $(\Prism_S^{\mathrm{init}},(d))\to (C,K)$. Moreover, $\Prism_S^{\mathrm{init}}$ is a prism as $d$ is distinguished. Finally, the map
\[
R\cong A_\inf(R)/d\to \Prism_S^{\mathrm{init}}/d
\]
factors over the quotient $S$ of $R$ as all elements of $J$ become divisible by $d$ in $\Prism_S^{\mathrm{init}}$. Thus, $(\Prism_S^{\mathrm{init}},(d))$ with the map $S\to \Prism_S^{\mathrm{init}}/d$ is the desired initial object.
\end{proof}

In general, $\Prism_S^{\mathrm{init}}$ is not particularly well-behaved; for example it is not bounded in general. After perfection, this issue disappears.

\begin{corollary}\label{PerfectionMixedChar} There is a universal perfectoid ring $S_\perfd$ equipped with a map $S\to S_\perfd$. In fact, $S_\perfd$ is the $p$-adic completion of $(\Prism_S^{\mathrm{init}})_\perf/I$.
\end{corollary}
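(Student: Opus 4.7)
The plan is to directly assemble the result from the machinery already established. Given the initial object $(\Prism_S^{\mathrm{init}}, I)$ of $(S)_\Prism$ from \cref{InitialPrism}, I would first apply \cref{PrismPerfection} to form its perfection $(\Prism_S^{\mathrm{init}})_\perf = (\Prism_S^{\mathrm{init}})_\infty$, where $(\Prism_S^{\mathrm{init}})_\infty$ denotes the derived (equivalently, classical) $(p,I)$-adic completion of $\colim_\phi \Prism_S^{\mathrm{init}}$; this is a perfect prism under $(\Prism_S^{\mathrm{init}}, I)$. I would then define
\[
S_\perfd := (\Prism_S^{\mathrm{init}})_\perf / I,
\]
which, by \cref{PerfdPrism}, is a perfectoid ring (and coincides with the $p$-adic completion of $(\colim_\phi \Prism_S^{\mathrm{init}})/I$, since a perfect prism is classically $(p,I)$-complete). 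The structure map $S \to S_\perfd$ is obtained by composing the tautological map $S \to \Prism_S^{\mathrm{init}}/I$ with the projection to $S_\perfd$.

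The universality is then a diagram chase combining two universal properties. Suppose $T$ is a perfectoid ring equipped with a map $S \to T$. By \cref{PerfdPrism}, $T$ corresponds to the perfect prism $(A_\inf(T), \ker\theta_T)$, and the map $S \to T = A_\inf(T)/\ker\theta_T$ exhibits $(A_\inf(T), \ker\theta_T)$ as an object of $(S)_\Prism$. The universal property of the initial object from \cref{InitialPrism} yields a unique map of prisms $(\Prism_S^{\mathrm{init}}, I) \to (A_\inf(T), \ker\theta_T)$ compatible with the maps from $S$. Since $A_\inf(T)$ is perfect, the universal property of the perfection from \cref{PrismPerfection} factors this uniquely as
\[
(\Prism_S^{\mathrm{init}}, I) \to (\Prism_S^{\mathrm{init}})_\perf \to (A_\inf(T), \ker\theta_T).
\]
Taking quotients by $I$ (using \cref{PrismMapTaut} to identify $I \cdot A_\inf(T) = \ker\theta_T$) gives the required unique map $S_\perfd \to T$ under $S$.

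There is essentially no obstacle here once the earlier results are in hand: the only point requiring a moment of care is verifying that the map $S_\perfd \to T$ constructed by reduction mod $I$ is unique as a map of $S$-algebras. This follows because any $S$-algebra map $S_\perfd \to T$ between perfectoid rings lifts uniquely, by the equivalence of \cref{PerfectPrismInitial} (or equivalently \cref{PerfdPrism}), to a map of perfect prisms $((\Prism_S^{\mathrm{init}})_\perf, I) \to (A_\inf(T), \ker\theta_T)$ compatible with the map from $(\Prism_S^{\mathrm{init}}, I)$, and such lifts are classified by the composed universal property above. This establishes both the existence and uniqueness of the factorization, concluding the argument.
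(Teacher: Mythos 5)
Your argument is correct and follows the same route as the paper: combine the initial object of $(S)_\Prism$ from \cref{InitialPrism}, the universal perfect prism under it from \cref{PrismPerfection}, and the equivalence between perfect prisms and perfectoid rings from \cref{PerfdPrism}. The paper compresses this into two sentences; your write-up spells out the same diagram chase in more detail but introduces no new ideas.
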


\begin{proof} Note that the category of perfectoid rings $S_\perfd$ with a map $S\to S_\perfd$ is equivalent to the category of perfect prisms $(A,I)$ with a map $R\to A/I$. Thus, the claim follows from \cref{PrismPerfection}.
\end{proof}

One goal in this section is prove the following theorem.

\begin{theorem}\label{PerfectoidificationSurj} The map $S\to S_\perfd$ is surjective.
\end{theorem}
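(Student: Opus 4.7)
The plan is to identify $S_\perfd$ (after a suitable base change) as an explicit quotient of $R$ by using Andr\'e's flatness lemma to extract $p$-power roots, and then descend surjectivity. Write $S = R/J$ with $R$ perfectoid; since $R \to S$ is surjective, it suffices to prove $R \to S_\perfd$ is surjective.

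First I would reduce to the case where $J$ is finitely generated, using that perfectoidization (being left adjoint to the inclusion of perfectoid rings in $p$-complete semiperfectoid rings) commutes with the filtered colimit $R/J = \colim_\alpha R/J_\alpha$ over finitely generated subideals. Assuming then $J = (f_1,\ldots,f_n)$, Andr\'e's flatness lemma (to be proven prismatically in \cref{AndreFlatness}) supplies a $p$-completely faithfully flat map $R \to R_\infty$ of perfectoid rings in which each $f_i$ admits a compatible system $(f_i^{1/p^k})_{k \geq 0}$ of $p$-power roots. Let $\tilde{J} \subset R_\infty$ denote the ideal generated by all of the $f_i^{1/p^k}$. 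The key claim is that the derived $p$-completion $(R_\infty/\tilde{J})^\wedge$ is perfectoid and realizes the universal perfectoid quotient of $R_\infty/JR_\infty$; granting this, together with the base-change compatibility of perfectoidization (which follows from \cref{PrismPerfection} combined with the initiality property in \cref{PerfectPrismInitial}), the ring $(R_\infty/\tilde{J})^\wedge$ identifies with $R_\infty \widehat{\otimes}_R S_\perfd$. Since $R_\infty \twoheadrightarrow R_\infty/\tilde{J}$ is visibly surjective and $R \to R_\infty$ is $p$-completely faithfully flat, surjectivity of $R \to S_\perfd$ then follows by $p$-completely faithfully flat descent.

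The hard part will be establishing the key claim. The perfectoid property of $(R_\infty/\tilde{J})^\wedge$ reduces to verifying surjectivity of Frobenius modulo a pseudouniformizer on the quotient, which holds because $\tilde{J}$ is stable under taking $p$-th roots of its generators (the remaining axioms being inherited from $R_\infty$ after a bounded $p^\infty$-torsion check). For universality, given a perfectoid quotient $T$ of $R_\infty/JR_\infty$, each compatible system $(f_i^{1/p^k})_k$ maps to a compatible system of $p$-power roots of $0$ in $T$, corresponding to an element of the tilt $T^\flat$ whose $p^k$-th power is $0$; since $T^\flat$ is a perfect (hence reduced) $\mathbf{F}_p$-algebra, this element vanishes, and passing to sharps yields the vanishing of each $f_i^{1/p^k}$ in $T$, as required. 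A secondary but substantial obstacle is Andr\'e's flatness lemma itself, which must be developed separately in this section.
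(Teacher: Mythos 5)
Your strategy matches the paper's: reduce to finitely generated $J$ by filtered colimits, use Andr\'e's lemma to adjoin $p$-power roots, identify the perfectoidization as the $p$-completed quotient by the root-closed ideal, and descend along the faithfully flat cover. (The paper inserts a further reduction to $J$ principal by induction, which you bypass by handling all $f_i$ at once; and you usefully spell out the faithfully flat descent that the paper glosses over with ``we can assume.'')

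The one genuine error is in the universality step. You claim the compatible system $(t_k)_k$ of $p$-power roots of $0$ in $T$ ``corresponds to an element of $T^\flat$ whose $p^k$-th power is $0$.'' This is false: if $\bar{t}=(t_0,t_1,\dots)\in T^\flat$ with $t_0=0$, then $\bar{t}^{p^k}$ only has its first $k+1$ coordinates equal to $0$ and need not vanish, so $\bar{t}$ is not nilpotent and reducedness of $T^\flat$ gives you nothing. The fix is simpler and is exactly what the paper does: $t_k^{p^k}=t_0=0$ holds already in $T$ itself, and any perfectoid ring is reduced, so $t_k=0$ directly (no passage to the tilt). Two secondary points also need attention: base-change compatibility $(S\widehat{\otimes}_R R_\infty)_\perfd\simeq S_\perfd\widehat{\otimes}_R R_\infty$ does not follow from \cref{PrismPerfection} and \cref{PerfectPrismInitial} alone --- you need that $S_\perfd\widehat{\otimes}_R R_\infty$ is perfectoid (a $p$-completely flat, $p$-complete algebra over a perfectoid ring with surjective mod-$p$ Frobenius is perfectoid) together with a universal-property chase, or else base change for derived prismatic cohomology and its perfection; and your check that $(R_\infty/\tilde{J})^\wedge$ is perfectoid is silent on the delicate axiom that $\ker\theta$ is principal, which is best seen by exhibiting it as $A'/IA'$ for the perfect prism $A' := W\bigl(R_\infty^\flat/(f_i^{\flat,1/p^\infty})_i\bigr)^{\wedge}$ with $I$ inherited from $A_\inf(R_\infty)$.
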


\begin{remark}[Zariski closed sets are strongly Zariski closed]
 In \cite[Section II.2]{ScholzeTorsion}, the second author defined notions of ``Zariski closed" and ``strongly Zariski closed" subsets of an affinoid perfectoid space $X=\Spa(R,R^+)$. Here $R$ is a perfectoid Tate ring, in particular there is some topologically nilpotent unit $\varpi\in R$, and $R^+\subset R$ is an open and integrally closed subring of $R^\circ$. Then strongly Zariski closed subsets are in bijection with quotients $R\to R^\prime$ to perfectoid Tate rings $R^\prime$, while Zariski closed subsets are those closed subsets of $X$ that are the vanishing locus of some ideal $I\subset R$. It was claimed in \cite[Remark II.2.4]{ScholzeTorsion} that not any Zariski closed subset is strongly Zariski closed. The theorem implies that this is false, and that the notions in fact agree. Indeed, let $I\subset R$ be any ideal, and let $S$ be the $\varpi$-adic completion of $R^+/I\cap R^+$. Then $S$ is a semiperfectoid ring, and so by the theorem $R^\prime=S_\perfd[1/\varpi]$ is a perfectoid Tate ring that is a quotient of $R$ and such that any map from $R$ to a perfectoid ring that vanishes on $I$ factors uniquely over $R^\prime$. This implies that the vanishing locus of $I$ is exactly the strongly Zariski closed subset $\Spa(R^\prime,R^{\prime +})$, where $R^{\prime+}\subset R^\prime$ is the minimal open and integrally closed subring that contains the image of $R^+$.
\end{remark}

For the proof of Theorem~\ref{PerfectoidificationSurj}, we will need a different definition of $\Prism_S^{\mathrm{init}}$ which makes it possible to use the Hodge-Tate comparison result that we proved for smooth algebras (Theorem~\ref{HTCompPrismatic}) in order to compute $\Prism_S^{\mathrm{init}}$.

\subsection{Derived prismatic cohomology}
\label{ss:DerivedPrismatic}

In this section, we explain how to extend the notion of prismatic cohomology to all $p$-complete rings via Quillen's formalism of non-abelian left derived functors (aka left Kan extensions), as recast in the $\infty$-categorical language in \cite[\S 5.5.9]{LurieHTT} (see also \cite[\S 9.2]{BLMdRW} for an exposition closer to our context). Write $\mathcal{D}(A)$ for the derived $\infty$-category of a commutative ring $A$. 

\begin{construction}[Constructing derived prismatic cohomology]
\label{DerivedPrismatic}
Fix a bounded prism $(A,I)$. Consider the functor $R \mapsto \Prism_{R/A}$ on $p$-completely smooth $A/I$-algebras $R$ valued in the category of commutative algebras in  the $\infty$-category of $(p,I)$-complete objects in $\mathcal D(A)$ equipped with a $\phi_A$-semilinear endomorphism $\phi$. Write $R \mapsto \Prism_{R/A}$ for the left Kan extension of this functor to all derived $p$-complete simplicial commutative $A/I$-algebras, so $\Prism_{R/A}$ is a derived $(p,I)$-complete commutative algebra in $\mathcal D(A)$ equipped with an endomorphism $\phi:\Prism_{R/A} \to \phi_{A,*} \Prism_{R/A}$; we call this theory {\em derived prismatic cohomology}.

The Hodge-Tate comparison shows that $\overline{\Prism}_{R/A} :=\Prism_{R/A} \otimes_A^L A/I$ admits an exhaustive increasing $\mathbf{N}$-indexed filtration with $\mathrm{gr}_i$ given by sending $R$ to the derived $p$-completion of $\wedge^i L_{R/(A/I)}\{-i\}[-i]$; we refer to this as the {\em Hodge-Tate comparison} for $\overline{\Prism}_{R/A}$. Let us record some immediate consequences:
\begin{enumerate}
\item (The value on smooth algebras) If $R$ is $p$-completely smooth, then $\Prism_{R/A}$ is unchanged, so derived prismatic cohomology extends prismatic cohomology to all $p$-complete $A/I$-algebras.
\item (The \'etale sheaf property) The Hodge-Tate comparison for derived prismatic cohomology shows that $R \mapsto \Prism_{R/A}$ is a sheaf for the $p$-completely \'etale topology on the category of derived $p$-complete $A/I$-algebras. In particular, if $X$ is a $p$-adic formal scheme, then we can naturally define its derived prismatic cohomology $\Prism_{X/A}$ as a commutative algebra in the $\infty$-category of derived $(p,I)$-complete objects in $\mathcal D(X_{\et}, A)$ (equipped with a Frobenius).
\item (The quasisyntomic sheaf property) Assume that $(A,I)$ is a perfect prism. Then the assignment $R \mapsto \Prism_{R/A}$ forms a sheaf for the quasisyntomic topology on the category of quasisyntomic $A/I$-algebras (see \cite[Example 5.11]{BMS2} for a variant involving derived de Rham cohomology in characteristic $p$).
\item (Base change behaviour) The formation of $\Prism_{R/A}$ commutes with base change in the sense that for any map of bounded prisms $(A,I)\to (B,J)$, letting $R_B$ be the derived $p$-completion of $R\otimes^L_A B$, one has
\[
\Prism_{R_B/B}\cong \Prism_{R/A}\widehat{\otimes}^L_A B\ ,
\]
where the completion is the derived $(p,J)$-completion.
\item (Colimit preservation) The functor $\Prism_{-/A}$ from $p$-complete simplicial commutative $A/I$-algebras to $(p,I)$-complete $E_\infty$-algebras in $\mathcal{D}(A)$ is symmetric monoidal. In fact, it commutes with all colimits (as the same holds true for the associated graded of the derived Hodge-Tate cohomology $\overline{\Prism}_{-/A}$ functor).
\end{enumerate}
\end{construction}

A key advantage of derived prismatic cohomology is that there is a large supply of rings for which this theory is concentrated in degree $0$ (Example~\ref{DerivedPrismaticPrismatic}). In such situations, this theory is also closely related to an initial object of the prismatic site. 

\begin{lemma}[Derived prismatic cohomology when it is discrete]
\label{PrismaticCohGivesPrisms}
Assume $(A,I)$ is a bounded prism, and $R$ is a derived $p$-complete simplicial $A/I$-algebra such that $\overline{\Prism}_{R/A}$ is concentrated in degree $0$. 
\begin{enumerate}
\item The $\phi_A$-linear Frobenius $\phi_R$ on $\Prism_{R/A}$ naturally lifts to a $\delta$-$A$-structure on the ring $\Prism_{R/A}$. 
\item The pair $(\Prism_{R/A},I\Prism_{R/A})$ gives a prism over $(A,I)$ equipped with a map $R \to \overline{\Prism}_{R/A} = \Prism_{R/A}/I\Prism_{R/A}$.
\item The category of prisms $(B,J)$ over $(A,I)$ equipped with a map $R\to B/J$ has an initial object, and $(\Prism_{R/A},I\Prism_{R/A})$ is weakly initial. In particular, the initial object is the image of some idempotent endomorphism of $(\Prism_{R/A},I\Prism_{R/A})$.
\end{enumerate}
\end{lemma}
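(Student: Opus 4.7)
The plan is to exploit that $\Prism_{R/A}$ is defined in \S\ref{ss:DerivedPrismatic} by left Kan extension from the functor on $p$-completely smooth $A/I$-algebras (where it naturally takes values in $\delta$-$A$-algebras), combined with the discreteness hypothesis on $\overline{\Prism}_{R/A}$. For part (1), I first argue that this left Kan extension can be refined to take values in simplicial commutative $\delta$-$A$-algebras: the forgetful functor from $\delta$-rings to commutative rings preserves sifted colimits, so the extension may be performed within $\delta$-rings, producing a derived $(p,I)$-complete simplicial commutative $\delta$-$A$-algebra whose underlying $E_\infty$-algebra with Frobenius agrees with the previously constructed one. I then upgrade this to full discreteness: working locally on $(A,I)$ with $I = (d)$ principal by Lemma~\ref{BoundedPrismProp}(4), the long exact cohomology sequence attached to the fiber sequence $\Prism_{R/A} \xrightarrow{d} \Prism_{R/A} \to \overline{\Prism}_{R/A}$ combined with the discreteness of $\overline{\Prism}_{R/A}$ shows that $d$ acts bijectively on $H^i(\Prism_{R/A})$ for $i < 0$; derived $(p,d)$-completeness of these cohomology groups (which holds since $(p,d)$ is finitely generated) then forces $H^i(\Prism_{R/A}) = 0$ for $i < 0$. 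Combined with the $D^{\leq 0}$ bound coming from the simplicial structure, this forces $\Prism_{R/A}$ to be discrete, and yields an honest $\delta$-$A$-algebra structure.

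For part (2), the same long exact cohomology sequence shows that $d$ is a nonzerodivisor on $\Prism_{R/A}$, so locally $I\Prism_{R/A}$ defines a Cartier divisor; this globalizes via faithfully flat descent using Lemma~\ref{BoundedPrismProp}(4). Derived $(p,I)$-completeness is inherent in the construction, and the containment $p \in I\Prism_{R/A} + \phi(I)\Prism_{R/A}$ is inherited from the corresponding relation in $(A,I)$. Combined with Lemma~\ref{PrismMapTaut}, these facts show that $(\Prism_{R/A}, I\Prism_{R/A})$ is a prism over $(A,I)$, with structure map $R \to \overline{\Prism}_{R/A}$ coming from the canonical unit of the Kan extension.

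For part (3), I first establish weak initiality. Given a prism $(B,J)$ over $(A,I)$ with a map $R \to B/J$, rigidity (Lemma~\ref{PrismMapTaut}) forces $J = IB$. On $p$-completely smooth $A/I$-algebras $S$ with a map $S \to B/IB$, the triple $(B, IB, S \to B/IB)$ is an object of $(S/A)_\Prism$, so global sections of $\mathcal{O}_\Prism$ yield a natural comparison map $\Prism_{S/A} \to B$; this extends to arbitrary derived $p$-complete simplicial commutative $A/I$-algebras $S$ (in particular $S = B/IB$) by left Kan extension. Composing with the functoriality map $\Prism_{R/A} \to \Prism_{(B/IB)/A}$ coming from $R \to B/IB$ yields the desired $\delta$-$A$-algebra map $\Prism_{R/A} \to B$. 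The category $\mathcal{C}$ of such prisms $(B, IB)$ under $R$ admits all small limits (products and equalizers exist in $\delta$-$A$-algebras and preserve the prism axioms), so by a standard Freyd-style adjoint functor argument the weakly initial object yields an initial object $T_0$. Writing $T := (\Prism_{R/A}, I\Prism_{R/A})$, the unique map $\iota \colon T_0 \to T$ (from initiality of $T_0$) and any map $\rho \colon T \to T_0$ (from weak initiality of $T$) satisfy $\rho \circ \iota = \mathrm{id}_{T_0}$ by initiality, so $e := \iota \circ \rho$ is an idempotent endomorphism of $T$ whose image is $T_0$. The main obstacle I anticipate is the connectivity argument in part (1): carefully bounding $\Prism_{R/A}$ above in the derived category requires controlling how derived $(p,I)$-completion interacts with sifted colimits of classically complete $\delta$-rings, and ensuring that this completion does not introduce higher cohomology is the key technical step that makes the whole chain of deductions go through.
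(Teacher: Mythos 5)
Your overall architecture is close to the paper's (Kan extension, LES modulo $d$, weakly initial object, retraction), but two steps are genuinely wrong or unjustified.

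\textbf{Part (1): the claimed $D^{\leq 0}$ bound is false, and the precision of the $\delta$-structure argument is missing.} Prismatic cohomology of a $p$-completely smooth ring is \emph{not} connective: $\Prism_{P/A}$ for a polynomial $P$ lives in $D^{\geq 0}$ with nontrivial higher cohomology. The left Kan extension is a sifted colimit (after completion) of such complexes and therefore carries no upper bound ``from the simplicial structure.'' The good news is that the LES you invoke already does everything: with $I=(d)$ locally, the long exact sequence together with discreteness of $\overline{\Prism}_{R/A}$ shows that $d$ acts bijectively on $H^i(\Prism_{R/A})$ for all $i\neq 0,1$ (not just $i<0$), so these vanish by derived $(p,d)$-completeness; for $i=1$ the LES only gives that $d$ is \emph{surjective} on $H^1(\Prism_{R/A})$, and you then need the separate observation that a derived $d$-complete module on which $d$ is surjective must vanish (a nonzero $d$-divisible tower would give a nonzero element of $\mathrm{Hom}_A(A[1/d],H^1)$, contradicting completeness). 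This closes the discreteness gap without any $t$-structure bound. As for ``refining the Kan extension to take values in simplicial $\delta$-rings'': this cannot be done naively, since the input values $\Prism_{P/A}$ are not $\delta$-rings but complexes. One must work at the level of \v{C}ech--Alexander complexes (cosimplicial $\delta$-rings) and then use the $W_2$-characterization of $\delta$-structures (\cref{ThetaW2}), the functorial pullback square involving $W_2$ and Frobenius, and the discreteness of $\Prism_{R/A}$ just established to identify the realization of $W_2(-)$ with $W_2(\Prism_{R/A})$; your sentence gestures at this but does not carry it out.

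\textbf{Part (3): the Freyd argument requires completeness you have not established.} The category of prisms over $(A,I)$ equipped with a map $R\to B/J$ clearly has products, but equalizers are in genuine doubt: given two parallel maps $f,g\colon (B_1,J_1)\rightrightarrows (B_2,J_2)$ under $R$, the kernel $B=\{b\in B_1:f(b)=g(b)\}$ is a $\delta$-ring and $(B,IB)$ is a prism, but the structure map $R\to B_1/J_1$ need not factor through $B/IB$ (the map $B/IB\to B_1/J_1$ may have a kernel and its image may not contain the image of $R$), so the naive kernel need not be an equalizer in the category. Without this, the Freyd-style argument does not apply. The fix is to use the functoriality of the map $\Prism_{R/A}\to B$ you already constructed: if that comparison map is functorial in $(B,J)$, it defines a functor $F\colon\mathcal C\to\mathcal C_{T\backslash}$ sectioning the projection (with $T=(\Prism_{R/A},I\Prism_{R/A})$), and then a simple categorical lemma in an \emph{idempotent complete} category (which prisms under $R$ visibly are: kernels of idempotent $\delta$-endomorphisms are prisms) shows that $F(T)$ is an idempotent endomorphism of $T$ whose retract is initial. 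This is the route the paper takes, it avoids any completeness hypothesis, and it is the functoriality --- not weak initiality alone --- that is the essential ingredient.
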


In part (3), we conjecture that $(\Prism_{R/A},I\Prism_{R/A})$ is actually the initial object.

\begin{proof}
Using \v{C}ech-Alexander complexes and canonical simplicial resolutions, we obtain a functor $R \mapsto F_A(R)$ from derived $p$-complete $A/I$-algebras to simplicial cosimplicial derived $(p,I)$-complete $\delta$-$A$-algebras computing prismatic cohomology, i.e., we have a functorial identification $\colim_{\Delta^{op}} \lim_{\Delta} F_A(R) \simeq \Prism_{R/A}$. The $\delta$-$A$-algebra structure on $F_A(R)$ is classified by a map $F_A(R) \to W_2(F_A(R))$ of simplicial cosimplicial rings (where the target is defined by pointwise application of $W_2(-)$). This map lies over the map $A \to W_2(A)$ classifying the $\delta$-structure on $A$ and splits the natural restriction map $W_2(F_A(R)) \to F_A(R)$. This defines a map 
\[ \Prism_{R/A} \simeq \colim_{\Delta^{op}} \lim_{\Delta} F_A(R) \to \colim_{\Delta^{op}} \lim_{\Delta} W_2(F_A(R))\]
of $E_\infty$-rings lying over the map $A \to W_2(A)$ and splitting the restriction map
\[ \colim_{\Delta^{op}} \lim_{\Delta} W_2(F_A(R)) \to \colim_{\Delta^{op}} \lim_{\Delta} F_A(R) \simeq \Prism_{R/A}.\]
Using the functorial pullback square
\[\xymatrix{
W_2(B)\ar[r]^F\ar[d] & B\ar[d]\\
B\ar[r]^\phi & B\otimes_{\mathbf Z}^L \mathbf F_p
}\]
of simplicial commutative (and thus of $E_\infty$) rings, as well as the discreteness of $\Prism_{R/A}$, one can identify $\colim_{\Delta^{op}} \lim_{\Delta} W_2(F_A(R))$ as $W_2(\Prism_{R/A})$. Thus, the above map defines a $\delta$-$A$-algebra structure on $\Prism_{R/A}$ refining the $\phi_A$-linear Frobenius endomorphism $\phi_R$, proving part (1). 

Part (2) is then immediate from the derived $(p,I)$-completeness of $\Prism_{R/A}$ and our assumption that $\overline{\Prism}_{R/A} = \Prism_{R/A} \otimes_A^L A/I$ is discrete.

For part (3), we first construct a functorial map $(\Prism_{R/A},I\Prism_{R/A})\to (B,J)$ for any prism $(B,J)$ over $(A,I)$ equipped with a map $R\to B/J$. Fix a resolution $R_\bullet\to R$ by $p$-completely ind-smooth $A/I$-algebras $R_i$. For each $R_i$, let $(R_i/A)_\Prism$ be the category of prisms $(C,K)$ over $(A,I)$ equipped with a map $R_i\to C/K$. Then
\[
\Prism_{R/A} = \colim_{\Delta^{op}} \lim_{C\in (R_\bullet/A)_\Prism} C
\]
in the category of $(p,I)$-complete complexes, as $\Prism_{R_i/A} = \lim_{C\in (R_i/A)_\Prism} C$ by the theory of \v{C}ech-Alexander complexes. But now $(B,J)$, like any object of $(R/A)_\Prism$, defines compatible objects of all $(R_i/A)_\Prism$, and so restricting the limit to this object gives a functorial map
\[
\Prism_{R/A} = \colim_{\Delta^{op}} \lim_{C\in (R_\bullet/A)_\Prism} C \to \colim_{\Delta^{op}} B = B\ .
\]
Using the argument from part (1), one shows that this is in fact a map of $\delta$-rings. Now part (3) follows from the following general categorical lemma.
\end{proof}

\begin{lemma} Let $\mathcal C$ be an idempotent complete category and $X\in \mathcal C$ an object. Let $\mathcal C_{X\backslash}$ be the category of objects $Y\in \mathcal C$ equipped with a map $X\to Y$. Assume that the identity on $\mathcal C$ factors over the projection $\mathcal C_{X\backslash}\to \mathcal C$ via a functor $F: \mathcal C\to \mathcal C_{X\backslash}$. Then $F(X)$ is an idempotent endomorphism of $X$ and the corresponding retract of $X$ is an initial object of $\mathcal C$.
\end{lemma}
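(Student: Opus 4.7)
The plan is to unpack what the functor $F\colon \mathcal C \to \mathcal C_{X\backslash}$ provides, and then run the classical argument that an idempotent whose fixed objects are weakly initial in fact splits to an initial object.

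First I would translate the hypothesis into concrete data. The functor $F$ equips each $Y \in \mathcal C$ with a morphism $f_Y\colon X \to Y$, since $F(Y) = (Y, f_Y)$ with the forgetful functor $\mathcal C_{X\backslash} \to \mathcal C$ sending $(Y,f_Y)$ to $Y$. Functoriality of $F$ forces naturality: for every morphism $g\colon Y \to Z$ in $\mathcal C$, the image $F(g)$ is a morphism in $\mathcal C_{X\backslash}$ from $(Y,f_Y)$ to $(Z,f_Z)$, which by definition means $g \circ f_Y = f_Z$.

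Next I would check that $f_X\colon X \to X$ is idempotent. Apply the naturality relation above to $g := f_X$, which is itself a morphism in $\mathcal C$: this yields $f_X \circ f_X = f_X$. Since $\mathcal C$ is idempotent complete, this idempotent splits as $X \xrightarrow{\,p\,} I \xrightarrow{\,i\,} X$ with $i \circ p = f_X$ and $p \circ i = \mathrm{id}_I$. I now claim $I$ is initial. For existence of maps out of $I$, set $h_Y := f_Y \circ i\colon I \to Y$ for each $Y$. For uniqueness, suppose $u\colon I \to Y$ is arbitrary; the composite $u \circ p\colon X \to Y$ is a morphism in $\mathcal C$, so naturality of $f_{(-)}$ applied to $g = u \circ p$ gives $(u \circ p) \circ f_X = f_Y$. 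Substituting $f_X = i \circ p$ and using $p \circ i = \mathrm{id}_I$ simplifies the left side to $u \circ p$, so $u \circ p = f_Y$; postcomposing with $i$ now yields $u = f_Y \circ i = h_Y$, giving uniqueness.

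The argument has no real obstacle: the only subtle point is recognizing that naturality of $F$ encodes precisely the identity $g \circ f_Y = f_Z$, and that feeding $g = f_X$ into this identity produces both the idempotency of $f_X$ and (via the splitting) the universal property of its image. Everything else is formal manipulation with a split idempotent.
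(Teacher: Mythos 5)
Your proof is correct and follows essentially the same path as the paper's: extract the natural family $f_Y\colon X\to Y$ from $F$, deduce idempotency of $f_X$ by applying naturality to $f_X$ itself, split the idempotent, and then show the summand is initial. The paper phrases the uniqueness step via faithfulness of $F$ (a section is always faithful) together with $p$ being a split epi, while you derive the constraint $u\circ p = f_Y$ directly from the naturality relation and then cancel $p$ using $p\circ i=\mathrm{id}$; these are the same observation in two outfits.
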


\begin{proof} Let $F(X) = (X\xrightarrow{e} X)$. Applying $F$ to the morphism $(X\xrightarrow{e} X)$ of $\mathcal C$ shows that $e$ is an idempotent. Let $X^\prime$ be the corresponding retract of $X$. Then $F(X^\prime)=(X\to X^\prime)$ where $X\to X^\prime$ is an epimorphism. The functor $F$ is faithful, so for any $Y\in \mathcal C$, one has
\[
\Hom_{\mathcal C}(X^\prime,Y)\hookrightarrow \Hom_{\mathcal C_{X\backslash}}(X\to X^\prime,X\to Y)\ .
\]
But $X\to X^\prime$ is an epimorphism, so the right hand side has at most one element, so any $Y$ admits at most one map from $X^\prime$. On the other hand, there is at least one map $X^\prime\to Y$ as there is a map $X^\prime\to X$ and a map $X\to Y$. Thus, $X^\prime$ is initial, as desired.
\end{proof}

\begin{example}[Prismatic cohomology of an lci quotient]
\label{DerivedPrismaticPrismatic}
Fix a bounded prism $(A,I)$. Let $R := A/J$, where $J = (I,f_1,...,f_r)$ with $f_1,...,f_r$ giving a Koszul-regular sequence on $A/I$; assume $R$ has bounded $p$-torsion. Then $\Prism_{R/A}$ is concentrated in degree $0$ and $I$-torsionfree thanks to the Hodge-Tate comparison, so $(\Prism_{R/A},I\Prism_{R/A})$ is an object of $(R/A)_\Prism$. We claim that this is the initial object. This assertion can be checked locally on $\mathrm{Spf}(A)$, so we may assume $I=(d)$ is principal. In this case, consider the derived $(p,I)$-complete simplicial commutative $\delta$-$A$-algebra $B = A\{\frac{f_1}{d},...,\frac{f_r}{d}\}^{\wedge}$ obtained by freely adjoining $\frac{f_i}{d}$. There is a natural map $B \to \Prism_{R/A}$ of simplicial commutative $\delta$-$A$-algebras since $f_i = 0 \in \overline{\Prism}_{R/A}$. We shall check this map is an isomorphism. Granting this property, it follows that $B$ is discrete and $(B,IB)$ gives an object of $(R/A)_\Prism$; the universal property describing $B$ then shows immediately that $(B,IB)$ is the initial object of $(R/A)_\Prism$, whence $(\Prism_{R/A},I\Prism_{R/A})$ is also initial, as wanted.

It remains to check that the map $B = A\{\frac{f_1}{d},...,\frac{f_r}{d}\}^{\wedge} \to \Prism_{R/A}$ from the previous paragraph is an isomorphism. As the formation of both sides commutes with arbitrary base change, it is enough to check this in the universal case where $A=\mathbf{Z}_p\{d,f_1,...,f_r\}[\delta(d)^{-1}]^{\wedge}$, so $A/I$ and $R$ are both $p$-torsionfree. Setting $(A_0,I_0) := (\mathbf{Z}_p\{d\}[\delta(d)^{-1}]^{\wedge}, (d))$, we see that $f_1,...,f_r \in A/I$ is $p$-completely regular relative $A_0/I_0$. By the construction of the prismatic envelope in \cref{qPDEnvRegular}, it follows that $B$ identifies with the prismatic envelope $A\{\frac{J}{I}\}^{\wedge}$; this implies that $B$ is concentrated in degree $0$ and $(B,IB)$ is the initial object of $(R/A)_\Prism$. The map  $B \to \Prism_{R/A}$ then admits a retraction by Lemma~\ref{PrismaticCohGivesPrisms}; the map $B/IB \to \overline{\Prism}_{R/A}$ then also admits a retraction, and thus has a $p$-torsionfree cokernel as $\overline{\Prism}_{R/A}$ is $p$-torsionfree since $R$ is so. To prove this map is an isomorphism, it is then enough to see that the map $B/IB[\frac 1p]\to \overline{\Prism}_{R/A}[\frac 1p]$ is surjective. By the Hodge-Tate comparison, $\overline{\Prism}_{R/A}[\frac 1p]$ is generated as a Banach $R[\frac 1p]$-algebra by $\mathrm{gr}_1 = L_{R/(A/I)}^\wedge\{-1\}[-1] = J/(J^2+I)\{-1\}$. It thus suffices to see that the map $B=A\{\frac JI\}\to \Prism_{R/A}$ maps $\frac JI$ surjectively onto $\mathrm{gr}_1 = J/(J^2+I)\{-1\}$. This is a standard computation using a free resolution for $R$ over $A/I$; see \cite[Claim 3.30]{Bhattpadicddr} for a variant of this calculation in crystalline cohomology.
%
%
\end{example}

\begin{proposition}\label{QRSPPrism} Let $S$ be a quasiregular semiperfectoid ring, let $R$ be any perfectoid ring with a map $R\to S$, and let $(A,I) = (A_\inf(R),\ker\theta)$. Then $\overline{\Prism}_{S/A}$ is discrete, and $\Prism_{S/A}$ is the initial prism $\Prism_S^{\mathrm{init}}$ with a map $S\to \Prism_S^{\mathrm{init}}/I$. In particular, $\Prism_{S/A}=\Prism_S^{\mathrm{init}}$ does not depend on $R$. Moreover, the map $S\to \overline{\Prism}_{S/A}$ is $p$-completely faithfully flat.
\end{proposition}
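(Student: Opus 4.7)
The plan breaks into three stages: (1) use the derived Hodge–Tate comparison to prove discreteness and $p$-complete flatness of $\overline{\Prism}_{S/A}$, (2) invoke \cref{PrismaticCohGivesPrisms} to exhibit $\Prism_{S/A}$ as a prism that is weakly initial in $(S)_{\Prism}$, and (3) upgrade weak initiality to initiality, thereby identifying $\Prism_{S/A}$ with $\Prism_S^{\mathrm{init}}$.

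For Stage 1, I apply the Hodge–Tate comparison of \cref{DerivedPrismatic}: $\overline{\Prism}_{S/A}$ carries an exhaustive increasing $\mathbf{N}$-filtration whose $i$-th graded piece is the derived $p$-completion of $\wedge^i_S L_{S/R}\{-i\}[-i]$. Since $R$ is perfectoid, the transitivity triangle for $\mathbf{Z}_p \to R \to S$ together with $\widehat{L_{R/\mathbf{Z}_p}}=0$ gives $\widehat{L_{S/R}}\simeq \widehat{L_{S/\mathbf{Z}_p}}$. Quasiregularity then forces $\widehat{L_{S/R}}[-1]$ to be a $p$-completely flat $S$-module concentrated in degree $0$; call this module $M$. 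Consequently each derived wedge power $\wedge^i L_{S/R}[-i]$ identifies ($p$-completed) with the divided power $\Gamma^i_S(M)$, still $p$-completely flat and discrete. Therefore $\overline{\Prism}_{S/A}$ is discrete, and since $\mathrm{gr}^0$ of the filtration equals $S$, the structure map $S\to\overline{\Prism}_{S/A}$ is $p$-completely faithfully flat.

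For Stage 2, the discreteness just established lets me apply \cref{PrismaticCohGivesPrisms}: $\Prism_{S/A}$ acquires a $\delta$-$A$-algebra structure lifting its $\phi_A$-semilinear Frobenius, and $(\Prism_{S/A},I\Prism_{S/A})$ becomes a prism over $(A,I)$ equipped with the map $S\to\overline{\Prism}_{S/A}$. Moreover, it is weakly initial among such prisms. Because $(A,I)=(A_{\inf}(R),\ker\theta)$ is a perfect prism, \cref{PerfectPrismInitial} shows that the category of prisms over $(A,I)$ with a map from $S$ coincides with the absolute category $(S)_\Prism$ of \cref{InitialPrism}, whose initial object is $\Prism_S^{\mathrm{init}}$. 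Thus we obtain natural maps $\alpha\colon \Prism_S^{\mathrm{init}}\to\Prism_{S/A}$ (by initiality of the source) and $\beta\colon \Prism_{S/A}\to\Prism_S^{\mathrm{init}}$ (by weak initiality), and by the universal property of $\Prism_S^{\mathrm{init}}$ the composite $\beta\alpha$ is the identity, so $e:=\alpha\beta$ is an idempotent on $\Prism_{S/A}$ with image $\Prism_S^{\mathrm{init}}$.

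For Stage 3, I must show $e=\mathrm{id}$, i.e., that $\alpha$ is an isomorphism. Both sides are derived $(p,I)$-complete, so by derived Nakayama it suffices to check that $\alpha/I$ is an isomorphism of $S$-algebras. Since $\beta/I$ provides a splitting, $\alpha/I$ is already a split injection, so the problem reduces to surjectivity. Modulo the conjugate filtration described in Stage 1, $\overline{\Prism}_{S/A}$ is generated as an $S$-algebra by $\mathrm{gr}^1\simeq M\{-1\}\simeq \widehat{(J/J^2)}\{-1\}$ for $J=\ker(R\to S)$, at least after inverting $p$ (where divided powers coincide with symmetric powers up to $\mathbf{Q}_p$-units). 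From the explicit construction of $\Prism_S^{\mathrm{init}}$ in \cref{InitialPrism}, the elements $f/d$ for $f\in J$ lie in $\Prism_S^{\mathrm{init}}$ and their images under $\alpha/I$ land in $\mathrm{Fil}^1\overline{\Prism}_{S/A}$, mapping in $\mathrm{gr}^1$ to the conormal classes of the $f$, which generate $\widehat{J/J^2}$; these generate $\mathrm{gr}^i[1/p]$ in the form of symmetric powers. The main obstacle will be the final integral upgrade from surjectivity after inverting $p$ to surjectivity of $\alpha/I$ itself: for this I would exploit the $\delta$-structure on $\Prism_S^{\mathrm{init}}$, which (via the identification of pd-envelopes with prismatic envelopes from \cref{PDalgLambda} and \cref{PDenvRegSeqLambda}) supplies elements of $\Prism_S^{\mathrm{init}}/I$ mapping onto the divided power generators $\gamma_i(\overline f)$ of $\Gamma^i_S(M)=\mathrm{gr}^i$, thereby realizing the full conjugate filtration in the image of $\alpha/I$. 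Once $\alpha$ is an isomorphism, the independence of $\Prism_{S/A}$ from the choice of $R$ is automatic from the absolute nature of $\Prism_S^{\mathrm{init}}$, and the faithful flatness of $S\to\overline{\Prism}_{S/A}$ is Stage 1.
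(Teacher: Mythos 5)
Stages 1 and 2 track the paper exactly: the Hodge--Tate comparison gives discreteness and $p$-complete faithful flatness, and \cref{PrismaticCohGivesPrisms} produces the prism structure with its canonical idempotent whose image is the initial object. The real content is Stage 3, and there your proposal has a genuine gap.

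You correctly identify the crux as showing the idempotent $e = \alpha\beta$ is the identity, and you correctly observe that $\alpha/I[1/p]$ is surjective because $\mathrm{gr}^1$ generates the associated graded of the conjugate filtration after inverting $p$. But the "integral upgrade" you flag as the main obstacle is not a loose end that the $\delta$-structure patches; it is precisely where the paper does something substantively different. Your sketched fix — use $\delta$ to produce elements hitting the divided power generators $\gamma_i(\bar f)$ of $\Gamma^i_S(M)$, via \cref{PDalgLambda} and \cref{PDenvRegSeqLambda} — doesn't directly work: those lemmas concern adjoining $\frac{\phi(x)}{p}$, i.e.\ they identify a \emph{Frobenius twist} of a prismatic envelope with a pd-envelope, not the prismatic envelope itself. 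Turning that into integral generators of the conjugate-filtration graded pieces is essentially the content of the Nygaard filtration computation (\cref{ThmNygaard}), which is proved several sections \emph{after} this proposition and would be circular to invoke here.

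The paper avoids the integral issue entirely by a different reduction. First, after replacing $R$ by $R\langle Y_i^{1/p^\infty}\rangle$ to make $R\to S$ surjective, one does not argue over $S$ directly but passes to $S' := R\langle X_j^{1/p^\infty}\rangle/(X_j)^\wedge$, which maps onto $S$ and also onto $L_{S/R}$ in the relevant degree, so $\Prism_{S'/A}\to\Prism_{S/A}$ is surjective; thus it suffices to kill the idempotent for $S'$. Writing $S'$ as a filtered colimit of lci quotients $T = R\langle X_1^{1/p^\infty},\ldots,X_r^{1/p^\infty}\rangle/(X_1-f_1,\ldots)$ reduces to \cref{DerivedPrismaticPrismatic}. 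That example works by base changing the whole situation to the \emph{universal oriented prism} (where $A/d$ is $p$-torsionfree), and in that universal setting $\overline{\Prism}_{T/A}$ is $p$-completely flat over a $p$-torsionfree ring, hence itself $p$-torsionfree. It is only \emph{there} that the "$e[1/p]=\mathrm{id}$ implies $e=\mathrm{id}$" trick is legitimate. In your setup, $\overline{\Prism}_{S/A}$ is $p$-completely flat over $S$, but $S$ is merely required to have bounded $p^\infty$-torsion, so you cannot conclude $p$-torsionfreeness, and the trick fails. The missing idea is thus not a divided-power computation but the chain of reductions (surjective cover by $S'$, colimit of lci $T$'s, base change to the universal case) that manufactures a $p$-torsionfree target.
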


In the following, we simply write $\Prism_S = \Prism_S^{\mathrm{init}} = \Prism_{S/A}$ for quasiregular semiperfectoid $S$.

\begin{proof} If $S$ is quasiregular, then $L_{S/R}[-1]$ is $p$-completely flat, and thus the derived $p$-completions of all $\wedge^i L_{S/R}[-i]$ are $p$-completely flat $S$-modules and in particular discrete. Thus the Hodge-Tate comparison implies that $\overline{\Prism}_{S/A}$ is discrete and $p$-completely faithfully flat over $S$. It remains to see that $\Prism_{S/A}$ is initial, or equivalently that the idempotent endomorphism from \cref{PrismaticCohGivesPrisms}~(3) is the identity.

Replacing $R$ by $R\langle Y_i^{1/p^\infty}|i\in I\rangle$ for some set $I$ and noting that this does not change the cotangent complexes, and thus not $\Prism_{S/R}$, we may assume that $R\to S$ is surjective. Choose a generating set $\{f_j\}_{j \in J}$ of the kernel of $R \to S$. Then the resulting map
\[ S^\prime = R \langle X_j^{1/p^\infty} | j \in J \rangle/(X_j - f_j | j \in J)^{\wedge} \to S\ : \ X_j \mapsto 0 \]
is surjective and also induces a surjection on cotangent complexes. Thus, by the Hodge-Tate comparison, $\Prism_{S^\prime/R}\to \Prism_{S/R}$ is surjective, so it suffices to show that the idempotent endomorphism of $\Prism_{S'/A}$ is the identity. But, by truncating the set $J$ of generators, we can express $S^\prime$ as a filtered colimit of regular semiperfectoid rings of the form
\[ 
T = R\langle X_1^{1/p^\infty},\ldots,X_r^{1/p^\infty}\rangle/(X_1-f_1,\ldots,X_r-f_r),
\]
so it suffices to prove that the idempotent endomorphism of $\Prism_{T/A}$ is the identity for each such $T$. Now if  $A' = A_{\inf}(R\langle X_1^{1/p^\infty},\ldots,X_r^{1/p^\infty} \rangle)$, then we also have $\Prism_{T/A} \simeq \Prism_{T/A'}$ compatibly with the idempotent endomorphisms.  Example~\ref{DerivedPrismaticPrismatic} then implies that the idempotent endomorphism $\Prism_{T/A'}$ is the identity, proving the claim.
\end{proof}

\subsection{Andr\'e's lemma}
\label{ss:AndreLemma}

\begin{proposition}[Lifting quasisyntomic covers to the prismatic site]
\label{PrismaticRefineQSyn}
Let $(A,I)$ be a bounded prism. Let $R$ be a quasisyntomic $A/I$-algebra.  Then there exists an object $(B \to B/IB \gets R) \in (R/A)_\Prism$ with $R \to B/IB$ being $p$-completely faithfully flat. Consequently:
\begin{enumerate}
\item The map $(A,I) \to (B,IB)$ is a flat map of prisms (and faithfully flat if $A/I \to R$ is $p$-completely faithfully flat). 
\item If $A$ is perfect, then the map $(A,I) \to (B,IB)_{\perf}$ is  flat as well (and faithfully flat if $A/I \to R$ is $p$-completely faithfully flat).
\end{enumerate}
\end{proposition}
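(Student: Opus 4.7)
The plan is to find a $p$-completely faithfully flat quasisyntomic cover $R \to S$ such that $S$ is \emph{quasiregular over $A/I$}, meaning $L_{S/(A/I)}[-1]$ is $p$-completely flat over $S$, and then to extract the required prism from the derived prismatic cohomology $\Prism_{S/A}$ of Construction~\ref{DerivedPrismatic}. For the construction of $S$, choose any surjection $P := (A/I)\langle X_\alpha : \alpha \in \Lambda\rangle \twoheadrightarrow R$ from a $p$-adically completed polynomial algebra on a (possibly infinite) set $\Lambda$ of topological generators of $R$ over $A/I$, and set
\[ S := R \,\widehat{\otimes}^L_P\, (A/I)\langle X_\alpha^{1/p^\infty}\rangle. \]
The map $P \to (A/I)\langle X_\alpha^{1/p^\infty}\rangle$ is $p$-completely faithfully flat (being a $p$-completed filtered colimit of finite free modules), hence so is its base change $R \to S$. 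A standard cotangent complex computation---combining the vanishing of the derived $p$-completion of $L_{(A/I)\langle X_\alpha^{1/p^\infty}\rangle/(A/I)}$ (a consequence of the $p$-divisibility appearing in the colimit $\mathrm{colim}_n (A/I)\langle X_\alpha^{1/p^n}\rangle \cdot dX_\alpha^{1/p^n}$) with the $p$-complete flatness of the conormal module of the quasisyntomic surjection $P \twoheadrightarrow R$---then shows that $L_{S/(A/I)}[-1]$ is $p$-completely flat over $S$.

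With $S$ relatively quasiregular over $A/I$, the derived Hodge-Tate comparison built into Construction~\ref{DerivedPrismatic} exhibits $\overline{\Prism}_{S/A}$ as the colimit of an exhaustive increasing $\mathbf{N}$-indexed filtration whose $i$-th associated graded is the derived $p$-completion of $\wedge^i L_{S/(A/I)}\{-i\}[-i]$. By quasiregularity, each graded piece is discrete and $p$-completely flat over $S$, with $\mathrm{gr}_0 = S$. Hence $\overline{\Prism}_{S/A}$ is concentrated in degree $0$ and $S \to \overline{\Prism}_{S/A}$ is $p$-completely faithfully flat. Lemma~\ref{PrismaticCohGivesPrisms} then refines the Frobenius on $\Prism_{S/A}$ to a $\delta$-structure and identifies $(B,IB) := (\Prism_{S/A}, I\Prism_{S/A})$ as a prism over $(A,I)$ with $S \to B/IB$ being $p$-completely faithfully flat. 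The composite $R \to S \to B/IB$ then furnishes the required object $(B \to B/IB \gets R) \in (R/A)_\Prism$ with $R \to B/IB$ $p$-completely faithfully flat.

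For consequence (1), the quasisyntomic hypothesis gives $A/I \to R$ is $p$-completely flat, and combining this with the $p$-completely faithfully flat map $R \to B/IB$ yields $A/I \to B/IB$ $p$-completely flat; together with the vanishing $\mathrm{Tor}^A_i(B,A/I) = 0$ (ensured by discreteness of $\overline{\Prism}_{S/A}$ and absence of $I$-torsion in $B$, both from Lemma~\ref{PrismaticCohGivesPrisms}), derived Nakayama promotes this to $(p,I)$-complete flatness of $A \to B$, with faithful flatness automatic under the stronger hypothesis. For consequence (2), assume $A$ is perfect, so $A/I$ is perfectoid and $A \simeq A_\inf(A/I)$. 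Then $S$, being a quotient of the perfectoid ring $(A/I)\langle X_\alpha^{1/p^\infty}\rangle$, is semiperfectoid; since $L_{(A/I)/\mathbf{Z}_p}^\wedge = 0$ for a perfectoid $A/I$, the relative quasiregularity of $S$ over $A/I$ coincides with absolute quasiregularity over $\mathbf{Z}_p$, so $S$ is quasiregular semiperfectoid. Proposition~\ref{QRSPPrism} then identifies $\Prism_{S/A}$ with the initial absolute prism $\Prism_S^{\mathrm{init}}$, and Theorem~\ref{PerfdPrism} together with Corollary~\ref{PerfectionMixedChar} identifies $(B,IB)_\perf$ with the perfect prism attached to the perfectoidization $S_\perfd$ of $S$. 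The remaining task is to verify that $A/I \to S_\perfd$ is $p$-completely (faithfully) flat, which is precisely Andr\'e's flatness lemma (\cref{AndreFlatness}) applied to the $p$-completely (faithfully) flat map $A/I \to S$. The main technical content of the argument resides in two places: the relative cotangent-complex computation of the first paragraph establishing quasiregularity of $S$ over $A/I$, and, for (2), the invocation of Andr\'e's flatness lemma---the rest being driven mechanically by the derived Hodge-Tate comparison and Lemma~\ref{PrismaticCohGivesPrisms}.
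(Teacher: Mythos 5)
Your construction of the cover $R \to S$ by extracting $p$-power roots of polynomial generators, the cotangent-complex computation establishing that $L_{S/(A/I)}^{\wedge}[-1]$ is $p$-completely flat, and the passage through the derived Hodge-Tate comparison and Lemma~\ref{PrismaticCohGivesPrisms} are exactly the paper's argument for the main statement and for consequence~(1); your derived-Nakayama reformulation of the flatness step in~(1), using discreteness of $\overline{\Prism}_{S/A}$ to control Tor groups, is just a more explicit version of the paper's ``each map in the composition $A/I \to R \to S \to B/IB$ is $p$-completely flat''.

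For consequence~(2), however, there is a genuine gap. You reduce to showing $A/I \to S_\perfd$ is $p$-completely (faithfully) flat and then assert this is ``precisely Andr\'e's flatness lemma (Theorem~\ref{AndreFlatness}) applied to the $p$-completely (faithfully) flat map $A/I \to S$.'' This does not work for two reasons. First, Theorem~\ref{AndreFlatness} is an existential statement: it constructs one specific $p$-completely faithfully flat cover of a perfectoid ring by an absolutely integrally closed perfectoid ring. It does not assert that the perfectoidization of an arbitrary $p$-completely flat quasiregular semiperfectoid $A/I$-algebra is $p$-completely flat over $A/I$; that is a different claim and you give no independent argument for it. Second, and more seriously, the paper's proof of Theorem~\ref{AndreFlatness} invokes Proposition~\ref{PrismaticRefineQSyn}~(2) itself---the very statement you are trying to prove---so the appeal is circular. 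The paper's actual argument for~(2) is much shorter and requires none of the detour through $\Prism_S^{\mathrm{init}}$, $S_\perfd$, or Corollary~\ref{PerfectionMixedChar}: since $A$ is perfect, $\phi_A$ is an isomorphism, so each Frobenius twist $(\phi_A^n)_* B$ is still $(p,I)$-completely flat over $A$; the perfection $B_\perf$ is the derived $(p,I)$-completion of the filtered colimit $\colim_\phi B$, and a completed filtered colimit of $(p,I)$-completely flat $A$-modules remains $(p,I)$-completely flat. Faithful flatness under the stronger hypothesis follows likewise. Replacing your final paragraph with this filtered-colimit argument would both repair the gap and substantially shorten the proof.
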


\begin{proof}
Let us first construct a quasisyntomic cover $R \to S$ with $S/p$ relatively semiperfect over $A/(I,p)$, i.e., $S/p$ can be written as a quotient of a flat $A/(I,p)$-algebra with a bijective relative Frobenius; any such $S$ has $\Omega^1_{S/(A/I)}/p = 0$. Choose a surjection $A/I\langle x_j|j \in J\rangle\to R$, where $J$ is some index set, and let $S$ be the $p$-complete $R$-algebra obtained by formally extracting all $p$-power roots of the $x_j$'s, i.e., 
\[ S := A/I\langle x_j^{1/p^\infty}|j \in J\rangle\widehat{\otimes}^L_{A/I\langle x_j|j \in J\rangle} R.\]
It is easy to see that $R \to S$ is a quasisyntomic cover (so $S$ is quasisyntomic over $A/I$ as well), and that $A/(I,p) \to S/p$ is relatively semiperfect. In particular, the derived $p$-completion of the cotangent complex $L_{S/(A/I)}^{\wedge}[-1]$ is a $p$-completely flat $S$-module. The argument in the first half of \cref{QRSPPrism} repeats verbatim shows that $B = \Prism_{S/A}$ gives the required prism (using \cref{PrismaticCohGivesPrisms} to get the $\delta$-structure on $B$). The consequence in (1) follows because each map in the composition $A/I \to R \to S \to B/IB$ is $p$-completely flat (and $p$-completely faithfully flat provided $A/I \to R$ is so), and the consequence in (2) follows by passage to filtered colimits.
\end{proof}

Using the previous lemma, one can often construct covers of the final object of the prismatic site (for nice enough rings) via ``perfectoid'' constructions. We give two examples. 

\begin{example}[Covers of the final object of $(R/A)_\Prism$ via relative perfectoids for $R$ smooth]
\label{PerfectoidCoverFinalObject}
Let $(A,I)$ be a bounded prism. Assume $R$ is a $p$-completely smooth $A/I$-algebra. Let $R \to R_\infty$ be a quasisyntomic cover with $R_\infty$ being formally $p$-completely \'etale over $A/I$, i.e., $L_{R_\infty/(A/I)}$ vanishes after derived $p$-completion\footnote{Two examples of such covers are given as follows. First, if $(A,I)$ is a perfect crystalline prism, then there is an obvious choice for $R_\infty$: take $R_\infty$ to be the perfection of $R$. Secondly, given a choice $x_1,...,x_n \in R$ of \'etale co-ordinates, we may set $R_\infty = R[y_1^{1/p^\infty},...,y_n^{1/p^\infty}]^{\wedge}_p/(y_1-x_1,...,y_n-x_n)$ to be the derived $p$-complete $R$-algebra obtained by freely adjoining $p$-power roots of the $x_i$'s.}. Then $B := \Prism_{R_\infty/A}$ is a $(p,I)$-completely flat and relatively perfect $\delta$-$A$-algebra with the natural map inducing $R_\infty \simeq \overline{\Prism}_{R_\infty/A} = B/IB$. In particular, $(B,IB)$ gives an object of $(R/A)_\Prism$. We claim this object covers the final object. To see this, let $(C \to C/IC \gets R) \in (R/A)_\Prism$ be a test object. We must find a $(p,I)$-completely faithfully flat cover $(C,IC) \to (D,ID)$ together with a map $(B,IB) \to (D,ID)$ in $(R/A)_\Prism$. To find such a $D$, consider the base change $C/IC \to C/IC \widehat{\otimes}^L_R R_\infty$ of $R \to R_\infty$. This map is a quasisyntomic cover, so Proposition~\ref{PrismaticRefineQSyn} yields a faithfully flat map $(C,IC) \to (D,ID)$ of prisms together with a factorization $C/IC \to C/IC \widehat{\otimes}^L_R R_\infty \to D/IC$. In particular, the natural map $A/I to D/IC$ factors over the $p$-completely \'etale $A/I$-algebra $R_\infty$. By deformation theory, there is a unique $\delta$-$A$-algebra map $B \to D$ lifting $B/IB \simeq R_\infty \to D/IC$. The resulting map $R \to R_\infty \to D/ID$ (which equals $R \to C/IC \to D/ID$) then gives an object $(R \to D/ID \gets D) \in (R/A)_\Prism$. By construction, one has a faithfully flat map $(C,IC) \to (D,ID)$ of prisms. Moreover, the induced map $C/IC \to D/ID$ can be checked to be an $R$-algebra map by a diagram chase. Thus, $(D,ID) \in (R/A)_\Prism$ provides the desired object.
\end{example}

\begin{example}[Covers of the final object of the absolute prismatic site of a regular ring]
\label{PerfectoidCoverFinalObjectAbs}
Let $(A,I)$ be a bounded prism with $\phi:A \to A$ being $(p,I)$-completely flat. Let $(B,IB)$ be the perfection of $(A,I)$ in the sense of Lemma~\ref{PrismPerfection}. Then $B/IB$ is a perfectoid ring, and $(B,IB)$ provides an object of the absolute prismatic site $(A/I)_\Prism$ from Remark~\ref{AbsPrismaticSite}. An argument similar to that employed in Example~\ref{PerfectoidCoverFinalObject} shows that this object covers the final object. A natural example of such a prism $(A,I)$ is any prism $(A,I)$ with $A/I$ being a regular ring.
\end{example}

We give an application.

\begin{theorem}[Andr\'e's flatness lemma]
\label{AndreFlatness}
Let $R$ be a perfectoid ring. Then there exists a $p$-completely faithfully flat map $R \to S$ of perfectoid rings such that $S$ is absolutely integrally closed (i.e., every monic polynomial has a solution). In particular, every element of $S$ admits a compatible system of $p$-power roots.
\end{theorem}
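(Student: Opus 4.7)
The plan is to iteratively adjoin roots of monic polynomials, using Proposition~\ref{PrismaticRefineQSyn} at each step to stay within the class of $p$-completely faithfully flat perfectoid extensions of $R$. The core step is to show that for any perfectoid ring $R_0$ and any monic polynomial $f\in R_0[X]$, there exists a $p$-completely faithfully flat map $R_0\to R_{0,f}$ of perfectoid rings such that $f$ admits a root in $R_{0,f}$.

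For the core step, set $T_f := R_0\langle X^{1/p^\infty}\rangle/(f(X))$, where the completion is $p$-adic. Since $f$ is monic, $\bar{f}$ is a nonzerodivisor on $(R_0/p)[X^{1/p^\infty}]$ and the quotient $(R_0/p)[X^{1/p^\infty}]/(\bar{f})$ is free over $R_0/p$ of positive rank, so $R_0\to T_f$ is $p$-completely faithfully flat. The transitivity triangle for $R_0 \to R_0\langle X^{1/p^\infty}\rangle \to T_f$ --- with $L_{R_0\langle X^{1/p^\infty}\rangle/R_0}$ being $p$-completely flat in degree $-1$ (since both terms are perfectoid) and $L_{T_f/R_0\langle X^{1/p^\infty}\rangle} \simeq T_f[1]$ by regularity of $f$ modulo $p$ --- shows that $L_{T_f/R_0}$ has $p$-complete Tor amplitude in $[-1,-1]$, so that $T_f$ is quasisyntomic as a $\mathbf{Z}_p$-algebra. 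Now apply Proposition~\ref{PrismaticRefineQSyn} to the perfect prism $(A,I) := (A_{\inf}(R_0), \ker\theta)$ (where $A/I = R_0$ by Theorem~\ref{PerfdPrism}) and the quasisyntomic $R_0$-algebra $T_f$: since $A$ is perfect and $R_0\to T_f$ is $p$-completely faithfully flat, part (2) yields a faithfully flat map of perfect prisms $(A,I) \to (B,IB)_\perf = (B_\infty, IB_\infty)$ together with a map $T_f \to B_\infty/IB_\infty$. By Theorem~\ref{PerfdPrism} again, $R_{0,f} := B_\infty/IB_\infty$ is a perfectoid ring, $R_0 \to R_{0,f}$ is $p$-completely faithfully flat (from faithful flatness of the underlying map of prisms), and the image of $X$ in $R_{0,f}$ is the desired root of $f$.

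For the iteration, starting from $R$ I build $p$-completely faithfully flat perfectoid extensions $R = R_0 \to R_1 \to R_2 \to \cdots$ in which every monic polynomial over $R_n$ has a root in $R_{n+1}$; concretely, $R_{n+1}$ is obtained as the $p$-adic completion of the filtered colimit, over finite subsets of monic polynomials in $R_n[X]$, of iterated applications of the core step. Letting $S$ be the $p$-adic completion of $\colim_n R_n$ yields a $p$-completely faithfully flat perfectoid $R$-algebra (perfectoidness is preserved under filtered colimits followed by $p$-completion). Every monic polynomial over $S$ has only finitely many coefficients, hence is defined over some $R_n$ and has a root in $R_{n+1}\subset S$; so $S$ is absolutely integrally closed. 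The final assertion is then immediate: for $x\in S$, successively solving $T^p = x^{1/p^{n-1}}$ produces a compatible system of $p$-power roots $(x^{1/p^n})_{n\geq 0}$.

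The main obstacle is the core step, and specifically the verification that $T_f$ is both quasisyntomic and $p$-completely faithfully flat over $R_0$ --- both of which hinge crucially on the monicity of $f$. Once this is in place, Proposition~\ref{PrismaticRefineQSyn}(2) delivers the desired perfectoid cover with little additional effort, and the remainder is a standard (countable) iteration.
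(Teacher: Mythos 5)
Your core step is correct and is essentially the paper's argument: both use Proposition~\ref{PrismaticRefineQSyn}~(2), applied to a quasisyntomic cover of $R$ that solves the relevant monic polynomials, to produce the required $p$-completely faithfully flat perfectoid extension. (The paper adjoins roots of \emph{all} monics over $R$ at once, while you go one $f$ at a time and then take a colimit over finite subsets; this is an immaterial difference. Your extra step of first adjoining $p$-power roots of $X$ in forming $T_f$ is also harmless.) The cotangent-complex computation showing $T_f$ is quasisyntomic over $R_0$, and the invocation of \cref{PerfdPrism} to identify $B_\infty/IB_\infty$ as a perfectoid ring, are both correct.

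The gap is in the final iteration. You run the construction countably many times, set $S$ to be the $p$-adic completion of $\colim_n R_n$, and then assert that any monic over $S$ "is defined over some $R_n$." This is false: the coefficients of a monic polynomial over $S$ lie in the $p$-adic completion, and elements of $\widehat{\colim_n R_n}$ need not lie in $\colim_n R_n$ at all (a countable filtered colimit of $p$-complete rings is typically not $p$-complete --- think of $\sum_n p^n x_n$ with each $x_n$ appearing only at stage $n$). So after completing, you have new elements and therefore new monic polynomials that your chain has not addressed, and the absolute integral closure of $S$ is not established. This is exactly why the paper says "transfinitely iterating": if you continue the chain $\{R_\alpha\}$ through an ordinal $\kappa$ of uncountable cofinality (taking $p$-completed colimits at limit stages), then $\colim_{\alpha<\kappa} R_\alpha$ is already $p$-complete (any $p$-Cauchy sequence is supported below some $\beta<\kappa$ by uncountable cofinality), and every monic over the colimit genuinely has all its coefficients at some stage $<\kappa$, so it gets a root at the next stage. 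Replacing your countable chain by such a transfinite chain closes the gap; nothing else in your argument needs to change.
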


In fact, the map $R \to S$ can also be chosen to be ind-syntomic modulo $p$, see Remark~\ref{AndreCoverfppf}.

\begin{proof}
Write $R = A/I$ for a perfect prism $(A,I)$. Let $\tilde{R}$ be the $p$-complete $R$-algebra obtained by formally adjoining roots of all monic polynomials over $R$. Then $\tilde{R}$ is a quasisyntomic cover of $R$, and each monic polynomial over $R$ has a solution in $\tilde{R}$. Proposition~\ref{PrismaticRefineQSyn} (2) then gives a perfect prism $(B,IB)$ over $(A,I)$ equipped with an $A/I$-algebra map $\tilde{R} \to B/IB =: R_1$ such that the composition $A/I=R \to \tilde{R} \to R_1$ is $p$-completely faithfully flat. In particular, $R_1$ is a perfectoid ring that is $p$-completely faithfully flat over $R$ such that every monic polynomial over $R$ admits a solution in $R_1$. Transfinitely iterating the construction $R\mapsto R_1$ yields a quasisyntomic cover $R \to R'$ with $R'$ being perfectoid and absolutely integrally closed.
\end{proof}

\begin{remark}[Refined form of Andr\'e's flatness lemma]
\label{AndreCoverfppf}
The map $R \to S$ constructed in Theorem~\ref{AndreFlatness} can be shown to be an ind-syntomic cover modulo $p$, as we now briefly sketch. Unwinding our construction, it suffices to show the following: for a monic equation $f(Y) \in R[Y]$, if we set $S := R\langle Y^{1/p^\infty}\rangle/(f(Y))$ over the prism $(A,I)$, then the resulting map $S\to T := \overline{\Prism}_S$ is an ind-syntomic cover modulo $p$. The ``cover'' part follows from \cref{QRSPPrism}. For ind-syntomicity, by base change, we may assume $R$ is $p$-torsionfree. Choose $\varpi \in R$ such that $(\varpi^p) = (p)$. By the definition of $T$ as a prismatic envelope and \cref{PDenvRegSeqLambda}, $\phi_A^* (T/\varpi)$ is the pd-envelope of $f(Y)$ in $R/\varpi[Y^{1/p^\infty}]$. The claim now follows by combining the following two facts:
\begin{enumerate}
\item If $A$ is an $\mathbf{F}_p$-algebra and $f \in A$ is a nonzerodivisor, then the natural map $A/f^p \to D_{(f)}(A)$ is identified with the natural map $A/f^p \to A/f^p[g_1,g_2,...]/(g_1^p, g_2^p,...)$ via $g_i \mapsto \gamma_{ip}(f)$; in particular, this map is ind-syntomic. This is a well-known fact about divided power algebras in characteristic $p$, and can be proven by reduction to the universal case where $A = \mathbf{F}_p[f]$.

\item If $A \to B = A/I$ is a surjection of $\mathbf{F}_p$-algebras with nilpotent kernel $I$, and $D$ is a flat $A$-algebra with $D/ID \simeq B[g_1,g_2,...]/(g_1^p,g_2^p,...)$, then $D$ is ind-syntomic over $A$. To see this, pick $\bar{h}_i \in D$ lifting $g_i$. Then we get an obvious surjective map $A[h_1,h_2,...] \to D$ by $h_i \mapsto \bar{h}_i$. As $\bar{h}_i^p \in ID$, we can find $\epsilon_i \in IA[h_1,h_2,....]$ such that $h_i^p - \epsilon_i$ maps to $0$ in $D$ for each $i$.  Setting $D_n = A[h_1,h_2,...]/(h_1^p - {\epsilon}_1, h_2^p - {\epsilon}_2,...,h_n^p - {\epsilon}_n)$, we get a map $\colim_n D_n \to D$; this map is an isomorphism as it is a surjection of flat $A$-algebras that is an isomorphism modulo the nilpotent ideal $I$. The claim follows as each $D_n$ is ind-syntomic over $A$.
\end{enumerate}
The observation that $R \to S$ can be chosen to be ind-syntomic modulo $p$ also follows from \cite[Theorem 16.9.17]{GabberRameroFART}.
\end{remark}

Finally, we can prove Theorem~\ref{PerfectoidificationSurj}.

\begin{proof} Recall that we want to prove that for any semiperfectoid ring $S$, the map $S\to S_\perfd$ is surjective, where $S_\perfd$ is the universal perfectoid ring with a map from $S$. Write $S=R/J$ for some perfectoid ring $R$. Using filtered colimits, we may assume that $J$ is finitely generated, and then by induction we may actually assume that $I=(f)$ is principal. By \cref{AndreFlatness}, we can assume that $f$ admits compatible $p$-power roots $f^{1/p^n}\in R$. But as any perfectoid ring $T$ is reduced, any map $S=R/f\to T$ automatically factors over the $p$-completion of $R/(f^{1/p^\infty})$, which is itself perfectoid. Thus, $S_\perfd$ is the $p$-completion of $R/(f^{1/p^\infty})$, and in particular the maps $R\to S\to S_\perfd$ are surjective.
\end{proof}

\newpage
\section{Perfections in mixed characteristic: General case}
\label{generalperfoidization}

The goal of this section is to study a notion of ``perfectoidiziation'' for any algebra over a perfectoid ring. In \S \ref{ss:PerfPrismatic}, we define this notion using derived prismatic cohomology, and then describe it via the perfect prismatic site. These results are then used in \S \ref{ss:arc} to reinterpret the perfectoidization as the sheafification of the structure sheaf for the \arc-topology (on $p$-adic formal schemes over a perfectoid ring), which leads to consequences such as a version of excision (\cref{PerfdBlowup}).

\subsection{Perfections via the perfect prismatic site}
\label{ss:PerfPrismatic}

\begin{notation}
Fix a perfectoid ring $R$ corresponding to a perfect prism $(A,I)$. For notational convenience, we fix a distinguished element $d \in I$ (which is automatically a generator).
\end{notation}

\begin{definition}[Perfectoidizations]
\label{def:perfectoidization}
Fix a derived $p$-complete simplicial $R$-algebra $S$.
\begin{enumerate}
\item The perfection $\Prism_{S/A,\perf}$ of $\Prism_{S/A}$ is defined as
\[ \Prism_{S/A,\perf} := \colim_{\phi_S} \big(\Prism_{S/A} \to \phi_* \Prism_{S/A} \to \phi^2_* \Prism_{S/A} \to ... \big)^{\wedge} \in D(A).\]
This is a derived $(p,I)$-complete $E_\infty$-$A$-algebra equipped with a $\phi_A$-semilinear automorphism induced by $\phi_S$. 

\item The perfectoidization $S_\perfd$ of $S$ is defined as 
\[ S_{\perfd} := \Prism_{S/A,\perf} \otimes_A^L A/I \in D(R).\]
This is a $p$-complete $E_\infty$-$S$-algebra.
\end{enumerate}
\end{definition}

A priori, $S_\perfd$ depends on the choice of perfectoid base ring $A$, but it follows from \cref{PerfdPerfectSite} below that it is actually independent of this choice. The same proposition will also show that Definition~\ref{def:perfectoidization} (2) is compatible with \cref{PerfectionMixedChar}.

\begin{example}[Perfectoidizations of characteristic $p$ rings]
\label{PerfdPerfCharp}
Assume that $S$ is an $R$-algebra where $p=0$. In this case, we claim that $S_\perfd$ coincides with the usual (direct limit) perfection $S_\perf$ of $S$. To see this, as $S_\perfd$ is independent of the choice of the perfectoid base ring, we may simply take $A = \mathbf{Z}_p$ so $R = \mathbf{F}_p$. But then the Frobenius equivariant map $S \to \Prism_{S/A}/p$ coming from the derived Hodge-Tate comparison induces an isomorphism on perfections: one checks (by, e.g., reduction to the case $S = \mathbf{F}_p[x]$) that the endomorphism of $\wedge^i L_{S/\mathbf{F}_p}[-i]$ induced by the Frobenius endomorphism of $\Prism_{S/A}$ via the Hodge-Tate comparison is the one induced by the Frobenius on $S$, and is thus $0$ for $i \neq 0$. Thus, we have $S_\perf \simeq \colim_{\phi_S} \Prism_{S/A}/p =: S_\perfd$, as asserted.
\end{example}

The goal of this section is to describe $S_\perfd$ via maps from $S$ to perfectoid rings.

\begin{lemma}[Coconnectivity of perfectoidizations]
\label{PerfdCoconnective}
For any derived $p$-complete simplicial $R$-algebra $S$, both $\Prism_{S/A,\perf}$ and $S_\perfd$ lie in $D^{\geq 0}$. 
\end{lemma}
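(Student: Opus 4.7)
The plan is to first reduce the claim for $\Prism_{S/A,\perf}$ to the claim for $S_\perfd$ via a derived Nakayama argument, then establish $S_\perfd \in D^{\geq 0}$ by left Kan extending to the $p$-completely smooth case, where it can be computed via the perfect prismatic site.

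For the first reduction, using \cref{BoundedPrismProp}(4) together with the base-change compatibility of derived prismatic cohomology (Construction~\ref{DerivedPrismatic}(4)), I can reduce to the case where $I = (d)$ is principal with $d$ a nonzerodivisor on $A$. In that situation, $S_\perfd$ is the cofiber of multiplication by $d$ on $\Prism_{S/A,\perf}$, fitting into the triangle
\[ \Prism_{S/A,\perf} \xrightarrow{d} \Prism_{S/A,\perf} \to S_\perfd. \]
If $S_\perfd \in D^{\geq 0}$, the long exact sequence in cohomology forces $d$ to act invertibly on $H^i(\Prism_{S/A,\perf})$ for each $i < 0$. Since $\Prism_{S/A,\perf}$ is derived $(p,d)$-complete, so is each of its cohomology groups, and $d$ must be topologically nilpotent on them; the combination forces these $H^i$ to vanish and hence $\Prism_{S/A,\perf} \in D^{\geq 0}$.

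Next I would reduce the claim $S_\perfd \in D^{\geq 0}$ to the case of a $p$-completely smooth $R$-algebra $R'$. The functor $S \mapsto S_\perfd$ is left Kan extended from $p$-completely smooth $R$-algebras: $\Prism_{-/A}$ is so by Construction~\ref{DerivedPrismatic}, the Frobenius colimit $\colim_\phi$ preserves sifted colimits, and $\otimes_A^L A/I$ preserves all colimits. Since sifted colimits (geometric realizations of simplicial resolutions, plus filtered colimits) of connective objects in the relevant $\infty$-category of derived $p$-complete $E_\infty$-$R$-algebras are connective, the claim reduces to showing $R'_\perfd \in D^{\geq 0}$ for smooth $R'$.

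In the smooth case, I would identify $\Prism_{R'/A,\perf}$ with the $R\Gamma$ of the structure sheaf on the perfect prismatic site $(R'/A)^{\perf}_\Prism$ (see Remark~\ref{PerfectPrismatic}), so that $R'_\perfd$ is computed as the $R\Gamma$ of the sheaf $\overline{\mathcal{O}}_\Prism$ taking perfectoid (hence discrete) values. This $R\Gamma$ is automatically in $D^{\geq 0}$ as cohomology of a sheaf of rings on a site. The main obstacle is precisely this identification: it is essentially the content of the forthcoming \cref{PerfdPerfectSite}, and relies on Andr\'e's flatness lemma (\cref{AndreFlatness}) to produce sufficiently many perfectoid covers. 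An alternative that avoids invoking \cref{PerfdPerfectSite} is to construct explicit perfectoid covers by applying \cref{AndreFlatness} to a weakly initial object of $(R'/A)_\Prism$ from \cref{cons:weakinitialsite} and then making a direct \v{C}ech-Alexander computation of $\Prism_{R'/A,\perf}$ via the resulting perfect cosimplicial prism.
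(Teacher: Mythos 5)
Your reduction in the first step is sound: if $S_\perfd \in D^{\geq 0}$, the triangle $\Prism_{S/A,\perf} \xrightarrow{d} \Prism_{S/A,\perf} \to S_\perfd$ together with derived $(p,d)$-completeness of the cohomology groups $H^i(\Prism_{S/A,\perf})$ forces $H^i(\Prism_{S/A,\perf}) = 0$ for $i < 0$ by derived Nakayama. (The phrase ``topologically nilpotent'' is a slightly imprecise way to say this, but the substance is correct.) Interestingly, the paper runs its deduction in the opposite direction --- it proves $\Prism_{S/A,\perf} \in D^{\geq 0}$ first and then must work to show that $d$ is a nonzerodivisor on $H^0(\Prism_{S/A,\perf})$, using the fact that $H^0$ is a perfect $p$-complete $\delta$-ring --- so your easier direction of the reduction is genuinely a different decomposition.

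However, the second step has a fatal gap: you cannot reduce the claim $S_\perfd \in D^{\geq 0}$ to the $p$-completely smooth case by left Kan extension. In the paper's (cohomological) conventions, $D^{\geq 0}$ means $H^i = 0$ for $i < 0$, i.e.\ coconnective. Left Kan extension is computed via simplicial resolutions and geometric realizations, i.e.\ sifted colimits, and these preserve \emph{connectivity} ($D^{\leq 0}$) but emphatically \emph{not} coconnectivity. A geometric realization of discrete rings, for instance, can (and typically does) have nontrivial $H^i$ in degrees $i < 0$. Your sentence ``sifted colimits\ldots\ of connective objects\ldots\ are connective'' is true in isolation but proves the wrong thing: it would show $S_\perfd \in D^{\leq 0}$, not $S_\perfd \in D^{\geq 0}$. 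So establishing $R'_\perfd \in D^{\geq 0}$ for smooth $R'$ does not transfer to general $S$ by this mechanism. There is also a circularity danger in the smooth-case argument: \cref{PerfdPerfectSite} (which you lean on for the identification with cohomology of the perfect prismatic site) itself invokes \cref{PerfdCoconnective} in its proof, so it cannot be a logical input here.

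The paper's actual argument avoids any reduction to the smooth case. It works directly with arbitrary $S$ by observing that $\Prism_{S/A,\perf}/p$ is an $E_\infty$-$\mathbf{F}_p$-algebra on which the Steenrod operation $P^0$ acts invertibly (since $P^0$ is realized by the levelwise Frobenius and the Frobenius becomes an equivalence after perfection). Since $P^0$ annihilates all positive-degree homotopy classes, invertibility of $P^0$ forces $\Prism_{S/A,\perf}/p \in D^{\geq 0}$, and hence $\Prism_{S/A,\perf} \in D^{\geq 0}$. This is the key idea your proposal is missing: it is a universal operation on $E_\infty$-$\mathbf{F}_p$-algebras that makes coconnectivity accessible without any colimit argument.
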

\begin{proof}
We first show the claim for $\Prism_{S/A,\perf}$ using the $P^0$ operation for $E_\infty$-$\mathbf{F}_p$-algebras. As the levelwise Frobenius map on a simplicial cosimplicial $\mathbf{F}_p$-algebra induces the $P^0$-operation on its realization, we learn that $\Prism_{S/A,\perf}/p$ is an $E_\infty$-$\mathbf{F}_p$-algebra where $P^0$ acts invertibly. As $P^0$ kills all positive degree homotopy classes \cite[Remark 2.2.7]{LurieRationalpadic}, it follows that $\Prism_{S/A,\perf}/p \in D^{\geq 0}$, so $\Prism_{S/A,\perf} \in D^{\geq 0}$ as well. 

To deduce that $S_\perfd \in D^{\geq 0}$, it is enough to check that $d$ is a nonzerodivisor in $H^0(\Prism_{S/A,\perf})$. As $\Prism_{S/A,\perf}/p \in D^{\geq 0}$, the ring $H^0(\Prism_{S/A,\perf})$ must be $p$-torsionfree. The reasoning used above also shows that the automorphism $\phi_S$ of $H^0(\Prism_{S/A,\perf})$ lifts the Frobenius on $H^0(\Prism_{S/A,\perf})/p \subset H^0(\Prism_{S/A,\perf}/p)$. It follows that $H^0(\Prism_{S/A,\perf})$ is a perfect $p$-complete $\delta$-ring, and $\phi_S$ is the unique Frobenius lift on such a ring. In particular, the map $A \to H^0(\Prism_{S/A,\perf})$ is a map of $\delta$-rings, so it carries $d$ to a distinguished element. The claim now follows from \cref{PrimEltPRop}.
\end{proof}

Fix a derived $p$-complete simplicial $R$-algebra $S$ as above. Our next goal is to describe $\Prism_{S/A,\perf}$ in terms of the perfect prismatic site $(S/A)_\Prism^\perf$ defined (as in Remark~\ref{PerfectPrismatic}) as the category of perfect prisms $(B,IB)$ over $(A,I)$ endowed with an $A/I$-algebra map $S \to B/IB$; thus, $(S/A)_\Prism^{\perf} = (\pi_0(S)/A)_\Prism^{\perf}$. Moreover,  by \cref{PerfectPrismInitial},  the site $(S/A)_\Prism^{\perf}$  is also simply the site of all maps $S \to T$ with $T$ a perfectoid ring. Thus, $(S/A)_\Prism^{\perf}$ is independent of $(A,I)$ and the structure map $A/I \to S$. By \cref{PrismaticCohGivesPrisms}, for $(B,J)\in (S/A)_\Prism^\perf$, we have a functorial map
\[
\Prism_{S/A}\to B\ ,
\]
which after passing to the perfection on the left and the limit over all $B$ on the right induces a comparison map
\[
\Prism_{S/A,\perf}\to R\Gamma((S/A)_\Prism^\perf,\mathcal O_\Prism)\ .
\]

\begin{proposition}\label{PerfdPerfectSite} For any choice of topology on $(S/A)_\Prism^\perf$ between the flat and the chaotic topology, the map
\[
\Prism_{S/A,\perf}\to R\Gamma((S/A)_\Prism^\perf,\mathcal O_\Prism)
\]
is an equivalence. In particular, the perfectoidization
\[
S_\perfd\cong R\Gamma((S/A)_\Prism^\perf,\overline{\mathcal O}_\Prism)\cong \lim_{S\to R^\prime} R^\prime
\]
is the derived limit of $R^\prime$ over all maps from $S$ to perfectoid rings $R^\prime$, and does not depend on the choice of $A$.
\end{proposition}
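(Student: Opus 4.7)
The plan is to first construct the comparison map, then verify it is an equivalence in the key case of quasiregular semiperfectoid $S$, and finally bootstrap to the general case via descent. A useful preliminary observation is that by \cref{PerfectPrismInitial}, the perfect prismatic site $(S/A)_\Prism^\perf$ may be described intrinsically as the category of perfectoid rings $R'$ together with a map $S \to R'$; this description does not reference $A$, so independence of the right-hand side from $A$ is automatic once the first isomorphism is established.

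For the comparison map: given any $(B,J) \in (S/A)_\Prism^\perf$, the universal property of derived prismatic cohomology (built on the smooth case as in \cref{PrismaticCohGivesPrisms}(3), then extended via left Kan extension) yields a natural $\phi$-equivariant map $\Prism_{S/A} \to B$. Since $B$ is perfect and derived $(p,I)$-complete, this factors canonically through the perfection $\Prism_{S/A,\perf} \to B$. Assembling these into a limit gives the comparison map
\[
\alpha_S \colon \Prism_{S/A,\perf} \to R\Gamma((S/A)_\Prism^\perf, \mathcal O_\Prism)
\]
for the chaotic topology. The map is compatible with pullback to any finer topology, so to treat all topologies between flat and chaotic it suffices to verify that it is an equivalence at the two extremes --- the chaotic case giving the naive limit, and the flat case giving the derived \v{C}ech-theoretic version via \cref{BoundedPrismSite} --- after which any intermediate topology is squeezed.

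Now suppose $S$ is quasiregular semiperfectoid. By \cref{QRSPPrism}, $\Prism_{S/A}$ is concentrated in degree $0$ and coincides with the initial prism $\Prism_S^{\mathrm{init}}$ of the absolute site $(S)_\Prism$. The universal property of $(p,I)$-completed perfection then identifies $(\Prism_{S/A,\perf}, I\Prism_{S/A,\perf})$ as the initial object of $(S/A)_\Prism^\perf$: for any perfect prism $(B,J)$ under $S$, the unique map $\Prism_S^{\mathrm{init}} \to B$ factors uniquely through the perfection since $B$ is perfect. Consequently, for the chaotic topology the right-hand side is the value of $\mathcal O_\Prism$ at this initial object, namely $\Prism_{S/A,\perf}$ itself; for any finer topology the existence of an initial object makes every \v{C}ech nerve constant, so higher cohomology vanishes and one gets the same answer. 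Reducing modulo $I$ identifies $S_\perfd$ with $\lim_{S \to R'} R'$ in this case.

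For a general derived $p$-complete simplicial $R$-algebra $S$, I would resolve $S$ simplicially by quasiregular semiperfectoid algebras and appeal to descent on both sides. For discrete $S$, such a resolution is obtained as the \v{C}ech nerve of a quasisyntomic cover built by extracting $p$-power roots of a generating set (as in the proof of \cref{PerfectoidificationSurj}, or alternatively by invoking \cref{PrismaticRefineQSyn}); the simplicial case reduces to the discrete case by standard Kan extension arguments for both functors. The left side satisfies the required descent because $\Prism_{-/A}$ is a quasisyntomic sheaf (Construction~\ref{DerivedPrismatic}(3)) and perfection interacts well with the totalizations of $p$-completely flat cosimplicial prisms that arise. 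The right side, expressed as a limit over perfectoid rings under $S$, inherits the analogous descent termwise from the quasiregular semiperfectoid case. The hard part will be rigorously justifying these descent claims --- in particular that $(p,I)$-completed perfection commutes with the relevant totalization of $\Prism_{S_\bullet/A}$ and that $T \mapsto \lim_{T\to R'} R'$ is itself a sheaf for quasisyntomic covers of $T$ --- both of which ultimately rely on the rigidity of maps between perfect prisms (\cref{PrismMapTaut}, \cref{PrismPerfection}) to move the descent data cleanly into the perfect world.
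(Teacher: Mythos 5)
Your construction of the comparison map and your handling of the quasiregular semiperfectoid case are both sound, and you correctly identify that the only remaining task is to bootstrap to a general derived $p$-complete simplicial $R$-algebra $S$. You also rightly flag the descent step as the difficulty. However, as stated, that step has a genuine gap, and the tools you point to (``rigidity of maps between perfect prisms'') are not the ones needed.

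Two problems. First, your proposed reduction is to \emph{quasiregular} semiperfectoid rings, but this is unavailable in general: a general derived $p$-complete $R$-algebra $S$ need not be quasisyntomic over $R$, so the \v{C}ech nerve of a cover obtained by extracting $p$-power roots will have terms that are simplicial rings (or, even when discrete, semiperfectoid but not quasiregular). The paper's proof deliberately reduces only to the case where $\pi_0 S$ is semiperfectoid, and then handles that case by a completely different mechanism: one shows $\Prism_{S/A,\perf}$ is both coconnective (\cref{PerfdCoconnective}, via the $P^0$ operation for $E_\infty$-$\mathbf{F}_p$-algebras) and connective (since $L_{S/\mathbf{Z}_p}[-1]$ is connective for semiperfectoid $\pi_0 S$), hence discrete and a perfect prism, and then compares it against the universal perfectoid $R'$ under $\pi_0 S$ by exploiting the surjectivity $S \to S_\perfd$ (\cref{PerfectoidificationSurj}). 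Your argument via the initial object of $(S/A)_\Prism^\perf$ only applies once you already know $\Prism_{S/A}$ is discrete, which requires quasiregularity, precisely what you cannot assume.

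Second and more fundamentally, the descent you need is not ordinary quasisyntomic descent on both sides: to show $\Prism_{S/A,\perf} \to \lim \Prism_{\tilde{S}^\bullet/A,\perf}$ is an isomorphism you must commute the $(p,I)$-completed filtered colimit $\colim_\phi$ with a totalization. This fails in general. The paper's mechanism for this is \emph{descendability} in the sense of Mathew, proven for the specific map $\Prism_{T/A,\perf} \to \Prism_{T_\infty/A,\perf}$ (with $T = R\langle X_1,\ldots,X_r\rangle$, $T_\infty$ its perfectoid cover) in \cref{DescentLemma}. Descendability is a tensor-ideal condition that survives base change and hence survives the passage to $\Prism_{\tilde{S}^\bullet/A,\perf} = \Prism_{T_\infty^\bullet/A,\perf} \widehat{\otimes}_{\Prism_{T/A,\perf}} \Prism_{S/A,\perf}$; ordinary cosimplicial descent does not. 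Your appeal to $p$-completely flat totalizations and rigidity of perfect prisms does not produce this. There is also a preliminary filtered-colimit reduction to $\pi_0 S$ topologically finitely generated over $R$ that you omit but that is needed to set up the \cref{DescentLemma} argument. Finally, the paper's topology independence in general (not only in the QRSP case) comes from a weakly final object $\tilde{S}_\perfd$ built by transfinitely adjoining $p$-th roots, a step your outline does not supply; your remark that an initial object ``makes every \v{C}ech nerve constant'' is not the right reasoning even in the QRSP case (it is rather that $\mathcal{O}_\Prism$ has vanishing higher cohomology for the flat topology on any object by \cref{BoundedPrismSite}).
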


Proposition~\ref{PerfdPerfectSite} implies that when $S$ is semiperfectoid, $S_\perfd$ agrees with the perfectoidization from Corollary~\ref{PerfectionMixedChar}, and that in general $\Prism_{S/A,\perf}$ and $S_\perfd$ depend only on $\pi_0 S$ (and not on $A$).

\begin{proof} First we check independence of the choice of topology. We claim first that the site $(S/A)_\Prism^\perf$ admits a weakly final object. We can evidently assume that $S$ is discrete. It suffices to prove that there is a map $S\to \tilde{S}$ to a semiperfectoid ring $\tilde{S}$ such that any map from $S$ to a perfectoid ring $R^\prime$ extends to $\tilde{S}$, for then $\tilde{S}_\perfd$ is the desired weakly initial object. For this, we inductively take any element $x\in S$ and pass to $S_x=S\langle X,Y\rangle/(x-X^p-pY)$. This adjoins a $p$-th root of $x$ modulo $p$, and any map $S\to R^\prime$ to a perfectoid ring extends to $S_x$. Repeating this transfinitely gives the desired ring $\tilde{S}$; here we implicitly use the fact (from \cite[Remark 4.22]{BMS2}) that any $p$-complete $R$-algebra $T$ with $T/pT$ semiperfect is itself semiperfectoid as $T$ is a  quotient of the perfectoid ring $R \widehat{\otimes}_{\mathbf{Z}_p} W(T^\flat)$. Then by \v{C}ech-Alexander theory and flat descent, $R\Gamma((S/A)_\Prism^\perf,\mathcal O_\Prism)$ is computed by the cosimplicial perfectoid $S$-algebra $(\tilde{S}^\bullet)_\perfd$, where $\tilde{S}^i = (\tilde{S}^{\otimes_S (i+1)})^\wedge$ is the $p$-completed tensor product of $i+1$ copies of $\tilde{S}$ over $S$, independently of the choice of topology, giving the desired independence.

We now work with the flat topology. First, let us restrict to the case where $\pi_0(S)$ is topologically finitely generated over $A/I$. For this, it is enough to show that both functors $S \mapsto S_\perfd$ and $S \mapsto R\Gamma((S/A)_\Prism^\perf,\mathcal O_\Prism)$ commute with filtered colimits (when viewed as valued in $(p,I)$-complete objects in $\mathcal{D}(A)$). This is clear for $S \mapsto S_\perfd$ as the analogous property holds true for derived prismatic cohomology. For the functor $S \mapsto R\Gamma((S/A)_\Prism^\perf,\mathcal O_\Prism)$, we may assume $S$ is discrete (since $(S/A)_\Prism^\perf = (\pi_0(S)/A)_\Prism^\perf$).  In that case, we can also compute the cohomology by passing to any flat semiperfectoid cover $S\to \tilde{S}$ and taking the similar \v{C}ech-Alexander complex. In particular, we can choose $\tilde{S}$ functorially by extracting all $p$-power roots of all elements of $S$. Using such functorial \v{C}ech-Alexander complexes, one verifies that $R\Gamma((S/A)_\Prism^\perf,\mathcal O_\Prism)$ commutes with filtered colimits in $S$. Thus, we may assume that $\pi_0 S$ is topologically finitely generated over $A/I$.

Now choose a map $R\langle X_1,\ldots,X_r\rangle\to S$ that is surjective on $\pi_0$. Let $\tilde{S}$ be the $p$-completion of $R[X_1^{1/p^\infty},\ldots,X_r^{1/p^\infty}] \otimes_{R[X_1,\ldots,X_r]} S$; then $\pi_0 \tilde{S}$ is semiperfectoid. Let $\tilde{S}^\bullet$ be the derived $p$-completion of $\tilde{S}^{\otimes_S (i+1)}$. By Lemma~\ref{DescentLemma} below, $\Prism_{S/A}$ can be computed as the limit of $\Prism_{\tilde{S}^\bullet/A}$. By the similar descent for the right-hand side, we can reduce to the case that $\pi_0 S$ is semiperfectoid.

Now assume that $\pi_0 S$ is semiperfectoid. Let $R^\prime$ be the perfectoidization of $\pi_0 S$ in the sense of the last section, i.e.~the universal perfectoid ring to which $S$ maps. We need to see that
\[
\Prism_{S/A,\perf}\otimes_A^L A/I\to R^\prime
\]
is an isomorphism. Note that the left-hand side is always coconnective by \cref{PerfdCoconnective}. On the other hand, as $S$ is semiperfectoid and so $L_{S/\mathbf Z_p}[-1]$ and thus all $\wedge^i L_{S/\mathbf Z_p}[-i]$ are connective, it follows from the Hodge-Tate comparison that $\Prism_{S/A}$ and thus also $\Prism_{S/A,\perf}$ are connective. Thus, $\Prism_{S/A,\perf}$ is in fact a perfect $\delta$-ring, and so $S_\perfd = \Prism_{S/A,\perf}\otimes_A^L A/I$ is a perfectoid ring. We have maps
\[
S\to S_\perfd\to R^\prime
\]
where the composite is surjective by \cref{PerfectoidificationSurj}. By the universal property of $R^\prime$, the second map is split surjective. Thus, it suffices to show that $S\to S_\perfd$ is surjective. 

As $S/p$ is semiperfect, we may replace $R$ with $R \langle Y_k^{1/p^\infty} \mid k \in K \rangle$ (for suitable index set $K$), we may assume $R \to S$ is surjective. Let $\{f_j\}_{j \in J}$ be a set of generators of $R \to S$. As the formation of $S \mapsto S_\perfd$ commutes with $p$-completely flat base change on $R$, we may assume by \cref{AndreFlatness} that each $f_j$ admits a compatible system $\{f_j^{1/p^n}\}_{n \geq 0}$ of $p$-power roots in $R$. Fixing such a system for each $f_j$ gives a map 
\[ S' := R \langle X_j^{1/p^\infty} \mid j \in J \rangle/(X_j \mid j \in J)^{\wedge} \to S \ : X_j^{1/p^n} \mapsto f_j^{1/p^n} \]
that is surjective on $\pi_0$ and  induces a surjection on $\pi_1 (L_{-/\mathbf Z_p}/p)$. By the Hodge-Tate comparison theorem, the map $S'_\perfd \to S_\perfd$ is also surjective, so it suffices to prove that $S \to S_\perfd$ is surjective when $S = S'$. By passage to filtered colimits, we may also assume $J$ is finite.  But then \cref{QRSPPrism} and the construction of $R^\prime$ show that indeed $S_\perfd=R^\prime$, as desired.
\end{proof}

We used the following lemma, which relies on Mathew's notion of descendability from \cite{MathewGalois}; see also \cite[\S 11.2]{BhattScholzeWitt} for a quick review.

\begin{lemma}[Strong descent for passage to the perfection]
\label{DescentLemma}
Let $T = R[X_1,...,X_n]^{\wedge}$, and let $T_\infty = R[X_1^{1/p^\infty},...,X_n^{1/p^\infty}]^{\wedge}$. The natural maps $\Prism_{T/A} \to \Prism_{T_\infty/A}$ and $\Prism_{T/A,\perf} \to \Prism_{T_\infty/A,\perf}$ are descendable as maps of commutative algebras in $\mathcal{D}_{comp}(A)$.
\end{lemma}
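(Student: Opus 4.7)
The plan is to reduce both statements to a Galois descent computation for a $\mathbf Z_p$-action. First, the second assertion follows from the first: since $T_\infty$ is perfectoid, $\Prism_{T_\infty/A}$ is already perfect (cf.\ \cref{QRSPPrism} together with the equivalence of perfectoid rings and perfect prisms in \cref{PerfdPrism}), so $\Prism_{T_\infty/A,\perf} \simeq \Prism_{T_\infty/A}$, and the map $\Prism_{T/A,\perf} \to \Prism_{T_\infty/A}$ is obtained as the $(p,I)$-completed base change of $\Prism_{T/A} \to \Prism_{T_\infty/A}$ along $\Prism_{T/A} \to \Prism_{T/A,\perf}$; descendability of a map of commutative algebras is preserved under such base change. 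Second, by the symmetric monoidality of derived prismatic cohomology (Construction~\ref{DerivedPrismatic}(5)), the map $\Prism_{T/A} \to \Prism_{T_\infty/A}$ decomposes as the $A$-linear $(p,I)$-completed tensor product of the analogous maps for each coordinate $X_i$; since descendability is preserved under completed tensor products of commutative algebras over a common base, we reduce to $n=1$.

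Fix $n=1$ with $T=R\langle X\rangle$ and $T_\infty=R\langle X^{1/p^\infty}\rangle$. After a $(p,I)$-completely faithfully flat base change on $(A,I)$ corresponding to the perfectoid extension $R \to R\widehat{\otimes}_{\mathbf Z_p}\mathbf Z_p[\zeta_{p^\infty}]^\wedge$ (which both preserves and detects descendability), we may assume $R$ contains a compatible system $\{\zeta_{p^n}\}$ of primitive $p^n$-th roots of unity. Fix a topological generator $\gamma \in \mathbf Z_p$; the assignment $\gamma\cdot X^{1/p^n} = \zeta_{p^n} X^{1/p^n}$ endows $T_\infty$ with a continuous $\mathbf Z_p$-action over $T$, which via functoriality of $\Prism_{-/A}$ yields a continuous $\mathbf Z_p$-action on $\Prism_{T_\infty/A}$ over $\Prism_{T/A}$. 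The key claim is the Galois descent isomorphism
\[
\Prism_{T/A}\ \simeq\ \mathrm{fib}\bigl(\gamma-1\colon \Prism_{T_\infty/A}\to \Prism_{T_\infty/A}\bigr).
\]
Granting this, $\Prism_{T/A}$ is exhibited as a two-term limit built only from copies of $\Prism_{T_\infty/A}$, which uniformly bounds the amplitude of the derived $(p,I)$-completed Amitsur complex of $\Prism_{T/A}\to \Prism_{T_\infty/A}$; Mathew's criterion \cite[\S 3]{MathewGalois} then yields descendability with small finite index.

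The main obstacle is establishing the Galois descent isomorphism. By derived Nakayama it suffices to verify this after reducing modulo $(p,I)$: namely, that $\overline{\Prism}_{T/A}$ identifies with the fiber of $\gamma-1$ acting on $\overline{\Prism}_{T_\infty/A}\simeq T_\infty$ (the latter identification holding because $T_\infty$ is perfectoid, via \cref{QRSPPrism}). The Hodge-Tate comparison (Theorem~\ref{HTCompPrismatic}) identifies $\overline{\Prism}_{T/A}$ as an extension of $T$ by $\Omega^1_{T/R}\{-1\}[-1] \simeq T\{-1\}[-1]$. Decomposing $T_\infty$ by $\mathbf Z_p$-weight under the cyclotomic character (so that $\gamma-1$ acts by zero on the weight-zero summand $T \subset T_\infty$ and by a nonzerodivisor of the form $\zeta_{p^n}^k-1$ on each nonzero weight piece), the claim reduces to the classical continuous $\mathbf Z_p$-cohomology computation matching $H^0 = T$ and $H^1 = T\{-1\}$ on both sides, with the Tate twist appearing via the standard identification of the limit of $\mathbf Z_p/(\zeta_{p^n}-1)$-systems with $I/I^2$ (up to a unit).
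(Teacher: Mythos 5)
Your strategy has a fatal flaw at the ``key claim'': the Galois descent isomorphism
\[
\Prism_{T/A}\ \simeq\ \mathrm{fib}\bigl(\gamma-1\colon \Prism_{T_\infty/A}\to \Prism_{T_\infty/A}\bigr)
\]
is \emph{false}. This is exactly the phenomenon that motivated the introduction of the d\'ecalage functor $L\eta_\mu$ in \cite{BMS1}: the naive continuous $\mathbf Z_p$-cohomology of $A_{\inf}(T_\infty)=\Prism_{T_\infty/A}$ carries ``junk torsion'' that must be eliminated before it can match $A\Omega_T \simeq \phi_A^*\Prism_{T/A}$. Concretely, reduce modulo $I$ as you suggest. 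By the Hodge--Tate comparison, $\overline{\Prism}_{T/A}$ has $H^0 = T$ and $H^1 = \Omega^1_{T/R}\{-1\}$, a \emph{free} $T$-module of rank $1$. On the other side, decomposing $T_\infty = \widehat{\bigoplus}_{i\in\mathbf N[1/p]} R\cdot X^i$ by weights, $\gamma-1$ acts by $0$ on the integer-weight summand $T$ and by the nonzerodivisor $\zeta_{p^n}^a-1$ on the weight-$a/p^n$ summand. Thus $H^0(\mathrm{fib}(\gamma-1))=T$, but
\[
H^1(\mathrm{fib}(\gamma-1)) \;=\; T \;\oplus\; \widehat{\bigoplus}_{\substack{i\in\mathbf N[1/p]\setminus\mathbf N}} R/(\zeta_{p^n}^a-1),
\]
which is $T$ plus a large torsion module. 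Since $H^1(\overline{\Prism}_{T/A})$ is torsion-free of rank $1$, the two sides disagree, and the proposed isomorphism fails. Your parenthetical about ``the limit of $\mathbf Z_p/(\zeta_{p^n}-1)$-systems'' conflates a limit (which would give $\mathbf Z_p$) with the direct sum over weights actually appearing in $H^1$, which is torsion. Since the entire argument hinges on this isomorphism, the proof does not go through.

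There is also a secondary error in your deduction of the second assertion from the first. The map $\Prism_{T/A,\perf}\to\Prism_{T_\infty/A,\perf}$ is \emph{not} the completed base change of $\Prism_{T/A}\to\Prism_{T_\infty/A}$ along $\Prism_{T/A}\to\Prism_{T/A,\perf}$: perfection does not commute with base change (already in characteristic $p$, $B_\perf\otimes_B C \not\simeq C$ even when $C$ is perfect, e.g.\ for $B=\mathbf F_p[X]$ and $C=B_\perf$). Your intended conclusion can be salvaged, since if $B\to D\to C$ and $B\to C$ is descendable then $D\to C$ is descendable (one shows $D\widehat{\otimes}_B C$ is a $C$-algebra, hence lies in the thick tensor ideal generated by $C$), but as written the justification is wrong. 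The paper instead invokes a categorical lemma from \cite[Lemma 11.22]{BhattScholzeWitt}: the colimit along compatible Frobenii of a descendable map of index $\leq n$ is descendable of index $\leq 2n$. For the core of the proof, the paper's strategy is completely different from yours: after reducing to $n=1$ and base-changing along $A\to A/(p,\phi^{-1}(d))$ to reach the case where $R$ is a perfect $\mathbf F_p$-algebra, it uses the de Rham comparison and the Cartier isomorphism to identify $\Prism_{T/A}$ with the split square-zero cdga $\Omega = T\oplus\Omega^1_{T/R}[-1]$, then computes explicitly with a resolution of the $\Omega$-module $T[-1]$ to show that $\mathrm{RHom}_\Omega(F^{\otimes 2},\Omega)\in D^{<0}$ for the fibre $F$ of $\Omega\to T_\infty$, which gives descendability of index $\leq 3$.
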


Recall that $\mathcal{D}_{comp}(A)$ is the full subcategory of $\mathcal{D}(A)$ spanned by derived $(p,I)$-complete objects.

\begin{proof}
The descendability of $\Prism_{T/A,\perf} \to \Prism_{T_\infty/A,\perf}$ follows from that of $\Prism_{T/A} \to \Prism_{T_\infty/A}$ and a general categorical fact: if $B \to C$ is a descendable map of commutative algebras of index $\leq n$ and both $B$ and $C$ come equipped with compatible endomorphisms $\phi$, then the $\colim_\phi B \to \colim_\phi C$ is  descendable of index $\leq 2n$ (see \cite[Lemma 11.22]{BhattScholzeWitt}).

We now prove the descendability of $\Prism_{T/A} \to \Prism_{T_\infty/A}$. As the functor $\Prism_{-/A}$ from $p$-complete simplicial commutative $R$-algebras to commutative algebras in $\mathcal{D}_{comp}(A)$ commutes with coproducts, we may assume $n=1$. Let $F \in \mathcal{D}(\Prism_{T/A})$ denote the fibre of $\Prism_{T/A} \to \Prism_{T_\infty/A}$. It suffices to show all maps $F^{\otimes 2} \to \Prism_{T/A}$ in $\mathcal{D}(\Prism_{T/A})$ are null-homotopic (where the tensor product is in $\mathcal{D}(\Prism_{T/A})$). For this, it is enough to show that 
\[ \mathrm{RHom}_{\Prism_{T/A}}(F^{\otimes 2}, \Prism_{T/A}) \in D^{< 0}.\]
By completeness, it suffices to check the same assertion after base change along $A \to A/(p,\phi^{-1}(d))$. In particular, we may assume $R$ is a perfect ring of characteristic $p$, so $T = R[X]$ and $T_\infty = R[X^{1/p^\infty}]$ is its perfection. Using the de Rham comparison and the Cartier isomorphism, we are reduced to checking the following:

\begin{itemize}
\item[$(\ast)$] Let $\Omega := T \oplus \Omega^1_{T/R}[-1]$, regarded as a $T$-cdga that is a split square zero extension of $T$ by $\Omega^1_{T/R}[-1]$. Let $F$ be the fibre of the canonical map $\Omega \to T \to T_\infty$. Then
\[ \mathrm{RHom}_{\Omega}(F^{\otimes 2}, \Omega) \in D^{< 0},\]
where the tensor product is in $\mathcal{D}(\Omega)$.
\end{itemize}

The fibre of $\Omega \to T$ is $\Omega^1_{T/R}[-1]$, which is isomorphic to $T[-1]$ as a dg-module over $\Omega$ via the choice of the generator $dX \in \Omega^1_{T/R}$. Also, $T_\infty$ is free when regarded as a $T$-module with one of the generators being $1 \in T_\infty$, so the fibre of $T \to T_\infty$ identifies with $T^{\oplus I}[-1]$ where $I$ a set. Combining, we find that the fibre $F$ of $\Omega \to T_\infty$ is isomorphic to an $\Omega$-module of the the form $T^{\oplus J}[-1]$ for some set $J$. As $\mathrm{RHom}(-,-)$ converts coproducts in the first factor into products, it is enough to check that $\mathrm{RHom}_{\Omega}( (T[-1])^{\otimes 2},\Omega) \in D^{< 0}$ (where the tensor product is in $\mathcal{D}(\Omega)$). This is a standard calculation using the resolution of the dg-module $T[-1]$ over the dg-algebra $\Omega$ given by
\[ \Big(\ldots\to \Omega[-3] \to \Omega[-2] \to \Omega[-1]\Big) \xrightarrow{\sim} T[-1],\]
where all transition maps are determined by $dX \in \Omega^1_{T/R}$.
\end{proof}

\subsection{\arc-descent}
\label{ss:arc}

In this section, we use Proposition~\ref{PerfdPerfectSite} to deduce some very strong descent results for the association $S\mapsto S_\perfd$, formulated in terms of the \arc-topology from \cite{BhattMathew}\footnote{We are trying to achieve optimal results here and thus use the \arc-topology. Getting similar results for the slightly weaker $v$-topology would be enough for the applications below, and could avoid reference to \cite{BhattMathew}.}.

\begin{definition}[The \arc-topology of $p$-adic formal schemes] 
Consider the category $\mathrm{fSch}$ of $p$-adic formal schemes $X$. Endow $\mathrm{fSch}$ with the structure of a site by declaring a map $Y\to X$ of qcqs $p$-adic formal schemes to be an \arc-cover if for all $p$-adically complete valuation rings $V$ of rank $1$ with a map $\Spf V\to X$ there is a faithfully flat extension $V\subset W$ of $p$-adically complete valuation rings of rank $1$ and a lift to a map $\Spf W\to Y$, and taking the topology generated by \arc-covers and open covers. We will refer to this as the {\em \arc-site} on $\mathrm{fSch}$. 

Let $\mathrm{Perfd}\subset \mathrm{fSch}$ be the full subcategory of $p$-adic formal schemes that are locally of the form $\Spf R$ for some perfectoid ring $R$, with the induced topology. 
\end{definition}

Perfectoids form a basis for the \arc-topology:

\begin{lemma}
\label{affperfdbasis}
Any $X\in \mathrm{fSch}$ admits a \arc-cover by some $Y\in \mathrm{Perfd}$.
\end{lemma}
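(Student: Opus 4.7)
My plan is to reduce to the affine case and then construct the perfectoid cover as a large product of perfectoid valuation rings.

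First, since open covers are part of the \arc-topology, I may work locally on $X$ and assume $X = \Spf(R)$ is affine, with $R$ a $p$-adically complete ring. The goal then is to exhibit a $p$-adically complete perfectoid ring $S$ with a map $R \to S$ such that $\Spf(S) \to \Spf(R)$ is an \arc-cover.

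The key construction is: for each $p$-adically complete rank-$1$ valuation ring $V$ together with a map $R \to V$, one can produce a faithfully flat extension $V \hookrightarrow V^\sharp$, where $V^\sharp$ is a $p$-adically complete rank-$1$ \emph{perfectoid} valuation ring. When $p=0$ in $V$, take $V^\sharp$ to be the $p$-adic completion of the perfection. When $p \neq 0$ in $V$, take $V^\sharp$ to be the $p$-adic completion of the integral closure of $V$ in an algebraic closure of $V[1/p]$ (this is a rank-$1$ valuation ring containing all $p$-power roots, hence perfectoid). Using standard bounds, one can choose $V^\sharp$ of cardinality at most some fixed cardinal $\kappa$ depending only on $|R|$ and $|V|$; in particular, after fixing some large enough $\kappa$, the collection $\{V_i\}_{i \in I}$ of isomorphism classes of such $V^\sharp$'s of size $\leq \kappa$ admitting a map from $R$ forms an honest set.

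Now set $S := \big(\prod_{i \in I} V_i\big)^{\wedge}$, the $p$-adic completion of the product. Since a product of perfectoid rings is perfectoid (the defining conditions: existence of a pseudouniformizer $\pi$ with $\pi^p \mid p$, surjectivity of Frobenius on $S/p$, and $p$-adic completeness all pass through products with $p$-completion), $S$ is perfectoid, so $\Spf(S) \in \mathrm{Perfd}$. The product projections give a natural map $R \to S$, and I claim $\Spf(S) \to \Spf(R)$ is an \arc-cover: given any $p$-adically complete rank-$1$ valuation ring $W$ with a map $\Spf(W) \to \Spf(R)$, applying the above construction produces a faithfully flat extension $W \hookrightarrow W^\sharp$ with $W^\sharp$ of bounded size, hence isomorphic over $R$ to some $V_i$, and the corresponding projection $S \to V_i \cong W^\sharp$ yields the required lift $\Spf(W^\sharp) \to \Spf(S)$ over $\Spf(R)$.

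The main obstacle is a set-theoretic one: controlling the size of the perfectoid extensions $V^\sharp$ uniformly so that the indexing class $I$ is a set (and then ensuring that the resulting product remains perfectoid). This is handled by the standard bounding arguments in \cite{BhattMathew} for the \arc-topology, where analogous constructions appear. All remaining verifications — that products commute with $p$-completion up to perfectoid-ness, and that the tautological cover by all valuation rings is an \arc-cover — are routine from the definition of the \arc-topology.
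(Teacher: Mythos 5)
Your product-over-valuation-rings strategy is a genuinely different route from the paper's: the paper passes to a $p$-completely faithfully flat semiperfectoid $\tilde{R}$ over $R$ and then takes its perfectoidization $\tilde{R}_\perfd$ using \cref{PerfectionMixedChar}, which keeps the set-theoretic issues invisible, whereas you avoid perfectoidization entirely and instead build the cover directly out of perfectoid valuation rings, closer in spirit to the constructions in \cite{BhattMathew}. The idea is sound, but the cardinality bound as you have stated it is wrong, and this is a genuine gap rather than a routine detail.

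You assert that for an arbitrary $p$-complete rank-one valuation ring $W$ with a map from $R$, the extension $W^\sharp$ ``has bounded size, hence isomorphic over $R$ to some $V_i$.'' This fails: $W$ itself can be arbitrarily large (take $R = \mathbf{Z}_p$ and $W = \mathcal{O}_C$ for $C/\mathbf{Q}_p$ complete algebraically closed of huge cardinality), and $W^\sharp$ contains $W$, so no fixed $\kappa$ bounds all the $W^\sharp$'s. Your earlier phrase ``$\kappa$ depending only on $|R|$ and $|V|$'' already signals the problem, since $|V|$ ranges over all cardinals. The repair uses the slack built into the arc-cover condition: the faithfully flat extension of $W$ receiving the lift need not lie in your indexing set. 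Given $R\to W$, first cut down to a rank-one valuation subring $W_0\subset W$ of cardinality at most $\max(|R|,\aleph_0)$ through which $R\to W$ factors (the valuation ring of $\mathrm{Frac}(\mathrm{im}(R))$ inside $\mathrm{Frac}(W)$, enlarged by a single topologically nilpotent element of $W$ if necessary to force rank one). Building $W_0^\sharp\subset W^\sharp$ compatibly inside a common absolute integral closure gives $|W_0^\sharp|\le \max(|R|,\aleph_0)^{\aleph_0}=:\kappa$, so $W_0^\sharp$ is (over $R$) one of your $V_i$'s. The lift required by the definition of arc-cover is then $\Spf(W^\sharp)\to\Spf(S)$ via $S\twoheadrightarrow V_i = W_0^\sharp\to W^\sharp$, with $W\subset W^\sharp$ faithfully flat; the point is that the lift lands in $W^\sharp$, not in any $V_i$. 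With this correction, and keeping in mind that the product of $p$-complete perfectoid rings is already $p$-complete and perfectoid so the outer completion is superfluous, your argument goes through.
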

\begin{proof} 
We may assume $X=\Spf R$ is affine. Choose a $p$-completely faithfully flat map $R\to \tilde{R}$ such that $\tilde{R}$ is semiperfectoid, so $\Spf \tilde{R} \to \Spf R$ is an \arc-cover. Moreover, the map $\Spf \tilde{R}_\perfd\to \Spf \tilde{R}$ is also an \arc-cover as any map $\tilde{R}\to V$ to a $p$-adically complete valuation ring factors over $\tilde{R}_\perfd$: one can assume that $V$ is perfectoid by replacing it by the $p$-completion of an absolute integral closure, and the universal property of $\tilde{R} \to \tilde{R}_\perfd$ gives the desired extension. The composite $\Spf \tilde{R}_{\perfd} \to \Spf R$ is then the desired $\arc$-cover.
\end{proof}

\begin{remark}
\label{ArcCoverAICVR}
In fact, one can find an even simpler basis that that in Lemma~\ref{affperfdbasis}: any $p$-complete ring $R$ admits a $p$-complete \arc-cover $R \to S$ where $S$ is a product of $p$-complete rank $1$ valuation rings with algebraically closed fraction field (and thus $S$ is perfectoid). Indeed, choose a set $X_R$ of representatives of rank $1$ valuations on $R$,  and take $S = \prod_{x \in X_R} \widehat{V_x^+}$, where $V_x \to V_x^+$ is an absolute integral closure of the valuation ring $V_x \subset \kappa(\ker(x))$ attached to $x \in X_R$ and the completion is $p$-adic. Such covers shall be used in \S \ref{sec:EtaleComp}.
\end{remark}

The structure sheaf on $\mathrm{fSch}$ is obviously not an \arc-sheaf: nilpotent elements of a $p$-complete ring are not detected by mapping into valuation rings. On the other hand, it turns out that such problems disappear on the basis $\mathrm{Perfd} \subset \mathrm{fSch}$:

\begin{proposition}
\label{arcvanishingaffperfd}
 The presheaf $\mathcal O$ on $\mathrm{Perfd}$ is a sheaf, and if $X=\Spf R$ is affine with $R$ perfectoid, then $H^i_\arc(X,\mathcal O)=0$ for $i>0$.

For any $p$-adic formal scheme $T$, the functor $X\mapsto \Hom(X,T)$ is an \arc-sheaf on $\mathrm{Perfd}$.
\end{proposition}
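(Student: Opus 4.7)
The plan is to establish \arc-descent for $\mathcal{O}$ on $\mathrm{Perfd}$ by tilting to characteristic $p$ and invoking known \arc-descent results in that setting; the statement for $\mathrm{Hom}(X,T)$ then follows formally. To show $\mathcal{O}$ is an \arc-sheaf with vanishing higher cohomology on affine perfectoids, it suffices to prove that for every \arc-cover $\Spf(S) \to \Spf(R)$ in $\mathrm{Perfd}$ the augmented \v{C}ech complex
\[ R \to S \to (S \widehat{\otimes}_R S)_\perfd \to (S \widehat{\otimes}_R S \widehat{\otimes}_R S)_\perfd \to \cdots \]
is exact. Here the fiber products in $\mathrm{Perfd}$ are computed by perfectoidization of the $p$-completed tensor products, a consequence of the universal property of $(-)_\perfd$ as the left adjoint to the forgetful functor from perfectoid rings (see Proposition~\ref{PerfdPerfectSite}, which also ensures $(-)_\perfd$ is independent of auxiliary choices).

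The core argument is via tilting. By Theorem~\ref{PerfdPrism}, every perfectoid $R$ has the form $W(R^\flat)/d$ for a distinguished element $d$ in the associated perfect prism, and tilting is a symmetric monoidal equivalence between perfectoid rings and perfect prisms. Consequently, tilting the above \v{C}ech complex yields
\[ R^\flat \to S^\flat \to (S^\flat \widehat{\otimes}_{R^\flat} S^\flat)_\perf \to (S^\flat \widehat{\otimes}_{R^\flat} S^\flat \widehat{\otimes}_{R^\flat} S^\flat)_\perf \to \cdots, \]
which is the \v{C}ech complex of an \arc-cover of perfect $\mathbf{F}_p$-algebras (\arc-covers being detected by maps from $p$-complete rank $1$ valuation rings, which correspond bijectively to perfect rank $1$ valuation rings of characteristic $p$ under tilting). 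By \arc-descent for the structure sheaf on perfect $\mathbf{F}_p$-schemes, as established in \cite{BhattMathew}, the tilted complex is a resolution of $R^\flat$ with no higher cohomology. Applying $W(-)$, which is right adjoint to reduction modulo $p$ on perfect rings and hence preserves totalizations, gives a resolution of $W(R^\flat)$. Quotienting by $d$, which is a nonzerodivisor on each Witt vector term by \cref{PrimEltPRop}, recovers the original \v{C}ech complex and establishes both its exactness and the vanishing of higher cohomology.

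For the final assertion on $\mathrm{Hom}(-,T)$, Zariski-local arguments on $T$ reduce us to the case $T = \Spf(B)$ affine. Then $\mathrm{Hom}(X,T) = \mathrm{Hom}_{p\text{-cts}}(B, \mathcal{O}(X))$, and \arc-sheafiness follows from the sheaf property of $\mathcal{O}$ proved above. The main obstacle is verifying that tilting commutes with perfectoidization of fiber products of perfectoid rings---equivalently, that tilting preserves pushouts in $\mathrm{Perfd}$---and then identifying the tilted \v{C}ech complex as arising from a genuine \arc-cover in characteristic $p$ to which the results of \cite{BhattMathew} apply. Both of these points rely on the precise form of the tilting equivalence $R \leftrightarrow R^\flat$, combined with the fact (used implicitly) that perfectoidization is computed on the level of $\pi_0$ and is compatible with tilting via the equivalence of Theorem~\ref{PerfdPrism}.
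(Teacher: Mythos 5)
Your overall strategy — tilt, invoke \arc-descent for perfect $\mathbf{F}_p$-schemes from \cite{BhattScholzeWitt,BhattMathew}, apply $W(-)$, reduce mod $d$ — is exactly the one the paper uses, but there is a concrete gap in the middle that the paper has to work around.

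You assert that $\Spec S^\flat \to \Spec R^\flat$ is an \arc-cover of perfect $\mathbf{F}_p$-schemes because ``\arc-covers are detected by maps from $p$-complete rank $1$ valuation rings, which correspond bijectively to perfect rank $1$ valuation rings of characteristic $p$ under tilting.'' This bijection only holds for \emph{$d$-adically complete} valuation rings, where $d$ generates $\ker(A_{\inf}(R)\to R)$. A map $R^\flat \to V$ to a perfect rank $1$ valuation ring in which $d$ is a \emph{unit} has no counterpart on the untilted side: there is no $p$-adically complete valuation ring over $R$ to pull it back to. So the fact that $\Spf S\to\Spf R$ is an \arc-cover of $p$-adic formal schemes does \emph{not} imply that $\Spec S^\flat \to \Spec R^\flat$ is an \arc-cover of perfect schemes in the sense of \cite{BhattMathew}, and one cannot directly cite \arc-descent there. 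The paper's fix is to enlarge the cover: $\Spec(S^\flat) \sqcup \Spec(R^\flat[\tfrac 1d]) \to \Spec(R^\flat)$ \emph{is} an \arc-cover of perfect schemes (the $d$-inverted locus covers the valuation ring points you are missing), so \arc-descent applies to it. One then observes that all terms in the \v{C}ech complex involving $R^\flat[\tfrac 1d]$ are killed by $d$-adic completion, which recovers exactness of the $d$-completed complex for $R^\flat\to S^\flat$ alone — and from there your Witt vector argument goes through.

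Your treatment of $\mathrm{Hom}(-,T)$ for general $T$ is also too quick. ``Zariski-local arguments on $T$'' are not available until you know the data on a cover $Y\to X$ glues to a \emph{continuous map of underlying spaces} $|X|\to|T|$; only then can you localize on $T$ and invoke the affine case. The paper establishes this topological factorization via a chain of reductions (pass to special fibres, reduce to $T$ quasicompact quasiseparated of finite type using an argument of Gabber and \cite{ThomasonTrobaugh}, identify the \arc-cover with an h-cover of finite-type schemes so that $|Y|\to|X|$ is a topological quotient). None of this is formal from the affine case, and your proposal does not address it.
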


\begin{proof} By Zariski descent, one can reduce to case of affine $X$. For the first statement, it suffices to prove that if $R\to S$ is a map of perfectoid rings such that $\Spf S\to \Spf R$ is an \arc-cover, the complex
\[
0\to R\to S\to S\widehat{\otimes}_R S\to \ldots
\]
is acyclic. It is enough to prove that the complex
\[
0\to R^\flat\to S^\flat\to S^\flat\widehat{\otimes}_{R^\flat} S^\flat\to \ldots
\]
is acyclic where the completion is now $d$-adic, where $d$ generates $\ker(A_\inf(R)\to R)$. Indeed, this implies that the same is true after applying Witt vectors to this sequence, and then also for the quotient modulo $d$.

Now we claim that $\Spec(S)^\flat\sqcup \Spec(R)^\flat[\frac 1d]\to \Spec(R)^\flat$ is an \arc-cover in the sense of \cite[Definition 1.2]{BhattMathew}. Take any map $R^\flat\to V$ to a rank $1$-valuation ring; we can assume that $V$ is perfect. If the image of $d$ in $V$ is invertible, then it lifts to a map $R^\flat[\frac 1d]\to V$. Otherwise, we may replace $V$ by its $d$-adic completion. Then the map $R^\flat\to V$ is equivalent to the map $R\to V^\sharp$ to a $p$-adically complete valuation ring, which by assumption can be lifted to a map $S\to W$ for some $p$-adically complete extension $W$ of $V^\sharp$ which we may assume to be perfectoid. Then $S^\flat\to W^\flat$ gives the desired lift.

By $v$-descent, \cite[Theorem 4.1]{BhattScholzeWitt}, and the characterization \cite[Theorem 1.6]{BhattMathew} of \arc-descent in terms of $v$-descent and a property for valuation rings that in the present case is given by \cite[Lemma 6.3]{BhattScholzeWitt}, for any \arc-cover $\Spec(B)\to \Spec(A)$ of perfect schemes, the complex
\[
0\to A\to B\to B\otimes_A B\to \ldots
\]
is exact. Applying this to $A=R^\flat$, $B=S^\flat\times R^\flat[\frac 1d]$, and passing to (derived) $d$-adic completions, all terms involving $R^\flat[\frac 1d]$ disappear, and we see that indeed
\[
0\to R^\flat\to S^\flat\to S^\flat\widehat{\otimes}_{R^\flat} S^\flat\to \ldots
\]
is exact.

If $T=\Spf S$ is affine, it is immediate from the preceding that $\Hom(X,T)$ is a sheaf on $\mathrm{Perfd}$. In general, assume that $Y\to X$ is an \arc-cover and we have given a map $Y\to T$ such that the two induced maps $Y\times_X Y\to T$ agree. It suffices to see that $|Y|\to |T|$ factors over a continuous map $|X|\to |T|$, as then one can localize on $T$ and assume that $T$ is affine. As the topological space in question depends only on the special fibre, we can assume that $T$ is of characteristic $p$, and then $X$ and $Y$ are perfect schemes. By Zariski descent, it suffices to solve the problem when $X$ and $Y$ are affine. We may then assume that $T$ is quasicompact, and by an argument of Gabber, cf.~\cite[Remark 4.6]{BhattTannaka}, \cite[Tag 03K0]{Stacks}, we can assume that $T$ is also quasiseparated. Then $T$ can be written as an inverse limit of finite type $\mathbf F_p$-schemes along affine transition maps by \cite[Appendix C]{ThomasonTrobaugh}, so we can assume that $T$ is of finite type. In that case we can also assume that $X$ and $Y$ are perfections of schemes of finite type over $\mathbf F_p$. But then $X\to Y$ is an \arc-cover only if it is an h-cover, in which case $|X|\to |Y|$ is a quotient map, giving the desired continuous map $|Y|\to |T|$.
\end{proof}

\begin{corollary}\label{PerfdVCohom} For any $p$-complete ring $S$, one has
\[
S_\perfd = R\Gamma_\arc(\Spf S,\mathcal O)\ .
\]
In particular, the association $S\mapsto S_\perfd$ satisfies descent for the \arc-topology.
\end{corollary}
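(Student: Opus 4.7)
The plan is to combine three ingredients: the \arc-basis property $\mathrm{Perfd} \subset \mathrm{fSch}$ from \cref{affperfdbasis}, the sheaf property plus cohomology vanishing from \cref{arcvanishingaffperfd}, and the computation of the perfectoidization via the perfect prismatic site from \cref{PerfdPerfectSite}. The strategy is to reduce the \arc-cohomology of $\mathcal O$ on $\Spf S$ to a cohomology computation on the slice category of affine perfectoids over $\Spf S$, and then invoke \cref{PerfdPerfectSite}.

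First I would reduce to the case where $X = \Spf S$ is affine. Every $p$-complete ring $S$ is a $\mathbb Z_p$-algebra, and $\mathbb Z_p$ is perfectoid, so $S_\perfd$ is defined via \cref{def:perfectoidization}, and the absolute perfect prismatic site $(S)_\Prism^\perf$ (see \cref{PerfectPrismatic} and \cref{PerfectPrismInitial}) makes sense independently of the choice of base perfect prism. Under the equivalence between perfect prisms and perfectoid rings (\cref{PerfdPrism}), the underlying category of $(S)_\Prism^\perf$ is canonically opposite to the slice category $\mathrm{Perfd}_{/\Spf S}$ of pairs $(R',\, S\to R')$ with $R'$ perfectoid.

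Since $\mathrm{Perfd}$ is an \arc-basis for $\mathrm{fSch}$ (\cref{affperfdbasis}) and $\mathcal O|_{\mathrm{Perfd}}$ is an \arc-sheaf with vanishing higher \arc-cohomology on affine perfectoids (\cref{arcvanishingaffperfd}), the standard basis-comparison theorem for site cohomology yields
\[
R\Gamma_\arc(\Spf S, \mathcal O) \;\cong\; R\Gamma_\arc\bigl(\mathrm{Perfd}_{/\Spf S},\ \mathcal O|_{\mathrm{Perfd}}\bigr).
\]
The \arc-topology, transported to $(S)_\Prism^\perf$, lies between the flat topology (which is finer, since $(p,I)$-completely faithfully flat covers of perfectoid rings are \arc-covers, cf.\ \cref{arcvanishingaffperfd}) and the chaotic topology. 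Thus the hypothesis of \cref{PerfdPerfectSite} is satisfied for the \arc-topology, and it yields
\[
R\Gamma_\arc\bigl((S)_\Prism^\perf,\ \overline{\mathcal O}_\Prism\bigr) \;=\; S_\perfd.
\]
Combining the two displays gives the desired identification $R\Gamma_\arc(\Spf S,\mathcal O) = S_\perfd$. The \arc-descent statement for $S \mapsto S_\perfd$ is then immediate, as the right-hand side is by definition the cohomology of a sheaf for the \arc-topology.

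The main obstacle is the formal basis-change step: one must verify that \arc-covers of a perfectoid object of $\mathrm{fSch}$ can be refined to \arc-covers by perfectoids (supplied by \cref{affperfdbasis}), and that $\mathcal O|_{\mathrm{Perfd}}$ is an \arc-sheaf (supplied by \cref{arcvanishingaffperfd}). Given these, the equivalence of \arc-cohomology on $\mathrm{fSch}$ and on its basis $\mathrm{Perfd}$ is a formal consequence of the comparison theorem for sites with a common basis, and the remainder of the argument is a direct application of \cref{PerfdPerfectSite} once one checks that the \arc-topology on perfectoid $S$-algebras is coarser than the flat topology used there.
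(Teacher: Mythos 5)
There is a genuine gap in the final step. You claim that ``the \arc-topology, transported to $(S)_\Prism^\perf$, lies between the flat topology (which is finer, since $(p,I)$-completely faithfully flat covers of perfectoid rings are \arc-covers) and the chaotic topology,'' and you then apply \cref{PerfdPerfectSite} directly to the \arc-topology. But your stated evidence runs the wrong way: the observation that flat covers are \arc-covers shows precisely that the \arc-topology has \emph{more} covers than the flat topology, hence that the \arc-topology is \emph{finer} than the flat topology, not coarser. The hypothesis of \cref{PerfdPerfectSite} requires a topology between chaotic and flat, and the \arc-topology does not satisfy it. Since \arc-covers of perfectoid rings need not be flat (\arc-covers can include, e.g., surjections onto quotients), this is not a minor slip but a false step: you cannot invoke \cref{PerfdPerfectSite} for the \arc-topology as stated.

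The paper bridges exactly this gap by using the vanishing statement from \cref{arcvanishingaffperfd} --- which you already cite in passing --- at the decisive moment. The correct chain of reasoning is: after reducing to the slice category $\mathrm{affPerfd}_{/\Spf S}$ via the basis property, one observes that the change-of-topology morphism from the \arc-topology to the chaotic topology on $\mathrm{affPerfd}$ has trivial higher direct images for $\mathcal O$ precisely by \cref{arcvanishingaffperfd}. This identifies $R\Gamma_\arc(\mathrm{affPerfd}_{/\Spf S},\mathcal O)$ with the \emph{chaotic} cohomology, i.e.\ the derived limit $\lim_{S\to R'} R'$ over all maps from $S$ to perfectoid rings. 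Only then does \cref{PerfdPerfectSite} apply --- to the chaotic topology, where it is valid --- to identify this limit with $S_\perfd$. In short: you have all the right ingredients, but your argument skips the step where the vanishing result is actually used to compare the \arc- and chaotic cohomologies of $\mathcal O$ on the basis, and replaces it with an incorrect assertion about the relative fineness of topologies.
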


\begin{proof} Let $\mathrm{affPerfd}\subset \mathrm{Perfd}$ be the subcategory of $\Spf R$ with $R$ perfectoid. As $\mathrm{affPerfd}\subset \mathrm{fSch}$ is a basis (by proof of \cref{affperfdbasis}), one has
\[
R\Gamma_\arc(\Spf S,\mathcal O) = R\Gamma_\arc(\mathrm{affPerfd}_{/\Spf S},\mathcal O)\ .
\]
On the other hand, the projection from $\mathrm{affPerfd}$ with its \arc-topology to $\mathrm{affPerfd}$ with the chaotic topology has trivial higher direct images for $\mathcal O$ by \cref{arcvanishingaffperfd}, giving
\[
R\Gamma_\arc(\Spf S,\mathcal O) = \lim_{S\to R^\prime} R^\prime = S_\perfd\ ,
\]
as desired.
\end{proof}

\begin{corollary}\label{PerfdBlowup} Let $S\to S^\prime$ be a map of derived $p$-complete rings and assume that for some derived $p$-complete ideal $I\subset S$, the map $S\to S^\prime$ induces an isomorphism of \arc-sheaves outside of $I$ in the sense that for any $p$-complete valuation ring $V$ of rank $1$ with a map $S\to V$ that does not kill $I$, there is a unique extension to a map $S^\prime\to V$.\footnote{This holds, for example, if $S\to S^\prime$ is an integral map and $\Spec(S^\prime)\to \Spec(S)$ is an isomorphism outside $V(I)$.} Then
\[\xymatrix{
S_\perfd\ar[r]\ar[d] & S^\prime_\perfd\ar[d]\\
(S/I)_\perfd\ar[r] & (S^\prime/I)_\perfd
}\]
is a pullback square of $E_\infty$-$S$-algebras (in particular, in the derived category of $S$-modules).
\end{corollary}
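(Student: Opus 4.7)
The plan is to use Corollary~\ref{PerfdVCohom} to reformulate the statement in terms of arc-cohomology: since $(-)_\perfd \simeq R\Gamma_\arc(\Spf(-), \mathcal O)$ as $E_\infty$-algebras, it suffices to show that applying $R\Gamma_\arc(-, \mathcal O)$ to the square of $p$-adic formal schemes
\[
\xymatrix{
\Spf(S'/I) \ar[r] \ar[d] & \Spf(S') \ar[d] \\
\Spf(S/I) \ar[r] & \Spf(S)
}
\]
produces a pullback square of complexes. I will prove the stronger assertion that this square is a pushout in the $\infty$-topos of arc-sheaves of spaces on $p$-adic formal schemes; since $\mathcal O$ is an arc-sheaf by Proposition~\ref{arcvanishingaffperfd}, applying $R\Gamma_\arc(-, \mathcal O)$ then converts the pushout to the required pullback.

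To produce the pushout, I establish an open-closed decomposition within arc-sheaves. Let $Z_S := \Spf(S/I) \hookrightarrow \Spf(S)$ be the closed sub-arc-sheaf and define $U_S \hookrightarrow \Spf(S)$ to be the arc-sheafification of the presheaf assigning to a $p$-adically complete rank $1$ valuation ring test object $\Spf V$ the subset $\{f : S \to V : f(I) \neq 0\}$ of $\Hom(S,V)$. Since valuation rings form a conservative family of points for the arc-topology (\cite{BhattMathew}), the tautological disjoint decomposition $\Hom(S,V) = U_S(\Spf V) \sqcup Z_S(\Spf V)$ at valuation rings upgrades to an equivalence $\Spf(S) \simeq U_S \sqcup Z_S$ of arc-sheaves. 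Analogously $\Spf(S') \simeq U_{S'} \sqcup Z_{S'}$ with $Z_{S'} = \Spf(S'/I)$. The hypothesis of the corollary is then exactly the statement that the natural map $j : U_{S'} \to U_S$ (induced by $\Spf(S') \to \Spf(S)$) is an equivalence: for $f \in U_S(\Spf V)$, the unique extension $\tilde f : S' \to V$ provided by the hypothesis satisfies $\tilde f(IS') \supseteq f(I) \neq 0$, whence $\tilde f \in U_{S'}(\Spf V)$, giving the inverse.

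Combining, the original square identifies with
\[
\xymatrix{
Z_{S'} \ar[r] \ar[d] & U_{S'} \sqcup Z_{S'} \ar[d] \\
Z_S \ar[r] & U_S \sqcup Z_S,
}
\]
which is patently a pushout in arc-sheaves (the top row is the inclusion of the $Z_{S'}$ summand, so the pushout equals $U_{S'} \sqcup Z_S \simeq U_S \sqcup Z_S \simeq \Spf(S)$ using $U_{S'} \simeq U_S$, matching the bottom-right corner). The main subtlety is the upgrade of the set-level decomposition $\Hom(S,V) = U_S(\Spf V) \sqcup Z_S(\Spf V)$ to an equivalence of arc-sheaves $\Spf(S) \simeq U_S \sqcup Z_S$, which relies on formal features of the arc-topology (conservativity of valuation-ring points and commutation of coproducts with stalks) from \cite{BhattMathew}.
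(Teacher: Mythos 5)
Your approach differs from the paper's: instead of the Voevodsky-style descent argument the paper uses (first replacing $S'$ by $S' \times S/I$ to make $\Spf S' \to \Spf S$ an $\arc$-cover, then reducing to sheaves of sets and verifying that a single explicit map to $\Spf S' \times_{\Spf S} \Spf S'$ is an $\arc$-cover), you aim directly for a coproduct decomposition $\Spf(S) \simeq U_S \sqcup Z_S$ in the $\arc$-topos. That is a clean conceptual reorganization of the excision statement, but the central step is not adequately justified, and I think it conceals a real gap.

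First, the definition of $U_S$ is not actually given: you specify a presheaf only on the subcategory of $\Spf V$ for $V$ a rank~$1$ valuation ring, but $\arc$-sheafification takes as input a presheaf on the whole site ($\mathrm{Perfd}$ or $\mathrm{fSch}$). Whichever extension is intended (left or right Kan extension, or a subsheaf of $\Spf(S)$ defined by a condition at valuation-ring points), it needs to be written down so that one can even pose the question of whether $U_S \sqcup Z_S \to \Spf(S)$ is an isomorphism. Second, the promotion of the pointwise decomposition $\Hom(S,V) = U_S(\Spf V) \sqcup Z_S(\Spf V)$ to an isomorphism of $\arc$-sheaves is asserted to follow from ``conservativity of valuation-ring points'' plus ``commutation of coproducts with stalks.'' The commutation with stalks is formal, but the claim that the $\arc$-topology on $p$-adic formal schemes (or on $\mathrm{Perfd}$) has enough points, with a conservative family given by rank~$\leq 1$ valuation rings, and that the stalk at such a point is literally evaluation on $\Spf V$ rather than a filtered colimit over a pro-system of neighborhoods, is not a formal feature. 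It is not stated in \cite{BhattMathew} (which treats schemes, not $p$-adic formal schemes), and this paper's own arc-descent results (Proposition~\ref{arcvanishingaffperfd}, Corollary~\ref{PerfdVCohom}) are proved by tilting into the scheme-theoretic characteristic-$p$ setting and invoking \cite{BhattScholzeWitt} and \cite{BhattMathew} there, not by a points argument. In effect, the decomposition $\Spf(S) \simeq U_S \sqcup Z_S$ is itself an excision statement at the same level of difficulty as the corollary you want to prove; asserting it does not reduce the problem. The paper's proof sidesteps this entirely: after the preliminary replacement $S' \rightsquigarrow S' \times S/I$ and the reduction to sheaves of sets (which you also skip; it is needed because the pushout $\mathcal{F}_{S'} \sqcup_{\mathcal{F}_{S'/I}} \mathcal{F}_{S/I}$ in sheaves of spaces must be shown to be $0$-truncated before one can work with sections), the only non-formal input is that
\[
\Spf S' \sqcup \Spf(S'/I) \times_{\Spf(S/I)} \Spf(S'/I) \longrightarrow \Spf S' \times_{\Spf S} \Spf S'
\]
is an $\arc$-cover, which one checks directly from the definition of $\arc$-cover using the hypothesis on $I$. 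If you want to pursue the decomposition route, you would need to establish the points/stalks statements as lemmas, at which point the argument becomes at least as long as the paper's.
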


We note that even when $S$ and $S^\prime$ are perfectoid, the result is interesting, and in particular ensures that the kernel and cokernel of $S\to S^\prime$ are killed by $\ker(S\to (S/I)_\perfd)$.

\begin{proof} Replacing $S^\prime$ by $S^\prime\times S/I$, we may assume that $\Spf S^\prime\to \Spf S$ is an \arc-cover. Now we follow the proof of \cite[Theorem 2.9]{BhattScholzeWitt} (cf.~\cite[Lemma 3.6]{VoevodskyHTop}). Let $\mathcal F_T$ be the sheaf $X\mapsto \Hom(X,\Spf T)$ on $\mathrm{Perfd}$ for any derived $p$-complete ring $T$. By Corollary~\ref{PerfdVCohom}, it suffices to see that
\[
\mathcal F_S = \mathcal F_{S^\prime}\sqcup_{\mathcal F_{S^\prime/I}} \mathcal F_{S/I}
\]
as sheaves of spaces on the site $\mathrm{Perfd}$. But as $\mathcal F_{S^\prime/I}\hookrightarrow \mathcal F_{S^\prime}$, this pushout is still discrete, so we can compute it in the category of sheaves of sets. In other words, we need to see that for any sheaf of sets $F$ on $\mathrm{Perfd}$, we have
\[
F(S) = F(S^\prime)\times_{F(S^\prime/I)} F(S)\ ,
\]
where $F$ is extended to $\mathrm{fSch}$ by descent. For this, we use descent along $\Spf S^\prime\to \Spf S$. Note that
\[
\Spf S^\prime\sqcup \Spf (S^\prime/I)\times_{\Spf (S/I)} \Spf (S^\prime/I)\to \Spf S^\prime\times_{\Spf S}\Spf S^\prime
\]
is an \arc-cover by the assumption on $I$. Unraveling, this gives the result.
\end{proof}

On the other hand, a main advantage of the definition of $S_\perfd$ in terms of derived prismatic cohomology is that it allows us to prove the following result.

\begin{proposition}\label{PerfdSymmMon} The functor $S\mapsto S_\perfd$ from derived $p$-complete simplicial rings admitting a map from a perfectoid ring, to derived $p$-complete $E_\infty$-rings is symmetric monoidal, i.e.~for any diagram $S_1\gets S_3\to S_2$, the induced map
\[
S_{1,\perfd}\widehat{\otimes}^L_{S_{3,\perfd}} S_{2,\perfd}\to (S_1\widehat{\otimes}^L_{S_3} S_2)_\perfd
\]
is an equivalence.
\end{proposition}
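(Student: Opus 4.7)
The plan is to bootstrap the claim from the symmetric monoidality of derived prismatic cohomology (item (5) of Construction~\ref{DerivedPrismatic}) by showing that each of the two remaining operations in the definition $S_\perfd := \Prism_{S/A,\perf} \otimes_A^L A/I$ --- namely perfection and reduction modulo $I$ --- also commutes with relative tensor products.

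First, I would fix a perfect prism $(A,I)$ corresponding to a perfectoid ring $R$ to which the $S_i$ map (after choosing compatible maps, which exist by the hypothesis). By item (5) of Construction~\ref{DerivedPrismatic}, derived prismatic cohomology $S \mapsto \Prism_{S/A}$ is symmetric monoidal, so there is a canonical $\phi$-equivariant equivalence
\[
\Prism_{S_1 \widehat{\otimes}^L_{S_3} S_2 / A} \;\simeq\; \Prism_{S_1/A} \widehat{\otimes}^L_{\Prism_{S_3/A}} \Prism_{S_2/A}
\]
of derived $(p,I)$-complete $E_\infty$-$A$-algebras, where the $\phi$-action on the right is the diagonal. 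Reducing modulo $I$ is symmetric monoidal by construction (both sides are relative tensor products computed in $(p,I)$-complete objects of $\mathcal D(A)$, and base change along $A \to A/I$ commutes with such tensor products), so it suffices to establish that perfection commutes with relative tensor products.

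For the perfection step, I would argue that the functor $B \mapsto B_\perf := (\colim_\phi B)^\wedge_{(p,I)}$ is the left adjoint to the inclusion of the full subcategory of derived $(p,I)$-complete $\phi$-equivariant $E_\infty$-$A$-algebras on which $\phi$ acts invertibly (``perfect'' ones) into all derived $(p,I)$-complete $\phi$-equivariant $E_\infty$-$A$-algebras. Indeed, a $\phi$-equivariant map $B \to E$ with $E$ perfect and complete factors uniquely through the colimit $\colim_\phi B$ (since $\phi_E$ is invertible) and then through its derived $(p,I)$-completion (since $E$ is complete). As a left adjoint, $(-)_\perf$ preserves all colimits; in particular it preserves coproducts, and coproducts in the relevant category of $C$-algebras are computed by relative tensor products $\widehat{\otimes}^L_C$. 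One needs to observe that if $B, C, D$ are all perfect, then $B \widehat{\otimes}^L_C D$ is again perfect: the Frobenius on the tensor product is the diagonal $\phi_B \otimes \phi_D$ over $\phi_C$, and this is an equivalence when the three inputs are. Thus the perfection functor carries $B \widehat{\otimes}^L_C D$ to $B_\perf \widehat{\otimes}^L_{C_\perf} D_\perf$, as required.

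Putting the three steps together yields the desired equivalence
\[
S_{1,\perfd} \widehat{\otimes}^L_{S_{3,\perfd}} S_{2,\perfd} \;\simeq\; (S_1 \widehat{\otimes}^L_{S_3} S_2)_\perfd.
\]
The main technical subtlety, and the step I would expect to require most care, is the compatibility of the derived $(p,I)$-completion with the filtered colimit defining the perfection and with the relative tensor product: completion does not commute with infinite colimits in general, so one must verify that the adjunction characterization of $(-)_\perf$ is actually well-posed in the $(p,I)$-complete world and, correspondingly, that relative tensor products of $(p,I)$-complete perfect $E_\infty$-algebras remain both complete and perfect. Once these categorical points are in place, the rest of the argument is formal.
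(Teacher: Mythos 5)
Your proof follows the paper's own argument: pick a perfect prism mapping to $S_3$, invoke the symmetric monoidality of $\Prism_{-/A}$ from the Hodge-Tate comparison (Construction~\ref{DerivedPrismatic}~(5)), and pass through perfection and reduction modulo $I$. The paper leaves the perfection step as a one-line assertion (``this passes to the perfection''), whereas you have correctly filled in the left-adjoint/closure-under-coproducts argument that justifies it, which is a useful elaboration but not a different approach.
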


\begin{proof} Pick a perfectoid ring $A$ mapping to $S_3$. Then derived prismatic cohomology $\Prism_{-/A}$ is symmetric monoidal by the Hodge-Tate comparison (see property (5) in Construction~\ref{DerivedPrismatic} for a more general statement), and this passes to the perfection, and then to the quotient modulo $I$.
\end{proof}

\begin{corollary}
\label{PerfdDiscreteUniversal}
Say $S$ is a derived $p$-complete simplicial ring admitting a map from a perfectoid ring. Assume that $S_\perfd$ is connective (or, equivalently by \cref{PerfdCoconnective}, concentrated in degree $0$). Then $S_\perfd$ is a perfectoid ring, and $S \to S_\perfd$ is the universal map from $S$ to a perfectoid ring.
\end{corollary}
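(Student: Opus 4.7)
The plan is to analyze the structure of $B := \Prism_{S/A,\perf}$ (for a perfect prism $(A,(d))$ with $R = A/d$ mapping to $S$) and use the $\arc$-descent of \cref{PerfdVCohom} to pin down $S_\perfd$. By \cref{PerfdCoconnective}, $B \in D^{\geq 0}$ and $H^0(B)$ is a perfect $p$-complete $\delta$-ring in which $d$ is a nonzerodivisor; thus $(H^0(B),(d))$ is a perfect prism and $T := H^0(B)/d$ is perfectoid by \cref{PerfdPrism}. The truncation $B \to H^0(B)$ and the edge map of the long exact sequence for $d: B \to B$ induce ring maps $\pi: S_\perfd \to T$ and $\iota: T \to S_\perfd$ with $\pi \circ \iota = \mathrm{id}_T$, so $S_\perfd = T \oplus K$ where $K := \ker(\pi)$ is an ideal.

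The first main step is to show $K^2 = 0$. Since $B/d = S_\perfd$ is discrete and $B \in D^{\geq 0}$, the LES for $d$ yields $H^i(B) = 0$ for $i \geq 2$ (by derived Nakayama applied to the isomorphism $d: H^i(B) \to H^i(B)$) and $d: H^1(B) \to H^1(B)$ surjective. So $B \in D^{[0,1]}$, and the ideal $I := \tau^{\geq 1} B = H^1(B)[-1]$ lives in cohomological degree $1$. The restriction $\mu|_{I \otimes_A^L I}: I \otimes_A^L I \to B$ of the multiplication is null-homotopic since $I \otimes_A^L I \in D^{\geq 2}$ while $B \in D^{\leq 1}$, so $I \cdot I = 0$ in $B$. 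Reducing modulo $d$ and using $H^1(B)/dH^1(B) = 0$, the quotient $I/d$ is concentrated in degree $0$ and equals $K$ as an ideal of $S_\perfd$. Therefore $K^2 = 0$.

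The second main step is to deduce $K = 0$. Here I would invoke \cref{PerfdVCohom}: $S_\perfd = R\Gamma_\arc(\Spf S, \mathcal O)$. For any $k \in K$ and any map $S \to V$ to a $p$-adically complete rank-$1$ valuation ring, the image $k_V \in V$ satisfies $k_V^2 = 0$; as $V$ is an integral domain, $k_V = 0$. The $\arc$-sheaf property then forces $k = 0$, so $K = 0$ and $\iota: T \xrightarrow{\sim} S_\perfd$, giving that $S_\perfd$ is perfectoid.

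For the universal property, given any perfectoid $R'$ with $S \to R'$, the perfect prism $(A_\inf(R'),\ker\theta)$ lies in $(S)_\Prism^\perf$, so the projection in the limit description $B \to A_\inf(R')$ factors uniquely (as $A_\inf(R') \in D^{=0}$) through $H^0(B) \to A_\inf(R')$, yielding $S_\perfd = T \to R'$ modulo $d$. Uniqueness of this factorization combines \cref{PerfectPrismInitial} (any $S$-algebra map $T \to R'$ lifts uniquely to a map of perfect prisms $(H^0(B),(d)) \to (A_\inf(R'),\ker\theta)$) with the functoriality and idempotency of $(-)_\perfd$ from \cref{PerfdSymmMon}: for any two compatible maps $S_\perfd \to R'$, applying $(-)_\perfd$ and using naturality of the unit at the perfectoid objects $S_\perfd$ and $R'$ identifies both with the canonical map $(S \to R')_\perfd$. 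The main technical hurdle is the verification $K^2 = 0$, which rests delicately on the Postnikov structure of the coconnective $E_\infty$-ring $B$ with $H^{\geq 2}(B) = 0$.
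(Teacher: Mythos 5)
Your argument is correct, but it takes a substantially longer path than the paper's on the central point. Having observed that the LES for $B\xrightarrow{d}B$ gives $d\colon H^1(B)\to H^1(B)$ surjective (because $H^1(B/d)=0$), you are already done: $H^1(B)$ is a derived $d$-complete $A$-module (since $B$ is derived $(p,d)$-complete and cohomology of a derived complete complex is derived complete), and a derived $d$-complete module $M$ with $M/dM=0$ vanishes by (the module form of) derived Nakayama. This kills $H^1(B)$ immediately, so $B$ is discrete and the pair $(B,dB)$ is a perfect prism by \cref{PrimEltPRop}. This is exactly what the paper means by ``$\Prism_{S/A,\perf}$ is discrete by derived Nakayama.'' Your subsequent analysis — extracting the Postnikov ideal $I=\tau^{\geq 1}B$, showing $I\cdot I=0$ for degree reasons, passing to $K=H^1(B)[d]$ and proving $K^2=0$, then invoking $\arc$-descent and reducedness of perfectoid rings to force $K=0$ — is sound, but it re-derives from scratch a vanishing that is free once you apply Nakayama to the module $H^1(B)$ rather than only to $H^{\geq 2}(B)$. (It is also slightly dangerous: the $\arc$-descent step quietly relies on $S_\perfd=\lim_{S\to R'}R'$ embedding into a product of perfectoid hence reduced rings, which is fine, but you should say so explicitly rather than appealing informally to point-wise vanishing in valuation rings.) For universality, your sketch via $(H^0(B),(d))$, \cref{PerfectPrismInitial}, and functoriality of $(-)_\perfd$ can be made to work, but the cleaner and more self-contained route is the paper's: any two maps $S_\perfd\rightrightarrows R'$ compatible under $S$ define $\eta\colon S_\perfd\widehat\otimes^L_S S_\perfd\to R'$; since $R'\simeq R'_\perfd$, the map $\eta$ factors through $(S_\perfd\widehat\otimes^L_S S_\perfd)_\perfd$, and \cref{PerfdSymmMon} identifies the latter with $S_\perfd$ via the multiplication map, forcing the two original maps to agree.
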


\begin{proof}
If $S_\perfd$ is discrete, then so is  $\Prism_{S/A,\perf}$ by derived Nakayama. But then $\Prism_{S/A,\perf}$ is a perfect $p$-complete $\delta$-$A$-algebra by the proof of \cref{PrismaticCohGivesPrisms}. The pair $(\Prism_{S/A,\perf},I\Prism_{S/A,\perf})$ is then necessarily a perfect prism (\cref{PrimEltPRop}), so $S_\perfd$ is a perfectoid ring. To prove the universality of $S \to S_\perfd$, we must show that any map $S_\perfd \to S'$ with $S'$ perfectoid is uniquely determined by the composition $S \to S_\perfd \to S'$; equivalently, we must check that any map $\eta:S_\perfd \widehat{\otimes}^L_S S_\perfd \to S'$ with $S'$ perfectoid factors (necessarily uniquely) over the multiplication map $S_\perfd \widehat{\otimes}^L_S S_\perfd \to S_\perfd$. As $S'$ is perfectoid, such a map $\eta$ factors over $S_\perfd \widehat{\otimes}^L_S S_\perfd \to (S_\perfd \widehat{\otimes}^L_S S_\perfd )_\perfd$ by applying the functor $(-)_\perfd$ since $S' \simeq S'_\perfd$. Now $(S_\perfd \widehat{\otimes}^L_S S_\perfd )_\perfd \simeq S_\perfd$  via the multiplication map thanks to \cref{PerfdSymmMon}, so we are done.
\end{proof}

\newpage

\section{The \'etale comparison theorem}
\label{sec:EtaleComp}

In this section, we explain how to recover \'etale cohomology of the generic fibre from prismatic cohomology for arbitrary rings.

\begin{theorem}[The \'etale comparison theorem]
\label{EtaleCompThm}
Fix a perfectoid ring $R$ corresponding to a perfect prism $(A,(d))$, and a $p$-adic formal scheme $X$ over $R$. Write $X_\eta := X \times_{\mathrm{Spf}(R)} \mathrm{Spa}(R[1/p],R)$ for the adic generic fibre of $X$, and let $\mu:X_{\eta,\et} \to X_\et$ denote the ``nearby cycles'' map. There is a canonical identification
\[ R\mu_* \mathbf{Z}/p^n \simeq (\Prism_{X/A}[1/d]/p^n)^{\phi=1}.\]
In particular, if $X = \mathrm{Spf}(S)$ is affine, then there is a canonical identification
\begin{equation}
\label{etalecomp}
 R\Gamma(\mathrm{Spec}(S[1/p]), \mathbf{Z}/p^n) \simeq (\Prism_{S/A}[1/d]/p^n)^{\phi=1}
\end{equation}
for each $n \geq 0$.
\end{theorem}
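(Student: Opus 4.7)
The plan is to reduce to the case where $S$ is perfectoid via a descent argument, and then compute both sides explicitly using Artin--Schreier--Witt theory combined with the tilting equivalence. First, observe that both sides of the asserted identification $R\mu_*\mathbf{Z}/p^n \simeq (\Prism_{X/A}[1/d]/p^n)^{\phi=1}$ are sheaves on $X_\et$: the right-hand side by the \'etale sheaf property of derived prismatic cohomology (\cref{DerivedPrismatic}), and the left-hand side by definition. Thus we reduce to the affine statement \eqref{etalecomp} for $X = \mathrm{Spf}(S)$.

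Next, the plan is to descend both sides from arbitrary derived $p$-complete $R$-algebras to the case where $S$ is perfectoid. On the prismatic side, derived prismatic cohomology satisfies $p$-completely faithfully flat descent, and the Frobenius-fixed-points functor $(-[1/d]/p^n)^{\phi=1}$ preserves this since it is built from limits. On the \'etale side, \'etale $\mathbf{Z}/p^n$-cohomology of the generic fibre satisfies compatible descent via Scholze's v-topology on diamonds, together with the identification of the scheme-theoretic generic fibre \'etale cohomology with its adic counterpart in the (semi-)perfectoid case. Using \cref{AndreFlatness} and its iteration we may cover $S$ by a perfectoid $R$-algebra and reduce to the case where $S$ is itself perfectoid. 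In this situation, by \cref{PerfectPrismInitial} and \cref{QRSPPrism}, the derived prismatic cohomology reduces to the initial prism, so $\Prism_{S/A} \simeq A_\inf(S) = W(S^\flat)$, and $\Prism_{S/A}[1/d]/p^n$ is naturally identified with $W_n(S^\flat[1/d^\flat])$. The Artin--Schreier--Witt short exact sequence
\[ 0 \to \mathbf{Z}/p^n \to W_n(\mathcal{O}) \xrightarrow{\phi-1} W_n(\mathcal{O}) \to 0 \]
on $\mathrm{Spec}(S^\flat[1/d^\flat])_\et$ identifies the derived fibre of $\phi-1$ on $W_n(S^\flat[1/d^\flat])$ with $R\Gamma_\et(\mathrm{Spec}(S^\flat[1/d^\flat]), \mathbf{Z}/p^n)$. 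Finally, the tilting equivalence for perfectoid rings matches this with $R\Gamma_\et(\mathrm{Spec}(S[1/p]), \mathbf{Z}/p^n)$.

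The main obstacle I anticipate is carrying out the descent step cleanly so that both sides descend along a common cover. While derived prismatic cohomology has excellent descent behaviour, \'etale cohomology of scheme-theoretic generic fibres with $\mathbf{Z}/p^n$-coefficients is more delicate: the natural functoriality lives on adic spaces (or diamonds), and comparing the two for the non-noetherian perfectoid affinoids appearing in the Čech nerve requires a careful algebraization or approximation argument. All other inputs --- the identification $\Prism_{S/A} \simeq A_\inf(S)$ for perfectoid $S$, Artin--Schreier--Witt, tilting for perfectoid rings, and the existence of faithfully flat perfectoid covers --- are in place in the earlier sections of the paper or in well-established references, so the bulk of the work lies in making the descent step (and the comparison with the adic generic fibre that underlies it) precise.
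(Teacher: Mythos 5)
Your endgame is on target --- in the perfectoid case both sides are indeed computed via $\Prism_{S/A}\simeq A_\inf(S)=W(S^\flat)$, the Artin--Schreier--Witt sequence on $\Spec(S^\flat[1/d^\flat])_\et$, and the tilting equivalence --- but the descent step that you correctly identify as the crux has a genuine gap, on two counts.

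First, $p$-completely faithfully flat descent cannot reduce an arbitrary derived $p$-complete $R$-algebra $S$ to the perfectoid case. Lemma~\ref{AndreFlatness} produces faithfully flat perfectoid covers of \emph{perfectoid} rings; for a general $S$ (take $S=R\langle X\rangle$) no faithfully flat $S$-algebra is perfectoid, since flat covers cannot decrease Krull dimension. The path from a general $S$ to a perfectoid algebra necessarily passes through a non-flat step: a flat map $S\to\tilde S$ with $\tilde S$ semiperfectoid, followed by the \emph{surjection} $\tilde S\to\tilde S_\perfd$. What the paper uses is the much finer $\arc$-topology, for which perfectoid rings form a basis (Lemma~\ref{affperfdbasis}); the argument is then to prove both $S\mapsto R\Gamma(\Spec(S[1/p]),\mathbf{Z}/p^n)$ (via \cite[Corollary 6.17]{BhattMathew}) and $S\mapsto(\Prism_{S/A}[1/d]/p^n)^{\phi=1}$ are $\arc$-sheaves and localize.

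Second, the claim that $(-[1/d]/p^n)^{\phi=1}$ preserves descent ``since it is built from limits'' elides the real difficulty: $M\mapsto M[1/d]$ is a filtered colimit, which a priori does not commute with the totalizations appearing in a descent limit. The mechanism that rescues this is Lemma~\ref{PhiFixedColimit}: for derived $d$-complete $M$, the operator $\phi$ is topologically nilpotent on the fibre of $M\to M/d$, so $(-)^{\phi=1}$ factors through $-/d$ and commutes with (completed) colimits. This is what allows one to replace $\Prism_{S/A}$ by its perfection $\Prism_{S/A,\perf}=A_\inf(S_\perfd)$ inside the $\phi$-fixed points, at which point the $\arc$-sheaf property is inherited from $S\mapsto S_\perfd$ (Corollary~\ref{PerfdVCohom}). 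Without this observation there is no route to the sheaf property on the prismatic side. By contrast, the issue you flag about algebraizing the scheme-versus-adic-space \'etale comparison for large perfectoid affinoids is handled directly by \cite[Corollary 3.2.2]{HuberBook} and \cite[Theorem 1.11]{ScholzeThesis} and is not where the difficulty lies.
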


Note that there are no restrictions on the singularities of $X$ above. We shall apply this in Theorem~\ref{TateTwistPerfd} to relate the $p$-adic Tate twists introduced in \cite{BMS2} to the usual Tate twists on the generic fibre for perfectoid rings. The proof of \cref{EtaleCompThm} will rely on a slightly surprising compatibility of the formation of $\phi$-fixed points with completed colimits.

\begin{lemma}[Commuting fixed points with completed colimits]
\label{PhiFixedColimit}
Let $B$ be an $\mathbf{F}_p$-algebra equipped with an element $t \in B$. Let $\mathcal{D}(B[F])$ be the $\infty$-category of pairs $(M,\phi)$ where $M \in \mathcal{D}(B)$ and $\phi:M \to \phi_* M$ is a map. Let $\mathcal{D}_{comp}(B[F]) \subset \mathcal{D}(B[F])$ be the full subcategory spanned by pairs $(M,\phi)$ with $M$ derived $t$-complete. Then the functors $\mathcal{D}_{comp}(B[F]) \to \mathcal{D}(\mathbf{F}_p)$ given by $M \mapsto M^{\phi=1}$ and $M \mapsto (M[1/t])^{\phi=1}$ commute with colimits.
\end{lemma}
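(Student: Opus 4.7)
The approach rests on the following characteristic-$p$ observation: since $\phi_B(t) = t^p$, any $\phi$-semilinear additive endomorphism $\phi \colon M \to M$ satisfies $\phi(t^n M) \subset t^{np} M$. For $n \geq 1$ and $p \geq 2$, we have $np \geq n+1$, so $\phi$ sends $t^n M$ into $t^{n+1} M$ and therefore acts as zero on the graded piece $t^n M/t^{n+1} M$. Consequently $\phi - 1$ acts as $-\mathrm{id}$ (invertible) on these graded pieces for $n \geq 1$.

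For the first functor, combining the above with the fiber sequence $t^n M/t^{n+1} M \to M/t^{n+1} \to M/t^n$ shows that the transition $(M/t^{n+1})^{\phi=1} \to (M/t^n)^{\phi=1}$ is an equivalence for every $n \geq 1$. Since $M \simeq R\lim_n M/t^n$ by $t$-completeness and $(-)^{\phi=1}$ commutes with limits, we obtain $M^{\phi=1} \simeq (M/t)^{\phi=1}$. The reduction $M \mapsto M/t$ from $\mathcal{D}_{comp}(B[F])$ to $\mathcal{D}(B/t[F])$ preserves colimits: colimits in $\mathcal{D}_{comp}$ are derived completions of colimits in $\mathcal{D}$, and since completion is an isomorphism modulo $t$, tensoring with $B/t$ recovers the uncompleted colimit. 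The functor $(-)^{\phi=1} \colon \mathcal{D}(B/t[F]) \to \mathcal{D}(\mathbf{F}_p)$ preserves colimits, being $\mathrm{fib}(\phi - 1)$, a finite limit in a stable $\infty$-category. Composing, $M \mapsto M^{\phi=1}$ preserves colimits.

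For the second functor, apply $(-)^{\phi=1}$ to the fiber sequence $M \to M[1/t] \to M[1/t]/M$ in $\mathcal{D}(B[F])$ to obtain a fiber sequence
\[ M^{\phi=1} \to (M[1/t])^{\phi=1} \to (M[1/t]/M)^{\phi=1} \]
in $\mathcal{D}(\mathbf{F}_p)$. The left term preserves colimits by the previous step, so it suffices to show the same for the right term. This is analyzed using the exhausting filtration $F_n := t^{-n} M/M \cong M/t^n$ on $M[1/t]/M$: under this identification, Frobenius sends $F_n$ into $F_{np}$ via the natural map $M/t^n \to M/t^{np}$ induced by $\phi_M$, while the inclusion $F_n \hookrightarrow F_{np}$ corresponds to multiplication by $t^{n(p-1)}$. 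Thus $\phi - 1$ restricted to $F_n$ identifies with $\phi_M - t^{n(p-1)} \colon M/t^n \to M/t^{np}$. A filtration argument on the target (mirroring the first half, tracking $\phi_M$ against the $t$-adic gradation) computes the fiber of this map in terms of $(M/t)$-level data. Passing to the filtered colimit over $n$ and using that fibers commute with filtered colimits in a stable $\infty$-category expresses $(M[1/t]/M)^{\phi=1}$ as a colimit-preserving functor of $M$.

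The main obstacle is this last step: since $\phi$ does not preserve the filtration $\{F_n\}$ in the strict sense (it jumps the filtration index from $n$ to $np$), the argument from the first half does not apply directly. Careful bookkeeping is required to handle the interaction between the $\phi$-shift and the ind-structure of $M[1/t]/M = \colim_n F_n$, in particular to establish that the fiber of $\phi_M - t^{n(p-1)} \colon M/t^n \to M/t^{np}$ admits a natural description via $M/t$-data compatible with the colimit. Once this is in place, the fiber sequence together with the first half of the argument delivers the desired colimit-preservation for $M \mapsto (M[1/t])^{\phi=1}$.
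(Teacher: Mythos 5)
Your argument for the first functor $M \mapsto M^{\phi=1}$ is correct and is essentially the paper's: reduce to $(M/t)^{\phi=1}$ using that $\phi$ is topologically nilpotent on $\mathrm{fib}(M\to M/t)$ (you phrase this via the equivalences $(M/t^{n+1})^{\phi=1}\to (M/t^n)^{\phi=1}$ for $n\geq 1$; the paper phrases it via nilpotence of $\phi$ with respect to the $t$-adic filtration on the fiber; these are the same observation), and then note that both $M\mapsto M/t$ and $(-)^{\phi=1}$ commute with colimits out of $\mathcal{D}_{comp}(B[F])$.

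For the second functor you have a genuine gap, which you yourself flag. The route through $M[1/t]/M=\colim_n F_n$ breaks down exactly where you say: $\phi$ shifts $F_n$ into $F_{np}$, so the filtration is not $\phi$-stable and the first-half argument does not transfer. You gesture at a ``filtration argument on the target'' that would express $\mathrm{fib}(\phi_M-t^{n(p-1)}:M/t^n\to M/t^{np})$ in $M/t$-level data compatibly with the colimit, but you give no construction, and it is not clear such a description exists in the form you would need. As written, the proof of the second assertion is incomplete.

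The paper avoids a separate analysis of $M[1/t]$ entirely via one observation that makes the two assertions literally equivalent. Write $C=\colim_i M_i$ for the colimit in $\mathcal{D}(B[F])$ and $\widehat{C}$ for its $t$-completion, which is the colimit in $\mathcal{D}_{comp}(B[F])$. The fiber $F=\mathrm{fib}(C\to\widehat{C})$ satisfies $F/t=0$, i.e.\ $F$ is uniquely $t$-divisible, hence $F[1/t]\simeq F$ and $F$ is also the fiber of $C[1/t]\to\widehat{C}[1/t]$. Since both $(-)^{\phi=1}$ and $(-)[1/t]$ commute with ordinary (uncompleted) colimits, the first assertion is equivalent to $F^{\phi=1}=0$ and so is the second, by applying $(-)^{\phi=1}$ to the two fiber sequences with the common fiber $F$. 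Thus the second assertion follows for free once the first is proven; your incomplete second half can simply be replaced by this reduction.
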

\begin{proof}
In this proof, all colimits refer to colimits in the underlying category $\mathcal{D}(B)$. Let $\{(M_i, \phi_i)\}$ be a diagram in $\mathcal{D}_{comp}(B[F])$. The fiber of the map $\colim_i M_i \to \widehat{\colim_i M_i}$ is uniquely $t$-divisible, and thus identifies with the fiber of the map $\colim_i M_i[1/t] \to (\widehat{\colim_i M_i})[1/t]$. As the lemma can be reformulated as the statement that $F^{\phi=1}=0$, it suffices to prove the statement for the functor $M \mapsto M^{\phi=1}$. For this, we claim that for any $(N,\phi) \in \mathcal{D}_{comp}(B[F])$, we have $N^{\phi=1} \simeq (N/t)^{\phi=1}$. First, this makes sense because we have $\phi(t) = t^p \subset tB$, so for any $N \in \mathcal{D}(B[F])$, there is an induced $\phi$-structure on $N/t$ compatible with the one on $N$. Secondly, if $N$ is derived $t$-complete, then the fibre of $N \to N/t$ is complete when endowed with the $t$-adic filtration, and the $\phi$-action on the fibre is topologically nilpotent with respect to this filtration as $\phi(t) = t^p \subset t^2 B$, so $(-)^{\phi=1}$ must vanish on the fibre, giving $N^{\phi=1} \simeq (N/t)^{\phi=1}$, as asserted.  The lemma now follows because both functors in the composition 
\[ \mathcal{D}_{comp}(B[F]) \xrightarrow{N \mapsto N/t} \mathcal{D}(B[F]) \xrightarrow{ (-)^{\phi=1} } \mathcal{D}(\mathbf{F}_p)\]
commute with all colimits.
\end{proof}

\begin{proof}[Proof of \cref{EtaleCompThm}]
Consider the following two functors $F$ and $G$ on affine formal schemes $\Spf S\in \mathrm{fSch}_{/\Spf R}$:
\[ F(S) := R\Gamma(\Spec(S[\frac{1}{p}]),\mathbf Z/p^n) \quad \text{and} \quad G(S) := (\Prism_{S/A}[\frac{1}{d}]/p^n)^{\phi=1}.\] 
Using the $\arc_p$-topology from \cite[Definition 6.14]{BhattMathew}, we shall build a comparison map $F \to G$ and then check it is an isomorphism.

First, we note that $F$ is an $\arc_p$-sheaf by \cite[Corollary 6.17]{BhattMathew}. Moreover, $F$ is $\arc_p$-locally concentrated in degree $0$ by arguing with valuation rings\footnote{It suffices to show that $F(S)$ is in degree $0$ when $S=\prod_i V_i$ is a product of $p$-complete  rank $1$ valuation rings with algebraically closed residue field. Recall that any higher \'etale cohomology class on any affine scheme can be annihilated by pullback along finitely presented finite covers. Now any finitely presented finite cover of $S[\frac{1}{p}]$ spreads out to a finitely presented finite cover of $S$. By the assumption on each $V_i$, any such cover of $S$ admits a section, so the same holds for the original cover of $S[\frac{1}{p}]$, whence all higher \'etale cohomology classes on $S[\frac{1}{p}]$ must be trivial.}, so it coincides with the $\arc_p$-sheafification of $H^0(F(-))$. Finally, for any $\Spf(S) \in \mathrm{fSch}_{/\Spf(R)}$, any clopen decomposition of $\mathrm{Spec}(S[\frac{1}{p}])$ can be lifted to a clopen decomposition of $\Spf(S)$ at the expense of replacing $S$ by an $\arc_p$-equivalent (see \cite[Definition 6.19]{BhattMathew}) subring of $S[\frac{1}{p}]$ by simply adjoining the relevant idempotents. As the Zariski topology on $\mathrm{fSch}_{/\Spf(R)}$ is coarser than the $\arc_p$-topology, it follows that we get a natural comparison map $H^0(F(-)) \to H^0_{\arc_p}(-, \mathbf{Z}/p^n)$ on $\mathrm{fSch}_{/\Spf(R)}$ (this map is in fact an isomorphism, but we do not need that here). As $F$ is the $\arc_p$-sheafification of its $H^0$, the preceding map extends extends uniquely to a natural transformation $F \to R\Gamma_{\arc_p}(-, \mathbf{Z}/p^n)$ of $\arc_p$-sheaves on $\mathrm{fSch}_{/\Spf(R)}$.

Next, we prove that $G$ is an $\arc_p$-sheaf. First, we show that the natural map
\[
G(S) := (\Prism_{S/A}[\frac{1}{d}]/p^n)^{\phi=1}\to (\Prism_{S/A,\perf}[\frac{1}{d}]/p^n)^{\phi=1}
\]
is an isomorphism. Indeed, it suffices to check this when $n=1$, and then $\Prism_{S/A,\perf}/p$ is the $d$-completed filtered colimit of
\[ \Prism_{S/A}/p \xrightarrow{\phi_S} \Prism_{S/A}/p \xrightarrow{\phi_S} \ldots\ .\]
As each map $\phi_S$ trivially induces an equivalence on applying $\big((-)[\frac{1}{d}]\big)^{\phi=1}$, Lemma~\ref{PhiFixedColimit} gives the claim. Now $\Prism_{S/A,\perf}$ is an \arc-sheaf as $S\mapsto S_\perfd$ is so (Corollary~\ref{PerfdVCohom}); this implies  that $G$ is also an \arc-sheaf. To get the stronger statement that $G$ is an $\arc_p$-sheaf, we argue as in \cite[Corollary 6.17]{BhattMathew}. Given an $\arc_p$-cover $S \to T$ of $p$-complete $R$-algebras with Cech nerve $S \to T^\bullet$, the map $S \to T \times S/p$ is an $\arc$-cover with Cech nerve $S \to T^\bullet \times T^\bullet/p \times S/p$. To deduce $\arc_p$-descent for $G$, it then suffices to show the following: for any $p$-complete $R$-algebra of the form $S \times S'$ with $pS' = 0$, the projection map $G(S \times S') \to G(S)$ is an isomorphism. By the description $G = \left(\Prism_{-/A,\perf}/p^n[\frac{1}{d}]\right)^{\phi=1}$ given earlier in this paragraph and the product compatibility for prismatic cohomology, we are reduced to checking that $\Prism_{S'/A,\perf}/p^n[\frac{1}{d}] = 0$. But $pS' = 0$, so $S'_{\perfd} = S'_\perf$ is the usual perfection (Example~\ref{PerfdPerfCharp}), whence $\Prism_{S'/A,\perf}$ identifies with $W(S'_{\perf})$. As $(d)=(p)$ as ideals in $W(S'_\perf)$, the object $W(S'_\perf)/p^n[\frac{1}{d}]$ is clearly $0$.

The previous paragraph implies that the obvious map $\mathbf{Z}/p^n \to G$ of presheaves on $\mathrm{fSch}_{/\Spf(R)}$ extends uniquely to a comparison map $R\Gamma_{\arc_p}(-, \mathbf{Z}/p^n) \to G$ of $\arc_p$-sheaves. Combining this the construction two paragraphs ago gives a  comparison map 
\[ F(S) := R\Gamma(\Spec(S[\frac{1}{p}]),\mathbf Z/p^n) \to G(S) := (\Prism_{S/A}[1/d]/p^n)^{\phi=1}\]
between the $\arc_p$-sheaves $F$ and $G$. To prove this an isomorphism, we may work $\arc_p$-locally and thus also \arc-locally. At this point, there are two ways to finish the proof. In the first approach, we can assume $S$ is perfectoid thanks to \arc-localization. One can then identify $\Prism_{S/A}[1/d]/p^n = W(S^\flat)[1/d]/p^n$, and then by Artin-Schreier-Witt we have
\[
R\Gamma(\Spec(S^\flat[\tfrac 1d]),\mathbf Z/p^n)=(\Prism_{S/A}[1/d]/p^n)^{\phi=1}\ .
\]
It remains to prove that there is a canonical quasi-isomorphism
\[
R\Gamma(\Spec(S[\tfrac 1p]),\mathbf Z/p^n)\simeq R\Gamma(\Spec(S^\flat[\tfrac 1d]),\mathbf Z/p^n)\ .
\]
This follows from the comparison between the \'etale cohomology of $\Spec(S[\frac 1p])$ and $\mathrm{Spa}(S[\frac 1p],S)$ (and similarly for $S^\flat$), cf.~\cite[Corollary 3.2.2]{HuberBook}, and \cite[Theorem 1.11]{ScholzeThesis}. 

In the second approach, one can apply further \arc-descent to reduce to a case like $S=\prod R_i$ being a product of absolutely integrally closed valuation rings of rank $\leq 1$ (Remark~\ref{ArcCoverAICVR}), where the claim can be verified by hand (as $H^i=0$ for $i>0$ and the $H^0$ becomes explicit); we leave this as an exercise to the reader.
\end{proof}

\begin{remark}
\label{EtaleCompSpecial}
There is also a variant of \cref{EtaleCompThm} without inverting $p$ or $d$: if $X$ is a $p$-adic formal scheme $R$-scheme, then there is a canonical identification
\[ \mathbf{Z}/p^n \simeq (\Prism_{X/A}/p^n)^{\phi=1}\]
of \'etale sheaves on $X$ for all $n \geq 0$; we leave this to the reader.
\end{remark}

Recall that for any integer $n \geq 0$, we have defined $p$-adic Tate twists $\mathbf{Z}_p(n)$ as sheaves on the quasisyntomic site in \cite[\S 7.4]{BMS2}; the notion is reviewed more thoroughly in \S \ref{sec:Nygaard}. For a perfectoid ring $R$, these admit a concrete description in terms of the corresponding perfect prism $(A,(d))$:
\begin{equation}
\label{ZpnR}
 \mathbf{Z}_p(n)(R) \simeq \Big(\phi^{-1}(d)^n A \xrightarrow{\frac{\phi}{d^n}-1} A\Big).
 \end{equation}
One can show that the right side is independent of the choice of generator $d$ up to quasi-isomorphism. Our goal is to describe these complexes in terms involving only the special or generic fibres.

\begin{theorem}
\label{TateTwistPerfd}
Let $R$ be a perfectoid ring.  For any integer $n \geq 1$, there is a canonical identification
\[ \mathbf{Z}_p(n)(R) \simeq R\Gamma(\mathrm{Spec}(R[1/p]), \mathbf{Z}_p(n)).\]
 For $n=0$, there is a canonical identification 
\[ \mathbf{Z}_p(0)(R) \simeq R\Gamma(\mathrm{Spec}(R), \mathbf{Z}_p) \simeq R\Gamma(\mathrm{Spec}( (R/p)_\perf), \mathbf{Z}_p).\]
\end{theorem}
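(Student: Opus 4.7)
For $n = 0$, the formula \eqref{ZpnR} identifies $\mathbf{Z}_p(0)(R)$ with the Artin-Schreier-Witt complex $[A \xrightarrow{\phi - 1} A]$ for $A = W(R^\flat)$, which by the classical Artin-Schreier-Witt sequence computes $R\Gamma_\et(\mathrm{Spec}(R^\flat), \mathbf{Z}_p)$. The remaining identifications are obtained by chaining together rigidity for the $\varpi^\flat$- and $p$-henselian pairs $(R^\flat, \varpi^\flat R^\flat)$ and $(R, pR)$ --- where $\varpi \in R$ is a pseudo-uniformizer satisfying $\varpi^p \mid p$ --- together with the isomorphism $R^\flat/\varpi^\flat \cong R/\varpi$ and the observation that the surjections $R/p \twoheadrightarrow R/\varpi$ and $R/p \twoheadrightarrow (R/p)_\perf$ have nilpotent kernels (since $\varpi^p \in pR$ for the first, and $R/p$ is semiperfect for the second). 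Rigidity for $\mathbf{Z}_p$ coefficients bootstraps from the unique lifting of étale mod-$p$ algebras to $p$-completely étale algebras as in \cref{ExtendEtale}.

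For $n \geq 1$, the plan is to upgrade the étale comparison (\cref{EtaleCompThm}) to incorporate Tate twists. Applied to $X = \Spf(R)$, where $\Prism_{R/A} = A$, that theorem yields
\[ R\Gamma(\mathrm{Spec}(R[1/p]), \mathbf{Z}/p^r) \simeq \mathrm{fib}\bigl(A[1/d]/p^r \xrightarrow{\phi - 1} A[1/d]/p^r\bigr). \]
Since $d$ is a unit in $A[1/d]$, the twisted Frobenius $\phi/d^n$ makes sense as an endomorphism there, and I would prove the twisted variant
\[ R\Gamma(\mathrm{Spec}(R[1/p]), \mathbf{Z}/p^r(n)) \simeq \mathrm{fib}\bigl(A[1/d]/p^r \xrightarrow{\phi/d^n - 1} A[1/d]/p^r\bigr), \]
either by reducing via \arc-descent (as in the proof of \cref{EtaleCompThm}) to the case where $R$ contains a compatible system of $p$-power roots of unity (so $\mathbf{Z}/p^r(n) \simeq \mathbf{Z}/p^r$ and the twist $d^n$ matches the classical cyclotomic element), or by a direct adaptation of that argument. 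Taking the derived inverse limit over $r$ then identifies $R\Gamma(\mathrm{Spec}(R[1/p]), \mathbf{Z}_p(n))$ with the derived $p$-completion of $\mathrm{fib}(A[1/d] \xrightarrow{\phi/d^n - 1} A[1/d])$.

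It remains to compare this with $\mathbf{Z}_p(n)(R) = \mathrm{fib}(\phi^{-1}(d)^n A \xrightarrow{\phi/d^n - 1} A)$, i.e., to show that the natural inclusion of two-term complexes from $[\phi^{-1}(d)^n A \to A]$ into $[A[1/d] \to A[1/d]]$ becomes a quasi-isomorphism after derived $p$-completion. This is the main obstacle. A key enabling observation is that $\tilde{d} := \phi^{-1}(d)$ becomes a unit in the derived $p$-completion $\widehat{A[1/d]}$: from $\tilde{d}^p + p\delta(\tilde{d}) = d$ one gets $\tilde{d}^p = d(1 - p\delta(\tilde{d})/d)$, with the factor in parentheses a unit in $\widehat{A[1/d]}$ since $p$ lies in its Jacobson radical. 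Combined with the fact that $\phi/d^n : \phi^{-1}(d)^n A \xrightarrow{\sim} A$ is already an isomorphism (Frobenius is bijective on the perfect ring $A$), one can verify the $p$-complete acyclicity of the cofibre directly. A cleaner alternative --- which I would pursue in practice --- is to exploit the \arc-sheaf property of both sides of the desired isomorphism, reducing via the method of \cref{EtaleCompThm} to the case of products of absolutely integrally closed rank-one perfectoid valuation rings, where the comparison can be verified by hand.
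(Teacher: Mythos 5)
Your $n=0$ argument is fine; it assembles Gabber's affine analog of proper base change for the henselian pairs $(R,pR)$ and $(R^\flat,\varpi^\flat R^\flat)$ together with the tilting identification $R/\varpi\cong R^\flat/\varpi^\flat$ and the invariance of the étale topos under the universal homeomorphism $\Spec R/p\to\Spec (R/p)_\perf$. This is essentially what the paper's one-line reference to \cref{EtaleCompSpecial} plus Gabber's theorem spells out.

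For $n\geq 1$ your plan is close in spirit to the paper's proof but organized differently, and the difference matters. The paper does not first prove a twisted étale comparison and then match it against the formula for $\mathbf{Z}_p(n)(R)$; instead it constructs a \emph{canonical} map $\alpha_n\colon\mathbf{Z}_p(n)(R)\to R\Gamma(\Spec R[1/p],\mathbf{Z}_p(n))$ directly, by multiplicativity from the $n=1$ case $\mathbf{Z}_p(1)\simeq T_p\mu_{p^\infty}$, and then checks $\alpha_n/p$ is an equivalence. The canonicity is the whole point: without it the statement ``there is a canonical identification'' has no content, and the identification $\mathbf{Z}/p^r(n)\simeq\mathbf{Z}/p^r$ used in your twisted comparison is \emph{not} canonical (it depends on the choice of roots of unity), nor is the operator $\phi/d^n$ on $A[1/d]$ (it depends on the generator $d$). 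Your construction therefore produces \emph{some} isomorphism, but you would still need to argue — exactly the issue the paper flags as ``slightly tricky'' and attaches a footnote to — that it agrees with the canonical map arising from the cyclotomic trace. This is a genuine gap, not a cosmetic one.

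The other concrete gap is in your ``direct verification'' of $p$-complete acyclicity of the cofibre. The observation that $\tilde d=\phi^{-1}(d)$ becomes a unit in $\widehat{A[1/d]}$ is correct and is used implicitly in the paper, but it does not by itself imply the cofibre is acyclic. You have an operator $\psi=\phi/d^n$ that is a bijection on each module in sight and carries $\phi^{-1}(d)^nA$ bijectively onto $A$ — but $\psi$ being bijective does \emph{not} make $\psi-1$ an isomorphism on the quotients (take $\psi=\mathrm{id}$). What actually makes the paper's proof work is a filtered-colimit-and-homotopy argument: after conjugating by $\mu^{-n}$ to replace $\phi/d^n$ with $\phi$, the complex $[R^\flat[1/\mu]\to R^\flat[1/\mu]]$ is the colimit of the complexes $[\mu^{-np^{i-1}}R^\flat\to\mu^{-np^{i}}R^\flat]$ along inclusion maps, and each such inclusion is chain-homotopic (with the identity as the homotopy) to the map induced by $\phi$, which is an isomorphism because $R^\flat$ is perfect. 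Your unit observation does not supply a replacement for this step. Your fallback (arc-descent to products of absolutely integrally closed rank-one valuation rings and checking by hand) \emph{does} reconverge with the paper, but then the ``direct'' route you advertise as the key enabler is not doing the work.

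So: the overall shape of the argument is sound, but as written the proposal has two genuine gaps — the canonicity of the comparison map, and the claimed direct acyclicity argument — each of which is precisely what the paper's more roundabout construction of $\alpha_n$ and its filtered-colimit computation of $\alpha_n'/p$ are designed to handle.
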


\begin{proof}
The statement for $n=0$ at the end follows from Remark~\ref{EtaleCompSpecial} (and Gabber's affine analog of proper base change to pass from $R$ to $R/p$); we leave the details to the reader. Assume from now that $n \geq 1$. Write $F_n(R) = \mathbf{Z}_p(n)(R)$ and $G_n(R) = R\Gamma(\mathrm{Spec}(R[1/p]), \mathbf{Z}_p(n))$; these are $p$-complete \arc-sheaves on the category of perfectoid rings. The proof involves three steps:\\

{\em The theorem over $\mathbf Z_p[\zeta_{p^\infty}]$-algebras $R$:} We first explain why $F_n/p \simeq G_n/p$ when restricted to perfectoid $\mathbf Z_p[\zeta_{p^\infty}]$-algebras $R$; the isomorphism we produce in this case will not obviously agree with our eventual isomorphism over all perfectoid rings, but will be useful in proving structural properties of $F_n$ over all perfectoid rings. Write $q=[\epsilon]\in A_{\inf}(R)$ for the the usual element, normalized to ensure that the distinguished element $d=[p]_q$ generates $\ker(A_{\inf}(R)\to R)$. Set $\mu=q-1\in A_{\inf}(R)$. Consider the map of complexes
\[ \alpha^\prime_n: F_n(R)/p := \Big(d^{\frac{n}{p}} R^\flat  \xrightarrow{\frac{\phi}{d^n}-1} R^\flat\Big) \to G_n(R)/p := \Big(R^\flat[1/d]\xrightarrow{\phi-1} R^\flat[1/d]\Big)\]
induced by multiplication by $\mu^{-n}$ on the terms. In other words, it is given by the evident map of complexes
\[
\Big(\mu^{-n/p} R^\flat  \xrightarrow{\phi-1} \mu^{-n} R^\flat\Big) \to \Big(R^\flat[1/\mu]\xrightarrow{\phi-1} R^\flat[1/\mu]\Big)\ .
\]
The map $\alpha_n^\prime$ is a quasi-isomorphism: it is the colimit over $i$ of  the maps of complexes
\[
\Big(\mu^{-np^{i-1}} R^\flat  \xrightarrow{\phi-1} \mu^{-np^i} R^\flat\Big) \to \Big(\mu^{-np^i} R^\flat  \xrightarrow{\phi-1} \mu^{-np^{i+1}} R^\flat\Big)
\]
induced by the identity; these are homotopic to the maps induced by $\phi$, which are isomorphisms of complexes. For future references, we remark that the maps $\alpha_n^\prime$ are multiplicative, i.e., they induce an isomorphism  $\oplus_n F_n/p \to \oplus_n G_n/p$ of graded ring sheaves on perfectoid $\mathbf Z_p[\zeta_{p^\infty}]$-algebras $R$; this is easy to see from the formula given above. \\

{\em Multiplicativity:} We shall prove the following assertion that will allow us to reduce the theorem for general $n$ to the $n=1$ case:
\begin{itemize}
\item[$(\ast)$] As presheaves on all perfectoid rings, each $F_n$ (resp. $G_n$) is $p$-complete \arc-locally concentrated on degree $0$ where it is $p$-torsionfree, and that the multiplication map $F_1^{\widehat{\otimes} n} \to F_n$ (resp. $G_1^{\widehat{\otimes} n} \to G_n$) expresses the target as the $p$-complete \arc-sheafification of the source. 
\end{itemize}
As both assertions in $(\ast)$ are $p$-complete \arc-local, it suffices to prove them for perfectoid $\mathbf Z_p[\zeta_{p^\infty}]$-algebras $R$. By the first part of the proof,  it is then enough to show them only for the $G_n$'s. In other words, we reduce to showing:
\begin{itemize}
\item[$(\ast)_a$] The sheaf $R \mapsto G_n(R)/p = R\Gamma(\mathrm{Spec}(R[1/p]), \mathbf{Z}/p(n))$ on perfectoid rings is $p$-complete \arc-locally concentrated in degree $0$.
\item[$(\ast_b)$] The multiplication map $(G_1/p)^{\otimes n} \to G_n$ exhibits the target as the $p$-complete \arc-sheafification of the source.  
\end{itemize}

For $(\ast_a)$: by $p$-complete \arc-descent, it suffices to prove that $R\Gamma(\mathrm{Spec}(R[1/p]), \mathbf{Z}/p(n))$ is concentrated in degree $0$ when $R=\prod_i V_i$ is a product of $p$-complete rank $1$ valuation rings with algebraically closed fraction fields (Remark~\ref{ArcCoverAICVR}). Moreover, as each $G_n$ kills rings of characteristic $p$, we may also assume each $V_i$ has fraction field of characteristic $0$. For such an $R$, we can choose a structure map $\mathbf Z_p[\zeta_{p^\infty}] \to R$, so the previous paragraph shows that $G_n(R)/p \simeq \prod_i G_n(V_i)/p$: indeed, the analogous statement is clear for $F_n(R)/p$ as $A_{\inf}(-)$ commutes with products. Now $G_n(V_i)/p$ is visibly concentrated in degree $0$, so we conclude that each $G_n$ is $p$-completely \arc-locally concentrated in degree $0$ where it is $p$-torsionfree. 

For $(\ast_b)$, we prove a stronger statement: for $R=\prod_i V_i$ with $V_i$ a $p$-complete valuation ring with algebraically closed fraction field, the restrictions $H_i = (G_i/p)|_{\mathrm{Spf}(R)}$ to the small Zariski site of $\mathrm{Spf}(R)$ satisfy $H_1^{\otimes n} \simeq H_n$ as sheaves on the topological space $\mathrm{Spf}(R)$. As a map of sheaves on $\mathrm{Spf}(R)$ is an isomorphism exactly when it is on stacks, we must check the following: for each $x \in \mathrm{Spf}(R)$, the natural map
\[ \colim_U \left(H_1(\mathcal{O}(U))^{\otimes n} \to H_n(\mathcal{O}(U))\right)\]
is an isomorphism, where the colimit runs over all affine open neighbourhoods $U \subset \mathrm{Spf}(R)$. Now the functors $G_n(-)/p$ carry $p$-completed filtered colimits of perfectoid rings to filtered colimits of complexes: this follows from the Gabber-Fujiwara theorem. Consequently, the above map can identified with the natural map
\[ G_1(\mathcal{O}_{\mathrm{Spf}(R),x})^{\otimes n} \to G_n(\mathcal{O}_{\mathrm{Spf}(R),x}).\]
The perfectoid ring $V := \mathcal{O}_{\mathrm{Spf}(R),x}$ is the $p$-completed ultraproduct of $p$-complete absolutely integrally closed valuation rings, and hence must itself be a $p$-complete absolutely integrally closed valuation ring. But then $C = V[1/p]$ is an algebraically closed field, so the above map identifies with the tautological map
\[ H^0(\mathrm{Spec}(C), \mathbf{Z}/p(1))^{\otimes n} \to H^0(\mathrm{Spec}(C), \mathbf{Z}/p(n)),\]
which is an isomorphism by the definition $\mathbf{Z}/p(i) = \mathbf{Z}/p(1)^{\otimes i}$ as \'etale sheaves on $\mathrm{Spec}(C)$. \\

{\em Proof of theorem:} Thanks to assertion $(\ast)$ above, to prove the theorem, it remains to construct a canonical isomorphism $\alpha:F_1 \simeq G_1$ of sheaves on all perfectoid rings: any such isomorphism induces a multiplicative system of isomorphisms $\alpha_n:F_n \simeq G_n$ by $(\ast)$. By \cite[Proposition 7.17]{BMS2}, we know that $\mathbf{Z}_p(1) = \lim_n \mu_{p^n}$ as \arc-sheaves on perfectoid rings (or even as sheaves on the full quasisyntomic site), so there is a canonical map $\alpha_1:F_1 \to G_1$ determined by Kummer theory. To prove this map is an isomorphism, we may work $p$-complete \arc-locally, so we reduce to checking that $\alpha_1(R)$ is an isomorphism where  $R=\prod_i V_i$ with $V_i$ a $p$-complete and $p$-torsion free rank $1$ valuation ring with algebraically closed fraction field $C_i = V_i[1/p]$. In this case, as we saw in the proof of $(\ast_a)$ above, both sides commute with products and are concentrated in degree $0$, so we reduce to checking the statement when $R=V$ is a $p$-complete and $p$-torsion free rank $1$ valuation ring with algebraically closed fraction field $C$. Unwinding definitions, we must show that the composite map
\[ \mathbf{Z}_p(1)(V) \simeq T_p(V^*) \xrightarrow{a} T_p(C^*) \xrightarrow{b} H^0(\mathrm{Spec}(C), \mathbf{Z}_p(1))\]
is an isomorphism, where $a$ is the natural map and $b$ comes from classical Kummer theory.  In fact, both $a$ and $b$ are themselves isomorphisms: the isomorphy of $b$ is clear as $H^1(\mathrm{Spec}(C), \mathbf{G}_m) = 0$, while the isomorphy of $a$ follows as $C^*/V^*$ is uniquely $p$-divisible.
\end{proof}

\begin{remark}
The isomorphism $\mathbf{Z}_p(n)(R) \simeq R\Gamma(\mathrm{Spec}(R[1/p]), \mathbf{Z}_p(n))$ from Theorem~\ref{TateTwistPerfd} relied on the following fact about $\mathbf{Z}_p(1)(R)$: it can be described both as the Tate module of $\mathbf{G}_m$ (which allows us to construct the comparison map for $n=1$ explicitly) and as a filtered Frobenius eigenspace of prismatic cohomology (which allowed us to prove various properties of $\mathbf{Z}_p(1)(-)$ including the key multiplicativity $\mathbf{Z}_p(1)^{\widehat{\otimes}^L n} \simeq \mathbf{Z}_p(n)$ necessary to construct the comparison map for all $n$). Taking the second property as a definition, the first property is highly non-obvious and relies on homotopy-theoretic input: it is proven in \cite{BMS2} using the cyclotomic trace map to relate algebraic $K$-theory with TC\footnote{The proof in \cite{BMS2} also uses the main theorems of \cite{ClausenMathewMorrow} to equate $K$-theory and TC in the relevant context; however, once the comparison map is constructed, this input can be avoided using the method of proof of Theorem~\ref{TateTwistPerfd}.}. In \cite{BhattLurieChern}, we shall give a direct  algebraic construction of a comparison map 
\[ \log_\Prism:T_p(R^*) \to \phi^{-1}(d) A_{\inf}(R)\{1\}. \]
In fact, this is the special case of a prismatic logarithm map $\mathbf{Z}_p(1)(A/I) \to A\{1\}$ that exists over any bounded prism $(A,I)$. Using this, the proof of Theorem~\ref{TateTwistPerfd} becomes purely algebraic. Moreover, the two approaches to building comparison map are equivalent: the map $\log_\Prism$ over the $q$-de Rham prism can be shown to agree (up to a Frobenius twist) with the $q$-logarithm, which was studied in \cite{AnschutzLeBrasqLog} as an explicitification of the map coming from the cyclotomic trace construction.
\end{remark}

We next extract algebraic consequences from Theorem~\ref{TateTwistPerfd}. For this, we need the following lemma reformulating results from \cite{BMS2}; all cohomology groups appearing below are computed in the flat topology unless otherwise specified.

\begin{lemma}
\label{BMS2Zp1}
For a ring $R$, let $\mathcal{P}\mathrm{ic}(R) = \left(\tau^{\leq 1} R\Gamma(\mathrm{Spec}(R), \mathbf{G}_m)\right) [1]$ denote the groupoid of line bundles on $R$, regarded as an object of the derived category of abelian groups. For any quasisyntomic ring $R$, there is a natural identification
\[ \mathcal{P}\mathrm{ic}(R)^{\wedge}[-2] \simeq \mathbf{Z}_p(1)(R),\]
where the completion on the left side is the $p$-completion.
\end{lemma}
\begin{proof}
By quasi-syntomic descent for perfect complexes and then line bundles, the functor $\mathcal{P}\mathrm{ic}(-)$ is a sheaf of complexes on the quasi-syntomic site. Moreover, it is locally concentrated in degree $-1$: we have $\mathcal{P}\mathrm{ic}(R) = R^*[1]$ for any ring $R$ no non-trivial line bundles, and every quasisyntomic ring has a quasisyntomic cover with this property (e.g., by $p$-completing its $w$-localization). Passing to $p$-completions, and using that units are $p$-divisible locally in the quasisyntomic topology, we learn that $\mathcal{P}\mathrm{ic}(-)^{\wedge}[-2]$ is the unique sheaf of complexes on the quasisyntomic site which is locally identified by the sheaf $R \mapsto T_p(R^*) = \lim_n \mu_{p^n}(R)$ of abelian groups. But this is exactly the characterization of $\mathbf{Z}_p(1)(-)$ by \cite[Proposition 7.17]{BMS2}, so the lemma follows.
\end{proof}

\begin{corollary}\label{cor:picperfectoid}
For a perfectoid ring $R$, the groups $\mathrm{Pic}(R)$ and $\mathrm{Pic}(R[1/p])$ are uniquely $p$-divisible.
\end{corollary}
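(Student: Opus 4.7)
The strategy is to deduce both assertions from Theorem~\ref{TateTwistPerfd}, using Kummer theory for $\mathrm{Pic}(R[1/p])$ and the tilting equivalence for $\mathrm{Pic}(R)$.

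First I would treat $\mathrm{Pic}(R[1/p])$.  Theorem~\ref{TateTwistPerfd} for $n=1$ identifies $R\Gamma(\mathrm{Spec}(R[1/p]),\mathbf{Z}_p(1))$ with $\mathbf{Z}_p(1)(R)$, and formula \eqref{ZpnR} presents the latter as the two-term complex $\phi^{-1}(d)A\xrightarrow{\phi/d-1}A$, placing it in cohomological degrees $[0,1]$.  Reducing modulo $p$ (using $\mathbf{Z}_p(1)\otimes^L_{\mathbf{Z}_p}\mathbf{F}_p\simeq \mu_p$ on $\mathrm{Spec}(R[1/p])$ since $p$ is invertible there) gives $H^i(\mathrm{Spec}(R[1/p]),\mu_p)=0$ for all $i\geq 2$.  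The $H^2$ row of the Kummer sequence $0\to \mu_p\to\mathbb{G}_m\xrightarrow{p}\mathbb{G}_m\to 0$ then reads
\[ 0\to \mathrm{Pic}(R[1/p])/p \hookrightarrow H^2(\mathrm{Spec}(R[1/p]),\mu_p)=0,\]
so $\mathrm{Pic}(R[1/p])$ is $p$-divisible.

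For the $p$-torsion-freeness of $\mathrm{Pic}(R[1/p])$, the $H^1$ row of Kummer gives
\[ R[1/p]^\times/(R[1/p]^\times)^p \hookrightarrow H^1(\mathrm{Spec}(R[1/p]),\mu_p) \twoheadrightarrow \mathrm{Pic}(R[1/p])[p],\]
so it suffices to show surjectivity of the first map.  Here I would trace through the explicit comparison built in the proof of Theorem~\ref{TateTwistPerfd}: after an \arc-localization (harmless by Corollary~\ref{PerfdVCohom}) we may assume $\mathbf{Z}_p[\zeta_{p^\infty}]\to R$, and the quasi-isomorphism $\alpha_1'$ of that proof identifies $H^1(\mu_p)$ with the Artin--Schreier-type cokernel $R^\flat[1/d^\flat]/(\phi-1)$.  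Each such class is represented by a concrete element of $R^\flat[1/d^\flat]$, whose sharp lift produces a unit in $R[1/p]^\times$ realising the same Kummer class, giving the required surjectivity.

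For $\mathrm{Pic}(R)$ I would invoke the tilting equivalence: finite projective modules over a perfectoid ring $R$ are equivalent to finite projective modules over its tilt $R^\flat$, so $\mathrm{Pic}(R)\cong \mathrm{Pic}(R^\flat)$.  Since $R^\flat$ is a perfect $\mathbf{F}_p$-algebra, the Frobenius $\phi\colon R^\flat\to R^\flat$ is an isomorphism, and $\phi^*L\cong L^{\otimes p}$ for any line bundle $L$; therefore multiplication by $p$ on $\mathrm{Pic}(R^\flat)$ is an isomorphism, and $\mathrm{Pic}(R)$ is uniquely $p$-divisible.

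The main obstacle will be the surjectivity argument in the second paragraph: carefully identifying, under the quasi-isomorphism of Theorem~\ref{TateTwistPerfd}, a class in the Artin--Schreier cokernel on the tilt with a Kummer class arising from an honest unit of $R[1/p]$.  The other two steps are essentially bookkeeping plus standard perfectoid tilting theory.
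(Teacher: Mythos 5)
Your arguments for unique $p$-divisibility of $\mathrm{Pic}(R)$ and for $p$-divisibility of $\mathrm{Pic}(R[1/p])$ match the paper. For $\mathrm{Pic}(R)$ the paper runs the same chain $\mathrm{Pic}(R)\cong\mathrm{Pic}(A_{\inf}(R))\cong\mathrm{Pic}(R^\flat)$ via deformation theory (the ``tilting equivalence'' for vector bundles over integral perfectoid rings is itself established this way) and then cites that perfect $\mathbf{F}_p$-algebras have uniquely $p$-divisible Picard groups, which is precisely your $\phi^*L\cong L^{\otimes p}$ observation. For $p$-divisibility of $\mathrm{Pic}(R[1/p])$ both arguments use Theorem~\ref{TateTwistPerfd}, formula~\eqref{ZpnR}, and the $H^2$ term of Kummer.

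The divergence is in the $p$-torsion-freeness of $\mathrm{Pic}(R[1/p])$, and this is where your proposal has a gap. The paper uses the first part: by Theorem~\ref{TateTwistPerfd} one has $H^1(\mathrm{Spec}(R[1/p]),\mu_p)\cong H^1(\mathbf{Z}_p(1)(R)/p)$, and the latter admits a Kummer-type surjection onto $\mathrm{Pic}(R)[p]$ that is compatible, under this isomorphism, with the \'etale Kummer surjection onto $\mathrm{Pic}(R[1/p])[p]$. Hence the restriction $\mathrm{Pic}(R)[p]\to\mathrm{Pic}(R[1/p])[p]$ is surjective, and since $\mathrm{Pic}(R)[p]=0$ by the first part, $\mathrm{Pic}(R[1/p])[p]=0$. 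Your plan instead tries to show directly that every class in $H^1(\mathrm{Spec}(R[1/p]),\mu_p)$ comes from a unit. But that statement is, via the very Kummer sequence you quote, \emph{equivalent} to $\mathrm{Pic}(R[1/p])[p]=0$, so it is not a reduction. The proposed route to it then has two genuine problems. First, \arc-localization is not harmless here: $\mathrm{Pic}(R[1/p])[p]=0$ and the surjectivity of $R[1/p]^\times/p\to H^1$ are statements about a fixed ring $R$, not \arc-local ones; Corollary~\ref{PerfdVCohom} only controls $S\mapsto S_\perfd$ as an \arc-sheaf and says nothing about $\mathrm{Pic}$ or $\mathbb{G}_m$, and surjectivity on global sections of a map of (even genuine) \arc-sheaves cannot be checked after \arc-localization. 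Second, the claimed identification of an Artin--Schreier class on $R^\flat[1/d^\flat]$ with a Kummer class of a unit in $R[1/p]$ via the sharp map is not actually explained, and it is not obvious how to make it work: $(\cdot)^\sharp$ is multiplicative but not additive, and the comparison map $\alpha_1$ in Theorem~\ref{TateTwistPerfd} comes from the cyclotomic trace, not from an explicit formula, as the paper itself notes. You should instead close the loop through the already-proved triviality of $\mathrm{Pic}(R)[p]$, as the paper does.
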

\begin{proof}
By deformation theory, we have $\mathrm{Pic}(R) \cong \mathrm{Pic}(A_{\inf}(R)) \cong \mathrm{Pic}(R^\flat)$, so the claim follows for $\mathrm{Pic}(R)$ as any perfect ring has a uniquely $p$-divisible Picard group.

For $\mathrm{Pic}(R[1/p])$, we shall use \cref{TateTwistPerfd} and Lemma~\ref{BMS2Zp1}. Consider the diagram
\[ \xymatrix{ \mathcal{P}\mathrm{ic}(R)^{\wedge}[-2] \ar[r]^{a, \ \simeq} \ar[d]^b & \mathbf{Z}_p(1)(R) \ar[d]^{c,\ \simeq} \\
\mathcal{P}\mathrm{ic}(R[1/p])^{\wedge}[-2]  \ar[r]^-d & R\Gamma(\mathrm{Spec}(R[1/p]), \mathbf{Z}_p(1)),}\]
where $a$ is the isomorphism from Lemma~\ref{BMS2Zp1}, $c$ is the isomorphism of Theorem~\ref{TateTwistPerfd}, $d$ is the classical Kummer map, and $b$ is the canonical map; by $p$-complete \arc-descent and reduction to the case $R=\mathcal{O}_C$ with $C/\mathbf{Q}_p$ complete and algebraically closed, one can check that this diagram is commutative. Moreover, the map $d[1]$ is the $p$-completion of the truncation map $\tau^{\leq 1} R\Gamma(\mathrm{Spec}(R[1/p]), \mathbf{G}_m) \to R\Gamma(\mathrm{Spec}(R[1/p]), \mathbf{G}_m)$, so $d$ is injective on all cohomology groups. As $a$ and $c$ are isomorphisms, it follows that $b$ and $d$ must also be isomorphisms. Reducing the isomorphism $b$ modulo $p$ gives an identification
\[  \mathcal{P}\mathrm{ic}(R)/p \simeq  \mathcal{P}\mathrm{ic}(R[1/p])/p.\]
Taking $H^0$ in this identification shows that $\mathrm{Pic}(R[1/p])/p \simeq \mathrm{Pic}(R)/p$; this group vanishes as $\mathrm{Pic}(R)$ is $p$-divisible, and thus $\mathrm{Pic}(R[1/p])$ is $p$-divisible. On the other hand, computing $H^{-1}$ for the previous isomorphism using the Bockstein sequences on both sides gives a surjection
\[  \mathrm{Pic}(R)[p] \to  \mathrm{Pic}(R[1/p])[p]\]
on $p$-torsion subgroups. By the unique $p$-divisibility of $\mathrm{Pic}(R)$, the left side is $0$, so the right is also $0$, whence  $\mathrm{Pic}(R[1/p])$ is uniquely $p$-divisible. 
\end{proof}

\newpage

\section{The almost purity theorem} 
\label{sec:APT}

The goal of this section is to use the perfectoidization functor $S\mapsto S_\perfd$ from \cref{generalperfoidization} to prove a general version of the almost purity theorem that includes Andr\'e's perfectoid Abhyankar lemma \cite{AndreAbhyankar}; this goal is accomplished in \S \ref{APT}. To accomplish this task, we need a notion of almost mathematics that deals with ramification with respect to any (finitely generated) ideal in a perfectoid ring; this is possible thanks to \cref{PerfectoidificationSurj}, and is the subject of \S \ref{ss:AlmostMathIdeal}.

\subsection{Almost mathematics with respect to any ideal}
\label{ss:AlmostMathIdeal}

Let $R$ be a perfectoid ring and let $J\subset R$ be a derived $p$-complete ideal with perfectoidization $J_\perfd = \ker(R\to (R/J)_\perfd)$. In this section, we study almost mathematics for $p$-complete $R$-modules with respect to the ideal $J_\perfd$.

\begin{definition} A derived $p$-complete $R$-module $M$ is $J$-almost zero if $J_\perfd M = 0$.
\end{definition}

When $J  = (p)$, we have $J_\perfd = \sqrt{pR}$, and the above notion then agrees with the usual notion of ``almost zero'' for modules over a perfectoid ring.

\begin{proposition} The subcategory of $J$-almost zero derived $p$-complete $R$-modules is stable under kernels, cokernels and extensions in the category of derived $p$-complete $R$-modules, and forms a $\otimes$-ideal. It is equivalent to the category of derived $p$-complete $(R/J)_\perfd$-modules.
\end{proposition}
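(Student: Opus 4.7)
The plan is to establish the category equivalence first and then deduce the stability properties from it; only closure under extensions requires a nontrivial ingredient beyond this equivalence, namely the idempotence of $J_\perfd$ as an ideal.

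First I would identify $(R/J)_\perfd = R/J_\perfd$: by the very definition $J_\perfd = \ker(R\to (R/J)_\perfd)$ combined with the surjectivity from \cref{PerfectoidificationSurj} (i.e.\ \cref{thm:B}(2)). Thus $J_\perfd M = 0$ is equivalent to the $R$-action on $M$ factoring through the surjection $R\twoheadrightarrow (R/J)_\perfd$, so that $M$ acquires a natural $(R/J)_\perfd$-module structure. Since derived $p$-completeness of a module depends only on the underlying abelian group with its multiplication-by-$p$ map (cf.\ \cite[Tag 091N]{Stacks}), restriction and extension of scalars along $R\to (R/J)_\perfd$ give mutually inverse equivalences between the category of derived $p$-complete $(R/J)_\perfd$-modules and the category of $J$-almost zero derived $p$-complete $R$-modules.

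Granted this equivalence, stability under kernels and cokernels is automatic since these are computed the same way over either ring. The $\otimes$-ideal property is also immediate: if $M$ is $J$-almost zero and $N$ is any derived $p$-complete $R$-module, then $M \widehat{\otimes}_R N$ inherits the $(R/J)_\perfd$-module structure from $M$ (the $R$-action on a tensor product factors through either factor), hence remains $J$-almost zero.

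The only nontrivial step is stability under extensions. For a short exact sequence $0\to M\to E\to N\to 0$ in derived $p$-complete $R$-modules with $M,N$ both $J$-almost zero, the relation $J_\perfd N = 0$ forces $J_\perfd E\subseteq M$, whence $J_\perfd^2 E = 0$. To conclude $J_\perfd E = 0$, one needs the classical idempotence $J_\perfd^2 = J_\perfd$. This is where the derived idempotence $J_\perfd\widehat{\otimes}_R^L J_\perfd \simeq J_\perfd$ (noted in \cref{ZariskiClosedStrongly}) enters: tensoring the cofiber sequence $J_\perfd\to R\to (R/J)_\perfd$ with $J_\perfd$ in the $p$-completed derived category yields $(R/J)_\perfd\widehat{\otimes}_R^L J_\perfd \simeq 0$, and taking $\pi_0$ gives that the derived $p$-completion of $J_\perfd/J_\perfd^2$ vanishes. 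The hard part will be deducing the actual classical vanishing $J_\perfd/J_\perfd^2 = 0$ from this, which requires tracking the $p$-completeness of the relevant Tor groups and using that $J_\perfd/J_\perfd^2$ is an $(R/J)_\perfd$-module over the perfectoid ring $(R/J)_\perfd$; this technical translation from derived to classical idempotence is the main obstacle of the proof.
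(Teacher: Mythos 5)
Your identification of the equivalence of categories is correct and essentially cost-free: since $R\to (R/J)_\perfd$ is surjective (\cref{PerfectoidificationSurj}), restriction of scalars from derived $p$-complete $(R/J)_\perfd$-modules to derived $p$-complete $R$-modules is fully faithful with essential image exactly the $J$-almost zero modules, and closure under kernels, cokernels, and the $\otimes$-ideal property follow formally. You also correctly isolate the one nontrivial assertion: closure under extensions in the ambient category.

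Where your argument has a genuine gap is in the treatment of extensions. You reduce to the \emph{classical} idempotence $J_\perfd^2 = J_\perfd$, and then try to deduce it from the \emph{derived} idempotence $J_\perfd \widehat\otimes^L_R J_\perfd \simeq J_\perfd$. But this deduction is not straightforward, and you acknowledge as much. Vanishing of the derived $p$-completion of $J_\perfd/J_\perfd^2$ does not by itself force $J_\perfd/J_\perfd^2 = 0$: the ideal $J_\perfd^2$ need not be derived $p$-complete (it is the image of the \emph{un}completed tensor $J_\perfd\otimes_R J_\perfd \to J_\perfd$), so the quotient $J_\perfd/J_\perfd^2$ need not be derived $p$-complete either, and a module whose derived $p$-completion vanishes can still be nonzero. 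Your proof therefore stops short of its target; the ``technical translation from derived to classical idempotence'' is precisely the missing step.

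The paper sidesteps this entirely by working one categorical level up. Instead of trying to show that the ideal is literally idempotent, it proves that the forgetful functor from derived $p$-complete $(R/J)_\perfd$-modules to derived $p$-complete $R$-modules preserves $R\Hom$ (and hence $\mathrm{Ext}^1$, which is what closure under extensions really requires). Concretely, for two derived $p$-complete $(R/J)_\perfd$-modules $M,N$, completed extension/restriction adjunction gives
\[
R\Hom_R(M,N) = R\Hom_{(R/J)_\perfd}(M\widehat{\otimes}^L_R (R/J)_\perfd, N),
\]
and then the whole matter reduces to showing $M \to M\widehat{\otimes}^L_R (R/J)_\perfd$ is an isomorphism, i.e.\ to $(R/J)_\perfd\widehat{\otimes}^L_R (R/J)_\perfd \cong (R/J)_\perfd$ --- which is exactly the derived idempotence established in \cref{GetAlmostSetup}. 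No passage back to a classical statement is needed. If you want to salvage your approach, the cleanest fix is to abandon the classical-idempotence route and instead observe that closure under extensions is precisely the statement that the fully faithful inclusion preserves $\mathrm{Ext}^1$; deduce that from the derived idempotence via the adjunction as above.
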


\begin{proof} Consider the ``forgetful" functor from the category of derived $p$-complete $(R/J)_\perfd$-modules to the category of derived $p$-complete $R$-modules. To prove the proposition, it suffices to prove that this functor is fully faithful and preserves $\mathrm{Ext}^1$. We will in fact prove that it preserves $R\Hom$. Thus let $M$ and $N$ be two derived $p$-complete $(R/J)_\perfd$-modules. Then
\[
R\Hom_R(M,N) = R\Hom_{(R/J)_\perfd}(M\widehat{\otimes}^L_R (R/J)_\perfd,N)\ .
\]
Thus it is enough to see that $M\to M\widehat{\otimes}^L_R (R/J)_\perfd$ is an isomorphism. As $M$ is an $(R/J)_\perfd$-module, it suffices to prove this when $M=(R/J)_\perfd$, which is part of the next lemma.
\end{proof}

\begin{lemma} 
\label{GetAlmostSetup}
The multiplication maps
\[
J_\perfd\widehat{\otimes}_R^L J_\perfd\to J_\perfd\ ,\ (R/J)_\perfd\widehat{\otimes}^L_R (R/J)_\perfd\to (R/J)_\perfd
\]
are quasi-isomorphisms, and
\[
J_\perfd\widehat{\otimes}_R^L (R/J)_\perfd = 0\ .
\]
\end{lemma}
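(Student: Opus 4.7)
The plan is to deduce all three statements from a single key idempotency fact: the multiplication map
\[
\mu : (R/J)_\perfd \widehat{\otimes}_R^L (R/J)_\perfd \to (R/J)_\perfd
\]
is a quasi-isomorphism. Granting this, the short exact sequence of derived $p$-complete $R$-modules
\[
0 \to J_\perfd \to R \to (R/J)_\perfd \to 0,
\]
whose surjectivity is \cref{PerfectoidificationSurj} applied to the semiperfectoid ring $R/J$, will give the remaining two parts by tensoring with $(R/J)_\perfd$ and $J_\perfd$ in turn.

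To prove idempotency, I would apply the symmetric monoidality of perfectoidization (\cref{PerfdSymmMon}) to the cospan $R/J \gets R \to R/J$, obtaining a natural identification
\[
(R/J)_\perfd \widehat{\otimes}_R^L (R/J)_\perfd \simeq \bigl( (R/J) \widehat{\otimes}_R^L (R/J)\bigr)_\perfd.
\]
Set $T := (R/J) \widehat{\otimes}_R^L (R/J)$, which is a derived $p$-complete simplicial $R$-algebra with $\pi_0 T = R/J \otimes_R R/J = R/J$. By \cref{PerfdPerfectSite}, $T_\perfd \simeq \lim_{T \to R'} R'$, where $R'$ ranges over perfectoid (hence discrete) rings under $T$; any such map factors through $\pi_0 T = R/J$, so $T_\perfd \simeq (R/J)_\perfd$, which gives the desired idempotency.

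Given this, tensoring the short exact sequence above with $(R/J)_\perfd$ over $R$ yields a cofiber sequence
\[
J_\perfd \widehat{\otimes}_R^L (R/J)_\perfd \to (R/J)_\perfd \to (R/J)_\perfd \widehat{\otimes}_R^L (R/J)_\perfd,
\]
whose second map is the unit $x \mapsto x \otimes 1$; since this unit is a right inverse to the equivalence $\mu$, it is itself an equivalence, forcing the first term to vanish. This is part~(3). Tensoring instead with $J_\perfd$ produces a cofiber sequence
\[
J_\perfd \widehat{\otimes}_R^L J_\perfd \to J_\perfd \to J_\perfd \widehat{\otimes}_R^L (R/J)_\perfd,
\]
in which the third term is now known to be $0$, so the first arrow is an equivalence, giving part~(1). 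Part~(2) is simply idempotency repackaged.

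The one point that requires care is the claim that perfectoidization depends only on $\pi_0$ of its argument, which is the content needed to identify $T_\perfd$ with $(R/J)_\perfd$. This is not obvious from the definition via derived prismatic cohomology, but follows cleanly from the identification in \cref{PerfdPerfectSite} of $S_\perfd$ with a limit over discrete perfectoid targets under $S$.
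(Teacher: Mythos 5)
Your proof is correct, but it takes a genuinely different route from the paper's. The paper makes the same initial reduction (all three statements boil down to the idempotency of $(R/J)_\perfd$ over $R$), observes that the map is an isomorphism on $H^0$ using the surjectivity from \cref{PerfectoidificationSurj}, and then proves the general fact that for any span $S \gets R \to T$ of perfectoid rings, the derived $p$-completed tensor product $S \widehat{\otimes}^L_R T$ is discrete and perfectoid. That last step passes to the associated perfect prisms $(B,IB) \gets (A,I) \to (C,IC)$ and invokes Tor-independence of perfect $\mathbf{F}_p$-algebras after derived reduction modulo $p$. Your argument bypasses the discreteness question entirely: you invoke symmetric monoidality of $(-)_\perfd$ (\cref{PerfdSymmMon}) to rewrite $(R/J)_\perfd \widehat{\otimes}^L_R (R/J)_\perfd$ as $((R/J)\widehat{\otimes}^L_R(R/J))_\perfd$, and then identify the latter with $(R/J)_\perfd$ via the observation (after \cref{PerfdPerfectSite}) that $(-)_\perfd$ sees only maps to discrete perfectoid targets, hence factors through $\pi_0$ in the relevant sense. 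This is slicker, but leans on heavier infrastructure (\cref{PerfdSymmMon} rests on the Hodge--Tate comparison for derived prismatic cohomology); the paper's argument is more hands-on and produces, as a byproduct, the independently useful fact that derived tensor products of perfectoid rings over a perfectoid base are discrete and perfectoid. One small imprecision worth flagging: the claim ``$\pi_0 T = R/J$'' for $T = (R/J)\widehat{\otimes}^L_R(R/J)$ is not literally automatic, since derived $p$-completion of a connective complex can perturb $\pi_0$ when $\pi_0$ fails to be classically $p$-complete (e.g.\ when $J$ is not finitely generated). What you actually need, and what is true, is that maps from $T$ to any discrete, derived $p$-complete ring are in bijection with maps from $R/J$ to that ring --- this follows by the universal property of derived $p$-completion applied before passing to $\pi_0$ --- and that suffices to identify the perfect prismatic sites and hence the perfectoidizations.
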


\begin{proof} It is clear that the first two maps are isomorphism precisely when $J_\perfd\widehat{\otimes}_R^L (R/J)_\perfd = 0$, so it suffices to prove that
\[
(R/J)_\perfd\widehat{\otimes}^L_R (R/J)_\perfd\to (R/J)_\perfd
\]
is a quasi-isomorphism. This is clearly true on $H^0$, so it suffices to prove more generally that for any maps $S\leftarrow R\to T$ of perfectoid rings, the ring $S\widehat{\otimes}^L_R T$ is concentrated in degree $0$ and perfectoid. Let $(B,IB)\leftarrow (A,I)\to (C,IC)$ be the diagram of perfect prisms corresponding to $S\leftarrow R\to T$. Then $D:=B\widehat{\otimes}^L_A C$ is concentrated in degree $0$ and a perfect $\delta$-ring, as this can be checked after (derived) reduction modulo $p$, where it follows from Tor-independence of perfect rings, cf.~e.g.~\cite[Lemma 3.16]{BhattScholzeWitt}. In particular, $(D,ID)$ is a perfect prism. In particular, $ID$ defines a Cartier divisor and we have $D/ID = S\widehat{\otimes}^L_R T$, giving the desired statement.
\end{proof}

\begin{proposition}\label{prop:derivedalmostzero} A derived $p$-complete complex $M$ of $R$-modules is $J$-almost zero, i.e.~$H^i(M)$ is $J$-almost zero for all $i$, if and only if
\[
J_\perfd\widehat{\otimes}^L_R M = 0\ .
\]
The category of all such is equivalent to the category $D_{p\text{-comp}}((R/J)_\perfd)$ of derived $p$-complete complexes of $(R/J)_\perfd$-modules, and is a thick tensor ideal in the category of all derived $p$-complete complexes of $R$-modules.
\end{proposition}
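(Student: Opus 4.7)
The plan is to exploit the triangle
\[
J_\perfd \to R \to (R/J)_\perfd
\]
in $D_{p\text{-comp}}(R)$, which comes from the surjectivity of $R\to (R/J)_\perfd$ (\cref{PerfectoidificationSurj}), together with the identities in \cref{GetAlmostSetup}. These identities exhibit $(R/J)_\perfd$ as an idempotent algebra in $D_{p\text{-comp}}(R)$ with ``complementary ideal'' $J_\perfd$. In particular, upon tensoring the triangle with any $M \in D_{p\text{-comp}}(R)$, one sees that the subcategory $\mathcal T \subset D_{p\text{-comp}}(R)$ cut out by $J_\perfd\widehat{\otimes}^L_R M = 0$ consists exactly of those $M$ for which the natural map $M\to (R/J)_\perfd\widehat{\otimes}^L_R M$ is an isomorphism.

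The main content is to identify $\mathcal T$ with the full subcategory of complexes having $J$-almost zero cohomology. The forward direction is immediate: if $M\in\mathcal T$, then $M\simeq (R/J)_\perfd\widehat{\otimes}^L_R M$ as above, so each $H^i(M)$ is a $(R/J)_\perfd$-module, hence $J$-almost zero. For the converse, I would first observe that $\mathcal T$ is closed under shifts, cofibre sequences, direct summands, and colimits; the colimit-closure is immediate because derived $p$-completion is a left adjoint and so $J_\perfd \widehat{\otimes}^L_R(-)$ commutes with colimits. Since $\mathcal T$ visibly contains $(R/J)_\perfd$ by the third identity in \cref{GetAlmostSetup}, these closure properties show $\mathcal T$ contains every discrete derived $p$-complete $(R/J)_\perfd$-module placed in a single cohomological degree. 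Combining with the Postnikov presentation $M\simeq\colim_n\tau^{\leq n}M$ (valid in $D_{p\text{-comp}}(R)$) and induction on Postnikov stages, $\mathcal T$ contains every $M\in D_{p\text{-comp}}(R)$ whose cohomology is $J$-almost zero.

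For the equivalence with $D_{p\text{-comp}}((R/J)_\perfd)$, I would combine the $R\Hom$-computation of the preceding proposition (which gives full faithfulness of the forgetful functor) with the characterization of the essential image just established. The thick $\otimes$-ideal assertion is then formal: the condition $J_\perfd\widehat{\otimes}^L_R M = 0$ is preserved by shifts, cones, fibres, and summands (a triangulated-category statement), and the tensor-ideal property follows from the associativity isomorphism $J_\perfd\widehat{\otimes}^L_R (M\widehat{\otimes}^L_R N) \simeq (J_\perfd\widehat{\otimes}^L_R M)\widehat{\otimes}^L_R N$. The principal technical point I anticipate is the passage to unbounded $M$ in the converse direction; this is handled cleanly by the colimit-compatibility of derived $p$-completion, which reduces the unbounded case to the bounded case already treated.
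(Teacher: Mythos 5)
Your reformulation in terms of the triangle $J_\perfd \to R \to (R/J)_\perfd$ and the full subcategory $\mathcal T$ of complexes fixed by the idempotent $(R/J)_\perfd\widehat{\otimes}^L_R(-)$ is essentially the paper's argument; the forward implication and the thick tensor-ideal assertion are fine. The gap is in the converse, at precisely the point you flag as a ``technical point.'' You establish closure of $\mathcal T$ under colimits, deduce that $\mathcal T$ contains all discrete $p$-complete $(R/J)_\perfd$-modules, and then invoke $M\simeq\colim_n\tau^{\leq n}M$ with ``induction on Postnikov stages.'' But each $\tau^{\leq n}M$ is only bounded above; when $M$ is unbounded below, no Postnikov induction terminates, because the colimit presentation alone never produces a bounded complex to induct from. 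Colimit-closure does not repair this, contrary to your closing sentence: the kernel of the colimit-preserving functor $J_\perfd\widehat{\otimes}^L_R(-)$ has no automatic closure under \emph{limits}, and it is limit-closure that is needed to write $\tau^{\leq n}M\simeq\lim_m\tau^{\geq -m}\tau^{\leq n}M$ and descend from bounded truncations.

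The fix is exactly the ingredient the paper supplies. Using the first identity of \cref{GetAlmostSetup}, any $M\in\mathcal T$ satisfies $M\simeq(R/J)_\perfd\widehat{\otimes}^L_R M$, exhibiting $M$ as the restriction of scalars of a derived $p$-complete $(R/J)_\perfd$-module complex; combined with the full faithfulness you already cite, this identifies $\mathcal T$ with the essential image of $D_{p\text{-comp}}((R/J)_\perfd)\hookrightarrow D_{p\text{-comp}}(R)$. Restriction of scalars is a right adjoint (to the $p$-completed extension of scalars), so this essential image is a reflective subcategory and is closed under limits; it is closed under colimits by the computation you already gave. Equivalently, the identities of \cref{GetAlmostSetup} say that $(R/J)_\perfd\widehat{\otimes}^L_R(-)$ is a smashing localization, and the local objects of any Bousfield localization are closed under limits. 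Once you record limit-closure of $\mathcal T$ alongside colimit-closure, your argument goes through: an arbitrary $J$-almost zero $M$ is the colimit of its $\tau^{\leq n}M$, each of which is the limit of its bounded truncations $\tau^{\geq -m}\tau^{\leq n}M$, each of which is a finite extension of shifted $J$-almost zero modules.
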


\begin{proof} We have already proved that $D_{p\text{-comp}}((R/J)_\perfd)\to D_{p\text{-comp}}(R)$ is fully faithful. Clearly, any object in the image is $J$-almost zero; conversely, writing any object as a limit and colimit of its truncations, and using that the inclusion commutes with all limits and colimits, shows that any $J$-almost zero complex is in the image. For all objects $M$ in the image,
\[
J_\perfd\widehat{\otimes}^L_R M = J_\perfd\widehat{\otimes}^L_R (R/J)_\perfd\otimes_{(R/J)_\perfd} M = 0\ .
\]
Conversely, if $J_\perfd\widehat{\otimes}^L_R M = 0$, then $M\to M\widehat{\otimes}^L_R (R/J)_\perfd$ is a quasi-isomorphism and so $M$ is $J$-almost zero.
\end{proof}

The following connectivity criterion in $J$-almost mathematics shall be useful later. 

\begin{lemma}
\label{ConnectivityCrit}
A $p$-complete complex $M \in D(R)$ is connective if and only if $M \widehat{\otimes}^L_R R/J_\perfd$ is connective and $M$ is $J$-almost connective (i.e., $H^i(M)$ is $J$-almost zero for $i > 0$).
\end{lemma}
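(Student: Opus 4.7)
The plan is to reduce the lemma to a statement about cofiber sequences using the short exact sequence
\[
0 \to J_\perfd \to R \to (R/J)_\perfd \to 0,
\]
which tensored with $M$ gives a triangle
\[
M \widehat{\otimes}^L_R J_\perfd \to M \to M \widehat{\otimes}^L_R R/J_\perfd
\]
in the category of $p$-complete complexes. The forward direction is immediate: if $M$ is connective, then $H^i(M)=0$ is trivially $J$-almost zero for $i>0$, and $M\widehat{\otimes}^L_R R/J_\perfd$ is connective because tensoring a connective complex with a discrete module preserves connectivity and derived $p$-completion preserves connectivity (by derived Nakayama).

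For the reverse direction, since the right-hand term of the triangle is connective by hypothesis, it suffices to show that $N := M \widehat{\otimes}^L_R J_\perfd$ is connective. To do this, I would consider the truncation triangle $\tau^{\leq 0} M \to M \to \tau^{\geq 1} M$ and apply $J_\perfd \widehat{\otimes}^L_R -$. The contribution from $\tau^{\leq 0} M$ is connective (again since $\tau^{\leq 0} M$ is connective and derived $p$-completion preserves connectivity). It therefore suffices to prove that $J_\perfd \widehat{\otimes}^L_R \tau^{\geq 1} M = 0$, i.e.\ that $\tau^{\geq 1} M$ is $J$-almost zero in the sense of \cref{prop:derivedalmostzero}.

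By hypothesis each $H^i(M)$ for $i\geq 1$ is killed by $J_\perfd$, so it is naturally a $(R/J)_\perfd$-module. By \cref{GetAlmostSetup}, $J_\perfd \widehat{\otimes}^L_R (R/J)_\perfd = 0$, so $J_\perfd \widehat{\otimes}^L_R H^i(M) = 0$ for $i\geq 1$. Since $\tau^{\geq 1} M$ is bounded below, it can be written as the filtered colimit $\colim_n \tau^{[1,n]} M$ of its bounded truncations, each of which is a finite iterated extension of the $H^i(M)$'s. The class of $J$-almost zero complexes is a thick $\otimes$-ideal closed under colimits (\cref{prop:derivedalmostzero}, using that tensor product and derived $p$-completion commute with colimits), so $\tau^{\geq 1} M$ is $J$-almost zero, completing the proof.

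The main obstacle I anticipate is the subtlety that $M$ need not be bounded above, so one must be careful with the interaction of derived $p$-completion, colimits, and the tensor product with $J_\perfd$ when treating $\tau^{\geq 1} M$. This is handled cleanly by the colimit-stability of the $J$-almost zero subcategory established in \cref{prop:derivedalmostzero}, which is the key technical input.
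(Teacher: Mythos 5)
Your proof is correct and follows essentially the same route as the paper: the same triangle $J_\perfd\widehat{\otimes}^L_R M \to M \to (R/J_\perfd)\widehat{\otimes}^L_R M$, the same truncation step, and the same ultimate appeal to \cref{prop:derivedalmostzero}. The only difference is that the paper cites \cref{prop:derivedalmostzero} directly to conclude $J_\perfd\widehat{\otimes}^L_R\tau^{>0}M = 0$ (noting that $\tau^{>0}M$ is a derived $p$-complete complex with all cohomology $J$-almost zero), whereas you re-derive this via a colimit-of-truncations argument that is effectively internal to the proof of that proposition; one small slip in your parenthetical is that ``derived $p$-completion commutes with colimits'' is false in $D(R)$ — the correct statement is that $J_\perfd\widehat{\otimes}^L_R(-)$ commutes with colimits computed in $\mathcal D_{p\text{-comp}}(R)$ (i.e.\ $p$-completed colimits), which is what the argument actually needs.
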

\begin{proof}
The ``only if'' direction is clear as $(-) \widehat{\otimes}^L_R R/J_\perfd$ preserves connectivity. For the converse, fix a derived $p$-complete complex $M \in D(R)$. We have a triangle
\[
J_\perfd\widehat{\otimes}^L_R M\to M\to (R/J_\perfd) \widehat{\otimes}^L_R M, 
\]
so it suffices to see that if $M$ is $J$-almost connective then $J_\perfd\widehat{\otimes}^L_R M$ is connective (the converse is also true, and clear). But in the triangle
\[
J_\perfd\widehat{\otimes}^L_R\tau^{\leq 0} M\to J_\perfd\widehat{\otimes}^L_R M\to J_\perfd\widehat{\otimes}^L_R\tau^{>0} M
\]
the last term vanishes by Proposition~\ref{prop:derivedalmostzero} and the first term is connective.
\end{proof}

The following approach to Galois coverings in the $J$-almost category shall prove useful. 

\begin{definition}\label{JalmostGalois} A $p$-complete $R$-algebra $S$ equipped with an action of a finite group $G$ is a $J$-almost $G$-Galois extension if the maps
\[
R\to R\Gamma(G,S)\ ,\ S\widehat{\otimes}_R^L S\to \prod_G S
\]
are $J$-almost isomorphisms, i.e.~the cones are $J$-almost zero.
\end{definition}

\begin{remark} The conditions imply similar statements modulo $p^n$, which then by passing to $H^0$ imply that
\[
R/p^n\to (S/p^n)^G\ ,\ S/p^n\otimes_{R/p^n} S/p^n\to \prod_G S/p^n
\]
are $J$-almost isomorphisms.
\end{remark}

Note that for each $n$, $(R/p^n,J_{\perfd,n})$ defines an almost setting (called a {\em basic setup} in \cite[\S 2.1.1]{GabberRamero}), where $J_{\perfd,n}$ is the image of $J_\perfd$ in $R/p^n$ (i.e., $J_{\perfd,n}$ is an idempotent ideal of $R/p^n$). If $J=(g)$ is generated by one element, then it satisfies the hypothesis of \cite[\S 2.5.15]{GabberRamero} that $J_{\perfd,n}\otimes_{R/p^n} J_{\perfd,n}$ is a flat $R/p^n$-module: Indeed, when $g$ admits $p$-power roots, this is given by $\colim_{g^{1/p^i-1/p^{i-1}}} R/p^n$.\footnote{For this, we have to see that
\[
\colim_{g^{1/p^i - 1/p^{i-1}}} R/p^n\to J_\perfd/p^n
\]
is an isomorphism. Surjectivity is clear. For injectivity, assume that $a\in R$ is an element such that when considered as an element in the $i$-th copy of $R/p^n$ it maps to $0$ in $J_\perfd/p^n$. This means that $g^{1/p^i} a = p^n g^{1/p^k} b$ for some $k$ (which we can assume to be larger than $i$) and $b\in R$. In particular
\[
g^{1/p^k}(g^{1/p^i-1/p^k}a-p^nb) = 0\ .
\]
As $R$ is reduced, this implies that also
\[
g^{1/p^{k+1}}(g^{1/p^i-1/p^k}a-p^nb) = 0\ ,
\]
or in other words
\[
g^{1/p^i - 1/p^k + 1/p^{k+1}}a = p^n g^{1/p^{k+1}} b\in p^nR\ ,
\]
which means that the image of $a$ in $\colim_{g^{1/p^i-1/p^{i+1}}} R/p^n$ is equal to $0$.}

\begin{proposition}[{\cite[Proposition 1.9.1]{AndreAbhyankar}}]\label{GaloisAlmostFet} Let $S$ be a $p$-complete $R$-algebra with $G$-action that is a $J$-almost $G$-Galois cover. Then $R/p^n\to S/p^n$ is almost finite projective and almost unramified with respect to $(R/p^n,J_{\perfd,n})$. More generally, if for a subgroup $H\subset G$ the map $S^\prime=S^H\to S$ is a $J$-almost $H$-Galois cover, then $R/p^n\to S^\prime/p^n$ is almost finite projective and almost unramified.
\end{proposition}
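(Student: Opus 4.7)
My plan is to adapt Andr\'e's proof of \cite[Proposition 1.9.1]{AndreAbhyankar} to the almost setting $(R/p^n, J_{\perfd,n})$, using the fact (noted in the discussion preceding the proposition) that this is a valid ``basic setup'' in the sense of Gabber-Ramero. First I would reduce everything modulo $p^n$: by the remark following \cref{JalmostGalois}, the $J$-almost $G$-Galois hypothesis yields that both
\[ R/p^n \to R\Gamma(G, S/p^n) \qquad \text{and} \qquad (S/p^n) \otimes^L_{R/p^n} (S/p^n) \to \prod_G S/p^n \]
are $J$-almost isomorphisms in the derived category of $R/p^n$-modules.

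For the first assertion, I would verify in turn almost flatness, almost unramifiedness, and almost finite projectivity of $R/p^n \to S/p^n$, which is a direct transcription of classical Galois theory into the almost setting along the lines of \cite[Ch.~3]{GabberRamero}. Almost flatness follows because the second condition shows $(S/p^n) \otimes^L_{R/p^n} (S/p^n)$ is almost discrete, while the first condition provides an almost $R/p^n$-linear splitting $R/p^n \to S/p^n \to R/p^n$ (via the trace coming from $R/p^n \simeq (S/p^n)^G$ almost); these together force $(S/p^n) \otimes^L_{R/p^n} -$ to be almost exact. Almost unramifiedness: the multiplication $(S/p^n) \otimes_{R/p^n} (S/p^n) \to S/p^n$ corresponds, almost, to the projection $\prod_G S/p^n \to S/p^n$ onto the identity component, so the diagonal ideal is almost generated by the obvious idempotent. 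Almost finite projectivity then follows from the classical Galois trace-pairing argument: the $G$-orbit decomposition supplies an almost basis and the almost trace supplies an almost dual basis for $S/p^n$ over $R/p^n$.

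For the second assertion, I would apply the first part to the $H$-Galois cover $S^\prime = S^H \to S$ (which is part of the hypothesis), obtaining that $S^\prime/p^n \to S/p^n$ is almost finite projective and almost unramified. Then I would descend along the almost faithfully flat map $R/p^n \to S/p^n$ using the chain of almost isomorphisms
\[ (S^\prime/p^n) \otimes^L_{R/p^n} (S/p^n) \simeq \bigl((S/p^n) \otimes^L_{R/p^n} (S/p^n)\bigr)^H \simeq \Bigl(\prod_G S/p^n\Bigr)^H \simeq \prod_{H \backslash G} S/p^n, \]
where the first step uses $S^\prime/p^n \simeq (S/p^n)^H$ almost together with almost flatness of $S/p^n$ over $R/p^n$ (just established) to commute $(-)^H$ with the tensor product, and the second step uses the $G$-Galois decomposition. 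The right-hand side is plainly almost finite projective and almost unramified over $S/p^n$, so $R/p^n \to S^\prime/p^n$ inherits these properties by almost faithfully flat descent along $R/p^n \to S/p^n$.

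The main obstacle will be making the almost Galois theory fully rigorous with respect to the ideal $J_{\perfd,n}$: in particular, verifying the commutation of $(-)^H$ with tensor products in this almost setting and the stability of almost finite projectivity/unramifiedness under almost faithfully flat descent. These are standard in Gabber-Ramero's framework but need to be carried over carefully to our basic setup, with \cref{GetAlmostSetup} together with the flatness of $J_{\perfd,n} \otimes_{R/p^n} J_{\perfd,n}$ (noted before the proposition in the principal case) as key inputs.
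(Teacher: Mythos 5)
The paper does not supply its own proof of this proposition — it cites it directly as \cite[Proposition 1.9.1]{AndreAbhyankar} — so you are reconstructing Andr\'e's argument, and the structure you outline (idempotent from the second Galois condition $\Rightarrow$ almost unramified; trace pairing $\Rightarrow$ almost finite projective; faithfully flat descent from $S$ to $R$ for the $S^H$ case) is broadly the standard one from Gabber--Ramero's almost Galois theory.

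There is, however, a genuine gap in your almost-flatness step. You claim that ``the first condition provides an almost $R/p^n$-linear splitting $R/p^n \to S/p^n \to R/p^n$ via the trace.'' This is false: the trace $T(s)=\sum_{g\in G}g(s)$ composed with the unit $R\to S$ is multiplication by $|G|$, not the identity, and $|G|$ need not be invertible (indeed it typically is not, since $G$ often has order divisible by $p$). So the trace does \emph{not} split $R\to S$, and this route to flatness does not work. The fix is to reverse the order of the deductions: establish almost finite projectivity first, directly from the trace pairing. Concretely, lift the idempotent to an almost-idempotent $e=\sum_i a_i\otimes b_i\in S\otimes_R S$ mapping to the unit component of $\prod_G S$; the identity $\sum_i a_i\,g(b_i)=\delta_{g,1}$ (almost) then gives $s = \sum_i T(sb_i)\,a_i$ for all $s$, exhibiting $S$ (almost) as a retract of a finite free $R$-module, with \emph{no} flatness hypothesis. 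Almost flatness is then a consequence of almost finite projectivity, not an input.

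Two further points, less serious. First, your argument for the $S^H$ case needs $R/p^n\to S/p^n$ to be almost \emph{faithfully} flat in order to descend; almost finite projectivity only gives almost flatness. One has to add the observation that $M\widehat{\otimes}^L_RS\simeq 0$ almost implies $M\simeq 0$ almost: since $G$ is finite, $R\Gamma(G,-)$ is computed by a bounded-below complex with terms finite products of $S$, so $M\widehat{\otimes}^L_R R\Gamma(G,S)\simeq R\Gamma(G,M\widehat{\otimes}^L_R S)$, and the left side is almost $M$ by the first Galois condition. Second, with the convention $x\otimes y\mapsto (x\,g(y))_g$ the $H$-invariants of $\prod_G S$ for the action on the second tensor factor are tuples constant on right cosets, so the answer is $\prod_{G/H}S$, not $\prod_{H\backslash G}S$; this does not affect the argument since the index is $[G:H]$ either way, but the convention should be kept straight.
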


Note that \cite[\S 1.1]{AndreAbhyankar} has a standing flatness assumption on $\mathfrak{m} \otimes_{\mathfrak{A}} \mathfrak{m}$, rendering it inapplicable directly in the context of Proposition~\ref{GaloisAlmostFet}. To circumvent this, one checks directly that this assumption is not necessary for the proof of \cite[Proposition 1.9.1]{AndreAbhyankar}; alternately, one can reduce to the flat case using the observation recalled before the statement of Proposition~\ref{GaloisAlmostFet} as well as the fact that an object in $D_{p-comp}(R)$ is  $J$-almost zero if and only if it is $(g)$-almost  zero for all $g \in J$.

\subsection{The almost purity theorem}
\label{APT}

Our goal is to prove the following version of the almost purity theorem (and Andr\'e's perfectoid Abhyankar lemma) over a perfectoid ring $R$, handling ramification along arbitrary closed subsets of $\Spec(R)$.

\begin{theorem}\label{GeneralAlmostPurity} Let $R$ be a perfectoid ring, $J\subset R$ a finitely generated ideal and $J_\perfd = \ker(R\to (R/J)_\perfd)$. Let $S$ be a finitely presented finite $R$-algebra such that $\Spec(S)\to \Spec(R)$ is finite \'etale outside $V(J)$. Then $S_\perfd$ is discrete and a perfectoid ring, and the map $S\to S_\perfd$ is an isomorphism away from $V(J)$. Moreover, for $n > 0$, the map $R/p^n \to S_\perfd/p^n$ is almost finite projective and almost unramified with respect to the almost setting $(R/p^n,J_{\perfd,n})$.

In fact, if $S$ admits a $G$-action for some finite group $G$ such that $\Spec(S)\to \Spec(R)$ is a $G$-Galois cover outside $V(J)$, then $R\to S_\perfd$ is a $J$-almost $G$-Galois cover in the sense of Definition~\ref{JalmostGalois}.
\end{theorem}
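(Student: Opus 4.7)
My plan is to first reduce the theorem to the final (Galois) assertion, and then establish the Galois case by a large base change combined with the excision result of \cref{PerfdBlowup}. For the reduction, given $R \to S$ finite \'etale outside $V(J)$, I will construct a finite $R$-algebra $\tilde S$ with an action of a finite group $G$ such that $\tilde S[1/J]/R[1/J]$ is a Galois closure of $S[1/J]/R[1/J]$ and $S$ identifies (after normalization) with $\tilde S^H$ for some subgroup $H \subset G$; this is a routine integral-closure argument. Granting the Galois assertion for $(\tilde S, G)$, Proposition~\ref{GaloisAlmostFet} then yields the almost finite projective and almost unramified assertions for $R/p^n \to (\tilde S_\perfd)^H/p^n$, and the identification of $S_\perfd$ with $(\tilde S_\perfd)^H$ in the $J$-almost sense follows from the universal property of the perfectoidization together with the fact that $H$-invariants of perfectoid rings (in the relevant \arc-setting) are again perfectoid.

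For the Galois case itself, I will use Andr\'e's flatness lemma (\cref{AndreFlatness}) to choose a $p$-completely faithfully flat perfectoid cover $R \to R_\infty$ with $R_\infty$ absolutely integrally closed. By the symmetric monoidality of $(-)_\perfd$ (\cref{PerfdSymmMon}) and \arc-descent (\cref{PerfdVCohom}), it suffices to prove the Galois statement after base changing along $R \to R_\infty$. Set $\tilde T := S \widehat{\otimes}^L_R R_\infty$. For each generator $g \in J$, the ring $R_\infty[1/g]$ is again absolutely integrally closed (monic polynomials can be cleared of the denominator via the substitution $y = g^k x$), so the $G$-Galois \'etale cover $\tilde T[1/g]/R_\infty[1/g]$ splits as $\prod_G R_\infty[1/g]$, and these splittings are compatible over overlaps; this yields a canonical $G$-equivariant integral $R_\infty$-algebra map
\[ \tilde T \to T_\infty := \prod_G R_\infty \]
that is an isomorphism outside $V(J)$.

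Now I apply excision (\cref{PerfdBlowup}) to $\tilde T \to T_\infty$, producing a pullback square
\[ \xymatrix{
\tilde T_\perfd \ar[r] \ar[d] & (T_\infty)_\perfd \ar[d] \\
(\tilde T/J)_\perfd \ar[r] & (T_\infty/J)_\perfd.
} \]
Since $T_\infty$ is perfectoid, $(T_\infty)_\perfd = T_\infty$, so three corners are discrete; combined with the coconnectivity of $\tilde T_\perfd$ (\cref{PerfdCoconnective}), the fourth corner $\tilde T_\perfd$ is also discrete. By \cref{PerfdDiscreteUniversal}, $\tilde T_\perfd$ is thus a perfectoid ring receiving the universal perfectoid quotient map from $\tilde T$. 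Tracking the $G$-action and using that the multiplication map $\tilde T_\perfd \widehat{\otimes}^L_{R_\infty} \tilde T_\perfd \to \prod_G \tilde T_\perfd$ and the group-cohomology comparison $R_\infty \to R\Gamma(G, \tilde T_\perfd)$ become isomorphisms after inverting any $g \in J$ (both reducing to the analogous identities for $T_\infty$), one concludes that $R_\infty \to \tilde T_\perfd$ is a $J$-almost $G$-Galois cover in the sense of Definition~\ref{JalmostGalois}.

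The main obstacle will be carrying the $J$-almost $G$-Galois structure from $R_\infty$ back down to $R$ along the \arc-cover: this requires that the ideal $J_\perfd \cdot R_\infty$ agree with $(JR_\infty)_\perfd$ and that the formation of the \v{C}ech nerve computing $S_\perfd$ from $\tilde T_\perfd$ respects the almost structure, both of which should follow from \cref{PerfdSymmMon} and \cref{GetAlmostSetup}, but demand careful verification. A secondary subtlety is the integral Galois-closure construction in the reduction step, which must be performed in our non-noetherian reduced setting; this should be handled by working inside the product of absolutely integrally closed valuation rings appearing in $\arc$-local models of $R$.
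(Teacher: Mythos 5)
Your proposal follows the same broad strategy as the paper — reduce to the $G$-Galois case, invoke Andr\'e's flatness lemma to base change to an absolutely integrally closed perfectoid cover $R_\infty$, and then use the excision result \cref{PerfdBlowup} — but there is a genuine gap in the middle step. You claim that the $G$-Galois \'etale cover $\tilde T[1/g]$ of $R_\infty[1/g]$ ``splits as $\prod_G R_\infty[1/g]$, and these splittings are compatible over overlaps,'' yielding a single integral $R_\infty$-algebra map $\tilde T\to \prod_G R_\infty$ that is an isomorphism outside $V(J)$. Neither half of this is justified: absolute integral closedness of $R_\infty[1/g]$ only guarantees that $\tilde T[1/g]$ splits \emph{Zariski locally} on $\Spec(R_\infty[1/g])$ (it shows, say, every unit is an $n$-th power, but says nothing about $\mathrm{Pic}[n]$ or more general $H^1(\Spec R_\infty[1/g],G)$), and even if each $\tilde T[1/g_i]$ did split globally the chosen trivializations would typically differ on overlaps by elements of $G$, so there is no canonical global map $\tilde T\to\prod_G R_\infty$. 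Since the existence of this map is exactly the hypothesis needed to apply \cref{PerfdBlowup}, the pullback square and hence the discreteness argument you build on it do not go through as written.

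The paper sidesteps this by never attempting to glue. After passing to $R_\infty$, it chooses generators $f_1,\dots,f_r$ of $J$ so that the cover splits over each $\Spec(R_\infty[1/f_i])$, and then uses the observation that a map is a $J$-almost isomorphism if and only if it is an $(f_i)$-almost isomorphism for each $i$ (a consequence of \cref{PerfdSymmMon} and \cref{GetAlmostSetup}, since $(R/J)_\perfd\simeq\widehat\bigotimes_i(R/f_i)_\perfd$). This reduces everything to the case of a \emph{principal} ideal $J=(f)$ with a \emph{global} splitting over $\Spec(R[1/f])$, where the replacement-by-trivial-torsor argument via \cref{PerfdBlowup} does apply; the constant-degree and general finite cases are then handled by further stratifying $\Spec(R)\setminus V(J)$. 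You should also note that in the paper the discreteness of $S_\perfd$ (as opposed to its $J$-almost connectivity) is established separately in Theorem~\ref{PerfdDiscrete} via a noetherian induction using the stratification Lemma~\ref{StratifyFinite}, with the $J$-almost version of the purity theorem as an input; this two-step structure is hard to recover from your single pullback-square argument. The descent issues you flag in your final paragraph (compatibility of $J_\perfd$ with base change to $R_\infty$, \v{C}ech nerves) are real but secondary to the gluing gap.
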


We first prove \cref{GeneralAlmostPurity} only in the $J$-almost setting. This $J$-almost version then implies a general discreteness statement (\cref{PerfdDiscrete}) for perfectoidizations of integral algebras over perfectoid rings, which then yields Theorem~\ref{GeneralAlmostPurity} in general. The key inputs in the argument are:
\begin{enumerate}
\item The \arc-descent properties of $S\mapsto S_\perfd$, especially Corollary~\ref{PerfdBlowup}.
\item Andr\'e's lemma (Theorem~\ref{AndreFlatness}).
\end{enumerate}

\begin{remark}
Our proof of \cref{GeneralAlmostPurity} yields, in particular, a new proof of the almost purity theorem: our arguments do not make any use of adic spaces (in particular, not of perfectoid spaces). 
\end{remark}

\begin{proof}[Proof of Theorem~\ref{GeneralAlmostPurity} in the $J$-almost setting]
With hypotheses and notation as in Theorem~\ref{GeneralAlmostPurity}, our goal is to show Theorem~\ref{GeneralAlmostPurity} holds true in $J$-almost setting, i.e., to show: 

\begin{itemize}
\item[$(\ast)$] 
$S_\perfd$ is $J$-almost connective, i.e.~$J_\perfd H^i(S_\perfd)=0$ for $i>0$, and $S\to S_\perfd$ is an isomorphism away from $V(J)$. Moreover, for all $n>0$ the map
\[
R/p^n\to H^0(S_\perfd)/p^n
\]
is almost finite projective and almost unramified with respect to the almost setting $(R/p^n,J_{\perfd,n})$.

In fact, if $S$ admits a $G$-action for some finite group $G$ such that $\Spec(S)\to \Spec(R)$ is a $G$-Galois cover outside $V(J)$, then $R\to H^0(S_\perfd)$ is a $J$-almost $G$-Galois cover in the sense of Definition~\ref{JalmostGalois}.
\end{itemize}

By Corollary~\ref{PerfdBlowup}, we are allowed to replace $S$ by an $S$-algebra $S^\prime$ that is integral over $S$ and such that $\Spec(S^\prime)\to \Spec(S)$ is an isomorphism outside $V(J)$ (as then $S_\perfd\to S^\prime_\perfd$ is a $J$-almost isomorphism). In particular, this implies that $S_\perfd$ is independent of the choice of $S$ inducing a given finite \'etale cover of $\Spec(R)\setminus V(J)$, up to $J$-almost isomorphism.

Assume first that $S$ admits an action of $G$ such that $\Spec(S)\to \Spec(R)$ is a $G$-Galois extension outside $V(J)$. We want to prove that $R\to S_\perfd$ is a $J$-almost $G$-Galois cover. By Theorem~\ref{AndreFlatness}, we can assume that $R$ is absolutely integrally closed. This implies in particular that any finite \'etale cover of any Zariski localization is Zariski locally split, so we can find generators $f_1,\ldots,f_r$ of $J$ such that $\Spec(S)\to \Spec(R)$ admits a splitting outside $V(f_i)$ for $i=1,\ldots,r$. As being a $J$-almost isomorphism is equivalent to being an $(f_i)$-almost isomorphism for all $i=1,\ldots,r$, we can assume that $J=(f)$ is a principal ideal such that $\Spec(S)\to \Spec(R)$ splits over $\Spec(R)[\frac 1f]$.\footnote{Even if $J$ was principal to start with, say $J=(p)$ as in the usual almost purity theorem, this step may change the principal ideal, and requires general elements.} But recall that we were allowed to replace $S$ by an integral $S$-algebra $S^\prime$ such that $\Spec(S^\prime)\to \Spec(S)$ is an isomorphism outside $V(J)$. This allows us to replace $S$ by the trivial $G$-torsor, where the result is clear.

Now assume more generally that $\Spec(S)\to \Spec(R)$ has constant degree $r$ outside $V(J)$. In that case, we can find a $\Sigma_r$-Galois extension of $X\setminus V(J)$ of which $\Spec(S)\setminus V(J)$ is the quotient by $\Sigma_{r-1}$, cf.~e.g.~\cite[Lemma 1.9.2]{AndreAbhyankar}. By Proposition~\ref{GaloisAlmostFet}, we can reduce to the case of a $G$-Galois extension.

In general, there is an open and closed decomposition of $\Spec(R)\setminus V(J)=\bigsqcup_{i=1}^n U_i$ over which the degree of $\Spec(S)\to \Spec(R)$ is constant. For each $i$, let $(R_i)_\perfd$ be the perfectoidization of the image $R_i$ of $R$ in $H^0(U_i,\mathcal O)$. Then $R\to \prod_{i=1}^n (R_i)_\perfd$ satisfies the hypothesis of Corollary~\ref{PerfdBlowup} (any map $R\to V$ that does not kill $J$ gives a map $\Spec(V)\to \Spec(R)\setminus V(J)=\bigsqcup_{i=1}^n U_i$, so factors over exactly one of the $U_i$, inducing a map $R_i\to V$, which then extends uniquely to the perfectoidization of $R_i$). Thus, we may replace $R$ by $(R_i)_\perfd$ for some $i$, and hence assume that $\Spec(S)\to \Spec(R)$ has constant degree outside $V(J)$.
\end{proof}

\begin{theorem}
\label{PerfdDiscrete}
Let $R$ be a perfectoid ring corresponding to a perfect prism $(A,I)$. Let $R \to S$ be the $p$-completion of an integral map. Then $\Prism_{S/A,\perf}$ is discrete and a perfect $p$-complete $\delta$-$A$-algebra. Consequently,  $S_\perfd$ is discrete and a perfectoid ring.
\end{theorem}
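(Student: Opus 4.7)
The plan is to upgrade the $J$-almost discreteness obtained in the proof of \cref{GeneralAlmostPurity} to a genuine discreteness statement; once this is in hand, \cref{PerfdDiscreteUniversal} together with the proof of \cref{PerfdCoconnective} (which identifies $H^0(\Prism_{S/A,\perf})$ as a perfect $p$-complete $\delta$-ring) automatically yields the perfectoid and perfect-$\delta$-ring conclusions. As a first reduction, since derived prismatic cohomology commutes with filtered colimits of derived $p$-complete simplicial $R$-algebras (Construction~\ref{DerivedPrismatic}(5)), so do its perfection $\Prism_{-/A,\perf}$ and the functor $(-)_\perfd$; writing $S$ as a filtered colimit of finite, finitely presented $R$-subalgebras, I can therefore assume $R\to S$ is finite and finitely presented. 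Choose a finitely generated ideal $J\subset R$ such that $\Spec(S)\to\Spec(R)$ is finite \'etale outside $V(J)$.

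By the $J$-almost version of \cref{GeneralAlmostPurity} just proved, $S_\perfd$ is $J$-almost connective, and by \cref{PerfdCoconnective} it lies in $D^{\geq 0}$. The connectivity criterion \cref{ConnectivityCrit} therefore reduces the problem to showing that $S_\perfd \widehat{\otimes}^L_R R/J_\perfd$ is connective. Since $R/J_\perfd = (R/J)_\perfd$ is perfectoid, \cref{PerfdSymmMon} identifies this base change with $\bigl(S \widehat{\otimes}^L_R (R/J)_\perfd\bigr)_\perfd$, and a second use of \cref{PerfdSymmMon} (factoring $R\to (R/J)_\perfd$ through $R/J$) identifies the latter with $(S/J_\perfd S)_\perfd$. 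Thus the issue is reduced to the connectivity of $(S/J_\perfd S)_\perfd$, an instance of the same discreteness problem for the integral $(R/J)_\perfd$-algebra $S/J_\perfd S$.

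A naive iteration does not terminate because the ramification locus after reduction may be all of $\Spec((R/J)_\perfd)$. The plan is therefore to perform a devissage using \arc-descent: by Andr\'e's lemma (\cref{AndreFlatness}) and \arc-descent for perfectoidization (\cref{PerfdVCohom}), one may assume $R$ is absolutely integrally closed perfectoid; then outside $V(J)$ the finite \'etale cover $\Spec(S)\to\Spec(R)$ splits as a disjoint union of copies of $\Spec(R[1/J])$, which allows constructing a factorization $R\to S\to S'$ with $S\to S'$ an \arc-iso away from $V(J)$ and $S'$ of the form of a product of quotients of $R$ (so that $S'_\perfd$ is manifestly discrete). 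Applying \cref{PerfdBlowup} to this factorization produces a pullback square relating $S_\perfd$ to $S'_\perfd$ and to $(S/J_\perfd S)_\perfd$, whose analysis requires iterating the construction on the latter term over the strictly simpler perfectoid quotient $(R/J)_\perfd$.

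The hardest step will be rigorizing this devissage in the non-Noetherian perfectoid setting, where there is no obvious well-founded parameter controlling the ramification. A cleaner route, which sidesteps an explicit induction, would be to compute $S_\perfd$ directly via a \v{C}ech resolution coming from an \arc-cover of $\Spf(S)$ by absolutely integrally closed perfectoid rings (produced via \cref{AndreFlatness}): each term of such a cosimplicial resolution is an integral algebra over an absolutely integrally closed perfectoid ring, whose perfectoidization can be analyzed explicitly by Galois-theoretic arguments, and the vanishing of higher \arc-cohomology of $\mathcal{O}$ on $\mathrm{Perfd}$ (\cref{arcvanishingaffperfd}) then forces the totalization of this resolution to remain discrete.
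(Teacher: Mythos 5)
Your first two reductions are correct and match the paper: passing to the case where $R\to S$ is finitely presented finite via filtered colimits, invoking \cref{PerfdDiscreteUniversal} (and \cref{PerfdCoconnective}) to reduce to showing $S_\perfd$ is connective, and using \cref{ConnectivityCrit} together with \cref{PerfdSymmMon} to reduce to the connectivity of $(S/J_\perfd S)_\perfd$ after establishing $J$-almost connectivity via \cref{GeneralAlmostPurity}. You also correctly diagnose the central difficulty: the naive recursion on $(S/J_\perfd S)_\perfd$ over $(R/J)_\perfd$ has no visible termination parameter, since the new base is a general non-Noetherian perfectoid ring and the new branch locus may be everything.

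However, your proposed fix via \arc-descent is circular and does not close the gap. The statement that $S_\perfd$ is discrete is (by \cref{PerfdVCohom}) precisely the assertion that $R\Gamma_\arc(\Spf S, \mathcal{O})$ is concentrated in degree $0$. Choosing an \arc-cover $\Spf T \to \Spf S$ with $T$ perfectoid makes every term of the \v{C}ech nerve a discrete perfectoid ring, and \cref{arcvanishingaffperfd} tells you that this \v{C}ech complex computes $S_\perfd$; but nothing forces the totalization of a cosimplicial diagram of discrete rings to be discrete, and the cited vanishing result applies only to $\Spf R$ with $R$ itself perfectoid, not to $\Spf S$ for an integral $R$-algebra $S$. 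Likewise ``Galois-theoretic arguments'' for integral algebras over absolutely integrally closed perfectoid rings are not spelled out, and the interesting case is exactly where the cover is wildly ramified along a closed set and not a Galois quotient.

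The missing ingredient is \cref{StratifyFinite}: for a finitely presented finite map $R\to S$ one can, by Noetherian approximation, produce finitely many elements $g_1,\ldots,g_n\in R$ cutting out a constructible stratification of $\Spec R$ over which the reduced fiber becomes finite \'etale (after a universal homeomorphism), with additionally $p$ either zero or invertible on each stratum. This finite sequence is the well-founded parameter: one inducts on $n$. The inductive step is essentially what you sketch — by \cref{ConnectivityCrit} it suffices to show $S_\perfd$ is $g_1$-almost connective and $(S/g_1S)_\perfd$ is connective, the latter by induction on the shorter stratification over $(R/g_1R)_\perfd$; for the $g_1$-almost part one uses \cref{PerfdBlowup} to replace $S$ by the integral closure $S'$ of $R$ in $S_{\mathrm{red}}[1/g_1]$ (a $g_1$-almost isomorphism at the level of perfectoidizations) and then invokes the $J$-almost version of \cref{GeneralAlmostPurity}. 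Without an analogue of \cref{StratifyFinite} you cannot make the recursion terminate, which is the step your proposal leaves open.
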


\begin{proof}
By passage to filtered colimits, we may assume $R \to S$ is finitely presented finite map\footnote{Note that any such $S$ is finitely presented as an $R$-module: any generator of $S$ as an $R$-algebra satisfies a monic polynomial, so any finite presentation of $S$ as an $R$-algebra immediately gives a finite presentation of $S$ as an $R$-module. In particular, such an $S$ is already $p$-complete.}. By \cref{PerfdDiscreteUniversal}, it is enough to prove $S_\perfd$ is connective. Choose a sequence $g_1,...,g_n \in R$ as in \cref{StratifyFinite}. We shall prove the connectivity of $S_\perfd$ by induction on $n$.  If $n=1$, the conditions in \cref{StratifyFinite} ensure that the map $R \to S_{\mathrm{red}}$ is finite \'etale, and thus $S_{\mathrm{red}}$ is perfectoid. But $S \to S_{\mathrm{red}}$ is an isomorphism on $\arc$-sheafification, so $S_\perfd \cong S_{\mathrm{red},\perfd} \cong S_{\mathrm{red}}$ is discrete by \cref{PerfdVCohom}.

Now assume $n > 1$. The inductive hypothesis applied to $(R/g_1R)_\perfd \to S \otimes_R (R/g_1R)_\perfd$ and \cref{PerfdSymmMon} ensure that $(S/g_1S)_\perfd$ is connective. The criterion in Lemma~\ref{ConnectivityCrit} reduces us to checking that $S_\perfd$ is $g_1$-almost connective. The conditions in \cref{StratifyFinite} imply the following:
\begin{enumerate}
\item Either $p=0$ in $R[1/g_1]$ or $p \in R[1/g_1]^*$.
\item The map $R[1/g_1] \to S_{\mathrm{red}}[1/g_1]$ factors as $R[1/g_1] \to T_1 \to S_{\mathrm{red}}[1/g_1]$, where the first map is finite \'etale and the second map is a universal homeomorphism. If $p \in R[1/g_1]^*$, then the second map is actually an isomorphism.
\end{enumerate}
We claim that $T_1 \to S_{\mathrm{red}}[1/g_1]$ is an isomorphism in general. By the last sentence of (2) above, we may assume $p=0$ in $R[1/g_1]$, so $R[1/g_1]$ is a perfect $\mathbf{F}_p$-algebra. But then $T_1$ is perfect as well, so $T_1 \to S_{\mathrm{red}}[1/g_1]$ is a universal homeomorphism from a perfect ring into a reduced ring; any such map must be an isomorphism, so $T_1 \cong S_{\mathrm{red}}[1/g_1]$.  Summarizing, the map $R[1/g_1] \to S[1/g_1] \to S_{\mathrm{red}}[1/g_1]$ is a finite \'etale cover.

If $S'$ denotes the integral closure of $R$ in $S_{\mathrm{red}}[1/g_1]$, then the $R$-finiteness of $S$ gives a map $S \to S'$ of integral $R$-algebras that is an isomorphism of $\arc$-sheaves outside $(g_1)$. By \cref{PerfdBlowup}, the map $S_\perfd \to S'_\perfd$ is a $g_1$-almost isomorphism. As we only want to show $g_1$-almost connectivity of $S_\perfd$, it suffices to check the $g_1$-almost connectivity of $S'_\perfd$. But this follows from Theorem~\ref{GeneralAlmostPurity} by approximating $S'$ by finitely presented finite $R$-algebras inducing the given finite \'etale cover $R[1/g_1] \to S_{\mathrm{red}}[1/g_1]$ on inverting $g_1$. 
\end{proof}

We needed the following lemma, stating roughly that any finitely presented finite map of qcqs schemes becomes finite \'etale over a constructible stratification of the target, at least up to universal homeomorphisms.

\begin{lemma}
\label{StratifyFinite}
Let $R \to S$ be a finitely presented finite map of commutative $\mathbf Z_{(p)}$-algebras. Then there exists a sequence of elements $g_1,...,g_n \in R$ such that, if we set $R_i = R/(g_1,..,g_{i-1})_{\mathrm{red}}[1/g_i]$ and $S_i = S/(g_1,...,g_{i-1})_{\mathrm{red}}[1/g_i]$ for $i=1,....,n$, then the following hold true:
\begin{enumerate}
\item  The ideal $(g_1,....,g_n)$ is the unit ideal of $R$; equivalently, $\cup_i \mathrm{Spec}(R_i) = \mathrm{Spec}(R)$. 
\item The map $R_i \to S_i$ factors as $R_i \to T_i \to S_i$, where $R_i \to T_i$ is finite \'etale, and $T_i \to S_i$ is a universal homeomorphism. Moreover, the map $T_i[1/p] \to S_i[1/p]$ is an isomorphism. 
\item In each $R_i$, we either have $p=0$ or $p \in R_i^*$.
\end{enumerate}
\end{lemma}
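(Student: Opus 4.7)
The plan is to combine Noetherian approximation with Noetherian induction on $\mathrm{Spec}(R)$, constructing the elements $g_i$ one by one by peeling off open strata. First I would reduce to $R$ Noetherian: since $R \to S$ is finitely presented finite, it is the base change of some $R_0 \to S_0$ with $R_0$ a finitely generated $\mathbf{Z}_{(p)}$-algebra, and any sequence solving the problem downstairs pulls back to one upstairs. Since conditions (1)--(3) only depend on reduced structures, I may further replace $R$ and $S$ by their reductions.

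Now I would perform Noetherian induction on the (reduced) scheme $\mathrm{Spec}(R)$. Pick a generic point $\eta$ of an irreducible component; the local ring at $\eta$ is a field $K$, in which either $p = 0$ or $p$ is invertible (since $\mathbf{Z}_{(p)} \subset R$ and $K$ is a field). The fibre $S_\eta := S \otimes_R K$ is a finite $K$-algebra, so its reduction $(S_\eta)_{\mathrm{red}}$ decomposes as a product $\prod_j L_j$ of finite field extensions of $K$. Each $L_j$ carries a canonical maximal separable subextension $L_j^{\mathrm{sep}} \subset L_j$, and the induced map $L_j^{\mathrm{sep}} \hookrightarrow L_j$ is purely inseparable (trivial when $\mathrm{char}(K) = 0$). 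Setting $T_\eta := \prod_j L_j^{\mathrm{sep}}$ yields a canonical factorization $K \to T_\eta \to (S_\eta)_{\mathrm{red}}$ in which the first map is finite \'etale and the second is a universal homeomorphism (and an isomorphism when $\mathrm{char}(K) = 0$).

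The next step is to spread this out. By standard limit arguments (lifting finite \'etale algebras and their maps to henselizations, and then to Zariski opens), there exists $g \in R$ with $g(\eta) \neq 0$ and a finite \'etale $R[1/g]$-algebra $T$ equipped with a factorization $R[1/g] \to T \to (S/\mathrm{nil})[1/g]$ recovering the above picture at $\eta$. By further multiplying $g$ by suitable nonvanishing elements, I can enforce that $T \to (S/\mathrm{nil})[1/g]$ is surjective with universally-homeomorphic (i.e.\ purely inseparable) cokernel, using constructibility of the relevant loci. For condition (3), I split by characteristic: if $\mathrm{char}(K) = p$, I localize further to ensure $p = 0$ on the stratum (possible since the locus where $p$ is $R$-regular is a proper closed subset not containing $\eta$, using reducedness of $R$); if $\mathrm{char}(K) = 0$, I replace $g$ by $pg$ so that $p$ is a unit on $R[1/g]$. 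In the latter case the factor $T \to (S/\mathrm{nil})[1/g]$ is automatically an isomorphism on the nose, and in particular condition (2)'s last clause on inverting $p$ holds in both cases.

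Taking this $g$ as $g_1$ handles the first stratum. I then continue by Noetherian induction applied to the reduced closed complement $V(g_1) \subset \mathrm{Spec}(R)$ with the base-changed map $R/(g_1)_{\mathrm{red}} \to S/(g_1)_{\mathrm{red}}$; the process terminates after finitely many steps by Noetherianity, and the union of the constructed open strata covers $\mathrm{Spec}(R)$, which yields condition (1). The main technical obstacle is the spreading-out step: one must simultaneously arrange the finite \'etale factorization, the universal-homeomorphism property of $T \to S_i$, the isomorphism after inverting $p$, and the characteristic dichotomy on a single stratum. Each of these follows from standard constructibility or generic-flatness considerations, but combining them requires careful bookkeeping across the $p = 0$ and $p \in R^*$ cases.
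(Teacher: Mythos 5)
Your proof is essentially correct and follows the same broad strategy as the paper: Noetherian approximation, then Noetherian induction on $\mathrm{Spec}(R)$ peeling off open strata where the desired finite-\'etale-then-universal-homeomorphism factorization can be spread out from the generic fibre. There are two genuine differences worth noting. First, you process one irreducible component at a time (picking a single generic point $\eta$ and shrinking to a stratum inside the corresponding component), whereas the paper works with the full total ring of fractions $K = \prod_i K_i$ of $R_{\mathrm{red}}$ at once, building the factorization over $K$ and spreading it out to a single dense open $\mathrm{Spec}(R_{\mathrm{red}}[1/g_1])$ before recursing on $V(g_1)$. Both terminate by Noetherianity; the paper's variant just produces fewer strata and slightly simpler bookkeeping. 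Second, and more significantly in terms of cleanliness, you try to enforce condition (3) (the dichotomy ``$p=0$ or $p\in R_i^*$'') \emph{during} the spreading-out step, which — as you yourself flag — forces you to juggle the characteristic dichotomy simultaneously with the \'etale factorization and the universal-homeomorphism locus. The paper instead decouples this entirely: it first proves (1) and (2), and then achieves (3) by the one-line trick of replacing the sequence $(g_1,\dots,g_n)$ with $(pg_1,g_1,pg_2,g_2,\dots,pg_n,g_n)$; on the stratum where $g_i$ is inverted but $pg_i$ is killed and one passes to the reduction, $p$ is automatically zero, and on the stratum where $pg_i$ is inverted, $p$ is automatically a unit. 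Your approach works, but adopting the paper's post-hoc refinement for (3) would eliminate exactly the bookkeeping burden you identify at the end of your writeup.
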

\begin{proof}
It suffices to find $g_1,...,g_n \in R$ satisfying (1) and (2): we can then stratify further if necessary to also achieve (3); explicitly, replacing $(g_1,...,g_n)$ with  $(pg_1,g_1,pg_2,g_2,...,pg_n,g_n)$ does the trick. For (1) and (2), by noetherian approximation, we may assume $R$ is a finitely presented $\mathbf{Z}_{(p)}$-algebra. Say $\mathfrak{p}_1,...,\mathfrak{p}_r$ are the minimal primes of $R$, and let $K = \prod_i K_i$ be the corresponding product of residue fields of $R$, so $K$ is the total ring of fractions of $R_{\mathrm{red}}$. As $R \to S$ is finite, the induced map $K \to (S \otimes_R K)_{\mathrm{red}}$ is then the product of maps of the form $K_i \to L_i$, where each $L_i = (S \otimes_R K_i)_{\mathrm{red}}$ is a finite reduced $K_i$-algebra. But then each $L_i$ is a finite product of finite field extensions of $K_i$, so the map $K_i \to L_i$ factors as $K_i \to M_i \to L_i$, where the first map is finite \'etale, while the second map is a universal homeomorphism and an isomorphism if $p$ is invertible in $K_i$. By an approximation argument, we can then find some $g_1 \in R$ invertible on $K$ such that $R_{\mathrm{red}}[1/g_1] \to S_{\mathrm{red}}[1/g_1]$ has the form required in (2). This constructs the open stratum, and the rest follows by noetherian induction applied to the map $R/(g_1) \to S/(g_1)$. 
\end{proof}

\begin{proof}[Proof of \cref{GeneralAlmostPurity}]
Combine the $J$-almost version proven above with \cref{PerfdDiscrete}.
\end{proof}

\begin{remark}
\cref{GeneralAlmostPurity} can be used to reprove some known results in commutative algebra relatively quickly. For example, the main theorems of \cite{HeitmannMaExtPlus} follow almost immediately. Let us explain the argument for \cite[Theorem 1.3]{HeitmannMaExtPlus}. Fix a complete regular local ring $R$ of mixed characteristic and a finite extension $R \to S$ contained in a fixed absolute integral closure $R \to R^+$. Fix some $n > 0$. To prove \cite[Theorem 1.3]{HeitmannMaExtPlus}, it is enough to show that there exists some $c \in R$ such that for all $i > 0$, the natural transformation 
\[ \mathrm{Tor}_i^{R/p^n}(S/p^n,-) \to \mathrm{Tor}_i^{R/p^n}(R^+/p^n,-)\]
is $c$-almost zero, i.e., the image is annihilated by all $p$-power roots  $c^{1/p^n} \in R^+$ of $c$. We claim that any $c \in pR$ such that $R[1/c] \to S[1/c]$ is finite \'etale will do the job. To see this, choose a faithfully flat extension $R \to R_\infty$ contained inside the $p$-adic completion of $R^+$ with $R_\infty$ perfectoid (see \cref{RegularLocalPrism}). Set $T = S \otimes_R R_\infty$, so the finite map $R_\infty \to T$ is finite \'etale after inverting $c$. \cref{GeneralAlmostPurity} implies that $R_\infty \to T_\perfd$ is $c$-almost finite \'etale. In particular, as $R \to R_\infty$ is faithfully flat, the functor $\mathrm{Tor}_i^{R/p^n}(T_\perfd/p^n,-)$ is $c$-almost zero (i.e., killed by $(cT_\perfd)_{\perfd}$) for $i > 0$; here we implicitly use that $c \in pR$ and that the $p$-torsion in $T_\perfd$ is $p$-almost zero (and thus $c$-almost zero). But the map $S/p^n \to R^+/p^n$ factors as $S/p^n \to T/p^n \to T_\perfd/p^n \to R^+/p^n$, where the last map is obtained from the universal property of perfectoidizations as the $p$-adic completion of $R^+$ is perfectoid. This gives a factorization 
\[ \mathrm{Tor}_i^{R/p^n}(S/p^n,-) \to \mathrm{Tor}_i^{R/p^n}(T_\perfd/p^n,-) \to \mathrm{Tor}_i^{R/p^n}(R^+/p^n,-),\]
which implies the claim as the middle group is $c$-almost zero for $i > 0$. 
\end{remark}

\newpage

\section{The \'etale cohomological dimension of perfectoid rings}
\label{sec:EtaleCD}

Fix a perfectoid ring $R$ corresponding to a perfect prism $(A,I)$. We apply our general version of the almost purity theorem to prove the following theorem.

\begin{theorem}[The \'etale cohomological dimension of algebraic \'etale sheaves]
\label{CohDim}
The $\mathbf{F}_p$-\'etale cohomological dimension of $X = \mathrm{Spec}(R[1/p])$ is $\leq 1$, i.e., for every \'etale $\mathbf{F}_p$-sheaf $F$ on $X$, we have $H^i(X,F)=0$ for $i>1$.
\end{theorem}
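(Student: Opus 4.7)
The plan is to reduce the statement to the classical Artin--Schreier-based vanishing for affine schemes in characteristic $p$, using the perfectoidization operator $S \mapsto S_{\perfd}$ from Theorem~\ref{GeneralAlmostPurity} to produce perfectoid realizations of arbitrary \'etale covers, and the \'etale comparison theorem (Theorem~\ref{EtaleCompThm}) to pass to characteristic $p$ via tilting.

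First, by standard d\'evissage in the \'etale topos --- reducing arbitrary $p$-torsion \'etale sheaves to constructible $\mathbf{F}_p$-sheaves via filtered colimits, and using that such sheaves are compactly generated by $j_!\mathbf{F}_p$ for affine \'etale $j\colon U \to X$ --- it suffices to establish that $H^i_{\et}(U,\mathbf{F}_p) = 0$ for $i \geq 2$ and every qcqs \'etale morphism $U \to X$. Second, we realize $U$ as the generic fibre of a perfectoid ring. After Zariski-localizing on $U$ and using the local structure theory of \'etale morphisms, we may write $U = \mathrm{Spec}(T)$ with $T = S[1/p]$ for a finitely presented integral $R$-algebra $S$ such that $R[1/p] \to S[1/p] = T$ is finite \'etale; we may further pass, if necessary, to an absolutely integrally closed perfectoid cover of $R$ via Theorem~\ref{AndreFlatness}, so that any Zariski localizations of $R$ required along the way can be realized inside perfectoid algebras (by adjoining $p$-power roots to the inverted elements). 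Applying Theorem~\ref{GeneralAlmostPurity} with $J = (p)$ then yields a perfectoid $R$-algebra $S_{\perfd}$ together with a map $S \to S_{\perfd}$ which is an isomorphism away from $V(p)$; in particular $T \simeq S_{\perfd}[1/p]$.

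Third, apply the \'etale comparison and Artin--Schreier. By Theorem~\ref{EtaleCompThm} applied to the perfect prism associated to $S_{\perfd}$, combined with the identification in its proof between both sides and the Artin--Schreier complex of the tilt, we obtain
\[
R\Gamma_{\et}\bigl(\mathrm{Spec}(S_{\perfd}[1/p]),\mathbf{F}_p\bigr) \;\simeq\; \bigl[\, S_{\perfd}^{\flat}[1/d^{\flat}] \xrightarrow{\phi - 1} S_{\perfd}^{\flat}[1/d^{\flat}] \,\bigr],
\]
where $d^{\flat}$ is the tilt of a generator of $\ker\theta$. Since the right-hand side, being an Artin--Schreier two-term complex on an $\mathbf{F}_p$-scheme, sits in cohomological degrees $0$ and $1$, the required vanishing follows.

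The main technical obstacle lies in the reduction of the first step: one must carefully verify that a cohomological dimension bound for constant $\mathbf{F}_p$-coefficients along a generating class of affine \'etale covers propagates to all $p$-torsion \'etale sheaves on $X$. This is a standard, if somewhat involved, argument in the style of SGA~4 Expos\'e X, using that $j_!\mathbf{F}_p$ for affine \'etale $j$ generate the category of constructible $\mathbf{F}_p$-sheaves. The remaining two steps are essentially direct chains of applications of the almost purity theorem and \'etale comparison theorem established earlier in the paper.
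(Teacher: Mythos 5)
Your proposal identifies the right tools (perfectoidization, the \'etale comparison theorem, the Artin--Schreier description in characteristic $p$), but the overall structure of the argument has several genuine gaps, the most important of which is that the key ingredient of the actual proof is entirely absent.

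First, the d\'evissage in your Step 1 is not valid. Even granting that $D(X_{\et},\mathbf{F}_p)$ is compactly generated by $j_!\mathbf{F}_p$ for $j$ affine \'etale, it does not follow that ``$H^i(U,\mathbf{F}_p)=0$ for $i\geq 2$ and all qcqs \'etale $U\to X$'' suffices. The point is that $R\Gamma(X, j_!\mathbf{F}_p)$ is \emph{not} $R\Gamma(U,\mathbf{F}_p)$ unless $j$ is finite (for $j$ an open immersion it is cohomology with supports in the complement), and a bound on the cohomology of the generators of a compactly generated triangulated category does not propagate to a cohomological-dimension bound for all connective objects. The standard and correct d\'evissage, which the paper uses, reduces to $j_!L$ where $j\colon U\hookrightarrow X$ is the inclusion of a quasi-compact \emph{open of a constructible closed} subset and $L$ is a locally constant $\mathbf{F}_p$-sheaf of constant rank; one then uses the method of the trace to pass from $L$ to $\mathbf{F}_p$ after a prime-to-$p$ finite \'etale cover, and lifts that cover to a finite morphism $\mathrm{Spec}(S)\to\mathrm{Spec}(R)$.

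Second, your claim in Step 2 that ``we may write $U=\mathrm{Spec}(T)$ with $T=S[1/p]$ for a finitely presented integral $R$-algebra $S$ such that $R[1/p]\to S[1/p]=T$ is finite \'etale'' is false: an affine \'etale map $U\to X$ is typically not finite (e.g.\ $U$ could be a basic open), and Zariski localizing on $U$ does not change this. More importantly, even if one could massage $U$ into a generic fibre of a perfectoid ring, the Artin--Schreier computation only bounds the amplitude of $R\Gamma(U,\mathbf{F}_p)$ itself, which as noted is not what one needs.

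Third, and fatally, the core of the theorem's proof is missing. After the correct d\'evissage one must show that $R\Gamma(Y,j_!\mathbf{F}_p)\in D^{\leq 1}$ where $Y=\mathrm{Spec}(S[1/p])$ for $S$ finite over $R$ and $j\colon V\hookrightarrow Y$ is a quasi-compact open. By \cref{EtaleCompThm} this becomes the statement that the fibre of $\Prism_{S/A,\perf}\to\Prism_{(S/J)/A,\perf}$ is connective, which by derived Nakayama reduces to the \emph{surjectivity} of $S_\perfd\to (S/J)_\perfd$, and this in turn rests on \cref{PerfdDiscrete} and \cref{PerfectoidificationSurj}. This surjectivity of perfectoidization (equivalently, surjectivity of the restriction map on $H^1$ from the generic fibre of $S_\perfd$ to that of $(S/J)_\perfd$) is exactly where the hard content of the theorem lives, and it is nowhere invoked in your argument. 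You acknowledge a ``technical obstacle'' in Step 1 but do not see that resolving it forces you to confront extension-by-zero sheaves, and hence the connectivity of a relative perfectoidization --- precisely the part of the theory your proposal never uses.
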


We stress that $X$ is the spectrum, not the adic spectrum, of $R[1/p]$; in particular, $F$ is a filtered colimit of Zariski-constructible sheaves. It is easy to see that the statement is wrong for general sheaves on the adic spectrum of $R[1/p]$.

\begin{proof} It suffices to prove the result when $F = j_! L$, where $j:U \hookrightarrow X$ is the inclusion of a quasicompact open subset of a constructible closed subset of $X$, and $L$ is an $\mathbf{F}_p$-local system of constant rank on $U$. Using the method of the trace \cite[Tag 03SH]{Stacks}, we can find a finite \'etale cover $V \to U$ of degree prime-to-$p$ such that $L|_V$ admits a finite filtration whose graded pieces are the constant sheaf $\mathbf{F}_p$.  Let $Y^+ := \mathrm{Spec}(S) \to X^+ := \mathrm{Spec}(R)$ be a finitely presented finite morphism lifting $V \to U$. Write $Y = Y^+[1/p]$ and let $j:V \to Y$ be the resulting open immersion. This data is summarized in the diagram
\[ \xymatrix{ V \ar[r]^-{j} \ar[d] & Y \ar[r] \ar[d] & Y^+ \ar[d] \\
		  U \ar[r]^-{j} & X \ar[r] & X^+ }\]
where all squares are cartesian. A standard devissage argument now reduces us to showing that $R\Gamma(Y, j_! \mathbf{F}_p) \in D^{\leq 1}$. Let $J \subset S$ be a finitely generated ideal cutting out the closed subset $Y^+ - V \subset Y^+$.  By the \'etale comparison theorem for $\mathrm{Spec}(S)$ and $\mathrm{Spec}(S/J)$, the complex $R\Gamma(Y, j_! \mathbf{F}_p)$ is computed by applying $(-/p[\frac{1}{I}])^{\phi=1}$ to the fibre of
\[ \Prism_{S/A,\perf} \to \Prism_{ (S/J)/A,\perf},\]
so its is enough to show this fibre is connective. This can be checked after base change along $A \to A/I$, so we are reduced to checking that the fibre of
\[ S_{\perfd} \to (S/J)_{\perfd}\] 
is connective. By \cref{PerfdDiscrete}, both objects above are discrete perfectoid rings, so we must show that the map is surjective. The universal property of perfectoidization shows that $(S/J)_\perfd \simeq (S_\perfd/JS_\perfd)_\perfd$, so the claim follows from \cref{PerfectoidificationSurj}.
\end{proof}

\newpage

\section{The Nygaard filtration for quasiregular semiperfectoid rings}
\label{sec:Nygaard}

In this section, we define and analyze the Nygaard filtration on $\Prism_S$ when $S$ is quasiregular semiperfectoid, and use it to endow prismatic cohomology with a Nygaard filtration which will in particular prove the de~Rham comparison. The main result  and its proof strategy is explained in \S \ref{ss:NygaardSPerfdStructure}; the proof, which is completed in \S \ref{ss:EndProofNygaard}, depends crucially on an analysis of the Nygaard filtration for a specific quasiregular semiperfectoid ring explained in \S \ref{ss:NygaardSpecial}. 

\subsection{Structure of the Nygaard filtration for quasiregular semiperfectoid rings}
\label{ss:NygaardSPerfdStructure}

The Nygaard filtration has a rather direct definition.

\begin{definition} Let $S$ be a quasiregular semiperfectoid ring with associated prism $(\Prism_S,(d))$. The Nygaard filtration on $\Prism_S$ is given by
\[
\mathrm{Fil}^i_N \Prism_S = \{x\in \Prism_S\mid \phi(x)\in d^i\Prism_S\}\ .
\]
It is an $\mathbf N$-indexed decreasing multiplicative filtration.
\end{definition}

We warn the reader that in general the Nygaard filtration is not separated. The primary goal of this section is to prove the following theorem. Fix any perfectoid ring $R$ (corresponding to a perfect prism $(A,(d))$) mapping to $S$. Then recall that by the Hodge-Tate comparison, $\overline{\Prism}_S = \Prism_S/d=\Prism_{S/R}/d$ is an $S$-algebra equipped with the conjugate filtration
\[
\mathrm{Fil}_i \overline{\Prism}_S\subset \overline{\Prism}_S
\]
which is an $\mathbf N$-indexed increasing exhaustive multiplicative filtration with
\[
\mathrm{gr}_i \overline{\Prism}_S = (\wedge^i L_{S/R}[-i])^\wedge\ .
\]
The conjugate filtration depends on the choice of $R$ in general (see Example~\ref{ConjFiltProduct}).

\begin{theorem}\label{ThmNygaard} The image of
\[
\mathrm{Fil}^i_N \Prism_S\xrightarrow{\frac{\phi}{d^i}} \Prism_S\to \overline{\Prism}_S
\]
agrees with $\mathrm{Fil}_i \overline{\Prism}_S$. In particular, there is a natural isomorphism
\[
\mathrm{gr}^i_N \Prism_S\cong \mathrm{Fil}_i \overline{\Prism}_S\{i\}\ .
\]
\end{theorem}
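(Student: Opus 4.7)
The approach will be to reduce to a universal quasiregular semiperfectoid ring via quasisyntomic descent, and then to compute directly on the universal example using the identification of prismatic envelopes with divided power envelopes from \cref{PDenvRegSeqLambda}.

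First, I would verify that the map $\phi/d^i \bmod d$ descends to a well-defined map on associated graded pieces: since elements $x\in \mathrm{Fil}^{i+1}_N$ satisfy $\phi(x)\in d^{i+1}\Prism_S$, we have $\phi(x)/d^i\in d\Prism_S$, so $\phi/d^i$ sends $\mathrm{Fil}^{i+1}_N$ into $d\Prism_S$ and hence descends to $\mathrm{gr}^i_N\Prism_S \to \overline{\Prism}_S$. The twist $\{i\} = (I/I^2)^{\otimes i}$ naturally emerges because normalizing by $d^i$ instead of by a choice-free trivialization of $I^i$ absorbs a factor of this twist; it also shows that the image ideal is intrinsic to $\Prism_S$ and independent of the choice of $d$.

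Next, the plan is to reduce to a universal qrsp example. Using \cref{PrismaticRefineQSyn} together with quasisyntomic descent for derived prismatic cohomology (so that both the Nygaard and the conjugate filtrations, expressed appropriately in derived terms, sheafify), it will suffice to treat the case
\[
S \;=\; R_0\langle T_1^{1/p^\infty},\ldots,T_n^{1/p^\infty}\rangle/(T_1-a_1,\ldots,T_n-a_n),
\]
where $R_0$ is a perfectoid ring chosen large enough that the $a_i$ admit compatible $p$-power roots and the $T_i - a_i$ form a $(p,d)$-completely regular sequence over $A_{\inf}(R_0)\langle T_i^{1/p^\infty}\rangle$. On such $S$, \cref{qPDEnvRegular} identifies $\Prism_S$ with the explicit prismatic envelope $A_{\inf}(R_0)\langle T_i^{1/p^\infty}\rangle\{\tfrac{T_i-a_i}{d}\}^\wedge$, and \cref{PDenvRegSeqLambda} identifies $\phi^*\Prism_S$ with the $p$-completed pd-envelope of $(\phi(T_i)-\phi(a_i))$ in $A_{\inf}(R_0)\langle T_i^{1/p^\infty}\rangle$.

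On this explicit model, the Nygaard filtration translates to a classical filtration by powers of the pd-ideal (together with its divided powers), while the conjugate filtration on $\overline{\Prism}_S$ admits a matching description: its graded pieces $\wedge^i L_{S/R_0}\{-i\}[-i]$ are realized concretely as the divided-power-degree-$i$ pieces of an explicit pd-polynomial algebra over $S$. A direct calculation on this model then shows that $\phi/d^i$ surjects onto $\mathrm{Fil}_i\overline{\Prism}_S\{i\}$, and that the induced map on $\mathrm{gr}^i_N$ is an isomorphism (essentially because the divided-Frobenius recovers the standard divided-power decomposition).

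The hard part will be the descent step. The Nygaard filtration is defined by a Frobenius preimage condition and is not manifestly compatible with base change along quasisyntomic covers, so lifting the universal computation to general $S$ requires additional input. The key tool is the $p$-complete flatness of $\overline{\Prism}_S$ over $S$ from \cref{QRSPPrism}, together with $I$-torsion-freeness of $\Prism_S$, which together allow one to express $\mathrm{Fil}^i_N\Prism_S$ as the fibre of a map between objects that do sheafify in the quasisyntomic topology (namely $\Prism_S \to \overline{\Prism}_S/\mathrm{Fil}_{i-1}\overline{\Prism}_S\{i\}$, with the map coming from $\phi/d^i$ composed with reduction). This reformulation makes the descent argument go through and reduces the general theorem to the explicit computation on the universal example.
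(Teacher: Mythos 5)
Your high-level plan — an explicit computation on a universal quasiregular semiperfectoid ring followed by descent — matches the shape of the paper's argument (\S\ref{ss:NygaardSpecial}--\S\ref{ss:EndProofNygaard}), but the step you yourself flag as "the hard part" is where the proposal breaks down, and your proposed fix does not work as written.

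The issue is that the map $\Prism_S \to \overline{\Prism}_S/\mathrm{Fil}_{i-1}\overline{\Prism}_S\{i\}$ you want to realize $\mathrm{Fil}^i_N\Prism_S$ as a fibre of does not exist: the divided Frobenius $\phi/d^i$ is only defined \emph{on} $\mathrm{Fil}^i_N\Prism_S$ (the very thing being constructed), not on all of $\Prism_S$. The well-defined kernel expression is $\mathrm{Fil}^i_N\Prism_S = \ker(\Prism_S \xrightarrow{\phi\bmod d^i} \Prism_S/d^i)$, but the target there is $\Prism_S/d^i$, not $\overline{\Prism}_S/\mathrm{Fil}_{i-1}\overline{\Prism}_S\{i\}$; the two only coincide when $i=1$. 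Moreover, even with a correct fibre expression, the content of the theorem is the \emph{surjectivity} of $\phi/d^i$ onto $\mathrm{Fil}_i\overline{\Prism}_S\{i\}$, and surjectivity of a map of sheaves is not something you can check on a cover and transfer back for free. The paper resolves this by a detour you omit: it first establishes the statement directly on rings of the form $R\langle X_i^{1/p^\infty}\rangle/(f_j)$ (\cref{PropNygaard}, reducing to one variable over $\mathbf{Z}_p[\zeta_{p^\infty}]^\wedge$ via Andr\'e's lemma and K\"unneth), then uses those to define the Nygaard filtration on $p$-completely \emph{smooth} algebras via quasisyntomic descent (\S\ref{subsec:generalnygaard}), then left Kan extends to get a derived Nygaard filtration $\mathrm{Fil}^\bullet_{N'}$ on all qrsp rings — which has known graded pieces by construction — and finally compares $\mathrm{Fil}^\bullet_{N'}$ with the honest $\mathrm{Fil}^\bullet_N$ by induction on $i$, obtaining surjectivity for a general qrsp $S$ by mapping onto it from a ring covered by \cref{PropNygaard}.

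A secondary point: you invoke \cref{PDenvRegSeqLambda} to identify $\phi^*\Prism_S$ with an ordinary $p$-completed pd-envelope, but $\phi$ carries the distinguished element $d$ to $\phi(d) = d^p + p\delta(d)$, which is not a unit times $p$ in general. \cref{PDenvRegSeqLambda} applies only after a base change that makes this happen — e.g.\ modulo $(q-1)$, which is exactly what the paper does in the proof of \cref{qPDEx}. The filtration that appears on the explicit model is a $q$-divided power filtration indexed by $\mathbf{N}[1/p]$, and the matching of its degree cutoffs with the conjugate filtration (which is the heart of \cref{NygaardKeyCase}) requires the explicit Frobenius formula \eqref{FrobeniusqPD} on that graded module; the phrase "the divided-Frobenius recovers the standard divided-power decomposition" glosses over that real calculation.
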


The proof of this theorem in the general case is rather indirect. We will proceed in the following steps:

\begin{enumerate}
\item The case $S=R\langle X^{1/p^\infty}\rangle/X$ (\S \ref{ss:NygaardSpecial}).
\item The case $S=R\langle X_1^{1/p^\infty},\ldots,X_n^{1/p^\infty}\rangle / (f_1,\ldots,f_m)$ for some $p$-completely regular sequence $f_1,\ldots,f_m$ (\S \ref{ss:NygaardFPSPerfd}).
\item Define a Nygaard filtration on $\Prism_{B/A}$ for any $p$-completely smooth $R$-algebra $B$ by quasisyntomic descent (\S \ref{subsec:generalnygaard}).
\item Define a derived Nygaard filtration on $\Prism_{S/A}$ via left Kan extension from smooth algebras, show that it agrees with the Nygaard filtration and finish the proof (\S \ref{ss:EndProofNygaard}).
\end{enumerate}

\begin{example}
\label{ConjFiltProduct}
We record an example of a quasiregular semiperfectoid ring $S$  where  the conjugate filtration on $\overline{\Prism}_S$ defined using a choice of a perfectoid ring mapping to $S$ depends on the choice.

Let $R=\mathcal{O}_{\mathbf{C}_p}$ and $\Gamma$ be the absolute Galois group of $\mathbf{Q}_p$, so $\Gamma$ acts naturally on $R$. Let $S=R \widehat{\otimes}_{\mathbf{Z}_p} R$, so $S$ is quasiregular semiperfectoid and has a natural $\Gamma \times \Gamma$-action. Write $i_1,i_2:R \to S$ for the inclusion of each of the two factors, and let $F_*$ and $F'_*$ be the corresponding conjugate filtrations on $\overline{\Prism}_S$. Write $R\{1\} = L_{R/\mathbf{Z}_p}^{\wedge}[-1]$, regarded as a finite free $R$-module of rank $1$ equipped with an equivariant $\Gamma$-action. Regarding $i_1$ as the map $i_1:R \widehat{\otimes}_{\mathbf{Z}_p} \mathbf{Z}_p \to R \widehat{\otimes}_{\mathbf{Z}_p} R$ shows that $i_1$ has a natural $(\Gamma \times \Gamma)$-action. This implies that the $(\Gamma \times \Gamma)$-action on $\overline{\Prism}_S$ preserves the conjugate filtration $F_*$; moreover, the standard description of $\mathrm{gr}_1$ for the conjugate filtration then gives an isomorphism 
\[ \mathrm{gr}_1^F \overline{\Prism}_S \simeq L_{i_1}^{\wedge}[-1] \simeq R \widehat{\otimes}_{\mathbf{Z}_p} R\{1\}\]
of $(\Gamma \times \Gamma)$-equivariant $S$-modules. By symmetry, we also have an isomorphism
\[ \mathrm{gr}_1^{F'} \overline{\Prism}_S \simeq  R\{1\} \widehat{\otimes}_{\mathbf{Z}_p} R\]
of $(\Gamma \times \Gamma)$-equivariant $S$-modules. 

Now assume that the conjugate filtration on $\overline{\Prism}_S$ attached to any choice of perfectoid ring mapping to $S$ is independent of the choice. Then $F_*$ and $F'_*$ are the same filtration and thus $ \mathrm{gr}_1^F \overline{\Prism}_S$ and $\mathrm{gr}_1^{F'} \overline{\Prism}_S$ are equal as subquotients of $\overline{\Prism}_S$; in particular, they are $(\Gamma \times \Gamma)$-equivariantly isomorphic $S$-modules. Thus, we learn that that  $R \widehat{\otimes}_{\mathbf{Z}_p} R\{1\}$ and $R\{1\} \widehat{\otimes}_{\mathbf{Z}_p} R$ are isomorphic as $\Gamma \times \Gamma$-modules. Applying derived invariants under $1 \times \Gamma$ to such an isomorphism would then show that
\[ R\Gamma( 1 \times \Gamma, R \widehat{\otimes}_{\mathbf{Z}_p} R\{1\}) \simeq R\Gamma( 1 \times \Gamma, R\{1\} \widehat{\otimes}_{\mathbf{Z}_p} R).\]
Using the projection formula, this implies that
\[ R \widehat{\otimes}_{\mathbf{Z}_p}^L R\Gamma(\Gamma, R\{1\}) \simeq R\{1\} \widehat{\otimes}_{\mathbf{Z}_p}^L R\Gamma(\Gamma, R).\]
Now Tate calculated that $H^0 R\Gamma(\Gamma,R) = \mathbf{Z}_p$ while $R\Gamma(\Gamma,R\{1\})$ is annihilated by a fixed power of $p$. Plugging this into the above equality gives an absurd statement: the left side is bounded $p$-torsion while the right side is nonzero after inverting $p$. Thus, we get a contradiction to the existence of such a filtration on $\overline{\Prism}_S$. 
\end{example}

\subsection{The Nygaard filtration in a special case}
\label{ss:NygaardSpecial}

In this section, we consider the semiperfectoid ring
\[
S=(\mathbf Z_p[\zeta_{p^\infty},X^{1/p^\infty}]/X)^\wedge\ .
\]
We consider it as an algebra over the perfectoid ring $R=\mathbf Z_p[\zeta_{p^\infty}]^\wedge$. The corresponding perfect prism is given by the pair $(A,I)$, where $A$ is the $(p,[p]_q)$-adic completion of $\mathbf Z_p[q^{1/p^\infty}]$ and $I=[p]_q=1+q+\ldots+q^{p-1}$; here $q^{1/p^n}\in A$ maps to $\zeta_{p^{n+1}}\in R=A/I$. From the definition of $\Prism_S = \Prism_S^{\mathrm{init}}$ as a prismatic envelope it follows that
\[
\Prism_S = A\langle Y^{1/p^\infty}\rangle\{\frac{Y^p}{[p]_q}\}^\wedge\ ,
\]
where we normalize the coordinates so that the map $S\to \overline{\Prism}_S$ takes $X^{1/p^n}$ to $Y^{1/p^{n-1}}$. (These normalizations will make the following formulas appear nicer.) The $\delta$-structure on $\Prism_S$ is determined by $\delta(q)=\delta(Y)=0$. 

It turns out that in this situation, there is an explicit description of $\Prism_S$ as a $q$-divided power algebra. Recall that for any integer $n\geq 0$, one sets
\[
[n]_q = \frac{q^n-1}{q-1} = 1 + q + \ldots + q^{n-1}\ ,\ [n]_q! = \prod_{i=1}^n [i]_q \in \mathbf{Z}_p\llbracket q-1 \rrbracket \subset A.
\]
The description as a prismatic envelope also shows that $\Prism_S$ is $(p,q-1)$-completely flat over $A$ (Lemma~\ref{PrismaticEnvSmooth}) and hence also over $\mathbf{Z}_p\llbracket q-1 \rrbracket$ as $\mathbf{Z}_p\llbracket q-1 \rrbracket \subset A$ is a flat inclusion; as the $\mathbf{Z}_p\llbracket q-1 \rrbracket$ is noetherian, we conclude that $\Prism_S$ is flat over $\mathbf{Z}_p\llbracket q-1 \rrbracket$ (see \cite[Lemma 5.15]{BhattCM}). In particular, each $[n]_q$ acts as a nonzerodivisor on $\Prism_S$, so the statement of the following lemma makes sense.

\begin{lemma}
\label{qPDEx}
We have $\frac{Y^n}{[n]_q!} \in \Prism_S$ for all integers $n \geq 0$. The resulting $A$-module map
\begin{equation}
\label{qPDmodule}
\bigoplus_{i \in \mathbf{N}[1/p]} A \cdot \frac{Y^i}{ [\lfloor i \rfloor]_q! } \to \Prism_S
\end{equation}
identifies $\Prism_S$ as the $(p,[p]_q)$-adic completion of the left side. 
\end{lemma}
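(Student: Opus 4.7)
The plan is to define an abstract $A$-algebra $C$, namely the $(p,q-1)$-adic completion of $\bigoplus_{i \in \mathbf{N}[1/p]} A\cdot e_i$, with multiplication $e_n \cdot e_m = \binom{n+m}{n}_q e_{n+m}$ for $n,m \in \mathbf{N}$ (note that $\binom{n+m}{n}_q \in \mathbf{Z}[q] \subseteq A$), extended to fractional indices via $e_i := Y^{i-\lfloor i\rfloor}\cdot e_{\lfloor i\rfloor}$ with $Y := e_1$. This makes $C$ a commutative $(p,q-1)$-complete $A\langle Y^{1/p^\infty}\rangle$-algebra. The goal is to identify $C \cong \Prism_S$ via $e_i \mapsto Y^i/[\lfloor i\rfloor]_q!$, which will immediately yield the lemma.

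First I would equip $C$ with a $\delta$-$A$-algebra structure. The formal action of $\phi$ on $e_n$ can be computed using the identity $[k]_{q^p} = [pk]_q/[p]_q$: one finds
\[ \phi(e_n) = [p]_q^n \cdot \prod_{\substack{1 \leq j \leq pn \\ p\nmid j}} [j]_q \cdot e_{pn} \in C, \]
so $\phi$ preserves $C$. Since $C$ is built from $A$ as a completion of a free $A$-module, it is $p$-torsion-free; thus a $\delta$-structure on $C$ compatible with $A$ is equivalent to verifying $\phi(x) \equiv x^p \pmod p$ on the generators $e_n$. Comparing $\phi(e_n)$ with $e_n^p = \binom{pn}{n,n,\ldots,n}_q\cdot e_{pn}$, this reduces to a congruence of elements of $\mathbf{Z}[q]$ modulo $p$, which follows from $[p]_q \equiv (q-1)^{p-1}\pmod p$ together with the $q$-Lucas identity for multinomials. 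Thus $C$ becomes a $(p,q-1)$-complete $\delta$-$A\langle Y^{1/p^\infty}\rangle$-algebra in which $[p]_q$ is a nonzerodivisor.

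Next I would use universal properties to produce mutually inverse maps. On one hand, since $Y^p = [p-1]_q!\cdot [p]_q\cdot e_p$ in $C$ with $[p-1]_q!$ a unit in $A$, the universal property of $\Prism_S = A\langle Y^{1/p^\infty}\rangle\{Y^p/[p]_q\}^\wedge$ from Proposition~\ref{PrismaticEnvSmooth} produces a unique $\delta$-$A\langle Y^{1/p^\infty}\rangle$-algebra map $\alpha:\Prism_S \to C$ sending $Y^p/[p]_q$ to $[p-1]_q!\cdot e_p$. On the other hand, to construct $\beta: C \to \Prism_S$ I need to check that $Y^n/[n]_q! \in \Prism_S$ for all $n$: for $n < p$ this is clear as $[n]_q!$ is a unit in $A$ ($[i]_q \equiv i\pmod{q-1}$ with $i<p$); the step from $Y^{p^k}/[p^k]_q!$ to $Y^{p^{k+1}}/[p^{k+1}]_q!$ is obtained by applying the $\delta$-operation and using the factorization $[p^{k+1}]_q = \phi([p^k]_q)\cdot [p]_q$ together with the congruence $\phi([p^k]_q!) \equiv ([p^k]_q!)^p\pmod{p}$; intermediate values are recovered via the $q$-binomial products. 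The two maps $\alpha,\beta$ are $A$-algebra endomorphisms of $\Prism_S$ and $C$ respectively that agree with the identity on the $(p,q-1)$-dense $\delta$-subring generated by $A\langle Y^{1/p^\infty}\rangle$ and $Y^p/[p]_q$, so they are mutually inverse by continuity.

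The main obstacle will be this last inductive step: extracting $Y^{p^{k+1}}/[p^{k+1}]_q!$ from $\delta(Y^{p^k}/[p^k]_q!)$ up to a unit of $A$ requires carefully tracking both the numerator $([p^k]_q!)^p - \phi([p^k]_q!)$ (whose divisibility by $p$ must be controlled to exactly the right order) and the denominator $\phi([p^k]_q!)\cdot ([p^k]_q!)^p$, and comparing with the target $[p^{k+1}]_q!$; the computation is governed by the $q$-identities above but is the one genuinely non-formal ingredient of the argument.
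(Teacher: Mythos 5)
Your key inductive step---passing from $Y^{p^k}/[p^k]_q!\in\Prism_S$ to $Y^{p^{k+1}}/[p^{k+1}]_q!\in\Prism_S$ via the $\delta$-operation---is a genuine gap, and you correctly flag it as the hard part. Writing $x=Y^{p^k}/[p^k]_q!$ and applying $\delta$ gives
\[
\delta(x)=\frac{Y^{p^{k+1}}\bigl(([p^k]_q!)^p-\phi([p^k]_q!)\bigr)}{p\cdot\phi([p^k]_q!)\cdot([p^k]_q!)^p}\in\Prism_S,
\]
whereas the target, by Lemma~\ref{qMultId}~(1) (namely $[p^{k+1}]_q!=u\cdot\phi([p^k]_q!)\cdot[p]_q^{p^k}$), is $Y^{p^{k+1}}/(\phi([p^k]_q!)\,[p]_q^{p^k})$. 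There is no visible way to pass from one to the other: the factor $\bigl(([p^k]_q!)^p-\phi([p^k]_q!)\bigr)/\bigl(p\,([p^k]_q!)^p\bigr)$ does not cancel down to $1/[p]_q^{p^k}$. (The factorization $[p^{k+1}]_q=\phi([p^k]_q)[p]_q$ that you invoke governs $q$-integers, not $q$-factorials, so it does not control $[p^{k+1}]_q!$ by itself.) The paper's Lemma~\ref{GetqPD} avoids $\delta$ entirely and uses only $\phi$: from $x^m\in[m]_q!D$ one gets separately $x^{mp}=\phi(x^m)\in\phi([m]_q!)D$ and $x^{mp}=(x^p)^m\in[p]_q^m D$, and then the observation that $\phi([m]_q!)$ is a nonzerodivisor modulo $[p]_q$ (it does not vanish at primitive $p$-th roots of unity) lets one combine the two divisibilities to obtain $x^{mp}\in\phi([m]_q!)[p]_q^m D=[mp]_q!D$. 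You should replace your $\delta$-based induction with this $\phi$-only argument, which is both correct and substantially simpler.

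The second half of your plan (the module identification) is also much heavier than necessary: you propose to build an auxiliary ring $C$, give it a $\delta$-structure by verifying a $q$-Lucas identity for multinomials (asserted but not proved), and then match universal properties. The paper instead reduces modulo $(q-1)$: by Corollary~\ref{PDenvRegSeqLambda}, $\Prism_S/(q-1)$ is the divided-power envelope $A/(q-1)\langle Y^{1/p^\infty}\rangle\{Y^p/p\}$ of $(Y)$, whose free-module structure is standard, and $(p,q-1)$-complete flatness lifts the identification back. This is essentially a one-line argument and dispenses with the $q$-multinomial combinatorics altogether.
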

\begin{proof}
Lemma~\ref{GetqPD} below implies that  $\frac{Y^n}{[n]_q!} \in \Prism_S$ for all $n \geq 0$. To check that the map is an isomorphism after completion, we may take the base change along the map $A\to A/(q-1)$ of $\delta$-rings; after this base change, $\Prism_S=A\langle Y^{1/p^\infty}\rangle\{\frac{Y^p}{[p]_q}\}$ becomes
\[
A/(q-1)\langle Y^{1/p^\infty}\rangle\{\frac{Y^p}p\}
\]
which by \cref{PDenvRegSeqLambda} agrees with the divided power envelope of $(Y)$ in $A/(q-1)\langle Y^{1/p^\infty}\rangle$ which indeed has the desired description.
\end{proof}

The following lemma will be reused below, so it is formulated in larger generality than necessary right now.

\begin{lemma}
\label{GetqPD}
Let $D$ be a $(p,[p]_q)$-completely flat $\mathbf{Z}_p\llbracket q-1 \rrbracket$-algebra. Assume we are given a map $\phi:D \to D$ and an element $x \in D$ such that the following hold true:
\begin{enumerate}
\item We have $\phi(q) = q^p$ and $\phi(x) = x^p$.
\item We have $\phi(x) \in [p]_q D$.
\end{enumerate}
Then $x^n \in [n]_q! D$ for all $n \geq 0$.
\end{lemma}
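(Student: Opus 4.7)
The plan is to prove the lemma by strong induction on $n$. The first reduction handles the case $p\nmid n$: since $[n]_q\equiv n\pmod{q-1}$ and $n\in \mathbf Z_p^\times$, the element $[n]_q$ is a unit in $\mathbf Z_p\llbracket q-1\rrbracket$, so $[n]_q!$ differs from $[n-1]_q!$ by a unit and the claim for $n$ follows from that for $n-1$. Hence I may assume $n=pk$. Using the algebraic identity $[pj]_q=[p]_q[j]_{q^p}$ (from telescoping $(q^{pj}-1)/(q-1)$) and the fact that $[i]_q$ is a unit whenever $p\nmid i$, I obtain the factorization $[pk]_q!=u\cdot[p]_q^k\,[k]_{q^p}!$ for some unit $u\in \mathbf Z_p\llbracket q-1\rrbracket$. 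Since $\phi(x)\in [p]_q D$ and $D$ is $p$-torsionfree by $(p,q-1)$-complete flatness over $\mathbf Z_p\llbracket q-1\rrbracket$, there is a unique $y\in D$ with $\phi(x)=[p]_q y$. Using $\phi(x)=x^p$, we get $x^{pk}=\phi(x)^k=[p]_q^k y^k$, so the problem reduces to showing $y^k\in [k]_{q^p}!\,D$.

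The key new input is a $\delta$-calculation showing $y^p\in [p]_{q^p}D$. Since $\phi(x)=x^p$ and $D$ is $p$-torsionfree, the identity $\phi(x)=x^p+p\delta(x)$ forces $\delta(x)=0$, and hence $\delta(x^p)=0$. Applying $\delta$ to the relation $[p]_q\,y=x^p$ via the Leibniz rule gives
\[
\delta(y)\bigl([p]_q^p+p\,\delta([p]_q)\bigr)+y^p\,\delta([p]_q)=0,
\]
and using $[p]_q^p+p\,\delta([p]_q)=\phi([p]_q)=[p]_{q^p}$ together with the fact that $\delta([p]_q)$ is a unit (as $[p]_q$ is distinguished), this rearranges to $y^p=-\delta(y)\cdot[p]_{q^p}/\delta([p]_q)\in [p]_{q^p}D$.

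To conclude, I would iterate: set $y_0=x$ and recursively define $y_{j+1}:=y_j^p/[p]_{q^{p^j}}\in D$, with the well-definedness at each step verified by an analogous $\delta$-calculation applied to the defining relation $y_{j-1}^p=[p]_{q^{p^{j-1}}}y_j$. Combined with a base-$p$ decomposition of the exponent and the first reduction applied at each level, this should yield $x^n\in [n]_q!\,D$ for all $n\ge 0$.

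The main obstacle will be the iteration: unlike $x$, the elements $y_j$ for $j\ge 1$ do not satisfy the rank-one relation $\phi(y_j)=y_j^p$ (one instead computes $\phi(y_j)\cdot[p]_{q^{p^j}}=[p]_{q^{p^{j-1}}}^p\cdot y_j^p$), so the $\delta$-computation becomes progressively more delicate and requires careful bookkeeping of the failure of the rank-one condition. An alternative route is to reduce to the universal case — where $D$ is the $(p,q-1)$-completion of the $\delta$-$A$-algebra presented by $\phi(x)=x^p$ and $\phi(x)\in [p]_qD$ — and exploit the fact that the base change along $A\to A/(q-1)$ is a classical divided-power envelope in which the analogous divisibilities hold, lifting to $D$ using $(q-1)$-adic completeness.
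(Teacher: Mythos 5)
Your first reduction (handling $p\nmid n$) and your multiplicative identity $[pk]_q! = u\cdot [p]_q^k\cdot [k]_{q^p}!$ both match the paper's approach (the latter is Lemma~\ref{qMultId}(1), since $\phi([k]_q!)=[k]_{q^p}!$). But your key step has a genuine gap: the $\delta$-computation — ``$\phi(x)=x^p+p\delta(x)$ forces $\delta(x)=0$,'' and then applying $\delta$ to $[p]_q y=x^p$ via the Leibniz rule — presupposes that $D$ carries a $\delta$-structure compatible with $\phi$. The lemma's hypotheses only give a ring endomorphism $\phi:D\to D$ with $\phi(q)=q^p$, not a $\delta$-structure. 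This matters in a concrete way: the lemma is invoked in the proof of Theorem~\ref{BMS2CompNC} with $D=\widehat{\Prism}_{S/A}^{\mathrm{nc}}=\pi_0\mathrm{TP}(S;\mathbf{Z}_p)$, and the entire point of that theorem is to \emph{establish} a $\delta$-structure on this ring; the lemma therefore cannot assume one. (Your alternative ``universal case'' sketch suffers from the same issue, since it again describes a presentation as a $\delta$-algebra.)

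The paper avoids the $\delta$-structure entirely and, relatedly, avoids the iteration that you flag as the ``main obstacle.'' The inductive step is: assume $x^m\in [m]_q!\,D$; then $x^{mp}=\phi(x^m)=\phi(x)^m$ lies in $\phi([m]_q!)\,D$ because $\phi$ is a ring map, and $x^{mp}=(x^p)^m$ lies in $[p]_q^m D$ by the hypothesis $\phi(x)=x^p\in[p]_q D$. Combining the two divisibilities then uses only that $\phi([m]_q!)$ is a nonzerodivisor modulo $[p]_q$ in $D$ — which follows because the polynomial $\phi([m]_q!)=\prod_{i=1}^m\frac{q^{ip}-1}{q^p-1}$ does not vanish at primitive $p$-th roots of unity in $\overline{\mathbf{Q}}_p$, hence is a nonzerodivisor mod $[p]_q$ in $\mathbf{Z}_p\llbracket q-1\rrbracket$, and this transfers to $D$ by flatness — together with $[p]_q$-torsionfreeness of $D$ (coming from $(p,q-1)$-complete flatness, since $[p]_q\equiv p\bmod(q-1)$). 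No $\delta$-structure, no iterated quotients $y_j$, and no delicate bookkeeping of the failure of the rank-one condition. Your argument could perhaps be salvaged in the special case where $D$ is a $\delta$-ring (e.g.\ Lemma~\ref{qPDEx}), but the lemma needs to hold in the stated generality.
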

\begin{proof}
As $[i]_q$ is invertible for $i$ coprime to $p$, it suffices to prove the following statement:
\begin{itemize}
\item[$(\ast)$] Given $m \geq 0$, if $x^m \in [m]_q! D$, then $x^{mp} \in [mp]_q! D$. 
\end{itemize}
Using \cref{qMultId} (1), it suffices to show that $\phi([m]_q!) \cdot [p]_q^m \mid x^{mp}$ under the above assumption on $x$. Now $\phi([m]_q!)$ is a nonzerodivisor modulo $[p]_q$: this holds true in $\mathbf{Z}_p\llbracket q-1\rrbracket$ as the polynomial $\phi([m]_q!) = \prod_{i=1}^m \frac{q^{ip}-1}{q^p-1}$ does not vanish at the primitive $p$-th roots of unity in $\overline{\mathbf{Q}_p}$, and thus also in $D$ by flatness. Thus, $(\ast)$ reduces to showing 
\begin{itemize}
\item[$(\ast')$] Given $m \geq 0$, if $x^m \in [m]_q! D$, then $x^{mp}$ is divisible by both $[p]_q^m$ and $\phi([m]_q!)$.
\end{itemize}
Now $x^m \in [m]_q! D$ implies that $x^{mp} = \phi(x^m) \in \phi([m]_q!) D$, which proves half of $(\ast')$. For the other half, we simply observe that $x^{mp} = (x^p)^m \in [p]_q^m D$ by our assumption $x^p \in [p]_q D$.
\end{proof}

Next, we record two multiplicative identities in $\mathbf{Z}_p\llbracket q-1\rrbracket$, one of which was already used above.

\begin{lemma}
 \label{qMultId}
 In the ring $\mathbf{Z}_p\llbracket q-1\rrbracket$, we have the following identities:
 \begin{enumerate}
 \item  For any $m \in \mathbf{N}$, we have $[mp]_q! = u \cdot \phi([m]_q!) \cdot [p]_q^m$ with $u$ a unit.
 \item  For any $i \in \mathbf{N}[1/p]$, we have $[\lfloor i \rfloor p]_q! = [ \lfloor ip \rfloor]_q! \cdot v$ with $v$ a unit.
 \end{enumerate}
\end{lemma}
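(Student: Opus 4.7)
The plan is to prove both identities by unpacking the factorials $[n]_q! = \prod_{j=1}^n [j]_q$ and separating the factors into two types: those indexed by $j$ coprime to $p$, and those indexed by $j$ divisible by $p$. The two basic facts driving everything are:
\begin{itemize}
\item[(a)] If $\gcd(j,p)=1$, then $[j]_q$ is a unit in $\mathbf{Z}_p\llbracket q-1\rrbracket$: indeed $[j]_q\equiv j\bmod (q-1)$, and $j\in \mathbf{Z}_p^\times$, so $[j]_q$ is a unit modulo the maximal ideal $(p,q-1)$, hence a unit.
\item[(b)] If $j=kp$, then $[kp]_q=\frac{q^{kp}-1}{q^p-1}\cdot\frac{q^p-1}{q-1}=[k]_{q^p}\cdot[p]_q=\phi([k]_q)\cdot[p]_q$, where $\phi$ is the Frobenius lift determined by $\phi(q)=q^p$.
\end{itemize}

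For (1), I would partition $\{1,2,\dots,mp\}$ according to divisibility by $p$. The multiples of $p$ are exactly $p,2p,\dots,mp$, and (b) rewrites each $[kp]_q=\phi([k]_q)\cdot[p]_q$, so their product contributes $\phi([m]_q!)\cdot[p]_q^m$. The remaining factors $[j]_q$ with $\gcd(j,p)=1$ are all units by (a), and their product is the desired unit $u$.

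For (2), set $m=\lfloor i\rfloor$. Since $m\le i<m+1$, one has $mp\le ip<(m+1)p$, so $\lfloor i\rfloor p=mp\le \lfloor ip\rfloor\le(m+1)p-1$. Hence
\[
[\lfloor ip\rfloor]_q! \;=\; [\lfloor i\rfloor p]_q!\cdot\prod_{j=mp+1}^{\lfloor ip\rfloor}[j]_q,
\]
and every integer $j$ in the displayed range satisfies $mp<j<(m+1)p$, so is coprime to $p$. By (a) each such $[j]_q$ is a unit, and $v$ is the inverse of their product.

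There is no real obstacle here: both parts are elementary manipulations once (a) and (b) are isolated. The only minor subtlety is verifying (a) carefully, namely that ``unit modulo the maximal ideal of the local ring $\mathbf{Z}_p\llbracket q-1\rrbracket$'' suffices for being a global unit, which is standard since that ring is $(p,q-1)$-adically complete.
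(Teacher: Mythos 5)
Your proof is correct and follows essentially the same approach as the paper: both part (1) and part (2) are proved by isolating the $[j]_q$ with $p\nmid j$ as units, and for part (1) rewriting $[kp]_q = \phi([k]_q)\cdot[p]_q$ for the remaining factors. The only cosmetic difference is that you state facts (a) and (b) as standalone observations up front, whereas the paper inlines them.
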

\begin{proof}
For (1), note that, up to multiplication by units, $[mp]_q!$ equals $\prod_{i=1}^m [ip]_q $: for any integer $k \geq 0$, the polynomial $[k]_q$ is invertible if $k$ is not divisible by $p$. Moreover, we also have an equality
\[ \prod_{i=1}^m [ip]_q! = \prod_{i=1}^m \frac{q^{ip}-1}{q-1} = \prod_{i=1}^m \big(\frac{q^{ip}-1}{q^p-1} \cdot \frac{q^p-1}{q-1}\big) = \phi([m]_q!) \cdot [p]_q^m.\]
The desired identity now easily follows.

For (2), write $i = \lfloor i \rfloor + \epsilon$ for $0 \leq \epsilon < 1$ in $\mathbf{N}[1/p]$. Then $\lfloor ip \rfloor = \lfloor i \rfloor p + \lfloor \epsilon p \rfloor$. As $0 \leq \epsilon < 1$ in $\mathbf{N}[1/p]$, we have $0 \leq \lfloor \epsilon p \rfloor < p$ in $\mathbf{N}$. But then any integer $k$ with $ \lfloor i \rfloor p < k \leq \lfloor ip \rfloor$ is coprime to $p$, so the corresponding polynomial $[k]_q$ is invertible; this easily implies the claim.
\end{proof}

Now we can describe the Nygaard filtration explicitly.

\begin{lemma}\label{NygaardKeyCase} Theorem~\ref{ThmNygaard} holds true for $S$. More precisely:
\begin{enumerate}
\item The Nygaard filtration $\mathrm{Fil}^n_N \Prism_S$ identifies with the $(p,[p]_q)$-adic completion of the $A$-submodule
\[ \bigoplus_{i \in \mathbf{N}[1/p]} [p]_{q^{1/p}}^{n-\lfloor i\rfloor } \cdot A \cdot \frac{Y^i}{ [\lfloor i\rfloor ]_{q}! } \subset \bigoplus_{i \in \mathbf{N}[1/p]} A \cdot \frac{Y^i}{  [\lfloor i\rfloor]_q! },\]
under the isomorphism in \eqref{qPDmodule}, and we follow the convention that $[p]_{q^{1/p}}^{n - \lfloor i\rfloor } =1$ for non-positive exponents (i.e., when $i \geq n+1$).

\item The image of
\[
\frac{\phi}{[p]_q^n}: \mathrm{Fil}^n_N\Prism_S\to \overline{\Prism}_S = \Prism_S/[p]_q = (\bigoplus_{i\in \mathbf N[1/p]} \mathbf Z_p[\zeta_{p^\infty}]\cdot \frac{Y^i}{[\lfloor i \rfloor]_q!})^\wedge
\]
is given by the summands with $i<p(n+1)$.

\item The conjugate filtration $\mathrm{Fil}_n \overline{\Prism}_S\subset \overline{\Prism}_S$ is also given by the summands with $i<p(n+1)$.
\end{enumerate}
\end{lemma}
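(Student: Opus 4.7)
The plan is to verify (1), (2), and (3) by direct computation, leveraging the explicit $A$-basis $\{Y^i/[\lfloor i\rfloor]_q! : i \in \mathbf{N}[1/p]\}$ of $\Prism_S$ furnished by \cref{qPDEx}.

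First, for parts (1) and (2), I would compute the action of $\phi$ on each basis element. Using $\phi(Y)=Y^p$, $\phi(q)=q^p$, the identity $\phi([p]_{q^{1/p}}) = [p]_q$, and \cref{qMultId}, one verifies
\[
\phi\!\left(\frac{Y^i}{[\lfloor i\rfloor]_q!}\right) \;=\; u_i\cdot [p]_q^{\lfloor i\rfloor}\cdot \frac{Y^{ip}}{[\lfloor ip\rfloor]_q!}
\]
for some unit $u_i\in A^{\times}$: apply \cref{qMultId}(1) to rewrite $\phi([\lfloor i\rfloor]_q!)$ in terms of $[\lfloor i\rfloor p]_q!$ and a power of $[p]_q$, then \cref{qMultId}(2) to replace $[\lfloor i\rfloor p]_q!$ by $[\lfloor ip\rfloor]_q!$. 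Since $\phi\colon A\to A$ is an automorphism ($A$ being a perfect $\delta$-ring), the condition $\phi(a_i)\cdot[p]_q^{\lfloor i\rfloor}\in [p]_q^n A$ on the coefficient of a single basis vector is equivalent to $a_i\in [p]_{q^{1/p}}^{\max(0,\,n-\lfloor i\rfloor)}A$. Because distinct $i\in\mathbf N[1/p]$ yield distinct basis vectors indexed by $ip$ on the target side, the divisibility $\phi(x)\in[p]_q^n\Prism_S$ for $x=\sum a_i\,Y^i/[\lfloor i\rfloor]_q!$ can be checked coefficient-wise, giving (1). For (2), substitute back into $\phi/[p]_q^n$ and reduce modulo $[p]_q$: the summands with $\lfloor i\rfloor>n$ acquire an extra positive power of $[p]_q$ and vanish, while summands with $\lfloor i\rfloor\le n$ surject (using surjectivity of $\phi\colon A\to A$) onto $R\cdot\overline{Y^{ip}/[\lfloor ip\rfloor]_q!}$. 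Re-indexing via $j=ip$ converts $\lfloor i\rfloor\le n$ into $j<p(n+1)$.

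For part (3), I would first compute the cotangent complex. Since $\widetilde S:=R\langle X^{1/p^\infty}\rangle$ is itself a perfectoid ring (its mod-$p$ reduction is perfect), $L^{\wedge}_{\widetilde S/R}$ vanishes; combined with the transitivity sequence for $R\to\widetilde S\to S$ and the fact that $X\in\widetilde S$ is a non-zerodivisor, this yields $L^{\wedge}_{S/R}\simeq S[1]$. Hence $S$ is quasiregular semiperfectoid, and the d\'ecalage identifies $(\wedge^n L_{S/R}[-n])^{\wedge}\simeq \Gamma^n(S)\simeq S$, so the expected graded pieces are $\mathrm{gr}^{\conj}_n\overline{\Prism}_S\simeq S\{-n\}$. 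To match this with the proposed filtration I would proceed in three steps: (a) Verify multiplicativity, using that a product $\overline{Y^i/[\lfloor i\rfloor]_q!}\cdot\overline{Y^{i'}/[\lfloor i'\rfloor]_q!}$ which overshoots the expected filtration level picks up a factor $[\lfloor i+i'\rfloor]_q!/([\lfloor i\rfloor]_q!\,[\lfloor i'\rfloor]_q!)$ that is divisible by $[p]_q$ and therefore vanishes in $\overline{\Prism}_S$; (b) identify $\mathrm{Fil}_0$ with the image of $S\to\overline{\Prism}_S$ by unwinding the normalization $X^{1/p^n}\mapsto Y^{1/p^{n-1}}$, so that $X^j$ maps to $Y^{pj}$ and the image spans exactly $\{\overline{Y^k}:k<p\}$; (c) match successive graded pieces abstractly against the cotangent complex computation, noting that in both cases the $n$-th graded piece is a completed rank-one free $S$-module.

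The main obstacle is step (c) above: passing from an abstract rank match to the genuine identification of the proposed filtration with the conjugate filtration. The cleanest route is a uniqueness statement: any increasing, multiplicative, exhaustive filtration on $\overline{\Prism}_S$ with $\mathrm{Fil}_0$ equal to the image of $S$ and graded pieces $\wedge^n L_{S/R}[-n]\{-n\}^{\wedge}$ must coincide with the conjugate filtration, which one can establish by induction on $n$ using the universal property of the derived Postnikov filtration on $\overline{\Prism}_{-/R}$. A more concrete alternative is to use the filtered colimit presentation $S=\operatornamewithlimits{colim}_m R\langle u\rangle/u^{p^m}$ by quasi-smooth quotients, compute the conjugate filtration on each $\overline{\Prism}_{R\langle u\rangle/u^{p^m}/A}$ (whose prismatic envelope is $A\langle u\rangle\{u^{p^m}/[p]_q\}^{\wedge}$) via the Postnikov filtration of a smooth resolution, and pass to the colimit; this route is more technically involved but avoids appealing to an external uniqueness principle.
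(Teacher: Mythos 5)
Parts (1) and (2) of your argument match the paper's proof essentially verbatim: both rest on the observation that $\phi$ is graded with respect to the $\mathbf{N}[1/p]$-grading coming from \eqref{qPDmodule}, together with the computation
\[
\phi\Big(\frac{Y^i}{[\lfloor i\rfloor]_q!}\Big) = u_i\,[p]_q^{\lfloor i\rfloor}\,\frac{Y^{ip}}{[\lfloor ip\rfloor]_q!}
\]
obtained from both halves of \cref{qMultId}. The graded nature of $\phi$ then reduces the Nygaard condition to a coefficientwise divisibility statement in $A$, and the bijectivity of $\phi$ on the perfect $\delta$-ring $A$ forces $[p]_{q^{1/p}}^{\max(0,n-\lfloor i\rfloor)}$ to be the minimal multiplier, just as you say. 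Part (2) also goes through exactly as you outline.

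For part (3), your step (c) has a genuine gap, and your proposed fix (route 1) does not close it. The uniqueness statement you invoke --- ``any increasing multiplicative exhaustive filtration with $\mathrm{Fil}_0 = \mathrm{image}(S)$ and graded pieces $(\wedge^n L_{S/R}[-n])^{\wedge}\{-n\}$ must coincide with the conjugate filtration'' --- is not a consequence of any universal property of Postnikov filtrations. For quasiregular semiperfectoid $S$ the conjugate filtration on the \emph{discrete} ring $\overline{\Prism}_S$ is defined by left Kan extension from the Postnikov filtration on smooth algebras, not characterized intrinsically on $\overline{\Prism}_S$; knowing that the associated graded is abstractly rank-one free over $S$ in each degree does not determine which $S$-submodule of $\overline{\Prism}_S$ constitutes $\mathrm{Fil}^{\conj}_n$. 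The paper closes this gap concretely: it appeals to the end of \cref{DerivedPrismaticPrismatic} to see that $\mathrm{gr}_1^{\conj}$ is generated by the image of $J/I$, which in the present normalization is the class of $Y^p/[p]_q$; then, since the conjugate gradeds form a divided power algebra on $\mathrm{gr}_1$, multiplicativity forces $\mathrm{gr}_n^{\conj}$ to be spanned by $\gamma_n(Y^p/[p]_q)$, which by \cref{qMultId}~(1) agrees up to units with $Y^{pn}/[pn]_q!$ --- precisely the expected generator. Your route 2 (presenting $S$ as $\colim_m R\langle u\rangle/u^{p^m}$ and tracking the conjugate filtration through the colimit) could be made rigorous, but it is much more involved than the paper's two-line identification of $\mathrm{gr}_1$, and would still require an explicit generator computation at each finite stage.
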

\begin{proof} For (1), we note that $\phi$ is a graded map in the above $\mathbf N[1/p]$-grading, multiplying the grading by $p$. In particular, it follows that $\mathrm{Fil}^n_N \Prism_S$ is also graded. Using both parts of \cref{qMultId}, we can write
\begin{equation}
\label{FrobeniusqPD}
 \phi( \frac{Y^i}{[\lfloor i\rfloor]_q!}) = \frac{Y^{ip}}{\phi([\lfloor i\rfloor]_q!)} = \frac{Y^{ip}}{[\lfloor ip \rfloor]_q!} \cdot [p]_q^{\lfloor i\rfloor} \cdot u
 \end{equation}
for a unit $u$. It follows from this formula that $ [p]_{q^{1/p}}^{n-\lfloor i\rfloor } \cdot \frac{Y^i}{ [\lfloor i\rfloor ]_{q}! } \in \mathrm{Fil}^n_N \Prism_S$ for all $i$ (under the convention that $[p]_{q^{1/p}}^{n - \lfloor i\rfloor } =1$ for non-positive exponents). The same formula (and the fact that $\phi$ is bijective on $A$) also shows that no smaller multiple of $\frac{Y^i}{[\lfloor i\rfloor]_q!}$ can lie in $\mathrm{Fil}^n_N \Prism_S$, giving (1).

For part (2), we use the above formula to see the following:
\begin{itemize}
\item For $i < n+1$, the map $\phi$ maps $(\mathrm{Fil}^n_N \Prism_S)_{deg=i}$ isomorphically onto $[p]_q^n \cdot (\Prism_S)_{deg=ip}$.
\item For $i \geq n+1$, the map $\phi$ maps $(\mathrm{Fil}^n_N \Prism_S)_{deg=i}$ into $[p]_q^{n+1} \cdot (\Prism_S)_{deg=ip}$.
\end{itemize}
This immediately gives (2).

For part (3), recall that the natural map
\[ S := (\mathbf Z_p[\zeta_{p^\infty},X^{1/p^\infty}]/X)^\wedge  \to \overline{\Prism}_S\]
 carries $X^{1/p^n}$ to $Y^{1/p^{n-1}}$. As $[p]_q \mid Y^i$ in $\overline{\Prism}_S$ for $i \geq p$, it follows that the image of the above map is given by the summands with $i < p$, giving the claim in part (3) for $n=0$. For $n=1$, we use the end of \cref{DerivedPrismaticPrismatic} to see that $\mathrm{gr}^1 \overline{\Prism}_S$ is generated by $\frac{Y^p}{[p]_q}$. The higher graded pieces are then generated by the divided powers of $\frac{Y^p}{[p]_q}$ by the multiplicativity of the conjugate filtration, which by \cref{qMultId} (1) indeed agree up to units with $\frac{Y^{pi}}{[pi]_q!}$. This gives the desired comparison.
%
%
%
\end{proof}

\subsection{The Nygaard filtration for finitely presented semiperfectoid rings}
\label{ss:NygaardFPSPerfd}

In this subsection we prove the following result.

\begin{proposition}\label{PropNygaard} Theorem~\ref{ThmNygaard} holds true for $S=R\langle X_1^{1/p^\infty},\ldots,X_n^{1/p^\infty}\rangle/(f_1,\ldots,f_m)$ where $f_1,\ldots,f_m$ is a $p$-completely regular sequence.
\end{proposition}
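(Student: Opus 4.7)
The plan is to deduce \cref{PropNygaard} from the key case \cref{NygaardKeyCase} via symmetric monoidality of derived prismatic cohomology together with $p$-completely faithfully flat base change.

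First, I would handle the ``trivial'' quotient $T_m := R\langle Y_1^{1/p^\infty}, \ldots, Y_m^{1/p^\infty}\rangle/(Y_1, \ldots, Y_m)$, in which the relations are literally the variables. Since each $Y_i$ is a nonzerodivisor in $R\langle Y_i^{1/p^\infty}\rangle$, the ring $T_m$ is the $m$-fold derived $p$-completed tensor product over $R$ of the single-variable ring $R\langle Y^{1/p^\infty}\rangle/Y$ treated in \cref{NygaardKeyCase}. To make that lemma directly applicable for a general perfectoid $R$ (which need not contain $\zeta_{p^\infty}$), I would first pass to the perfectoid $p$-completely faithfully flat cover $R' := R \widehat{\otimes}_{\mathbf{Z}_p}^L \mathbf{Z}_p[\zeta_{p^\infty}]^\wedge$ (which is perfectoid by the proof of \cref{GetAlmostSetup}) and descend at the end. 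Since derived prismatic cohomology is symmetric monoidal (Construction~\ref{DerivedPrismatic}(5)), one obtains $\Prism_{T_m}$ as the $m$-fold completed tensor product of $\Prism_{R\langle Y_i^{1/p^\infty}\rangle/Y_i}$ over the base prism $(A,(d))$ of $R$. The Nygaard filtration is multiplicative by construction, and the conjugate filtration is multiplicative since the cotangent complex is additive under derived tensor products; combining these with \cref{NygaardKeyCase} gives \cref{ThmNygaard} for $T_m$.

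Next, I would reduce the general case $S = R_0/(f_1, \ldots, f_m)$, with $R_0 := R\langle X_1^{1/p^\infty}, \ldots, X_n^{1/p^\infty}\rangle$, to $T_m$. Applying \cref{AndreFlatness} to $R_0$ yields a $p$-completely faithfully flat perfectoid cover $R_0 \to R_0^\flat$ inside which each $f_i$ admits a compatible system of $p$-power roots. Fixing such roots determines an $R$-algebra map $\widetilde{R} := R\langle Y_1^{1/p^\infty}, \ldots, Y_m^{1/p^\infty}\rangle \to R_0^\flat$ with $Y_i \mapsto f_i$, giving the pushout presentation
\[ S^\flat := S \widehat{\otimes}_{R_0}^L R_0^\flat \cong R_0^\flat \widehat{\otimes}_{\widetilde{R}}^L T_m. \]
Symmetric monoidality of derived prismatic cohomology then yields $\Prism_{S^\flat} \cong \Prism_{R_0^\flat} \widehat{\otimes}_{\Prism_{\widetilde{R}}}^L \Prism_{T_m}$. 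Since $R_0^\flat$ and $\widetilde{R}$ are perfectoid, their prismatic cohomologies are perfect prisms, on which the Nygaard filtration is simply the $d$-adic filtration and the conjugate filtration is trivial; consequently, both filtrations on $\Prism_{T_m}$ transfer through the tensor product, and \cref{ThmNygaard} for $T_m$ yields \cref{ThmNygaard} for $S^\flat$. Finally, $p$-completely faithfully flat descent along $S \to S^\flat$ descends the result to $S$.

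The main obstacle will be establishing the required multiplicativity of the Nygaard filtration under derived tensor products and its compatibility with $p$-completely faithfully flat base change. The conjugate filtration is a Postnikov-type filtration coming from the Hodge-Tate comparison, whose multiplicativity and base-change compatibility follow formally from those of the cotangent complex. The Nygaard filtration is more delicate: multiplicativity gives one inclusion of the tensor of filtrations into the Nygaard filtration of the tensor product, and the reverse inclusion is checked on associated graded pieces by matching the summand-by-summand description from \cref{NygaardKeyCase} with the K\"unneth formula for the conjugate filtration, after which $p$-completely faithfully flat descent can likewise be verified at the level of associated gradeds where the Hodge-Tate comparison makes everything explicit.
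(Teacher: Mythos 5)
Your proposal uses the same key ingredients as the paper's proof: Andr\'e's lemma to obtain $p$-power roots, a K\"unneth/base-change reduction to the model ring $T_m = R\langle Y_1^{1/p^\infty},\ldots,Y_m^{1/p^\infty}\rangle/(Y_1,\ldots,Y_m)$, and ultimately \cref{NygaardKeyCase}. The organizational difference (handling $T_m$ in full first and then reducing general $S$ to it, versus the paper's single chain of reductions) is not itself a problem.

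However, the final paragraph, where you treat compatibility of the Nygaard filtration with completed tensor products and with faithfully flat base change as checkable ``on associated gradeds,'' is circular as written: to verify anything about the associated graded of the Nygaard filtration of $\Prism_{S_1 \widehat{\otimes} S_2}$ you would already need to know that filtration. The same circularity affects the two descent steps (along $\mathbf{Z}_p[\zeta_{p^\infty}]^\wedge \to R'$ and along $S \to S^\flat$), since base-change compatibility of $\mathrm{Fil}^*_N$ is precisely the kind of statement in play. The paper cuts this knot with a small but essential observation, which you should state explicitly: to prove \cref{ThmNygaard} for $S$ it suffices to exhibit \emph{any} filtration $\mathrm{Fil}^i_M \Prism_S$ on which $\phi$ is divisible by $d^i$ and such that the resulting map $\phi/d^i : \mathrm{gr}^i_M \Prism_S \to \overline{\Prism}_S$ is injective with image $\mathrm{Fil}_i \overline{\Prism}_S$; an easy induction on $i$ then forces $\mathrm{Fil}^i_M = \mathrm{Fil}^i_N$. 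With this lemma one simply \emph{defines} $\mathrm{Fil}^i_M$ by the K\"unneth/base-change recipe (e.g.\ $\mathrm{Fil}^i_M \Prism_S := \mathrm{Fil}^i_N \Prism_{T_m} \widehat{\otimes}^L_{\Prism_{\widetilde{R}}} \Prism_{R_0^\flat}$ in your notation) and verifies the two properties directly from \cref{NygaardKeyCase} and the K\"unneth formula for the conjugate filtration, bypassing any a priori multiplicativity or descent statement for $\mathrm{Fil}^*_N$ itself. Once you insert this device, your proposal becomes correct and essentially coincides with the paper's argument; without it, the step ``the reverse inclusion is checked on associated graded pieces'' is a genuine gap.
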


\begin{proof}
For the purposes of this proof, we make the following definition for a quasiregular semiperfectoid $R$-algebra $S$:

\begin{itemize}
\item[$(\ast)$] A multiplicative filtration $\{\mathrm{Fil}^i_M \Prism_S\}_{i \geq 0}$ by ideals on the ring $\Prism_S$ is {\em good} if $\phi(\mathrm{Fil}^i_M) \subset d^i \Prism_S$ for all $i$ and the induced map $\mathrm{gr}^i_M \Prism_S \xrightarrow{\frac{\phi}{d^i}} \overline{\Prism}_S$ identifies the source with $\mathrm{Fil}_i \overline{\Prism}_S$ for all $i$. 
\end{itemize}

The goodness condition has the following properties:

\begin{enumerate}
\item Given a quasiregular semiperfectoid $R$-algebra $S$ and a good filtration $\mathrm{Fil}^*_M$, we must have $\mathrm{Fil}^*_M = \mathrm{Fil}^*_N$: indeed, this follows by induction on $i$ using the map 
\[ \xymatrix{ 0 \ar[r] & \mathrm{Fil}^{i+1}_M \Prism_S \ar[r] \ar[d]^\phi & \mathrm{Fil}^i_M \Prism_S \ar[r] \ar[d]^\phi & \mathrm{gr}^i_M \Prism_S \ar[d]^\phi \ar[r] & 0 \\
0 \ar[r] & d^{i+1} \Prism_S \ar[r] & d^i \Prism_S \ar[r] & d^i \Prism_S/d^{i+1} \Prism_S \ar[r] & 0 }\]
of short exact sequences, the definition of the Nygaard filtration, and the injectivity of the rightmost vertical arrow in the above map of exact sequences.

\item If the $\Prism_S$ of a quasiregular semiperfectoid $R$-algebra $S$ admits a good filtration, then Theorem~\ref{ThmNygaard} holds true for $S$: this follows by the definition of a good filtration and (1). 

\item Say $S \to S'$ is a map of quasiregular semiperfectoid $R$-algebras which is relatively perfect in the following sense: it is obtained via base change from a map of perfectoid $R$-algebras. If  $\Prism_S$ supports a good filtration $\{\mathrm{Fil}^i_M \Prism_S\}_{i \geq 0}$, then $\{\mathrm{Fil}^i_{M'} \Prism_{S'} := \mathrm{Fil}^i_M \Prism_S \widehat{\otimes}^L_{\Prism_S} \Prism_{S'} \}_{i \geq 0}$ is a good filtration on $\Prism_{S'}$. Indeed, by the base change compatibility of the conjugate filtration along such maps $S \to S'$, we know that $\mathrm{gr}^i_{M'} \Prism_{S'} \simeq \mathrm{Fil}_i \overline{\Prism}_{S'}$ is concentrated in degree $0$ for all $i$, and hence the same is true for $ \Prism_{S'} / \mathrm{Fil}^i_{M'} \Prism_{S'}$ for all $i$. Moreover, by base change, we also know that $\Prism_{S'} \to \Prism_{S'} / \mathrm{Fil}^i_{M'} \Prism_{S'}$ is surjective for all $i$, so each $\mathrm{Fil}^i_{M'} \Prism_{S'}$ is concentrated in degree $0$ and an ideal of $\Prism_{S'}$. By the assumption on $S \to S'$, we also know that the Frobenius $\Prism_{S'} \to \Prism_{S'}$ is the base change of the Frobenius on $\Prism_S$. Using this remark, it is easy to see now that $\{\mathrm{Fil}^i_{M'} \Prism_{S'}\}_{i \geq 0}$ is a good filtration $\Prism_{S'}$.

\item Say $S \to T$ is a map of quasiregular semiperfectoid $R$-algebras which is relatively perfect in the sense of (3) and further is $p$-completely faithfully flat. If $\Prism_{T}$ supports a good filtration, so does $\Prism_S$. Indeed, consider the Cech nerve $S \to T^\bullet$ of $S \to T$. Applying (3) to $T^\bullet$, we learn that each $\Prism_{T^i}$ supports a good filtration via base change from the one for $\Prism_T$ along some structure map $T \to T^i$. By (2), this filtration on $\Prism_{T^\bullet}$ must be the Nygaard filtration, so it is independent of the structure map used in the previous sentence. In other words, the Nygaard filtration $\{\mathrm{Fil}^i_N \Prism_{T^\bullet}\}_{i \geq 0}$ defines a cartesian cosimplicial $(p,d)$-complete complex over the cosimplicial ring $\Prism_{T^\bullet}$. Now $\Prism_S \to \Prism_T$ is $(p,d)$-completely faithfully flat with Cech nerve $\Prism_{T^\bullet}$, so the Nygaard filtration $\{\mathrm{Fil}^i_N \Prism_{T^\bullet}\}_{i \geq 0}$ on $\Prism_{T^\bullet}$ must arise as the base change of a multiplicative filtration $\{\mathrm{Fil}^i_M \Prism_S\}_{i \geq 0}$ on $\Prism_S$, a priori merely in the filtered derived category. To finish, we must check that $\{\mathrm{Fil}^i_M \Prism_S\}_{i \geq 0}$ defines a good filtration on $\Prism_S$. For each $i$, the map $\Prism_S \to \Prism_S/\mathrm{Fil}^i_M \Prism_S$ base changes along $\Prism_S \to \Prism_T$ to a surjective map of modules in degree $0$. By $(p,d)$-complete faithful flatness, we conclude that each $\Prism_S/\mathrm{Fil}^i_M \Prism_S$ is connective and the map $\Prism_S \to \Prism_S/\mathrm{Fil}^i_M \Prism_S$ is surjective on $H^0$. This implies that each $\mathrm{Fil}^i_M \Prism_S$ is connective and the map to $\Prism_S$ is injective on $H^0$. On the other hand, the descent formula $\mathrm{Fil}^i_M \Prism_S \simeq \lim_\bullet \mathrm{Fil}^i_N \Prism_{T^\bullet}$ shows that $\mathrm{Fil}^i_M \Prism_S$ is also coconnective, so $\{\mathrm{Fil}^i_M \Prism_S\}_{i \geq 0}$ is indeed a multiplicative filtration by ideals on $\Prism_S$. Using $(p,d)$-complete faithful flatness and base change for the conjugate filtration, we then conclude that $\{\mathrm{Fil}^i_M \Prism_S\}_{i \geq 0}$ is a good filtration on $\Prism_S$. 
\end{enumerate}

We can now prove the proposition by devissage. 

First, consider  $S(m) =R\langle X_1^{1/p^\infty},\ldots,X_m^{1/p^\infty}\rangle/(X_1,\ldots,X_m)$. Lemma~\ref{NygaardKeyCase} gives the theorem for $S(1)$. The general $m$ case follows by the K\"{u}nneth formula: tensoring together the Nygaard filtration for each $R \langle X_i^{1/p^\infty} \rangle/(X_i)$ gives a good filtration, so the claim follow from property (2) above. 

Next, consider a quasiregular semiperfectoid ring of the form $R'/(f_1,...,f_r)$, where $R'$ is a perfectoid $R$-algebra and $f_1,...,f_r \in R'$ is a sequence of elements that is $p$-completely regular relative to $R$ and such that each $f_i$ admits a compatible system of $p$-power roots in $f_i$. Such a ring receives a relatively perfect map (in the sense of property (3) above) from $S(r)$, so the proposition follows in this case from property (3). 

Finally, by Andr\'e's lemma, any $S$ as in the proposition admits a relatively perfect and $p$-completely faithfully flat cover by the type of quasiregular semiperfectoid ring treated in the previous paragraph, so the claim follows from property (4).
\end{proof}

%
%

\subsection{The Nygaard filtration on prismatic cohomology}\label{subsec:generalnygaard}

Fix a perfectoid ring $R$ corresponding to a perfect prism $(A,I)$. We want to endow $\Prism_{B/A}$ with a functorial Nygaard filtration for any $p$-completely smooth $R=A/I$-algebra $B$. Although we will later be able to give a better definition, we use the following direct recipe.

For any surjection $R\langle X_1,\ldots,X_n\rangle\to B$, the ring
\[
\tilde{B} = R\langle X_1^{1/p^\infty},\ldots,X_n^{1/p^\infty}\rangle\otimes_{R\langle X_1,\ldots,X_n\rangle} B
\]
is semiperfectoid and Zariski locally of the form considered in the previous subsection, so by localization Theorem~\ref{ThmNygaard} holds true for $\tilde{B}$. The same applies to all terms of the \v{C}ech nerve $\tilde{B}^\bullet$ of $B\to \tilde{B}$. The cosimplicial $\delta$-ring
\[
\Prism_{\tilde{B}^\bullet}
\]
computes $\Prism_{B/A}$ (for example, by the Hodge-Tate comparison and quasisyntomic descent).

\begin{definition} The Nygaard filtration
\[
\mathrm{Fil}^i_N \Prism_{B/A}\to \Prism_{B/A}
\]
is the totalization of $\mathrm{Fil}^i_N \Prism_{\tilde{B}^\bullet}\subset \Prism_{\tilde{B}^\bullet}$.
\end{definition}

\begin{proposition} There is a natural map
\[
``\frac{\phi}{d^i}": \mathrm{Fil}^i_N \Prism_{B/A}\to \Prism_{B/A}
\]
that projects to an isomorphism of $\mathrm{gr}^i_N \Prism_{B/A}\cong \tau^{\leq i} \Prism_{B/A}$.
\end{proposition}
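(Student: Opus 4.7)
The plan is to build the map $\phi/d^i$ and compute $\mathrm{gr}^i_N$ term-by-term on the cosimplicial resolution $\tilde B^\bullet$ by (Zariski-locally) quasiregular semiperfectoid rings of the type handled in Proposition~\ref{PropNygaard}, and then totalize. For the construction of the map: at each cosimplicial level, $d$ is a nonzerodivisor on $\Prism_{\tilde B^k}$ and Proposition~\ref{PropNygaard} guarantees that $\phi(\mathrm{Fil}^i_N \Prism_{\tilde B^k}) \subset d^i \Prism_{\tilde B^k}$, so dividing by $d^i$ produces a well-defined map $\phi/d^i: \mathrm{Fil}^i_N \Prism_{\tilde B^k} \to \Prism_{\tilde B^k}$. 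Naturality in $\tilde B^k$ yields a map on totalizations, giving the desired $\phi/d^i: \mathrm{Fil}^i_N \Prism_{B/A} \to \Prism_{B/A}$.

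Since the Nygaard filtration is a filtration by subcomplexes at each level, the operation $\mathrm{gr}^i_N$ commutes with $\mathrm{Tot}$, and Proposition~\ref{PropNygaard} at each level identifies $\phi/d^i$ with an isomorphism $\mathrm{gr}^i_N \Prism_{\tilde B^k} \simeq \mathrm{Fil}_i \overline{\Prism}_{\tilde B^k}\{i\}$. Totalizing gives
\[
\mathrm{gr}^i_N \Prism_{B/A} \simeq \mathrm{Tot}\bigl(\mathrm{Fil}_i \overline{\Prism}_{\tilde B^\bullet}\bigr)\{i\},
\]
so the content of the proposition reduces to the identification (of the right-hand side with $\tau^{\leq i}\overline{\Prism}_{B/A}\{i\}$, which is the natural reading of the statement), i.e.\ to showing
\[
\mathrm{Tot}\bigl(\mathrm{Fil}_i \overline{\Prism}_{\tilde B^\bullet}\bigr) \;\simeq\; \tau^{\leq i} \overline{\Prism}_{B/A}.
\]

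The main step is to compare the conjugate filtration, totalized, with the Postnikov filtration on $\overline{\Prism}_{B/A}$. At each cosimplicial level, the Hodge-Tate comparison (for quasiregular semiperfectoid rings, Theorem~\ref{thm:Bq}) identifies the graded pieces as $\mathrm{gr}_j \overline{\Prism}_{\tilde B^\bullet} \simeq (\wedge^j L_{\tilde B^\bullet/R}[-j])^\wedge$, which are discrete. Quasisyntomic descent for the cotangent complex, together with $p$-complete smoothness of $B/R$ (so that $L_{B/R}$ is a finite projective $B$-module in degree $0$), yields
\[
\mathrm{Tot}\bigl(\mathrm{gr}_j \overline{\Prism}_{\tilde B^\bullet}\bigr) \;\simeq\; (\wedge^j L_{B/R}[-j])^\wedge,
\]
a complex concentrated in cohomological degree exactly $j$. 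Thus the filtration $\mathrm{Tot}(\mathrm{Fil}_\bullet \overline{\Prism}_{\tilde B^\bullet})$ on $\overline{\Prism}_{B/A}$ is an exhaustive increasing filtration whose $j$-th graded piece is concentrated in degree $j$; a standard induction on the filtration degree (using the cofiber sequences $\mathrm{Tot}(\mathrm{Fil}_{j-1}) \to \mathrm{Tot}(\mathrm{Fil}_j) \to \mathrm{Tot}(\mathrm{gr}_j)$ and the fact that $\mathrm{Tot}(\mathrm{gr}_j)$ lives in degree $j$) then shows that this filtration coincides with the Postnikov filtration $\tau^{\leq \bullet}\overline{\Prism}_{B/A}$, giving the desired isomorphism in degree $i$.

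The main obstacle I anticipate is handling the interplay between totalization and the conjugate filtration carefully: one must ensure that the filtration remains exhaustive after totalization and that the graded-piece identification descends faithfully, both of which rely on the degree count $|\mathrm{Tot}(\mathrm{gr}_j)| = j$ to provide strong (in fact finite-in-each-degree) convergence. Once this is in hand, the rest is formal consequence of the pointwise Theorem~\ref{ThmNygaard} and the symmetry between the Nygaard and conjugate filtrations.
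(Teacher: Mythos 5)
Your proof is correct and takes essentially the same route as the paper's (whose entire proof reads ``This follows from Proposition~\ref{PropNygaard} by passing to totalizations of the cosimplicial objects''); you are simply spelling out the two steps that sentence compresses — that $\mathrm{gr}^i_N$ commutes with totalization because the Nygaard filtration is levelwise a filtration by submodules, and that the totalized conjugate filtration is identified with the Postnikov filtration via quasisyntomic descent for $\wedge^j L_{-/R}$ and the degree count in the smooth case. You also correctly read $\tau^{\leq i}\Prism_{B/A}$ in the statement as $\tau^{\leq i}\overline{\Prism}_{B/A}\{i\}$, which is what the paper clearly intends (cf.\ Theorem~\ref{thmCagain}).
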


\begin{proof} This follows from Proposition~\ref{PropNygaard} by passing to totalizations of the cosimplicial objects.
\end{proof}

It follows that the Nygaard filtration on $\Prism_{B/A}$ is independent of the choice of the surjection $R\langle X_1,\ldots,X_n\rangle\to B$: adding extra variables to the $X_i$, one gets a comparison map between the two induced Nygaard filtrations, and it induces isomorphisms on $\mathrm{gr}^i_N \Prism_{B/A}$, so by descending induction on $i$ on all $\mathrm{Fil}^i_N \Prism_{B/A}$.

\subsection{End of proof}
\label{ss:EndProofNygaard}

Finally, we can finish the proof of Theorem~\ref{ThmNygaard}. First, by left Kan extension of $B\mapsto \mathrm{Fil}^i_N \Prism_{B/A}$ we can define a ``derived Nygaard filtration" $\mathrm{Fil}^i_{N'} \Prism_{B/A}$ on $\Prism_{B/A}$ for any derived $p$-complete simplicial $R$-algebra, with $\mathrm{gr}^i_{N'} \Prism_{B/A}\cong \mathrm{Fil}_i \overline{\Prism}_{B/A}$. In case $B=S$ is quasiregular semiperfectoid, these properties imply that $\mathrm{Fil}^i_{N'} \Prism_{B/A}$ is concentrated in cohomological degrees $0$ and $-1$ for all $i\geq 0$.

\begin{proposition} For all $i\geq 0$, the complex $\mathrm{Fil}^i_{N'} \Prism_S$ sits in degree $0$ and defines a filtration of $\Prism_S$ that agrees with $\mathrm{Fil}^i_N \Prism_S$, and Theorem~\ref{ThmNygaard} holds true for $S$.
\end{proposition}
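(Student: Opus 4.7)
The plan is to use quasisyntomic descent to reduce the statement, for a general quasiregular semiperfectoid (QRSP) ring $S$, to the explicit case established in Proposition~\ref{PropNygaard}. We are given that $\mathrm{Fil}^i_{N'} \Prism_S$ lies in cohomological degrees $\{-1, 0\}$. If we can show it lies in degrees $\geq 0$ as well, and identify the resulting discrete module with $\mathrm{Fil}^i_N \Prism_S \subset \Prism_S$, then Theorem~\ref{ThmNygaard} follows from the known identification $\mathrm{gr}^i_{N'} \Prism_S \cong \mathrm{Fil}_i \overline{\Prism}_S\{i\}$.

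First I would construct a quasisyntomic cover $S \to \tilde{S}$ with $\tilde{S}$ of the form handled by Proposition~\ref{PropNygaard}. Fixing a perfectoid $R$ mapping to $S$ and a surjection $R\langle X_1, \ldots, X_n\rangle \twoheadrightarrow S$, one takes $\tilde{S}$ to be obtained by adjoining $p$-power roots to the generators $X_j$; after possibly applying Andr\'e's flatness lemma (Theorem~\ref{AndreFlatness}) to the defining relations to arrange a $p$-completely regular sequence admitting $p$-power roots, one reaches a ring $\tilde{S}$ of the required form. The terms $\tilde{S}^k := \tilde{S}^{\widehat{\otimes}_S(k+1)}$ of the \v{C}ech nerve are QRSP and, after further Andr\'e-style refinement if needed, again of this form. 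On such terms, Proposition~\ref{PropNygaard} gives that $\mathrm{Fil}^i_N \Prism_{\tilde{S}^k}$ is concentrated in degree $0$ with graded pieces $\mathrm{Fil}_i \overline{\Prism}_{\tilde{S}^k}\{i\}$; the derived filtration $\mathrm{Fil}^i_{N'} \Prism_{\tilde{S}^k}$ shares the same graded pieces, so descending induction on $i$ (using derived completeness of both filtrations) makes the natural comparison $\mathrm{Fil}^i_{N'} \Prism_{\tilde{S}^k} \to \mathrm{Fil}^i_N \Prism_{\tilde{S}^k}$ an isomorphism of discrete modules.

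Next I would exploit quasisyntomic descent on QRSP rings for the derived Nygaard filtration: the conjugate-filtration pieces $\mathrm{Fil}_i \overline{\Prism}_{-/A}$ descend (being built from the cotangent complex, which is a quasisyntomic sheaf), so the same holds by induction on $i$ and completeness for each $\mathrm{Fil}^i_{N'} \Prism_{-/A}$. Applying this to the cover $S \to \tilde{S}^\bullet$ gives
\[
\mathrm{Fil}^i_{N'} \Prism_S \simeq \mathrm{Tot}\bigl(\mathrm{Fil}^i_{N'} \Prism_{\tilde{S}^\bullet}\bigr),
\]
which, by the previous step, is a totalization of a cosimplicial object concentrated in degree $0$ and hence lives in degrees $\geq 0$. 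Combined with the a priori bound, $\mathrm{Fil}^i_{N'} \Prism_S$ is discrete. The induced map $\mathrm{Fil}^i_{N'} \Prism_S \to \Prism_S$ lands in $\mathrm{Fil}^i_N \Prism_S$ because $\phi$ is divisible by $d^i$ on the source by construction; equality of these two submodules of $\Prism_S$ follows by descending the already-established equality on each $\tilde{S}^k$. Theorem~\ref{ThmNygaard} for $S$ is then the descent of Theorem~\ref{ThmNygaard} for the $\tilde{S}^k$, where the image of $\phi/d^i$ and the conjugate filtration have been identified term-by-term.

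The main obstacle will be justifying the quasisyntomic descent claim for $\mathrm{Fil}^i_{N'} \Prism_{-/A}$ on QRSP rings. One must interchange the descent totalization over $\tilde{S}^\bullet$ with the completion $R\lim_N \bigl(\mathrm{Fil}^i_{N'}/\mathrm{Fil}^{i+N}_{N'}\bigr)$ in the filtration direction. This is safe because each truncated quotient $\mathrm{Fil}^i_{N'}/\mathrm{Fil}^{i+N}_{N'}$ is an iterated extension of conjugate-filtration pieces sitting in the fixed degree range $\{-1,0\}$ uniformly in $N$ on QRSP inputs, so the two limits commute and the descent claim reduces to descent for the individual graded pieces $\mathrm{Fil}_j \overline{\Prism}_{-/A}$, which is classical.
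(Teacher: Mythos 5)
Your proposal takes a genuinely different route from the paper: you try to establish quasisyntomic descent for the left-Kan-extended filtration $\mathrm{Fil}^i_{N'}\Prism_{-}$ on QRSP rings and then read off discreteness from the totalization. The paper instead argues by \emph{ascending induction on $i$}: assuming $\mathrm{Fil}^i_{N'}=\mathrm{Fil}^i_N$, the map $\mathrm{Fil}^i_{N'}\Prism_S\to\mathrm{gr}^i_{N'}\Prism_S$ is a map of discrete modules whose image is shown to be all of $\mathrm{gr}^i_{N'}\Prism_S\cong\mathrm{Fil}_i\overline{\Prism}_S$, so its derived kernel $\mathrm{Fil}^{i+1}_{N'}\Prism_S$ is again discrete and equals $\ker(\phi/d^i)=\mathrm{Fil}^{i+1}_N\Prism_S$. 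The crucial surjectivity is obtained not from a cover but from a \emph{single} functorial surjection $S'=R\langle X_i^{1/p^\infty},Y_j^{1/p^\infty}\rangle/(Y_j)\twoheadrightarrow S$ that is also surjective on $\pi_1(L_{-/R})$, so that Proposition~\ref{PropNygaard} applies to $S'$ and the image for $S$ receives the image for $S'$. This avoids descent entirely and is much leaner than what you attempt.

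There is a genuine gap in your justification of the descent claim. You propose to write $\mathrm{Fil}^i_{N'}\Prism_S$ as $R\lim_N\bigl(\mathrm{Fil}^i_{N'}/\mathrm{Fil}^{i+N}_{N'}\bigr)$ and then commute this limit with the \v{C}ech totalization. That requires the Nygaard filtration on $\Prism_S$ to be complete, but the paper explicitly warns, right after defining it, that the Nygaard filtration is in general not even separated on a quasiregular semiperfectoid ring (this is precisely why $\widehat{\Prism}_S$ and $\Prism_S$ are distinguished objects in \S\ref{sec:BMS2Comp}). So the equality $\mathrm{Fil}^i_{N'}\simeq R\lim_N\bigl(\mathrm{Fil}^i_{N'}/\mathrm{Fil}^{i+N}_{N'}\bigr)$ fails, and your ``the two limits commute'' step does not go through as stated. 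If you want to prove that $\mathrm{Fil}^i_{N'}$ is a quasisyntomic sheaf, the correct argument is a finite ascending induction on $i$ via the fiber sequence $\mathrm{Fil}^{i+1}_{N'}\to\mathrm{Fil}^i_{N'}\to\mathrm{gr}^i_{N'}$, using descent for $\Prism$ in degree $0$ and descent for $\mathrm{gr}^i_{N'}\cong\mathrm{Fil}_i\overline{\Prism}$ (which is built from $\wedge^j L_{-/R}$, $j\le i$); no completeness is needed. A second, smaller gap is the claim that after ``Andr\'e-style refinement'' the \v{C}ech nerve $\tilde S^\bullet$ has terms of the form $R'\langle X^{1/p^\infty}\rangle/(f)$: refining a single level of a \v{C}ech nerve destroys the cosimplicial structure, so one either needs to pass to hypercovers, or (as the paper does in \S\ref{subsec:generalnygaard}) argue Zariski-locally on each term; this needs to be said explicitly. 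Even with these repairs, you would still need a separate argument identifying the totalization with the honest $\mathrm{Fil}^i_N\Prism_S$ (i.e.\ quasisyntomic descent for the explicit Nygaard filtration, which is another nontrivial claim). The paper's single-surjection argument removes all of these complications at once, which is why it is the preferred route.
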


\begin{proof} We argue by induction on $i$, so assume that $\mathrm{Fil}^i_{N'} = \mathrm{Fil}^i_N$ holds true for some $i$. This is clearly true for $i=0$, giving the inductive start. First, we check that then Theorem~\ref{ThmNygaard} holds true in degree $i$, i.e.~the image of
\[
\frac{\phi}{d^i}: \mathrm{Fil}^i_N \Prism_S\to \overline{\Prism}_S
\]
is given by $\mathrm{Fil}_i \overline{\Prism}_S$. Using $\mathrm{Fil}^i_N = \mathrm{Fil}^i_{N'}$, one sees that the image is at most $\mathrm{Fil}_i \overline{\Prism}_S$. On the other hand, by picking a surjection $S^\prime=R\langle X_i^{1/p^\infty},Y_j^{1/p^\infty}\rangle/(Y_j)\to S$ that also induces a surjection on cotangent complexes and using Proposition~\ref{PropNygaard} (and passage to filtered colimits), one sees that the image is at least $\mathrm{Fil}_i \overline{\Prism}_S$. Thus, Theorem~\ref{ThmNygaard} holds true in degree $i$.

We also see that $\mathrm{Fil}^i_{N'} \Prism_S\to \mathrm{gr}^i_{N'} \Prism_S$ is surjective, and hence its derived kernel $\mathrm{Fil}^{i+1}_{N'} \Prism_S$ is still concentrated in degree $0$. Moreover, it agrees with the kernel of
\[
\frac{\phi}{d^i}: \mathrm{Fil}^i_N \Prism_S\to \overline{\Prism}_S
\]
which by definition is $\mathrm{Fil}^{i+1}_N \Prism_S$. Thus, $\mathrm{Fil}^{i+1}_{N'} \Prism_S = \mathrm{Fil}^{i+1}_N \Prism_S$, as desired.
\end{proof}

\newpage

\section{Comparison with \cite{BMS2}}
\label{sec:BMS2Comp}

The goal of this section is to compare the constructions of this paper with those in \cite{BMS2}.  Recall that for a quasiregular semiperfectoid $S$, we defined in \cite[\S 7.2]{BMS2} the ring $\widehat{\Prism}_S=\pi_0 \mathrm{TC}^-(S;\mathbf Z_p)=\pi_0 \mathrm{TP}(S;\mathbf Z_p)$ together with a Frobenius semilinear endomorphism $\phi_S$ (induced by the cyclotomic Frobenius map). The fundamental diagram
\begin{equation}
\label{BMS2THHFrob}
 \xymatrix{ \mathrm{TC}^{-}(S;\mathbf{Z}_p) \ar[r]^{\phi^{h\mathbb{T}}} \ar[d] & \mathrm{TP}(S;\mathbf{Z}_p) \ar[d] \\
\mathrm{THH}(S;\mathbf{Z}_p) \ar[r]^{\phi} & \mathrm{THH}(S;\mathbf{Z}_p)^{tC_p} }
\end{equation}
then yields on $\pi_0$ a commutative diagram
\begin{equation}
\label{BMS2PrismFrob}
 \xymatrix{ \widehat{\Prism}_S \ar^{\phi_S}[r] \ar[d]^-{a_S} & \widehat{\Prism}_S \ar[d]^{can} \\
			S\ar[r]^-{\eta_S} & \widehat{\Prism}_S/d}
\end{equation}
of commutative algebras. By construction, $\widehat{\Prism}_S$ is complete for a filtration $\mathrm{Fil}^i_N \widehat{\Prism}_S\subset \widehat{\Prism}_S$ also termed the Nygaard filtration in \cite{BMS2}. Our comparison theorem is the following:

\begin{theorem}
\label{BMS2CompNC} There is a functorial (in $S$) $\delta$-ring structure on $\widehat{\Prism}_S$ refining the endomorphism $\phi$. The induced map $\Prism_S=\Prism_S^{\mathrm{init}}\to \widehat{\Prism}_S$ identifies $\widehat{\Prism}_S$ with the Nygaard completion of $\Prism_S$, compatibly with Nygaard filtrations.
\end{theorem}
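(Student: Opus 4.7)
My plan is to proceed in three steps: first upgrade $\widehat{\Prism}_S$ to a $\delta$-ring lifting $\phi_S$, then use the universal property of $\Prism_S^{\mathrm{init}}$ from Proposition~\ref{InitialPrism} to produce a comparison map $\Prism_S \to \widehat{\Prism}_S$, and finally check via a graded-piece argument that this map identifies $\widehat{\Prism}_S$ with the Nygaard completion of $\Prism_S$. For the first step, I would invoke Remark~\ref{DerivedFrobLift}: specifying a $\delta$-structure on a $\mathbf{Z}_{(p)}$-algebra is equivalent to specifying a derived Frobenius lift, and the cyclotomic Frobenius on $\mathrm{TP}(S;\mathbf{Z}_p)$ provides exactly such data on $\pi_0\mathrm{TP}(S;\mathbf{Z}_p) = \widehat{\Prism}_S$, yielding a functorial $\delta$-structure lifting $\phi_S$. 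Alternatively, one can argue first in the $p$-torsion free case (where $\widehat{\Prism}_S$ is itself $p$-torsion free by \cite{BMS2}, so Remark~\ref{FrobLiftTheta} applies directly), and then extend by quasisyntomic descent from a cover of $S$ by $p$-torsion free quasiregular semiperfectoids.

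With the $\delta$-structure in hand, the pair $(\widehat{\Prism}_S,(d))$ is a bounded prism: Nygaard-completeness forces derived $(p,d)$-completeness, the element $d$ can be taken distinguished by matching it with the distinguished generator of $\ker\theta$ on a perfectoid $R$ mapping to $S$, and the map $\eta_S : S \to \widehat{\Prism}_S/d$ from the diagram~\eqref{BMS2PrismFrob} endows it with the structure of an object of the absolute prismatic site $(S)_\Prism$. The universality of $\Prism_S^{\mathrm{init}}$ then yields a canonical $\delta$-ring map $\Prism_S \to \widehat{\Prism}_S$ compatible with the structure maps to $S$. Since the Nygaard filtration is defined intrinsically as $\{x : \phi(x) \in d^i\}$ on both sides, this comparison is automatically filtered, and hence factors through a map $\widehat{\Prism_S} \to \widehat{\Prism}_S$ of Nygaard-complete rings.

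To complete the proof it suffices to show this factored map is an isomorphism, and since both sides are Nygaard-complete this reduces to checking it on graded pieces. Theorem~\ref{ThmNygaard} identifies $\mathrm{gr}^i_N \Prism_S$ with $\mathrm{Fil}^{\mathrm{conj}}_i \overline{\Prism}_S\{i\}$; on the other side, the motivic-filtration analysis of $\widehat{\Prism}_S$ from \cite[\S 7]{BMS2} identifies $\mathrm{gr}^i_N \widehat{\Prism}_S$ with $\tau^{\leq i}$ of the Hodge-Tate cohomology of $S$ (Tate-twisted), which via Proposition~\ref{QRSPPrism} and the Hodge-Tate comparison becomes the same object $\mathrm{Fil}^{\mathrm{conj}}_i \overline{\Prism}_S\{i\}$. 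The main difficulty I anticipate is checking that the induced map on graded pieces is the identity under these two identifications, since the Nygaard filtrations come from a priori different sources (explicit Frobenius divisibility here versus a motivic filtration on $\mathrm{TP}$ there). I would address this by quasisyntomic descent, reducing to explicit covers of $S$ (for example the $q$-crystalline situation of \S \ref{ss:NygaardSpecial}) where the composite $\frac{\phi}{d^i} : \mathrm{Fil}^i_N \to \overline{\Prism}_S$ can be computed by hand on both sides and matched directly with the conjugate filtration; the functoriality and universality established in the earlier steps then upgrade this pointwise comparison to the desired natural isomorphism.
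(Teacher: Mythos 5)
The main gap is in your first step, and it is not a technicality: you assume precisely what the theorem is proving. You claim that the cyclotomic Frobenius on $\mathrm{TP}(S;\mathbf{Z}_p)$ furnishes a derived Frobenius lift on $\pi_0 \mathrm{TP}(S;\mathbf{Z}_p) = \widehat{\Prism}_S$ in the sense of Remark~\ref{DerivedFrobLift}, and hence a $\delta$-structure. But the paper explicitly warns (in the remark after \cref{thm:Bq}) that it is \emph{not} clear from the definitions that the cyclotomic Frobenius $\phi_S$ even lifts the Frobenius on $\widehat{\Prism}_S/p$; and Remark~\ref{DerivedFrobLift} is stated for simplicial commutative rings, whereas $\mathrm{TP}$ is naturally an $E_\infty$-ring spectrum, so there is no direct access to a path in the space $\mathrm{End}_{SCR_{\mathbf{F}_p}}(\widehat{\Prism}_S\otimes^L_{\mathbf{Z}}\mathbf{F}_p)$ from the THH formalism. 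Your fallback via Remark~\ref{FrobLiftTheta} in the $p$-torsionfree case has the same circularity: it requires knowing that $\phi_S$ is a Frobenius lift, which is exactly the hard part. The paper acknowledges this ``we do not know of a good conceptual reason for this in terms of topological Hochschild homology'' and establishes it only a posteriori.

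The actual proof avoids this circularity by running your steps in the opposite order. It first proves a separate lemma that the $\delta$-structure on $\Prism_S$ extends continuously to its Nygaard completion (a step you do not address). Then, after reducing via Andr\'e's lemma, quasisyntomic descent, K\"unneth, and base change to the single case $S = R\langle X^{1/p^\infty}\rangle/(X)$ over $R = \mathbf{Z}_p[\zeta_{p^\infty}]^\wedge$, it constructs the comparison map $\Prism_S \to \widehat{\Prism}_{S/A}^{\mathrm{nc}}$ not via the universal property of $\Prism_S^{\mathrm{init}}$, but via \cref{GetqPD}, which crucially requires only a multiplicative endomorphism $\phi$ with $\phi(q) = q^p$, $\phi(x) = x^p$ on a $(p,q-1)$-completely flat algebra --- no $\delta$-structure on the target is needed. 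The comparison map is then shown to be an isomorphism by a derived Nakayama argument using the crystalline comparison \cite[Theorem 8.17]{BMS2} for the base change along $A_\inf(R) \to A_\crys(R/p)$. Only once this isomorphism is in place does $\widehat{\Prism}_S$ inherit its $\delta$-structure from $\Prism_S$. Your proposal would need this same indirect route; as written, the derivation of the $\delta$-structure from the topological side cannot be made to go through.
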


Note that the $\delta$-ring structure on $\widehat{\Prism}_S$ is uniquely determined when $\widehat{\Prism}_S$ is $p$-torsionfree, which happens for example if $S$ is $p$-torsionfree. In that case, the first part of the theorem simply says that the endomorphism $\phi$ on $\widehat{\Prism}_S$ lifts the Frobenius. Our proof of this fact is very indirect and we do not know of a good conceptual reason for this in terms of topological Hochschild homology.

\begin{proof} First we check the claim, independent of the theory of \cite{BMS2}, that the $\delta$-ring structure on $\Prism_S$ extends uniquely to a continuous $\delta$-ring structure on its Nygaard completion. This follows from the following lemma.

\begin{lemma} Let $S$ be any quasiregular semiperfectoid ring.
\begin{enumerate}
\item One has
\[
\delta(\mathrm{Fil}^i_N \Prism_S)\subset \mathrm{Fil}^{pi}_N \Prism_S + (d,p)^{i-1} \Prism_S\ .
\]
\item The completion of $\Prism_S$ with respect to the sequence of ideals $\mathrm{Fil}^i_N \Prism_S$ agrees with its completion with respect to the sequence of ideals $\mathrm{Fil}^i_N \Prism_S + (d,p)^j\Prism_S$.
\end{enumerate}
\end{lemma}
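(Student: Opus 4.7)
The plan is to handle (2) first as an essentially formal consequence of \cref{ThmNygaard}, and then tackle (1), which requires a delicate bookkeeping argument on the $\delta$-identity $p\delta(x)=\phi(x)-x^p$.

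For (2), \cref{ThmNygaard} identifies $\mathrm{gr}^i_N\Prism_S \cong \mathrm{Fil}_i\overline\Prism_S\{i\}$, which is an $\overline\Prism_S = \Prism_S/d$-module and is therefore annihilated by $d$. Moreover, $\overline\Prism_S$ is $p$-completely flat over the $p$-complete ring $S$ (by the Hodge-Tate comparison), so each graded piece is $p$-complete. By induction each quotient $\Prism_S/\mathrm{Fil}^i_N\Prism_S$ is an iterated extension of $p$-complete $d$-torsion modules, so it is itself $p$-complete and annihilated by $d^i$. Being simultaneously $d^i$-nilpotent and $p$-adically complete, $\Prism_S/\mathrm{Fil}^i_N\Prism_S$ is classically $(d,p)$-adically complete; consequently the tower $\{\Prism_S/(\mathrm{Fil}^i_N + (d,p)^j)\}_j$ is eventually constant equal to $\Prism_S/\mathrm{Fil}^i_N$, and passing to the limit over $i$ gives the equality of the two completions.

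For (1), fix $x \in \mathrm{Fil}^i_N\Prism_S$ and write $\phi(x) = d^i y$ for some $y\in \Prism_S$. The universal identity $p\delta(x)=\phi(x)-x^p$ reads $p\delta(x) = d^i y - x^p$. Since $\phi(x^p) = d^{ip}y^p$, one has $x^p \in \mathrm{Fil}^{pi}_N$, so
\[ p\delta(x) \in d^i\Prism_S + \mathrm{Fil}^{pi}_N\Prism_S. \]
The main task is to pass from this statement about $p\delta(x)$ to the claimed statement about $\delta(x)$. The tools I would use are: (a) by \cref{QRSPPrism} $\Prism_S$ is $(p,d)$-completely flat over the perfect prism $A$, so $\Prism_S$ is $p$-torsionfree and each $\Prism_S/d^n$ is $p$-torsionfree; (b) as a consequence, both $\mathrm{Fil}^{pi}_N\Prism_S$ and $d^n\Prism_S$ are $p$-pure in $\Prism_S$ (i.e.~their intersection with $p\Prism_S$ coincides with $p$ times the ideal itself); and (c) the decomposition $(d,p)^i = p(d,p)^{i-1} + d^i\Prism_S$.

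The hard part is the following refinement: I need to show that modulo $\mathrm{Fil}^{pi}_N$, the element $d^i y$ has the form $p\cdot(\text{something in }(d,p)^{i-1})$, so that after dividing by $p$ (legitimate by $p$-torsion freeness) the $d^i$-term contributes to $(d,p)^{i-1}$ rather than only to $(d,p)^i/p$. This refinement is where the structure of $\Prism_S/\mathrm{Fil}^{pi}_N\Prism_S$ supplied by \cref{ThmNygaard} is essential: in that quotient $d$ strictly raises the Nygaard filtration degree (since each $\mathrm{gr}^j_N$ is killed by $d$), so $d^i$ lands in $\mathrm{Fil}^i_N/\mathrm{Fil}^{pi}_N$, which has explicit graded pieces of the form $\wedge^j L_{S/R}^\wedge\{-j\}$, and one can use the resulting rigidity to locate $y$ modulo $p$. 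Once this is achieved, writing $d^i y = p z + w$ with $z\in (d,p)^{i-1}$ and $w\in\mathrm{Fil}^{pi}_N$, substitution and the $p$-purity of $\mathrm{Fil}^{pi}_N$ yields $\delta(x) = z + (w - x^p)/p$ with $(w-x^p)/p \in \mathrm{Fil}^{pi}_N$, giving the desired decomposition.
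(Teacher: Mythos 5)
There is a genuine gap, and its root cause appears in both parts of your argument: you repeatedly assert that $\mathrm{gr}^j_N\Prism_S$ is killed by $d$ because it is identified with the $\overline{\Prism}_S=\Prism_S/d$-module $\mathrm{Fil}_j\overline{\Prism}_S\{j\}$. But the isomorphism $\mathrm{gr}^j_N\Prism_S\cong\mathrm{Fil}_j\overline{\Prism}_S\{j\}$ in \cref{ThmNygaard} is given by $x\mapsto\phi(x)/d^j\bmod d$ and is therefore $\phi$-semilinear over $\Prism_S$. Consequently $d$ acts on $\mathrm{gr}^j_N\Prism_S$ as $\phi(d)\bmod d=p\delta(d)$ does on $\mathrm{Fil}_j\overline{\Prism}_S\{j\}$, i.e.\ as $p$ times a unit, not as $0$. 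Concretely, for $x\in\mathrm{Fil}^j_N$ one has $\phi(dx)=\phi(d)\phi(x)=(d^p+p\delta(d))\phi(x)$, which typically fails to lie in $d^{j+1}\Prism_S$ since $p\notin d\Prism_S$; thus $dx\notin\mathrm{Fil}^{j+1}_N$ in general, and $\mathrm{gr}^j_N$ is not $d$-torsion. (What is true, and what the paper uses, is that the $\phi$-semilinear identification converts $(d,p)$-completeness of $\mathrm{gr}^j_N$ into $(\phi(d),p)$-completeness of $\mathrm{Fil}_j\overline{\Prism}_S$, which reduces to $p$-completeness since $\phi(d)\equiv p\delta(d)\bmod d$.) For part (2) this flaw is repairable --- your conclusion that $\Prism_S/\mathrm{Fil}^i_N$ is classically $(d,p)$-complete still follows once one observes $d$ acts $p$-adically topologically nilpotently on each $\mathrm{gr}^j_N$; this is essentially what the paper does.

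For part (1) the situation is worse. You correctly extract $x^p\in\mathrm{Fil}^{pi}_N$ and write $p\delta(x)=\phi(x)-x^p$, and you correctly identify the crux: one must show $\phi(x)=d^iy\in\mathrm{Fil}^{pi}_N+p(d,p)^{i-1}$ in order to divide by $p$ using $p$-purity. But you never prove this, and the sketch you give (``$d$ strictly raises the Nygaard degree since each $\mathrm{gr}^j_N$ is killed by $d$'') is built on the same false $d$-torsion claim, so the proposed ``rigidity'' is not there. In fact, any argument along these lines must confront that $\mathrm{Fil}^1_N$, not $d$, annihilates the graded pieces, and $d\notin\mathrm{Fil}^1_N$. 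The paper instead avoids manipulating a general $x\in\mathrm{Fil}^i_N$ entirely: it reduces by surjectivity, colimits, and an explicit multiplicativity lemma (using the product rule for $\delta$) to the case of a single $p$-power-root variable $S=R\langle X^{1/p^\infty}\rangle/X$, where the Nygaard filtration is graded by $\mathbf{N}[1/p]$ (\cref{NygaardKeyCase}) and where $\delta$ is homogeneous of degree multiplied by $p$, so one checks the claim directly on the two multiplicative generators $[p]_{q^{1/p}}$ and $Y^i/[i]_q!$. This reduction-to-generators is doing real work here that your formal manipulation does not replace; you would need an independent argument for the key divisibility claim, and as stated there is no such argument.
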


\begin{proof} In part (1), pick a perfectoid ring $R$ mapping to $S$. There is a surjection $S^\prime=R\langle X_i^{1/p^\infty},Y_j^{1/p^\infty}\rangle/(X_i)\to S$ inducing a surjection on cotangent complexes and thus on $\Prism_S$ and all $\mathrm{Fil}^i_N \Prism_S$ by the explicit description. Thus we can assume $S=S^\prime$. Replacing $R$ by $R\langle Y_j^{1/p^\infty}\rangle$ we can assume there are no $Y_j$'s. By a filtered colimit argument one can reduce to the case that there are only finitely many $X_i$. Next, we want to use the K\"unneth formula to reduce to the case of a single $X$. For this, we need to remark that it is enough to check the claim on a set of generators for the ideal $\mathrm{Fil}^i_N \Prism_S$ by the addition formula for $\delta$, and that if $i=j+k$ and $x\in \mathrm{Fil}^j_N \Prism_S$, $y\in \mathrm{Fil}^k_N \Prism_S$ satisfy
\[
\delta(x) = x_1+x_2\in \mathrm{Fil}^{pj}_N \Prism_S + (d,p)^{j-1} \Prism_S\ ,\ \delta(y) = y_1 +y_2\in \mathrm{Fil}^{pk}_N \Prism_S + (d,p)^{k-1} \Prism_S\ ,
\]
then
\[
\delta(xy) = x^p \delta(y) + y^p \delta(x) + p\delta(x)\delta(y) = x^py_1 + x^p y_2 + y^p x_1 + y^p x_2 + px_1y_1 + px_1y_2 + px_2y_1 + px_2y_2\ ,
\]
where
\[\begin{aligned}
px_2y_2&\in (d,p)^{i-1}\Prism_S\ ,\\
x^py_1,y^px_1,px_1y_1&\in \mathrm{Fil}^{pi}_N \Prism_S\ ,\\
(x^p+px_1)y_2=(\phi(x)-px_2)y_2&\in (d,p)^{i-1}\Prism_S\ ,\\
(y^p+py_1)x_2=(\phi(y)-py_2)x_2&\in (d,p)^{i-1}\Prism_S\ ,
\end{aligned}\]
so indeed $\delta(xy)\in \mathrm{Fil}^{pi}_N \Prism_S + (d,p)^{i-1}\Prism_S$. Thus, finally, we can reduce to the case of one variable. Using \cref{NygaardKeyCase} and the previous considerations, it is enough to check the claim for $x=[p]_{q^{1/p}}\in \mathrm{Fil}^1_N \Prism_S$ and $x=\frac{Y^i}{[i]_q!}\in \mathrm{Fil}^i_N \Prism_S$ (as these generate the Nygaard filtration multiplicatively). In the first case $i=1$ and the claim is trivial as $(d,p)^{i-1}\Prism_S = \Prism_S$. In the other case $\delta(x)$ is homogeneous of degree $pi$, and all such elements lie in $\mathrm{Fil}^{pi}_N \Prism_S$, as desired.

For part (2), it is enough to show that all $\mathrm{gr}^i_N \Prism_S$ are classically $(d,p)$-adically complete. Under the $\phi$-linear identification with $\mathrm{Fil}_i \overline{\Prism}_S$, it is enough to prove that the latter are classically $(\phi(d),p)$-adically complete. But $\phi(d)$ agrees up to units with $p$ modulo $d$, so it suffices to prove that the latter are classically $p$-adically complete. This follows from them being derived $p$-complete and $p$-completely flat over $S$.
\end{proof}

Now we start the proof of the theorem. Fix a perfectoid ring $R$ corresponding to a perfect prism $(A,(d))$, and restrict to $R$-algebras $S$. By Andr\'e's lemma, we may assume that there is a compatible systems of $p$-power roots of unity in $R$, and so we can take $R=\mathbf Z_p[\zeta_{p^\infty}]^\wedge$. By descent to $p$-completely smooth $R$-algebras and left Kan extension, we defined in \cite[Construction 7.12]{BMS2} a functor $S\mapsto \widehat{\Prism}_{S/A}^{\mathrm{nc}}$ (there simply denoted by $\Prism_{S/A}$ as will be justified a posteriori by the theorem; here $\mathrm{nc}$ stands for ``non-completed"). This can be endowed with a Nygaard filtration $\mathrm{Fil}^i_N \widehat{\Prism}_{S/A}^{\mathrm{nc}}$ by the same procedure, and then $\widehat{\Prism}_{S/A}$ is the Nygaard completion for quasisyntomic $R$-algebras $S$, in particular if $S$ is quasiregular semiperfectoid. By the Segal conjecture for smooth algebras, \cite[Corollary 9.12]{BMS2}, one has a Hodge-Tate comparison for $\widehat{\Prism}_{S/A}/d$ in the smooth case, and thus also in general. Moreover, the same result and the identification of $\mathrm{gr}^i_N \widehat{\Prism}_{S/A}^{\mathrm{nc}}$ with $\mathrm{gr}^i \mathrm{THH}(S;\mathbf Z_p)$ implies that for quasiregular semiperfectoid $S$ the Nygaard filtration is given by
\[
\mathrm{Fil}^i_N \widehat{\Prism}_{S/A}^{\mathrm{nc}} = \{x\in\widehat{\Prism}_{S/A}^{\mathrm{nc}}\mid \phi(x)\in d^i \widehat{\Prism}_{S/A}^{\mathrm{nc}}\}
\]
and the image of
\[
\frac{\phi}{d^i}: \mathrm{Fil}_i^N \widehat{\Prism}_{S/A}^{\mathrm{nc}}\to \widehat{\Prism}_{S/A}^{\mathrm{nc}}/d
\]
is given by the conjugate filtration $\mathrm{Fil}_i \widehat{\Prism}_{S/A}^{\mathrm{nc}}/d$ coming from the Hodge-Tate comparison. Moreover, the graded pieces $\mathrm{gr}_i \widehat{\Prism}_{S/A}^{\mathrm{nc}}/d$ are given by the $p$-completion of $\wedge^i L_{S/R}[-i]$. In other words, structurally $\widehat{\Prism}_{S/A}^{\mathrm{nc}}$ has exactly the same properties as $\Prism_S$.

Our task now is to show that $\widehat{\Prism}_{S/A}^{\mathrm{nc}} = \Prism_S$ compatibly with $\phi$, as $\widehat{\Prism}_{S/A}$ is the Nygaard completion of the left, which then inherits its functorial $\delta$-ring structure by the lemma. As both sides are defined via left Kan extensions from $p$-completely smooth algebras, and in the $p$-completely smooth case via descent, it suffices to prove the result for the quasiregular semiperfectoid algebras $S$ required in this descent. In other words, as in \S \ref{subsec:generalnygaard}, we can assume $S=R\langle X_1^{1/p^\infty},\ldots,X_n^{1/p^\infty}\rangle/(f_1,\ldots,f_m)$ where $f_1,\ldots,f_m$ is a $p$-completely regular sequence relative to $R$. In particular, in this case $\widehat{\Prism}_{S/A}^{\mathrm{nc}}$ is $p$-torsion free, so the $\delta$-ring structure is unique if it exists, and then the map $\Prism_S\to \widehat{\Prism}_{S/A}^{\mathrm{nc}}$ is unique. Following the steps in the proof Proposition~\ref{PropNygaard}, we can assume that all $f_i$ admit compatible $p$-power roots in $R$, and then to $S=R\langle X^{1/p^\infty}\rangle/(X)$ by K\"unneth and base change.

In this case, we can apply Lemma~\ref{GetqPD} to $D=\widehat{\Prism}_{S/A}^{\mathrm{nc}}$. Using the explicit description of $\Prism_S$, this gives a $\phi$-equivariant map
\[
\Prism_S\to \widehat{\Prism}_{S/A}^{\mathrm{nc}}
\]
in this case. This sits in a commutative diagram
\[\xymatrix{
\Prism_S\ar[r]\ar[d] & \widehat{\Prism}_{S/A}^{\mathrm{nc}}\ar[d]\\
\Prism_{S/p}\ar[r]^\cong & \widehat{\Prism}_{(S/p)/A}^{\mathrm{nc}},
}\]
where the lower isomorphism comes from \cite[Theorem 8.17]{BMS2} (plus descent and left Kan extension to pass to the non-completed version of $\widehat{\Prism}_{(S/p)/\mathbf F_p}^{\mathrm{nc}}$, and independence of the perfectoid base). Moreover the upper map is the derived $p$-completed base extension of $\Prism_S\to \widehat{\Prism}_{S/A}^{\mathrm{nc}}$ along $A=A_\inf(R)\to A_\crys(R/p)$ (as both $\Prism$ and $\widehat{\Prism}^{\mathrm{nc}}$ satisfy base change and using \cite[Theorem 8.17]{BMS2} again). By derived Nakayama, it suffices to see that $\Prism_S\to \widehat{\Prism}_{S/A}^{\mathrm{nc}}$ is an isomorphism after base change along $A_\inf(R)\to A_\inf(R)/(p,d)$, but this map factors over $A_\crys(R/p)$, so the result follows.
\end{proof}

\newpage

\section{$p$-adic Tate twists and the odd vanishing conjecture for $K$-theory}
\label{sec:OddVanishing}

In \cite{BMS2}, for each $n \geq 0$, we defined a sheaf $\mathbf{Z}_p(n)$ of complexes on the category of $p$-complete quasisyntomic rings. It was given by the formula
\[ \mathbf{Z}_p(n)(S) := \mathrm{fib}(\mathcal{N}^{\geq n} \widehat{\Prism}_{S}\{n\} \xrightarrow{1-\phi_n} \widehat{\Prism}_{S}\{n\}).\]
Here $\widehat{\Prism}_S$ is a version of the Nygaard completed prismatic cohomology of $S$ defined using topological Hochschild homology and quasisyntomic descent in \cite{BMS2}; when $S$ lives over a perfectoid ring, this theory coincides with the Nygaard completion of prismatic cohomology as defined in this paper, with $\mathcal{N}^{\geq \cdot} \widehat{\Prism}_S$ being the Nygaard filtration (\cref{BMS2CompNC}). Our goal in this section is to prove the following structural property of this sheaf, conjectured in \cite[Conjecture 7.18]{BMS2}:

\begin{theorem}
\label{OddVanishing}
For each $n \geq 0$, the sheaf $\mathbf{Z}_p(n)$ is discrete and $p$-torsionfree.
\end{theorem}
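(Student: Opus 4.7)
The plan is to work quasisyntomic-locally and exploit the explicit description of the Nygaard filtration for quasiregular semiperfectoid (qrsp) rings established in \S\ref{sec:Nygaard}. The case $n=0$ is handled by Remark~\ref{EtaleCompSpecial}, which identifies $\mathbf{Z}_p(0)$ with the constant sheaf $\mathbf{Z}_p$. Assume henceforth $n \geq 1$. Since qrsp rings form a basis for the quasisyntomic topology and we are proving a sheaf-theoretic statement, it suffices to show that $\mathbf{Z}_p(n)(S)$ is concentrated in degree $0$ and $p$-torsionfree for every qrsp $S$. By a further quasisyntomic descent (producing enough $p$-power roots via Andr\'e's lemma, Theorem~\ref{AndreFlatness}), we may further assume $S$ is $p$-torsionfree; by Theorem~\ref{BMS2CompNC} both the source $\mathcal{N}^{\geq n}\widehat{\Prism}_S\{n\}$ and target $\widehat{\Prism}_S\{n\}$ of $1-\phi_n$ are then discrete and $p$-torsionfree, so $\mathbf{Z}_p(n)(S) \in D^{[0,1]}$, and the two desired conclusions are jointly equivalent to surjectivity of $(1 - \phi_n) \bmod p$.

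To prove this surjectivity, equip the source with the filtration $F^k := \mathcal{N}^{\geq n+k}\widehat{\Prism}_S\{n\}$ and the target with $G^k := \mathcal{N}^{\geq k}\widehat{\Prism}_S\{n\}$ (extended by the full module for $k \leq 0$). Both filtrations are separated and complete by the Nygaard completeness part of Theorem~\ref{BMS2CompNC}. The identity inclusion $F^k \hookrightarrow G^k$ is zero on $\mathrm{gr}^k$ because $n \geq 1$ forces $\mathcal{N}^{\geq n+k} \subseteq \mathcal{N}^{\geq k+1}$, while $\phi_n = \phi/d^n$ carries $F^k$ into $G^k$ by the very definition of the Nygaard filtration. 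Consequently $1 - \phi_n$ is a filtered map between complete filtered modules whose mod-$p$ associated graded agrees, up to sign, with the family of graded Frobenius maps
\[
\phi_n : \mathrm{gr}^{n+k}_N \widehat{\Prism}_S\{n\}/p \longrightarrow \mathrm{gr}^k_N \widehat{\Prism}_S\{n\}/p, \qquad k \geq 0.
\]
Since both filtrations are complete, successive lifting along the graded pieces shows that surjectivity on each graded piece implies surjectivity of $(1-\phi_n) \bmod p$ itself.

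For the graded surjectivity, use Theorem~\ref{ThmNygaard} to identify $\mathrm{gr}^i_N \widehat{\Prism}_S \cong \mathrm{Fil}_i^{\mathrm{conj}}\overline{\Prism}_S\{i\}$ via $\phi_i = \phi/d^i$. After trivializing the Breuil-Kisin twist $\{n\}$ by a chosen generator of $I$ and unraveling these isomorphisms, the graded Frobenius $\phi_n$ modulo $p$ becomes a concretely describable map between pieces of the conjugate filtration on $\overline{\Prism}_S/p$. By the quasisyntomic-sheaf property of the conjugate-filtered derived Hodge-Tate cohomology, this map can be computed by descent from qrsp rings of the form $R\langle X_i^{1/p^\infty}\rangle/(f_j)$ of the sort treated in \S\ref{ss:NygaardFPSPerfd}; K\"unneth and base change then reduce matters to the single-variable universal case $S = R\langle X^{1/p^\infty}\rangle/X$ of \S\ref{ss:NygaardSpecial}. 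There, Lemma~\ref{NygaardKeyCase} provides explicit $\mathbf{N}[1/p]$-indexed bases for both $\mathrm{Fil}^\bullet_N \Prism_S$ and $\mathrm{Fil}_\bullet^{\mathrm{conj}}\overline{\Prism}_S$ and makes the surjectivity of each graded Frobenius manifest.

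The main obstacle will be the careful bookkeeping required to trace the graded Frobenius $\phi_n \bmod p$ through the Breuil-Kisin twists and the identifications of Theorem~\ref{ThmNygaard}, and to verify that this identification is compatible with the quasisyntomic descent from the universal one-variable example to general qrsp $S$. Once these compatibilities are established, the remainder of the proof is a formal assembly of the filtered surjectivity argument, Nygaard completeness, and the preceding reductions.
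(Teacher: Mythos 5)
The proposal has two genuine gaps, one conceptual and one computational, and in fact aims to prove something stronger than the theorem which turns out to be false.

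\textbf{The conceptual gap.} You reduce the theorem to the claim that $\mathbf{Z}_p(n)(S)$ is discrete for every quasiregular semiperfectoid $S$, i.e., that $1-\phi_n$ is surjective on the nose. This is too strong and is not true. Already for $S = R$ a perfectoid ring and $n\geq 1$, Theorem~\ref{TateTwistPerfd} identifies $\mathbf{Z}_p(n)(R)$ with $R\Gamma(\mathrm{Spec}(R[1/p]),\mathbf{Z}_p(n))$; taking, say, $R=\mathcal O_K[\zeta_{p^\infty}]^{\wedge}$ for a non--algebraically-closed $p$-adic field $K$, the $H^1$ of this complex is a nonzero Galois cohomology group. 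The theorem is a statement about the quasisyntomic \emph{sheaf}: it only asserts that the sheafification of $H^1(\mathbf{Z}_p(n)(-))$ vanishes, i.e., that any class in $H^1(\mathbf{Z}_p(n)(S))$ can be killed after a quasisyntomic cover. The paper's proof proceeds by establishing exactly this local vanishing statement $(\ast)_S$, not a pointwise vanishing; the two are not the same, and only the local one is true.

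\textbf{The computational gap.} Even setting the above aside, the claim that $\phi_n = \phi/d^n$ carries $F^k = \mathcal N^{\geq n+k}$ into $G^k = \mathcal N^{\geq k}$ ``by the very definition of the Nygaard filtration'' is incorrect. By definition, $x\in\mathcal N^{\geq n+k}$ means $\phi(x)\in d^{n+k}\widehat{\Prism}_S$, hence $\phi_n(x)=\phi(x)/d^n\in d^k\widehat{\Prism}_S$. But $d^k\widehat{\Prism}_S$ and $\mathcal N^{\geq k}\widehat{\Prism}_S$ are genuinely different: $y\in\mathcal N^{\geq k}$ means $\phi(y)\in d^k\widehat{\Prism}_S$, and for $y=d^k a$ one has $\phi(y)=\phi(d)^k\phi(a)$, which does not lie in $d^k\widehat{\Prism}_S$ since $\phi(d)$ and $d$ are coprime (as $\delta(d)$ is a unit). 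Concretely, for $S$ perfectoid one has $\mathcal N^{\geq k}A_{\inf}(S)=\xi^k A_{\inf}(S)$ with $\xi=\phi^{-1}(d)$, and $\phi_n(\xi^{n+k}a)=d^k\phi(a)\notin\xi^k A_{\inf}(S)$ in general. So the map $1-\phi_n$ is not a filtered map in the way you need, and the associated-graded computation you propose does not apply; taking $G^k = d^k\widehat{\Prism}_S$ instead fixes this but then breaks the vanishing of the identity inclusion on $\mathrm{gr}^k$.

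\textbf{How the paper actually argues.} After reducing to discreteness of the sheaf over $\mathcal O_C$-algebras for $C$ algebraically closed (where twists and Nygaard completion can be ignored), the paper proves $(\ast)_S$: any $\alpha\in H^1(\mathbf{Z}_p(n)(S))$ dies after a quasisyntomic cover. This is first done for $S$ perfectoid using Theorem~\ref{TateTwistPerfd}, Kummer theory, \cref{cor:picperfectoid}, and Andr\'e's lemma. It is then done for $S_0=R\langle x_i^{1/p^\infty}\rangle/(x_i)$ via the key surjectivity result \cref{PerfdSPerfdSurjTate}, whose proof uses the explicit $\mathbf N[1/p]$-graded bases for $\Prism_{S_0}$ that you correctly identified as relevant. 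Finally \cref{SurjLemma} and Andr\'e's lemma reduce a general quasiregular semiperfectoid $S$ to $S_0$. The structural point is that one reduces classes on $S$ backward (through surjections on $H^1(\mathbf Z_p(n)(-))$) to the perfectoid case, rather than attempting a direct surjectivity of $1-\phi_n$ on $S$ itself.
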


It was shown in \cite[Theorem 1.12 (5)]{BMS2} that ($p$-complete) topological cyclic homology of quasisyntomic rings admits a complete descending $\mathbf{N}$-indexed filtration with graded pieces given by the sheaves $\mathbf{Z}_p(n)$'s. The paper \cite{ClausenMathewMorrow} identifies $K$-theory with topological cyclic homology for a large class of rings. Theorem~\ref{OddVanishing} then has the following consequence:

\begin{corollary}
\label{OddVanishingLocally}
Locally on the quasisyntomic site, the functor $K(-;\mathbf{Z}_p)$ is concentrated in even degrees, i.e., $\pi_n K(-;\mathbf{Z}_p)$ vanishes for $n$ odd after quasisyntomic sheafification.
\end{corollary}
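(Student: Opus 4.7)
The plan is to deduce the corollary from Theorem~\ref{OddVanishing} in two steps: first establishing odd vanishing for $\mathrm{TC}(-;\mathbf{Z}_p)$ via the motivic filtration, and then transferring this from $\mathrm{TC}$ to $K$-theory using the Clausen-Mathew-Morrow comparison.

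By \cite[Theorem 1.12(5)]{BMS2}, the functor $\mathrm{TC}(-;\mathbf{Z}_p)$ on quasisyntomic $\mathbf{Z}_p$-algebras carries a complete descending $\mathbf{N}$-indexed filtration $F^\bullet \mathrm{TC}$ whose $n$-th graded piece is $\mathbf{Z}_p(n)[2n]$. Theorem~\ref{OddVanishing} says that each $\mathbf{Z}_p(n)$ is discrete as a quasisyntomic sheaf, so each graded piece $\mathrm{gr}^n \mathrm{TC}$ is concentrated in the single even homotopical degree $2n$. A short induction on $n$, using the cofiber sequences $\mathrm{gr}^{n-1} \to \mathrm{TC}/F^n \to \mathrm{TC}/F^{n-1}$ and the induced long exact sequences, then shows $\pi_k(\mathrm{TC}/F^n) = 0$ for every odd $k$ and every $n \geq 0$. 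Completeness of the filtration gives a Milnor exact sequence computing $\pi_k \mathrm{TC}$ in terms of $\lim_n \pi_k(\mathrm{TC}/F^n)$ and $\lim^1_n \pi_{k+1}(\mathrm{TC}/F^n)$; for odd $k$ the $\lim$ term vanishes by what we just showed, and the $\lim^1$ term vanishes because only the single graded piece $\mathrm{gr}^{(k+1)/2}$ can contribute in the even degree $k+1$, so the tower $\{\pi_{k+1}(\mathrm{TC}/F^n)\}_n$ stabilizes once $n > (k+1)/2$. Hence $\pi_k \mathrm{TC}(-;\mathbf{Z}_p) = 0$ for $k$ odd as a quasisyntomic sheaf.

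To transfer this vanishing from $\mathrm{TC}$ to $K$-theory, we invoke \cite{ClausenMathewMorrow}: every $p$-complete quasisyntomic $\mathbf{Z}_p$-algebra $R$ is $p$-henselian with $\mathbf{F}_p$-algebra quotient $R/p$, so the cyclotomic trace $K(R;\mathbf{Z}_p) \to \mathrm{TC}(R;\mathbf{Z}_p)$ is an equivalence in all positive degrees, while $\pi_0$ of both sides is concentrated in the even degree $0$. Combined with the $\mathrm{TC}$ vanishing above (and the connectivity of $K$), this yields $\pi_n K(-;\mathbf{Z}_p) = 0$ for $n$ odd after quasisyntomic sheafification.

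The substantive input here is Theorem~\ref{OddVanishing} itself, whose proof is the difficult part; once that is granted, the present argument is essentially bookkeeping. The only two delicate points are (i) convergence of the motivic spectral sequence, which is handled by the $\lim^1$ computation above and uses the discreteness of $\mathbf{Z}_p(n)$ in an essential way, and (ii) checking that the Clausen--Mathew--Morrow comparison is compatible with quasisyntomic sheafification, which is automatic from the functoriality of the cyclotomic trace in $R$.
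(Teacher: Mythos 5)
Your argument is correct and follows the same route the paper intends: combine the motivic filtration on $\mathrm{TC}(-;\mathbf{Z}_p)$ with graded pieces $\mathbf{Z}_p(n)[2n]$ (from \cite{BMS2}), the discreteness of $\mathbf{Z}_p(n)$ (Theorem~\ref{OddVanishing}), and the identification of $K$ with $\mathrm{TC}$ in positive degrees from \cite{ClausenMathewMorrow}. The paper states the deduction in one line; your write-up simply makes explicit the Milnor $\varprojlim^1$ bookkeeping (which works precisely because each graded piece sits in a single even degree) and the step from $\mathrm{TC}$ to $K$-theory, both of which are handled correctly.
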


Another corollary of the method of the proof is the following, showing a slightly more precise form of \cref{OddVanishingLocally} in a certain situation; notably, this shows that $\pi_* K(\mathcal{O}_C/p^n;\mathbf{Z}_p)$ is concentrated in even degrees for $n \geq 0$, where $C/\mathbf{Q}_p$ is a complete and algebraically closed extension. We thank Martin Speirs for raising this question.

\begin{corollary}
\label{OddVanishingExamples}
Say $R$ is a perfectoid $\mathbf{Z}_p[\zeta_{p^\infty}]^{\wedge}$-algebra. Fix a regular sequence $f_1,...,f_r \in R$ such that each $f_i$ admits a compatible system of $p$-power roots; let $S = R/(f_1,...,f_r)$. The map $\pi_* K(R;\mathbf{Z}_p) \to \pi_* K(S;\mathbf{Z}_p)$ is surjective  in odd degrees. In particular, if $\pi_* K(R;\mathbf{Z}_p)$ is concentrated in even degrees, the same holds true for $\pi_* K(S;\mathbf{Z}_p)$. 
\end{corollary}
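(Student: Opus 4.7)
\medskip
\noindent
\textbf{Proof plan for Corollary~\ref{OddVanishingExamples}.}
The plan is to use the motivic filtration on $K(-;\mathbf{Z}_p)$ from \cite{BMS2} and \cite{ClausenMathewMorrow}, whose graded pieces are the sheaves $\mathbf{Z}_p(n)[2n]$, and to analyze the fiber sequence defining $\mathbf{Z}_p(n)$ using the explicit $q$-PD description of the Nygaard-completed prismatic cohomology available for the rings at hand. The ``in particular'' clause is immediate from the surjectivity in odd degrees, so the entire content is that surjectivity.

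First, both $R$ and $S$ are quasiregular semiperfectoid: $R$ is perfectoid by hypothesis, and for $S=R/(f_1,\dots,f_r)$ the regular sequence hypothesis forces $L_{S/R}$ to be a shift of a free $S$-module, which, combined with the known structure of $L_{R/\mathbf{Z}_p}$, ensures that $L_{S/\mathbf{Z}_p}[-1]$ is $p$-completely flat. Via \cref{BMS2CompNC}, the rings $\widehat{\Prism}_R$ and $\widehat{\Prism}_S$ are then discrete, so the complexes
\[ \mathbf{Z}_p(n)(T) = \mathrm{fib}\bigl(\mathcal{N}^{\geq n}\widehat{\Prism}_T\{n\} \xrightarrow{1-\phi_n} \widehat{\Prism}_T\{n\}\bigr)\]
for $T\in\{R,S\}$ are concentrated in cohomological degrees $0$ and $1$. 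The motivic spectral sequence then has at most two nonzero rows, which forces degeneration for bidegree reasons and identifies $\pi_{2n-1} K(T;\mathbf{Z}_p) \cong H^1(\mathbf{Z}_p(n)(T))$ (up to the convergence of the complete filtration, which holds here since $\widehat{\Prism}_T$ is Nygaard-complete). Thus it suffices to prove that $H^1(\mathbf{Z}_p(n)(R)) \to H^1(\mathbf{Z}_p(n)(S))$ is surjective for every $n \geq 0$.

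Reducing by the K\"unneth-style argument used in the proof of \cref{PropNygaard}, we may assume $r=1$, so $S = R/(f)$ with compatible $p$-power roots of $f$, placing us in exactly the setting of \cref{NygaardKeyCase}. There $\widehat{\Prism}_S$ is described as the $(p,q-1)$-adic completion of $\bigoplus_{i \in \mathbf{N}[1/p]} A\cdot e_i$ with $e_i = Y^i/[\lfloor i\rfloor]_q!$ and $A = A_{\inf}(R)$, and the Nygaard filtration and Frobenius are completely explicit: lifting $e_i$ to $y_i = [p]_{q^{1/p}}^{n-\lfloor i\rfloor}\cdot e_i \in \mathcal{N}^{\geq n}\widehat{\Prism}_S$ (with the convention that negative exponents are zero), the computation of \eqref{FrobeniusqPD} combined with \cref{qMultId} yields
\[ (1-\phi_n)(y_i) \;=\; [p]_{q^{1/p}}^{n-\lfloor i\rfloor}\, e_i \;-\; w(i)\, e_{ip}, \qquad w(i)\in A^\times.\]
Hence in the cokernel of $1-\phi_n$ the basis element $e_i$ with $i>0$ is identified, after multiplication by a unit and a power of $[p]_{q^{1/p}}$, with $e_{ip^k}$ for arbitrarily large $k$. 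Since $\lfloor ip^k\rfloor \to \infty$ and $e_{ip^k}\in \mathcal{N}^{\geq \lfloor ip^k\rfloor}\widehat{\Prism}_S$, Nygaard completeness forces these $e_{ip^k}$ to zero in the limit; an induction on $\lfloor i\rfloor$ (peeling off one factor of $[p]_{q^{1/p}}$ at a time using the relation above) then shows that the classes of all $e_i$ with $i>0$ in the cokernel are in the image of $A = \widehat{\Prism}_R \to \widehat{\Prism}_S$, proving surjectivity on $H^1$.

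The main obstacle is precisely this last step of translating a ``$\phi_n$-shift'' identity into a statement about the cokernel. The delicate point is that the iteration $e_i \rightsquigarrow e_{ip} \rightsquigarrow e_{ip^2}\rightsquigarrow\cdots$ produces series in the $\mathbf{N}[1/p]$-grading rather than single terms, so convergence must be controlled simultaneously in the Nygaard filtration \emph{and} in the $(p,q-1)$-adic topology. The rigorous execution requires checking that the quotient $\widehat{\Prism}_S\{n\}/(1-\phi_n)\mathcal{N}^{\geq n}\widehat{\Prism}_S\{n\}$ inherits enough completeness from $\widehat{\Prism}_S$ (this is where the boundedness of $p^\infty$-torsion in $\widehat{\Prism}_S$, ultimately coming from the regular sequence hypothesis on $(f_1,\dots,f_r)$, is essential) and in treating the ``small denominator'' cases $\lfloor i\rfloor < n$ by the auxiliary multiplication by $[p]_{q^{1/p}}^{n-\lfloor i\rfloor}$ and its eventual cancellation via further applications of $1-\phi_n$.
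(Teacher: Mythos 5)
The overall strategy of reducing to surjectivity of $H^1(\mathbf{Z}_p(n)(R)) \to H^1(\mathbf{Z}_p(n)(S))$ for all $n$ is correct and matches the paper's opening step, as is the observation that the motivic spectral sequence degenerates for bidegree reasons. However, your execution has a genuine gap in the key step.

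The central problem is the claim that ``$\widehat{\Prism}_S$ is described as the $(p,q-1)$-adic completion of $\bigoplus_{i \in \mathbf{N}[1/p]} A\cdot e_i$ with $e_i = Y^i/[\lfloor i\rfloor]_q!$ and $A = A_{\inf}(R)$'' for $S = R/(f)$. This $\mathbf{N}[1/p]$-grading exists only when the element being inverted is a \emph{free} variable --- that is, for $S' := R'/(X)$ with $R' := R\langle X^{1/p^\infty}\rangle$, which is the actual setting of \cref{NygaardKeyCase} (there with $R = \mathbf{Z}_p[\zeta_{p^\infty}]^\wedge$). Passing from $S'$ to $S = R/(f)$ is a base change of prisms along $A_{\inf}(R') \to A_{\inf}(R)$, $[X]\mapsto[f]$; this map sends each graded component of degree $i$ to degree $0$ and therefore destroys the $\mathbf{N}[1/p]$-grading. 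Consequently the whole graded cokernel analysis (peeling off $e_i \rightsquigarrow e_{ip} \rightsquigarrow \cdots$, controlling convergence via the degree filtration) is not available for $\Prism_S$ itself. A secondary issue: the K\"unneth reduction to $r=1$ is used in \cref{PropNygaard} to control the \emph{structure} of $\Prism_S$ and its Nygaard filtration; it is not clear that it applies to the cokernel of $1-\phi_n$, which is a quotient rather than a graded piece of anything admitting an obvious K\"unneth decomposition. (Indeed, the paper's proof of \cref{PerfdSPerfdSurjTate} handles all $r$ simultaneously rather than reducing to $r=1$, for exactly this reason.)

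The missing idea is to introduce the graded model ring as an intermediary and transport surjectivity via a commutative square. Specifically, set $R' := R\langle x_1^{1/p^\infty},\dots,x_r^{1/p^\infty}\rangle$ and $S' := R'/(x_1,\dots,x_r)$, and use the chosen $p$-power roots of the $f_i$ to build the square with top row $R' \to S'$ and bottom row $R \to S$, vertical maps $x_i^{1/p^n}\mapsto f_i^{1/p^n}$. Then the top map is surjective on $H^1(\mathbf{Z}_p(n))$ by \cref{PerfdSPerfdSurjTate} (which \emph{is} the explicit graded cokernel analysis you are trying to carry out, done in the setting where the grading actually exists), while the right vertical map $S'\to S$ is surjective on $\Prism$ by \cref{SurjLemma} (it is surjective on $\pi_0$ and on $H^{-1}L_{-/R}$), hence a fortiori surjective on $H^1(\mathbf{Z}_p(n))$ since that group is a quotient of $\Prism_{-}$. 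Chasing the square then gives surjectivity of the bottom map. Your graded computation is essentially correct for the model ring $S'$, but you should not try to apply it directly to $S$.
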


The proof of \cref{OddVanishing} depends on two inputs. First, we use Andr\'e's lemma (\cref{AndreFlatness})  to restrict attention to a particular nice class of quasisyntomic rings. Secondly, we use the explicit description of prismatic cohomology and its Nygaard filtration for this class of rings (coming from \S \ref{ss:NygaardSpecial}) to make calculations. Let us begin with the latter.

\begin{lemma}
\label{PerfdSPerfdSurjTate}
Let $R$ be a perfectoid $\mathbf{Z}_p[\zeta_{p^\infty}]^{\wedge}$-algebra. Set $R' = R\langle X_1^{1/p^\infty},....,X_r^{1/p^\infty}\rangle$ and $S = R'/(X_1,....,X_r)$. The natural map $\mathbf{Z}_p(n)(R') \to \mathbf{Z}_p(n)(S)$ is surjective on $H^1$.
\end{lemma}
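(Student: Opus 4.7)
The plan is to reduce to the case $r=1$ by a K\"{u}nneth argument (in the spirit of the proof of \cref{PropNygaard}): writing $S = \bigotimes_R (R\langle X_k^{1/p^\infty}\rangle/X_k)$ and using multiplicativity of derived prismatic cohomology, one reduces to the case $S = R'/X$ with $R' = R\langle X^{1/p^\infty}\rangle$. Both $R'$ and $S$ are quasiregular semiperfectoid (\cref{QRSPPrism}), so $\widehat{\Prism}_M$ and $\mathcal{N}^{\geq n}\widehat{\Prism}_M$ are discrete for $M \in \{R', S\}$, and $\mathbf{Z}_p(n)(M)$ is the two-term complex $[\mathcal{N}^{\geq n}\widehat{\Prism}_M\{n\} \xrightarrow{1-\phi_n} \widehat{\Prism}_M\{n\}]$ placed in degrees $[0,1]$. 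Surjectivity on $H^1$ is thus equivalent to
\begin{equation*}
\widehat{\Prism}_S\{n\} \;=\; \operatorname{image}\bigl(\widehat{\Prism}_{R'}\{n\} \to \widehat{\Prism}_S\{n\}\bigr) \;+\; (1-\phi_n)\mathcal{N}^{\geq n}\widehat{\Prism}_S\{n\}. \tag{$\ast$}
\end{equation*}

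Using \cref{NygaardKeyCase}, set $A := A_\inf(R)$; then $\widehat{\Prism}_S$ is the $(p,q-1)$-adic completion of $\bigoplus_{i \in \mathbf{N}[1/p]} A \cdot \tfrac{Y^i}{[\lfloor i\rfloor]_q!}$, with $\mathcal{N}^{\geq n}$ the componentwise $[p]_{q^{1/p}}^{\max(0, n-\lfloor i\rfloor)}$-multiple. The natural map $\widehat{\Prism}_{R'} = A_\inf(R') \to \widehat{\Prism}_S$ carries the Teichm\"{u}ller lift $[X^\flat]$ to the rank-one element $Y^p \in \widehat{\Prism}_S$ (note $\delta(Y^p) = 0$, and $Y^p$ lifts $X$), so its image is the closed sub-$A$-module spanned by the monomials $Y^i = [\lfloor i\rfloor]_q! \cdot \tfrac{Y^i}{[\lfloor i\rfloor]_q!}$. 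The cokernel modulo the image is therefore componentwise $A/[\lfloor i\rfloor]_q!$, which vanishes unless $\lfloor i\rfloor \geq p$ since $[k]_q$ is a unit when $p \nmid k$.

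It remains to hit each generator $v = \tfrac{Y^i}{[\lfloor i\rfloor]_q!}$ with $\lfloor i\rfloor \geq p$ by the right-hand side of $(\ast)$. To this end, introduce the ``$\phi$-predecessor chain'': the unique bidegree in $\mathbf{N} \times (\mathbf{N}[1/p] \cap [0,1))$ whose image under $\phi$ equals $(k,\epsilon)$ is $(\lfloor k/p\rfloor, (k \bmod p + \epsilon)/p)$. Starting from $(k_0,\epsilon_0) := (\lfloor i\rfloor, \{i\})$, the iterates $k_j = \lfloor k_{j-1}/p\rfloor$ drop below $p$ after $\lceil \log_p k_0\rceil$ steps, entering the cokernel-trivial region. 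One then builds $u = \sum_{j\geq 0} c_j \gamma_{k_j}(Y) Y^{\epsilon_j}$ inductively from the tail towards $j=0$, choosing coefficients $c_j \in [p]_{q^{1/p}}^{\max(0,n-k_j)} A$ to satisfy the equations $c_j \equiv \delta_{j,0} + [\phi_n u_{j+1}]_{(k_j, \epsilon_j)} \pmod{[k_j]_q!}$. The key identity $\phi([p]_{q^{1/p}}^\ell) = [p]_q^\ell$ cancels the $[p]_q^{k_{j+1}}$-factor arising in $\phi_n$ applied to a Nygaard-minimal element, so that $[\phi_n u_{j+1}]_{(k_j, \epsilon_j)}$ is a unit times $\phi(c'_{j+1})$ with $c_{j+1} = [p]_{q^{1/p}}^{\max(0, n-k_{j+1})} c'_{j+1}$; combined with the bijectivity of $\phi$ on $A$ and the companion identity $\phi^{-1}([p]_{q^{1/p}}) = [p]_{q^{1/p^2}}$ which propagates the Nygaard constraint backwards along the chain, each equation admits a solution.

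The main obstacle is coordinating the Nygaard constraint at each intermediate $c_j$ with the cokernel congruence modulo $[k_j]_q!$, particularly when $p \leq k_j < n$, where neither $[p]_{q^{1/p}}^{n-k_j}$ nor $[p]_q$ individually generates the unit ideal of $A$. The resolution is that the chain eventually reaches $k_j < p$, where $[k_j]_q!$ is a unit and the congruence condition becomes vacuous; this provides a sink in which any required correction can be absorbed, which is then propagated back along the chain using the bijectivity of $\phi$.
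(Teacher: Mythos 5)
Your overall geometric picture—bigrade $\widehat{\Prism}_S$, observe that the $\phi_n$-isomorphism shifts the degree down by a factor of $p$, and conclude that eventually everything lands in a region already covered by the image of $A_\inf(R')$—is essentially the paper's, and the explicit description of $\widehat{\Prism}_S$ and its Nygaard filtration that you extract from \cref{NygaardKeyCase} matches the one used in the proof. But there are two real problems.

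\textbf{The K\"unneth reduction to $r=1$ does not go through.} Multiplicativity of derived prismatic cohomology gives $\Prism_{S/A} \simeq \Prism_{S_1/A} \widehat{\otimes}_A \cdots \widehat{\otimes}_A \Prism_{S_r/A}$, but the Nygaard filtration of a tensor product is the Day convolution $\mathrm{Fil}^n_N = \sum_{a_1+\cdots+a_r = n} \mathrm{Fil}^{a_1}_N \otimes \cdots \otimes \mathrm{Fil}^{a_r}_N$, not the tensor of the $n$-th pieces, and $\mathbf{Z}_p(n)$ is the \emph{fibre} of $1-\phi_n$ rather than a tensor product. Surjectivity of $H^1$ of a fibre is a statement about a cokernel, and cokernels do not interact with $\widehat{\otimes}$ in the way your sketch needs. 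The citation of the K\"unneth step in the proof of \cref{PropNygaard} is misleading: there one is matching two filtrations graded-piece by graded-piece, and graded pieces of Day convolutions \emph{do} factor; nothing like that is available here. (You also cannot fix this by factoring $R' \to R'/(X_1) \to \cdots \to S$ and doing one variable at a time, since the intermediate quotients are no longer perfectoid and the inductive hypothesis does not apply.) The paper simply runs the argument for general $r$ using the full multigrading over $\mathbf{N}[1/p]^r$; this costs nothing extra.

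\textbf{The inductive construction of $u$ conflates two regimes that need to be treated by genuinely different mechanisms.} For a degree $\underline{i}$ with $\sum_j \lfloor i_j\rfloor \geq n+1$, the operator $\phi_n$ on that homogeneous piece lands in $[p]_q$ times the target, so $1-\phi_n$ is invertible there by a convergent geometric series and those components are simply killed in $H^1$. For $\sum_j \lfloor i_j\rfloor \leq n$, $\phi_n$ instead induces an isomorphism from the Nygaard piece in degree $\underline{i}/p$ onto the degree-$\underline{i}$ piece, which is what lets you divide the degree by $p$. Your predecessor chain applies the second mechanism uniformly, starting from an arbitrary $v$, including degrees where that isomorphism statement is false; the geometric series argument that is supposed to handle the large-degree tail never appears. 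The ``choose coefficients $c_j$ to satisfy the congruences'' step is also never shown to have solutions—that solvability is precisely what the two separate mechanisms are providing in the paper's proof. So while the ingredients (the $\mathbf{N}[1/p]$-grading, $\gamma_k(Y) = Y^k/[k]_q!$, the observation that the image of $A_\inf(R')$ is the span of the bare monomials) are all the right ones, the argument as written has a genuine gap, and the clean version requires separating the $\geq n+1$ and $\leq n$ cases as the paper does.
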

\begin{proof}
Let $A = A_{\inf}(R)$, so $(A,[p]_q)$ is the perfect prism corresponding to $R$, and we have
\begin{equation}
\label{TateTwistA}
 A_{\inf}(R') = \widehat{\bigoplus_{\underline{i} \in \mathbf{N}[1/p]^r}} A \cdot X^{\underline{i}}, \quad \text{where} \quad X^{\underline{i}} = \prod_{j=1}^r X_j^{i_j}.
 \end{equation}
In this description, the Nygaard filtration is given by 
\[\mathrm{Fil}_N^n A_{\inf}(R') = [p]_{q^{1/p}}^n A_{\inf}(R').\]
Write 
\[ \eta_{R'}:A_{\inf}(R') \twoheadrightarrow \mathrm{coker}(\phi_n-1:\mathrm{Fil}_N^n A_{\inf}(R') \to A_{\inf}(R')) =: H^1(\mathbf{Z}_p(n)(R'))\] 
for the canonical map.

Write $D = \Prism_{S/A}$, so 
\begin{equation}
\label{TateTwistD}
 D = A_{\inf}(R')\{\frac{X_1^p}{[p]_q},....,\frac{X_r^p}{[p]_q}\}^{\wedge} \simeq \widehat{\bigoplus_{\underline{i} \in \mathbf{N}[1/p]^r}} A \cdot \frac{X^{\underline{i}}}{[\lfloor \underline{i} \rfloor]_q!}, \quad \text{where} \quad {[\lfloor \underline{i} \rfloor]_q!} = \prod_{j=1}^r [\lfloor i_j \rfloor]_q!.
 \end{equation}
Taking products of the calculation in \eqref{FrobeniusqPD}, we find
 \[ \phi(\frac{X^{\underline{i}}}{[\lfloor \underline{i} \rfloor]_q!}) = \frac{X^{\underline{i}p}}{[\lfloor \underline{i}p \rfloor]_q!} \cdot [p]_{q}^{\sum_j \lfloor i_j \rfloor} \cdot u,\]
 where $u$ is a unit. Using this description and the $[p]_q$-torsionfreeness of $A$, one checks that the Nygaard filtration is given by
\begin{equation}
\label{TateTwistFil}
 \mathrm{Fil}_N^n D = \widehat{\bigoplus_{\underline{i} \in \mathbf{N}[1/p]^r}}  [p]_{q^{1/p}}^{n - \sum_j \lfloor i_j \rfloor}   \cdot A \cdot \frac{X^{\underline{i}}}{[\lfloor \underline{i} \rfloor]_q!},
 \end{equation}
where we declare $[p]_q^a = 1$ for $a \leq 0$.  Again, we write
\[ \eta_{S}:D \twoheadrightarrow \mathrm{coker}(\phi_n-1:\mathrm{Fil}_N^n D \to D) =: H^1(\mathbf{Z}_p(n)(S))\] 
for the canonical map.

Using these explicit descriptions, we shall prove the lemma by analyzing the surjective map $\eta_S$ on various graded pieces of $D$. Before delving into the specifics, let us explain the structure of the argument. The key is to observe (by comparing  \eqref{TateTwistA} and \eqref{TateTwistD}) that the map $A_{\inf}(R') \to D$ is bijective on components of sufficiently small degree $\underline{i}$; in fact, it suffices to assume that $i_j < 1$ for all $j$ since that forces the denominator appearing in \eqref{TateTwistD} to be $1$. Thus, the lemma would follow once we knew that $H^1(\mathbf{Z}_p(n)(S))$ is already spanned by the image of components of $D$ with sufficiently small degree under the $\eta_S$. We shall verify this to be the case by using the explicit description of the Nygaard filtration given in \eqref{TateTwistFil}.

To carry out this strategy, it is convenient to use the following notation. Given a $(p,[p]_q)$-complete $A_{\inf}(R)$-module $M$ equipped with a $(p,[p]_q)$-complete $\mathbf{N}[1/p]^r$-grading 
\[M := \widehat{\bigoplus_{\underline{i} \in \mathbf{N}[1/p]^r}} M_{\underline{i}}\]
(such as $A$ or $D$ or $\mathrm{Fil}_N^n D$) and any $\alpha \in \mathbf{N}$, we write 
\[ M_{\leq \alpha, \sqcup} := \widehat{\bigoplus_{\underline{i} \in \mathbf{N}[1/p]^r, \sum_j \lfloor i_j\rfloor \leq \alpha}} M_{\underline{i}},\] 
and similarly for $M_{\geq \alpha, \sqcup}$. For future use, we observe that 
\[ M \simeq M_{\leq \alpha, \sqcup} \oplus M_{\geq \alpha+1,\sqcup} \quad \text{and} \quad M_{\leq 0,\sqcup} = \widehat{\bigoplus_{\underline{i} \in \mathbf{N}[1/p]^r \cap [0,1)^r}} M_{\underline{i}}\]
with these conventions.

We now begin the proof.  First, we claim that 
\[ \eta_S(D_{\geq n+1,\sqcup}) = 0.\]
To see this, we must check that $D_{\geq n+1,\sqcup} \subset D$ lies in the image of $\phi_n - 1:\mathrm{Fil}_N^n D \to D$. The description \eqref{TateTwistFil} shows that 
 \[ \phi_n\Big((\mathrm{Fil}_N^n D)_{\geq n+1,\sqcup}\Big) \subset [p]_q D_{\geq n+1,\sqcup}.\]  
 On the other hand, the identity map induces an isomorphism 
 \[(\mathrm{Fil}_N^n D)_{\geq n+1,\sqcup} = D_{\geq n+1,\sqcup},\] 
 again by \eqref{TateTwistFil}. Combining these, it follows that the map 
 \[(\mathrm{Fil}_N^n D)_{\geq n+1,\sqcup} \xrightarrow{\phi_n - 1} D_{\geq n+1,\sqcup}\] 
 is surjective: explicitly, for any $x \in D_{\geq n+1,\sqcup}$, the infinite sum 
\[-(x + \phi_n(x) + \phi_n^2(x) + ...)\] 
converges to an element $y \in D_{\geq n+1,\sqcup}= (\mathrm{Fil}_N^n D)_{\geq n+1,\sqcup}$ that is a lift of $x$ under $\phi_n-1$. This proves  that $\eta_S$ kills $D_{\geq n+1,\sqcup}$, so 
  \[D_{\leq n, \sqcup} \subset D \xrightarrow{\eta_S} H^1(\mathbf{Z}_p(n)(S))\] 
  is surjective.

Next, for any $\underline{i} \in \mathbf{N}[1/p]^r$ with $\sum_j \lfloor i_j \rfloor \leq n$, the map $\phi_n:\mathrm{Fil}_N^nD \to D$ induces an isomorphism
\[ \phi_n:(\mathrm{Fil}_N^nD)_{\frac{1}{p} \cdot \underline{i}} \simeq D_{\underline{i}}\] 
by \eqref{TateTwistFil}. As $\eta_S(x) = \eta_S(\phi_n(x))$ for any $x \in D$, we learn that 
\[ \eta_S(D_{\underline{i}}) \subset \eta_S(D_{\frac{1}{p} \cdot \underline{i}}).\]
Iterating this observation shows that we can divide the degree by arbitrarily many powers of $p$. In particular, after dividing finitely many times, we learn that the map 
\[ D_{\leq 0, \sqcup} \subset D \xrightarrow{\eta_S} H^1(\mathbf{Z}_p(n)(S))\] 
is surjective.

Next, we observe that the map 
\[A_{\inf}(R')_{\leq 0, \sqcup} \to D_{\leq 0, \sqcup}\]
 is surjective. Indeed, the degree $\underline{i}$ terms of \eqref{TateTwistA} and \eqref{TateTwistD} coincide since the degrees $\underline{i}$ occurring here satisfy $i_j < 1$ for all $j$. By functoriality, we conclude 
 \[H^1(\mathbf{Z}_p(n)(R')) \to H^1(\mathbf{Z}_p(n)(S))\] 
 is also surjective.
\end{proof}

The following lemma will be quite useful in making reductions.

\begin{lemma}
\label{SurjLemma}
Fix a perfectoid ring $R$. If $S' \to S$ is a surjection of quasiregular semiperfectoid $R$-algebras such that the $p$-completion of $L_{S'/R} \to L_{S/R}$ is also surjective on $H^{-1}(-)$, then ${\Prism}_{S'} \to {\Prism}_{S}$ is surjective.
\end{lemma}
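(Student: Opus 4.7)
The plan is to reduce, via derived Nakayama along the element $d \in A$ (where $(A,(d))$ is the perfect prism attached to $R$), to showing that the Hodge-Tate specialization $\overline{\Prism}_{S'} \to \overline{\Prism}_S$ is surjective, and then to analyze this latter map using the conjugate filtration on prismatic cohomology. First I would invoke \cref{QRSPPrism} to note that both $\Prism_S$ and $\Prism_{S'}$ are discrete and classically $(p,d)$-adically complete, so the cone $C$ of $\Prism_{S'} \to \Prism_S$ lies in $D^{[-1,0]}$ and is derived $d$-complete, and $C/d$ identifies with the cone of $\overline{\Prism}_{S'} \to \overline{\Prism}_S$. Derived Nakayama in $d$ then reduces surjectivity of $\Prism_{S'} \to \Prism_S$ to that of $\overline{\Prism}_{S'} \to \overline{\Prism}_S$.

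For the Hodge-Tate specialization, I would invoke the conjugate filtration coming from the derived Hodge-Tate comparison (\cref{DerivedPrismatic} and \cref{thm:Bq}(1)): the ring $\overline{\Prism}_S$ carries an exhaustive increasing filtration $\mathrm{Fil}_i^{\conj} \overline{\Prism}_S$ by $S$-submodules with graded pieces $(\wedge^i L_{S/R}[-i])^\wedge\{-i\}$, functorially in $S$, and analogously for $S'$. The map of interest is therefore filtered, and by induction on $i$ together with exhaustivity it suffices to prove surjectivity on each graded piece.

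For the graded pieces I would use that the quasiregularity of $S$ and $S'$ ensures that the shifted cotangent complex $L_{-/R}[-1]^\wedge$ is discrete and $p$-completely flat, so everything lives in degree $0$ and formation of exterior powers reduces to ordinary commutative algebra. The transitivity triangle $L_{S'/R} \otimes^L_{S'} S \to L_{S/R} \to L_{S/S'}$ combined with $L_{S/S'}[-1] \in D^{\leq -1}$ translates the hypothesis into surjectivity of $L_{S'/R}[-1]^\wedge \widehat{\otimes}_{S'} S \to L_{S/R}[-1]^\wedge$ on $H^0$; composing with the surjection $(\wedge^i_{S'} L_{S'/R}[-1])^\wedge \twoheadrightarrow (\wedge^i_{S'} L_{S'/R}[-1])^\wedge \widehat{\otimes}_{S'} S$ coming from $S' \twoheadrightarrow S$, and using that $p$-completed exterior powers preserve surjections of $p$-completely flat modules, gives surjectivity of the induced map on $i$-th graded pieces. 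The main obstacle is the careful bookkeeping between $p$-completion, base change along the surjection $S' \to S$, and formation of exterior powers; the key input that makes this manageable is the $p$-complete flatness of $L_{-/R}[-1]$ furnished by quasiregularity, which keeps the entire analysis concentrated in cohomological degree $0$.
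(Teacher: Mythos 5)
Your proposal is correct and is precisely what the paper's one-line proof (``This follows from the Hodge-Tate comparison'') intends: reduce mod $d$ by derived Nakayama, then use the conjugate filtration on $\overline{\Prism}$ whose graded pieces are $p$-completed wedge powers of $L_{-/R}[-1]$, and push the hypothesis on $H^{-1}(L_{-/R}^\wedge)$ through the filtration. The only spot worth stating a bit more carefully is the passage from surjectivity on $\colim_i \mathrm{Fil}_i$ to surjectivity on $\overline{\Prism}$ itself: one should note that the cone of the uncompleted filtered-colimit map is a shift $K[1]$ of a single module, so its derived $p$-completion $\widehat{K}[1]$ lies in $D^{\leq -1}$, which is what makes the exhaustivity-only-after-completion issue harmless.
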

\begin{proof}
This follows from the Hodge-Tate comparison.
\end{proof}

We can finally put everything together to prove \cref{OddVanishing}.

\begin{proof}[Proof of \cref{OddVanishing}]
By \cite[Remark 7.20]{BMS2} and \cref{BMS2CompNC}, it suffices to prove discreteness. Moreover, as we work locally on the quasisyntomic site, we may restrict attention to quasiregular semiperfectoid $\mathcal{O}_C$-algebras, where $C/\mathbf{Q}$ is an algebraically closed nonarchimedean field. In this setting, the Breuil-Kisin twists and the Nygaard completion may be ignored whilst calculating $\mathbf{Z}_p(n)$: more precisely, the proof of \cite[Lemma 7.22]{BMS2} shows that 
\[ \mathbf{Z}_p(n)(S) \simeq \mathrm{fib}(\mathrm{Fil}_N^n {\Prism}_{S} \xrightarrow{1-\phi_n} {\Prism}_{S}).\]
Consider the following assertion:
\begin{itemize}
\item[ $(\ast)_S$] 
Given a quasiregular semiperfectoid $\mathcal{O}_C$-algebra $S$ and an element $\alpha \in H^1(\mathbf{Z}_p(n)(S))$, there exists a quasi-syntomic cover $S \to S'$ such that  $\alpha$ maps to $0$ in $H^1(\mathbf{Z}_p(n)(S'))$.
\end{itemize}

Our goal is to prove $(\ast)_S$ for all $S$. Let us first prove this when $S = R$ is a perfectoid $\mathcal{O}_C$-algebra. \cref{TateTwistPerfd} and Artin-Schreier theory show that any class in $H^1(\mathbf{Z}_p(0)(-))$ can be annihilated by a pro-(finite \'etale) cover of $R$, which settles the $n=0$ case of $(\ast)_R$. If $n > 0$, then \cref{TateTwistPerfd} (and a trivialization of $\mathbf{Z}_p(1) \cong \mathbf{Z}_p(n)$ on the generic fibre) reduce us to the $n=1$ case. By Kummer theory (and \cite[Proposition 7.17]{BMS2} to identify $\mathbf{Z}_p(1)$ as the quasisyntomic sheaf $\lim_n \mu_{p^n}$), for any perfectoid ring $R$, the group $H^1(\mathbf{Z}_p(1)(R))$ is $H^0$ of the derived $p$-completion of $R^*$; here we use that $\mathrm{Pic}(R)$ is uniquely $p$-divisible by \cref{cor:picperfectoid}. Andr\'e's lemma (\cref{AndreFlatness}) gives a cover $R \to R'$ with $(R')^*$ being $p$-divisible, whence $H^1(\mathbf{Z}_p(1)(R')) = 0$, so we are done.


Next, we verify $(\ast)_S$ for a specific example. Set
\[S_0 = R\langle x_1^{1/p^\infty},...,x_r^{1/p^\infty}\rangle/(x_1,...,x_r),\] 
with $R$ being a perfectoid $\mathcal{O}_C$-algebra $R$. We claim that $(\ast)_{S_0}$ holds: indeed, \cref{PerfdSPerfdSurjTate} implies that $(\ast)_{S_0}$ follows from $(\ast)_{R\langle x_1^{1/p^\infty},...,x_r^{1/p^\infty}\rangle}$, which was already shown above.

Finally, we handle the general case. Fix a quasiregular semiperfectoid $S$, presented as a quotient $R'/I$ with $R'$ a perfectoid $\mathcal{O}_C$-algebra.  Fix a set $\{x_t \in I\}_{t \in T}$ of generators of $I$. By Andr\'e's lemma, we may replace $R'$ by a quasisyntomic cover if necessary to assume that each $x_t$ admits a compatible system of $p$-power roots. On fixing such a system, we obtain an evident surjection $S' := \big(R'[ \{x_t^{1/p^\infty}\}_{t \in T}]/(x_t)\big)^{\wedge} \to S$. The induced map on $p$-complete cotangent complexes is also surjective on $H^{-1}$, so we can then assume $S = S'$ by \cref{SurjLemma}. Filtering $T$ by its finite subsets then reduces us to the ring $S_0$ considered in the previous paragraph, so we are done.
\end{proof}

\begin{proof}[Proof of \cref{OddVanishingExamples}]
Via \cite[Theorem 1.12 (5)]{BMS2} and \cite{ClausenMathewMorrow}, it is enough to show that for all $n \geq 0$, applying $H^1(\mathbf{Z}_p(n)(-))$ to $R \to S$ gives a surjective map. Choose a $p$-power compatible system of roots $\{f_i^{1/p^n}\}_{n \geq 0}$ for each $i$. Via this choice, we obtain a commutative square
\[ \xymatrix{ R\langle x_1^{1/p^{\infty}},...,x_r^{1/p^{\infty}}\rangle \ar[r] \ar[d] & R\langle x_1^{1/p^{\infty}},...,x_r^{1/p^{\infty}}\rangle /(x_1,...,x_r) \ar[d] \\
   R \ar[r] & S := R/(f_1,...,f_r)}\]
where the top horizontal map is the obvious one, and left vertical map is determined by $x_i^{1/p^n} \mapsto f_i^{1/p^n}$ for all $n \geq 0$ and $i \in 1,...,r$. The top horizontal map is surjective on $H^1(\mathbf{Z}_p(n)(-))$ by \cref{PerfdSPerfdSurjTate}, while the vertical maps are surjective on $H^1(\mathbf{Z}_p(n)(-))$ by \cref{SurjLemma}. The commutativity implies the same for the bottom horizontal map, as wanted.
\end{proof}

\newpage

\section{The Nygaard filtration: Relative case}
\label{sec:RelativeNygaard}

In \S \ref{ss:NygaardRelative}, we explain how to prove a version of the results of \S \ref{sec:Nygaard} relative to any bounded base prism $(A,I)$. This yields, in particular, the Nygaard filtration on the prismatic complex on a smooth formal $A/I$-scheme, thus proving \cref{thm:C}. In \S \ref{ss:BK}, we apply these results to compare the theory constructed in this paper with the Breuil-Kisin type theory from \cite{BMS2}.

\subsection{Constructing the Nygaard filtration on relative prismatic cohomology}
\label{ss:NygaardRelative}

Let $S$ be any quasisyntomic $A/I$-algebra. Recall that this means that $S$ is a $p$-completely flat $A/I$-algebra such that $L_{S/(A/I)}$ has $p$-adic Tor amplitude in $[-1,0]$, and we always assume that $S$ is derived $p$-adically complete; such an $S$ is automatically classically $p$-adically complete by \cite[Lemma 4.7]{BMS2}.

\begin{definition} A quasisyntomic $A/I$-algebra $S$ is large if there is a surjection $A/I\langle X_i^{1/p^\infty}|i\in I\rangle\to S$ for some set $I$.
\end{definition}

For large quasisyntomic $A/I$-algebras $L_{S/(A/I)}[-1]$ is a $p$-completely flat $S$-module as $\Omega^1_{S/(A/I)}$ vanishes after $p$-completion. Note that large quasisyntomic algebras form a basis for the quasisyntomic site of $A/I$ as one can always extract compatible sequences of $p$-power roots of elements. The standard examples are $A/I\langle X_1^{1/p^\infty},\ldots,X_n^{1/p^\infty}\rangle/(f_1,\ldots,f_m)$ for some $p$-completely regular sequence $(f_1,\ldots,f_m)$ relative to $A/I$.

\begin{theorem}\label{RelativeNygaard} Let $S$ be a large quasisyntomic $A/I$-algebra.
\begin{enumerate}
\item[{\rm (1)}] The derived prismatic cohomology $\Prism_{S/A}$ is concentrated in degree $0$ and a $(p,I)$-completely flat $\delta$-$A$-algebra. It is the initial object of $(S/A)_\Prism$.
\end{enumerate}
Let
\[
\phi_{S/A}: \Prism_{S/A}^{(1)}:= \Prism_{S/A}\widehat{\otimes}^L_{A,\phi} A\to \Prism_{S/A}
\]
denote the relative Frobenius, and
\[
\mathrm{Fil}^i_N \Prism_{S/A}^{(1)} = \{x\in \Prism_{S/A}^{(1)}\mid \phi_{S/A}(x)\in I^i \Prism_{S/A}\}\ .
\]
\begin{enumerate}
\item[{\rm (2)}] The image of
\[
\phi_{S/A}: \mathrm{gr}^i_N \Prism_{S/A}^{(1)}\hookrightarrow \overline{\Prism}_{S/A}\{i\}
\]
is given by $\mathrm{Fil}_i \overline{\Prism}_{S/A}\{i\}$.
\item[{\rm (3)}] The formation of $\mathrm{Fil}^i_N$ commutes with base change in $A$.
\end{enumerate}
\end{theorem}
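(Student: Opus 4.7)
The plan is to establish part~(1) via the Hodge--Tate comparison for derived prismatic cohomology together with \cref{PrismaticCohGivesPrisms}, and then to deduce parts~(2) and~(3) by reducing to a perfect base prism via \cref{BoundedPrismProp}(4) and \cref{PrismPerfection}, and invoking \cref{ThmNygaard} through quasisyntomic descent.

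For part~(1), the largeness hypothesis on $S$ implies that $L_{S/(A/I)}[-1]$ is $p$-completely flat over $S$. The Hodge--Tate comparison for derived prismatic cohomology then shows that $\overline{\Prism}_{S/A}$ is discrete and $p$-completely flat over $S$, and derived Nakayama yields that $\Prism_{S/A}$ itself is discrete and $(p,I)$-completely flat over $A$. \cref{PrismaticCohGivesPrisms} then endows $\Prism_{S/A}$ with a canonical $\delta$-$A$-algebra structure, promotes $(\Prism_{S/A},I\Prism_{S/A})$ to a prism with a map $S\to \Prism_{S/A}/I$, and exhibits it as weakly initial in $(S/A)_\Prism$. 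To upgrade weak initiality to initiality, I would mimic the last part of the proof of \cref{QRSPPrism}: surject onto $S$ from rings of the form
\[
T = (A/I)\langle X_1^{1/p^\infty},\ldots,X_r^{1/p^\infty}\rangle/(X_1,\ldots,X_r)
\]
in a way that is surjective on $p$-completed cotangent complexes (so that the induced map $\Prism_{T/A}\to \Prism_{S/A}$ is surjective by Hodge--Tate), and then apply \cref{DerivedPrismaticPrismatic} to identify $\Prism_{T/A}$ with the prismatic envelope $A\{J/I\}^\wedge$ for $J = (I,X_1,\ldots,X_r)$, which is tautologically initial.

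For parts~(2) and~(3), the key reduction is to a perfect base prism: a faithfully flat cover $(A,I)\to (A_0,I_0)$ with $A_0$ perfect exists by \cref{BoundedPrismProp}(4) combined with \cref{PrismPerfection}, and over $(A_0,I_0)$ the ring $A_0/I_0$ is perfectoid. Setting $S_0 = S\widehat{\otimes}_{A/I} A_0/I_0$, the largeness assumption gives a quasisyntomic cover $S_0\to \widetilde S$ with $\widetilde S$ quasiregular semiperfectoid. Following the strategy of \S\ref{subsec:generalnygaard} in the relative setting, one computes $\mathrm{Fil}^i_N \Prism_{S_0/A_0}^{(1)}$ as the totalization of the Nygaard filtrations on the terms of the \v{C}ech nerve $\widetilde S^\bullet/S_0$; \cref{ThmNygaard} identifies each graded piece with the conjugate filtration on $\overline{\Prism}_{\widetilde S^\bullet/A_0}$, and totalization plus faithfully flat descent back to $(A,I)$ gives part~(2). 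For part~(3), note that $\mathrm{Fil}^i_N \Prism_{S/A}^{(1)}$ is by definition the kernel of
\[
\Prism_{S/A}^{(1)}\xrightarrow{\phi_{S/A}} \Prism_{S/A}\to \Prism_{S/A}/I^i\Prism_{S/A};
\]
the $(p,I)$-complete flatness from part~(1) combined with base change for prismatic cohomology (\cref{BaseChangePrismCoh}) makes this kernel commute with base change in $A$.

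The main obstacle is the descent step in part~(2): while each level of the \v{C}ech nerve is controlled by \cref{ThmNygaard}, one must verify that the totalization of the cosimplicial Nygaard filtration on $\Prism_{\widetilde S^\bullet/A_0}^{(1)}$ is discrete and computes the asserted subobject of $\Prism_{S_0/A_0}^{(1)}$, which in turn requires both the Nygaard and the conjugate filtrations on the cosimplicial object to consist of $p$-completely flat pieces so that totalization interacts well with the defining short exact sequences. Fortunately, the explicit description of the conjugate filtration via $p$-completed wedge powers of the cotangent complex, combined with the $p$-completely faithfully flat nature of $S_0\to\widetilde S$, supplies exactly the flatness needed to push the argument through.
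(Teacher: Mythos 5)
Your part (1) argument is correct and essentially the paper's. For parts (2) and (3), however, there is a genuine gap in the very first reduction step.

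You assert that "a faithfully flat cover $(A,I)\to (A_0,I_0)$ with $A_0$ perfect exists by \cref{BoundedPrismProp}(4) combined with \cref{PrismPerfection}." This is false for a general bounded prism. \cref{BoundedPrismProp}(4) only produces a faithfully flat map to an \emph{orientable} prism (where $I$ becomes principal), not to a perfect one. And \cref{PrismPerfection} gives the universal perfect prism $(A_\infty, IA_\infty)$ under $(A,I)$, but the map $A\to A_\infty$ is not $(p,I)$-completely flat in general: $A_\perf = \colim_\phi A$ is flat over $A$ only when the Frobenius on $A/p$ is flat, which fails for most bounded prisms (e.g.\ any crystalline prism $(A,(p))$ with $A/p$ noetherian but not regular, by Kunz's theorem). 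So you cannot descend along this map, and the reduction to a perfectoid base — which is exactly what would let you cite \cref{ThmNygaard} directly — does not go through. The paper is careful about this: it first reduces (via descent to smooth algebras, left Kan extension, and the arguments of \cref{PropNygaard}) to the single algebra $S = A/I\langle X^{1/p^\infty}\rangle/(X)$, then localizes to make $I$ orientable, then base changes along the map from the \emph{universal oriented prism} of \cref{UnivOrientedPrism} — for which the Frobenius on $A/p$ \emph{is} $d$-completely flat, as noted there — and only at this point is the perfection map $A\to A_\infty$ flat, allowing the final reduction to a perfect base.

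A secondary, smaller point: your argument for part (3) — that the kernel defining $\mathrm{Fil}^i_N$ commutes with base change because $\Prism_{S/A}$ is $(p,I)$-completely flat — is more delicate than it sounds, since a kernel is a truncation of a fiber and those do not automatically commute with derived base change. The paper sidesteps this entirely: it observes that once (2) is known for $S$, the base-changed filtration also satisfies the characterization in (2), and that characterization pins down the Nygaard filtration uniquely; hence (3) follows from (2) without any separate flatness argument. You may want to adopt that cleaner deduction.
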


\begin{proof} In part (1), note that $\Prism_{S/A}$ is concentrated in degree $0$ by the Hodge-Tate comparison and the assumption that $L_{S/(A/I)}[-1]$ is $p$-completely flat, and we also see that it is $(p,I)$-completely flat. To see that $\Prism_{S/A}$ is initial, it is enough by \cref{PrismaticCohGivesPrisms} to show that a functorial idempotent endomorphism of $\Prism_{S/A}$ is the identity, which follows from the proof of \cref{QRSPPrism}.

Part (3) is immediate from the explicit description, but in fact one can see that if part (2) is true for the $A/I$-algebra $S$, then it is also true for any base change of $S$ along a map $(A,I)\to (B,J)$ of bounded prisms. Indeed, one can define a putative Nygaard filtration on $\Prism_{S\widehat{\otimes}_A B/B} = \Prism_{S/A}\widehat{\otimes}_A B$ via base change, and it follow satisfy (2), which implies that it has to be the Nygaard filtration.

The proof of part (2) follows exactly the outline of \S \ref{sec:Nygaard}. In particular, by descent to smooth algebras and left Kan extension, it suffices to treat the case
\[
S=A/I\langle X_1^{1/p^\infty},\ldots,X_n^{1/p^\infty}\rangle/(f_1,\ldots,f_m)
\]
for some $p$-completely regular sequence $(f_1,\ldots,f_m)$ relative to $A/I$. By \cref{PrismaticRefineQSyn}, we can after a flat base change in $A$ assume that all $f_i$ admit compatible $p$-power roots $f_i^{1/p^j}$. Arguing as in the proof of Proposition~\ref{PropNygaard} reduces us to the case
\[
S=A/I\langle X^{1/p^\infty}\rangle/(X)\ .
\]
Localizing on $A$, we may assume that $I=(d)$ is orientable, and then by base change in $A$ we can reduce to the universal oriented prism. In that case the perfection $A\to A_\infty$ is flat, so we can reduce to the case that $A$ is perfect, where it follows from \S \ref{sec:Nygaard}. (Note that for all terms in the \v{C}ech nerve $A_\infty\widehat{\otimes}_A A_\infty$ etc.~, the Nygaard filtration will simply be the base change from $A_\infty$ by our remark about part (3) above, so one can apply flat descent.)
\end{proof}

In particular, if $X$ is a smooth formal $A/I$-scheme, we can define sheaves $\Prism_{-/A}$ and
\[
\mathrm{Fil}^i_N \Prism_{-/A}^{(1)}\subset \Prism_{-/A}^{(1)}:=\Prism_{-/A}\widehat{\otimes}^L_{A,\phi} A
\]
on the quasisyntomic site $X_\qsyn$ by defining them on the base of large quasisyntomic $A/I$-algebras $S$, with the values defined in the theorem. We can now prove Theorem~\ref{thm:C}, whose statement we recall.

\begin{theorem}\label{thmCagain} Let $(A,I)$ be a bounded prism and let $X=\Spf R$ be an affine smooth $p$-adic formal scheme over $A/I$. There is a canonical isomorphism
\[
R\Gamma_\Prism(X/A)\cong R\Gamma(X_\qsyn,\Prism_{-/A})
\]
and we endow prismatic cohomology with the Nygaard filtration
\[
\mathrm{Fil}^i_N R\Gamma_\Prism(X/A)^{(1)} = R\Gamma(X_\qsyn,\mathrm{Fil}^i_N \Prism_{-/A}^{(1)})\ .
\]
Then there are natural isomorphisms
\[
\mathrm{gr}^i_N R\Gamma_\Prism(X/A)^{(1)} \cong \tau^{\leq i} \overline{\Prism}_{R/A}\{i\}
\]
for all $i\geq 0$. The Frobenius $\phi$ on $R\Gamma_\Prism(X/A)$ factors as
\[
\phi_A^\ast R\Gamma_\Prism(X/A)=R\Gamma_\Prism(X/A)^{(1)}\xrightarrow{\tilde{\phi}} L\eta_I R\Gamma_\Prism(X/A)\to R\Gamma_\Prism(X/A)\ ,
\]
using the d\'ecalage functor $L\eta_I$ as e.g.~in \cite{BMS1}. The map
\[
\tilde{\phi}: \phi_A^\ast R\Gamma_\Prism(X/A)\to L\eta_I R\Gamma_\Prism(X/A)
\]
is an isomorphism.
\end{theorem}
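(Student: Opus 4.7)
The plan is to combine Theorem \ref{RelativeNygaard} with quasisyntomic descent, following the strategy pioneered for $A\Omega$-cohomology in \cite[\S 6 and \S 8]{BMS1}. First, I would establish the comparison $R\Gamma_\Prism(X/A) \cong R\Gamma(X_\qsyn, \Prism_{-/A})$. By \cref{PrismaticRefineQSyn} large quasisyntomic $A/I$-algebras form a basis for $X_\qsyn$, and on such $S$ Theorem \ref{RelativeNygaard}~(1) identifies the sheaf $\Prism_{-/A}$ with the discrete ring $\Prism_{S/A}$. Fixing a large quasisyntomic cover $R \to S^0$ with \v{C}ech nerve $S^\bullet$, both sides are computed by totalizing $\Prism_{S^\bullet/A}$: for the left-hand side this is quasisyntomic descent for prismatic cohomology itself, which follows by reducing modulo $I$ via derived Nakayama and applying the Hodge-Tate comparison (whose conjugate graded pieces are derived wedge powers of the cotangent complex and thus satisfy descent). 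Theorem \ref{RelativeNygaard}~(3) ensures base-change compatibility, so the Nygaard filtration on $R\Gamma_\Prism(X/A)^{(1)}$ is well-defined via the same descent.

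Next I would identify the graded pieces of the Nygaard filtration. By the same descent, $\mathrm{gr}^i_N R\Gamma_\Prism(X/A)^{(1)}$ is the derived global section over $X_\qsyn$ of $\mathrm{gr}^i_N \Prism^{(1)}_{-/A}$, which by Theorem \ref{RelativeNygaard}~(2) equals $\mathrm{Fil}_i \overline{\Prism}_{-/A}\{i\}$ (in terms of the conjugate filtration). The $j$-th conjugate graded piece $(\wedge^j L_{-/(A/I)}[-j])^\wedge\{-j\}$ descends from $X_\qsyn$ down to smooth $R$ as $\Omega^j_{R/(A/I)}\{-j\}[-j]$. Assembling these, $R\Gamma(X_\qsyn, \mathrm{Fil}_i \overline{\Prism}_{-/A})$ is a complex concentrated in degrees $\le i$ whose $j$-th cohomology is $\Omega^j_{R/(A/I)}\{-j\}$, and comparison with $\overline{\Prism}_{R/A}$ via the Hodge-Tate comparison identifies it with $\tau^{\le i} \overline{\Prism}_{R/A}$; twisting by $\{i\}$ yields the asserted formula.

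For the Frobenius factorization, the inclusion $\phi(\mathrm{Fil}^i_N \Prism^{(1)}) \subset I^i \Prism$ together with the concrete description of $L\eta_I$ as a subcomplex of $\Prism[1/I]$ (when $\Prism$ is represented by an $I$-torsion free complex, e.g.\ a functorial \v{C}ech-Alexander complex) canonically produces the factorization $\tilde\phi: \Prism^{(1)} \to L\eta_I \Prism$. To show $\tilde\phi$ is an isomorphism, I would reduce modulo $I$ by derived Nakayama. The Beilinson-Deligne computation \cite[Proposition 6.12]{BMS1} identifies $(L\eta_I \Prism_{R/A})/I$ with $\bigoplus_i H^i(\overline{\Prism}_{R/A})\{i\}[-i]$, which by the Hodge-Tate comparison becomes $\bigoplus_i \Omega^i_{R/(A/I)}[-i]$. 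On the other hand, the Nygaard filtration modulo $I$, combined with the graded pieces formula from the previous step, provides a filtration on $(\Prism^{(1)}_{R/A})/I$ whose $i$-th associated graded is $H^i(\overline{\Prism}_{R/A})\{i\}[-i]$ in degree $i$, matching the right-hand side. A diagram chase then verifies that $\tilde\phi/I$ induces the natural identification on graded pieces and is therefore an equivalence. The main obstacle will be checking that $\tilde\phi$ is well-defined in the derived category (i.e.\ independent of the choice of representative for $\Prism$) and that the identifications of associated graded pieces on the two sides match up correctly; this amounts to a careful interplay between the Nygaard filtration, the $L\eta_I$ functor, and the Hodge-Tate comparison.
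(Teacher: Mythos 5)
Your proposal follows the paper's overall strategy (quasisyntomic descent from Theorem~\ref{RelativeNygaard}, reduction modulo $I$ via derived Nakayama), but it diverges — and contains a genuine gap — in the construction of $\tilde\phi$ and in the proof that it is an isomorphism.

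For the construction of $\tilde\phi$, you work at the chain level with a choice of $I$-torsionfree representative of $\Prism_{-/A}$, and you yourself flag that well-definedness in the derived category (independence of the representative, functoriality) is "the main obstacle." This obstacle is real: the paper sidesteps it entirely by observing that the filtered map $\mathrm{Fil}^\star_N R\Gamma_\Prism(X/A)^{(1)} \to I^\star R\Gamma_\Prism(X/A)$ has source connective in the Beilinson $t$-structure (because $\mathrm{gr}^i_N \cong \tau^{\le i}\overline{\Prism}_{R/A}\{i\}$ sits in degrees $\le i$), and then invokes \cite[Proposition 5.8]{BMS2}, which characterizes $L\eta_I$ via a universal property in the Beilinson $t$-structure and yields the canonical lift $\tilde\phi$ without any choice of representative. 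This is the key idea your proposal is missing — the chain-level approach does not obviously produce a canonical map.

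For the proof that $\tilde\phi$ is an isomorphism, your "diagram chase" is too vague to carry the argument; in particular, there is a subtle imprecision in asserting that $(L\eta_I \Prism_{R/A})/I$ "becomes $\bigoplus_i \Omega^i_{R/(A/I)}[-i]$." The correct statement from \cite[Proposition 6.12]{BMS1} is that $(L\eta_I C)/I$ is the Bockstein complex $(H^*(\overline{\Prism}_{R/A})\{\ast\},\beta_I)$, which by the Hodge--Tate comparison is the de~Rham \emph{complex} $\Omega^*_{R/(A/I)}$ with its differential — not a direct sum. Matching the two sides of $\tilde\phi/I$ then requires identifying two genuinely different-looking complexes, and the paper does this by a chain of reductions: étale localization and base change to reduce to a polynomial algebra, then to the universal oriented prism, then via Construction~\ref{UnivOrientedCrystallize} to a crystalline prism, and finally to $A=\mathbf{Z}_p$, where everything reduces to the crystalline comparison (Theorem~\ref{CrysComp}) and the classical Cartier isomorphism. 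You would need an argument of comparable precision to close the gap.
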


\begin{proof} The isomorphism
\[
R\Gamma_\Prism(X/A)\cong R\Gamma(X_\qsyn,\Prism_{-/A})
\]
follows from the Hodge-Tate comparison and flat descent for the cotangent complex and its wedge powers \cite[Theorem 3.1]{BMS2}. The isomorphism
\[
\mathrm{gr}^i_N R\Gamma_\Prism(X/A)^{(1)} \cong \tau^{\leq i} \overline{\Prism}_{R/A}\{i\}
\]
follows via descent from Theorem~\ref{RelativeNygaard}~(2). The Frobenius refines to a map of filtered complexes
\[
\mathrm{Fil}^\star_N R\Gamma_\Prism(X/A)^{(1)}\to I^\star R\Gamma_\Prism(X/A)\ .
\]
The filtered complex on the left is connective in the Beilinson $t$-structure by the identification of its graded pieces. Thus \cite[Proposition 5.8]{BMS2} implies that the Frobenius lifts to a map
\[
\tilde{\phi}: R\Gamma_\Prism(X/A)^{(1)}\to L\eta_I R\Gamma_\Prism(X/A)\ .
\]
To see that this is an isomorphism, it suffices by derived Nakayama to check modulo $I$. Then the right-hand side is given by $\Omega^\ast_{R/A}$ by the Hodge-Tate comparison and \cite[Proposition 6.12]{BMS1}. In particular, both sides commute with base change and satisfy \'etale localization. We can then reduce to the case of a polynomial algebra and then via base change in $A$ first to an oriented $A$, then to the universal oriented $A$, and then by Construction~\ref{UnivOrientedCrystallize} to crystalline $A$, and finally to $A=\mathbf Z_p$. Now it follows from Theorem~\ref{CrysComp} and the Cartier isomorphism.
\end{proof}

In particular, we get the de~Rham comparison in general.

\begin{corollary}\label{generaldeRham} For any bounded prism $(A,I)$ and any smooth formal $A/I$-scheme $X$, there is a canonical isomorphism of $E_\infty$-algebras in $D(X_\et,A/I)$,
\[
\Prism_{X/A}\widehat{\otimes}_{A,\phi}^L A/I\cong \Omega^\ast_{R/(A/I)}\ .
\]
\end{corollary}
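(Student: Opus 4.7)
The plan is to deduce Corollary~\ref{generaldeRham} from the isomorphism
\[
\tilde{\phi}\colon \phi_A^\ast R\Gamma_\Prism(X/A)\xrightarrow{\sim} L\eta_I R\Gamma_\Prism(X/A)
\]
provided by Theorem~\ref{thmCagain}, by reducing modulo $I$. On the left, derived base change along $A\to A/I$ gives exactly $\Prism_{X/A}\widehat{\otimes}^L_{A,\phi} A/I$. So the content is to identify the reduction of the right-hand side with the de Rham complex $\Omega^\ast_{X/(A/I)}$ as an $E_\infty$-algebra in $D(X_\et,A/I)$.

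To identify $L\eta_I R\Gamma_\Prism(X/A)\otimes^L_A A/I$ with the de Rham complex, I would work Zariski-locally on $X$ so that $I=(d)$ becomes principal (using that, after passing to a prism cover if needed, $I$ is invertible, and that both sides of the asserted isomorphism glue). The key input is the general property of the d\'ecalage functor recalled in \cite[Proposition 6.12]{BMS1}: for a derived $d$-complete complex $K$, the complex $L\eta_d K\otimes^L_A A/d$ is represented by the ``Bockstein complex'' whose $i$-th term is $H^i(K\otimes^L_A A/d)\{i\}$ and whose differential is the Bockstein $\beta_d$ associated to the triangle $K/d\{1\}\to K/d^2\to K/d$. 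Applied to $K=R\Gamma_\Prism(X/A)$, the Hodge-Tate comparison (Theorem~\ref{HTCompPrismatic}) identifies $H^i(K/d)\{i\}$ with $\Omega^i_{X/(A/I)}$.

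The key step, and the main place where one must be careful, is the identification of the Bockstein differential with the de Rham differential. But this is not extra work: by Construction~\ref{CartierPrismatic}, the Hodge-Tate comparison map $\Omega^\ast_{X/(A/I)}\to H^\ast(\overline{\Prism}_{X/A})\{\ast\}$ was built precisely so as to intertwine $d_{\mathrm{dR}}$ with $\beta_I$, and by Theorem~\ref{HTCompPrismatic} it is an isomorphism of differential graded $A/I$-algebras. Thus the Bockstein dga of $R\Gamma_\Prism(X/A)$ is canonically $\Omega^\ast_{X/(A/I)}$.

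Finally, for the multiplicative enhancement, everything in sight is naturally symmetric monoidal: $\tilde{\phi}$ is a map of commutative algebras in $D(A)$, $L\eta_I$ is a lax symmetric monoidal functor on derived $I$-complete complexes (cf.~the discussion in \cite{BMS1}), and the identification of the Bockstein complex with the de Rham complex is multiplicative because the Hodge-Tate comparison is an isomorphism of commutative dgas. Combining these, the mod $I$ reduction of $\tilde{\phi}$ yields the desired $E_\infty$-$A/I$-algebra isomorphism $\Prism_{X/A}\widehat{\otimes}^L_{A,\phi}A/I\cong\Omega^\ast_{X/(A/I)}$ globally on $X_\et$.
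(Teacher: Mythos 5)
Your proposal is correct and follows exactly the route the paper takes: the paper's proof is literally to reduce $\tilde{\phi}$ modulo $I$ and invoke \cite[Proposition 6.12]{BMS1} together with the Hodge--Tate comparison, which is precisely what you spell out. Your elaboration of the two cited ingredients (the Bockstein description of $L\eta_I K\otimes^L_A A/I$ and the fact that the Hodge--Tate comparison map of Construction~\ref{CartierPrismatic} is by construction a cdga isomorphism intertwining $\beta_I$ with $d_{\mathrm{dR}}$), as well as the remark about lax symmetric monoidality of $L\eta_I$ to get the $E_\infty$-structure, are all accurate and are exactly the content the paper leaves implicit.
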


\begin{proof} Take the reduction of $\tilde{\phi}$ modulo $I$ and use \cite[Proposition 6.12]{BMS1} and the Hodge-Tate comparison.
\end{proof}

Another application is the following result on the image of $\phi$ on prismatic cohomology.

\begin{corollary}\label{ImageofPhi} Let $(A,I)$ be a bounded prism and $X$ be a smooth formal $A/I$-scheme. For any $i\geq 0$, there is a natural map
\[
V_i: \tau^{\leq i}\Prism_{X/A}\otimes_A I^{\otimes i}\to \tau^{\leq i} \Prism_{X/A}^{(1)}
\]
such that $\phi V_i$ is the natural map $\tau^{\leq i}\Prism_{X/A}\otimes_A I^{\otimes i}\to \tau^{\leq i}\Prism_{X/A}$ and also the composite
\[
\tau^{\leq i} \Prism_{X/A}^{(1)}\otimes_A I^{\otimes i}\xrightarrow{\phi\otimes 1} \tau^{\leq i} \Prism_{X/A}\otimes_A I^{\otimes i}\xrightarrow{V_i} \tau^{\leq i} \Prism_{X/A}^{(1)}
\]
is the natural map. In particular, $V_i$ induces a map
\[
V_i: H^i(X_\et,\Prism_{X/A}^{(1)})\otimes_A I^{\otimes i}\to H^i(X_\et,\Prism_{X/A}) = H^i_\Prism(X/A)
\]
that is an inverse of $\phi$ up to $I^{\otimes i}$.
\end{corollary}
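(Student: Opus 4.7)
The starting point is Theorem~\ref{thmCagain}, which factors the Frobenius as
\[
\Prism_{X/A}^{(1)}\xrightarrow{\tilde\phi} L\eta_I\Prism_{X/A}\xrightarrow{\mathrm{can}}\Prism_{X/A}
\]
with $\tilde\phi$ an isomorphism. The strategy is therefore to construct, for any complex $M \in D(X_\et,A)$ and any locally principal Cartier divisor $I \subset A$, a natural map
\[
W_i: (\tau^{\leq i}M)\otimes_A I^{\otimes i}\to \tau^{\leq i}L\eta_I M
\]
with the following two properties: (a) the composite $\mathrm{can}\circ W_i$ equals the canonical multiplication $(\tau^{\leq i}M)\otimes I^{\otimes i}\to \tau^{\leq i}M$, and (b) the composite $W_i\circ(\mathrm{can}\otimes\mathrm{id})$ equals the canonical multiplication on $\tau^{\leq i}L\eta_I M$. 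Setting $V_i := \tilde\phi^{-1}\circ W_i$ and taking $M = \Prism_{X/A}$ then yields a map with both compatibilities claimed in the corollary.

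To construct $W_i$, work Zariski-locally on $\mathrm{Spec}(A)$ so that $I=(d)$ is principal, and represent $M$ by a complex $K$ of $d$-torsion-free modules; recall the explicit description $(L\eta_d K)^n = d^n K^n \cap \partial^{-1}(d^{n+1}K^{n+1})\subset K^n$. Define $W_i$ at the chain level in degree $n\leq i$ by $x\mapsto d^i\cdot x$. For $n<i$, we have $d^i x = d^n\cdot(d^{i-n}x)\in d^n K^n$ and $\partial(d^i x) = d^{n+1}\cdot(d^{i-n-1}\partial x)\in d^{n+1}K^{n+1}$, so $W_i(x)\in (L\eta_d K)^n$. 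For $n=i$, the equality $(\tau^{\leq i}K)^i = \ker\partial$ makes both conditions automatic. A direct computation shows $W_i$ commutes with $\partial$, is functorial in maps of $d$-torsion-free complexes, and is independent of the choice of generator $d$ up to the canonical action of units on $I^{\otimes i}$. By the standard derived functoriality of $L\eta_I$ from \cite{BMS1}, it descends to a well-defined natural transformation $W_i$ in $D(X_\et,A)$.

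Property (a) is immediate, since $\mathrm{can}\circ W_i(x) = d^i x$ viewed in $K^n$ is exactly the natural multiplication. For property (b), any element of $(L\eta_d K)^n$ has the form $y = d^n a$ for some $a\in K^n$ with $\partial a\in dK^{n+1}$; then
\[
W_i(\mathrm{can}(y)) = d^i\cdot(d^n a) = d^{i+n}a = d^i\cdot y,
\]
the final equality being the canonical multiplication of $d^i$ on the element $y\in L\eta_d K$. Transporting via $\tilde\phi$ gives the two identities in the corollary. The induced map on $H^i(X_\et,-)$ follows from the observation that for any complex $N$ whose cohomology sheaves are concentrated in non-negative degrees, the natural map $H^i(X_\et,\tau^{\leq i}N)\to H^i(X_\et,N)$ is an isomorphism: the cofiber $\tau^{>i}N$ has cohomology sheaves supported in degrees $>i$, so a spectral sequence argument shows its hypercohomology vanishes in degrees $\leq i$.

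The main subtlety is ensuring that the chain-level construction of $W_i$ descends functorially to the derived category: the formula $x\mapsto d^i x$ is defined on a specific $d$-torsion-free model, and one must verify independence of such choices. This is handled by the standard machinery of $L\eta_I$ from \cite[\S 6]{BMS1}, which guarantees that $L\eta_I$ preserves quasi-isomorphisms between $d$-torsion-free complexes; combined with the manifest naturality of $W_i$ on such complexes, this forces $W_i$ to descend unambiguously. Globalizing from the locally principal case to a general Cartier divisor poses no additional difficulty, as the formula $x\mapsto d^i x$ is equivariant for rescaling the generator $d$ by units and therefore patches to a globally defined map twisted by $I^{\otimes i}$.
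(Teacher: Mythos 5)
Your proof is correct and takes the same approach as the paper: factor $\phi$ through $\tilde\phi:\Prism_{X/A}^{(1)}\simeq L\eta_I\Prism_{X/A}$ (Theorem~\ref{thmCagain}), then exploit the natural truncated ``Verschiebung'' map relating $\tau^{\leq i}M\otimes_A I^{\otimes i}$ and $\tau^{\leq i}L\eta_I M$. The only difference is that where the paper simply cites \cite[Lemma 6.9]{BMS1} as a black box for this map, you reconstruct its chain-level formula $x\mapsto d^i x$ on a $d$-torsionfree representative and verify by hand that it lands in $\eta_d K$, commutes with the differential, is unit-equivariant in $d$, and satisfies the two composition identities.
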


\begin{proof} This follows from the isomorphism
\[
\tilde{\phi}: \Prism_{X/A}^{(1)}\simeq L\eta_I \Prism_{X/A}
\]
and \cite[Lemma 6.9]{BMS1}.
\end{proof}

\begin{remark}
\label{AbsCCNygaard}
By left Kan extension, \cref{AbsCCPrismatic} combined with Theorem~\ref{thm:C} (2) (which is a part of \cref{thmCagain}) imply the following: for any formal $A/I$-scheme $X$, there is a canonical identification
\[ L_{X/A} \simeq \mathrm{gr}^1_N \Prism_{X/A}^{(1)}.\]
When $I=(p)$, this identification has been proven (independently) recently by Illusie (to appear).
\end{remark}

\subsection{Comparison with the Breuil-Kisin type theory from \cite{BMS2}}
\label{ss:BK}

One of the goals of  \cite{BMS2} was to give a construction of Breuil-Kisin-type cohomology theories. Let us verify that the theory defined in \cite{BMS2} agrees with the present construction. Thus, let $K$ be a complete discretely valued extension of $\mathbf Q_p$ with perfect residue field with ring of integers $\mathcal O_K$ and residue field $k$, and fix a uniformizer $\pi\in \mathcal O_K$. Let $\mathfrak S=W(k)[[u]]$ which surjects onto $\mathcal O_K$ via $u\mapsto \pi$. Let $I\subset \mathfrak S$ be the kernel of this map; then $(\mathfrak S,I)$ is a prism. We fix a generator $d\in I$.

In \cite{BMS2}, we used relative $\mathrm{THH}$ for the base $\mathbf S[u]$. The key comparison is now the following.

\begin{proposition} 
\label{BKComp}
For any quasiregular semiperfectoid quasisyntomic $\mathcal O_K$-algebra $S$, the cyclotomic Frobenius on $\pi_0 \mathrm{TP}(S/\mathbf S[u];\mathbf Z_p)$ refines to a $\delta$-ring structure, functorial in $S$, and identifies with the Nygaard completion $\widehat{\Prism_{S/\mathfrak S}^{(1)}}$ of $\Prism_{S/\mathfrak S}^{(1)}$, compatibly with Nygaard filtrations.
\end{proposition}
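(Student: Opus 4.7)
The plan is to follow the argument of \cref{BMS2CompNC} essentially verbatim, replacing the absolute perfectoid base $(A_{\inf}(R), \ker\theta)$ by the Breuil–Kisin prism $(\mathfrak S, I)$. The proof will proceed in three steps: (i) verify that the $\delta$-ring structure on $\Prism_{S/\mathfrak S}^{(1)}$ extends uniquely and continuously to its Nygaard completion; (ii) use quasisyntomic descent and left Kan extension from smooth $\mathcal O_K$-algebras to define a non-completed variant $\widehat{\Prism}_{S/\mathfrak S}^{\mathrm{nc}}$ of $\pi_0 \mathrm{TP}(-/\mathbf S[u];\mathbf Z_p)$; and (iii) construct a $\phi$-equivariant isomorphism $\Prism_{S/\mathfrak S}^{(1)} \cong \widehat{\Prism}_{S/\mathfrak S}^{\mathrm{nc}}$, then complete both sides for the Nygaard filtration.

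For step (i), the absolute argument transports without essential change: the inclusion
\[ \delta(\mathrm{Fil}^i_N \Prism_{S/\mathfrak S}^{(1)}) \subset \mathrm{Fil}^{pi}_N \Prism_{S/\mathfrak S}^{(1)} + (p,d)^{i-1}\Prism_{S/\mathfrak S}^{(1)} \]
reduces, via K\"unneth and filtered colimit arguments as in \cref{BMS2CompNC}, to the one-variable case $S = \mathcal O_K\langle X^{1/p^\infty}\rangle/(X)$ (with $\pi$ made to admit a compatible system of $p$-power roots after a quasisyntomic cover obtained via \cref{AndreFlatness}), where the explicit $q$-divided-power description of \cref{NygaardKeyCase} applies with $d$ in place of $[p]_q$. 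The $(p,d)$-adic completeness of each $\mathrm{gr}^i_N \cong \mathrm{Fil}_i \overline{\Prism}_{S/\mathfrak S}\{i\}$ (for $\phi$-twist reasons) follows from \cref{RelativeNygaard} together with the $p$-complete flatness of the Hodge–Tate graded pieces over $S$. For step (ii), the relative Segal-conjecture computations of \cite[Corollary 9.12]{BMS2} remain valid over $\mathbf S[u]$ and identify $\mathrm{gr}^\bullet_N \widehat{\Prism}_{-/\mathfrak S}^{\mathrm{nc}}$ with the wedge powers of the shifted $p$-complete cotangent complex, which forces $\widehat{\Prism}_{S/\mathfrak S}^{\mathrm{nc}}$ (for quasiregular semiperfectoid $S$) to be concentrated in degree $0$, $(p,d)$-completely flat over $\mathfrak S$, and to satisfy the same structural properties as $\Prism_{S/\mathfrak S}^{(1)}$ established in \cref{RelativeNygaard}.

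For step (iii), we reduce via descent and K\"unneth to $S = \mathcal O_K\langle X^{1/p^\infty}\rangle/(X)$ (again after a cover giving $p$-power roots of $\pi$). There $\widehat{\Prism}_{S/\mathfrak S}^{\mathrm{nc}}$ is $p$-torsionfree, so \cref{GetqPD} (applied with the element $d$ in place of $[p]_q$) produces a $\phi$-equivariant ring map $\Prism_{S/\mathfrak S} \to \widehat{\Prism}_{S/\mathfrak S}^{\mathrm{nc}}$, which is automatically a $\delta$-map by $p$-torsionfreeness of the target. Twisting by the Frobenius on $\mathfrak S$ gives the comparison map on the Frobenius-twisted side. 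The \textbf{main obstacle} is verifying that this map is an isomorphism. The argument mirrors the end of \cref{BMS2CompNC}: by derived Nakayama it suffices to check the statement modulo $(p,d)$. The composition $\mathfrak S \to A_{\mathrm{crys}}(\mathcal O_K/p) \to \mathfrak S/(p,d) = k$ together with the base-change compatibility of both $\Prism_{S/\mathfrak S}$ (by construction) and $\widehat{\Prism}_{S/\mathfrak S}^{\mathrm{nc}}$ (via \cite[Theorem 8.17]{BMS2}) reduces the claim to the crystalline comparison, which is furnished on the THH side by \cite[Theorem 8.17]{BMS2} and on the prismatic side by \cref{CrysComp}. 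Transporting the $\delta$-structure along the verified isomorphism yields the functorial $\delta$-ring refinement of the cyclotomic Frobenius in general, and matching Nygaard filtrations follows since their associated gradeds already agree with $\mathrm{Fil}_\bullet \overline{\Prism}_{S/\mathfrak S}\{\bullet\}$ on both sides; Nygaard completion then delivers the final identification with $\pi_0 \mathrm{TP}(S/\mathbf S[u];\mathbf Z_p)$.
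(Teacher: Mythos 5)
Your proposal diverges from the paper's actual argument in a substantial way, and the divergence introduces genuine gaps. The paper does \emph{not} redo the argument of \cref{BMS2CompNC} with $(\mathfrak S,I)$ replacing the perfectoid base; it reduces the relative statement directly to the already-proven absolute one by making the $(p,u)$-completely faithfully flat base change $\mathfrak S \to W(k)[[u^{1/p^\infty}]]$, under which $\mathrm{TP}(S/\mathbf S[u];\mathbf Z_p)$ becomes the \emph{absolute} $\mathrm{TP}(S\langle\pi^{1/p^\infty}\rangle;\mathbf Z_p)$ by \cite[Corollary~11.8]{BMS2}; the comparison map itself is produced by the universal property of $\Prism_{S/\mathfrak S}$ as the initial object of $(S/\mathfrak S)_\Prism$, and both the Frobenius-lift check and the isomorphism check are done after this same base change. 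Your step (iii) invoking \cref{GetqPD} ``with the element $d$ in place of $[p]_q$'' does not work: that lemma is a statement about the $q$-divided powers $[n]_q!$ in a $\mathbf Z_p\llbracket q-1\rrbracket$-algebra with $\phi(q)=q^p$, and there is no analogous divided-power structure attached to a general distinguished element of $\mathfrak S$; there is no $q$ in the Breuil--Kisin prism, so the lemma simply has no meaning there. The correct (and more robust) tool to construct the map is the initiality of $\Prism_{S/\mathfrak S}$.

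Two further issues: $\mathfrak S/(p,d) = W(k)[[u]]/(p,E(u)) \cong k[u]/(u^e)$, not $k$, whenever $e = [\mathcal O_K : W(k)] > 1$, so your proposed mod-$(p,d)$ reduction as stated is wrong for ramified $K$. And the appeal to \cref{AndreFlatness} is misplaced: Andr\'e's lemma concerns $p$-completely faithfully flat extensions of \emph{perfectoid} rings, while $\mathcal O_K$ is not perfectoid. The extraction of $p$-power roots of $\pi$ is exactly the prism base change $\mathfrak S \to W(k)[[u^{1/p^\infty}]]$, and it is \cite[Corollary~11.8]{BMS2} (which you do not cite) that makes the relative-to-absolute THH reduction possible. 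Your plan to assert that the relative Segal conjecture computations ``remain valid over $\mathbf S[u]$'' is also not something that can be taken verbatim from \cite[Corollary~9.12]{BMS2} (which is an absolute statement); the paper circumvents this entirely by the base-change reduction.
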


\begin{proof} The ring $\pi_0 \mathrm{TP}(S/\mathbf S[u];\mathbf Z_p)$ is $p$-torsion free, so we need to check that the cyclotomic Frobenius is a Frobenius lift. This can be checked after the $(p,u)$-completed base change along $\mathfrak S\to W(k)[[u^{1/p^\infty}]]$, which gives $\pi_0 \mathrm{TP}(S\langle \pi^{1/p^\infty}\rangle;\mathbf Z_p)$ by \cite[Corollary 11.8]{BMS2}. Thus, we can apply Theorem~\ref{BMS2CompNC}. Now the universal property of $\Prism_{S/\mathfrak S}$ and the formal properties of $\pi_0\mathrm{TP}(S/\mathbf S[u];\mathbf Z_p)$ (namely, the relative to $\mathbf{S}[u]$ analog of the diagrams \eqref{BMS2THHFrob} and \eqref{BMS2PrismFrob}) give a map
\[
\Prism_{S/\mathfrak S}^{(1)}\to \pi_0 \mathrm{TP}(S/\mathbf S[u];\mathbf Z_p)\ .
\]
Checking its compatibility with the Nygaard filtration can again be done after base change to $W(k)[[u^{1/p^\infty}]]$ where it follows from Theorem~\ref{BMS2CompNC}. In particular, the map extends to the Nygaard completion, and then is an isomorphism, again via reduction to Theorem~\ref{BMS2CompNC}.
\end{proof}

Now note that for $i$ at least the dimension of $X$, the map
\[
\phi_{S/A}: \mathrm{Fil}^i_N \widehat{\Prism_{-/A}^{(1)}}\to I^i \Prism_{-/A}
\]
induces an isomorphism
\[
R\Gamma(X_\qsyn,\mathrm{Fil}^i_N \widehat{\Prism_{-/A}^{(1)}})\to R\Gamma(X_\qsyn,\Prism_{-/A})\otimes_A I^{\otimes i}
\]
as both are complete for compatible filtrations ($\mathrm{Fil}^j_N$ respectively $I^j \Prism_{-/A}$) and one has isomorphisms on graded pieces by Theorem~\ref{thmCagain}. The left-hand side can be expressed in terms of $\pi_0 \mathrm{TP}(-/\mathbf S[u];\mathbf Z_p)$ and its Nygaard filtration by the previous proposition. This is how $R\Gamma_{\mathfrak S}(X)$ was defined in \cite{BMS2} (cf.~\cite[Proposition 11.5]{BMS2}, noting that the Nygaard filtration is what one finds on higher homotopy groups in $\mathrm{TC}^-$), so we get a canonical isomorphism
\[
R\Gamma_{\mathfrak S}(X)\cong R\Gamma_\Prism(X/\mathfrak S)\ ,
\]
as desired.
\newpage

\section{$q$-crystalline and $q$-de Rham cohomology}
\label{sec:qcrys}

In this section, we construct a canonical $q$-deformation of de Rham cohomology: given a formally smooth $\mathbf{Z}_p$-scheme $X$, we construct a ringed site  --- the $q$-crystalline site of $X$ --- whose cohomology yields a deformation of the de Rham cohomology of $X/\mathbf{Z}_p$ across the map $\mathbf{Z}_p\llbracket q-1\rrbracket \xrightarrow{q\mapsto 1} \mathbf{Z}_p$, and can be computed in local co-ordinates via a $q$-de Rham complex; this verifies some conjectures from \cite{ScholzeqdeRham}. The main innovation here is the introduction of a $q$-analog of the notion of divided power thickenings (defined in a co-ordinate free fashion) in the category of $\delta$-rings over $\mathbf{Z}_p\llbracket q-1\rrbracket$; this notion is introduced in \S \ref{ss:qPD}, and the basic example is the pair $(\mathbf{Z}_p\llbracket q-1\rrbracket, (q-1))$. With this ingredient, the $q$-crystalline site is defined in an evident fashion in \S \ref{ss:qcryssite} and the comparison with $q$-de Rham complexes is the subject of \S \ref{ss:qdR}; our definitions are set up to work over any $q$-divided power thickening as a base. Along the way, we also check that $q$-crystalline cohomology is closely related to prismatic cohomology (\cref{QCrysPrism}), so the comparison with $q$-de Rham complexes gives an explicit complex computing prismatic cohomology in many cases.

\begin{notation}
Set $A = \mathbf{Z}_p\llbracket q-1\rrbracket$ with $\delta$-structure given by $\delta(q) = 0$, and let $[p]_q = \frac{q^p-1}{q-1} \in A$ be the $q$-analog of $p$. Note that $\phi(q-1) = q^p-1 \in [p]_q A$. We shall often use without comment the congruence $[p]_q = p \mod (q-1)$ and that $(q-1)^{p-1}$ and $p$ differ by a multiplicative unit in $A/[p]_q \cong \mathbf{Z}_p[\zeta_p]$, where $\zeta_p$ is a primitive $p$-th root of $1$. In particular, derived $(p,[p]_q)$-completion coincides with derived $(p,[p]_q)$-completion for any complex of $A$-modules. Finally, if $x$ is an element of a $[p]_q$-torsionfree $\delta$-$A$-algebra $D$ such that $\phi(x) \in [p]_q D$, then we write
\[
\gamma(x) := \frac{\phi(x)}{[p]_q} - \delta(x) \in D\ ;
\]
if $q=1$ in $D$, then we have $\gamma(x) = \frac{x^p}{p}$ is (up to the unit $(p-1)!$) the usual divided $p$-th power, and in general we think of $\gamma(x)$ as the ``divided $[p]_q$-th power". (A true $q$-analog of the $p$-th divided power would be obtained by further dividing the expression for $\gamma(x)$ above by the unit $\prod_{j=1}^{p-1} [j]_q$; we avoid doing this to keep formulas simple.)
\end{notation}

\subsection{$q$-divided power thickenings}
\label{ss:qPD}

The key innovation of our approach to $q$-crystalline cohomology is the following definition.

\begin{definition}[$q$-divided power algebras]
\label{DefqPD}
A {\em $q$-PD pair} is given by a derived $(p,[p]_q)$-complete $\delta$-pair $(D,I)$ over $(A,(q-1))$ satisfying the following conditions:
\begin{enumerate}
\item The ideal $I \subset D$ satisfies $\phi(I) \subset [p]_qD$ (so that $\gamma$ is defined on $I$) and $\gamma(I) \subset I$.
\item The pair $(D,([p]_q))$ is a bounded prism over $(A,([p]_q))$, i.e., $D$ is $[p]_q$-torsionfree, and $D/([p]_q)$ has bounded $p^\infty$-torsion.
\item The ring $D/(q-1)$ is $p$-torsionfree with finite $(p,[p]_q)$-complete Tor-amplitude\footnote{We could also say finite $p$-complete Tor amplitude since $[p]_q = p \mod (q-1)$.} over $D$. 
\end{enumerate}
The corresponding map $D \to D/I$ is sometimes called a {\em $q$-PD thickening} and the ideal $I$ is sometimes called a {\em $q$-PD ideal}. There is an obvious category of $q$-PD pairs. If $(D,I)$ is a $q$-PD pair with $q-1=0$ in $D$, then we call $(D,I)$ a {\em $\delta$-PD pair}. The collection of $\delta$-PD pairs forms a full subcategory of the category of all $q$-PD pairs.
\end{definition}

Unlike the classical crystalline theory, being a $q$-PD pair is a property of a $\delta$-pair $(D,I)$ rather than extra structure. Moreover, condition (3) above is a technical condition imposed to facilitate some arguments below. Likewise, condition (2) might reasonably be weakened to merely asking that $(D,([p]_q))$ is a prism, i.e.~that $D$ is $[p]_q$-torsionfree. However, as with prismatic cohomology above, it is more convenient to work with bounded prisms. We note that one way to satisfy (2) and (3) is for $D$ to be $(p,[p]_q)$-completely flat over $A$; another is for $q=1$ in $D$ and $D$ being $p$-torsionfree. These will be the cases of interest below, and one can show that if $D$ is noetherian, these are the only possibilities, at least for $D$ local, using the Buchsbaum-Eisenbud criterion. 

\begin{remark}[Characterizing $\delta$-PD pairs]
\label{DeltaPDPair}
Say $D$ is a $\delta$-ring regarded as an $A$-algebra via $q=1$ and $I \subset A$ is an ideal. Then $(D,I)$ is a $\delta$-PD pair if and only if the following hold true:
\begin{enumerate}
\item[(a)] The ring $D$ is $p$-torsionfree, and both $D$ and $I$ are $p$-adically complete.  
\item[(b)] The ideal $I$ admits divided powers, i.e., for each $x \in I$, we have $\frac{x^n}{n!} \in I$ for all $n > 0$.
\end{enumerate}
Indeed, it is easy to see that any pair $(D,I)$ satisfying the above conditions is a $\delta$-PD pair. Conversely, if $(D,I)$ is a $\delta$-PD pair, then condition (a) above is automatic. For (b), we note that condition (1) in Definition~\ref{DefqPD} ensures that $\frac{x^p}{p} \in I$ for all $x \in I$. As $(p-1)!$ is a unit, this means $\frac{x^p}{p!} \in I$ for all $x \in I$. Via the formula $\gamma_{kp}(x)=u\gamma_k(\gamma_p(x))$ for some unit $u$, one sees inductively that all divided powers stay in $I$.
\end{remark}

\begin{remark}[Relating $q$-PD pairs to $\delta$-PD pairs]
The hypotheses are designed to ensure that if $(D,I)$ is a $q$-PD pair, then $(D/(q-1), ID/(q-1))$ is a $\delta$-PD pair; this construction gives a left adjoint to the inclusion of $\delta$-PD pairs into all $q$-PD pairs.
\end{remark}

\begin{lemma}[Homological properties of $q$-PD pairs]
\label{qPDCommAlg}
Let $(D,I)$ be a $q$-PD pair. 
\begin{enumerate}
\item The ring $D$ is derived $f$-complete for every $f \in I$. 
\item The functor $M \mapsto M \widehat{\otimes}_D^L D/(q-1)$ of $(p,[p]_q)$-completed base change along $D \to D/(q-1)$ is conservative on $(p,[p]_q)$-complete complexes.
\item The functor in (2) commutes with totalizations of cosimplicial $(p,[p]_q)$-complexes in $D^{\geq 0}$.
\item Let $I' \subset D$ be the derived $(p,[p]_q)$-complete ideal generated by $I$. Then $(D,I')$ is a $q$-PD pair.
\item Let $D \to D'$ be a $(p,[p]_q)$-completely flat map of $\delta$-$A$-algebras. Let $I'$ be the $(p,[p]_q)$-complete ideal of $D'$ generated by $I$. Then $(D',I')$ is a $q$-PD pair.
\end{enumerate}
\end{lemma}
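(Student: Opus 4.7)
For part (1), observe that any $f \in I$ satisfies $f^p = \phi(f) - p\delta(f) \in (p, [p]_q) D$, since $\phi(f) \in [p]_q D$ by assumption. Hence $f \in \sqrt{(p, [p]_q)D}$, and since derived completion with respect to a finitely generated ideal depends only on its radical, derived $(p, [p]_q)$-completeness of $D$ forces derived $f$-completeness. For part (2), the cyclotomic ramification identity gives $(q-1)^{p-1} \equiv p \cdot u \pmod{[p]_q}$ for a unit $u$, so $(q-1)^{p-1} \in (p,[p]_q) A$; every derived $(p, [p]_q)$-complete $D$-complex $M$ is therefore automatically derived $(q-1)$-complete, and the base change $M \widehat{\otimes}_D^L D/(q-1)$ reduces to the underived cofibre $\mathrm{cofib}(q-1: M \to M)$ (as it is already $(p, [p]_q)$-complete). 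Derived Nakayama for the element $q-1$ then yields conservativity.

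Part (3) is a direct application of Lemma~\ref{TorAmpBC} with $C = D$, $C' = D/(q-1)$, and ideal $J = (p, [p]_q)$; condition (3) of Definition~\ref{DefqPD} supplies exactly the finite $(p, [p]_q)$-complete Tor amplitude of $C'$ over $C$ that the lemma requires.

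For parts (4) and (5), the prism condition (2) and the Tor-amplitude condition (3) of Definition~\ref{DefqPD} depend only on the underlying ring, so they hold unchanged in (4) and pass through $(p, [p]_q)$-completely flat base change in (5) (using Lemma~\ref{BoundedPrismProp} (3) for the bounded prism $(D', ([p]_q))$, and the standard stability of Tor amplitude under flat base change for $D'/(q-1)$). The substantive task is verifying condition (1): $\phi(I') \subset [p]_q D$ (respectively $[p]_q D'$) and $\gamma(I') \subset I'$. Since $D$ (respectively $D'$) is classically $(p, [p]_q)$-complete by Lemma~\ref{BoundedPrismProp} (1), the ideal $I'$ is the classical $(p, [p]_q)$-adic closure of $I$ (respectively of the ideal generated by $I$ inside $D'$). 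The continuity of $\phi$ in the $(p, [p]_q)$-adic topology (Lemma~\ref{LambdaCompletions}, after noting $\phi(p) = p$ and $\phi([p]_q) = [p]_{q^p} \in (p, [p]_q)$), combined with closedness of the ideal $[p]_q D$ (since $D/[p]_q$ is classically $p$-complete), immediately gives $\phi(I') \subset [p]_q D$, and analogously over $D'$.

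The main obstacle is the remaining inclusion $\gamma(I') \subset I'$. The approach is to verify that $\gamma = \phi/[p]_q - \delta: I \to I$ is $(p, [p]_q)$-adically continuous, so that it extends continuously to the closure $I'$ with values in $I'$. Since $\delta$ is continuous, this reduces to the continuity of the map $x \mapsto \phi(x)/[p]_q$, which in turn reduces to showing that division by $[p]_q$ loses only a bounded amount of filtration depth. Letting $c$ bound the $p^\infty$-torsion of $D/[p]_q$ (such a $c$ exists by boundedness of the prism $(D, ([p]_q))$), for $b \in D$ with $[p]_q b \in (p, [p]_q)^N D$ and $N \geq c$, one writes $[p]_q b = \sum_{i+j = N} p^i [p]_q^j c_{ij}$, reduces modulo $[p]_q$ to force $p^N c_{N,0} \in [p]_q D$, and uses the bound $c$ to absorb this factor and deduce $b \in (p, [p]_q)^{N-c} D$ (using that $[p]_q$ is a nonzerodivisor). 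This uniform modulus of continuity produces the desired continuous extension $\gamma: I' \to I'$. For part (5), the same estimate applies since $(D', ([p]_q))$ is again a bounded prism under $(p, [p]_q)$-completely flat base change, so $D'/[p]_q$ inherits bounded $p^\infty$-torsion from $D/[p]_q$.
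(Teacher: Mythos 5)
Your treatment of parts (1) and (3) is correct and matches the paper's argument. The discussion of (2), (4), and (5) has issues worth flagging.

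For part (2), the central claim that $M \widehat{\otimes}_D^L D/(q-1)$ ``reduces to the underived cofibre $\mathrm{cofib}(q-1\colon M \to M)$'' is false in general: the objects $D/(q-1)$ and $\mathrm{Kos}(D;q-1) = \mathrm{cofib}(q-1\colon D\to D)$ differ whenever $q-1$ is a zerodivisor in $D$. Definition~\ref{DefqPD} does not force $q-1$ to be a nonzerodivisor, and in fact the $\delta$-PD case $q=1$ (where $q-1=0$ and $\mathrm{Kos}(D;q-1)\simeq D\oplus D[1]$) is explicitly allowed and is one of the two cases the paper singles out as being of interest. Your underlying strategy --- pass to derived $(q-1)$-completeness and invoke derived Nakayama --- is sound and in fact cleaner than the paper's two-element Koszul argument, but to make it rigorous you should instead argue: the perfect complex $\mathrm{Kos}(D;q-1)$ has both cohomology groups ($D/(q-1)$ and the torsion $D[q-1]$) given by $D/(q-1)$-modules, so $M\widehat{\otimes}_D^L D/(q-1)=0$ forces $M\otimes_D^L \mathrm{Kos}(D;q-1)=\mathrm{Kos}(M;q-1)=0$ (the latter is already $(p,[p]_q)$-complete since $M$ is), whereupon derived Nakayama applies. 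The paper's own proof works with the composite $D\to D/(q-1)\to D/(q-1,p)$ and the Koszul complex on $([p]_q,q-1)$, which is a variant of the same idea.

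For parts (4) and (5), you take a genuinely different route from the paper: a direct $(p,[p]_q)$-adic continuity estimate for the nonlinear operator $\gamma$, controlling the loss incurred by dividing by $[p]_q$ via the bound $c$ on the $p^\infty$-torsion of $D/[p]_q$; the paper instead reduces modulo $q-1$ using the fact that $I'$ is the preimage of its image in $D/(q-1)$, and then invokes classical $p$-adic continuity of divided powers. Your estimate for division by $[p]_q$ is correct, and the route is appealing because it avoids the reduction step. However, the argument rests on the assertion that $I'$, defined as the smallest \emph{derived} $(p,[p]_q)$-complete ideal containing $I$, coincides with the \emph{classical} $(p,[p]_q)$-adic closure of $I$; this is not obvious in the non-noetherian setting and needs justification (the paper instead characterizes $I'$ as the image of $I^\wedge\to D$, i.e.\ the kernel of $D\to H^0((D/I)^\wedge)$, and works with that). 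Relatedly, in (5) you invoke ``standard stability of Tor amplitude under flat base change'' to pass condition (3) of Definition~\ref{DefqPD} to $D'$, but this presupposes $D'/(q-1)$ is the $p$-completed derived base change of $D/(q-1)$; verifying this (and the $p$-torsionfreeness of $D'/(q-1)$) is precisely the content of the paper's argument for (5), which you should not gloss over.
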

\begin{proof}
For (1), we simply note that for any $f \in I$, we have $f^p \in (p,[p]_q)$ since $\phi(f) \in [p]_qD$ by hypothesis, so derived $f^p$-completeness (and hence derived $f$-completeness) follows from derived $(p,[p]_q)$-completeness of $D$.

For (2), it suffices to show that base change along the composite  $D \to D/(q-1) \to D/(q-1,p)$ is conservative on derived $(p,[p]_q)$-complete complexes. If $M \otimes_D^L D/(q-1,p)=0$, then $M \otimes_D^L \mathrm{Kos}(D;[p]_q, q-1)= 0$ as well, since $\mathrm{Kos}(D;[p]_q,q-1)$ has at most two nonzero cohomology groups, each of which is a $D/(q-1,p)$-module. But this forces $M=0$ by derived Nakayama, so the claim follows.

Part (3) follows immediately Definition~\ref{DefqPD} (3) and Lemma~\ref{TorAmpBC}.

For part (4), we must check that condition (1) from Definition~\ref{DefqPD} is satisfied for $I'$. The containment $\phi(I') \subset [p]_q D$ is clear since $\phi^{-1}([p]_q D)$ is derived $(p,[p]_q)$-complete. To check $\gamma(I') \subset I'$, recall that $I'$ is defined as the image of $I^{\wedge} \to D$, where the source is the derived $(p,[p]_q)$-completion of $I$. As $q-1 \in I$, the ideal $I'$ is the preimage of its image in $D/(q-1)$. Moreover, the image of $I'$ in $D/(q-1)$ coincides with the image of $I^{\wedge} \to \overline{I}^{\wedge} \to D/(q-1)$, where $\overline{I} = \mathrm{im}(I \to D/(q-1))$. The  map $I^{\wedge} \to \overline{I}^{\wedge}$ is surjective as the derived completion preserves surjections, so $I'$ is the preimage of the $p$-complete ideal in $D/(q-1)$ generated by $\overline{I}$. Thus, we can reduce to case $q=1$, in which case the claim follows from $p$-adic continuity of divided powers in a $p$-torsionfree ring.

For part (5), we check the conditions from Definition~\ref{DefqPD}. Conditions (1) and (2) in Definition~\ref{DefqPD} come from part (4) above and \cref{BoundedPrismProp} respectively. For (3) in Definition~\ref{DefqPD}, we observe that the $p$-completed derived base change $D/(q-1) \to (D' \otimes^L_D D/(q-1))^{\wedge}$ of $D \to D'$ along $D \to D/(q-1)$ is $p$-completely flat with source being $p$-torsionfree. Consequently, by \cref{BoundedPrismProp} again, the derived $p$-completion $(D' \otimes^L_D D/(q-1))^{\wedge}$ is concentrated in degree $0$ and $p$-torsionfree in that degree. But then this object must coincide with $D'/(q-1)$ by derived $p$-completeness of $D'$ and stability of derived $p$-completeness under cokernels, so $D'/(q-1)$ is indeed $p$-torsionfree. This reasoning also shows that $D' \to D'/(q-1)$ is the $p$-completed base change of $D \to D/(q-1)$ and hence must have finite $p$-complete Tor amplitude by the assumption on $D \to D/(q-1)$, thus verifying the conditions in Definition~\ref{DefqPD} (3).
\end{proof}

To check condition (1) from Definition~\ref{DefqPD} in examples, the following remark is useful. 

\begin{remark}[Additivity and multiplicative properties of $\gamma$]
\label{qPDelements}
Let $D$ be a $[p]_q$-torsionfree $\delta$-$A$-algebra. Given $x,y \in \phi^{-1}([p]_qD)$, one easily checks that
\[ \gamma(x+y) = \gamma(x) + \gamma(y) + \frac{(x+y)^p - x^p - y^p}{p}.\]
Similarly, given $x \in \phi^{-1}([p]_qD)$ and $f \in D$, one checks that 
\[ \gamma(fx) = \phi(f)\gamma(x) - x^p \delta(f). \]
It follows immediately from the shape of these formulas that if $I$ is any ideal of $D$, then the subset $J := \{x \in I \mid \phi(x) \in [p]_qD, \ \gamma(x) \in I\}$ is an ideal of $D$. In particular, to check $I = J$, it suffices to check that $\phi(x) \in [p]_q D$ and $\gamma(x)\in I$ as $x$ runs through a generating set for $I$. 
\end{remark}

The next lemma roughly states that if $\gamma(x)$ makes sense, so does $\gamma(\gamma(x))$.

\begin{lemma}[Existence of higher $q$-divided powers]
\label{qPDgenerate}
Let $D$ be a $[p]_q$-torsionfree $\delta$-$A$-algebra. The ideal $\phi^{-1}([p]_q D)$ is stable under $\gamma$. 
\end{lemma}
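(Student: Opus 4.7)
The plan is to prove the stability directly by computing $\phi(\gamma(x))$ for an element $x$ with $\phi(x)\in [p]_qD$ and verifying that the result lies in $[p]_qD$. The entire argument reduces to a single divisibility identity in $A$ itself, namely
\[ 1 - \delta([p]_q) \;\in\; [p]_q A, \]
which is the $q$-analog of the classical identity $\delta(p) = 1 - p^{p-1} \equiv 1 \pmod p$ underlying the interaction between $\delta$-structures and divided powers in \cref{PDalgLambda}.

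To establish this divisibility, I would start from $p\delta([p]_q) = \phi([p]_q) - [p]_q^p = [p]_{q^p} - [p]_q^p$ and reduce modulo $[p]_q$. The quotient $A/[p]_q$ identifies with $\mathbf{Z}_p[\zeta_p]$ by sending $q\mapsto \zeta_p$, and in this quotient $[p]_{q^p} = \sum_{i=0}^{p-1}\zeta_p^{ip} = p$ while $[p]_q^p = 0$. Thus $p\delta([p]_q) \equiv p \pmod{[p]_q}$, and since $\mathbf{Z}_p[\zeta_p]$ is $p$-torsionfree, this upgrades to $\delta([p]_q) \equiv 1\pmod{[p]_q}$ in $A$, as claimed.

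With this in hand, fix $x\in D$ with $\phi(x) = [p]_q y$, so that $\gamma(x) = y - \delta(x)$. Using that $\delta$ commutes with $\phi$ and the convenient rewriting of the product formula as $\delta(ab) = a^p\delta(b) + \phi(b)\delta(a)$ (obtained by absorbing the $p\delta(a)\delta(b)$ term), I compute
\[ \phi(\delta(x)) \;=\; \delta(\phi(x)) \;=\; \delta([p]_q\, y) \;=\; [p]_q^p\,\delta(y) + \phi(y)\,\delta([p]_q). \]
Therefore
\[ \phi(\gamma(x)) \;=\; \phi(y) - \phi(\delta(x)) \;=\; \phi(y)\bigl(1 - \delta([p]_q)\bigr) \;-\; [p]_q^p\,\delta(y), \]
and both summands lie in $[p]_qD$: the first by the key divisibility above, and the second because of the explicit factor $[p]_q^p$.

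The only real obstacle is the divisibility $1 - \delta([p]_q)\in [p]_qA$, which is \emph{not} a formal consequence of $[p]_q$ being distinguished (distinguishedness only asserts that $\delta([p]_q)$ is a unit). It is a special feature of this particular distinguished element, coming from the evaluation of $[p]_{q^p}$ at $q = \zeta_p$. Once this single identity is in hand, the remainder of the argument is a short formal computation with standard $\delta$-ring identities.
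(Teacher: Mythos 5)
Your proof is correct and is a genuinely cleaner route than the paper's. Both arguments ultimately rest on the same arithmetic fact about $A=\mathbf{Z}_p\llbracket q-1\rrbracket$: via $\phi([p]_q) = [p]_q^p + p\,\delta([p]_q)$, your congruence $\delta([p]_q)\equiv 1 \pmod{[p]_q}$ is equivalent to the paper's observation that $\phi([p]_q)\equiv p \pmod{[p]_q}$. Where the arguments diverge is in how this fact is deployed. The paper first reduces to the universal case $D = A\{f,\frac{\phi(f)}{[p]_q}\}$ in order to get $A$-flatness of $D$ (via \cref{qPDEnvRegular}), uses flatness to conclude that $\phi([p]_q)$ remains a nonzerodivisor in $D/[p]_q D$, multiplies through by $\phi([p]_q)$, and then cancels. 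You instead establish the congruence once and for all inside $A$ (where one can simply invoke $p$-torsionfreeness of $\mathbf{Z}_p[\zeta_p]$), and then run a direct, universal computation in $D$ using $\delta\phi = \phi\delta$ and the product rule, so that the membership $\phi(\gamma(x))\in[p]_qD$ falls out term by term with no cancellation of a nonzerodivisor in $D$ and no reduction to the universal case. The paper's version is slightly more roundabout but fits its pattern of reducing $\delta$-ring identities to flat universal models; yours is more self-contained and shows the only non-formal input is a single congruence in the base prism. One small remark: you are right that $\delta([p]_q)\equiv 1 \pmod{[p]_q}$ is stronger than distinguishedness and is special to $[p]_q\in\mathbf{Z}_p\llbracket q-1\rrbracket$ — but note this is precisely the content of $(A,([p]_q))$ being a $q$-PD pair (cf.\ the verification that $\gamma(q-1)\in(q-1)$ in \cref{SmallLargeqPD}, which proceeds by the same mod-$[p]_q$ computation), so the hypothesis that you are in a $q$-setting and not merely a prismatic one is exactly what makes the lemma true.
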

\begin{proof}
We must show that if $f \in D$ with $\phi(f) \in [p]_q D$, then $\phi(\gamma(f)) \in [p]_q D$ as well.  It suffices to prove this in the universal case $D = A\{f,\frac{\phi(f)}{[p]_q}\}$. In particular, we may assume that $D$ is $A$-flat by \cref{qPDEnvRegular}. Our goal is to show that 
\[ \frac{\phi^2(f)}{\phi([p]_q)} \equiv \phi(\delta(f)) \mod [p]_q D\]
Note that $\phi([p]_q)$ equals $p$ in $A/[p]_qA$. In particular, it is a nonzerodivisor in this ring. Flatness implies that $\phi([p]_q)$ equals $p$ and is a nonzerodivisor in $D/[p]_qD$ as well. It thus suffices to check that
\[ \phi^2(f) \equiv p\phi(\delta(f)) \mod [p]_q D.\]
Now $\phi(f) = f^p + p\delta(f)$, so $\phi^2(f) = \phi(f)^p + p\phi(\delta(f))$. Our claim follows since $\phi(f) \in [p]_q D$ by assumption.
\end{proof}

\begin{corollary}[Smallest and largest $q$-PD ideals]
\label{SmallLargeqPD}
Say $D$ is a derived $(p,[p]_q)$-complete $A$-algebra satisfying conditions (2) and (3) from Definition~\ref{DefqPD}. Then $(q-1)$ is the smallest $q$-PD ideal in $D$, and $\phi^{-1}([p]_qD)$ is the largest $q$-PD ideal in $D$.
\end{corollary}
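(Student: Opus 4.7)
The plan is to verify each extremality claim by a direct application of the definitions, using the previously established lemmas \cref{qPDelements} and \cref{qPDgenerate} as the main inputs. In both cases, conditions (2) and (3) from \cref{DefqPD} are given by hypothesis, so the real work is to verify condition (1) for the proposed ideal and to check the bounding property against an arbitrary $q$-PD ideal.

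For the largest ideal: the bounding property is tautological, because if $(D,I)$ is a $q$-PD pair then by condition (1) we have $\phi(I) \subset [p]_q D$, i.e., $I \subset \phi^{-1}([p]_qD)$. To check that $(D,\phi^{-1}([p]_qD))$ is itself a $q$-PD pair, note first that $q-1$ lies in $\phi^{-1}([p]_qD)$ since $\phi(q-1) = q^p-1 = (q-1)[p]_q$, so $(A,(q-1)) \to (D,\phi^{-1}([p]_qD))$ is a map of $\delta$-pairs. Then the $\phi$-stability required in \cref{DefqPD} (1) is tautological, and the $\gamma$-stability is precisely the content of \cref{qPDgenerate}.

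For the smallest ideal: any $q$-PD pair $(D,I)$ is by hypothesis equipped with a map of $\delta$-pairs $(A,(q-1)) \to (D,I)$, forcing $(q-1)D \subset I$. Hence it suffices to verify that $(D,(q-1)D)$ is itself a $q$-PD pair. By \cref{qPDelements}, since $(q-1)D$ is principal, condition (1) reduces to showing $\phi(q-1) \in [p]_qD$ and $\gamma(q-1) \in (q-1)D$. The first is immediate from $\phi(q-1) = (q-1)[p]_q$. For the second, we have $\gamma(q-1) = \frac{\phi(q-1)}{[p]_q} - \delta(q-1) = (q-1) - \delta(q-1)$, so it is enough to show $\delta(q-1) \in (q-1)A$. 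This is a computation in the $p$-torsionfree ring $A$: from $p\delta(q-1) = \phi(q-1) - (q-1)^p = (q-1)\bigl([p]_q - (q-1)^{p-1}\bigr)$, and a binomial expansion of $q^p = (1+(q-1))^p$ showing that $[p]_q = p + \sum_{i=1}^{p-1} \binom{p}{i}(q-1)^{i-1} \cdot (\text{stuff}) + (q-1)^{p-1}$, or more directly $[p]_q - (q-1)^{p-1}$ is a $\mathbf{Z}$-linear combination of $\binom{p}{i}(q-1)^{i-1}$ for $1 \leq i \leq p-1$ which is divisible by $p$, we conclude $\delta(q-1) \in (q-1)A$.

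The only mildly technical step is the last computation in $A$, and even that is essentially formal; the result is really a direct consequence of how the defining conditions of a $q$-PD pair are set up, together with the $\gamma$-stability of $\phi^{-1}([p]_qD)$ already proved in \cref{qPDgenerate}. No subtleties involving the topology on $D$ or the prism structure are needed beyond what is packaged in conditions (2) and (3) of \cref{DefqPD}.
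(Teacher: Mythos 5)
Your argument follows the same overall strategy as the paper's, and the main computational content is correct. The one divergence is in how you verify $\gamma(q-1) \in (q-1)$: the paper invokes condition (3) of \cref{DefqPD} (that $D/(q-1)$ is $p$-torsionfree) to reduce the claim to showing $p\,\delta(q-1) \in (q-1)$, which falls out immediately from the identity $p\,\delta(q-1) = (q-1)\bigl([p]_q - (q-1)^{p-1}\bigr)$ with no further analysis of the factor $[p]_q - (q-1)^{p-1}$; you instead prove the stronger statement $\delta(q-1) \in (q-1)A$ directly in $A$ by observing $[p]_q - (q-1)^{p-1} \in pA$ via the binomial theorem and then push forward along the $\delta$-map $A \to D$. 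Both routes are correct; yours is marginally cleaner in that it avoids condition (3) for this particular step.

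However, you omitted one verification. Definition~\ref{DefqPD} asks for a \emph{derived $(p,[p]_q)$-complete} $\delta$-pair $(D,I)$, and the paper's reading of this (visible both in its proof of this corollary and in \cref{qPDCommAlg}(4)) is that the ideal $I$ itself, not just $D$, must be derived $(p,[p]_q)$-complete. So you should also check that $(q-1)D$ and $\phi^{-1}([p]_qD)$ are derived $(p,[p]_q)$-complete. Both are straightforward --- $(q-1)D$ is the image of multiplication by $q-1$ on the derived complete module $D$, while $\phi^{-1}([p]_qD)$ is exhibited in the paper as a finite limit of a diagram of derived complete $A$-modules built from $\phi$ and multiplication by $[p]_q$ (using $[p]_q$-torsionfreeness of $D$ from condition (2)) --- but your concluding remark that no subtleties beyond conditions (2) and (3) enter is not quite accurate: the derived completeness of the proposed ideal is part of what the definition asks for, and the paper's proof does spell it out.
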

\begin{proof}
We first show that $(q-1) \subset D$ is a $q$-PD ideal. This ideal is derived $(p,[p]_q)$-complete as it is the image of the map $D \xrightarrow{q-1} D$ between derived $(p,[p]_q)$-complete $A$-modules. For the rest, using Remark~\ref{qPDelements}, it suffices to check that $\phi(q-1) \in [p]_qD$ and $\gamma(q-1) \in (q-1)$. The first containment is clear: $\phi(q-1) = q^p-1 = (q-1)[p]_q \in [p]_qA$. For the second, note that
\[ \gamma(q-1) = \frac{\phi(q-1)}{[p]_q} - \delta(q-1) = (q-1) - \delta(q-1).\]
It thus suffices to check that $\delta(q-1) \in (q-1)$. As $D/(q-1)$ is $p$-torsionfree by hypothesis, it is enough to show that $p\delta(q-1) \in (q-1)$. But we have 
\[ p\delta(q-1) = \phi(q-1) - (q-1)^p = (q^p-1) - (q-1)^p = (q-1)([p]_q - (q-1)^{p-1}),\]
which lies in $(q-1)$, so the claim follows.

We now show that $\phi^{-1}([p]_qD)$ is a $q$-PD ideal. Derived $(p,[p]_q)$-completeness of the ideal follows as above:  $\phi^{-1}([p]_q D)$ is the limit of the diagram $D \xrightarrow{\phi} \phi_* D \xleftarrow{\phi_*([p]_q)} \phi_*(D)$ of derived $(p,[p]_q)$-complete $A$-modules. The containment $\phi(\phi^{-1}([p]_qD)) \subset [p]_qD$ is clear, while the containment $\gamma(\phi^{-1}([p]_qD)) \subset \phi^{-1}([p]_qD)$ follows from Lemma~\ref{qPDgenerate}. 
\end{proof}

We now give the most important examples of $q$-PD pairs for our purposes.

\begin{example}[Examples of $q$-PD pairs]
The key examples are:
\begin{enumerate}
\item (The initial object). The pair $(A,(q-1))$ is a $q$-PD pair. Indeed, conditions (2) and (3) from Definition~\ref{DefqPD} are clear, while (1) follows from Corollary~\ref{SmallLargeqPD}. Note that the pair $(A,(q-1))$ is the initial object in the category of all $q$-PD pairs. More generally, the same reasoning shows that if $D$ is a $(p,[p]_q)$-completely flat $A$-algebra, then $(D,(q-1))$ is a $q$-PD pair.

\item (A perfect object). Let $A_{\inf}$ be the $(p,[p]_q)$-completed perfection of $A$, and set $\xi := \phi^{-1}([p]_q) \in A_{\inf}$. Then $(A_{\inf},(\xi))$ is a $q$-PD pair. Using Remark~\ref{qPDelements}, the only non-trivial statement one must check is that $\gamma(\xi) \in (\xi)$. Unwinding definitions, this amounts to showing that $\delta(\xi) \equiv 1 \mod (\xi)$. As $p$ is a nonzerodivisor on $A_{\inf}/(\xi)$, it suffices to check that $p\delta(\xi) \equiv p \mod (\xi)$. As $\xi^p \equiv 0 \mod (\xi)$, it is enough to show that $\phi(\xi) \equiv p \mod (\xi)$. But 
\[ \phi(\xi) = [p]_q = \frac{q^p-1}{q-1} = 1+q+...+q^{p-1} \equiv p \mod (q-1),\]
so the claim follows as $\xi \mid q-1$. 

\item (The classical case). If $D$ is a $p$-torsionfree and $p$-complete $\delta$-ring equipped with a $p$-complete ideal $I$, then $(D,I)$ is a $\delta$-PD pair exactly when each $x \in I$ admits all divided powers in $D$ (Remark~\ref{DeltaPDPair}).
\end{enumerate}
\end{example}

\begin{lemma}[Existence of $q$-PD envelopes]
\label{qPDEnvConstruct}
Let $(D,I)$ be a $q$-PD pair. Let $P$ be a $(p,[p]_q)$-completely flat $\delta$-$D$-algebra. Let $x_1,...,x_r \in P$ be a sequence that is $(p,[p]_q)$-completely regular relative to $D$. 
\begin{enumerate}
\item The $(p,[p]_q)$-complete $\delta$-$D$-algebra 
\[E := P\{\frac{\phi(x_1)}{[p]_q},\ldots,\frac{\phi(x_r)}{[p]_q}\}^{\wedge}\] 
obtained by freely adjoining $\frac{\phi(x_i)}{[p]_q}$ is $(p,[p]_q)$-completely flat over $D$. In particular, it is discrete and $[p]_q$-torsionfree. 
\item Write $J \subset P$ for the ideal generated by $I$ and the $x_i$'s and let $K\subset E$ be the minimal $(p,[p]_q)$-complete ideal containing $J$ and stable under the operation $\gamma$. Then the natural map $P/J\to E/K$ is an isomorphism, and in particular we get a natural map $E\to E/K\cong P/J$. Then $(E,K)$ is a $q$-PD pair and the induced map $(P,J) \to (E,K)$ of $\delta$-pairs is the universal map from $(P,J)$ to a $q$-PD pair.
\end{enumerate}
In this situation, we often write $D_{J,q}(P) = E$ and call the pair $(D_{J,q}(P),K)$ (or just $D_{J,q}(P)$ if there is no potential for confusion) the $q$-PD envelope of $(P,J)$; note that it only depends on the ideal $J$ and not the generators $x_1,...,x_r$ by (2) above. We then have the following:
\begin{enumerate}[resume]
\item The functor $(P,J) \mapsto (D_{J,q}(P),K)$ commutes with $(p,[p]_q)$-completed derived base change along maps $(D,I) \to (D',I')$ of $q$-PD pairs. In particular, $D_{J,q}(P) \widehat{\otimes}_D^L D/(q-1)$ is the pd-envelope of $J/(q-1) \subset P/(q-1)$.
\end{enumerate}
\end{lemma}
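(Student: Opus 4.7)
The plan is to prove all three parts simultaneously by reducing to the classical theory of pd-envelopes modulo $q-1$, exploiting the conservativity of the base change $D \to D/(q-1)$ on $(p,[p]_q)$-complete complexes (Lemma~\ref{qPDCommAlg}(2)) and its compatibility with cosimplicial totalizations (Lemma~\ref{qPDCommAlg}(3)).

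For part (1), since $E = P\{\phi(x_i)/[p]_q\}^\wedge$ is defined by freely adjoining in $(p,[p]_q)$-complete simplicial $\delta$-rings, its derived $(p,[p]_q)$-completed base change along $D \to D/(q-1)$ coincides formally with $(P/(q-1))\{\phi(x_i)/p\}^\wedge$ (using $[p]_q \equiv p \bmod q-1$). The ring $P/(q-1)$ is a $p$-torsionfree $\delta$-ring, and $\overline{x_1}, \ldots, \overline{x_r}$ is a $p$-completely regular sequence in it relative to $D/(q-1)$, so by Corollary~\ref{PDenvRegSeqLambda} this base change agrees with the $p$-completed pd-envelope $D_{P/(q-1)}(\overline{x_1}, \ldots, \overline{x_r})$, which is $p$-completely flat over $D/(q-1)$ by Lemma~\ref{PDEnvelopeFlat}, hence in particular discrete. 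Using derived Nakayama together with the conservativity of Lemma~\ref{qPDCommAlg}(2), one lifts these properties back: $E$ is discrete and $(p,[p]_q)$-completely flat over $D$. This already implies that $E$ is $[p]_q$-torsionfree and satisfies conditions (2) and (3) of Definition~\ref{DefqPD} (using Lemma~\ref{BoundedPrismProp} and the assumption that $D$ satisfies these conditions).

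For part (2), the remaining condition $\phi(K) \subset [p]_q E$ in Definition~\ref{DefqPD} follows by first checking $\phi(J) \subset [p]_q E$ directly from the construction (one has $\phi(x_i) = [p]_q u_i$ by definition of $E$, and $\phi(I) \subset [p]_q D \subset [p]_qE$ since $(D,I)$ is already a $q$-PD pair), and then invoking Lemma~\ref{qPDgenerate} to conclude that $\phi^{-1}([p]_qE)$ is $\gamma$-stable; the minimality of $K$ then gives $K \subset \phi^{-1}([p]_qE)$. For the universal property, any $\delta$-map $f:(P,J) \to (E',K')$ to a $q$-PD pair forces $\phi(f(x_i)) \in [p]_qE'$, so $u_i \mapsto \phi(f(x_i))/[p]_q$ defines the unique $\delta$-extension $E \to E'$, and its image of $K$ lands in $K'$ by minimality. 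The isomorphism $P/J \cong E/K$ is verified as follows: surjectivity is immediate since $u_i = \gamma(x_i) + \delta(x_i) \equiv \delta(x_i) \bmod K$ with $\delta(x_i) \in P$; injectivity is obtained by reducing modulo $q-1$, where $(E,K) \widehat{\otimes}_D D/(q-1)$ becomes the classical pd-envelope of $\overline{J}$ in $P/(q-1)$ equipped with its pd-ideal, for which the analogous isomorphism is standard, and then lifting via Lemma~\ref{qPDCommAlg}(2). Part (3) follows formally from the construction: adjoining $\phi(x_i)/[p]_q$ in $(p,[p]_q)$-complete $\delta$-rings commutes with $(p,[p]_q)$-completed base change by universality, and the characterization of $K$ as the minimal $\gamma$-stable $(p,[p]_q)$-complete ideal containing $JE$ is preserved by such base change since $\gamma$ commutes with $\delta$-ring maps defined on $\phi^{-1}([p]_q\cdot)$.

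The main obstacle I anticipate is the identification $P/J \cong E/K$ in part (2) and its interaction with part (3): the natural proof uses base change modulo $q-1$, but part (3) in turn requires $K$ to be a reasonable ideal, creating potential circularity. The resolution is to bootstrap in stages --- first establish flatness and hence the prism structure (part (1)); then verify that $K$ is well-behaved (i.e., derived $(p,[p]_q)$-complete and $\gamma$-stable) and check the universal property for the pair $(E, K)$ without yet claiming $P/J \cong E/K$; then prove base change compatibility at the level of the pair $(E, K)$ since both $E$ and $K$ are defined by universal properties that commute with base change; and finally deduce the isomorphism $P/J \cong E/K$ by reduction modulo $q-1$ using the already-established base change compatibility.
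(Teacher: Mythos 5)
Your part (1) matches the paper's argument: reduce modulo $q-1$ to the classical pd-envelope via \cref{qPDCommAlg}(2) and \cref{PDFlatp} (or \cref{PDenvRegSeqLambda}). The divergence, and the gap, is in how you try to establish $P/J\cong E/K$ for part (2).

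You construct $P/J\to E/K$ and try to prove it is an isomorphism directly, whereas the paper goes the other way: using the fact (established in the proof of part (1)) that $E/(q-1)$ is the classical $p$-completed pd-envelope of $P/(q-1)$ along $(x_1,\dots,x_r)$, it builds a map $E\to E/(q-1)\to P/(q-1,x_1,\dots,x_r)\to P/J$, sets $K':=\ker(E\to P/J)$, shows $(E,K')$ is a $q$-PD pair, and finally proves $K'=K$ via \cref{PDgenerate} (which characterizes pd-ideals as the smallest ideal stable under $x\mapsto x^p/p$) together with the key observation that $K'' := K'\cap\phi^{-1}([p]_q E)$ is the preimage of its image in $E/(q-1)$. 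The direction matters. Your surjectivity claim that $P/J\to E/K$ is \emph{immediate} because $u_i\equiv\delta(x_i)\bmod K$ only handles the degree-zero generator: $E$ is generated as a $(p,[p]_q)$-complete $P$-algebra by the full set $\{\delta^n(u_i):n\geq 0\}$, not just by the $u_i$. To conclude that each $\delta^n(u_i)$ ($n\geq 1$) lies in the image of $P$ modulo $K$, you would need to control $\delta(K)$; but $K$ is only postulated to be $\gamma$-stable, and $\gamma$-stability of an ideal does not imply $\delta$-stability (the relation $\delta(a)=\tfrac{\phi(a)}{[p]_q}-\gamma(a)$ does not help without independently controlling $\tfrac{\phi(a)}{[p]_q}$).

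Your bootstrap for escaping the circularity also has a soft spot: you assert that $K$ "is defined by a universal property that commutes with base change," but $K$ is an abstract minimal ideal, and minimality of an ideal with a closure property is not automatically preserved under $\widehat{\otimes}_D^L D'$. The paper proves part (3) \emph{after} the identification $E/K\cong P/J$: once $K$ is the kernel of the concrete map $E\to P/J$ and $E$ is $D$-flat, base change of the short exact sequence $0\to K\to E\to P/J\to 0$ is straightforward. So the step you postpone to escape circularity is exactly the step you would need in order to carry out your postponed argument. The cleanest fix is to adopt the paper's direction: construct $E\to P/J$ first, and use \cref{PDgenerate} to identify the kernel with the minimal $\gamma$-stable ideal.
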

\begin{proof}
Let $\widetilde{E}$ be the $(p,[p]_q)$-complete simplicial commutative $\delta$-$P$-algebra obtained by freely adjoining $\frac{\phi(x_i)}{[p]_q}$ to $P$. We claim that $\widetilde{E}$ is $(p,[p]_q)$-completely flat over $D$; this will imply that $\widetilde{E}$ is discrete and thus coincides with the ring $E$ above, proving (1). To show $(p,[p]_q)$-complete flatness of $\widetilde{E}$ over $D$, it suffices to do so after derived base change along $D \to D/(q-1)$. After this base change, the claim follows from Lemma~\ref{PDFlatp}, so we have proven (1).

To prove (2), we first construct a map $E \to P/J$. To define this map, observe that the previous paragraph shows that $E/(q-1)$ is the $p$-completely flat $D/(q-1)$-algebra obtained as the pd-envelope of $p$-completely flat $D/(q-1)$-algebra $P/(q-1)$ along the ideal generated by the sequence $x_1,...,x_r$ that is $p$-completely regular relative to $D/(q-1)$. In particular, we have an obvious map $E/(q-1) \to P/(q-1,x_1,...,x_r)$, and hence also an obvious map $E \to E/(q-1) \to P/(q-1,x_1,...,x_r) \to P/J$. Let $K^\prime$ denote the kernel of this map. Note that the kernel $\overline{K}^\prime$ of $E/(q-1) \to P/J$ has divided powers by the previous paragraph (and because $I \cdot D/(q-1)$ has divided powers), and that $K^\prime$ is the preimage of $\overline{K}^\prime$. 

We now prove that $(E,K^\prime)$ is a $q$-PD pair. For this, it suffices to check that $\phi(K^\prime)\in [p]_q E$ and that $\gamma(K^\prime) \subset K^\prime$. Equivalently, if we set $K^{\prime\prime} = K^\prime \cap \phi^{-1}([p]_q E)$, we must show that $K^{\prime\prime} = K^\prime$ and that $\gamma(K^{\prime\prime}) \subset K^{\prime\prime}$. For the latter, we must check that if $f \in K''$, then $\gamma(f) \in K''$, which is immediate: the containment in $\phi^{-1}([p]_qE)$ follows from Lemma~\ref{qPDgenerate}, while the containment in $K'$ follows as $\gamma(f)$ maps under $E \to E/(q-1)$ to the element $\frac{f^p}{p}$, which lies in $\overline{K}' \subset E/(q-1)$ as the latter ideal has divided powers (and thus $\gamma(f)$ itself must lie in the inverse image of $\overline{K}'$, which is $K'$). It remains to show $K'' = K' = K$. We already have $(I,x_1,...,x_r) \subset K''$ and $\gamma(K'') \subset K''$. We shall deduce that $K''=K'=K$ from Lemma~\ref{PDgenerate}. To apply this lemma, it suffices to check that $K''$ is the preimage of its image under $E \to E/(q-1)$. But this is easy to see: if $x \in K''$ and $y \in E$, then $x + (q-1)y \in K''$ since $\phi(q-1) \in [p]_q D$ and $q-1 \in K$.

This also proves (2) by the definition of $K$ and the identification $K'=K$. Finally, (3) is immediate from the construction of $E \cong \widetilde{E}$ and the fact that the hypotheses on the $x_i$'s commute with base change.
\end{proof}

\begin{lemma}
\label{PDgenerate}
Let $B$ be $p$-torsionfree $\mathbf{Z}_p$-algebra equipped with an ideal $I$ that is regular modulo $p$. Let $(D,J)$ be the pd-pair obtained as the pd-envelope of $(B,I)$. Then the ideal $J \subset D$ can described as the smallest ideal of $D$ that contains $ID$ and is stable under the operation $f \mapsto \frac{f^p}{p}$.
\end{lemma}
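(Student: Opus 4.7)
The plan is to define $K_0$ as the smallest ideal of $D$ containing $ID$ and stable under $f \mapsto f^p/p$, taken as the intersection of all such ideals. First I would verify this infimum makes sense: the operation $f \mapsto f^p/p$ is well-defined precisely on elements $f$ with $f^p \in pD$, and this holds for all $f \in J$ since $f^p = p! \, \gamma_p(f) \in pD$, using that $D$ is $p$-torsionfree (which is the standard fact that the pd-envelope of an ideal regular modulo $p$ in a $p$-torsionfree ring is itself $p$-torsionfree). So the collection of ideals $K \subset J$ containing $ID$ and stable under $f \mapsto f^p/p$ is nonempty (it contains $J$) and closed under intersection, giving us a well-defined $K_0 \subset J$. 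The inclusion $K_0 \subset J$ is then tautological.

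The reverse inclusion $J \subset K_0$ is the content of the lemma, and the strategy is to show $K_0$ is a sub-pd-ideal of $D$; then by the minimality of $J$ among sub-pd-ideals containing $I$, we obtain $J \subset K_0$. Since $\gamma_p(f) = (p-1)!^{-1} \cdot f^p/p$ and $(p-1)! \in \mathbf{Z}_{(p)}^*$, stability of $K_0$ under $\gamma_p$ is the same as stability under $f \mapsto f^p/p$, which we have. We then prove by strong induction on $n$ that $\gamma_n(f) \in K_0$ for every $f \in K_0$. The cases $n < p$ and $n=p$ are immediate from $\gamma_n(f) = f^n/n!$ with $n!$ a unit and from $\gamma_p$-stability, respectively. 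For $n > p$ with $p \nmid n$, write $\gamma_n(f) = n^{-1} f \,\gamma_{n-1}(f)$ using the identity $f \gamma_{n-1}(f) = n \gamma_n(f)$ and conclude by induction. For $n = kp$ with $k \geq 2$, invoke the identity $\gamma_{kp}(f) = u_k \,\gamma_k(\gamma_p(f))$ for some unit $u_k \in \mathbf{Z}_{(p)}^*$ (as in \cref{FirstDividedPowerLambda}); since $\gamma_p(f) \in K_0$ by hypothesis and $k < n$, the inductive hypothesis applied to the element $\gamma_p(f) \in K_0$ gives $\gamma_k(\gamma_p(f)) \in K_0$, hence $\gamma_n(f) \in K_0$.

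The one subtlety worth flagging is the structure of the induction in the $p \mid n$ step: the induction must be run on $n$ \emph{uniformly over all elements of} $K_0$, not on a single fixed $f$, so that when $n = kp$ we may apply the inductive hypothesis to the new element $\gamma_p(f) \in K_0$ rather than to $f$ itself. Once the quantifiers are arranged in this order there is no further obstacle, and the two inductive cases exhaust all $n > p$.
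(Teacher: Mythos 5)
Your proof is correct and, in effect, spells out what the paper compresses into a two-sentence citation: the paper observes that regularity of $I$ mod $p$ gives $p$-torsionfreeness of $D$ and then defers the rest to \cite[Tag 07GS]{Stacks}. Your engine for showing that $\gamma_p$-stability implies $\gamma_n$-stability for all $n$ — the multiplicative identity $\gamma_{kp}(f) = u_k\,\gamma_k(\gamma_p(f))$ for a $p$-adic unit $u_k$, combined with the cross-prime case $\gamma_n(f) = n^{-1} f\,\gamma_{n-1}(f)$, run inside a strong induction on $n$ quantified uniformly over all elements of $K_0$ — is exactly the device the paper itself deploys in the proof of Lemma~\ref{FirstDividedPowerLambda} and invokes again in Remark~\ref{DeltaPDPair}. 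You are also right to flag the quantifier order in the $p \mid n$ case as the subtle point; stated that way it goes through cleanly.

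One spot is slightly under-justified: the step "$K_0$ is a sub-pd-ideal of $D$ containing $ID$, so by minimality of $J$ among such ideals, $J \subset K_0$." That minimality is true but is not a tautology from the universal property alone; the clean way to see it is the concrete description of the pd-envelope, namely that $J$ is generated as an ideal by $\{\gamma_n(x) : x \in ID,\ n \geq 1\}$ (this is essentially what the cited Stacks tag records). With that in hand the conclusion is immediate: since $ID \subset K_0$ and you have shown $K_0$ is stable under every $\gamma_n$, all the generators $\gamma_n(x)$ of $J$ lie in $K_0$. So there is no gap, just one sentence that should be added (or the tag cited at that point) to make the argument genuinely self-contained rather than self-contained modulo the same fact the paper was outsourcing.
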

\begin{proof}
The regularity hypothesis ensures that $D$ is $p$-torsionfree. The rest follows by \cite[Tag 07GS]{Stacks}.
\end{proof}

\subsection{The $q$-crystalline site}
\label{ss:qcryssite}

In this section, we fix a $q$-PD pair $(D,I)$ over $(A,(q-1))$ as well as a $p$-completely smooth $D/I$-algebra $R$. 

\begin{definition}[The $q$-crystalline site and its cohomology]
\label{Defqcryssite}
The {\em $q$-crystalline site of $R$ relative to $D$}, denoted $(R/D)_{\qc}$, is the (opposite of the) category of $q$-PD thickenings of $R$ relative to $D$, i.e., the category of $q$-PD pairs $(E,J)$ over $(D,I)$ equipped with a $D/I$-algebra map $R \to E/J$; we give this category the indiscrete topology, so all presheaves are sheaves. Let $\mathcal{O}_{\qc}$ be the presheaf on $(R/D)_{\qc}$ determined by $(E,J) \mapsto E$; this presheaf is naturally valued in $\delta$-$B$-algebras. The {\em $q$-crystalline cohomology of $R$ relative to $D$}, denoted $q\Omega_{R/D}$, is defined as $R\Gamma( (R/D)_{\qc}, \mathcal{O}_{\qc})$, viewed as a $(p,[p]_q)$-complete commutative algebra object in $D(D)$ equipped with a $\phi_D$-semilinear endomorphism $\phi_{R/D}$. 
\end{definition}

Let us explain how to compute $q$-crystalline cohomology explicitly in a manner analogous to Construction~\ref{PrismaticFunctorialCC}.

\begin{construction}[Computing $q$-crystalline cohomology via \v{C}ech-Alexander complexes]
\label{CechAlexanderqCrys}
Choose a derived $(p,[p]_q)$-complete polynomial $D$-algebra $P$ with a surjection $P \to R$ over $D$ with kernel $J_0$. Let $F$ be the free derived $(p,[p]_q)$-complete $\delta$-$D$-algebra on $P$, and write $J=(J_0 F)^{\wedge}$ for the derived $(p,[p]_q)$-complete ideal in $F$ generated by $J_0$. Form the Cech nerves $P^\bullet$ and $F^\bullet$ of $D \to P$ and $D \to F$ respectively. We then have a map $P^\bullet \to F^\bullet$ expressing $F^\bullet$ as the free derived $(p,[p]_q)$-complete $\delta$-$D$-algebra on $P^\bullet$. Let $J_0^\bullet \subset P^\bullet$ be the kernel of the surjection $P^\bullet \to P \to R$, and let $J^\bullet := (J_0^\bullet F^\bullet)^{\wedge} \subset F^\bullet$ be the corresponding derived $(p,[p]_q)$-complete ideal of $F^\bullet$ (so $J^0 = J$). Then the natural $D$-algebra map $R \simeq P^\bullet/J_0^\bullet \to F^\bullet/J^\bullet$ is identified with the derived $p$-completed Cech nerve of the map $R \to F/J$ obtained via base change from $P \to F$: this follows by pushing out $P^\bullet \to F^\bullet$  first along $P^\bullet \to P$ and then $P \to R$. The kernel of each augmentation $P^{n} \to P \to R$ is generated by $I$ and, locally on $\Spec(P)$, by a filtered colimit of ideals generated by sequences that are $(p,[p]_q)$-completely regular relative to $D$. We may then apply Lemma~\ref{qPDEnvConstruct} to the ideals $J^\bullet \subset F^\bullet$ obtain a cosimplicial $q$-PD thickening $D_{J^\bullet, q}(F^\bullet) \to F^\bullet/J^\bullet$ with kernel $K^\bullet$. In particular, this yields a cosimplicial object $(D_{J^\bullet,q}(F^\bullet), K^\bullet) \in (R/D)_{\qc}$. By construction, this cosimplicial object is the \v{C}ech nerve in $(R/D)_{\qc}$ of its $0$-th term $D_{J,q}(F)$. Moreover, using the freeness of $P$ (as a $D$-algebra) and $F$ (as a $\delta$-$D$-algebra), one checks (as in Construction~\ref{cons:weakinitialsite}) that this $0$-th term $D_{J,q}(F)$ is a weakly initial object. Consequently, by \v{C}ech theory, we learn that the limit of $D_{J^\bullet,q}(F^\bullet)$ computes $q\Omega_{R/D}$.
\end{construction}

\begin{theorem}[$q$-crystalline and crystalline cohomology]
\label{qCrysCrysComp}
There is a canonical identification 
\[q\Omega_{R/D} \widehat{\otimes}_D^L D/(q-1) \simeq R\Gamma_{\crys}(R/(D/(q-1))).\]
\end{theorem}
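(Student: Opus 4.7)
The strategy is to compare \v{C}ech-Alexander style complexes on both sides and use the base-change compatibility of $q$-PD envelopes (Lemma~\ref{qPDEnvConstruct}~(3)) together with the totalization property of Lemma~\ref{qPDCommAlg}~(3). The starting point is Construction~\ref{CechAlexanderqCrys}: choose a $\delta$-$D$-algebra $P$ that is a free $\delta$-$D$-algebra on a set surjecting onto $R$, let $P^\bullet$ denote the $(p,[p]_q)$-completed \v{C}ech nerve of $D\to P$, and write $J^\bullet\subset P^\bullet$ for the kernel of the augmentation $P^\bullet\to R$. Then the cosimplicial object of $q$-PD pairs $(D_{J^\bullet,q}(P^\bullet),K^\bullet)\in (R/D)_{\qc}$ computes $q\Omega_{R/D}$ by \v{C}ech theory, i.e.\ $q\Omega_{R/D}\simeq \mathrm{Tot}\, D_{J^\bullet,q}(P^\bullet)$.

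Next, I would apply the functor $-\widehat{\otimes}^L_D D/(q-1)$ to this cosimplicial object. By Lemma~\ref{qPDCommAlg}~(3), this functor commutes with totalizations of cosimplicial $(p,[p]_q)$-complete objects in $D^{\geq 0}$, and all the terms $D_{J^n,q}(P^n)$ are discrete by Lemma~\ref{qPDEnvConstruct}~(1), so
\[
q\Omega_{R/D}\widehat{\otimes}^L_D D/(q-1)\;\simeq\;\mathrm{Tot}\bigl(D_{J^\bullet,q}(P^\bullet)\widehat{\otimes}^L_D D/(q-1)\bigr).
\]
The base-change identity in Lemma~\ref{qPDEnvConstruct}~(3) now identifies each term on the right with the $p$-completed classical pd-envelope of $J^n/(q-1)\subset P^n/(q-1)$ in the category of pd-$D/(q-1)$-algebras (noting that $D/(q-1)$ is a $\delta$-PD pair by construction, so the pd-structure on $I\cdot D/(q-1)$ is the given one and is automatically compatible).

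It remains to identify the resulting cosimplicial object with a \v{C}ech-Alexander complex computing $R\Gamma_{\crys}(R/(D/(q-1)))$. After reduction mod $q-1$, the ring $P/(q-1)$ is a $p$-completely ind-smooth $D/(q-1)$-algebra surjecting onto $R$ (flatness holds because $P$ is $(p,[p]_q)$-completely flat over $D$), and $P^\bullet/(q-1)$ is its \v{C}ech nerve; taking pd-envelopes of the augmentation ideals is precisely the standard \v{C}ech-Alexander computation of crystalline cohomology of $R$ over the pd-base $D/(q-1)$, yielding the desired comparison. The functoriality of the construction in $P$, together with the fact that the collection of such free choices is sifted, yields a canonical identification.

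The main obstacle in executing this plan is to verify cleanly that the pd-envelopes constructed as base changes of $q$-PD envelopes agree, on the nose and compatibly with all cosimplicial structure, with the pd-envelopes appearing in the classical \v{C}ech-Alexander description of crystalline cohomology; this amounts to checking that the universal property of Lemma~\ref{qPDEnvConstruct}~(2) specializes to the universal property of the classical pd-envelope when $q=1$ (which is essentially Remark~\ref{DeltaPDPair} combined with Lemma~\ref{PDgenerate}), and that the pd-structure induced on $I\cdot D/(q-1)$ matches the one making $D/(q-1)$ a pd-base.
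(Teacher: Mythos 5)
Your proposal is correct and follows essentially the same route as the paper's proof: compute $q\Omega_{R/D}$ by the \v{C}ech-Alexander complex $D_{J^\bullet,q}(P^\bullet)$ from Construction~\ref{CechAlexanderqCrys}, use Lemma~\ref{qPDCommAlg}~(3) to commute the base change $-\widehat{\otimes}^L_D D/(q-1)$ past the totalization, and then invoke Lemma~\ref{qPDEnvConstruct}~(3) to identify the resulting cosimplicial object with a \v{C}ech-Alexander complex computing $R\Gamma_{\crys}(R/(D/(q-1)))$. The extra care you flag in the final paragraph about matching universal properties is a reasonable thing to worry about, but as you note it is already handled by Remark~\ref{DeltaPDPair} and Lemma~\ref{PDgenerate}, and the paper treats it as immediate.
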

\begin{proof}
We use the complex $D_{J^\bullet,q}(F^\bullet)$ from Construction~\ref{CechAlexanderqCrys} to compute $q\Omega_R$. Applying the functor $- \widehat{\otimes}_D^L D/(q-1)$ of $p$-completed derived base change along $D \to D/(q-1)$, and using Lemma~\ref{qPDCommAlg} (3), we learn that $q\Omega_D \widehat{\otimes}_D^L D/(q-1)$ is computed by the cosimplicial $D/(q-1)$-complex $D_{J^\bullet,q}(F^\bullet) \widehat{\otimes}_D^L D/(q-1)$. Lemma~\ref{qPDEnvConstruct} (3) then shows that this base change is a \v{C}ech-Alexander complex computing $R\Gamma_{\crys}(R/(D/(q-1)))$, proving the theorem.
\end{proof}

\begin{remark}[Globalization]
Let us briefly explain two equivalent approaches to defining $q$-crystalline complexes for not necessarily affine smooth $p$-adic formal schemes $\mathfrak{X}/(D/I)$.
\begin{enumerate}
\item Theorem~\ref{qCrysCrysComp} together with Zariski descent for crystalline cohomology implies that the functor carrying a $p$-completely smooth $D/I$-algebra $R$ to the $(p,[p]_q)$-complete $E_\infty$-$D$-algebra $q\Omega_{R/D}$ is naturally a sheaf for the Zariski topology: by derived Nakayama, the isomorphy of a map of derived $(p,[p]_q)$-complete $D$-complexes can be checked after $p$-completed base change along $D \to D/(q-1)$ as $D$ is derived $(q-1)$-complete. Consequently, we obtain a sheaf $q\Omega_{\mathfrak{X}/D}$ of $E_\infty$-$D$-algebras on the Zariski site of $\mathfrak{X}$ admitting a comparison analogous of Theorem~\ref{qCrysCrysComp} with the pushforward of the crystalline structure structure along the standard comparison map $(\mathfrak{X}/(D/(q-1)))_\crys \to \mathfrak{X}$ between the crystalline and Zariski sites.

\item One can define a ringed $q$-crystalline site $\left((\mathfrak{X}/D)_{\qc},\mathcal{O}_{\qc}\right)$ with a morphism $u_{\mathfrak{X}/D}:(\mathfrak{X}/D)_{\qc} \to \mathfrak{X}$, and set $q\Omega_{\mathfrak{X}/D} := Ru_{\mathfrak{X}/D,*} \mathcal{O}_{\qc}$. The objects of $(\mathfrak{X}/D)_{\qc}$ are given by $q$-PD pairs $(E,J)$ over $(D,I)$ together with $D/I$-maps $\mathrm{Spf}(E/J) \to \mathfrak{X}$; the topology is determined by finite families of maps $\{(E,J) \to (E_i,J_i)\}_{i=1,...,n}$ of $q$-PD pairs in $(\mathfrak{X}/D)_{\qc}$ with the property that the map $E \to \prod_i E_i$ is a $(p,[p]_q)$-completed Zariski cover and such that $E/J \widehat{\otimes}^L_E E_i \simeq E_i/J_i$ for all $i$. The morphism $u_{\mathfrak{X}/D}$ is obtained by deforming Zariski covers along $q$-PD thickenings, using Lemma~\ref{qPDCommAlg} (5) to extend $q$-PD structures along $(p,[p]_q)$-completeted Zariski localizations.
\end{enumerate}
As the affine case suffices for our applications, and in view of the relatively simple construction in (1), we do not develop the definition in (2) further in this paper.
\end{remark}

\begin{remark}[Computing $q$-crystalline cohomology via ``small'' \v{C}ech-Alexander complexes]
There is a variant of Construction~\ref{CechAlexanderqCrys} with much smaller objects that produces a quasi-isomorphic output. For instance, choose a $(p,[p]_q)$-completely smooth $\delta$-$D$-algebra $P$ with a surjection $P \to R$ of $D$-algebras. Let $P^\bullet$ be the derived $(p,[p]_q)$-completed \v{C}ech nerve of $D \to P$, and let $J^\bullet \subset P^\bullet$ be the kernel of $P^\bullet \to R$. The kernel of each augmentation $P^{n} \to P \to R$ is generated by $I$ and, locally on $\Spec P$, by sequences that are $(p,[p]_q)$-completely regular relative to $D$. Consequently, applying Lemma~\ref{qPDEnvConstruct}, we obtain a cosimplicial diagram $D_{J^\bullet,q}(P^\bullet)$ in $(R/D)_{\qc}$. The limit of this complex still computes $q\Omega_{R/D}$. Indeed, this follows by running through the proof of Theorem~\ref{qCrysCrysComp} with  $D_{J^\bullet,q}(P^\bullet)$ instead, and observing that classical crystalline cohomology can be computed by the cosimplicial ring $D_{J^\bullet}(P^\bullet) \simeq D_{J^\bullet,q}(P^\bullet)/(q-1)$ formed by the $p$-completed PD-envelope of $J^\bullet \subset P^\bullet$: this cosimplicial ring is the \v{C}ech-Alexander complex of the PD-thickening $D_{J^0}(P^0)$ of $R$ relative to $D$, and this latter object is weakly initial amongst all PD-thickenings of $R$ relative to $D$. In particular, it is often convenient to let $P$ simply be a $(p,I)$-completely smooth lift of $R$ along $D \to D/I$. 
\end{remark}

\begin{theorem}[Invariance under $q$-PD thickenings of the base]
\label{qCrysInvqPD}
Let $(D,J) \to (D,I)$ be a morphism of $q$-PD pairs that is the identity on underlying $\delta$-rings. (For example, take $J = (q-1)$.) Let $\widetilde{R}$ be a $p$-completely smooth $D/J$-algebra lifting $R$ along $D/J \to D/I$. Then  there is a canonical identification $q\Omega_{\widetilde{R}/D} \simeq q\Omega_{R/D}$.
\end{theorem}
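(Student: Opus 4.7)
The plan is to construct a natural comparison map $q\Omega_{R/D} \to q\Omega_{\widetilde R/D}$ via the universal property of $q$-PD thickenings, then reduce modulo $q-1$ to the classical invariance theorem for crystalline cohomology under change of pd-base. For the construction, given a $q$-PD thickening $(E,K) \in (\widetilde R/D)_{\qc}$, set $K' := K + IE$. I claim that $(E, K')$ is a $q$-PD pair over $(D,I)$ with $E/K' \cong \widetilde R / I\widetilde R \cong R$. The containment $\phi(K') \subset [p]_q E$ follows from $\phi(K) \subset [p]_q E$ together with $\phi(I) \subset [p]_q D$, and $\gamma(K') \subset K'$ is verified by the ideal criterion of Remark~\ref{qPDelements} applied to the generating set $K \cup IE$: elements of $K$ map into $K$ by hypothesis; for $ae \in IE$ with $a \in I$ and $e \in E$, the product formula $\gamma(ae) = \phi(e)\gamma(a) - a^p \delta(e)$ shows $\gamma(ae) \in IE$ since $\gamma(a) \in I$. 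This defines a functor $F\colon (\widetilde R/D)_{\qc} \to (R/D)_{\qc}$ with $F^* \mathcal O_{\qc} = \mathcal O_{\qc}$, and restricting the defining limits along $F$ produces the desired natural map $q\Omega_{R/D} \to q\Omega_{\widetilde R/D}$.

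To verify this is an isomorphism, by Lemma~\ref{qPDCommAlg}(2) it suffices to prove the statement after the $(p,[p]_q)$-completed base change along $D \to D_0 := D/(q-1)$. I compute both sides by the \v{C}ech-Alexander complexes from Construction~\ref{CechAlexanderqCrys}: choose a $\delta$-$D$-algebra $P$ that is ind-smooth over $D$ and surjects onto $\widetilde R$ (hence also onto $R$), and let $K^{\bullet}_J = \ker(P^\bullet \to \widetilde R)$ and $K^{\bullet}_I = \ker(P^\bullet \to R) = K^\bullet_J + IP^\bullet$. The universal property of Lemma~\ref{qPDEnvConstruct}(2), applied to the $q$-PD pair $F(D_{K^\bullet_J,q}(P^\bullet))$, yields a cosimplicial map $D_{K^\bullet_I, q}(P^\bullet) \to D_{K^\bullet_J, q}(P^\bullet)$ of $\delta$-$D$-algebras realizing the comparison. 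By Lemma~\ref{qPDEnvConstruct}(3) together with Remark~\ref{DeltaPDPair}, reducing this map modulo $q-1$ gives the natural map between classical pd-envelopes
\[ D_{\overline{K}^{\bullet}_I}^{(D_0, I_0)}(P^\bullet/(q-1)) \to D_{\overline{K}^{\bullet}_J}^{(D_0, J_0)}(P^\bullet/(q-1)), \]
where $I_0 = I/(q-1)$ and $J_0 = J/(q-1)$ carry their inherited pd-structures. By the proof of Theorem~\ref{qCrysCrysComp}, this is the natural change-of-pd-base map $R\Gamma_{\crys}(R/(D_0, I_0)) \to R\Gamma_{\crys}(\widetilde R/(D_0, J_0))$.

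Finally, this classical comparison is an isomorphism: since $D_0$ is $p$-complete and $p$-torsionfree (condition (3) of Definition~\ref{DefqPD}) and $I_0, J_0$ carry pd-structures, the smooth $D_0/J_0$-algebra $\widetilde R$ admits a $p$-completely smooth $D_0$-lift $\mathcal R$, which is automatically also a lift of $R$ across $D_0 \twoheadrightarrow D_0/I_0$. Both crystalline cohomology groups compute the de Rham cohomology $R\Gamma_{\mathrm{dR}}(\mathcal R/D_0)$ of this common lift, and the constructed comparison map is the identity under this identification. The main obstacle is checking this last compatibility, i.e.\ that the map induced by $F$ at the level of \v{C}ech-Alexander complexes really does correspond to the identity via de Rham; this follows from naturality of the pd-envelope construction and uniqueness up to quasi-isomorphism of smooth deformations.
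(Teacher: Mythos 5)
Your construction of the functor $F\colon (\widetilde R/D)_{\qc}\to (R/D)_{\qc}$ (sending $(E,K)$ to $(E,K')$ with $K'$ the $q$-PD ideal generated by $K$ and $IE$) and the resulting map $q\Omega_{R/D}\to q\Omega_{\widetilde R/D}$ match the paper's first step, and your $\gamma$-stability check is sound. (One small correction: $K'$ should be the \emph{derived $(p,[p]_q)$-complete} ideal generated by $K$ and $IE$, not just the ordinary sum; your criterion from Remark~\ref{qPDelements} applies to generators and \cref{qPDCommAlg}(4) then handles the completion.)

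However, you miss the decisive observation, which makes everything after the first paragraph unnecessary. Pick, as you do, a single ind-smooth $\delta$-$D$-algebra $P$ surjecting onto $\widetilde R$ (and hence onto $R$). Inspecting Lemma~\ref{qPDEnvConstruct}, the underlying ring of the $q$-PD envelope is $E = P\{\frac{\phi(x_1)}{[p]_q},\ldots,\frac{\phi(x_r)}{[p]_q}\}^\wedge$, which depends only on the $\delta$-$D$-algebra $P$ and on the extra generators $x_1,\ldots,x_r$ of the kernel beyond the base pd-ideal — it does \emph{not} see whether the base $q$-PD ideal is $J$ or $I$. Since $J\subset I$ and $\ker(P\to R)=\ker(P\to \widetilde R)+IP$, the same sequence $x_1,\ldots,x_r$ serves for both kernels (relative to $J$ and relative to $I$, respectively), and the regularity condition (flatness of a Koszul complex) is independent of the chosen base ideal. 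Therefore $D_{K^\bullet_J,q}(P^\bullet)$ and $D_{K^\bullet_I,q}(P^\bullet)$ are \emph{literally the same} cosimplicial $\delta$-$D$-algebra; only the marked $q$-PD ideals differ. The map you construct by universality is thus the identity, and the theorem follows with no further work.

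Your fallback route (reduce mod $q-1$ via \cref{qPDCommAlg}(2) and invoke classical invariance of crystalline cohomology under pd-thickening of the base) is not wrong in principle — indeed that classical result is itself proved by exactly the observation above — but as written it leaves a genuine gap. You need to verify that the cosimplicial comparison map you built, after reduction mod $q-1$, induces a quasi-isomorphism on totalizations, and you dispatch this by asserting that it ``corresponds to the identity via de Rham'' and follows from ``naturality of the pd-envelope construction and uniqueness up to quasi-isomorphism of smooth deformations.'' That is an acknowledgment of the missing step, not a proof of it; establishing the required compatibility between the comparison map and the de~Rham identifications would take real work (along the lines of \cref{ComparisonCrit}). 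The shortcut above renders the whole detour moot and is what the paper actually does.
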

\begin{proof}
Let us first observe that if $(E,K) \in (\widetilde{R}/D)_{\qc}$, then the natural map $E \to E/K \simeq \widetilde{R} \to R$ is surjective and a $q$-PD thickening: surjectivity follows from that of $\widetilde{R} \to R$ (which holds true as it does so modulo $p$), while the rest follows as the kernel can be described as the smallest derived $(p,[p]_q)$-complete ideal of $E$ generated by $K$ and $IE$.  This construction gives a functor $(\widetilde{R}/D)_{\qc} \to (R/D)_{\qc}$ that does not change the underlying $\delta$-$D$-algebra, and thus yields a canonical map $q\Omega_{\widetilde{R}/D} \to q\Omega_{R/D}$. It now follows by inspection of the proof of Lemma~\ref{qPDEnvConstruct} that the \v{C}ech-Alexander complex used to compute $q\Omega_{\widetilde{R}/D}$ also computes $q\Omega_{R/D}$. 
\end{proof}

\begin{theorem}[$q$-crystalline and prismatic cohomology]
\label{QCrysPrism} Let $R$ be a $p$-completely smooth $D/I$-algebra. Let $R^{(1)}$ be the $p$-completely smooth $D/([p]_q)$-algebra defined via $p$-completed base change along the map $D/I \to D/([p]_q)$ induced by $\phi_D$. Writing $\Prism_{R^{(1)}/D}$ for the prismatic cohomology of $R^{(1)}$ relative to the bounded prism $(D,[p]_q)$, there is a canonical isomorphism $\Prism_{R^{(1)}/D} \simeq q\Omega_{R/D}$.
\end{theorem}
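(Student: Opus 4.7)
Following the template of Theorem~\ref{CrysComp}, the plan is to exhibit two cosimplicial $\delta$-$D$-algebras $C^\bullet$ and $E^\bullet$ computing $\Prism_{R^{(1)}/D}$ and $q\Omega_{R/D}$ respectively, together with a relative-Frobenius map $C^\bullet\to E^\bullet$ that becomes a quasi-isomorphism on totalizations. I begin by choosing a surjection $P_0\to R$ where $P_0$ is the $(p,I)$-completion of a polynomial $D$-algebra on sufficiently many generators, letting $P$ be the $(p,[p]_q)$-completed free $\delta$-$D$-algebra on $P_0$, and letting $P^\bullet$ be the \v{C}ech nerve of $D\to P$ in $(p,[p]_q)$-complete $\delta$-$D$-algebras. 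Let $J^\bullet\subset P^\bullet$ be the kernel of the canonical $D$-algebra surjection $P^\bullet\twoheadrightarrow R$ (sending the generators of $P_0^\bullet$ to their images in $R$ and all $\delta$-iterates to zero). Then by Construction~\ref{CechAlexanderqCrys} together with Lemma~\ref{qPDEnvConstruct}(1), the $(p,[p]_q)$-completely flat cosimplicial $\delta$-$D$-algebra
\[
E^\bullet \;:=\; D_{J^\bullet,q}(P^\bullet) \;=\; P^\bullet\{\phi(J^\bullet)/[p]_q\}^{\wedge}
\]
has totalization $q\Omega_{R/D}$.

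On the prismatic side, I set $P^{(1)} := P\widehat{\otimes}_{D,\phi_D}D$, with induced surjection $P^{(1)}\twoheadrightarrow R^{(1)}$ whose kernel is the base change $J^{(1)}\subset P^{(1)}$ of $J$ along Frobenius. By Construction~\ref{PrismaticFunctorialCC} and Corollary~\ref{PrismaticEnvSmooth}, the cosimplicial prismatic envelope
\[
C^\bullet \;:=\; P^{(1),\bullet}\{J^{(1),\bullet}/[p]_q\}^{\wedge}
\]
computes $\Prism_{R^{(1)}/D}$. The relative Frobenius $\phi_{P/D}\colon P^{(1)}\to P$ sends $J^{(1)}$ into $\phi(J)P$, hence induces a map $C^\bullet\to E^\bullet$ of cosimplicial $\delta$-$D$-algebras; by base change for prismatic envelopes (Proposition~\ref{qPDEnvRegular}), this map identifies $E^\bullet$ termwise with $C^\bullet\widehat{\otimes}^L_{P^{(1),\bullet}}P^\bullet$. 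Frobenius equivariance of the resulting comparison will be automatic since it is built from relative Frobenius of $\delta$-rings.

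The main obstacle is to show that this relative-Frobenius base change is a quasi-isomorphism on totalizations. My plan is to reduce this to the crystalline case proved in Theorem~\ref{CrysComp} via Lemma~\ref{qPDCommAlg}: the $(p,[p]_q)$-completed base change along $D\to D/(q-1)$ is conservative on $(p,[p]_q)$-complete complexes by part (2) and commutes with totalizations of cosimplicial complexes in $D^{\ge 0}$ by part (3), so it suffices to verify the claim after reducing modulo $q-1$. After this reduction $E^\bullet\widehat{\otimes}_D^L D/(q-1)$ becomes a crystalline \v{C}ech--Alexander complex by Theorem~\ref{qCrysCrysComp}, while $C^\bullet\widehat{\otimes}_D^L D/(q-1)$ becomes a prismatic \v{C}ech--Alexander complex over the crystalline prism $(D/(q-1),(p))$ (using base change for prismatic cohomology together with the identification $[p]_q\equiv p\pmod{q-1}$), and the comparison map reduces precisely to the relative-Frobenius quasi-isomorphism established in the proof of Theorem~\ref{CrysComp} via Lemma~\ref{CosimpHT}.
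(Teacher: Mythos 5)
Your proposal is an interesting attempt to prove the theorem via explicit \v{C}ech--Alexander complexes and relative Frobenius, mirroring the $I=(p)$ case of Theorem~\ref{CrysComp}, rather than the paper's route (which constructs the comparison map abstractly from the functor $(R/D)_{\qc} \to (R^{(1)}/D)_{\Prism}$, reduces to $I=(q-1)$ via Theorem~\ref{qCrysInvqPD}, and then invokes the \emph{statement} of Theorem~\ref{CrysComp} as a black box after base change). Your strategy, if sound, would be more self-contained and would bypass Theorem~\ref{qCrysInvqPD}; but there is a genuine gap in the identification $\lim C^\bullet \simeq \Prism_{R^{(1)}/D}$.

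The problem is a mismatch between the two \v{C}ech--Alexander constructions. On the $q$-crystalline side, Construction~\ref{CechAlexanderqCrys} does use the \emph{full} kernel of $P^\bullet\to R$, since the objects of $(R/D)_{\qc}$ are pairs $(E,J_E)$ with an actual \emph{isomorphism} $E/J_E\cong R$. But on the prismatic side, the objects of $(R^{(1)}/D)_\Prism$ are diagrams $B\to B/[p]_qB\gets R^{(1)}$ where $R^{(1)}\to B/[p]_qB$ is merely a map; accordingly, Construction~\ref{PrismaticFunctorialCC} (via Construction~\ref{cons:weakinitialsite}) forms the prismatic envelope of the \emph{intermediate} ideal $J_0 B\subset B$ -- the one generated by $\ker(B_0\to R)$ -- and \emph{not} the full kernel of $B\to R$, which additionally contains the free generators $\delta^n(x_s)$ for $n\geq 1$. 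These two envelopes differ: the full-kernel envelope freely adjoins $\frac{\delta^n(x_s)}{[p]_q}$ as well, and, more importantly, the full-kernel envelope is not weakly initial in $(R^{(1)}/D)_\Prism$. Weak initiality of $P^{(1)}\{J^{(1)}/[p]_q\}^\wedge$ would require, for any prism $(B,[p]_qB)$ over $(D,[p]_q)$ with a map $R^{(1)}\to B/[p]_qB$, lifting the images of the $x_s$ to elements $y_s\in B$ with $\delta^n(y_s)\in [p]_qB$ for all $n\geq 1$; but this is a ``rank one modulo $[p]_q$'' condition that is not attainable in general (varying the lift $y_s$ by $[p]_q h$ changes $\delta(y_s)$ by $h^p\cdot\delta([p]_q)$ plus lower-order terms modulo $[p]_q$, and the $p$-th-power map is not surjective). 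Consequently, the totalization of your $C^\bullet$ need not compute $\Prism_{R^{(1)}/D}$, and the invocation of Construction~\ref{PrismaticFunctorialCC} and Corollary~\ref{PrismaticEnvSmooth} in your second paragraph does not say what you need.

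To repair this, you would need to run the construction on the prismatic side with the intermediate ideal $J_0^{(1)}P^{(1),\bullet}$ (so that $\lim C^\bullet$ is honestly $\Prism_{R^{(1)}/D}$), at which point $E^\bullet$ is no longer literally the relative-Frobenius base change of $C^\bullet$ termwise, and the appeal to Lemma~\ref{CosimpHT} has to be reworked. This is essentially why the paper avoids any explicit \v{C}ech--Alexander comparison here: the abstract comparison map $\Prism_{R^{(1)}/D}\to q\Omega_{R/D}$ (constructed by observing that any $q$-PD pair $(E,J_E)$ gives a prism $(E,[p]_qE)$ with a map $R^{(1)}\to E/[p]_qE$ by linearizing $\phi_E$) is manifestly well-defined and functorial, and the isomorphy can then be checked after base change along $D\to D/(q-1)$ using Theorem~\ref{CrysComp} as a black box, after first invoking Theorem~\ref{qCrysInvqPD} to reduce to $I=(q-1)$ so that $D/I$ and $D/(q-1,p)$ line up correctly with the crystalline comparison's hypotheses. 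I would also flag, though it is a lesser issue, that your invocation of ``the relative-Frobenius quasi-isomorphism established in the proof of Theorem~\ref{CrysComp} via Lemma~\ref{CosimpHT}'' conflates the $I=(p)$ and general-$I$ cases of that theorem; in the general case the paper takes a detour via lifting $R$, and your reduction mod $(q-1)$ lands in the general case (since $D/I$ need not equal $D/(q-1,p)$), so Lemma~\ref{CosimpHT} should be invoked directly after a further reduction mod $p$ rather than by appeal to the proof of Theorem~\ref{CrysComp}.
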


\begin{proof}
Let us first describe a canonical map $\Prism_{R^{(1)}/D} \to q\Omega_{R/D}$. For this, it is enough to explain how each $(E,J) \in (R/D)_{\qc}$ functorially yields an object $(E \to E/([p]_q) \gets R^{(1)})$ of $(R^{(1)}/D)_\Prism$. As $E\to E/J \cong R$ is a $q$-PD thickening, we have $\phi_E(J) \subset [p]_q E$, so $\phi_E$ yields a map $R \cong E/J \to E/([p]_q)$ that is linear over $\phi_D$. By linearization, this can be viewed as a $D$-linear map $R^{(1)} \to E/([p]_q)$, which then gives the desired object $(E\to E/([p]_q) \gets R^{(1)})$ of $(R^{(1)}/D)_\Prism$.

We have constructed the map $\alpha:\Prism_{R^{(1)}/D} \to q\Omega_{R/D}$. It remains to check that $\alpha$ is an isomorphism. We are now allowed to make choices. In particular, using Theorem~\ref{qCrysInvqPD}, we may assume $I=(q-1)$; here we implicitly use that $R$ lifts to a $p$-completely smooth algebra along $D/(q-1) \to D/I$. To check $\alpha$ is an isomorphism, it suffices to do so after $(p,[p]_q)$-completed derived base change along $D \to D/(q-1)$ by Lemma~\ref{qPDCommAlg} (2). Now $D \to D/(q-1)$ refines to a map $(D,([p]_q)) \to (D/(q-1),(p))$ of prisms by our assumptions on $D$. So base change for prismatic cohomology identifies $\Prism_{R^{(1)}/D} \widehat{\otimes}^L_D D/(q-1)$ with $\Prism_{\overline{R}^{(1)}/(D/(q-1))}$, where $\overline{R}^{(1)}$ is the smooth $D/(p, q-1)$-algebra defined by $R^{(1)}$ via base change. Using base change for prismatic cohomology again, this can also be written as $\widehat{\phi_D^*} \Prism_{\overline{R}/(D/(q-1))}$, where $\overline{R}$ is the $D/(p, q-1)$-algebra defined by $R$ via base change (i.e, $\overline{R} = R/p$, so $\overline{R}^{(1)}$ is the Frobenius twist of $\overline{R}$ relative to $D/(q-1,p)$, as the notation suggests). But Theorem~\ref{CrysComp} identifies this with $R\Gamma_\crys(\overline{R}/(D/(q-1)))$, which coincides with $R\Gamma_\crys(R/(D/(q-1)))$ and hence also $q\Omega_{R/D} \widehat{\otimes}_D^L D/(q-1)$ by Theorem~\ref{qCrysCrysComp}.
\end{proof}

\subsection{$q$-de Rham cohomology}
\label{ss:qdR}

In this section, we fix a $q$-PD pair $(D,I)$ with $D$ being $A$-flat as well as a $p$-completely smooth $D/I$-algebra $R$.

\begin{construction}[The $q$-de Rham complex of a framed $D$-algebra]
\label{FramedqdR}
A {\em framed} $D$-algebra is a pair $(P,S)$, where $P$ is a $p$-completely ind-smooth $D$-algebra and $S \subset P$ is a set of $p$-completely ind-\'etale co-ordinates, i.e., the map $\square:D[S] := D[\{X_s\}_{s \in S}] \to P$ is $p$-completely ind-\'etale. There are unique (and compatible) $\delta$-$D$-algebra structures on $D[S]$ and $P$ determined by the requirement $\delta(X_s) = 0$ for all $s \in S$. For each $s \in S$, there is a unique $\delta$-$D$-algebra automorphism $\gamma_s$ of $D[S]$ given by scaling $X_s \mapsto qX_s$ and $X_t \mapsto X_t$ for $t \neq s$. As this automorphism is congruent to the identity modulo the topologically nilpotent element $qX_s-X_s$, it extends uniquely to a $\delta$-$D$-algebra automorphism $\gamma_s$ of $P$ that is also congruent to the identity modulo $qX_s-X_s$. Since $D$ was $A$-flat, the same holds true for $P$. In particular, $qX_s-X_s$ is a nonzerodivisor on $P$, so we obtain a $q$-derivation $\nabla_{q,s}:P \to P$ given by the formula
\[\nabla_{q,s} := \frac{\gamma_s(f) - f}{qX_s-X_s}.\]
Note that $\nabla_{q,s}$ lifts the derivative $\frac{\partial}{\partial X_s}(-)$ on both $D[S]/(q-1)$ and $P/(q-1)$: this is clear for $D[S]$ by explicit computation and follows for $P$ by formal \'etaleness\footnote{More precisely, as $(q-1)$ is a nonzerodivisor on $P$, it suffices to check that the endomorphism $f \mapsto \gamma_s(f) - f$ of $P$ coincides with $X_s \cdot \frac{\partial}{\partial X_s}(-)$ modulo $(q-1)^2$, which follows from formal \'etaleness of $D[S] \to P$ and the interpretation of the endomorphism $f \mapsto \gamma_s(f) - f$ of $D[S]/(q-1)^2$ as the element of $H^0(T_{D[S]/D} \otimes_D (q-1)/(q-1)^2)$ classifying the infinitesimal automorphism $\gamma_s$ of the square-zero extension $D[S]/(q-1)^2 \to D[S]/(q-1)$.}. Varying $s$ we can assemble these $q$-derivations into a map
\[ \nabla_q:P \to \Omega^1_{P/S} := \widehat{\bigoplus}_{s \in S} P dX_s\]
given by the rule $\nabla_q(f) := \sum \nabla_{q,s}(f) dX_s$. Taking the Koszul complex on the commuting endomorphisms $\nabla_{q,s}$, we obtain the $q$-de Rham complex
\[ q\Omega_{P/D}^{\ast,\square} := \Big(P \xrightarrow{\nabla_q} \Omega^1_{P/D} \xrightarrow{\nabla_q} \Omega^2_{P/D} \to ...\Big),\]
regarded as a chain complex of $D$-modules. This construction has the following functoriality property: given two framed $D$-algebras $(P,S)$ and $(P',S')$ as well as a map $P \to P'$ of $D$-algebras carrying $S$ into $S'$, we obtain an induced morphism $q\Omega_{P/D}^{\ast,\square} \to q\Omega_{P'/D}^{\ast,\square}$ of chain complexes.
\end{construction}

\begin{construction}[The $q$-de Rham complex of a framed $q$-PD pair]
\label{FramedqPDqdR}
A {\em framed $q$-PD datum} is a triple $(P,S,J)$, where $(P,S)$ is a framed $D$-algebra as in Construction~\ref{FramedqdR} and $J \subset P$ is the kernel of a $D$-algebra surjection $P \to R$.   Let $D_{J,q}(P)$ be the $q$-PD envelope as in Construction~\ref{qPDEnvConstruct}. We have the following:
\begin{lemma}
For each $s \in S$, the automorphism $\gamma_s$ of $P$ extends uniquely to an automorphism of $D_{J,q}(P)$ that is congruent to the identity modulo $qX_s - X_s$.
\end{lemma}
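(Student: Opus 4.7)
My plan is to produce $\tilde{\gamma}_s$ via the universal property of the $q$-PD envelope (\cref{qPDEnvConstruct}(2)) and then verify the congruence by examining its effect on a generating set over $P$.

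\textbf{Step 1: Extension.} First I would show $\gamma_s(J) = J$. By construction $\gamma_s(f) - f = (qX_s - X_s)\nabla_{q,s}(f) \in (q-1)P$ for every $f \in P$, so $\gamma_s$ is the identity modulo $q-1$. Since $P \to R$ is a $D/I$-algebra map we have $IP \subset J$, and $(q-1) \in I$ by \cref{SmallLargeqPD}; hence $\gamma_s(J) \subset J + (q-1)P \subset J$, and symmetrically for $\gamma_s^{-1}$. Thus $\gamma_s$ is an endomorphism of the $\delta$-pair $(P,J)$ over $(D,I)$. Composing with $P \to D_{J,q}(P)$ and invoking the universal property from \cref{qPDEnvConstruct}(2) produces a unique $\delta$-$D$-algebra endomorphism $\tilde{\gamma}_s$ of $D_{J,q}(P)$ extending $\gamma_s$; applying the same argument to $\gamma_s^{-1}$ and using uniqueness shows $\tilde{\gamma}_s$ is an automorphism.

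\textbf{Step 2: Congruence.} To establish $\tilde{\gamma}_s \equiv \mathrm{id} \pmod{qX_s - X_s}$, I would consider
\[ S := \{f \in D_{J,q}(P) \mid (\tilde{\gamma}_s - \mathrm{id})(f) \in (qX_s - X_s) D_{J,q}(P)\}. \]
Using the identity $(\tilde{\gamma}_s - \mathrm{id})(fg) = f(\tilde{\gamma}_s - \mathrm{id})(g) + (\tilde{\gamma}_s - \mathrm{id})(f)\tilde{\gamma}_s(g)$ together with the corresponding additive identity for $\delta$, one checks that $S$ is a $\delta$-$D$-subalgebra; the key input is $\delta(qX_s - X_s) \in (qX_s - X_s) P$, which reduces (via $\delta(X_s)=0$) to the direct computation $\delta(q-1) \in (q-1)\mathbf{Z}_p\llbracket q-1\rrbracket$. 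The inclusion $P \subset S$ is immediate. Since $D_{J,q}(P)$ is generated as a $(p,[p]_q)$-complete $\delta$-$D$-algebra over $P$ by elements of the form $\phi(x)/[p]_q$ with $x$ a local generator of $J$ (see \cref{qPDEnvConstruct}(1)), it suffices to verify these lie in $S$. Writing $\gamma_s(x) - x = (qX_s - X_s)h$ for some $h \in P$ and using $\phi(qX_s - X_s) = (q-1)[p]_q X_s^p$, a one-line computation yields
\[ \tilde{\gamma}_s\!\left(\tfrac{\phi(x)}{[p]_q}\right) - \tfrac{\phi(x)}{[p]_q} = (qX_s - X_s) X_s^{p-1} \phi(h), \]
so $\phi(x)/[p]_q \in S$. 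Closure of $S$ under $(p,[p]_q)$-adic limits follows from the $(p,[p]_q)$-complete flatness of $D_{J,q}(P)$ over $D$ (\cref{qPDEnvConstruct}(1)), giving $S = D_{J,q}(P)$ as desired.

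\textbf{Expected obstacle.} The main bookkeeping hurdle will be verifying closure of $S$ under $\delta$ and under $(p,[p]_q)$-adic limits; both of these reduce to the single clean fact $\phi(qX_s - X_s) \in [p]_q \cdot (qX_s - X_s) \cdot P$, after which everything is routine. I do not anticipate any conceptual difficulty beyond carefully tracking these completeness and $\delta$-stability conditions.
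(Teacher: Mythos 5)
Your proof is correct and is essentially the approach the paper takes: existence of the extension comes from the universal property of the $q$-PD envelope, and the congruence comes down to the key computation $\phi(qX_s-X_s) = (q-1)[p]_q X_s^p$. You use a slightly more elementary route into the universal property (showing $\gamma_s(J)=J$ directly, rather than checking as the paper does that $\phi(\gamma_s(J))\subset[p]_q D_{J,q}(P)$ and landing in the largest $q$-PD ideal of \cref{SmallLargeqPD}), and you spell out the $\delta$-subalgebra/generating-set argument for the congruence that the paper compresses into a single phrase, but the heart of the argument is the same.
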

\begin{proof}
We will check that $\gamma_s$ extends to $D_{J,q}$ as an endomorphism that is congruent to the identity modulo $qX_s-X_s$; this will force the extension to be an automorphism, proving the lemma. Consider the composition $P \xrightarrow{\gamma_s} P \to D_{J,q}(P)$. As all rings in sight are $A$-flat, it suffices to show that this composition carries $\phi(f)$ into $[p]_qD_{J,q}(P)$ for all $f \in J$. As $\gamma_s$ is congruent to the identity modulo $qX_s-X_s$, for any $f \in P$, we can write
\[ \gamma_s(f) = f + (q-1) X_s g\]
for suitable $g \in P$. Applying $\phi$ shows 
\[ \phi(\gamma_s(f)) = \phi(f) + [p]_q \cdot (q-1) \cdot X_s^p \cdot \phi(g).\] 
As $\phi(f) \in [p]_q D_{J,q}(P)$ for $f \in J$, the same must hold true for $\phi(\gamma_s(f))$ by the previous formula. This proves that $\gamma_s$ extends to $D_{J,q}$ as an endomorphism. In fact, this formula also shows that the resulting endomorphism of $D_{J,q}$ is congruent to the identity modulo $qX_s-X_s$, as wanted. 
\end{proof}
We can thus define $q$-derivatives $\nabla_{q,s}$ of $D_{J,q}$ by the same formula as in Construction~\ref{FramedqdR}; note that these derivatives continue to lift the operator $\frac{\partial}{\partial X_s}$ on $D_{J,q}/(q-1)$: this follows by identifying the latter as the pd-envelope of of $P/(q-1) \to R$ and reducing to the analogous statement for $P$ using torsionfreeness and density. Taking the Koszul complex of the commuting endomorphisms $\nabla_{q,s}$ as in Construction~\ref{FramedqdR}, we obtain a $q$-dR complex
\[ q\Omega_{D_{J,q}(P)/D}^{\ast,\square} := \Big(D_{J,q}(P) \xrightarrow{\nabla_q} D_{J,q}(P) \widehat{\otimes}_P \Omega^1_{P/D} \xrightarrow{\nabla_q} D_{J,q}(P) \widehat{\otimes}_P \Omega^2_{P/D} \to...\Big), \]
regarded as a chain complex of $D$-modules. The construction $(P,S,J) \mapsto q\Omega_{D_{J,q}(P)/D}^{\ast,\square}$ has the following functoriality property: given two framed $q$-PD data $(P,S,J)$ and $(P',S',J')$ and a map $P \to P'$ of $D$-algebras carrying $S$ into $S'$ and $J$ into $J'$, we obtain an induced morphism $q\Omega_{D_{J,q}(P)/D}^{\ast,\square} \to q\Omega_{D_{J',q}(P')/D}^{\ast,\square}$. To see this, it suffices to show that for any $t \in S' - \mathrm{im}(S)$, we have an equality $\gamma_t \circ \alpha = \alpha$ of morphisms, where $\alpha:D_{J,q}(P) \to D_{J',q}(P')$ is the map induced by the given map $P \to P'$ and $\gamma_t$ denotes the endomorphism of $D_{J',q}(P')$ attached to $t \in S'$ coming from Construction~\ref{FramedqPDqdR}. This identity holds true after evaluation on each of the \'etale co-ordinates $X_s \in P$ as $t \in S' - \mathrm{im}(S)$; the rest follows by $(p,I)$-complete \'etaleness of the framing $A[\{X_s\}]_{s \in S} \to P$, the $[p]_q$-torsionfreeness of $D_{J',q}(P')$, and the fact that $D_{J,q}(P)$ is generated as a $(p,I)$-complete $\delta$-$A$-algebra by elements of the form $f/[p]_q$ for suitable $f \in P$.
\end{construction}

\begin{theorem}[$q$-de Rham and $q$-crystalline comparison]
\label{qFramedPDqdR}
Let $(P,S,J)$ be a framed $q$-PD datum in the sense of Construction~\ref{FramedqPDqdR}. Then there is a canonical quasi-isomorphism $q\Omega_{R/D} \simeq q\Omega_{D_{J,q}(P)/D}^{\ast,\square}$.
\end{theorem}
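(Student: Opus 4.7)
\medskip

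\noindent\textbf{Proof plan.} The plan is to construct a canonical comparison map and then check it is an isomorphism after reduction modulo $(q-1)$. Thanks to Lemma~\ref{qPDCommAlg}(2), the base change functor $-\widehat{\otimes}_D^L D/(q-1)$ is conservative on derived $(p,[p]_q)$-complete complexes, so once a map is constructed, we may test the isomorphism property over $D/(q-1)$. On the right hand side, reducing modulo $(q-1)$ identifies $q\Omega_{D_{J,q}(P)/D}^{\ast,\square} \widehat{\otimes}_D^L D/(q-1)$ with the classical $p$-adic de Rham complex of the pd-envelope $D_{J^{cl}}^{PD}(P/(q-1))$ over $D/(q-1)$ (using Lemma~\ref{qPDEnvConstruct}(3) for the identification of the underlying ring, and noting that $\nabla_{q,s}$ specializes to $\partial/\partial X_s$ by Construction~\ref{FramedqPDqdR}). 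On the left, Theorem~\ref{qCrysCrysComp} identifies $q\Omega_{R/D} \widehat{\otimes}_D^L D/(q-1)$ with $R\Gamma_{\crys}(R/(D/(q-1)))$. By the classical crystalline Poincar\'e lemma (Berthelot), the de Rham complex of a framed pd-envelope computes crystalline cohomology, so once our map is constructed it will agree with this classical comparison modulo $(q-1)$ and thus be an isomorphism.

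\medskip

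For the construction of the comparison map, I would use a \v{C}ech-Alexander argument in the style of Construction~\ref{CechAlexanderqCrys}. Writing $P^\bullet$ for the $(p,[p]_q)$-completed \v{C}ech nerve of $D \to P$ and $J^\bullet = \ker(P^\bullet \to R)$, we have $q\Omega_{R/D} \simeq \mathrm{Tot}\bigl(D_{J^\bullet,q}(P^\bullet)\bigr)$. The key local claim (a \emph{$q$-Poincar\'e lemma}) is that for each framed $q$-PD datum $(P,S,J)$, there is a canonical quasi-isomorphism of cosimplicial objects
\[
q\Omega^{\ast,\square}_{D_{J,q}(P)/D} \xrightarrow{\sim} \mathrm{Tot}\bigl(D_{J^\bullet,q}(P^\bullet)\bigr).
\]
To build this map, exploit the \'etale framing: the two coface maps $P \rightrightarrows P \widehat{\otimes}_D P$ differ on each generator $X_s$ by the element $\eta_s := 1 \otimes X_s - X_s \otimes 1$, and the ideal they generate (together with $JP \widehat{\otimes}_D P$) has all required $q$-divided powers in $D_{J^2,q}(P \widehat{\otimes}_D P)$. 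The Koszul differentials $\nabla_{q,s}$ then fit, via the endomorphism $\gamma_s$ and the formula $\gamma_s(f) - f = (qX_s - X_s) \nabla_{q,s}(f)$, into a morphism from the Koszul complex into the cosimplicial \v{C}ech-Alexander complex in a standard fashion (the $q$-variant of the map sending $f_I \, dX_I \mapsto f_I \prod_{s \in I} \eta_s$, corrected for the $q$-divided power structure).

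\medskip

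To verify that this constructed map is a quasi-isomorphism, reduce to the polynomial framing $P_0 = D\langle X_1,\ldots,X_n\rangle^\wedge$ with the obvious framing, then pass to the general ind-\'etale framing $P_0 \to P$ using $(p,[p]_q)$-complete \'etale base change (both sides satisfying the appropriate \'etale localization, since $q$-PD envelopes commute with $(p,[p]_q)$-completely flat base change by Lemma~\ref{qPDCommAlg}(5)). For the polynomial case, apply the conservativity Lemma~\ref{qPDCommAlg}(2) to reduce modulo $(q-1)$, where the map becomes the classical Poincar\'e lemma quasi-isomorphism between the de Rham complex of the free pd-envelope $D_{J^{cl}}^{PD}(P_0/(q-1))$ and the \v{C}ech-Alexander complex $D^{PD}_{J_0^{\bullet,cl}}(P_0^\bullet/(q-1))$; this is classical (cf. Berthelot). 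The main obstacle is making sure that the Koszul-to-\v{C}ech-Alexander map built above is well-defined as a map into the $q$-PD envelope (not merely its $(q-1)$-adic reduction) --- i.e., that the formulas using $\eta_s$ and its $q$-divided powers respect all relations imposed by Construction~\ref{FramedqPDqdR} --- and that it is functorial in the framing so as to descend from the polynomial to the general ind-\'etale case. This is the only delicate step; the global strategy is otherwise dictated by the conservativity principle and the classical crystalline story.
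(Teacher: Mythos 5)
Your high-level strategy is sound and shares a key element with the paper — reducing modulo $(q-1)$ via conservativity of $-\,\widehat{\otimes}_D^L D/(q-1)$ (Lemma~\ref{qPDCommAlg}(2)) to check a quasi-isomorphism. However, the way you propose to \emph{construct} the comparison map is genuinely different from what the paper does, and it is where the gap lies. You aim to build an explicit cochain-level map from the Koszul complex $q\Omega^{\ast,\square}_{D_{J,q}(P)/D}$ into the \v{C}ech--Alexander totalization $\mathrm{Tot}(D_{J^\bullet,q}(P^\bullet))$ via a ``$q$-exponential''-type formula in $\eta_s = 1\otimes X_s - X_s\otimes 1$, and you correctly flag that verifying this to be a well-defined map of complexes into the $q$-PD envelope is ``the only delicate step.'' But you don't supply that verification, and it is not routine: the target differential is cosimplicial (alternating sums of coface maps), the source is a Koszul differential built from the operators $\nabla_{q,s}$, and matching them exactly would require a $q$-deformed version of the divided-power exponential identity, which does not follow from what is assembled here. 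Moreover, your passage to the polynomial case $P_0 \to P$ needs a localization statement for the $q$-de Rham complex under ind-\'etale framings that is not stated in the paper.

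The paper's proof avoids the explicit map entirely by promoting the single complex $q\Omega^{\ast,\square}_{D_{J,q}(P)/D}$ to a \emph{cosimplicial complex}: it applies Construction~\ref{FramedqPDqdR} to the whole cosimplicial framed $q$-PD datum $(P^\bullet,S^\bullet,J^\bullet)$ coming from the \v{C}ech nerve, producing the first-quadrant bicomplex $M^{\bullet,\ast} = q\Omega^{\ast,\square}_{D_{J^\bullet,q}(P^\bullet)/D}$. Then it invokes two separate degeneration statements: (a) each row $M^{\bullet,i}$ with $i>0$ is \emph{cosimplicially contractible} (a standard fact about forms on \v{C}ech nerves of affines, independent of the $q$-structure), which shows $\mathrm{Tot}(M^{\bullet,\ast}) \simeq \mathrm{Tot}(M^{\bullet,0}) = q\Omega_{R/D}$; and (b) all face maps between columns $M^{i,\ast}\to M^{j,\ast}$ are quasi-isomorphisms, which follows by reducing modulo $(q-1)$ to the classical Poincar\'e lemma and then using $(q-1)$-completeness — this is where your conservativity idea appears — and yields $\mathrm{Tot}(M^{\bullet,\ast}) \simeq M^{0,\ast} = q\Omega^{\ast,\square}_{D_{J,q}(P)/D}$. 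Combining (a) and (b) gives the quasi-isomorphism formally, with no explicit cochain map ever written. Point (a) is the ingredient missing from your plan: without it, there is no mechanism to relate $\mathrm{Tot}(D_{J^\bullet,q}(P^\bullet))$ (the $0$-th row) to the full bicomplex total, which is why you were forced to try to build an explicit map and got stuck. If you want to salvage your route, you would either supply the $q$-exponential formulas and verify the chain-map identities, or (better) adopt the paper's two-spectral-sequence argument on the bicomplex.
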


The proof given below follows the cosimplicial proof of the crystalline-de Rham comparison in \cite[Tag 07LG]{Stacks}.

\begin{proof}
Consider the \v{C}ech nerve $P^\bullet$ of $D \to P$. The \'etale co-ordinates $S$ on $P = P^0$ naturally define \'etale co-ordinates $S^\bullet$ on $P^\bullet$ by taking coproducts, yielding a cosimplicial framed $D$-algebra $(P^\bullet,S^\bullet)$. Likewise, letting $J^\bullet \subset P^\bullet$ be the cosimplicial ideal defined as the kernel of the augmentation $P^n \to P \to R$, we obtain a cosimplicial framed $q$-PD datum $(P^\bullet, S^\bullet, J^\bullet)$. Applying Construction~\ref{FramedqPDqdR}, we obtain a cosimplicial complex $M^{\bullet,\ast} := q\Omega^{\ast,\square}_{D_{J^\bullet,q}(P^\bullet)}$. The cosimplicial $D$-module $M^{\bullet,i}$ given by the ``$i$-th row'' is acyclic (even homotopy equivalent to $0$ as a cosimplicial module) for $i > 0$ by a standard argument (see \cite[Tag 07JP]{Stacks} for the analogous assertion in crystalline cohomology). On the other hand, for any face map $P^i \to P^j$ in the cosimplicial ring $P^\bullet$, the induced map $M^{i,\ast} \to M^{j,\ast}$ of chain complexes is a quasi-isomorphism: this holds true modulo $(q-1)$ by the Poincar\'e lemma, and thus holds true by $(q-1)$-completeness. It formally follows that the chain complex attached to the cosimplicial $D$-module $M^{\bullet,0}$ is quasi-isomorphic to the $0$-th column $M^{0,\ast}$. The former computes $q\Omega_{R/D}$, while the latter is $ q\Omega_{D_{J,q}(P)/D}^{\ast,\square}$, so we obtain the desired quasi-isomorphism.
\end{proof}

\newpage

\section{Comparison with $A\Omega$}
\label{sec:AOmegaComp}

In this section, we fix a perfectoid field $C$ of characteristic $0$ containing $\mu_{p^\infty}$ as well as a $p$-completely smooth $\mathcal{O}_C$-algebra $R$. The goal of this section is to prove the comparison between the prismatic cohomology of $R$ and the $A\Omega$-complexes defined in \cite{BMS1} via the intermediary of $q$-de Rham cohomology. Let us introduce the relevant notation first.

\begin{notation}
Choosing a compatible system $\{\zeta_{p^n} \in \mu_{p^n}(C)\}$ of primitive $p$-power roots of $1$, we obtain the rank $1$ element $q = [\underline{\epsilon}] \in A := A_{\inf}(\mathcal{O}_C)$, where $\epsilon=(1,\zeta_p,\ldots)\in \mathcal O_C^\flat$. This allows us to view $A$ as a perfect $\delta$-$\mathbf{Z}_p\llbracket q-1\rrbracket$-algebra that is $(p,q-1)$-completely flat and thus also genuinely flat (see \cite[Lemma 5.15]{BhattCM}). Let $\xi = \phi^{-1}([p]_q)$, so $(A,\xi)$ is a $q$-PD pair corresponding to the $q$-PD thickening $A \to A/(\xi) \cong \mathcal{O}_C$. 
\end{notation}

With this notation, our goal is to show the following:

\begin{theorem}
\label{PrismaticAOmegaComp}
There is a canonical isomorphism $q\Omega_{R/A} \simeq A\Omega_R$, and thus
\[
A\Omega_R\simeq q\Omega_{R/A}\simeq \Prism_{R^{(1)}/A} = \phi_A^\ast \Prism_{R/A}\ .
\]
All these maps are isomorphisms of $E_\infty$-$A$-algebras compatible with the Frobenius. 
\end{theorem}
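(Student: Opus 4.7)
The chain of isomorphisms breaks into three pieces of which the latter two are essentially formal. The identification $\Prism_{R^{(1)}/A} \simeq \phi_A^\ast\Prism_{R/A}$ is base change for prismatic cohomology (Corollary~\ref{BaseChangePrismCoh}) along the Frobenius map of prisms $\phi:(A,(\xi)) \to (A,([p]_q))$, using that $A$ is perfect. The identification $q\Omega_{R/A} \simeq \Prism_{R^{(1)}/A}$ is Theorem~\ref{QCrysPrism} applied to the $q$-PD pair $(A,(\xi))$ (which, being a perfect prism, satisfies $A/\xi \simeq \mathcal{O}_C$). Both are visibly $\phi$-equivariant and promote to isomorphisms of $E_\infty$-$A$-algebras via the multiplicativity built into Theorem~\ref{QCrysPrism}. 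The content of the theorem is therefore to construct a canonical $\phi$-equivariant $E_\infty$-$A$-algebra isomorphism $A\Omega_R \simeq q\Omega_{R/A}$.

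To do this, I would work Zariski-locally on $\Spf R$ and choose an \'etale framing $\square:\mathcal{O}_C\langle T_1^{\pm 1},\ldots,T_d^{\pm 1}\rangle \to R$. By Elkik and Lemma~\ref{ExtendEtale}, this lifts uniquely to a $(p,\xi)$-completely \'etale $\delta$-$A$-algebra $\widetilde{R}$ with $\widetilde{R}/\xi \simeq R$ and a $\delta$-map $A\langle T_1^{\pm 1},\ldots,T_d^{\pm 1}\rangle \to \widetilde{R}$. Then $(\widetilde{R},\{T_1,\ldots,T_d\},\xi\widetilde{R})$ is a framed $q$-PD datum in the sense of Construction~\ref{FramedqPDqdR}: because $\phi(\xi)=[p]_q$, no adjoining of divided powers is needed and the $q$-PD envelope is simply $\widetilde{R}$ itself. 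Theorem~\ref{qFramedPDqdR} then supplies a canonical quasi-isomorphism $q\Omega_{R/A} \simeq q\Omega_{\widetilde{R}/A}^{\ast,\square}$, the Koszul complex of the $q$-derivations $\nabla_{q,i}$ associated to $T_1,\ldots,T_d$. On the other side, the main local computation of \cite{BMS1}, namely the identification $A\Omega_R \simeq q\Omega_R^\square$ proven in \cite[Theorem 9.4]{BMS1}, expresses $A\Omega_R$ by the same $q$-de Rham complex in these coordinates. Composing these yields a local isomorphism $A\Omega_R \simeq q\Omega_{R/A}$.

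Promoting this to a canonical global isomorphism compatible with all structures requires three checks. First, independence of the framing: on the $q$-crystalline side this is built into the framing-functoriality used in the proof of Theorem~\ref{qFramedPDqdR}, and on the $A\Omega$ side it is the coordinate-independence proven in \cite[\S 9]{BMS1}. Second, globalization: since both $A\Omega_R$ and $q\Omega_{R/A}$ are Zariski sheaves on $\Spf R$, the framing-independent local isomorphism glues to a global one. Third, $\phi$-equivariance and $E_\infty$-compatibility: both identifications should intertwine the endomorphism of the local $q$-de Rham complex induced by $\phi_A$ together with $T_i \mapsto T_i^p$ on $\widetilde{R}$. The hard part will be this last point, as the Frobenius on $A\Omega_R$ is defined via tilting and $L\eta_\mu$ applied to pro-\'etale cohomology whereas the Frobenius on $q\Omega_{R/A}$ arises from the $\delta$-structure on $q$-PD thickenings via Theorem~\ref{QCrysPrism}; matching them on the nose requires carefully tracing both constructions through the local $q$-de Rham representative. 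Once established, the strong uniqueness result of \S \ref{sec:UniqueComp} allows one to conclude that the resulting diagram of comparison isomorphisms commutes coherently as $E_\infty$-$A$-algebras.
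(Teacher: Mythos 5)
Your formal reductions at the start are correct and match the paper: $\Prism_{R^{(1)}/A} \simeq \phi_A^*\Prism_{R/A}$ is base change along $\phi_A$, and $q\Omega_{R/A} \simeq \Prism_{R^{(1)}/A}$ is Theorem~\ref{QCrysPrism}. Your observation that a $(p,\xi)$-completely \'etale $\delta$-lift $\widetilde{R}$ of $R$ gives a framed $q$-PD datum whose envelope is $\widetilde{R}$ itself is also right, and Theorem~\ref{qFramedPDqdR} then does identify $q\Omega_{R/A}$ with the Koszul complex of $q$-derivations on $\widetilde{R}$.

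However, your route to the core identification $A\Omega_R \simeq q\Omega_{R/A}$ differs substantially from the paper's, and the difference is exactly where your gap lives. You propose to cite the framing-dependent isomorphism $A\Omega_R \simeq q\Omega_R^\square$ from \cite[Theorem 9.4]{BMS1}, match $q$-de~Rham complexes on both sides, and then globalize. The paper instead avoids that theorem entirely: working on a ``very small'' $R$, it produces for each finite generating set $\Sigma \subset R^*$ a genuine $\delta$-$A_{\inf}$-algebra map $\mu_0:D_{J_\Sigma,q}(P_\Sigma)\to A_{\inf}(R_{\Sigma,\infty})$ intertwining the automorphisms $\gamma_s$ and $\sigma_s$, where the left side is the $q$-PD envelope computing $q$-crystalline cohomology and the right side is the Galois-cohomological Koszul representative of $A\Omega_R$. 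Because $\mu_0$ is a map of $\delta$-rings (uniqueness of extensions to $q$-PD envelopes), Frobenius compatibility comes for free, and functoriality in $\Sigma$ kills the framing-dependence problem by passing to the colimit. Isomorphy is then certified by the abstract criterion of Lemma~\ref{ComparisonCrit}, which only requires checking compatibility with the Hodge-Tate structure maps.

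Your plan defers the Frobenius compatibility to ``carefully tracing both constructions through the local $q$-de Rham representative,'' and this is not a detail you can wave away: on the $A\Omega$ side the Frobenius arises from tilting and $L\eta_\mu$ on pro-\'etale cohomology, and on the $q$-crystalline side from the $\delta$-structure via Theorem~\ref{QCrysPrism}, so matching them through the BMS1 coordinate comparison amounts to reproving a substantial part of \cite[\S 9]{BMS1} with Frobenius included. The appeal to \S\ref{sec:UniqueComp} does not rescue this: that theorem constrains Hodge-Tate-compatible maps of the symmetric monoidal functor $\Prism_{-/A}$, so you would first need to show your framing-built isomorphism is compatible with the Hodge-Tate structure (which you have not addressed) and, even then, the result rules out nontrivial \emph{endomorphisms}, not \emph{Frobenius-semilinear} failures of commutativity. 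So the crucial $\phi$-equivariance of the comparison is a genuine gap in your argument, one that the paper's construction via $\delta$-algebra maps is designed precisely to sidestep.
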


\begin{remark} The proof will a priori give an isomorphism of $E_1$-$A$-algebras functorially in $R$ and compatibly with the Frobenius. However, this can be upgraded to an isomorphism of $E_\infty$-$A$-algebras a posteriori: By left Kan extension, one can extend both sides to quasiregular semiperfectoid $R$-algebras, where it induces an isomorphism (as associative rings) of discrete commutative rings, which then tautologically upgrades to an isomorphism as $E_\infty$-rings. Now by quasisyntomic descent, one gets the desired isomorphism of $E_\infty$-$A$-algebras.
\end{remark}

\begin{proof}
We shall build a functorial map $\mu_R:q\Omega_{R/A} \to A\Omega_R$ by constructing one between functorial representatives of both sides; to check this is a quasi-isomorphism, we give an abstract argument (ultimately reducing to the Hodge-Tate comparison theorem for both sides). In fact, it suffices to build a strictly functorial map under the additional assumption that $R$ is {\em very small}, i.e., when $R/p$ is generated over $\mathcal{O}_C/p$ by units; the comparison map for general $R$ is then obtained by glueing as any smooth formal $\mathcal{O}_C$-scheme has a basis of opens of the form $\mathrm{Spf}(R)$ with $R$ very small. For the rest of the proof, we assume $R$ is very small.  

Before introducing the complexes, we need some notation. Let $S=R^*$ and let $\mathcal{C}_S$ be the collection of all finite subsets $\Sigma \subset S$ large enough to generate $R$, i.e., such that the induced map $A[\{x_s^{\pm 1}\}_{s \in \Sigma}]^{\wedge} \to R$ (or equivalently $\mathcal{O}_C[\{x_s^{\pm 1}\}_{s \in \Sigma}]^{\wedge} \to R$) is surjective; thus, the inclusion relation amongst subsets of $S$ turns $\mathcal{C}_S$ into a filtered poset. For each $\Sigma \in \mathcal{C}_S$, let $P_\Sigma = A[\{x_s^{\pm 1}\}_{s \in \Sigma}]^{\wedge}$ be the formal torus over $A$ with variables indexed by $\Sigma$; we endow $P_\Sigma$ with the $\delta$-structure defined by $\delta(x_s) = 0$ for all $s \in \Sigma$. Moreover, for each $s \in \Sigma$, we have the automorphism $\gamma_s$ of $P_\Sigma$ given by scaling $x_s$ by $q$ and fixing $x_t$ for $t \neq s$.

Elaborating on the strategy above, in the proof below, we shall build, for each $\Sigma \in \mathcal{C}_S$, explicit functorial complexes computing $q\Omega_{R/A}$ and $A\Omega_{R/A}$ and a natural (in $R$ and a choice of $\Sigma \in \mathcal{C}_S$) quasi-isomorphism between them. By naturality in $\Sigma$, we may pass to the colimit over all $\Sigma$ will yield a natural (in $R$) quasi-isomorphism $q\Omega_{R/A} \to A\Omega_{R/A}$ intrinsic to the very small algebra $R$. \\

{\em Computing $q\Omega_{R/A}$ by an explicit and functorial complex.} 
Fix $\Sigma \in \mathcal{C}_S$. Write $D_{J_\Sigma,q}(P_\Sigma)$ for the $q$-PD envelope of the surjection $P_\Sigma \to R$.  By Theorem~\ref{qFramedPDqdR}, we can then compute $q\Omega_{R/A}$ by the complex $\mathrm{Kos}_c(D_{J_\Sigma,q}(P_\Sigma); \{\nabla_{q,s}\}_{s \in \Sigma})$. As the formation of these complexes is functorial in $\Sigma \in \mathcal{C}_S$ (see discussion before Theorem~\ref{qFramedPDqdR}), we can take a colimit to see that $q\Omega_{R/A}$ is also computed by the complex 
\begin{equation}
\label{Expq}
 \colim_{\Sigma \in \mathcal{C}_S} \mathrm{Kos}_c(D_{J_\Sigma,q}(P_\Sigma); \{\nabla_{q,s}\}_{s \in \Sigma}),
 \end{equation}
which is an explicit complex functorial in the very small algebra $R$. \\

{\em Computing $A\Omega_R$ by an explicit and functorial complex.} 
Fix $\Sigma \in \mathcal{C}_S$, and set $\Delta_\Sigma := \prod_{s \in \Sigma} \mathbf{Z}_p(1)$ be the displayed profinite group. Formally extracting all $p$-power roots of the $x_s$'s gives a pro-\'etale $\Delta_\Sigma$-torsor over $R[1/p]$. Let  $R_{\Sigma,\infty}$ be the $p$-completed integral closure of $R$ in this extension of $R[1/p]$. It is then standard that $R_{\Sigma,\infty}$ is perfectoid\footnote{This follows from \cite[\S II.2]{ScholzeTorsion} in general. For our application, it suffices to consider sufficiently large $\Sigma \in \mathcal{C}_S$, so we may assume that $\Sigma$ contains a subset $\{x_1,...,x_n\} \subset \Sigma$ of units defining a formally \'etale map $\mathcal{O}_C[x_1^{\pm 1},...,x_n^{\pm 1}]^{\wedge} \to R$; in this case, the perfectoidness claim also follows from almost purity.}, and the map $\mathrm{Spa}(R_{\Sigma,\infty}[1/p]) \to \mathrm{Spa}(R[1/p])$ is a pro-\'etale $\Delta_\Sigma$-torsor. Consider the complex 
\[ \eta_{q-1} \mathrm{Kos}_c(A_{\inf}(R_{\Sigma,\infty}), \{\sigma_s-1\}_{s \in \Sigma}),\]
where $\sigma_s$ denotes the automorphism induced by the $s$-th basis vector of $\Delta_\Sigma$. Note that the complex $\mathrm{Kos}_c(A_{\inf}(R_{\Sigma,\infty}), \{\sigma_s-1\}_{s \in \Sigma})$ computes the continuous group cohomology of $\Delta_\Sigma$ acting on $A_{\inf}(R_{\Sigma,\infty})$ by \cite[Lemma 7.3]{BMS1}.  Moreover, this construction is evidently functorial in enlarging $\Sigma$, so we may pass to the limit to form the complex
\begin{equation}
\label{ExpA}
 \colim_{\Sigma \in \mathcal{S}_S} \eta_{q-1} \mathrm{Kos}_c(A_{\inf}(R_{\Sigma,\infty}), \{\sigma_s-1\}_{s \in \Sigma}),
 \end{equation}
which is an explicit complex functorial in the very small algebra $R$. Using the arguments in \cite[\S 9]{BMS1}, one checks that this explicit functorial complex computes $A\Omega_R$: in fact, for a cofinal collection of $\Sigma \in \mathcal{C}_S$ (more precisely, any $\Sigma$ containing a subset $\{x_1,...,x_n\} \subset \Sigma$ of units defining a formally \'etale map $\mathcal{O}_C[x_1^{\pm 1},...,x_n^{\pm 1}]^{\wedge} \to R$), the complex $\eta_{q-1} \mathrm{Kos}_c(A_{\inf}(R_{\Sigma,\infty}), \{\sigma_s-1\}_{s \in \Sigma})$ already computes $A\Omega_R$ by \cite[Theorem 9.4 (iii)]{BMS1} and the criterion in \cite[Lemma 5.14]{BhattBMS}. \\

{\em Constructing the comparison map $\mu_R:q\Omega_{R/A} \to A\Omega_R$.} To construct a functorial comparison map using the complexes described above in \eqref{Expq} and \eqref{ExpA}, it suffices to construct, for each $\Sigma \in \mathcal{C}_S$, a natural  (in $R$ and the chosen $\Sigma \in \mathcal{C}_S$) map
\[ \mu_0:D_{J_\Sigma,q}(P_\Sigma) \to A_{\inf}(R_{\Sigma,\infty})\]
of $A_{\inf}$-algebras that intertwines $\gamma_s$ on the left with $\sigma_s$ on the right and is compatible with enlarging $\Sigma$. Indeed, we will then get a map
\[
\mathrm{Kos}_c(D_{J_\Sigma,q}(P_\Sigma); \{\nabla_{q,s}\}_{s\in \Sigma}) \cong  \eta_{q-1} \mathrm{Kos}_c(D_{J_\Sigma,q}(P_\Sigma); \{\gamma_s-1\}_{s\in \Sigma})\to \eta_{q-1} \mathrm{Kos}_c(A_{\inf}(R_{\Sigma,\infty}); \{\sigma_s-1\}_{s \in \Sigma})
\]
also compatible with enlarging $\Sigma$, so we can then pass to the limit over $\Sigma \in \mathcal{C}_S$ to obtain a desired comparison map between the explicit complexes constructed earlier.

To construct the comparison map $\mu_0$ desired above, let $P_\Sigma \to P_{\Sigma,\infty}$ be the $(p,I)$-completed perfection. Write $\overline{P_\Sigma} := P_\Sigma/(\xi)$, etc. Note that $P_{\Sigma,\infty}$ is a perfect $(p,\xi)$-completely flat $\delta$-ring over $A_{\inf}$, so $\overline{P_{\Sigma,\infty}}$ is perfectoid and $P_{\Sigma,\infty} \simeq A_{\inf}(\overline{P_{\Sigma,\infty}})$. Moreover, the perfectoid ring $\overline{P_{\Sigma,\infty}}$ is obtained from $\overline{P_\Sigma}$ by formally extracting all $p$-power roots of the $x_s$'s, so there is a natural $\Delta_\Sigma$-action on $\overline{P_{\Sigma,\infty}}$. Applying $A_{\inf}(-)$, we obtain a $\Delta_\Sigma$-action on $P_{\Sigma,\infty}$ with the property that the automorphism $\sigma_s$ of $P_{\Sigma,\infty}$ induced by the $s$-th basis vector $e_s \in \Delta_\Sigma$ lies over the automorphism $\gamma_s$ of $P_\Sigma$. We can relate the objects over $P_\Sigma$ and over $R$ in the following commutative diagram:
\[ \xymatrix{P_\Sigma \ar[r] \ar[d] & P_{\Sigma,\infty} \simeq A_{\inf}(\overline{P_{\Sigma,\infty}}) \ar[r] \ar[d] & A_{\inf}(R_{\Sigma,\infty}) \ar[dd] \\
		\overline{P_\Sigma} \ar[r] \ar[d] & \overline{P_{\Sigma,\infty}} \ar[d] & \\
		R \ar[r] & R \widehat{\otimes}_{\overline{P_\Sigma}} \overline{P_{\Sigma,\infty}} \ar[r] & R_{\Sigma,\infty}, }\]
here all maps in the top horizontal row are $\delta$-maps, all vertical maps are surjective, and the two smaller squares on the left are pushout squares. For each $s \in \Sigma$, the automorphism $\sigma_s$  of $A_{\inf}(R_{\Sigma,\infty})$ is compatible with corresponding automorphism of $P_{\Sigma,\infty}$ and hence also with the automorphism $\gamma_s$ of $P_\Sigma$. 

 As $A_{\inf}(R_{\Sigma,\infty}) \to R_{\Sigma,\infty}$ is a $q$-PD thickening, the outer square in the above commutative diagram shows that the $\delta$-map $P_\Sigma \to A_{\inf}(R_{\Sigma,\infty})$ extends uniquely to a $\delta$-$A$-map $D_{J_\Sigma,q}(P_\Sigma) \to A_{\inf}(R_{\Sigma,\infty})$. It also follows by uniqueness that the induced $\gamma_s$-action on $D_{J_\Sigma,q}(P_\Sigma)$ is compatible with the $\sigma_s$-action on $A_{\inf}(R_{\Sigma,\infty})$: the maps 
\[ D_{J_\Sigma,q}(P_\Sigma) \xrightarrow{\gamma_s} D_{J_\Sigma,q}(P_\Sigma) \to A_{\inf}(R_{\Sigma,\infty}) \quad \text{and} \quad D_{J_\Sigma,q}(P_\Sigma) \to A_{\inf}(R_{\Sigma,\infty}) \xrightarrow{\sigma_s} A_{\inf}(R_{\Sigma,\infty})\] 
of $\delta$-$A$-algebras are induced by the universal property of $q$-PD envelopes from the maps 
\[ P_\Sigma \xrightarrow{\gamma_s} P_\Sigma \to A_{\inf}(R_{\Sigma,\infty}) \quad \text{and} \quad P_\Sigma \to A_{\inf}(R_{\Sigma,\infty}) \xrightarrow{\sigma_s} A_{\inf}(R_{\Sigma,\infty}),\]
so the former pair must coincide as the latter pair does. This gives the promised map  $\mu_0:D_{J_\Sigma,q}(P_\Sigma) \to A_{\inf}(R_{\Sigma,\infty})$ of $\delta$-rings that intertwines the $\gamma_s$ automorphism of the source with the $\sigma_s$ automorphism of the target, and thus also a comparison map
\[ \mu_R:q\Omega_{R/A} \to A\Omega_R\]
by the explicit complexes constructed above. \\

{\em Proving that $\mu_R$ is an isomorphism.} To check that $\mu_R$ is an isomorphism, we use the criterion in Lemma~\ref{ComparisonCrit} with $d = [p]_q$. By Theorem~\ref{QCrysPrism}, we can identify $q\Omega_{R/A} \simeq \Prism_{R^{(1)}/A}$, where $R^{(1)} := R \otimes_{A_{\inf},\phi} A$. By the Hodge-Tate comparison for $\Prism_{R^{(1)}/A}$, we then obtain a canonical map 
\[ \eta_{q\Omega_{R/A}}:R^{(1)} \to H^0(q\Omega_{R/A}/[p]_q)\] 
that induces an isomorphism 
\[ (\Omega^*_{R^{(1)}/(A/[p]_q)}, d_{dR}) \simeq (H^*(q\Omega_{R/A}/[p]_q),\beta_{[p]_q})\]
of commutative differential graded $A/[p]_q$-algebras. On the other hand, by \cite[Theorem 9.2]{BMS1} for $A\Omega_R$, we have a canonical identification\footnote{The Frobenius twist in $\widetilde{\Omega}_{R^{(1)}}$ is implicit but not mentioned in \cite[Theorem 9.2]{BMS1}: the $p$-completely smooth $\mathcal{O}$-algebra $R$ (corresponding to the formal scheme $\mathfrak{X}$) is regarded in {\em loc.\ cit.} as an $A_{\inf}$-algebra via the map $A_{\inf} \xrightarrow{\widetilde{\theta}} \mathcal{O} \to R$, which is identified in the present context with the map $A_{\inf} \xrightarrow{\theta} \mathcal{O} \to R \stackrel{\phi}{\simeq} R^{(1)}$.} \[ \widetilde{\Omega}_{R^{(1)}} \simeq A\Omega_R/[p]_q\]
of commutative $A/[p]_q$-algebras. This gives a comparison map 
\[ \eta_{A\Omega_R}:R^{(1)} \to H^0(A\Omega_R/[p]_q).\] 
By the Hodge-Tate comparison \cite[Theorem 8.3]{BMS1}, the map $\eta_{A\Omega_R}$ induces an isomorphism
\[ (\Omega^*_{R^{(1)}/(A/[p]_q)}, d_{dR}) \simeq (H^*(A\Omega_{R}/[p]_q),\beta_{[p]_q})\]
of commutative differential graded $A/[p]_q$-algebras. We leave it to the reader to check that the comparison map $\mu_R$ intertwines $\eta_{q\Omega_{R/A}}$ with $\eta_{A\Omega_R}$ by reduction to the case where $R = \mathcal{O}_C[x^{\pm 1}]^{\wedge}$. Lemma~\ref{ComparisonCrit} then implies that $\mu_R$ is an isomorphism.
\end{proof}

The following abstract criterion for proving that comparison maps are isomorphisms often allows us to bypass tedious checks (e.g., as in the proof above).

\begin{lemma}
\label{ComparisonCrit}
Let $A$ be a commutative ring equipped with a nonzerodivisor $d$. Let $R$ be a smooth $A/d$-algebra.  Let $E$ and $F$ be derived $d$-complete commutative algebras in $D(A)$ with $E/d$ and $F/d$ being coconnective and $H^*(E/d)$ and $H^*(F/d)$ being strictly graded commutative.  Assume we are given $A/d$-algebra maps $\eta_E:R \to H^0(E/d)$ and $\eta_F:R \to H^0(F/d)$ such that the map $(\Omega^*_{R/(A/d)}, d_{dR}) \to (H^*(E/d), \beta_d)$ induced by the universal property of the de Rham complex is an isomorphism, and similarly for $F$. Then any commutative $A$-algebra map $\alpha:E \to F$ that intertwines $\eta_E$ with $\eta_F$ must be an isomorphism. 

A similar statement holds true if $(A,d)$ is a bounded prism, $R$ is assumed to be $p$-completely smooth over $A/d$, and $E$ and $F$ are assumed to be derived $(p,d)$-complete.
\end{lemma}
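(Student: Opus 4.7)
The plan is to reduce the problem modulo $d$ and then use the universal property of the de Rham complex to identify the induced map on cohomology with the identity.

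First, by derived Nakayama (since both $E$ and $F$ are derived $d$-complete, resp.\ derived $(p,d)$-complete in the prismatic case), the map $\alpha:E \to F$ is an isomorphism in $D(A)$ if and only if its reduction $\overline{\alpha} := \alpha \otimes_A^L A/d : E/d \to F/d$ is a quasi-isomorphism. So I reduce to showing that $\overline{\alpha}$ induces an isomorphism on all cohomology groups. Since $E/d$ and $F/d$ are coconnective, this is well-defined.

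Next, I observe that $\overline{\alpha}$ induces a map $H^*(\overline{\alpha}) : H^*(E/d) \to H^*(F/d)$ of graded $A/d$-algebras that is moreover compatible with the Bockstein differentials $\beta_d$: this is because the Bocksteins are induced from the functorial triangle $E/d\{1\} \to E \otimes_A^L d/d^2 \to E/d$ (and similarly for $F$), together with naturality of connecting maps in a map between such triangles. Thus $H^*(\overline{\alpha})$ is a map of (strictly graded commutative) differential graded $A/d$-algebras. Under the identifications from the hypothesis,
\[
\Omega^*_{R/(A/d)} \xrightarrow{\sim} H^*(E/d) \xrightarrow{H^*(\overline{\alpha})} H^*(F/d),
\]
the composite is a map of differential graded $A/d$-algebras whose restriction in degree zero is $\eta_F$ (since $\alpha$ intertwines $\eta_E$ and $\eta_F$ by assumption). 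By the universal property of the de Rham complex (which applies because the target is a strictly graded commutative cdga over $A/d$ whose degree zero part receives a map from $R$), any such map of cdgas is uniquely determined by its behaviour in degree zero. Therefore the composite above must coincide with the isomorphism $\Omega^*_{R/(A/d)} \xrightarrow{\sim} H^*(F/d)$ determined by $\eta_F$, and hence $H^*(\overline{\alpha})$ is an isomorphism.

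The main (minor) obstacle is verifying the two compatibilities used above: that the Bockstein $\beta_d$ on $H^*(E/d)$ is natural under maps of commutative algebras $E \to F$ in $D(A)$, and that the universal property of the de Rham complex applies to a strictly graded commutative cdga $(C^*, d_C)$ with $C^0 = H^0(E/d)$ viewed as an $R$-algebra via $\eta_E$; both of these are essentially formal, the first from the functoriality of the triangle $(-)/d\{1\} \to (-) \otimes_A^L d/d^2 \to (-)/d$, and the second from smoothness of $R/(A/d)$ (which ensures $\Omega^1_{R/(A/d)}$ is the universal object for $A/d$-linear derivations out of $R$) combined with strict graded commutativity (which gives $dx \wedge dx = 0$). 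The prismatic variant of the statement is handled by the identical argument, using $(p,d)$-complete derived Nakayama in place of $d$-complete derived Nakayama.
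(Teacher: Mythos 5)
Your proof is correct and follows essentially the same route as the paper's: reduce modulo $d$ via derived Nakayama, observe that $H^*(\overline{\alpha})$ is a map of strictly graded commutative differential graded algebras compatible with $\eta_E$, $\eta_F$, and then invoke the universal property of the de Rham complex to force $H^*(\overline{\alpha})$ to agree with the comparison isomorphism. The only cosmetic difference is that the paper organizes the final step as a commutative square of cdgas with the identity on $\Omega^*_{R/(A/d)}$ on top, whereas you phrase it as uniqueness of cdga maps out of the de Rham complex determined in degree zero; these are the same argument.
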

\begin{proof}
To show $\alpha$ is an isomorphism, it is enough to show that $\bar{\alpha}:E/d \to F/d$ induces an isomorphism on $H^*(-)$. Now $H^*(\bar{\alpha})$ is a map of graded $A/d$-algebras that lifts to a map of commutative differential graded $A/d$-algebras $(H^*(E/d), \beta_d) \to (H^*(F/d),\beta_d)$. This latter map intertwines $\eta_E$ with $\eta_F$ by assumption, so we obtain a commutative diagram
\[ \xymatrix{ (\Omega^*_{R/(A/d)}, d_{dR}) \ar@{=}[r] \ar[d] & (\Omega^*_{R/(A/d)}, d_{dR}) \ar[d] \\
		  (H^*(E/d), \beta_d) \ar[r]^-{H^*(\bar{\alpha})} & (H^*(F/d), \beta_d) }\]
of maps of commutative differential graded $A/d$-algebras. The left and right vertical maps are induced by $\eta_E$ and $\eta_F$, and are isomorphisms by assumption. It follows that the bottom horizontal map is also an isomorphism, as wanted.
\end{proof}

\newpage

\section{A uniqueness criterion for comparison isomorphisms}
\label{sec:UniqueComp}

Let $(A,I)$ be a perfect prism corresponding a perfectoid ring $R$. In this section, we show that the prismatic cohomology functor $S\mapsto \Prism_{S/R}$ has no non-trivial automorphisms that are compatible with the Hodge-Tate structure map $\eta_S:S\to \overline{\Prism}_{S/R}$. In particular, the comparison isomorphisms relating prismatic cohomology to $q$-crystalline cohomology (Theorem~\ref{QCrysPrism}), the $A\Omega$-theory (Theorem~\ref{PrismaticAOmegaComp}), or the $p$-adic Nygaard complexes arising from topological Hochschild homology (Theorem~\ref{BMS2CompNC}) are essentially unique. To formulate our result  precisely, we introduce some $\infty$-categorical notation.

\begin{notation}
Let $\mathrm{Sm}_R$ denote the category of $p$-completely smooth $R$-algebras. Let $\mathcal{C}_{\mathrm{Sm}_R}$ be the $\infty$-category of pairs $(G,\eta)$, where $G:\mathrm{Sm}_R \to \mathcal{D}_{(p,I)\text{-comp}}(A)$ is a symmetric monoidal functor and $\eta:\mathrm{id} \to G \otimes_A^L R$ is a natural transformation of symmetric monoidal functors $\mathrm{Sm}_R\to \mathcal{D}_{p\text{-comp}}(R)$; here $G \otimes_A^L R$ is the $\mathcal{D}_{p\text{-comp}}(R)$-valued functor determined by $S \mapsto G(S) \otimes_A^L R$. Prismatic cohomology together with the Hodge-Tate comparison naturally gives an object $\Prism_{-/A} \in \mathcal{C}_{\mathrm{Sm}_R}$.
\end{notation}

\begin{theorem}
The object $\Prism_{-/A}\in \mathcal{C}_{\mathrm{Sm}_R}$  has no nontrivial endomorphisms, i.e., $\mathrm{End}(\Prism_{-/A}) = \{1\}$.
\end{theorem}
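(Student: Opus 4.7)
The plan is to reduce the uniqueness problem from all smooth $R$-algebras down to quasiregular semiperfectoid (qrsp) algebras, where $\Prism_{S/A}$ is discrete and the universal property from Proposition~\ref{InitialPrism} provides the needed rigidity. First I would note that the symmetric monoidal functor $\Prism_{-/A}:\mathrm{Sm}_R \to \mathcal{D}_{(p,I)\text{-comp}}(A)$ admits a canonical extension by left Kan extension to all simplicial commutative $R$-algebras (namely derived prismatic cohomology of \S\ref{ss:DerivedPrismatic}), and any endomorphism $\alpha \in \mathrm{End}(\Prism_{-/A})$ extends functorially. Using the quasisyntomic sheaf property of $\Prism_{-/A}$ (Construction~\ref{DerivedPrismatic}, item (3)) together with Andr\'e's lemma (Theorem~\ref{AndreFlatness}), any $p$-completely smooth $R$-algebra $T$ admits a quasisyntomic cover by qrsp rings, so the restriction map to natural endomorphisms of $\Prism_{-/A}$ on qrsp $R$-algebras is faithful.

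Next, fix a qrsp $R$-algebra $S$. By \cref{QRSPPrism} the complex $\Prism_{S/A}$ is concentrated in degree $0$, is $(p,I)$-completely flat (hence $p$-torsionfree) over $A$, and is the initial object $\Prism_S^{\mathrm{init}}$ of the category of prisms $(B,J)$ over $(A,I)$ equipped with a map $S \to B/J$. The symmetric monoidal structure on $\alpha$ makes $\alpha_S$ an $A$-linear ring endomorphism of $\Prism_{S/A}$, and the compatibility with $\eta_S$ forces $\bar\alpha_S \circ \eta_S = \eta_S$, i.e., the induced map $\Prism_{S/A}/I \to \Prism_{S/A}/I$ fixes the image of $S$. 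Once we know that $\alpha_S$ is additionally a $\delta$-ring map, we obtain a map of prisms from $(\Prism_{S/A},I\Prism_{S/A})$ to itself over $(A,I)$ inducing the identity on $S \to \overline{\Prism}_{S/A}$; by the universal property of the initial prism this must be the identity, completing the argument.

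The main obstacle is therefore showing that $\alpha_S$ respects the $\delta$-structure. Since $\Prism_{S/A}$ is $p$-torsionfree for qrsp $S$, this reduces to showing that $\alpha_S$ commutes with the Frobenius lift $\phi_S$. Here I would exploit naturality: the Frobenius assembles into a natural transformation $\phi:\Prism_{-/A} \to \phi_{A,*}\Prism_{-/A}$ of symmetric monoidal functors on $\mathrm{Sm}_R$ (and on its left Kan extension to qrsp rings), and the endomorphism $\alpha$ induces an endomorphism $\phi_{A,*}\alpha$ of the target. Using that modulo $p$ the Frobenius coincides with the canonical $p$-power/$P^0$-operation on any $E_\infty$-$\mathbf{F}_p$-algebra --- an operation that is preserved by any morphism of $E_\infty$-$\mathbf{F}_p$-algebras --- the two composites $\phi_S \circ \alpha_S$ and $(\phi_{A,*}\alpha)_S \circ \phi_S$ agree modulo $p$. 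To upgrade to integral equality, I would express $\Prism_{S/A}$ as the limit of a \v{C}ech-Alexander resolution by free $\delta$-$A$-algebras (each $p$-torsionfree) and apply naturality of $\alpha$ termwise in the resolution, where Frobenius-equivariance of an $A$-linear ring map between $p$-torsionfree $\delta$-$A$-algebras that is Frobenius-equivariant modulo $p$ follows by lifting. Combining these steps shows that $\alpha_S$ is a $\delta$-endomorphism, and hence the identity as explained above.
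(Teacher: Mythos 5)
Your overall plan --- reduce to quasiregular semiperfectoid rings by quasisyntomic descent, use that $\Prism_{S/A}$ is discrete and $p$-torsionfree there, show $\alpha_S$ is a $\delta$-map, and conclude via the universal property of $\Prism_S^{\mathrm{init}}$ --- is a sensible strategy and roughly parallels the paper's reduction to the class $\mathrm{rsPerfd}_R$. However, the central step does not work: your claim that ``Frobenius-equivariance of an $A$-linear ring map between $p$-torsionfree $\delta$-$A$-algebras that is Frobenius-equivariant modulo $p$ follows by lifting'' is false. For a counterexample, take $B = \mathbf{Z}_p[x]$ with the $\delta$-structure determined by $\phi(x) = x^p$, and the $\mathbf{Z}_p$-algebra endomorphism $f$ of $B$ defined by $f(x) = x + p$. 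Then $f \equiv \mathrm{id} \pmod p$, so $f$ trivially commutes with Frobenius modulo $p$; but $f(\phi(x)) = (x+p)^p$ whereas $\phi(f(x)) = x^p + p$, and these disagree (already modulo $p^2$). Thus mod-$p$ Frobenius-equivariance does not lift. Moreover the proposed use of the \v{C}ech--Alexander resolution is unclear: $\alpha$ is a natural transformation on (derived) prismatic cohomologies of $R$-algebras, not on arbitrary free $\delta$-$A$-algebras, so it does not act termwise on such a resolution.

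The paper's proof deliberately sidesteps the question of $\delta$-equivariance of $\epsilon_S$; it even flags the Frobenius compatibility as a ``somewhat surprising'' consequence of the theorem rather than an input. After settling the perfectoid case (where any ring endomorphism of $A_{\inf}(S)$ is automatically a $\delta$-map by \cref{PerfectLambda}), the paper studies the kernel $K$ of $\epsilon_S - \mathrm{id}$ for $S = R'/(f_1,\dots,f_r)$ with $R'$ perfectoid, using the explicit presentation $\Prism_{S/A} \cong A_{\inf}(R')\{\tfrac{g_1}{d},\dots,\tfrac{g_r}{d}\}^{\wedge}$. It verifies that $K$ is closed under taking powers (as the equalizer of two ring maps), is $A_{\inf}(R')$-linear (by the perfectoid case), and is saturated with respect to $p$ and each $\phi^n(d)$ (by flatness of the prismatic envelope). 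An induction using Joyal's operations $\delta_n$ (\cref{JoyalOps}, \cref{JoyalOpsGenerate}) and the identity $\phi^{n+1}(y) = \sum_{j=0}^{n+1} p^j \delta_j(y)^{p^{n+1-j}}$ then shows that all topological generators $\delta_n(g_i/d)$ lie in $K$, forcing $K$ to be the whole ring. To repair your argument you would need to replace the lifting step with something along these lines; there does not seem to be a shortcut via mod-$p$ considerations alone.
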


Somewhat surprisingly, we do not need to {\em a priori} impose compatibility with the Frobenius endomorphism of prismatic cohomology in formulating the above uniqueness assertion.

\begin{proof}
Let $\mathrm{rsPerfd}_R$ denote the category of $R$-algebras $S$ that are quotients of $p$-completely flat perfectoid $R$-algebras by $p$-completely regular sequences relative to $R$. In particular, $\mathrm{Kos}(S;p)$ is flat over $\mathrm{Kos}(R;p)$ for any such $S$. Let $\mathcal{C}_{\mathrm{rsPerfd}_R}$ be the $\infty$-category defined the same way as $\mathcal{C}_{\mathrm{Sm}_R}$ with $\mathrm{rsPerfd}_R$ replacing $\mathrm{Sm}_R$.  Derived prismatic cohomology and its Hodge-Tate comparison then similarly defines an object $\Prism'_{-/A}\in \mathcal{C}_{\mathrm{rsPerfd}_R}$. As any $p$-completely smooth $\mathcal{O}$-algebra $S$ admits a quasisyntomic hypercover (or even a \v{C}ech cover) $S \to S^\bullet$ with each $S^i \in \mathrm{rsPerfd}_R$, it suffices by descent to show that $\Prism'_{-/A}$ has no nontrivial endomorphisms. As the prismatic cohomology of any $S\in \mathrm{rsPerfd}_R$ takes on discrete values, this reduces to the following concrete statement:

\begin{lemma}
Assume that for each $S\in \mathrm{rsPerfd}_R$, we are given an $A$-algebra endomorphism $\epsilon_S$ of $\Prism_{S/A}$ with the following properties:
\begin{enumerate}
\item The endomorphism $\epsilon_S$ is functorial in $S$.
\item The endomorphism $\epsilon_S$ is compatible with the identity map on $S$ under the natural map $\eta_S:S \to \overline{\Prism}_{S/A}$.
\end{enumerate}
Then $\epsilon_S = \mathrm{id}$ for all $S$.
\end{lemma}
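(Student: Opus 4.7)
The strategy is to reduce the problem in three stages: from $\mathrm{rsPerfd}_R$ to general perfectoid $R$-algebras via perfection, from there to absolutely integrally closed perfectoid rings via André's lemma, and finally to the universal perfectoid polynomial ring, where an explicit calculation closes the argument.

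First, for $S \in \mathrm{rsPerfd}_R$, functoriality of $\epsilon$ under the surjection $S \to S_\perfd$ yields a commutative square with the natural map $\Prism_{S/A} \to \Prism_{S/A,\perf} \simeq A_{\inf}(S_\perfd)$ from \cref{PerfdPerfectSite}, intertwining $\epsilon_S$ with $\epsilon_{S_\perfd}$. The relative Frobenius on $\Prism_{S/A}$ is injective (by the $L\eta_I$-description from Theorem~\ref{thmCagain}~(4), extended to the quasiregular semiperfectoid setting using the explicit prismatic envelope description of $\Prism_{S/A}$ from \cref{qPDEnvRegular}), so this comparison is injective. It therefore suffices to prove $\epsilon_{\tilde R} = \mathrm{id}$ on $\Prism_{\tilde R/A} = A_{\inf}(\tilde R)$ for every perfectoid $R$-algebra $\tilde R$.

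Second, by \cref{AndreFlatness} we pass to a $p$-completely faithfully flat absolutely integrally closed perfectoid cover $\tilde R \to \tilde R^+$, which yields an injection $A_{\inf}(\tilde R) \hookrightarrow A_{\inf}(\tilde R^+)$; similarly, by base change along the perfectoid extension $R \to R\widehat{\otimes}_{\mathbf{Z}_p}\mathbf{Z}_p[\zeta_{p^\infty}]^{\wedge}$ we may further assume $R \supset \zeta_{p^\infty}$. Then for each $x \in \tilde R^+$, a compatible choice of $p^n$-th roots $\{x^{1/p^n}\}$ yields both a Teichmüller lift $[x^\flat] \in A_{\inf}(\tilde R^+)$ and an $R$-algebra map $\alpha\colon \tilde R_0 := R\langle T^{1/p^\infty}\rangle^{\wedge} \to \tilde R^+$ sending $T^{1/p^n} \mapsto x^{1/p^n}$. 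By naturality, $\epsilon_{\tilde R^+}([x^\flat]) = \alpha_\ast(\epsilon_{\tilde R_0}([T^\flat]))$, so showing $\epsilon_{\tilde R_0}([T^{1/p^n}]) = [T^{1/p^n}]$ (below) forces $\epsilon_{\tilde R^+}([x^\flat]) = [x^\flat]$; combined with $A$-linearity and the $(p,\xi)$-adic density of Teichmüller lifts in $W((\tilde R^+)^\flat)$, this gives $\epsilon_{\tilde R^+} = \mathrm{id}$.

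Third, for the universal object $\tilde R_0$, write $\epsilon_{\tilde R_0}([T^{1/p^n}]) = [T^{1/p^n}] + \xi g_n$ with $g_n \in A_{\inf}(\tilde R_0)$. Naturality under the $R$-algebra automorphisms $T \mapsto \zeta_{p^m} T$ (defined because $\zeta_{p^\infty} \in R$), combined with $A$-linearity and multiplicativity of $\epsilon$, produces the identity $\alpha_m(g_n) = [\zeta_{p^{m+n}}^\flat]\, g_n$; expanding in the natural $\mathbf{N}[1/p]$-grading on $A_{\inf}(\tilde R_0) \cong A\langle [T^{1/p^\infty}]\rangle^{\wedge}$ and letting $m$ vary forces $g_n$ to be supported in degree $1/p^n$, so $g_n = c_n[T^{1/p^n}]$ for some $c_n \in A$. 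Naturality under $T \mapsto T^2$ then yields $(1+\xi c_n)^2 = 1+\xi c_n$, and since $1+\xi c_n \in A^\times$ (as $\xi$ lies in the Jacobson radical), $c_n = 0$, completing the proof.

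The main obstacle is the injectivity claim in the first step: establishing $\Prism_{S/A} \hookrightarrow \Prism_{S/A,\perf}$ for quasiregular semiperfectoid $S$ requires combining the injectivity of Frobenius on a bounded prism with the $(p,I)$-adic separatedness of the colimit defining the perfection, both of which are delicate outside the smooth setting. A cleaner alternative — avoiding this injectivity entirely — would invoke the quasisyntomic sheaf property of $\Prism_{-/A}$ and refine $S$ by a perfectoid quasisyntomic cover via \cref{PrismaticRefineQSyn}; this requires first extending $\epsilon$ naturally to all quasiregular semiperfectoid $R$-algebras in order to handle the terms of a Čech nerve, which need not lie in $\mathrm{rsPerfd}_R$.
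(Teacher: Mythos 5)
Your plan correctly identifies the key first step (settle the perfectoid case), but the proposed reduction from $\mathrm{rsPerfd}_R$ to the perfectoid case rests on an injectivity claim you do not prove — namely that $\Prism_{S/A} \to \Prism_{S/A,\perf}$ is injective — and you yourself flag it as the main obstacle. This is a genuine gap, not a routine check. Theorem~\ref{thmCagain}~(4) gives the $L\eta_I$-isomorphism only for smooth formal schemes, not for quasiregular semiperfectoid $S$, and the paper even warns that the Nygaard filtration on $\Prism_S$ need not be separated in general; establishing separatedness for $S \in \mathrm{rsPerfd}_R$ would require an explicit argument (not trivial even with \cref{NygaardKeyCase}, because one must control the completed direct sums). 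Moreover, even granting injectivity of $\phi$, one would still need that the $(p,I)$-completion of $\colim_\phi \Prism_{S/A}$ does not kill anything in $\Prism_{S/A}$, which is a second, separate point. Your alternative via \cref{PrismaticRefineQSyn} and quasisyntomic descent has the obstruction you note: the hypothesis only gives $\epsilon$ on $\mathrm{rsPerfd}_R$, not on all quasiregular semiperfectoid $R$-algebras appearing in a \v{C}ech nerve.

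The paper sidesteps all of this. For the perfectoid case, it uses nothing more than \cref{PerfectLambda}: any ring endomorphism of $A_\inf(\tilde R)$ is automatically a $\delta$-map, hence a map of perfect prisms, and \cref{PerfdPrism} says such a map is determined by its reduction modulo $I$, which is the identity by your hypothesis (2). Your Teichm\"uller/grading calculation re-derives this by hand, with some extra delicacies (you implicitly need $[\epsilon^c]-1$ to be a nonzerodivisor for $c \in \mathbf{Z}[1/p]$, $c \neq 0$, in a possibly non-domain $A_\inf(R)$, which requires justification). For the reduction from $\mathrm{rsPerfd}_R$ to perfectoid, the paper takes a completely different and self-contained route: it writes $\Prism_{S/A} = A_\inf(R')\{\tfrac{g_1}{d},\ldots,\tfrac{g_r}{d}\}^{\wedge}$ via \cref{qPDEnvRegular}, lets $K$ be the kernel of $\epsilon_S - \mathrm{id}$, observes that $K$ is a subring, an $A_\inf(R')$-submodule (since $\epsilon_{R'}=\mathrm{id}$ by the perfectoid case), and is saturated with respect to $p$ and all $\phi^n(d)$ (using flatness of the envelope from \cref{BoundedPrismProp}~(2)), and then shows by induction on $n$ that Joyal's $\delta_n(g_i/d)$ all lie in $K$, using the universal identity $\phi^{n+1}(x) = x^{p^{n+1}} + p\delta_1(x)^{p^n} + \cdots + p^{n+1}\delta_{n+1}(x)$. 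Since these generate the envelope topologically over $A_\inf(R')$ (\cref{JoyalOpsGenerate}), $K$ is everything. This is an arithmetic argument internal to a single prismatic envelope and never appeals to a non-injectivity or a separatedness statement.
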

\begin{proof}
Let us first check that $\epsilon_S = \mathrm{id}$ when $S$ is perfectoid. In this case, we have $\Prism_{S/A} \cong A_{\inf}(S)$, so $\epsilon_S$ can be regarded as an $A$-algebra endomorphism of $A_{\inf}(S)$. By $A$-linearity, this endomorphism carries $IA_{\inf}(R)$ to itself. As any endomorphism of a perfect $p$-complete $\delta$-ring is automatically compatible with the $\delta$-structure (Corollary~\ref{PerfectLambda}), we may regard $\epsilon_S$ as an endomorphism of the perfect prism $(A_{\inf}(S), IA_{\inf}(S))$. By assumption (2), this endomorphism gives the trivial endomorphism of $S\cong A_{\inf}(S)/IA_{\inf}(S)$. By Theorem~\ref{PerfdPrism}, it follows that $\epsilon_S = \mathrm{id}$ for perfectoid $S$.

Now fix some $S \in \mathrm{rsPerfd}_R$. By assumption, we can write $S = R^\prime/(f_1,...,f_r)$, where $R^\prime$ is a $p$-completely flat perfectoid $R$-algebra and $f_1,...,f_r$ is a sequence in $S$ that is $p$-completely regular relative to $R$. Then $\Prism_{R^\prime/A} \cong A_{\inf}(R^\prime)$, so $\Prism_{S/A}$ is naturally a $\delta$-$A_{\inf}(R^\prime)$-algebra. Moreover, if we fix a distinguished element $d \in I$ as well as $g_i \in A_{\inf}(R^\prime)$ lifting $f_i \in S$ for $i=1,...,r$, then the presentation $S = R^\prime/(f_1,...,f_r)$ gives a presentation of $\Prism_{S/A}$ as the $\delta$-$A_{\inf}(R^\prime)$-algebra $A_{\inf}(S)\{\frac{g_1}{d},...,\frac{g_r}{d}\}^{\wedge}$ obtained by freely adjoining $\frac{g_i}{d}$ for $i=1,...,r$ to $A_{\inf}(S)$ in the world of $(p,d)$-complete simplicial $\delta$-rings. Applying assumption (1) to the map $R^\prime\to S$, we obtain the commutative diagram
\[ \xymatrix{ A_{\inf}(R^\prime) \ar[r]^-{can} \ar[d]^-{\epsilon_{R^\prime}} & A_{\inf}(R^\prime)\{\frac{g_1}{d},...,\frac{g_r}{d}\}^{\wedge} \ar[d]^-{\epsilon_S} \\
 		     A_{\inf}(R^\prime) \ar[r]^-{can} & A_{\inf}(R^\prime)\{\frac{g_1}{d},...,\frac{g_r}{d}\}^{\wedge}. }\]
We must show that the $A$-module endomorphism $\epsilon_S - \mathrm{id}$ of $A_{\inf}(R^\prime)\{\frac{g_1}{d},...,\frac{g_r}{d}\}^{\wedge}$ is identically $0$.  Equivalently, if $K$ denotes the kernel of this map, we must show $K = A_{\inf}(R^\prime)\{\frac{g_1}{d},...,\frac{g_r}{d}\}^{\wedge}$.  Note that $K$ is an $A$-submodule of $A_{\inf}(R^\prime)\{\frac{g_1}{d},...,\frac{g_r}{d}\}^{\wedge}$ that has the following stability properties:

\begin{enumerate}
\item $K$ is stable under taking powers, i.e., if $f \in K$ then $f^n \in K$ for all $n \geq 0$: the kernel of a difference of ring homomorphisms always has this property. 
\item $K$ is naturally linear over $A_{\inf}(R^\prime)$, not merely $A$: we have $\epsilon_{R^\prime} = \mathrm{id}$ by the previously settled perfectoid case, so both $\epsilon_S$ and $\mathrm{id}$ are $A_{\inf}(R^\prime)$-algebra maps. In particular, the image of $A_{\inf}(R^\prime)$ in $A_{\inf}(R^\prime)\{\frac{g_1}{d},...,\frac{g_r}{d}\}^{\wedge}$ lies in $K$. 
\item $K$ is saturated in $A_{\inf}(R^\prime)\{\frac{g_1}{d},...,\frac{g_r}{d}\}^{\wedge}$ with respect to multiplication by both $p$ as well as $\phi^n(d)$ for all $n \geq 0$. Indeed, $A_{\inf}(R^\prime)\{\frac{g_1}{d},...,\frac{g_r}{d}\}^{\wedge}$ is $(p,d)$-completely flat over $A$ (\cref{qPDEnvRegular}). In particular, by Lemma~\ref{BoundedPrismProp} (2), the element $p$ as well as all the Frobenius powers $\phi^n(d)$ for $n \geq 0$ are nonzerodivisors in this $A$-module, and hence the kernel of any endomorphism of this $A_{\inf}(R^\prime)$-module has the stated saturation property.
\end{enumerate}

We now show that $K=A_{\inf}(R^\prime)\{\frac{g_1}{d},...,\frac{g_r}{d}\}^{\wedge}$. As $K$ is derived $(p,[p]_q)$-complete and an $A_{\inf}(R')$-module by property (2), it suffices to check that a topological generating set for the $A_{\inf}(R^\prime)$-module $A_{\inf}(R^\prime)\{\frac{g_1}{d},...,\frac{g_r}{d}\}^{\wedge}$ lies in $K$. Using the operations $\{\delta_n\}_{n \geq 0}$ of Joyal (Remark~\ref{JoyalOps} and Remark~\ref{JoyalOpsGenerate}), it suffices to check that $\delta_n(\frac{g_i}{d}) \in K$ for all $n \geq 0$ and all $i$. We check this by induction on $n$ (and fixed $i$). For $n=0$, we must check that $\frac{g_i}{d} \in K$. But $g_i \in K$ by property (2), and hence $\frac{g_i}{d} \in K$ by the $d$-saturatedness of $K$ from property (3).  Assume now that $\delta_j(\frac{g_i}{d}) \in K$ for $j \leq n$. To show $\delta_{n+1}(\frac{g_i}{d}) \in K$, we use the formula
\[ \phi^{n+1}(\frac{g_i}{d}) = (\frac{g_i}{d})^{p^n} + p \delta_1(\frac{g_i}{d})^{p^{n-1}} + ... + p^n \delta_n(\frac{g_i}{d})^p + p^{n+1} \delta_{n+1}(\frac{g_i}{d}).\]
The term on the left lies in $K$: this follows after multiplication by $\phi^{n+1}(d)$ using property (2), and hence holds true on the nose as $K$ is $\phi^{n+1}(d)$-saturated by property (3).  Moreover, all terms on the right except the last one also lie in $K$: this follows by induction and property (1). Thus, the last term $p^{n+1} \delta_{n+1}(\frac{g_i}{d})$ also lies in $K$. As $K$ is $p$-saturated by property (3), it follows that $\delta_{n+1}(\frac{g_i}{d}) \in K$ as well, which finishes the induction.
\end{proof}
\end{proof}

\newpage

\bibliographystyle{amsalpha}
\bibliography{prisms}

\end{document}